\providecommand{\tabularnewline}{\\}
\numberwithin{equation}{section}
\numberwithin{figure}{section}
\numberwithin{table}{section}
\newlist{thmstepnv}{enumerate}{4}
\setlist[thmstepnv]{leftmargin=*,align=left,wide,labelwidth=0pt,labelindent=0pt}
\setlist[thmstepnv,1]{label={\itshape {\thmstepname} \arabic*.},ref=\arabic*}
\setlist[thmstepnv,2]{label={\itshape {\thmstepname} {\thethmstepnvi\alph*}.},ref=\thethmstepnvi\alph*}
\setlist[thmstepnv,3]{label={\itshape {\thmstepname\ \alph*.}},ref=\alph*}
\setlist[thmstepnv,4]{label={\itshape {\thmstepname} \arabic*.},ref=\arabic*}
\DeclareFontFamily{U}{matha}{\hyphenchar\font45}
\DeclareFontShape{U}{matha}{m}{n}{
      <5> <6> <7> <8> <9> <10> gen * matha
      <10.95> matha10 <12> <14.4> <17.28> <20.74> <24.88> matha12
      }{}
\DeclareSymbolFont{matha}{U}{matha}{m}{n}
\DeclareFontFamily{U}{mathx}{\hyphenchar\font45}
\DeclareFontShape{U}{mathx}{m}{n}{
      <5> <6> <7> <8> <9> <10>
      <10.95> <12> <14.4> <17.28> <20.74> <24.88>
      mathx10
      }{}
\DeclareSymbolFont{mathx}{U}{mathx}{m}{n}
\DeclareMathDelimiter{\vvvert}{0}{matha}{"7E}{mathx}{"17}
\let\oversetx\overset
    \DeclareFontFamily{U}{wncy}{}
    \DeclareFontShape{U}{wncy}{m}{n}{<->wncyr10}{}
    \DeclareSymbolFont{mcy}{U}{wncy}{m}{n}
    \DeclareMathSymbol{\Sha}{\mathord}{mcy}{"58} 
    \DeclareMathSymbol{\NNNN}{\mathord}{mcy}{'111}
\DeclareFontFamily{OMX}{MnSymbolE}{}
\DeclareSymbolFont{MnLargeSymbols}{OMX}{MnSymbolE}{m}{n}
\DeclareFontShape{OMX}{MnSymbolE}{m}{n}{
    <-6>  MnSymbolE5
   <6-7>  MnSymbolE6
   <7-8>  MnSymbolE7
   <8-9>  MnSymbolE8
   <9-10> MnSymbolE9
  <10-12> MnSymbolE10
  <12->   MnSymbolE12
}{}
\DeclareFontShape{OMX}{MnSymbolE}{b}{n}{
    <-6>  MnSymbolE-Bold5
   <6-7>  MnSymbolE-Bold6
   <7-8>  MnSymbolE-Bold7
   <8-9>  MnSymbolE-Bold8
   <9-10> MnSymbolE-Bold9
  <10-12> MnSymbolE-Bold10
  <12->   MnSymbolE-Bold12
}{}
\let\llangle\@undefined
\let\rrangle\@undefined
\DeclareMathDelimiter{\llangle}{\mathopen}%
                     {MnLargeSymbols}{'164}{MnLargeSymbols}{'164}
\DeclareMathDelimiter{\rrangle}{\mathclose}%
                     {MnLargeSymbols}{'171}{MnLargeSymbols}{'171}
\LetLtxMacro{\oldzcref}{\zcref}
\renewcommand{\zcref}[2][]%
             {\ifmmode\text{\oldzcref[{#1}]{#2}}\else\oldzcref[{#1}]{#2}\fi}
\newcommand{\myheppsector}[1]{%
\mathord{\vcenter{\hbox{\begin{forest}
      for tree={
        draw = none,
        align=center,
        minimum size=0pt,   %
        inner sep=0pt,      %
        s sep=4mm,          %
        l sep=1mm,          %
        l = 1mm,
              edge path={
                        \noexpand\path [\forestoption{edge}]
          (!u.parent anchor) -| (.child anchor)\forestoption{edge label};
      }
      }
      [{ }, draw=none, no edge [#1]]
      \end{forest}}}}
}
\renewcommand{\eqref}{\zcref}
\renewcommand\nompreamble{\begin{multicols}{2}\small\raggedright}
\renewcommand\nompostamble{\end{multicols}}
\theoremstyle{plain}
\newtheorem{thm}{\protect\theoremname}[section]
\theoremstyle{remark}
\newtheorem{rem}[thm]{\protect\remarkname}
\theoremstyle{plain}
\newtheorem{prop}[thm]{\protect\propositionname}
\newtheorem{lem}[thm]{\protect\lemmaname}
\theoremstyle{definition}
\newtheorem{defn}[thm]{\protect\definitionname}
\theoremstyle{plain}
\newtheorem{cor}[thm]{\protect\corollaryname}
\providecommand{\corollaryname}{Corollary}
\providecommand{\definitionname}{Definition}
\providecommand{\lemmaname}{Lemma}
\providecommand{\propositionname}{Proposition}
\providecommand{\remarkname}{Remark}
\providecommand{\theoremname}{Theorem}
\providecommand{\thmstepname}{Step}
\begin{document}
\makesavedRS{lollipopr}
\makesavedRS{lollipopb}
\makesavedRS{lollipopn}
\makesavedRS{lollipopnc}
\makesavedRS{balloonr}
\makesavedRS{balloonb}
\makesavedRS{balloonn}
\makesavedRS{irenorm}
\makesavedRS{iprenorm}
\makesavedRS{iprenormtri}
\makesavedRS{cherryrr}
\makesavedRS{cherryrb}
\makesavedRS{cherrybb}
\makesavedRS{cherryrenormr}
\makesavedRS{cherryrenormb}
\makesavedRS{cherryrenormrenorm}
\makesavedRS{icherryrr}
\makesavedRS{icherryrb}
\makesavedRS{icherryrenormr}
\makesavedRS{ipcherryrenormr}
\makesavedRS{ipcherryrr}
\makesavedRS{ipcherryrb}
\makesavedRS{elkrenormrr}
\makesavedRS{elkrrr}
\makesavedRS{elkrrb}
\makesavedRS{elkrbr}
\makesavedRS{ielkrrr}
\makesavedRS{ipelkrrr}
\makesavedRS{candelabrarrrr}
\makesavedRS{mooserrrr}
\makesavedRS{clawrr}
\makesavedRS{iplollipopr}
\makesavedRS{ilollipopr}
\makesavedRS{one}
\makesavedRS{potential}
\makesavedRS{noise}
\makesavedRS{x}
\makesavedRS{t}
\makesavedRS{renorm}
\makesavedRS{cherryrbc}
\makesavedRS{lollirc}
\makesavedRS{elkrenormrrXA}
\makesavedRS{elkrenormrrXB}
\makesavedRS{elkrbrG}
\makesavedRS{elkrbrXA}
\makesavedRS{elkrbrXB}
\makesavedRS{elkrbrc}
\makesavedRS{elkrbrcc}
\makesavedRS{elkrbrccone}
\makesavedRS{elkrbrccc}
\makesavedRS{elkrbrcccone}
\makesavedRS{elkrbrcS}
\makesavedRS{elkrbrcSG}
\makesavedRS{elkrbrcSN}
\makesavedRS{clawrG}
\makesavedRS{clawrrX}
\makesavedRS{clawrrXA}
\makesavedRS{clawrrcA}
\makesavedRS{clawrrcAG}
\makesavedRS{clawrrcAS}
\makesavedRS{clawrrc}
\makesavedRS{clawrrcNORECENTER}
\makesavedRS{clawrrcc}
\makesavedRS{clawrrccc}
\makesavedRS{elkrrbc}
\makesavedRS{elkrrbcc}
\makesavedRS{elkrrbccpl}
\makesavedRS{elkrrbccplX}
\makesavedRS{cherryrrc}
\makesavedRS{cherryrrcc}
\makesavedRS{iplollipoprG}
\makesavedRS{ipcherryrbG}
\makesavedRS{elkrrrXA}
\makesavedRS{elkrrrXAX}
\makesavedRS{elkrrrc}
\makesavedRS{elkrrrcXA}
\makesavedRS{elkrrrcXB}
\makesavedRS{elkrrrcc}
\makesavedRS{elkrrrccXA}
\makesavedRS{elkrrrccc}
\makesavedRS{elkrrrcccARM}
\makesavedRS{elkrrrcccXA}
\makesavedRS{elkrrrcccXB}
\makesavedRS{elkrrrcccXC}
\makesavedRS{elkrrrcccc}
\makesavedRS{elkrrrccccXA}
\makesavedRS{elkrrrccccc}
\makesavedRS{elkrrrcccccc}
\makesavedRS{elkrrrccccccXA}
\makesavedRS{candelabrarrrrcA}
\makesavedRS{candelabrarrrrcB}
\makesavedRS{candelabrarrrrcC}
\makesavedRS{candelabrarrrrcD}
\makesavedRS{candelabrarrrrcE}
\makesavedRS{candelabrarrrrcF}
\makesavedRS{mooserrrrR}
\makesavedRS{mooserrrrD}
\makesavedRS{mooserrrrcA}
\makesavedRS{mooserrrrcAD}
\makesavedRS{mooserrrrcAXA}
\makesavedRS{mooserrrrcAXB}
\makesavedRS{mooserrrrcAXC}
\makesavedRS{mooserrrrcAXD}
\makesavedRS{mooserrrrcADI}
\makesavedRS{mooserrrrcAR}
\makesavedRS{mooserrrrcBD}
\makesavedRS{mooserrrrcBR}
\makesavedRS{mooserrrrcCD}
\makesavedRS{mooserrrrcCDI}
\makesavedRS{mooserrrrcCR}
\makesavedRS{mooserrrrccA}
\makesavedRS{mooserrrrccB}
\makesavedRS{mooserrrrccBI}
\makesavedRS{mooserrrrccBKA}
\makesavedRS{mooserrrrccBKB}
\makesavedRS{mooserrrrccBKC}
\makesavedRS{mooserrrrccBR}
\makesavedRS{mooserrrrcPA}
\makesavedRS{mooserrrrcPB}
\makesavedRS{mooserrrrcPBR}
\makesavedRS{mooserrrrcPBRI}
\makesavedRS{mooserrrrcPC}
\makesavedRS{mooserrrrcPD}
\makesavedRS{mooserrrrcPE}
\makesavedRS{mooserrrrcPF}
\makesavedRS{mooserrrrccAI}
\makesavedRS{bphz}
\makesavedRS{bphzXX}
\makesavedRS{triangle}
\makesavedRS{cherryrenormbtr}
\makesavedRS{mooserrrrcDD}
\makesavedRS{mooserrrrcDDI}
\makesavedRS{mooserrrrcDDIP}
\makesavedRS{mooserrrrcDR}
\makesavedRS{ipelkrrrR}
\makesavedRS{mooserrrrcPG}
\makesavedRS{mooserrrrcPGI}
\makesavedRS{mooserrrrRA}
\makesavedRS{mooserecenterA}
\makesavedRS{mooserecenterB}
\makesavedRS{mooserecenterC}
\makesavedRS{mooserrrrccAA}
\makesavedRS{mooserrrrccBKD}
\makesavedRS{mooserrrrccBKE}
\makesavedRS{mooserrrrccBKF}
\makesavedRS{mooserrrrccBKG}
\makesavedRS{cherryrenormrXA}
\makesavedRS{cherryrenormrXB}
\makesavedRS{cherryrenormrenormXA}
\makesavedRS{cherryrenormrenormXB}

\global\long\def\supp{\operatorname{supp}}%

\global\long\def\Sym{\operatorname{Sym}}%

\global\long\def\Anti{\operatorname{Anti}}%

\global\long\def\diam{\operatorname{diam}}%

\global\long\def\id{\operatorname{id}}%

\global\long\def\Uniform{\operatorname{Uniform}}%

\global\long\def\dif{\mathrm{d}}%

\global\long\def\e{\mathrm{e}}%

\global\long\def\ii{\mathrm{i}}%

\global\long\def\Cov{\operatorname{Cov}}%

\global\long\def\dist{\operatorname{dist}}%

\global\long\def\Var{\operatorname{Var}}%

\global\long\def\pv{\operatorname{p.v.}}%

\global\long\def\spn{\operatorname{span}}%

\global\long\def\e{\mathrm{e}}%

\global\long\def\p{\mathrm{p}}%

\global\long\def\Law{\operatorname{Law}}%

\global\long\def\supp{\operatorname{supp}}%

\global\long\def\image{\operatorname{image}}%

\global\long\def\dif{\mathrm{d}}%

\global\long\def\eps{\varepsilon}%

\global\long\def\sgn{\operatorname{sgn}}%

\global\long\def\tr{\operatorname{tr}}%

\global\long\def\Hess{\operatorname{Hess}}%

\global\long\def\Re{\operatorname{Re}}%

\global\long\def\Im{\operatorname{Im}}%

\global\long\def\hash{\mathbin{\#}}%

\global\long\def\Dif{\mathrm{D}}%

\global\long\def\divg{\operatorname{div}}%

\global\long\def\arsinh{\operatorname{arsinh}}%

\global\long\def\sech{\operatorname{sech}}%

\global\long\def\erf{\operatorname{erf}}%

\global\long\def\Cauchy{\operatorname{Cauchy}}%

\global\long\def\artanh{\operatorname{artanh}}%

\global\long\def\diag{\operatorname{diag}}%

\global\long\def\pr{\operatorname{pr}}%

\global\long\def\Left{\mathrm{L}}%

\global\long\def\Right{\mathrm{R}}%

\global\long\def\LR{\mathrm{LR}}%

\global\long\def\ssS{\mathsf{S}}%

\global\long\def\GS{\mathsf{GS}}%

\global\long\def\clA{\mathcal{A}}%

\global\long\def\ssN{\mathsf{N}}%

\global\long\def\ssM{\mathsf{M}}%

\global\long\def\ssH{\mathsf{H}}%

\global\long\def\ssQ{\mathsf{Q}}%

\global\long\def\sfs{\mathsf{s}}%

\global\long\def\sfr{\mathsf{r}}%

\global\long\def\sff{\mathsf{f}}%

\global\long\def\sfY{\mathfrak{Y}}%

\global\long\def\TT{\mathbb{T}}%

\global\long\def\RR{\mathbb{R}}%

\global\long\def\ZZ{\mathbb{Z}}%

\global\long\def\PP{\mathbb{P}}%

\global\long\def\NN{\mathbb{N}}%

\global\long\def\scM{\mathscr{M}}%

\global\long\def\st{\mathrel{:}}%

\global\long\def\uu{\mathsf{u}}%

\global\long\def\vv{\mathsf{v}}%

\global\long\def\Sh{\Sha}%

\global\long\def\NNN{\NNNN}%

\global\long\def\oh{\sfrac{1}{2}}%

\global\long\def\thrh{\sfrac{3}{2}}%

\newcommandx\rsxn[2][usedefault, addprefix=\global, 1=rsN]{\rsify{\tikzxn{#2}}{#1}}%
\newcommandx\rstn[2][usedefault, addprefix=\global, 1=rsN]{\rsify{\tikztn{#2}}{#1}}%
\newcommandx\rsspecial[2][usedefault, addprefix=\global, 1=rsN]{\rsify{\tikzmathgeneral{gray}{#2}}{#1}}%
\newcommandx\rsmath[2][usedefault, addprefix=\global, 1=rsN]{\rsify{\tikzmath{#2}}{#1}}%
\newcommandx\rslollipopr[1][usedefault, addprefix=\global, 1=rsN]{\myrslollipopr{#1}}%
\newcommandx\rslollipopb[1][usedefault, addprefix=\global, 1=rsN]{\myrslollipopb{#1}}%
\newcommandx\rslollipopn[1][usedefault, addprefix=\global, 1=rsN]{\myrslollipopn{#1}}%
\newcommandx\rslollipopnc[1][usedefault, addprefix=\global, 1=rsN]{\myrslollipopnc{#1}}%
\newcommandx\rsballoonr[1][usedefault, addprefix=\global, 1=rsN]{\myrsballoonr{#1}}%
\newcommandx\rsballoonb[1][usedefault, addprefix=\global, 1=rsN]{\myrsballoonb{#1}}%
\newcommandx\rsballoonn[1][usedefault, addprefix=\global, 1=rsN]{\myrsballoonn{#1}}%
\newcommandx\rsirenorm[1][usedefault, addprefix=\global, 1=rsN]{\myrsirenorm{#1}}%
\newcommandx\rsiprenorm[1][usedefault, addprefix=\global, 1=rsN]{\myrsiprenorm{#1}}%
\newcommandx\rsiprenormtri[1][usedefault, addprefix=\global, 1=rsN]{\myrsiprenormtri{#1}}%
\newcommandx\rscherryrr[1][usedefault, addprefix=\global, 1=rsN]{\myrscherryrr{#1}}%
\newcommandx\rscherryrb[1][usedefault, addprefix=\global, 1=rsN]{\myrscherryrb{#1}}%
\newcommandx\rscherrybb[1][usedefault, addprefix=\global, 1=rsN]{\myrscherrybb{#1}}%
\newcommandx\rscherryrenormr[1][usedefault, addprefix=\global, 1=rsN]{\myrscherryrenormr{#1}}%
\newcommandx\rscherryrenormb[1][usedefault, addprefix=\global, 1=rsN]{\myrscherryrenormb{#1}}%
\newcommandx\rscherryrenormrenorm[1][usedefault, addprefix=\global, 1=rsN]{\myrscherryrenormrenorm{#1}}%
\newcommandx\rsicherryrr[1][usedefault, addprefix=\global, 1=rsN]{\myrsicherryrr{#1}}%
\newcommandx\rsicherryrb[1][usedefault, addprefix=\global, 1=rsN]{\myrsicherryrb{#1}}%
\newcommandx\rsicherryrenormr[1][usedefault, addprefix=\global, 1=rsN]{\myrsicherryrenormr{#1}}%
\newcommandx\rsipcherryrenormr[1][usedefault, addprefix=\global, 1=rsN]{\myrsipcherryrenormr{#1}}%
\newcommandx\rsipcherryrr[1][usedefault, addprefix=\global, 1=rsN]{\myrsipcherryrr{#1}}%
\newcommandx\rsipcherryrb[1][usedefault, addprefix=\global, 1=rsN]{\myrsipcherryrb{#1}}%
\newcommandx\rselkrenormrr[1][usedefault, addprefix=\global, 1=rsN]{\myrselkrenormrr{#1}}%
\newcommandx\rselkrrr[1][usedefault, addprefix=\global, 1=rsN]{\myrselkrrr{#1}}%
\newcommandx\rselkrrb[1][usedefault, addprefix=\global, 1=rsN]{\myrselkrrb{#1}}%
\newcommandx\rselkrbr[1][usedefault, addprefix=\global, 1=rsN]{\myrselkrbr{#1}}%
\newcommandx\rsielkrrr[1][usedefault, addprefix=\global, 1=rsN]{\myrsielkrrr{#1}}%
\newcommandx\rsipelkrrr[1][usedefault, addprefix=\global, 1=rsN]{\myrsipelkrrr{#1}}%
\newcommandx\rscandelabrarrrr[1][usedefault, addprefix=\global, 1=rsN]{\myrscandelabrarrrr{#1}}%
\newcommandx\rsmooserrrr[1][usedefault, addprefix=\global, 1=rsN]{\myrsmooserrrr{#1}}%
\newcommandx\rsclawrr[1][usedefault, addprefix=\global, 1=rsN]{\myrsclawrr{#1}}%
\newcommandx\rsiplollipopr[1][usedefault, addprefix=\global, 1=rsN]{\myrsiplollipopr{#1}}%
\newcommandx\rsilollipopr[1][usedefault, addprefix=\global, 1=rsN]{\myrsilollipopr{#1}}%
\newcommandx\rsone[1][usedefault, addprefix=\global, 1=rsN]{\myrsone{#1}}%
\newcommandx\rspotential[1][usedefault, addprefix=\global, 1=rsN]{\myrspotential{#1}}%
\newcommandx\rsnoise[1][usedefault, addprefix=\global, 1=rsN]{\myrsnoise{#1}}%
\newcommandx\rsx[1][usedefault, addprefix=\global, 1=rsN]{\myrsx{#1}}%
\newcommandx\rst[1][usedefault, addprefix=\global, 1=rsN]{\myrst{#1}}%
\newcommandx\rsrenorm[1][usedefault, addprefix=\global, 1=rsN]{\myrsrenorm{#1}}%
\newcommandx\rscherryrbc[1][usedefault, addprefix=\global, 1=rsN]{\myrscherryrbc{#1}}%
\newcommandx\rslollirc[1][usedefault, addprefix=\global, 1=rsN]{\myrslollirc{#1}}%
\newcommandx\rselkrenormrrXA[1][usedefault, addprefix=\global, 1=rsN]{\myrselkrenormrrXA{#1}}%
\newcommandx\rselkrenormrrXB[1][usedefault, addprefix=\global, 1=rsN]{\myrselkrenormrrXB{#1}}%
\newcommandx\rselkrbrG[1][usedefault, addprefix=\global, 1=rsN]{\myrselkrbrG{#1}}%
\newcommandx\rselkrbrXA[1][usedefault, addprefix=\global, 1=rsN]{\myrselkrbrXA{#1}}%
\newcommandx\rselkrbrXB[1][usedefault, addprefix=\global, 1=rsN]{\myrselkrbrXB{#1}}%
\newcommandx\rselkrbrc[1][usedefault, addprefix=\global, 1=rsN]{\myrselkrbrc{#1}}%
\newcommandx\rselkrbrcc[1][usedefault, addprefix=\global, 1=rsN]{\myrselkrbrcc{#1}}%
\newcommandx\rselkrbrccone[1][usedefault, addprefix=\global, 1=rsN]{\myrselkrbrccone{#1}}%
\newcommandx\rselkrbrccc[1][usedefault, addprefix=\global, 1=rsN]{\myrselkrbrccc{#1}}%
\newcommandx\rselkrbrcccone[1][usedefault, addprefix=\global, 1=rsN]{\myrselkrbrcccone{#1}}%
\newcommandx\rselkrbrcS[1][usedefault, addprefix=\global, 1=rsN]{\myrselkrbrcS{#1}}%
\newcommandx\rselkrbrcSG[1][usedefault, addprefix=\global, 1=rsN]{\myrselkrbrcSG{#1}}%
\newcommandx\rselkrbrcSN[1][usedefault, addprefix=\global, 1=rsN]{\myrselkrbrcSN{#1}}%
\newcommandx\rsclawrG[1][usedefault, addprefix=\global, 1=rsN]{\myrsclawrG{#1}}%
\newcommandx\rsclawrrX[1][usedefault, addprefix=\global, 1=rsN]{\myrsclawrrX{#1}}%
\newcommandx\rsclawrrXA[1][usedefault, addprefix=\global, 1=rsN]{\myrsclawrrXA{#1}}%
\newcommandx\rsclawrrcA[1][usedefault, addprefix=\global, 1=rsN]{\myrsclawrrcA{#1}}%
\newcommandx\rsclawrrcAG[1][usedefault, addprefix=\global, 1=rsN]{\myrsclawrrcAG{#1}}%
\newcommandx\rsclawrrcAS[1][usedefault, addprefix=\global, 1=rsN]{\myrsclawrrcAS{#1}}%
\newcommandx\rsclawrrc[1][usedefault, addprefix=\global, 1=rsN]{\myrsclawrrc{#1}}%
\newcommandx\rsclawrrcNORECENTER[1][usedefault, addprefix=\global, 1=rsN]{\myrsclawrrcNORECENTER{#1}}%
\newcommandx\rsclawrrcc[1][usedefault, addprefix=\global, 1=rsN]{\myrsclawrrcc{#1}}%
\newcommandx\rsclawrrccc[1][usedefault, addprefix=\global, 1=rsN]{\myrsclawrrccc{#1}}%
\newcommandx\rselkrrbc[1][usedefault, addprefix=\global, 1=rsN]{\myrselkrrbc{#1}}%
\newcommandx\rselkrrbcc[1][usedefault, addprefix=\global, 1=rsN]{\myrselkrrbcc{#1}}%
\newcommandx\rselkrrbccpl[1][usedefault, addprefix=\global, 1=rsN]{\myrselkrrbccpl{#1}}%
\newcommandx\rselkrrbccplX[1][usedefault, addprefix=\global, 1=rsN]{\myrselkrrbccplX{#1}}%
\newcommandx\rscherryrrc[1][usedefault, addprefix=\global, 1=rsN]{\myrscherryrrc{#1}}%
\newcommandx\rscherryrrcc[1][usedefault, addprefix=\global, 1=rsN]{\myrscherryrrcc{#1}}%
\newcommandx\rsiplollipoprG[1][usedefault, addprefix=\global, 1=rsN]{\myrsiplollipoprG{#1}}%
\newcommandx\rsipcherryrbG[1][usedefault, addprefix=\global, 1=rsN]{\myrsipcherryrbG{#1}}%
\newcommandx\rselkrrrXA[1][usedefault, addprefix=\global, 1=rsN]{\myrselkrrrXA{#1}}%
\newcommandx\rselkrrrXAX[1][usedefault, addprefix=\global, 1=rsN]{\myrselkrrrXAX{#1}}%
\newcommandx\rselkrrrc[1][usedefault, addprefix=\global, 1=rsN]{\myrselkrrrc{#1}}%
\newcommandx\rselkrrrcXA[1][usedefault, addprefix=\global, 1=rsN]{\myrselkrrrcXA{#1}}%
\newcommandx\rselkrrrcXB[1][usedefault, addprefix=\global, 1=rsN]{\myrselkrrrcXB{#1}}%
\newcommandx\rselkrrrcc[1][usedefault, addprefix=\global, 1=rsN]{\myrselkrrrcc{#1}}%
\newcommandx\rselkrrrccXA[1][usedefault, addprefix=\global, 1=rsN]{\myrselkrrrccXA{#1}}%
\newcommandx\rselkrrrccc[1][usedefault, addprefix=\global, 1=rsN]{\myrselkrrrccc{#1}}%
\newcommandx\rselkrrrcccARM[1][usedefault, addprefix=\global, 1=rsN]{\myrselkrrrcccARM{#1}}%
\newcommandx\rselkrrrcccXA[1][usedefault, addprefix=\global, 1=rsN]{\myrselkrrrcccXA{#1}}%
\newcommandx\rselkrrrcccXB[1][usedefault, addprefix=\global, 1=rsN]{\myrselkrrrcccXB{#1}}%
\newcommandx\rselkrrrcccXC[1][usedefault, addprefix=\global, 1=rsN]{\myrselkrrrcccXC{#1}}%
\newcommandx\rselkrrrcccc[1][usedefault, addprefix=\global, 1=rsN]{\myrselkrrrcccc{#1}}%
\newcommandx\rselkrrrccccXA[1][usedefault, addprefix=\global, 1=rsN]{\myrselkrrrccccXA{#1}}%
\newcommandx\rselkrrrccccc[1][usedefault, addprefix=\global, 1=rsN]{\myrselkrrrccccc{#1}}%
\newcommandx\rselkrrrcccccc[1][usedefault, addprefix=\global, 1=rsN]{\myrselkrrrcccccc{#1}}%
\newcommandx\rselkrrrccccccXA[1][usedefault, addprefix=\global, 1=rsN]{\myrselkrrrccccccXA{#1}}%
\newcommandx\rscandelabrarrrrcA[1][usedefault, addprefix=\global, 1=rsN]{\myrscandelabrarrrrcA{#1}}%
\newcommandx\rscandelabrarrrrcB[1][usedefault, addprefix=\global, 1=rsN]{\myrscandelabrarrrrcB{#1}}%
\newcommandx\rscandelabrarrrrcC[1][usedefault, addprefix=\global, 1=rsN]{\myrscandelabrarrrrcC{#1}}%
\newcommandx\rscandelabrarrrrcD[1][usedefault, addprefix=\global, 1=rsN]{\myrscandelabrarrrrcD{#1}}%
\newcommandx\rscandelabrarrrrcE[1][usedefault, addprefix=\global, 1=rsN]{\myrscandelabrarrrrcE{#1}}%
\newcommandx\rscandelabrarrrrcF[1][usedefault, addprefix=\global, 1=rsN]{\myrscandelabrarrrrcF{#1}}%
\newcommandx\rsmooserrrrR[1][usedefault, addprefix=\global, 1=rsN]{\myrsmooserrrrR{#1}}%
\newcommandx\rsmooserrrrD[1][usedefault, addprefix=\global, 1=rsN]{\myrsmooserrrrD{#1}}%
\newcommandx\rsmooserrrrcA[1][usedefault, addprefix=\global, 1=rsN]{\myrsmooserrrrcA{#1}}%
\newcommandx\rsmooserrrrcAD[1][usedefault, addprefix=\global, 1=rsN]{\myrsmooserrrrcAD{#1}}%
\newcommandx\rsmooserrrrcAXA[1][usedefault, addprefix=\global, 1=rsN]{\myrsmooserrrrcAXA{#1}}%
\newcommandx\rsmooserrrrcAXB[1][usedefault, addprefix=\global, 1=rsN]{\myrsmooserrrrcAXB{#1}}%
\newcommandx\rsmooserrrrcAXC[1][usedefault, addprefix=\global, 1=rsN]{\myrsmooserrrrcAXC{#1}}%
\newcommandx\rsmooserrrrcAXD[1][usedefault, addprefix=\global, 1=rsN]{\myrsmooserrrrcAXD{#1}}%
\newcommandx\rsmooserrrrcADI[1][usedefault, addprefix=\global, 1=rsN]{\myrsmooserrrrcADI{#1}}%
\newcommandx\rsmooserrrrcAR[1][usedefault, addprefix=\global, 1=rsN]{\myrsmooserrrrcAR{#1}}%
\newcommandx\rsmooserrrrcBD[1][usedefault, addprefix=\global, 1=rsN]{\myrsmooserrrrcBD{#1}}%
\newcommandx\rsmooserrrrcBR[1][usedefault, addprefix=\global, 1=rsN]{\myrsmooserrrrcBR{#1}}%
\newcommandx\rsmooserrrrcCD[1][usedefault, addprefix=\global, 1=rsN]{\myrsmooserrrrcCD{#1}}%
\newcommandx\rsmooserrrrcCDI[1][usedefault, addprefix=\global, 1=rsN]{\myrsmooserrrrcCDI{#1}}%
\newcommandx\rsmooserrrrcCR[1][usedefault, addprefix=\global, 1=rsN]{\myrsmooserrrrcCR{#1}}%
\newcommandx\rsmooserrrrccA[1][usedefault, addprefix=\global, 1=rsN]{\myrsmooserrrrccA{#1}}%
\newcommandx\rsmooserrrrccB[1][usedefault, addprefix=\global, 1=rsN]{\myrsmooserrrrccB{#1}}%
\newcommandx\rsmooserrrrccBI[1][usedefault, addprefix=\global, 1=rsN]{\myrsmooserrrrccBI{#1}}%
\newcommandx\rsmooserrrrccBKA[1][usedefault, addprefix=\global, 1=rsN]{\myrsmooserrrrccBKA{#1}}%
\newcommandx\rsmooserrrrccBKB[1][usedefault, addprefix=\global, 1=rsN]{\myrsmooserrrrccBKB{#1}}%
\newcommandx\rsmooserrrrccBKC[1][usedefault, addprefix=\global, 1=rsN]{\myrsmooserrrrccBKC{#1}}%
\newcommandx\rsmooserrrrccBR[1][usedefault, addprefix=\global, 1=rsN]{\myrsmooserrrrccBR{#1}}%
\newcommandx\rsmooserrrrcPA[1][usedefault, addprefix=\global, 1=rsN]{\myrsmooserrrrcPA{#1}}%
\newcommandx\rsmooserrrrcPB[1][usedefault, addprefix=\global, 1=rsN]{\myrsmooserrrrcPB{#1}}%
\newcommandx\rsmooserrrrcPBR[1][usedefault, addprefix=\global, 1=rsN]{\myrsmooserrrrcPBR{#1}}%
\newcommandx\rsmooserrrrcPBRI[1][usedefault, addprefix=\global, 1=rsN]{\myrsmooserrrrcPBRI{#1}}%
\newcommandx\rsmooserrrrcPC[1][usedefault, addprefix=\global, 1=rsN]{\myrsmooserrrrcPC{#1}}%
\newcommandx\rsmooserrrrcPD[1][usedefault, addprefix=\global, 1=rsN]{\myrsmooserrrrcPD{#1}}%
\newcommandx\rsmooserrrrcPE[1][usedefault, addprefix=\global, 1=rsN]{\myrsmooserrrrcPE{#1}}%
\newcommandx\rsmooserrrrcPF[1][usedefault, addprefix=\global, 1=rsN]{\myrsmooserrrrcPF{#1}}%
\newcommandx\rsmooserrrrccAI[1][usedefault, addprefix=\global, 1=rsN]{\myrsmooserrrrccAI{#1}}%
\newcommandx\rsbphz[1][usedefault, addprefix=\global, 1=rsN]{\myrsbphz{#1}}%
\newcommandx\rsbphzXX[1][usedefault, addprefix=\global, 1=rsN]{\myrsbphzXX{#1}}%
\newcommandx\rstriangle[1][usedefault, addprefix=\global, 1=rsN]{\myrstriangle{#1}}%
\newcommandx\rscherryrenormbtr[1][usedefault, addprefix=\global, 1=rsN]{\myrscherryrenormbtr{#1}}%
\newcommandx\rsmooserrrrcDD[1][usedefault, addprefix=\global, 1=rsN]{\myrsmooserrrrcDD{#1}}%
\newcommandx\rsmooserrrrcDDI[1][usedefault, addprefix=\global, 1=rsN]{\myrsmooserrrrcDDI{#1}}%
\newcommandx\rsmooserrrrcDDIP[1][usedefault, addprefix=\global, 1=rsN]{\myrsmooserrrrcDDIP{#1}}%
\newcommandx\rsmooserrrrcDR[1][usedefault, addprefix=\global, 1=rsN]{\myrsmooserrrrcDR{#1}}%
\newcommandx\rsipelkrrrR[1][usedefault, addprefix=\global, 1=rsN]{\myrsipelkrrrR{#1}}%
\newcommandx\rsmooserrrrcPG[1][usedefault, addprefix=\global, 1=rsN]{\myrsmooserrrrcPG{#1}}%
\newcommandx\rsmooserrrrcPGI[1][usedefault, addprefix=\global, 1=rsN]{\myrsmooserrrrcPGI{#1}}%
\newcommandx\rsmooserrrrRA[1][usedefault, addprefix=\global, 1=rsN]{\myrsmooserrrrRA{#1}}%
\newcommandx\rsmooserecenterA[1][usedefault, addprefix=\global, 1=rsN]{\myrsmooserecenterA{#1}}%
\newcommandx\rsmooserecenterB[1][usedefault, addprefix=\global, 1=rsN]{\myrsmooserecenterB{#1}}%
\newcommandx\rsmooserecenterC[1][usedefault, addprefix=\global, 1=rsN]{\myrsmooserecenterC{#1}}%
\newcommandx\rsmooserrrrccAA[1][usedefault, addprefix=\global, 1=rsN]{\myrsmooserrrrccAA{#1}}%
\newcommandx\rsmooserrrrccBKD[1][usedefault, addprefix=\global, 1=rsN]{\myrsmooserrrrccBKD{#1}}%
\newcommandx\rsmooserrrrccBKE[1][usedefault, addprefix=\global, 1=rsN]{\myrsmooserrrrccBKE{#1}}%
\newcommandx\rsmooserrrrccBKF[1][usedefault, addprefix=\global, 1=rsN]{\myrsmooserrrrccBKF{#1}}%
\newcommandx\rsmooserrrrccBKG[1][usedefault, addprefix=\global, 1=rsN]{\myrsmooserrrrccBKG{#1}}%
\newcommandx\rscherryrenormrXA[1][usedefault, addprefix=\global, 1=rsN]{\myrscherryrenormrXA{#1}}%
\newcommandx\rscherryrenormrXB[1][usedefault, addprefix=\global, 1=rsN]{\myrscherryrenormrXB{#1}}%
\newcommandx\rscherryrenormrenormXA[1][usedefault, addprefix=\global, 1=rsN]{\myrscherryrenormrenormXA{#1}}%
\newcommandx\rscherryrenormrenormXB[1][usedefault, addprefix=\global, 1=rsN]{\myrscherryrenormrenormXB{#1}}%

\global\long\def\boxop#1{\rsmath[rsK]{#1}}%

\global\long\def\bboxop#1{\rsmath[rsP]{#1}}%

\global\long\def\ovset#1{\oversetx{#1}}%

\global\long\def\heppsector#1{\myheppsector{#1}}%

\title{Invariant measures for the open KPZ equation:\\
an analytic perspective}
\author{Alexander Dunlap\thanks{Department of Mathematics, Duke University, Durham, NC 27708, USA.
Email: \protect\url{alexander.dunlap@duke.edu}}\and Yu Gu\thanks{Department of Mathematics, University of Maryland, College Park, MD
20742, USA. Email: \protect\url{ygu7@umd.edu}}\and Tommaso Rosati\thanks{Department of Statistics, University of Warwick, Coventry CV4 7AL,
UK. Email: \protect\url{t.rosati@warwick.ac.uk}}}
\maketitle
\begin{abstract}
The ergodic theory of the open KPZ equation has seen significant progress in recent years, with explicit invariant measures described in a series of works by Corwin--Knizel, Barraquand--Le Doussal, and Bryc--Kuznetsov--Wang--Weso\l owski. %
In this paper, we provide  a stochastic analytic proof of the formula for the invariant measures. %
Our approach starts from the Gaussian invariant
measure for the case of homogeneous boundary conditions. We approximate
the inhomogeneous problem by a homogeneous one with a singular boundary
potential. Using tools including change of measure, time reversal
for Markov processes, and Itô's formula, we then reduce the problem
to analyzing the KPZ nonlinearity in a thin boundary layer. Finally,
using the theory of regularity structures, we establish a central
limit theorem for the time-integrated nonlinearity near the boundary,
which completes the proof of the invariance. Although it is known
that different boundary parameters give rise to distinct physical
regimes for the invariant measures, our method is robust and does
not rely on any particular choice of boundary parameters. 
\end{abstract}
\tableofcontents{}

\listoftables

\section{Introduction}

Fix $L\in(0,\infty)$ and $\uu,\vv\in\mathbb{R}$. We consider the
\emph{open KPZ equation}, which is the KPZ equation with inhomogeneous
Neumann boundary conditions on the strip $\mathbb{R}\times[0,L]$,
given formally by
\begin{subequations}
\label{eq:huv}
\begin{align}
\dif h_{\uu,\vv;t}(x) & =\frac{1}{2}\left(\Delta h_{\uu,\vv;t}(x)+(\partial_{x}h_{\uu,\vv;t}(x))^{2}\right)\dif t+\dif W_{t}(x), &  & t\in\mathbb{R},x\in(0,L);\label{eq:huv-eqn}\\
\partial_{x}h_{\uu,\vv;t}(0) & =\uu\qquad\text{and}\qquad\partial_{x}h_{\uu,\vv;t}(L)=-\vv, &  & t\in\mathbb{R};\label{eq:huv-Neumann}
\end{align}
\end{subequations}
where $(\dif W_{t})$ is a space-time white noise. The problem \zcref{eq:huv}
cannot be interpreted as a stochastic PDE as it is written, since
solutions are expected to have the spatial regularity of Brownian
motion, and thus neither the nonlinearity in \zcref{eq:huv-eqn} nor
the boundary conditions \zcref{eq:huv-Neumann} can be understood
in the classical sense. However, the ``correct'' or ``physical''
meaning of the problem is now well-understood \cite{corwin:shen:2018:weakly,parekh:2019:KPZ,gerencer:hairer:2019:singular}:
it can be interpreted through the Cole--Hopf transform and the mild
formulation of a stochastic heat equation with Robin boundary conditions,
as we detail in \zcref{sec:solntheory} below.

The goal of the present work is to investigate the ergodic behavior
of \zcref{eq:huv}, in particular the invariant measure. It is known
that, for a given choice of $\uu$ and $\vv$, there is a unique invariant
measure for $h_{\uu,\vv}$ \emph{up to a spatially constant height
shift}, or equivalently a unique invariant measures for the derivative
$u_{\uu,\vv;t}\coloneqq\partial_{x}h_{\uu,\vv;t}$, which formally
satisfies the \emph{open stochastic Burgers equation}
\begin{subequations}
\label{eq:uuv}
\begin{align}
\dif u_{\uu,\vv;t}(x) & =\frac{1}{2}\left(\Delta u_{\uu,\vv;t}(x)+\partial_{x}((u_{\uu,\vv;t})^{2})(x)\right)\dif t+\partial_{x}\dif W_{t}(x), &  & t\in\mathbb{R},x\in(0,L);\label{eq:uuv-eqn}\\
u_{\uu,\vv;t}(0) & =\uu\qquad\text{and}\qquad u_{\uu,\vv;t}(L)=-\vv, &  & t\in\mathbb{R};\label{eq:uuv-Dirichlet}
\end{align}
\end{subequations}
The existence of invariant measures for \zcref{eq:uuv} (along with much more, as will be discussed below) was shown in \cite{corwin:knizel:2024:stationary}. Uniqueness was proved
in \cite{knizel:matetski:arXiv2022:strong,parekh:2022:ergodicity},
which relied on the compactness of the domain $[0,L]$. This
is in some sense a generalization of the classical work of Sinai in
the periodic setting \cite{sinai:1991:two}.

Describing the invariant measure for \zcref{eq:uuv} has been the
subject of significant work. All existing work takes the approach
of first identifying an appropriate discrete model which on one hand
admits an explicit description of the invariant measure and on the
other hand approximates the open KPZ equation in a certain asymptotic
regime, then passing to the limit of the invariant measure on the
discrete level to obtain explicit descriptions of the invariant measure
on the continuous level. The same spirit applies to the KPZ equation
on the whole line, half line or in the periodic setting. Compared
to the whole line or the periodic setting, the boundary condition
\zcref{eq:huv-Neumann} makes the problem much less tractable. As
a matter of fact, the difficulty of the problem changes substantially
with different choices of the boundary parameters $\uu$ and $\vv$.
The reason is that, for certain chosen discrete models, the explicit
description of the invariant measure is only available for $\uu,\vv$
(and even $L$) in some specific regimes, and performing an analytic
continuation to other values of $\uu$ and $\vv$ is by no means straightforward.

There are relatively easier cases, in particular when $\uu+\vv=0$
so that the slopes of the height function $h_{\uu,\vv}$ at the two
boundaries are the same. In this case, one may guess that a Brownian
motion with drift $\uu=-\vv$ is invariant under the evolution of
\zcref{eq:huv}, in light of the known fact the two-sided Brownian
motion with drift is invariant under the KPZ evolution on the whole
line. Indeed, in this case, it is not hard to find discrete models
which have product invariant measures and approximate \zcref{eq:huv}.
In this case, as one passes to the limit as a drifted random walk
approximates a drifted Brownian motion which is then invariant under
\zcref{eq:huv}. For example, in the special case $\uu=\vv=0$, it
was shown in \cites{corwin:shen:2018:weakly}{goncalves:perkowski:simon:2020:derivation}
that the invariant measure for \zcref{eq:uuv} is spatial white noise.

As soon as $\uu+\vv\ne0$, the problem becomes much more difficult
and one generally does not expect Gaussian invariant measures. The
first breakthrough in this direction was obtained in \cite{corwin:knizel:2024:stationary},
where an explicit characterization of the invariant measure was obtained in the regime $\uu+\vv\ge0$,
in terms of the multipoint Laplace transform. Shortly afterwards, the
Laplace transform was inverted in \cite{bryc:kuznetsov:wang:wesolowski:2023:markov,barraquand:le_doussal:2022:steady},
in the mathematics and physics literature respectively. The resulting probabilistic description can in fact be extended to all $\uu,\vv\in\mathbb{R}$, and hence it was conjectured
that it indeed describes the invariant measure for all $\uu,\vv$.
This description can be written as follows. Let $\mu_{\uu,\vv}$ be the invariant measure
for \zcref{eq:uuv}. Then $\mu_{\uu,\vv}$ is absolutely continuous
with respect to the law $\mu_{0,0}$ of spatial white noise on $[0,L]$,
and the Radon--Nikodym derivative is given by
\begin{equation}
\frac{\dif\mu_{\uu,\vv}}{\dif\mu_{0,0}}(u)=\mathcal{Y}_{\uu,\vv}(u)\coloneqq\mathfrak{Z}^{-1}_{\uu,\vv}\mathrm{E}_{B}\left[\e^{-\mathsf{u}(h(0)-B(0))-\mathsf{v}(h(L)-B(L))}\left(\int^{L}_{0}\e^{-(h(x)-B(x))}\,\dif x\right)^{-\mathsf{u}-\mathsf{v}}\right].\label{eq:Duv-def}
\end{equation}
Here $h$ is any antiderivative of $u$, $\mathrm{E}_{B}$ is expectation
with respect to an auxiliary standard Brownian motion $B$ on $[0,L]$
with $B(0)=0$, and $\mathfrak{Z}_{\uu,\vv}$ is the deterministic
constant chosen such that the Radon--Nikodym derivative has expectation
$1$. For a more detailed discussion on the relevant literature, we
refer to the review \cite{corwin:2022:some} and the references cited
therein. Subsequent developments on this topic have included \cite{barraquand:le_doussal:2023:stationary,himwich:arXiv2024:stationary}.
In particular, \cite{himwich:arXiv2024:stationary} proved the explicit
description of the invariant measures for parameters satisfying $\uu+\vv>0$. 

Most recently, \cite{barraquand:corwin:yang:2024:stationary} studied
models of integrable polymers on the strip, including geometric last
passage percolation and the log-gamma polymer. By unraveling a two-layer
Gibbs measure structure and performing an analytic continuation, they
were able to describe explicit invariant measures for all $\uu,\vv\in\mathbb{R}$
that are discrete analogues of \zcref{eq:Duv-def}. It is natural
to conjecture that the open KPZ equation arises as the limit of the
log-gamma polymer in the intermediate disorder regime, and modulo
this convergence, they were able to justify \zcref{eq:Duv-def} for
the open KPZ equation for all $\uu,\vv\in\mathbb{R}$. 

As mentioned already, all of the previous works relied on the analysis
of integrable discrete models and taking limits to pass to the stochastic
PDE \zcref{eq:huv}. While the methods developed have uncovered rich
integrable structures, it is a natural and compelling question whether
one can develop a more direct approach based on the equation itself.
This problem is surprisingly difficult even for the equation without
boundary conditions. Ultimately, the difficulty lies in the singular
nature of the equation. This makes it quite challenging to proceed
as if one is dealing with a typical stochastic differential equation,
i.e.~by constructing the generator of the corresponding Markov process
and then checking directly through the generator that a certain measure
is invariant under the evolution. (See \cite{gubinelli:perkowski:2020:generator}
for some of the difficulties involved in working with the generator.)
Some progress on proving Gaussian invariant measures for KPZ on the
whole line or in the periodic setting, without relying on a discrete
integrable approximation, can be found in \cite{gu:quastel:arXiv2024:integration}.
This method can also be used to handle the open boundary condition
$\uu+\vv=0$, but seems to break down in the case when $\uu+\vv\ne0$.
In particular, we note that the approach taken in the present work
is completely orthogonal to the methods of \cite{gu:quastel:arXiv2024:integration}.

The purpose of the present work is to provide a stochastic analytic
proof of the characterization \zcref{eq:Duv-def} of the invariant
measure, which indeed is entirely different from all of the aforementioned
previous works concerning $(\uu,\vv)\ne(0,0)$. Through the analytic
lens, one may gain new understanding and insight into the structure
of the invariant measures, as well as the properties of the solutions
to \zcref{eq:huv}. In particular, one can better understand the height
growth near the boundaries, which is where all of the interesting
physics takes place that leads to the generally non-Gaussian measure
described in \zcref{eq:Duv-def}.

Our starting point is \cite{goncalves:perkowski:simon:2020:derivation},
which considers the case $\uu=\vv=0$, and hence concerns a Gaussian
invariant measure. Our strategy is based on the following four ingredients:
\begin{enumerate}
\item the time reversal property of the stationary Markov process when $\uu=\vv=0$,
studied in \cite{goncalves:perkowski:simon:2020:derivation};
\item using the Cameron--Martin theorem to treat the actual boundary condition
with $\uu,\vv\in\mathbb{R}$ as a (singular) perturbation of the noise;
\item applying Itô's formula to a certain functional of the solution to
the stochastic heat equation, which unravels a crucial martingale
structure associated with the Radon--Nikodym derivative in \zcref{eq:Duv-def},
modulo the understanding of the \emph{formal} nonlinear term $(\partial_{x}h_{\uu,\vv;t}(x))^{2}$;
and
\item using the theory of regularity structures \cite{hairer:2014:theory}
to analyze the behavior of the nonlinearity at the boundaries via
a local expansion.
\end{enumerate}
Our proof proceeds in a uniform manner for all $(\uu,\vv)\ne(0,0)$,
without using analytic continuation. We also do not use the two-layer
structure of the invariant measures, although we believe it would
be interesting to extend our method to consider the two-layer problem.

As expected, here is the main result:
\begin{thm}
\label{thm:mainthm}Let $\uu,\vv\in\mathbb{R}$. The invariant measure
$\mu_{\uu,\vv}$ for \zcref{eq:uuv} is absolutely continuous with
respect to the law $\mu_{0,0}$ of spatial white noise on $[0,L]$
with Radon--Nikodym derivative given in \zcref{eq:Duv-def}.
\end{thm}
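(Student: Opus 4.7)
The plan follows the four-ingredient strategy outlined above. The invariance of $\mu_{\uu,\vv} = \mathcal{Y}_{\uu,\vv}\cdot\mu_{0,0}$ is equivalent to showing that for every bounded test function $F$ and every $t\ge 0$,
\begin{equation*}
  \mathrm{E}_{\mu_{0,0}}\!\bigl[\mathcal{Y}_{\uu,\vv}(u_0)\, F(u_{\uu,\vv;t})\bigr] = \mathrm{E}_{\mu_{0,0}}\!\bigl[\mathcal{Y}_{\uu,\vv}(u_0)\, F(u_0)\bigr]
\end{equation*}
under the $(\uu,\vv)$-dynamics of \zcref{eq:uuv}. At $(\uu,\vv)=(0,0)$ this is the known invariance of white noise under stochastic Burgers \cite{goncalves:perkowski:simon:2020:derivation}; the plan is to deform that identity continuously to general $(\uu,\vv)$.

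First I would replace the inhomogeneous Neumann condition \zcref{eq:huv-Neumann} by homogeneous Neumann together with a bulk drift $\uu\rho^\eps_0 + \vv\rho^\eps_L$ supported in boundary layers of width $\eps$, and verify that solutions of the regularized equation converge to those of \zcref{eq:huv} as $\eps\to 0$. Girsanov's theorem then relates the path law of the regularized $(\uu,\vv)$-dynamics started from $\mu_{0,0}$ to that of the $(0,0)$-dynamics via the factor
\begin{equation*}
  M^\eps_t \coloneqq \exp\!\Bigl(\uu\!\int_0^t\!\langle\rho_0^\eps,\dif W_s\rangle + \vv\!\int_0^t\!\langle\rho_L^\eps,\dif W_s\rangle - \tfrac12\!\int_0^t\!\|\uu\rho_0^\eps+\vv\rho_L^\eps\|^2\,\dif s\Bigr).
\end{equation*}
Substituting the equation for $\dif W_s$ rewrites the stochastic integrals as the boundary height increments $\uu(h_t(0)-h_0(0)) + \vv(h_t(L)-h_0(L))$, plus time integrals of $\tfrac12\Delta h_s$ and of the nonlinearity $\tfrac12(\partial_x h_s)^2$ smeared against $\rho_0^\eps,\rho_L^\eps$.

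Next I would apply Itô's formula to the integrand defining $\mathcal{Y}_{\uu,\vv}$ in \zcref{eq:Duv-def}, evaluated along the trajectory $t\mapsto h_t$. The role of the auxiliary Brownian motion $B$ becomes transparent here: averaging over $B$ produces a dual heat kernel, so that the Laplacian contribution $\tfrac12\Delta h_s$ converts, via integration by parts in $x$, into (a) boundary terms that exactly cancel those appearing in $\log M^\eps_t$ and (b) volume terms whose Itô correction matches the quadratic-variation term in $\log M^\eps_t$. Time-reversal symmetry of the stationary $(0,0)$-dynamics (the Sinai-type symmetry combining time reversal with the spatial reflection $u\mapsto -u$, cf.~\cite{goncalves:perkowski:simon:2020:derivation}) is then used to recast the resulting identity as a martingale statement on the appropriate filtration. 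What is left over is a single residual term: a space--time integral of the boundary-localised KPZ nonlinearity, whose vanishing as $\eps\to 0$ would close the martingale identity.

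The main obstacle is precisely this last step: identifying the limit of
\begin{equation*}
  \int_0^t\!\int_0^L \bigl(\uu\rho_0^\eps(x) + \vv\rho_L^\eps(x)\bigr)\bigl(\partial_x h_{\uu,\vv;s}(x)\bigr)^2\,\dif x\,\dif s
\end{equation*}
as $\eps\to 0$. Since $\partial_x h$ has only distributional regularity, the pointwise square is ill-defined and must be renormalized. I would use the theory of regularity structures \cite{hairer:2014:theory}, together with a boundary-adapted BPHZ renormalization (for which the preamble's extensive symbol and tree machinery is developed), to establish a central limit theorem: after subtracting the diverging Wick constant (absorbed into $\mathfrak{Z}_{\uu,\vv}$), the residual integral converges in probability to zero uniformly on compact time intervals. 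Once this convergence is available, the martingale identity closes and the invariance of $\mu_{\uu,\vv}$ follows uniformly in $(\uu,\vv)\in\mathbb{R}^{2}$, without any appeal to analytic continuation or integrable discrete approximations.
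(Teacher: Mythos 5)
Your architecture matches the paper's: boundary potential regularization, Cameron--Martin/Girsanov, Itô's formula on the $\mathcal{Y}_{\uu,\vv}$ functional with Gaussian integration by parts over the auxiliary Brownian motion, time reversal of the stationary $(0,0)$-dynamics, and regularity structures for the boundary-localised nonlinearity. But your final step contains a genuine error that would sink the argument: you claim that, after subtracting a divergent Wick constant, the residual $\int_0^t\langle \uu\rho_0^\eps+\vv\rho_L^\eps,(\partial_x h_s)^2\rangle\,\dif s$ converges in probability to zero. It does not. The correct statement (\zcref{prop:boundaryterm}) is that this time-integrated boundary flux converges \emph{in law} to a nondegenerate Gaussian $\mathcal{N}(\mu T,\sigma^2 T)$ that is \emph{independent} of the driving noise — a new source of randomness, which is why only convergence in law (established via the Peccati--Tudor fourth moment theorem) is possible, not convergence in probability. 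Moreover the deterministic part of the limit is finite and nonzero: besides the genuinely divergent renormalization, the surviving contributions include $-\tfrac{T}{6}(\uu^3+\vv^3)$ from the purely deterministic boundary-potential tree and $-\tfrac{T}{2}(\uu^2+\vv^2)V_\psi+\tfrac{T}{24}(\uu+\vv)$ from third- and fourth-chaos terms (see \zcref{tab:the-terms-1}); these are $T$-linear and cannot be absorbed into the normalization constant $\mathfrak{Z}_{\uu,\vv}$, which is independent of $T$.

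This is not a technicality but the crux of the theorem. The Itô/integration-by-parts computation (\zcref{prop:DtildeMG}) produces a martingale only after multiplying $\tilde{\mathcal{Y}}_{\uu,\vv}(\hat h_{\uu,\vv;\hat t})$ by the drift factor $\e^{((\uu^3+\vv^3)/6-(\uu+\vv)/24)\hat t}$; this drift must be cancelled by the exponential moment $\mathbb{E}\,\e^{-\hat\Upsilon_{\uu,\vv;0,T}}=\e^{(\uu^3+\vv^3)T/6}$ of the limiting Gaussian boundary flux. If the flux vanished in the limit, as you assert, the identity would reduce to $\mathcal{Y}_{\uu,\vv}(\hat u_0)=\e^{\alpha T}\,\mathbb{E}[\cdots]$ with an uncompensated exponential in $T$, and invariance would fail for all $(\uu,\vv)$ outside a degenerate set. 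A further subtlety you would need to confront is that both $\mu$ and $\sigma^2$ individually depend on the choice of mollifier $\psi$, and only the combination $-\mu+\tfrac12\sigma^2$ is mollifier-independent; identifying which finitely many trees in the local expansion contribute to the mean versus the variance, and verifying this cancellation, is where the bulk of the regularity-structures work (Sections~\ref{sec:Explicit-calculations} and~\ref{sec:BPHZ}) actually goes.
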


\subsection{\label{subsec:Our-method}Our method}

In this section, we explain on a heuristic level the main ideas in
the proof. We omit the subscript $\uu,\vv$ here to simplify the notation,
writing $h=h_{\uu,\vv}$.

The starting point of our approach is to view \zcref{eq:huv} as a
perturbation of the same equation with $\uu=\vv=0$. To see why this
is possible, we note that, for the standard heat equation $\partial_{t}f=\frac{1}{2}\Delta f$
on $\mathbb{R}\times[0,L]$ with Neumann boundary conditions $\partial_{x}f(0)=\uu$,
$\partial_{x}f(L)=-\vv$, one can check through an integration by
parts that the even extension of $f$ solves the equation $\partial_{t}f=\frac{1}{2}\Delta f-\uu\delta_{0}-\vv\delta_{L}$
on $\mathbb{R}\times[-L,L]$ with periodic boundary conditions. In
other words, the inhomogeneous boundary condition in \zcref{eq:huv-Neumann}
can be interpreted as a Dirac forcing on the boundary. Thus, on a
formal level, one can rewrite \zcref{eq:huv} as
\begin{equation}
\dif h_{t}(x)=\frac{1}{2}\left(\Delta h_{t}(x)+(\partial_{x}h_{t}(x))^{2}\right)\dif t+\dif W_{t}(x)-\uu\delta_{0}-\vv\delta_{L},\qquad t\in\mathbb{R},x\in\mathbb{R}/(2L\mathbb{Z}),\label{eq:hdirac}
\end{equation}
where $\dif W_{t}(x)$ is extended evenly from $[0,L]$ to $\mathbb{R}/(2L\mathbb{Z})$.
With the above equation, the idea is to change the underlying probability
measure so that, under the new measure, the white noise has the law
of $\dif W_{t}(x)-\uu\delta_{0}-\vv\delta_{L}$. Of course, the additional
forcing term $-\uu\delta_{0}-\vv\delta_{L}$ does not live in the
Cameron--Martin space associated with the white noise, so one cannot
really view the effect of the boundary conditions as a change of measure.
Nevertheless, we regard it as a ``singular'' change of measure,
in a sense that we now make precise.

As usual, we proceed through an approximation. We let $\varphi^{\eps}_{\uu,\vv}$
be an $\eps$-approximation of the singular forcing $-\uu\delta_{0}-\vv\delta_{L}$
and then consider the equation \zcref{eq:hdirac} with $-\uu\delta_{0}-\vv\delta_{L}$
replaced by $\varphi^{\eps}_{\uu,\vv}$. Let $\mathcal{Q}^{\eps}_{\uu,\vv;0,T}$
be the Radon--Nikodym derivative associated with the change of measure
$\dif W_{t}(x)\mapsto\dif W_{t}(x)+\varphi^{\eps}_{\uu,\vv}$, restricted
to a fixed time interval $[0,T]$:
\begin{equation}
\mathcal{Q}^{\eps}_{\uu,\vv;0,T}\coloneqq\exp\left\{ \int^{T}_{0}\int^{L}_{0}\varphi^{\eps}_{\uu,\vv}(y)\,\dif W_{t}(y)-\frac{1}{2}T\|\varphi^{\eps}_{\uu,\vv}\|^{2}_{L^{2}([0,L])}\right\} .\label{eq:defRN}
\end{equation}
In this way, we simply consider the solution to \zcref{eq:huv} with
$\uu=\vv=0$ except that when computing statistical quantities for
general $\uu,\vv\in\mathbb{R}$, the law of the white noise forcing
needs to be tilted by $\mathcal{Q}^{\eps}_{\uu,\vv;0,T}$. In other
words, we stick to the equation with homogeneous boundary conditions
and incorporate all of the statistical information induced by the
inhomogeneous boundary conditions into the Radon--Nikodym derivative
\zcref{eq:defRN}.

For any $\uu,\vv\in\mathbb{R}$, to prove that the measure $\mu_{\uu,\vv}$
given by \zcref{eq:Duv-def} is invariant under the stochastic Burgers
equation, it is enough to show that, for any $T>0$ and a large class
of test functions $F$, we have
\begin{equation}
\mathbb{E}[\mathcal{Y}_{\uu,\vv}(u_{0})F(u_{0})]=\lim_{\eps\to0}\mathbb{E}[\mathcal{Y}_{\uu,\vv}(u_{0})F(u_{T})\mathcal{Q}^{\eps}_{\uu,\vv;0,T}],\label{eq:9111}
\end{equation}
where, under $\mathbb{E}$, $u_{0}$ is a spatial white noise and
$u_{t}=\partial_{x}h_{t}$ solves \zcref{eq:uuv} with $\uu=\vv=0$.
Since the spatial white noise is invariant in the case $\uu=\vv=0$
as proved in \cite{goncalves:perkowski:simon:2020:derivation}, the
above equation can be rewritten as
\[
\mathbb{E}[\mathcal{Y}_{\uu,\vv}(u_{T})F(u_{T})]=\lim_{\eps\to0}\mathbb{E}[\mathcal{Y}_{\uu,\vv}(u_{0})F(u_{T})\mathcal{Q}^{\eps}_{\uu,\vv;0,T}]=\lim_{\eps\to0}\mathbb{E}\left[\mathbb{E}[\mathcal{Y}_{\uu,\vv}(u_{0})\mathcal{Q}^{\eps}_{\uu,\vv;0,T}\mid u_{T}]F(u_{T})\right],
\]
where, unusually, the inner conditional expectation on the right side
is taken with respect to the ``future information'' $u_{T}$. Since
$F$ is an arbitrary test function, this is equivalent to proving
the following relation:
\begin{equation}
\mathcal{Y}_{\uu,\vv}(u_{T})=\lim_{\eps\to0}\mathbb{E}[\mathcal{Y}_{\uu,\vv}(u_{0})\mathcal{Q}^{\eps}_{\uu,\vv;0,T}\mid u_{T}].\label{eq:key911}
\end{equation}

At this stage, we use another important feature of the case $\uu=\vv=0$:
the solution to \zcref{eq:uuv} at stationarity satisfies a type of
time-reversal skew symmetry. Precisely, for fixed $T>0$, the time-reversal
$\hat{u}_{\hat{t}}\coloneqq u_{T-\hat{t}}$ solves the same equation
with the opposite sign in front of the nonlinear term and a different
space-time white noise:
\begin{equation}
\dif\hat{u}_{\hat{t}}(x)=\frac{1}{2}\left(\Delta\hat{u}_{\hat{t}}(x)-\partial_{x}(\hat{u}^{2}_{\hat{t}})(x)\right)\dif t+\partial_{x}\dif\hat{W}_{\hat{t}}(x),\label{eq:backu}
\end{equation}
where $\dif\hat{W}_{\hat{t}}(x)$ is another space-time white noise
that is correlated with $\dif W_{t}(x)$ in a rather complicated way.
Since the conditional expectation in \zcref{eq:key911} is taken with
respect to the future, it is natural to rewrite \zcref{eq:key911}
in terms of the backward solution $(\hat{u}_{\hat{t}})$:
\begin{equation}
\mathcal{Y}_{\uu,\vv}(\hat{u}_{0})=\lim_{\eps\to0}\mathbb{E}[\mathcal{Y}_{\uu,\vv}(\hat{u}_{T})\mathcal{Q}^{\eps}_{\uu,\vv;0,T}\mid\hat{u}_{0}].\label{eq:key1911}
\end{equation}

The immediate difficulty arising from the above expression is that
the Radon--Nikodym derivative $\mathcal{Q}_{\uu,\vv;0,T}$ is expressed
in terms of the forward noise $\dif W_{t}(x)$, with which it is quite
challenging to compute the conditional expectation given the future,
since $(\dif W_{t}(x))_{t}$ is not adapted to the backward filtration.
However, we will be able to rewrite $\mathcal{Q}^{\eps}_{\uu,\vv;0,T}$
in terms of the backward solution $(\hat{u}_{\hat{t}})_{\hat{t}}$
by comparing the two equations \zcref{eq:uuv-eqn,eq:backu}. Thus,
it is not hard to imagine that to prove \zcref{eq:key1911}, it will
suffice to show that a certain functional of the backward noise and
the solution is a martingale with respect to the backward filtration.
To illustrate the main ideas and discuss the difficulties, in the
next section we consider a toy example in which a similar strategy
can be implemented.

\subsubsection{A toy example}

We consider the one-dimensional SDE 
\[
\dif X_{t}=-V'(X_{t})\dif t+\dif B_{t},
\]
where $(B_{t})$ is a standard Brownian motion and $V$ is a smooth
potential that grows rapidly at infinity. It is well-known that the
Markov process $(X_{t})_{t}$ has a unique invariant measure with
density 
\[
p_{0}(x)=\mathfrak{Z}^{-1}_{0}\e^{-2V(x)},
\]
where $\mathfrak{Z}$ is the normalization constant. Now we suppose
that we perturb the dynamics by adding an additional drift $\theta\in\mathbb{R}$
to the Brownian motion: $B_{t}\mapsto B_{t}+\theta t$. The goal is
to understand the invariant measure for these modified dynamics.

The usual approach is to absorb the drift into the potential, writing
the new dynamics as 
\[
\dif X_{t}=-(V'(X_{t})-\theta)\dif t+\dif B_{t}.
\]
This new dynamics has a unique invariant measure with new density
\[
p_{\theta}(x)\coloneqq\mathfrak{Z}^{-1}_{\theta}\e^{-2(V(x)-\theta x)}.
\]
The new invariant measure is absolutely continuous with respect to
the original one, and the Radon--Nikodym derivative is given by
\begin{equation}
\mathcal{Y}_{\theta}(x)\coloneqq\frac{\mathfrak{Z}_{0}}{\mathfrak{Z}_{\theta}}\e^{2\theta x}.\label{eq:Ytheta}
\end{equation}
All of this is classical and well-known, but let us try to take a
more complicated approach which will illustrate the strategy in the
proof of \zcref{thm:mainthm}.

To show that the Radon--Nikodym derivative given by $\mathcal{Y}_{\theta}$
leads to the invariant measure of the perturbed dynamics, it is in
fact equivalent to show that, for the unperturbed dynamics $(X_{t})_{t}$,
any $T>0$, and any bounded function $F\colon\mathbb{R}\to\mathbb{R}$,
it holds that
\begin{equation}
\mathbb{E}[\mathcal{Y}_{\theta}(X_{0})F(X_{0})]=\mathbb{E}[\mathcal{Y}_{\theta}(X_{0})\mathcal{Q}_{\theta;0,T}F(X_{T})],\label{eq:9112}
\end{equation}
where $X_{0}$ is sampled from the density $p_{0}$ and the Radon--Nikodym
derivative induced by the change $B_{t}\mapsto B_{t}+\theta t$ is
given by 
\[
\mathcal{Q}_{\theta;0,T}\coloneqq\exp\left\{ \theta B_{T}-\frac{1}{2}\theta^{2}T\right\} .
\]
Equation \zcref{eq:9112} should be compared with \zcref{eq:9111},
which is somewhat more complicated in that a limiting procedure must
be used to deal with the ``singularity'' of the change of measure
in that case. Again proceeding similarly to the above, we see that
\[
\mathbb{E}[\mathcal{Y}_{\theta}(X_{0})F(X_{0})]=\mathbb{E}[\mathcal{Y}_{\theta}(X_{T})F(X_{T})],
\]
and by taking conditional expectation with respect to the future,
\zcref{eq:9112} reduces to 
\begin{equation}
\mathcal{Y}_{\theta}(X_{T})=\mathbb{E}[\mathcal{Y}_{\theta}(X_{0})\mathcal{Q}_{\theta;0,T}\mid X_{T}],\label{eq:keytoy}
\end{equation}
which corresponds to \zcref{eq:key911}. Now, in this highly simplified
case, $(X_{t})_{t}$ is reversible, and indeed the time-reversed process
$(\hat{X}_{\hat{t}})_{\hat{t}}\coloneqq(X_{T-\hat{t}})_{\hat{t}}$
satisfies 
\[
\dif\hat{X}_{\hat{t}}=-V'(\hat{X}_{\hat{t}})\dif\hat{t}+\dif\hat{B}_{t}
\]
for another standard Brownian motion $(\hat{B}_{\hat{t}})_{\hat{t}}$.
By combining the two equations 
\begin{equation}
X_{T}=X_{0}-\int^{T}_{0}V'(X_{s})\,\dif s+B_{T}\qquad\text{and}\qquad\hat{X}_{T}=\hat{X}_{0}-\int^{T}_{0}V'(\hat{X}_{\hat{s}})\,\dif\hat{s}+\hat{B}_{T},\label{e.toyequation}
\end{equation}
we express $B_{T}$, which appears in the expression of $\mathcal{Q}_{\theta;0,T}$,
in terms of $\hat{B}$ and $\hat{X}$: 
\begin{equation}
B_{T}=\hat{B}_{T}-2(\hat{X}_{T}-\hat{X}_{0}).\label{e.BhatB}
\end{equation}
Therefore, \zcref{eq:keytoy} can be further rewritten as 
\begin{equation}
\mathcal{Y}_{\theta}(\hat{X}_{0})=\mathbb{E}[\mathcal{Y}_{\theta}(\hat{X}_{T})\e^{\theta\hat{B}_{T}-2\theta(\hat{X}_{T}-\hat{X}_{0})-\tfrac{1}{2}\theta^{2}T}\mid\hat{X}_{0}].\label{eq:oldkey}
\end{equation}
That is, to check that the $\mathcal{Y}_{\theta}$ is the desired
change of measure, we need to show that the identity \zcref{eq:oldkey}
holds. But if we plug in the definition \zcref{eq:Ytheta} of $\mathcal{Y}_{\theta}$
into \zcref{eq:oldkey}, the desired identity \zcref{eq:oldkey} simply
reduces to the elementary fact that 
\[
1=\mathbb{E}[\e^{\theta\hat{B}_{T}-\tfrac{1}{2}\theta^{2}T}\mid\hat{X}_{0}].
\]

To summarize, by taking an apparently more complicated approach, we
have reached the same conclusion that the invariant measure for the
perturbed dynamics is absolutely continuous with respect to the old
one, with the Radon-Nikodym derivative given by \eqref{eq:Ytheta}.
We highlight two aspects of the preceding argument, as they will later
constitute the principal difficulties in the proof of \zcref{thm:mainthm}:
\begin{enumerate}
\item As one may have noticed, in this toy example, a key step is to express
the change of measure factor $\mathcal{Q}_{\theta;0,T}$ appearing
in \zcref{eq:keytoy} in terms of the backward noise and solution,
so that one can take the conditional expectation given the future.
This was done through combining the two equations in \zcref{e.toyequation}.
In this example, the unperturbed dynamics is reversible, so when one
subtracts one equation from the other to obtain \zcref{e.BhatB},
the drift does not appear. This is not the case of the open KPZ equation:
for the unperturbed dynamics with $\uu=\vv=0$, we only have the time-reversal
\emph{skew} symmetry, so if we combine the two equations for $u$
and $\hat{u}$ to express $(\partial_{x}\dif W_{t}(x))$ in terms
of $(\partial_{x}\dif\hat{W}_{\hat{t}}(x))$ and the backward solution,
part of the drift inevitably appears. Indeed, this creates the main
technical difficulty that we need to overcome in the paper.
\item As one may not have noticed, what matters in the proof of \zcref{eq:oldkey}
is actually not to compute explicitly the conditional expectation
but only to uncover a martingale structure. That is, we need to show
that after rewriting $\mathcal{Q}_{\theta;0,T}$ in terms of $(\hat{B}_{\hat{t}})$
and $(\hat{X}_{\hat{t}})$, the process
\[
\mathcal{Y}_{\theta}(X_{0})\mathcal{Q}_{\theta;0,T}=\mathcal{Y}_{\theta}(\hat{X}_{T})\e^{\theta\hat{B}_{T}-2\theta(\hat{X}_{T}-\hat{X}_{0})-\tfrac{1}{2}\theta^{2}T}
\]
is a martingale in $T$ in the backward filtration, provided that
$\mathcal{Y}_{\theta}$ is chosen as in \zcref{eq:Ytheta}. In this
case, the martingale is simple enough to be a geometric Brownian motion.
The case of the open KPZ equation is substantially more involved.
In \zcref{eq:key911}, after rewriting $\mathcal{Q}^{\eps}_{\uu,\vv;0,T}$
in terms of the backward noise and solutions, we would obtain a complicated
expression involving $\mathcal{Y}_{\uu,\vv}(\hat{u}_{T})$, the backward
noise $\dif\hat{W}_{t}(x)$, and the backward solution $\hat{u}_{t}(x)$.
The fact that there exists an underlying martingale so that \zcref{eq:key1911}
holds relies, on the one hand, on the form of $\mathcal{Y}_{\uu,\vv}$
in \zcref{eq:Duv-def}, and on the other hand, the ``singular''
behavior of the nonlinear term at the boundary.
\end{enumerate}
Having outlined the main difficulties in implementing the above argument
to prove \zcref{eq:key1911}, we provide further details in the next
section.

\subsubsection{Key steps in the proof}

\label{s.keysteps}In this section, we sketch the main steps in the
proof of \zcref{eq:key1911}. First, to rewrite the change of measure
$\mathcal{Q}^{\eps}_{\uu,\vv;0,T}$ in terms of the backward noise
$\dif\hat{W}_{t}(x)$, it is easier to compare the forward and backward
KPZ equations (rather than the Burgers equation). If we fix $T>0$
and define $\hat{h}_{\hat{t}}=h_{T-\hat{t}}$, then we obtain
\begin{subequations}
\label{e.hhath}
\begin{align*}
\dif h_{t}(x) & =\frac{1}{2}\big(\Delta h_{t}(x)+(\partial_{x}h_{t}(x))^{2}\big)\dif t+\dif W_{t}(x),\\
\dif\hat{h}_{\hat{t}}(x) & =\frac{1}{2}\big(\Delta\hat{h}_{\hat{t}}(x)-(\partial_{x}\hat{h}_{\hat{t}}(x))^{2}\big)\dif\hat{t}+\dif\hat{W}_{\hat{t}}(x).
\end{align*}
\end{subequations}
The above are only formal expressions. Let us, however, ignore this
technical issue at this stage, and proceed by writing the two equations
in their integral forms and subtracting one from the other, as in
\zcref{e.toyequation}. With $\langle\cdot,\cdot\rangle$ denoting
the $L^{2}([0,L])$ inner product, we obtain 
\begin{equation}
\int^{T}_{0}\langle\varphi^{\eps}_{\uu,\vv},\dif W_{t}\rangle=-2\langle\hat{h}_{T},\varphi^{\eps}_{\uu,\vv}\rangle+2\langle\hat{h}_{0},\varphi^{\eps}_{\uu,\vv}\rangle-\int^{T}_{0}\langle(\partial_{x}\hat{h}_{\hat{t}})^{2},\varphi^{\eps}_{\uu,\vv}\rangle\dif\hat{t}+\int^{T}_{0}\langle\varphi^{\eps}_{\uu,\vv},\dif\hat{W}_{t}\rangle,\label{e.917integral}
\end{equation}
which is a much more complicated version of \zcref{e.BhatB} in the
toy example.

Since $\mathcal{Q}^{\eps}_{\uu,\vv;0,T}$ is given by \zcref{eq:defRN},
proving \zcref{thm:mainthm} reduces to showing that 
\begin{equation}
\begin{aligned}\mathcal{Y}_{\uu,\vv}(\hat{u}_{0}) & =\lim_{\eps\to0}\mathbb{E}[\mathcal{Y}_{\uu,\vv}(\hat{u}_{T})\mathcal{Q}^{\eps}_{\uu,\vv;0,T}\mid\hat{u}_{0}]\\
 & =\lim_{\eps\to0}\mathbb{E}\left[\mathcal{Y}_{\uu,\vv}(\hat{u}_{T})\e^{-2\langle\hat{h}_{T},\varphi^{\eps}_{\uu,\vv}\rangle}\e^{2\langle\hat{h}_{0},\varphi^{\eps}_{\uu,\vv}\rangle}\e^{-\int^{T}_{0}\langle(\partial_{x}\hat{h}_{\hat{t}})^{2},\varphi^{\eps}_{\uu,\vv}\rangle\,\dif\hat{t}}\e^{\int^{T}_{0}\langle\varphi^{\eps}_{\uu,\vv},\dif\hat{W}_{\hat{t}}\rangle-\tfrac{1}{2}T\|\varphi^{\eps}_{\uu,\vv}\|^{2}_{L^{2}([0,L])}}\,\bigg|\,\hat{u}_{0}\right].
\end{aligned}
\label{e.9121}
\end{equation}
There are multiple exponential factors inside the expectation on the
right side. The one containing the nonlinear term is the most difficult
to analyze. Recall that $\varphi^{\eps}_{\uu,\vv}$ approximates $-\uu\delta_{0}-\vv\delta_{L}$
in the limit when these objects are viewed as functions/distributions
on $\mathbb{R}/(2L\mathbb{Z})$, so we have
\[
\langle\hat{h}_{\hat{t}},\varphi^{\eps}_{\uu,\vv}\rangle\xrightarrow[\eps\to0]{}-\tfrac{1}{2}\uu\hat{h}_{\hat{t}}(0)-\tfrac{1}{2}\vv\hat{h}_{\hat{t}}(0)\quad\quad\mbox{ for }t=0,T.
\]
The factors of $1/2$ arise because $\langle\cdot,\cdot\rangle$ is
the inner product on $L^{2}([0,L])$, i.e. half of $\mathbb{R}/(2L\mathbb{Z})$.
Taking the limit in the first two exponential factors on the right
side of \zcref{e.9121}, we see that \zcref{e.9121} is equivalent
to 
\begin{equation}
\mathcal{Y}_{\uu,\vv}(\hat{u}_{0})\e^{\uu\hat{h}_{0}(0)+\vv\hat{h}_{0}(L)}=\lim_{\eps\to0}\mathbb{E}\left[\mathcal{Y}_{\uu,\vv}(\hat{u}_{T})\e^{\uu\hat{h}_{T}(0)+\vv\hat{h}_{T}(L)}\e^{-\int^{T}_{0}\langle(\partial_{x}\hat{h}_{\hat{t}})^{2},\varphi^{\eps}_{\uu,\vv}\rangle\,\dif\hat{t}}\hat{\mathcal{Q}}^{\eps}_{\uu,\vv;0,T}\,\bigg|\,\hat{u}_{0}\right],\label{e.9152}
\end{equation}
where we defined the new change of measure $\hat{\mathcal{Q}}^{\eps}_{\uu,\vv;0,T}\coloneqq\e^{\int^{T}_{0}\langle\varphi^{\eps}_{\uu,\vv},\dif\hat{W}_{\hat{t}}\rangle-\tfrac{1}{2}T\|\varphi^{\eps}_{\uu,\vv}\|^{2}_{L^{2}([0,L])}}$.
In the above expression, the conditional expectation is straightforward
to evaluate: given the initial data $\hat{u}_{0}$ for the backward
Burgers equation, we construct $\hat{h}_{0}$ as the initial data
for the backward KPZ equation and solve 
\begin{equation}
\dif\hat{h}_{t}(x)=\frac{1}{2}\big(\Delta\hat{h}_{t}(x)-(\partial_{x}\hat{h}_{t}(x))^{2}\big)\dif t+\dif\hat{W}_{t}(x)+\varphi^{\eps}_{\uu,\vv},\label{e.neweqhath}
\end{equation}
and the goal is to show that
\begin{equation}
\mathcal{Y}_{\uu,\vv}(\hat{u}_{0})\e^{\uu\hat{h}_{0}(0)+\vv\hat{h}_{0}(L)}=\lim_{\eps\to0}\mathbb{E}\left[\mathcal{Y}_{\uu,\vv}(\hat{u}_{T})\e^{\uu\hat{h}_{T}(0)+\vv\hat{h}_{T}(L)}\e^{-\int^{T}_{0}\langle(\partial_{x}\hat{h}_{t})^{2},\varphi^{\eps}_{\uu,\vv}\rangle\dif t}\,\bigg|\,\hat{u}_{0}\right].\label{e.9151}
\end{equation}
In other words, we have replaced the Radon--Nikodym derivative $\hat{\mathcal{Q}}^{\eps}_{\uu,\vv;0,T}$
in \zcref{e.9152} with the supposition that $\hat{h}$ solves the
backward KPZ equation with a perturbed noise: $\dif\hat{W}_{\hat{t}}(x)\mapsto\dif\hat{W}_{\hat{t}}(x)+\varphi^{\eps}_{\uu,\vv}$.
In this way, \zcref{e.9152} reduces to \zcref{e.9151}. The above
equation should be compared to \zcref{eq:oldkey}. The extra term
$\e^{-\int^{T}_{0}\langle(\partial_{x}\hat{h}_{\hat{t}})^{2},\varphi^{\eps}_{\uu,\vv}\rangle\,\dif\hat{t}}$
in \zcref{e.9151} arises from the non-reversibility of the dynamics.

Two results then combine to complete the proof of \zcref{e.9151}.
First, with $\hat{h}$ solving \zcref{e.neweqhath}, we have 
\begin{equation}
\int^{T}_{0}\langle(\partial_{x}\hat{h}_{t})^{2},\varphi^{\eps}_{\uu,\vv}\rangle\,\dif t\xrightarrow[\eps\downarrow0]{\mathrm{law}}\mathcal{N}(\mu T,\sigma^{2}T),\label{e.paininthe}
\end{equation}
with the mean $\mu$ and the variance $\sigma^{2}$ depending explicitly
on $\uu,\vv$. Furthermore, the limiting Gaussian random variable
is independent of everything else, including the backward noise $\dif\hat{W}_{t}(x)$
and the initial data $\hat{u}_{0}$. (The last statement should be
interpreted in terms of a joint convergence in law of the left side
of \zcref{e.paininthe} and the noise.) Second,
\begin{equation}
\text{the process }\left(\mathcal{Y}_{\uu,\vv}(\hat{u}_{T})\e^{\uu\hat{h}_{T}(0)+\vv\hat{h}_{T}(L)}\e^{-\mu T+\tfrac{1}{2}\sigma^{2}T}\right)_{T\geq0}\mbox{ is a martingale in the backward filtration. }\label{e.915ma}
\end{equation}

The factor $\e^{-\mu T+\tfrac{1}{2}\sigma^{2}T}$ in \zcref{e.915ma}
simply comes from the convergence in \zcref{e.paininthe}, together
with taking the expectation of $\e^{-\mathcal{N}(\mu T,\sigma^{2}T)}$.
We view the term $\int^{T}_{0}\langle(\partial_{x}\hat{h}_{t})^{2},\varphi^{\eps}_{\uu,\vv}\rangle\,\dif t$
as the integrated boundary flux, drawing an analogy with the model
of ASEP. Technically, the reason it shows up in our analysis is because
we have treated the boundary condition as a singular boundary potential
and incorporated its effects into the noise through the Cameron--Martin
theorem. From a physical perspective, it is also an extremely interesting
quantity because, if we make the analogue between $\hat{u}=\partial_{x}\hat{h}$
and the particle density in ASEP, the term $\langle(\partial_{x}\hat{h}_{t})^{2},\varphi^{\eps}_{\uu,\vv}\rangle$
is a linear combination of the particle fluxes at the two boundaries.
The above result provides a very detailed description of the particle
behaviors near the boundaries, showing that the time-integrated flux
in a thin boundary layer is approximately a Gaussian random variable,
independent from the ``bulk,'' and the mean and the variance depend
explicitly on the boundary parameters $\uu,\vv$. As a matter of fact,
with some extra effort, one may attempt to show a process-level convergence
to a drifted Brownian motion in \zcref{e.paininthe}.

The proof of the martingale property in \zcref{e.915ma} is a relatively
straightforward application of Itô's formula, provided that we nail
down the precise value of $-\mu+\tfrac{1}{2}\sigma^{2}$. The main
technical difficulty of this whole paper lies in proving \zcref{e.paininthe}.
It is for this latter purpose that we use the theory of regularity
structures.

There are different ways to understand the Gaussianity coming out
of the boundary flux in \zcref{e.paininthe}. On a heuristic level,
one may say that $\hat{h}$ is supposed to satisfy the boundary conditions
$\partial_{x}\hat{h}\approx\uu,-\vv$ near $x\approx0,L$, which implies
that in some sense the low frequency modes of $\hat{h}$ is small
near the boundaries. On the other hand, for the high frequency modes,
the heat semigroup imposes a fast mixing in time which leads to the
Gaussian behavior in \zcref{e.paininthe}. The way we convinced ourselves
in the first place that there could be nontrivial Gaussian fluctuations
arising from the boundary flux was through a formal expansion. For
$\hat{h}$ solving \zcref{e.neweqhath} which is evenly extended to
$[-L,L]$, we study the first few terms in the formal expansion. With
$\mathcal{G}$ representing the solution operator for the standard
heat equation with a periodic boundary condition, we treat the nonlinear
term $-(\partial_{x}\hat{h}_{t}(x))^{2}$ as a perturbation and iterate
the mild formulation of \zcref{e.neweqhath} to obtain
\begin{subequations}
\label{e.formalex}
\begin{equation}
\hat{h}=(\mathcal{G}\dif\hat{W}+\mathcal{G}\varphi^{\eps}_{\uu,\vv})+\tfrac{1}{2}\mathcal{G}[\nabla(\mathcal{G}\dif\hat{W}+\mathcal{G}\varphi^{\eps}_{\uu,\vv})]^{2}+\ldots\label{eq:formalex-hhat}
\end{equation}
and
\begin{equation}
\begin{aligned}|\partial_{x}\hat{h}|^{2} & =|\nabla\mathcal{G}\dif\hat{W}+\nabla\mathcal{G}\varphi^{\eps}_{\uu,\vv}|^{2}+\tfrac{1}{4}|\nabla\mathcal{G}[\nabla(\mathcal{G}\dif\hat{W}+\mathcal{G}\varphi^{\eps}_{\uu,\vv})]^{2}|^{2}\\
 & \qquad+(\nabla\mathcal{G}\dif\hat{W}+\nabla\mathcal{G}\varphi^{\eps}_{\uu,\vv})\cdot\nabla\mathcal{G}[\nabla(\mathcal{G}\dif\hat{W}+\mathcal{G}\varphi^{\eps}_{\uu,\vv})]^{2}+\cdots.
\end{aligned}
\label{eq:formalex-nonlinearity}
\end{equation}
\end{subequations}

Note that in the above expansion of $|\partial_{x}\hat{h}|^{2}$,
we simply treat $\nabla\mathcal{G}\dif\hat{W}+\nabla\mathcal{G}\varphi^{\eps}_{\uu,\vv}$
and $\tfrac{1}{2}\nabla\mathcal{G}[\nabla(\mathcal{G}\dif\hat{W}+\mathcal{G}\varphi^{\eps}_{\uu,\vv})]^{2}$
as the first and the second term in the expansion of $\nabla\hat{h}$,
then we expand the square. One might guess that the term $|\nabla\mathcal{G}\varphi^{\eps}_{\uu,\vv}|^{2}$
is the major deterministic contribution and $|\nabla\mathcal{G}\dif\hat{W}|^{2}$,
interpreted in the Wick sense, is the major random contribution. Indeed,
a preliminary calculation shows that 
\begin{equation}
\int^{T}_{0}\langle|\nabla\mathcal{G}\dif\hat{W}|^{2}(t,\cdot),\varphi^{\eps}_{\uu,\vv}\rangle\dif t\xrightarrow[\eps\downarrow0]{\mathrm{law}}\mathcal{N}(0,\sigma^{2}T),\label{e.cltleading}
\end{equation}
which suggests that \zcref{e.paininthe} might hold. However, a closer
look at the convergence in \zcref{e.cltleading} reveals that the
limiting variance $\sigma^{2}$ depending on the choice of the mollifier
$\varphi^{\eps}_{\uu,\vv}$. Since the process 
\[
(\mathcal{Y}_{\uu,\vv}(\hat{u}_{T})\e^{\uu\hat{h}_{T}(0)+\vv\hat{h}_{T}(L)}\e^{-\mu T+\tfrac{1}{2}\sigma^{2}T})_{T\geq0}
\]
is expected to be a martingale, and the backward solutions $\hat{u},\hat{h}$
do not depend on the mollifier, the dependence of $\sigma^{2}$ on
the details of the mollifier suggests that the convergence in \zcref{e.paininthe}
is likely more subtle than one might have anticipated. Indeed, both
the mean $\mu$ and the variance $\sigma^{2}$ of the limiting Gaussian
depend on the choice of the mollifier $\varphi^{\eps}_{\uu,\vv}$,
but the sum $-\mu+\tfrac{1}{2}\sigma^{2}$ does not. This turns out
to be one of the main puzzles we need to figure out in this paper,
namely, in the expansion \zcref{e.formalex}, which terms contribute
to the mean $\mu$, which terms contribute to the variance $\sigma^{2}$,
and how they combine together so that $\e^{-\mu T+\tfrac{1}{2}\sigma^{2}T}$
is what we need to compensate $\mathcal{Y}_{\uu,\vv}(\hat{u}_{T})\e^{\uu\hat{h}_{T}(0)+\vv\hat{h}_{T}(L)}$
to make a martingale. At the end, we were able to show that $|\nabla\mathcal{G}\dif\hat{W}|^{2}$
is the only contributor to the limiting variance. On the other hand,
$|\nabla\mathcal{G}\varphi^{\eps}_{\uu,\vv}|^{2}$, together with
\emph{three} higher order terms, gives the desired mean; see \zcref{tab:the-terms-1}
below.

Now the problem reduces to justifying the expansion in \zcref{e.formalex}
and in particular to showing that the rest of the infinitely many
terms do not contribute in the convergence of \zcref{e.paininthe}.
Before even trying, one should first realize that \zcref{e.paininthe}
is only a formal expression as it was written and $|\partial_{x}\hat{h}|^{2}$
is merely a symbol appearing in the equation. The fact that only finitely
many terms in the expansion contribute to the integrated boundary
flux is by no means trivial, and, as a matter of fact, it relies crucially
on the symmetry embedded into the dynamics. Recall that after absorbing
the boundary condition into the singular boundary potential, we extended
the solution evenly and periodically so that $\hat{h}$ is even around
both $0$ and $L$. As a result, $\partial_{x}\hat{h}$ is odd around
$0$ and $L$, so if it were an actual continuous function, both $\partial_{x}\hat{h}$
and $|\partial_{x}\hat{h}|^{2}$ would be zero at the boundaries which
makes the integral $\int^{T}_{0}\langle(\partial_{x}\hat{h}_{\hat{t}})^{2},\varphi^{\eps}_{\uu,\vv}\rangle\,\dif\hat{t}$
vanish as $\eps\to0$. Since it is precisely the singularity of $\partial_{x}\hat{h}$
and $|\partial_{x}\hat{h}|^{2}$ that contributes to the integrated
boundary flux, one could imagine that, if the remainder in the formal
expansion \zcref{e.formalex} is a continuous function, the symmetry
may help in the plain way as ``the square of a continuous odd function
is small near the origin,'' using which one may show that the remainder
does not contribute in \zcref{e.paininthe}.

Rigorously justifying this type of expansion is, from the classical
perspective, highly nontrivial, since at some point adding more terms
in the expansion simply stops improving the regularity of the remainder.
This was precisely the obstacle confronted in the development of the
theory of singular SPDE \cite{hairer:2014:theory,gubinelli:imkeller:perkowski:2014:paracontrolled}.
Therefore, in this work, we use the theory of regularity structures
\cite{hairer:2014:theory}, which provides a very precise description
of the local behavior of solutions to singular SPDEs. This is well-suited
to our problem since we are indeed interested in the local behavior
of the solution near the boundary. In particular, it is the local
expansion of $(\partial_{x}\hat{h})^{2}$ that drives the convergence
in \zcref{e.paininthe}.\footnote{This quantity turns out to be far more challenging to study than $\partial_{x}\hat{h}$,
even in the $\uu=\vv=0$ case, for which the time-integral of the
latter object was shown to converge to a constant in \cites[Prop.~3.13]{goncalves:perkowski:simon:2020:derivation}.} The linchpin of the whole theory of regularity structures is the
reconstruction theorem, which on one hand is used to stitch together
many local expansions to form a Schwartz distribution and on the other
hand provides a very precise local expansion of the Schwartz distribution
at hand. It is the second aspect that plays a crucial role in our
analysis. Namely, with a well-developed solution theory for $\hat{h}$
in the framework of regularity structures, we have a local expansion
of $\partial_{x}\hat{h}$ in a space of modeled distributions, and
this leads to a local expansion of $(\partial_{x}\hat{h})^{2}$ in
a (different) space of modeled distributions. Applying the reconstruction
theorem to the modeled distribution corresponding to $(\partial_{x}\hat{h})^{2}$,
gives an estimate on the remainder, which we can show to be small,
provided that the basepoint in the local expansion is chosen to be
at the boundary. In this case, certain Gubinelli derivatives are zero
due to the aforementioned symmetry. It turns out that this means that
only a finite number of explicitly computable terms remain in the
expansion.
\begin{rem}
\label{rem:why-not-add}Given these challenges in interpreting the
nonlinearity $(\partial_{x}\hat{h})^{2}$, the reader may reasonably
ask why, in \zcref{e.917integral}, we chose to subtract rather than
add the forward and backward equations. Indeed, adding the equations
would cancel the challenging nonlinear term, and leave just a similar
time integral of $\Delta\hat{h}$. This strategy has been exploited
extensively (via the so-called \emph{Itô trick}) in the energy solutions
literature; see e.g. \cite{gubinelli:perkowski:2018:energy,goncalves:perkowski:simon:2020:derivation}.
For our purposes, however, it seems better to keep the nonlinearity
term and cancel the Laplacian term. This is because the boundary conditions
do not really tell us anything about $\Delta\hat{h}$, and we do not
believe that this term would have a universal behavior at the boundary
in the same way that $(\partial_{x}\hat{h})^{2}$ does. It seems that,
even though $(\partial_{x}\hat{h})^{2}$ is more challenging, it is
really the term we want to study. See \zcref{rem:whyweneedh} below
for another reflection of this issue.
\end{rem}

We have now outlined all the main ideas of the paper. We will provide
more detailed explanations as we work through the proof.

\subsection{Related work}

In this section we mention a few other related works on the study
of invariant measures for the KPZ-type equations. Broadly speaking,
the questions one may ask fall into two categories: (i) proving that
a certain explicit measure is invariant; and (ii) proving the existence
and uniqueness of the invariant measure, and further studying the
synchronization/one force one solution principle for the associated
random dynamical system. The above two questions are generally separate
from each other since they address different issues and require completely
different techniques. For example, the result presented in this paper
falls into category (i) and is quite different from the existence
and uniqueness results established in \cite{knizel:matetski:arXiv2022:strong,parekh:2022:ergodicity}.

In the case without boundaries and with spacetime white noise, the
Brownian invariance was first established in the seminal work \cite{MR1462228},
using the invariance of i.i.d.\ Bernoulli for ASEP. Several alternative
proofs have since been developed using different discrete approximations
\cites{sasamoto2009superdiffusivity}{funaki2015kpz}{gubinelli:perkowski:2017:kpz}{cannizzaro2018space},
and the main difficulty there lies in justifying the approximation
of the infinite dimensional SPDE by the corresponding finite dimensional
dynamics. Regarding the uniqueness of the invariant measure, the periodic
setting is much easier than the whole space setting, see \cite{hairermattingly,tommaso,gu:quastel:arXiv2024:integration}.
For the whole space case, the recent works \cite{janjigian2022ergodicity,dunlap2024viscous}
provide a complete characterization of invariant measures.

The aforementioned works concern solvable models, in the sense that
these models possess explicit invariant measures. For more general
models---for instance, equations driven by noise that is white in
time but colored in space with an arbitrary covariance function---one
generally does not expect the existence of any explicit invariant
measure. Nevertheless, existence and uniqueness can still be established
in certain cases. In the periodic setting, this essentially follows
from the classical work of Sinai \cite{sinai:1991:two}, while in
the non-compact setting the problem is significantly more difficult,
and we are aware of two works in this direction \cite{bakhtin2019thermodynamic,dunlap2021stationary}.
This line of research is closely related to the study of Busemann
functions and the coalescence of geodesics in the context of last-
and first-passage percolation \cite{janjigian:rassoul-agha:seppalainen:2023:geometry}.

As mentioned earlier, introducing boundary effects greatly complicates
the problem, in the sense that the questions in category (i) becomes
much more difficult. For the half-space KPZ, we refer to \cite{barraquand2023stationary}
for a more or less complete description of all possible invariant
measures (see Conjecture~1.5 there for the set of all extremal stationary
measures). Related works on the study of the invariant measure of
the open KPZ equation on a bounded interval have already been discussed
in the introduction, before the statement of the main result. In
the recent work \cite{contreras_hip:das:zitridis:2025:fluctuation},
the form of the invariant measure was used to study the rate of growth
of the height fluctuations of the open KPZ equation.

\subsection{Outline of the paper}

The conceptual strategy of the paper outlined in \zcref{s.keysteps}
is laid out rigorously in the first three sections of the paper. In
particular, we recall the solution theory of the open KPZ equation,
including the crucial time-reversal property, and introduce the boundary
potentials in \zcref{sec:solntheory}. (Some standard technical pieces
are relegated to \zcref{sec:Basic-properties-of}.) In \zcref{sec:Proof-of-the},
we give the proof of \zcref{thm:mainthm} (essentially a rigorous
version of the strategy already outlined), conditional on two key
statements: \zcref{prop:DtildeMG,prop:boundaryterm} on the behavior
of the semimartingale ``bulk'' term and the singular ``boundary''
term, respectively. The analysis of the semimartingale term is handled
via the Itô formula in \zcref{sec:It=0000F4-formula-and}.

The analysis of the singular boundary term is rather technical and
occupies the remainder of the paper, Sections~\ref{sec:Hairer-theory}
through~\ref{sec:BPHZ}. These sections are devoted to the proof
of \zcref{prop:boundaryterm}, which is indeed the only result of
these sections that is used in the proof of \zcref{thm:mainthm}.
In \zcref{sec:Hairer-theory} we recall the construction of the KPZ
solution using regularity structures from \cite{hairer:2013:solving,hairer:2014:theory,hairer:quastel:2018:class,friz:hairer:2020:course},
with adaptations to our setting of Neumann boundary conditions. In
\zcref{sec:Analysis-of-the}, we explain how to use the theory of
regularity structures to prove \zcref{prop:boundaryterm}, modulo
the finite number of stochastic calculations and estimates that are
necessary both to bound the regularity structure model and to compute
the contributions of these terms on the boundary. Sections~\ref{sec:Explicit-calculations}
and~\ref{sec:BPHZ} then contain the requisite analysis of the finite
number of relevant stochastic terms. In particular, in \zcref{sec:Explicit-calculations},
we compute the nonzero contributions of these terms to the KPZ nonlinearity,
and in \zcref{sec:BPHZ}, we perform the stochastic estimates necessary
to bound the model as well as to show that the remaining contributions
are small. The approach to bounding the model is rather more involved
than the approach used for the open KPZ equation via regularity structures
in \cite{gerencer:hairer:2019:singular} due to the fact that we need
to use the regularity expansion close the boundary; see the discussion
at the beginning of \zcref{sec:BPHZ} for details.

\subsection{Notation}

We let $\mathfrak{s}\coloneqq(2,1)$ denote the parabolic scaling
on $\mathbb{R}^{2}$. For $\alpha\in(0,1)$ and $U\subseteq\mathbb{R}^{2}$,
we define the parabolic Hölder norm
\begin{equation}
\|f\|_{\mathcal{C}^{\alpha}_{\mathfrak{s}}(U)}\coloneqq\sup_{(t,x)\in U}|f_{t}(x)|+\sup_{(t,x),(s,y)\in U}\frac{|f_{t}(x)-f_{s}(y)|}{|t-s|^{\alpha/2}+|x-y|^{\alpha}},\label{eq:parabolic-holder-norm}
\end{equation}
and define $\mathcal{C}^{\alpha}_{\mathfrak{s}}(U)$ as the closure
of $\mathcal{C}^{\infty}(U)$ under this norm. (Note that this is
slightly different from the set of functions on $U$ such that $\|f\|_{\mathcal{C}^{\alpha}_{\mathfrak{s}}(U)}$
is finite, but has the advantage of being separable.) Let
\begin{equation}
\langle f,g\rangle\coloneqq\int^{L}_{0}f(x)g(x)\,\dif x,\label{eq:fgIP}
\end{equation}
a notation that we use liberally whenever $f$ and $g$ are functions/distribution
for which this pairing is well-defined. For functions $f$ and $g$,
we use $f*g$ to denote spatial convolution and $f\circledast g$
to denote space-time convolution.

\subsection{Acknowledgments}

The authors are grateful to Ivan Corwin, Martin Hairer, Jonathan Mattingly,
Nicolas Perkowski, and Guangqu Zheng for helpful discussions and pointers
to the literature at various stages of this project. A.D.\ was partially
supported by the National Science Foundation under grant no.~DMS-2346915.
Part of this work was completed while A.D.\ was in residence at the
Simons Laufer Mathematical Sciences Institute in Berkeley, California,
during the Fall 2025 semester, supported by the National Science Foundation
under grant no.\ DMS-2424139. Y.G. was partially supported by the
National Science Foundation under grant no.~DMS-2203014. T.R.\ was
partially supported by the Leverhulme Trust ECF~2024-543.

\section{Equations, time reversal, and approximations\label{sec:solntheory}}

In this section, we present some preliminary results. Before delving
into details, we provide motivation based on the proof sketch in \zcref{subsec:Our-method}.

Recall that our starting point is the result in \cite{goncalves:perkowski:simon:2020:derivation}
for the case $\uu=\vv=0$. The analysis there was based on the theory
of energy solutions \cite{gubinelli:perkowski:2018:energy} and the
discrete approximation of \zcref{eq:uuv} by WASEP. This is the only
``integrable'' input needed for our approach. More precisely, we
rely on the following two facts from \cite{goncalves:perkowski:simon:2020:derivation}
in the case $\uu=\vv=0$: (i) the white noise is invariant under the
dynamics; (ii) the stationary Markov process $(u_{0,0;t})_{t}$ satisfies
a time-reversal skew symmetry in the sense that for any $T>0$, $(\hat{u}_{0,0;\hat{t}})_{\hat{t}}:=(u_{0,0;T-t})_{t}$
solves the same equation, but with the opposite sign in front of the
nonlinear term. The consideration of time reversal plays a key role
in the development of the energy solution theory. Nevertheless, the
solution of \zcref{eq:huv} and \zcref{eq:uuv} we consider in this
paper is the Hopf-Cole solution, defined through the stochastic heat
equation imposed with a Robin boundary condition interpreted properly.
It is known that the two notions of solutions are not exactly the
same, so we need to keep track of the discrepancy, which is simply
a time-dependent spatial constant. In particular, to use the time
reversal and the skew symmetry, we need to study the forward solution/noise
and the backward solution/noise defined on the \emph{same} probability
space. The first part of this section is dedicated to these issues.

Building on the aforementioned integrable inputs, we aim at studying
the case with general boundary parameters $\uu,\vv\in\mathbb{R}$.
The idea is to consider the singularly perturbed noise $\dif W_{t}\mapsto\dif W_{t}-\uu\delta_{0}-\vv\delta_{L}$
and use the Cameron--Martin theorem to absorb the boundary effects
into the noise. Since the Dirac function is not square integrable,
we introduce an approximation of $-\uu\delta_{0}-\vv\delta_{L}$.
Throughout the paper, $\eps>0$ represents the scale on which the
boundary potential $\varphi^{\eps}_{\uu,\vv}$ approximates $-\uu\delta_{0}-\vv\delta_{L}$.
On the other hand, the noise $\dif W_{t}$ is white in space and time,
which makes the equations \zcref{eq:huv,eq:uuv} singular and thus
to some extent unusable. In order to use the equations in the usual
way so that one may be able to represent the forward noise in terms
of the backward noise and solution, we introduce another parameter
$\zeta>0$, representing the spatial scale on which we mollify the
noise $\dif W_{t}\mapsto\dif W^{\zeta}_{t}$. \zcref{subsec:approx-inhomog,sec:mollifying}
are devoted to justifying these approximations.

\subsection{Mild and energy solutions}

Let $(\dif W_{t})_{t}$ be a space-time white noise on a probability
space $(\Omega,\mathcal{F},\mathbb{P})$ generating a natural filtration
$\{\mathscr{F}_{t}\}_{t}$, and let $h_{0}$ be a standard Brownian
motion on $[0,L]$, with $h_{0}(0)=0$, independent of the noise $(\dif W_{t})_{t}$.
For boundary parameters $\uu,\vv\in\mathbb{R}$, we define $Z_{\uu,\vv;t}$
to be the mild solution to the stochastic heat equation
\begin{subequations}
\label{Z-Robin}
\begin{align}
\dif Z_{\uu,\vv;t}(x) & =\oh\Delta Z_{\uu,\vv;t}(x)\dif t+Z_{\uu,\vv;t}(x)\dif W_{t}(x), &  & t\in(0,T],x\in(0,L);\label{eq:ZPDE-Robin}\\
\partial_{x}Z_{\uu,\vv;t}(0) & =(\uu-\oh)Z_{\uu,\vv;t}(0)\quad\text{and}\quad\partial_{x}Z_{\uu,\vv;t}(L)=-\left(\vv-\oh\right)Z_{\uu,\vv;t}(L), &  & t\in(0,T];\label{eq:Zbc-Robin}\\
Z_{\uu,\vv;0}(x) & =\e^{h_{0}(x)}, &  & x\in(0,L),\label{eq:Zic-Robin}
\end{align}
\end{subequations}
where the Robin boundary conditions \zcref{eq:Zbc-Robin} are interpreted
through the Robin heat kernel in the mild formulation as in \cites[Defn.~4.1]{parekh:2019:KPZ}.
Define
\begin{equation}
h_{\uu,\vv;t}\coloneqq\log Z_{\uu,\vv;t}\label{eq:hfromZ}
\end{equation}
and
\begin{equation}
u_{\uu,\vv;t}\coloneqq\partial_{x}\log Z_{\uu,\vv;t}=\partial_{x}h_{\uu,\vv;t}.\label{eq:uthtderiv}
\end{equation}
We abbreviate 
\begin{equation}
Z_{t}\coloneqq Z_{0,0;t},\qquad h_{t}\coloneqq h_{0,0;t}=\log Z_{t},\qquad\text{and}\qquad u_{t}\coloneqq u_{0,0;t}=\partial_{x}h_{t}=\partial_{x}\log Z_{t},\label{eq:abbrv}
\end{equation}
and define
\begin{equation}
\tilde{h}_{t}\coloneqq h_{t}+\frac{t}{24}.\label{eq:httildedef}
\end{equation}
The definition of $h_{t}$ is consistent with the use of $h_{0}$
in \zcref{eq:Zic-Robin}, and moreover
\begin{equation}
u_{\uu,\vv;0}=\partial_{x}\log Z_{\uu,\vv;0}=u_{0}\qquad\text{for all }\uu,\vv\in\mathbb{R}.\label{eq:ic-u0}
\end{equation}
We note for later use that 
\begin{equation}
\{u_{t}\}_{t\in[0,T]}\text{ is independent of }\{\langle W_{t}-W_{0},1\rangle\}_{t\in[0,T]}.\label{eq:ut_independent_of_Wtflat}
\end{equation}

As mentioned already, our analysis will be based on time reversal
for the $\uu=\vv=0$ problem, which was studied in the context of
\emph{almost stationary energy solutions} to the open KPZ equation
with homogeneous boundary conditions in \cite{goncalves:perkowski:simon:2020:derivation}.
Here the term ``almost'' refers to the fact that, for each $t\ge0$,
$h_{t}$ is a standard Brownian motion only modulo a height shift,
i.e.\ only after $h_{t}(0)$ is subtracted. In order to use the energy
solution theory, we first must know that our solutions actually are
stationary energy solutions to the KPZ equation.  Following \cites[(2.4–5)]{goncalves:perkowski:simon:2020:derivation},
define
\begin{equation}
\mathcal{S}_{\mathrm{Dir}}\coloneqq\left\{ \varphi\in\mathcal{C}^{\infty}([0,L])\st\varphi^{(2k)}(0)=\varphi^{(2k)}(L)=0\text{ for all }k=0,1,2,\ldots\right\} \label{eq:SDir}
\end{equation}
and
\begin{equation}
\mathcal{S}_{\mathrm{Neu}}\coloneqq\left\{ \varphi\in\mathcal{C}^{\infty}([0,L])\st\varphi^{(2k+1)}(0)=\varphi^{(2k+1)}(L)=0\text{ for all }k=0,1,2,\ldots\right\} .\label{eq:SNeu}
\end{equation}
We also define a discretization of the gradient at scale $\kappa$,
as in \cites[(3.9)]{goncalves:perkowski:simon:2020:derivation},
\begin{equation}
\nabla_{\kappa}f(x)=\begin{cases}
\kappa^{-1}(f(x+\kappa)-f(x)), & x\in[0,L-2\kappa);\\
\kappa^{-1}(f(x)-f(x-\kappa)), & x\in[L-2\kappa,L].
\end{cases}\label{eq:deltakappadef}
\end{equation}
The following is the main result of this section. It states that the
Cole--Hopf solution to KPZ, shifted by $t/24$, has the law of the
energy solution. (This somewhat strange phrasing is because, strictly
speaking, the notion of energy solution as considered in \cites{goncalves:perkowski:simon:2020:derivation}
is defined only as the \emph{law} of a process.)
\begin{prop}
\label{prop:The-process-}The process $(\tilde{h}_{t})_{t\in[0,T]}$
is an almost stationary energy solution to the open KPZ equation
\[
\dif\tilde{h}_{t}=\frac{1}{2}\Delta\tilde{h}_{t}+\frac{1}{2}(\partial_{x}\tilde{h}_{t})^{2}+\dif W_{t}
\]
with homogeneous Neumann boundary conditions on $[0,L]$ and initial
data $h_{0}$, in the sense of \cites[Thm.~3.7]{goncalves:perkowski:simon:2020:derivation}.
In particular, for each $\varphi\in\mathcal{S}_{\mathrm{Neu}}$ and
each $s,t\in[0,T]$, the limit
\begin{equation}
\mathcal{B}_{s,t}(\varphi)\coloneqq\lim_{\kappa\to0}\mathcal{B}^{[\kappa]}_{s,t}(\varphi)\label{eq:Bstdef}
\end{equation}
exists in $L^{2}(\Omega)$, where
\begin{equation}
\mathcal{B}^{[\kappa]}_{s,t}(\varphi)\coloneqq\int^{t}_{s}\int^{L}_{0}\varphi(x)\left\{ (\nabla_{\kappa}\tilde{h}_{r}(x))^{2}-\frac{1}{\kappa}\right\} \,\dif x\,\dif r=\int^{t}_{s}\int^{L}_{0}\varphi(x)\left\{ (\nabla_{\kappa}h_{r}(x))^{2}-\frac{1}{\kappa}\right\} \,\dif x\,\dif r.\label{eq:Bkappastdef}
\end{equation}
Furthermore, for any $\varphi\in\mathcal{S}_{\mathrm{Neu}}$ and $0\le s<t\le T$,
we have the integral form
\begin{align}
\langle W_{t}-W_{s},\varphi\rangle & =\langle\tilde{h}_{t},\varphi\rangle-\langle\tilde{h}_{s},\varphi\rangle-\frac{1}{2}\int^{t}_{s}\langle\tilde{h}_{r},\Delta\varphi\rangle\,\dif s-\frac{1}{2}\mathcal{B}_{s,t}(\varphi)\label{eq:Wintermsoftildeh}\\
\ovset{\zcref{eq:httildedef}} & =\langle h_{t}-h_{s}+\nicefrac{1}{24}(t-s),\varphi\rangle-\frac{1}{2}\int^{t}_{s}\langle h_{r},\Delta\varphi\rangle\,\dif r-\frac{1}{2}\mathcal{B}_{s,t}(\varphi).\label{eq:noiserel}
\end{align}
Moreover, the process $(u_{t})_{t\in[0,T]}$ is a stationary energy
solution to the stochastic Burgers equation with homogeneous Dirichlet
boundary conditions on $[0,L]$ in the sense of \cites[Thm.~3.3]{goncalves:perkowski:simon:2020:derivation}. 
\end{prop}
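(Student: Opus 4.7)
The approach is to identify $(\tilde h_t)_{t \in [0,T]}$ with the almost stationary energy solution of the open KPZ equation with homogeneous Neumann boundary conditions constructed by Gonçalves--Perkowski--Simon in \cite{goncalves:perkowski:simon:2020:derivation}. Once this identification is made, every assertion in the proposition falls out of the energy-solution framework developed there; the proof is essentially a matter of combining the right known inputs.

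The starting observation is that $u_0 = \partial_x h_0$ is spatial white noise on $[0,L]$, so that $(\tilde h_t)$ has the initial law matching the (almost) stationary energy solution. To pass from this initial match to a law-level identification of the full processes, the plan is to use the joint scaling limit of the open WASEP height function, which converges to the Cole--Hopf solution $h_t + t/24 = \tilde h_t$ after the usual $t/24$ recentering (this is the content of \cite{corwin:shen:2018:weakly,parekh:2019:KPZ,gerencer:hairer:2019:singular}), and simultaneously to the GPS energy solution by \cite{goncalves:perkowski:simon:2020:derivation}. Comparing the two limits identifies $(\tilde h_t)$ in law with the GPS energy solution; the uniqueness of the latter proved in \cite{knizel:matetski:arXiv2022:strong,parekh:2022:ergodicity} then promotes this to an almost sure identification on our fixed probability space carrying $(\dif W_t)$ and $h_0$.

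Granted this identification, almost stationarity is immediate, the $L^2$-existence of $\mathcal{B}_{s,t}(\varphi)$ in \zcref{eq:Bstdef} and the integral identity \zcref{eq:Wintermsoftildeh} are built into the definition of an almost stationary energy solution \cites[Thm.~3.7]{goncalves:perkowski:simon:2020:derivation}, and the stationary energy solution property for $(u_t) = (\partial_x \tilde h_t)$ is likewise the content of \cites[Thm.~3.3]{goncalves:perkowski:simon:2020:derivation}. The reformulation \zcref{eq:noiserel} is then a direct algebraic substitution of $\tilde h_r = h_r + r/24$ into \zcref{eq:Wintermsoftildeh}: the Laplacian annihilates the constant-in-space shift, while the time integral of the shift tested against $\varphi$ yields the $\tfrac{1}{24}(t-s)\langle 1,\varphi\rangle$ contribution, which combines with the $h$-increment.

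The main obstacle is the passage from equality in law to the pathwise $L^2$-convergence of $\mathcal{B}^{[\kappa]}_{s,t}(\varphi)$ and the pathwise integral identity \zcref{eq:Wintermsoftildeh} on our given probability space. In principle one could try a direct approach bypassing WASEP: apply It\^o's formula to $\log Z_t$ with noise mollified at some scale $\zeta$, match the It\^o correction against the discrete gradient-squared counterterm $1/\kappa$ via a double-limit argument in $\zeta$ and $\kappa$, and produce \zcref{eq:Wintermsoftildeh} directly. This is delicate because the $1/24$ constant must be extracted from the chosen regularization and because different regularizations produce different time-dependent spatial constants. The route via WASEP plus uniqueness of energy solutions neatly avoids this bookkeeping, at the price of relying on the deep universality and uniqueness results cited above, and is the cleanest way to reach the identity needed for the remainder of the paper.
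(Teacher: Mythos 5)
There is a genuine gap at the step you yourself flag as the main obstacle: passing from equality in law to the pathwise identity \zcref{eq:Wintermsoftildeh} on the given probability space, with $W$ being the \emph{specific} noise driving $Z$ in \zcref{Z-Robin}. The uniqueness results you invoke from \cite{knizel:matetski:arXiv2022:strong,parekh:2022:ergodicity} concern uniqueness of the \emph{invariant measure}, and even the genuine uniqueness theorems for energy solutions are uniqueness-\emph{in-law} statements; neither can upgrade a law-level identification of $(\tilde h_t)$ with the GPS process to an almost sure identity that moreover identifies the martingale part of the energy-solution decomposition of $\tilde h$ with the given $W$. Equality in law does suffice for almost stationarity and for the $L^{2}$-existence of $\mathcal{B}_{s,t}(\varphi)$, but \zcref{eq:Wintermsoftildeh} is a statement about the joint law of $(\tilde h, W)$, and your argument never controls that joint law. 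The proposed route via a ``joint scaling limit'' of WASEP to both the Cole--Hopf solution and the GPS energy solution would additionally require joint convergence with the driving noise, which is not what the cited references provide.

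The paper closes this gap by running the argument in the opposite direction and with a different uniqueness input. One takes an \emph{abstract} almost stationary energy solution $(k_t)$ (existence from \cite{goncalves:perkowski:simon:2020:derivation}) and, since the definition only constrains its law, couples it so that $k_0=h_0$ and so that the white noise \emph{reconstructed from} $(k_t)$ via \zcref{eq:Wtdef} coincides with the $W$ driving \zcref{Z-Robin}. Then \zcref{prop:cole-hopf} (proved in \zcref{sec:Basic-properties-of}, and itself requiring a nontrivial conversion between the Neumann test-function class of \cite{goncalves:perkowski:simon:2020:derivation} and the Robin test-function class of \cite{parekh:2019:KPZ}) shows that $\e^{k_t-t/24}$ is a mild solution of \zcref{Z-Robin} driven by that same $W$, and \emph{strong (pathwise) uniqueness of the mild stochastic heat equation} --- not uniqueness of energy solutions --- forces $\e^{k_t-t/24}=Z_t$, hence $k_t=\tilde h_t$ almost surely. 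The identity \zcref{eq:Wintermsoftildeh} is then literally the defining decomposition of the coupled energy solution. If you want to salvage your write-up, replace the ``uniqueness of energy solutions promotes this to an almost sure identification'' step with this coupling-plus-Cole--Hopf-plus-strong-uniqueness argument; your derivation of \zcref{eq:noiserel} from \zcref{eq:Wintermsoftildeh} and your reading of the last claim from \cites[Thm.~3.3]{goncalves:perkowski:simon:2020:derivation} are fine.
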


\begin{proof}
Let $(k_{t})_{t\in[0,T]}$ be an almost stationary energy solution
to the KPZ equation with homogeneous Neumann boundary conditions on
$[0,L]$ with initial data $k_{0}(\cdot)$ having the law of a standard
Brownian motion with $k_{0}(0)=0$. The definition of almost stationary
energy solution only defines the law of the process $(k_{t})_{t\ge0}$,
so we are free to choose the coupling with the random variables already
defined. In particular, we can choose the coupling such that $k_{0}=h_{0}=\tilde{h}_{0}$
and the process $(\dif W_{t})_{t}$ in \zcref{Z-Robin} is the same
as the process $(\dif W_{t})_{t}$ constructed from $(k_{t})_{t}$
by \zcref{eq:Wtdef} below. By \zcref{prop:cole-hopf} below, we
see that $\e^{k_{t}-t/24}$ is a mild solution to the open stochastic
heat equation \zcref{Z-Robin}. By the strong uniqueness of solutions
to this equation, this means that we in fact must have $\e^{k_{t}-t/24}=Z_{t}$,
and hence $k_{t}=\log Z_{t}+t/24\overset{\zcref{eq:httildedef}}{=}\tilde{h}_{t}$.
Therefore, $\tilde{h}_{t}$ is (has the law of) an almost stationary
energy solution, which implies by definition that \zcref{eq:Bstdef}
holds, and \zcref{eq:Wintermsoftildeh} is a consequence of our coupling.
The last claim of the proposition follows immediately from \zcref{eq:uthtderiv}
and \cites[Prop.~3.7(2)]{goncalves:perkowski:simon:2020:derivation}.
\end{proof}

\begin{rem}
As one may have noticed, in the case $\uu=\vv=0$, the KPZ equation
\zcref{eq:huv} is subject to homogeneous boundary conditions, whereas
the corresponding stochastic heat equation satisfies the Robin boundary
condition \zcref{eq:Zbc-Robin}, with the “extra’’ coefficients $-\tfrac{1}{2}$
and $\tfrac{1}{2}$ at $x=0$ and $x=L$, respectively. This phenomenon
can be interpreted as a form of boundary renormalization; see, for
example, the discussions in \cite[Section 3.5]{goncalves:perkowski:simon:2020:derivation}
and \cite{gerencer:hairer:2019:singular}, as well as the computations
in \zcref{sec:mollifying} below.
\end{rem}

From \cites[Thm.~3.3(1)]{goncalves:perkowski:simon:2020:derivation},
we know that
\begin{equation}
\Law(u_{t})\text{ does not depend on }t,\label{eq:utstationary}
\end{equation}
and indeed, for each $t$, this law is that of a spatial white noise.
Moreover, according to \cites[Thm.~3.3(2–3)]{goncalves:perkowski:simon:2020:derivation},
for each $\varphi\in\mathcal{S}_{\mathrm{Dir}}$ and each $s,t\in[0,T]$,
there exists a limit in $L^{2}(\Omega)$
\begin{equation}
\mathcal{A}_{s,t}(\varphi)\coloneqq\lim_{\kappa\to0}\mathcal{A}^{\kappa}_{s,t}(\varphi),\qquad\text{where }\mathcal{A}^{\kappa}_{s,t}(\varphi)\coloneqq-\int^{t}_{s}\int^{L}_{0}\partial_{x}\varphi(x)\langle\iota^{\kappa}_{x},u_{r}\rangle^{2}\,\dif x\,\dif r,\label{eq:Astdef}
\end{equation}
where we have defined as in \cite[Defn.~3.2]{goncalves:perkowski:simon:2020:derivation}
\[
\iota^{\kappa}_{x}(y)\coloneqq\begin{cases}
\kappa^{-1}\mathbf{1}_{(x,x+\kappa]}(y), & x\in[0,L-2\kappa);\\
\kappa^{-1}\mathbf{1}_{[x-\kappa,x)}(y)), & x\in[L-2\kappa,L].
\end{cases}
\]
Comparing \cite[(3.6) and (3.10)]{goncalves:perkowski:simon:2020:derivation},
we see that, for $\varphi\in\mathcal{S}_{\mathrm{Dir}}$, we have
\begin{equation}
\mathcal{A}_{s,t}(\varphi)=-\mathcal{B}_{s,t}(\partial_{x}\varphi).\label{eq:ABreln}
\end{equation}

\subsection{Time reversal\label{subsec:Time-reversal}}

A crucial property of stationary energy solutions is that the time-reversed
solution to the homogeneous Dirichlet stochastic Burgers equation
has the same law as the forward solution, except with the opposite
sign of the nonlinearity. This is stated in \cites[Thm.~3.3(4)]{goncalves:perkowski:simon:2020:derivation}.
For our analysis, however, we need to consider the stochastic Burgers
solution and its time reversal simultaneously as solutions to stochastic
PDEs on the same probability space. This requires some understanding
the space-time white noise with respect to which the time reversal
satisfies an equation. Throughout the paper, we distinguish time-reversed
quantities by decorating them with the ``$\hat{\phantom{\bullet}}$''
symbol. In particular, for time-reversed quantities, we will use the
time variable $\hat{t}=T-t$. 

For $\hat{t}\in[0,T]$, define
\begin{equation}
\hat{u}_{\hat{t}}=u_{T-\hat{t}},\label{eq:uhatdef}
\end{equation}
and let $\{\hat{\mathscr{G}}_{\hat{t}}\}_{\hat{t}}$ to be the filtration
given by
\[
\hat{\mathscr{G}}_{\hat{t}}=\sigma\left(\langle\hat{u}_{\hat{s}},\varphi\rangle\st\varphi\in\mathcal{S}_{\mathrm{Dir}},\hat{s}\in[0,\hat{t}]\right).
\]
According to \cites[Thm.~3.3(4)]{goncalves:perkowski:simon:2020:derivation},
if we define
\begin{equation}
\hat{\mathcal{A}}_{\hat{s},\hat{t}}(\varphi)=\mathcal{A}_{T-\hat{t},T-\hat{s}}(\varphi)\qquad\text{for all }\varphi\in\mathcal{S}_{\mathrm{Dir}},\label{eq:Ahatdef}
\end{equation}
then for each $\varphi\in\mathcal{S}_{\mathrm{Dir}}$, the process
\begin{equation}
\hat{\mathcal{M}}_{\hat{t}}(\varphi)\coloneqq\langle\hat{u}_{\hat{t}},\varphi\rangle-\langle\hat{u}_{0},\varphi\rangle-\frac{1}{2}\int^{\hat{t}}_{0}\langle\hat{u}_{\hat{s}},\Delta\varphi\rangle\,\dif s+\frac{1}{2}\hat{\mathcal{A}}_{0,\hat{t}}(\varphi)\label{eq:Mhatdef}
\end{equation}
is a continuous $\{\hat{\mathscr{G}}_{\hat{t}}\}$-martingale with
quadratic variation
\[
[\hat{\mathcal{M}}(\varphi)]_{\hat{t}}=\hat{t}\|\partial_{x}\varphi\|^{2}_{L^{2}([0,L])}.
\]
This expression of the quadratic variation implies that, thought of
as a space-time distribution, the process $(\dif\hat{\mathcal{M}}_{\hat{t}})$
has the distribution of the spatial derivative of a space-time white
noise.

For our purposes, we need to write the process $(\dif\hat{\mathcal{M}}_{\hat{t}})$
as the spatial derivative of a \emph{particular} space-time white
noise, which essentially amounts to the choice of a zero-frequency
mode for the space-time white noise. While in principle this choice
could be arbitrary, it will be very useful for our applications to
choose this zero-frequency mode carefully. First, we first note that
for any $\varphi\in\mathcal{S}_{\mathrm{Neu}}$, there exist unique
elements $\tilde{\varphi}\in\mathcal{S}_{\mathrm{Dir}}$ and $\overline{\varphi}\in\mathbb{R}$
such that 
\begin{equation}
\varphi=-\partial_{x}\tilde{\varphi}+\overline{\varphi}\label{eq:phidecomp}
\end{equation}
and hence
\begin{equation}
\|\varphi\|^{2}_{L^{2}([0,L])}=\|\partial_{x}\tilde{\varphi}\|^{2}_{L^{2}([0,L])}+|\overline{\varphi}|^{2}L.\label{eq:normdecomp}
\end{equation}
Now we define 
\[
\hat{\mathscr{F}}_{\hat{t}}=\hat{\mathscr{G}}_{\hat{t}}\vee\sigma\left(\langle W_{T-\hat{s}},1\rangle-\langle W_{T},1\rangle\st\hat{s}\in[0,\hat{t}]\right)
\]
and
\begin{equation}
\hat{\mathcal{N}}_{\hat{t}}(\varphi)\coloneqq\hat{\mathcal{M}}_{\hat{t}}(\tilde{\varphi})+\langle W_{T-\hat{t}},\overline{\varphi}\rangle-\langle W_{T},\overline{\varphi}\rangle.\label{eq:Nhatdef}
\end{equation}
Using \zcref{eq:ut_independent_of_Wtflat}, we see that $(\hat{\mathcal{M}}_{\hat{t}}(\tilde{\varphi}))_{\hat{t}\in[0,T]}$
and $(\langle W_{T-\hat{t}},\overline{\varphi}\rangle-\langle W_{T},\overline{\varphi}\rangle)_{\hat{t}\in[0,T]}$
are independent $\{\hat{\mathscr{F}}_{\hat{t}}\}$-martingales, and
in particular they are both Brownian motions. Therefore, $(\hat{\mathcal{N}}_{\hat{t}}(\varphi))_{\hat{t}}$
is also an $\{\hat{\mathscr{F}}_{\hat{t}}\}$-martingale (and a Brownian
motion) with quadratic variation
\begin{equation}
[\hat{\mathcal{N}}(\varphi)]_{\hat{t}}=\hat{t}\left(\|\partial_{x}\tilde{\varphi}\|_{L^{2}([0,L])}+|\overline{\varphi}|^{2}L\right)^{\oh}\overset{\zcref{eq:normdecomp}}{=}\hat{t}\|\varphi\|^{2}_{L^{2}([0,L])}.\label{eq:NhatQV}
\end{equation}
This means that there is a space-time white noise $(\dif\hat{W}_{\hat{t}})_{\hat{t}}$,
adapted to the filtration $\{\hat{\mathscr{F}}_{\hat{t}}\}$, such
that
\begin{equation}
\langle\hat{W}_{\hat{t}}-\hat{W}_{\hat{s}},\varphi\rangle=\hat{\mathcal{N}}_{\hat{t}}(\varphi)-\hat{\mathcal{N}}_{\hat{s}}(\varphi)\qquad\text{for all }\hat{s},\hat{t}\in[0,T].\label{eq:Whatdef}
\end{equation}

Now we define the initial data for the backward KPZ equation
\begin{equation}
\hat{h}_{0}(x)=\int^{x}_{0}\hat{u}_{0}(y)\,\dif y,\qquad x\in[0,L].\label{eq:h0hatdef}
\end{equation}
Then, for $\hat{t}\in[0,T]$, we define the solution to the backward
KPZ equation
\begin{equation}
\hat{h}_{\hat{t}}(x)=h_{T-\hat{t}}(x)+\hat{h}_{0}(x)-h_{T}(x).\label{eq:hhatdef-new}
\end{equation}
We note that 
\[
\partial_{x}\hat{h}_{0}(x)\overset{\zcref{eq:h0hatdef}}{=}\hat{u}_{0}(x)\overset{\zcref{eq:uhatdef}}{=}u_{T}(x)\overset{\zcref{eq:abbrv}}{=}\partial_{x}h_{T}(x),
\]
so from \zcref{eq:hhatdef-new} we have for all $\hat{t}\in[0,T]$
that
\begin{equation}
\partial_{x}\hat{h}_{\hat{t}}(x)=\hat{u}_{\hat{t}}(x)=u_{T-\hat{t}}(x)=\partial_{x}h_{T-\hat{t}}(x).\label{eq:htderivtimerev}
\end{equation}
It is worth emphasizing that $\hat{u}$ is the exact time reversal
of $u$, as defined in \zcref{eq:uhatdef}, while $\hat{h}$ is not
the time reversal of $h$, due to the additional term $\hat{h}_{0}(x)-h_{T}(x)$
in \zcref{eq:h0hatdef}. By definition, this extra term is a (random)
constant, chosen so that the initial data $\hat{h}_{0}$ for $\hat{h}$
is a standard Brownian motion with $\hat{h}_{0}(0)=0$.

Continuing from \zcref{eq:htderivtimerev}, we see that if we define
\begin{align*}
\hat{\mathcal{B}}^{[\kappa]}_{\hat{s},\hat{t}}(\varphi) & \coloneqq\int^{\hat{t}}_{\hat{s}}\int^{L}_{0}\varphi(x)\left\{ (\nabla_{\kappa}\hat{h}_{\hat{r}}(x))^{2}-\frac{1}{\kappa}\right\} \,\dif x\,\dif\hat{r}\overset{\zcref{eq:htderivtimerev}}{=}\int^{\hat{t}}_{\hat{s}}\int^{L}_{0}\varphi(x)\left\{ (\nabla_{\kappa}h_{T-\hat{r}}(x))^{2}-\frac{1}{\kappa}\right\} \,\dif x\,\dif\hat{r}\\
 & =\int^{T-\hat{s}}_{T-\hat{t}}\int^{L}_{0}\varphi(x)\left\{ (\nabla_{\kappa}h_{r}(x))^{2}-\frac{1}{\kappa}\right\} \,\dif x\,\dif r\overset{\zcref{eq:Bkappastdef}}{=}\mathcal{B}^{[\kappa]}_{T-\hat{t},T-\hat{s}}(\varphi),
\end{align*}
then by \zcref{eq:Bstdef} we have the limit
\begin{equation}
\hat{\mathcal{B}}_{\hat{s},\hat{t}}(\varphi)\coloneqq\lim_{\kappa\to0}\hat{\mathcal{B}}^{[\kappa]}_{\hat{s},\hat{t}}(\varphi)\qquad\text{in the }L^{2}\text{ sense}.\label{eq:Bhatstdef}
\end{equation}
Using this in \zcref{eq:Bhatstdef} and comparing with \zcref{eq:Bstdef},
we see that in fact
\begin{equation}
\hat{\mathcal{B}}_{\hat{s},\hat{t}}(\varphi)=\mathcal{B}_{T-\hat{t},T-\hat{s}}(\varphi).\label{eq:Bssame}
\end{equation}
We note that, for $\varphi\in\mathcal{S}_{\mathrm{Dir}}$, we have
\begin{equation}
-\hat{\mathcal{B}}_{\hat{s},\hat{t}}(\partial_{x}\varphi)\overset{\zcref{eq:Bssame}}{=}\mathcal{B}_{T-\hat{t},T-\hat{s}}(-\partial_{x}\varphi)\overset{\zcref{eq:ABreln}}{=}\hat{\mathcal{A}}_{T-\hat{t},T-\hat{s}}(\varphi)\overset{\zcref{eq:Ahatdef}}{=}\mathcal{A}_{\hat{t},\hat{s}}(\varphi).\label{eq:ABhat}
\end{equation}

The following lemma shows that the process $\hat{h}$ defined in \zcref{eq:hhatdef-new},
after a proper shift, is indeed an almost energy solution to the backward
KPZ equation. Consequently, an integral formulation exists for the
backward equation. In \zcref{prop:noise-relation} below, we will
combine this integral formulation with \zcref{eq:noiserel} to express
the forward noise in terms of the backward noise and the corresponding
solutions. 
\begin{lem}
\label{lem:hhat-as}The process $(\hat{h}_{\hat{t}}-\hat{t}/24)_{\hat{t}\in[0,T]}$
is an almost stationary energy solution for the open KPZ equation
\[
\dif\tilde{h}_{t}=\frac{1}{2}\Delta\tilde{h}_{t}-\frac{1}{2}(\partial_{x}\tilde{h}_{t})^{2}+\dif W_{t}
\]
with homogeneous Neumann boundary conditions, in the sense of \cites[Thm.~3.7]{goncalves:perkowski:simon:2020:derivation}.
In particular, for any $\varphi\in\mathcal{S}_{\mathrm{Neu}}$, we
have
\begin{equation}
\hat{\mathcal{N}}_{\hat{t}}(\varphi)=\langle\varphi,\hat{h}_{\hat{t}}-\hat{t}/24\rangle-\langle\varphi,\hat{h}_{0}\rangle-\frac{1}{2}\int^{\hat{t}}_{0}\langle\Delta\varphi,\hat{h}_{\hat{s}}\rangle\,\dif\hat{s}+\frac{1}{2}\hat{\mathcal{B}}_{0,\hat{t}}(\varphi),\label{eq:Ntexpr}
\end{equation}
with $\hat{\mathcal{N}}_{\hat{t}}(\varphi)$ defined in \zcref{eq:Nhatdef}. 
\end{lem}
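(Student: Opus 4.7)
The main claim is the integral identity \zcref{eq:Ntexpr}; the remaining axioms of ``almost stationary energy solution'' will follow quickly from ingredients already assembled. Both sides of \zcref{eq:Ntexpr} are linear in $\varphi$, so the plan is to decompose $\varphi = -\partial_x \tilde\varphi + \overline{\varphi}$ via \zcref{eq:phidecomp} and verify the identity on each of the two pieces separately: the ``Dirichlet piece'' $\varphi_1 = -\partial_x \tilde\varphi$, which will be matched to the known martingale characterization of $\hat{\mathcal{M}}_{\hat{t}}(\tilde\varphi)$, and the ``constant piece'' $\varphi_2 = \overline{\varphi}$, which will be matched to the forward noise relation \zcref{eq:noiserel}.

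For $\varphi_1 = -\partial_x \tilde\varphi$ with $\tilde\varphi \in \mathcal{S}_{\mathrm{Dir}}$, the left side of \zcref{eq:Ntexpr} equals $\hat{\mathcal{M}}_{\hat{t}}(\tilde\varphi)$ by \zcref{eq:Nhatdef}. On the right, one integrates by parts in each pairing: since $\tilde\varphi$ and $\Delta \tilde\varphi$ vanish at both endpoints, the boundary terms disappear and \zcref{eq:htderivtimerev} converts every pairing $\langle \varphi_1, \hat{h}_{\hat{t}}\rangle$ or $\langle \Delta\varphi_1, \hat{h}_{\hat{s}}\rangle$ into the corresponding pairing of $\tilde\varphi$ or $\Delta\tilde\varphi$ against $\hat{u}_{\hat{t}}$ or $\hat{u}_{\hat{s}}$. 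The drift $\langle \varphi_1, \hat{t}/24\rangle$ vanishes for the same Dirichlet reason, and \zcref{eq:ABhat} (read at $\hat{s}=0$) converts $\hat{\mathcal{B}}_{0,\hat{t}}(\varphi_1)$ into $\hat{\mathcal{A}}_{0,\hat{t}}(\tilde\varphi)$. The identity \zcref{eq:Ntexpr} then collapses to the definition \zcref{eq:Mhatdef} of the martingale $\hat{\mathcal{M}}_{\hat{t}}(\tilde\varphi)$, which is the core input from \cite{goncalves:perkowski:simon:2020:derivation}.

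For $\varphi_2 = \overline{\varphi}$, by \zcref{eq:Nhatdef} the left side of \zcref{eq:Ntexpr} equals $\overline{\varphi}\langle W_{T-\hat{t}} - W_T, \mathbf{1}\rangle$. On the right, use \zcref{eq:hhatdef-new} to rewrite $\hat{h}_{\hat{t}} - \hat{h}_0 = h_{T-\hat{t}} - h_T$ and \zcref{eq:Bssame} to rewrite $\hat{\mathcal{B}}_{0,\hat{t}}(\overline{\varphi}) = \overline{\varphi}\,\mathcal{B}_{T-\hat{t},T}(\mathbf{1})$; the Laplacian pairing vanishes since $\Delta \mathbf{1} = 0$. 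What remains is precisely the forward noise relation \zcref{eq:noiserel} applied at $s = T-\hat{t}$, $t = T$, $\varphi \equiv 1$, read with an overall sign flip (because $\langle W_{T-\hat{t}} - W_T,\mathbf{1}\rangle = -\langle W_T - W_{T-\hat{t}},\mathbf{1}\rangle$). The $\tfrac{1}{24}$ drift, conspicuously absent from the Dirichlet piece, is exactly what has to line up here for the matching to work, and is the single deterministic term that couples the zero-frequency mode of the backward noise to the forward Cole--Hopf theory.

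Once \zcref{eq:Ntexpr} is in hand, the remaining requirements of an almost stationary energy solution for the opposite-sign KPZ equation are essentially bookkeeping, collected from what has already been done in \zcref{sec:solntheory}: the noise $(\hat{W}_{\hat{t}})$ is a space-time white noise by the quadratic-variation calculation \zcref{eq:NhatQV}; $\partial_x \hat{h}_{\hat{t}} = \hat{u}_{\hat{t}}$ is a spatial white noise for each $\hat{t}$ by \zcref{eq:uhatdef} and \zcref{eq:utstationary}, giving the ``almost stationarity''; the initial datum $\hat{h}_0$ is a standard Brownian motion vanishing at $0$ by construction in \zcref{eq:h0hatdef}; and the sign reversal of the Burgers nonlinearity is visible as the switch from $-\tfrac{1}{2}\mathcal{B}_{s,t}$ in \zcref{eq:noiserel} to $+\tfrac{1}{2}\hat{\mathcal{B}}_{0,\hat{t}}$ in \zcref{eq:Ntexpr}. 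The only conceptually nontrivial step is thus the matching for the constant piece: this is the one place where the proof cannot be reduced to a previously established martingale identity for $\hat{\mathcal{M}}$ and instead forces us to invoke the forward noise relation \zcref{eq:noiserel} and carefully track the $\tfrac{1}{24}$ drift.
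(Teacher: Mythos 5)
Your proposal is correct and follows essentially the same route as the paper's proof: decompose $\varphi=-\partial_{x}\tilde{\varphi}+\overline{\varphi}$ via \zcref{eq:phidecomp}, identify the Dirichlet piece with the martingale $\hat{\mathcal{M}}_{\hat{t}}(\tilde{\varphi})$ through \zcref{eq:ABhat} and \zcref{eq:Mhatdef}, and match the constant piece to the forward noise relation \zcref{eq:noiserel} using \zcref{eq:hhatdef-new} and \zcref{eq:Bssame}, concluding with \zcref{eq:NhatQV} for the quadratic variation. The only difference is organizational — you verify the identity piecewise by linearity, while the paper expands the full expression and regroups — which does not change the argument.
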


\begin{proof}
Since $(\hat{u}_{\hat{t}})$ is a stationary energy solution for homogeneous
open Burgers by definition, the only thing that needs to be checked
is that, for any $\varphi\in\mathcal{S}_{\mathrm{Neu}}$, the process
\[
\hat{t}\mapsto\langle\varphi,\hat{h}_{\hat{t}}-\hat{t}/24\rangle-\langle\varphi,\hat{h}_{0}\rangle-\frac{1}{2}\int^{\hat{t}}_{0}\langle\Delta\varphi,\hat{h}_{s}\rangle\,\dif s+\frac{1}{2}\hat{\mathcal{B}}_{0,\hat{t}}(\varphi)
\]
is a martingale with respect to the filtration generated by the process
$(\hat{h}_{\hat{t}})_{\hat{t}}$, and that the quadratic variation
process is given by $\hat{t}\mapsto\hat{t}\|\varphi\|^{2}_{L^{2}([0,L])}$.
The adaptedness is clear, so it remains to check the martingale property.
Decompose $\varphi=-\partial_{x}\tilde{\varphi}+\overline{\varphi}$
with $\tilde{\varphi}\in\mathcal{S}_{\mathrm{Dir}}$ and $\overline{\varphi}\in\mathbb{R}$
as in \zcref{eq:phidecomp}. Then we can write
\begin{align}
\langle\varphi, & \hat{h}_{\hat{t}}-\hat{t}/24\rangle-\langle\varphi,\hat{h}_{0}\rangle-\frac{1}{2}\int^{\hat{t}}_{0}\langle\Delta\varphi,\hat{h}_{s}\rangle\,\dif s+\frac{1}{2}\hat{\mathcal{B}}_{0,\hat{t}}(\varphi)\nonumber \\
 & =\langle-\partial_{x}\tilde{\varphi}+\overline{\varphi},\hat{h}_{\hat{t}}-\hat{t}/24\rangle-\langle-\partial_{x}\tilde{\varphi}+\overline{\varphi},\hat{h}_{0}\rangle-\frac{1}{2}\int^{\hat{t}}_{0}\langle\Delta(-\partial_{x}\tilde{\varphi}),\hat{h}_{s}\rangle\,\dif s-\frac{1}{2}\hat{\mathcal{B}}_{0,\hat{t}}(\partial_{x}\tilde{\varphi})+\frac{1}{2}\hat{\mathcal{B}}_{0,\hat{t}}(\overline{\varphi})\nonumber \\
\ovset{\zcref{eq:ABhat}} & =\langle\tilde{\varphi},\hat{u}_{\hat{t}}\rangle+\langle\overline{\varphi},\hat{h}_{\hat{t}}-\hat{t}/24\rangle-\langle\tilde{\varphi},\hat{u}_{0}\rangle-\langle\overline{\varphi},\hat{h}_{0}\rangle-\frac{1}{2}\int^{\hat{t}}_{0}\langle\Delta\tilde{\varphi},\hat{u}_{s}\rangle\,\dif s+\frac{1}{2}\hat{\mathcal{A}}_{0,\hat{t}}(\tilde{\varphi})+\frac{1}{2}\hat{\mathcal{B}}_{0,\hat{t}}(\overline{\varphi})\nonumber \\
\ovset{\zcref{eq:Mhatdef}} & =\hat{\mathcal{M}}_{\hat{t}}(\tilde{\varphi})+\langle\overline{\varphi},\hat{h}_{\hat{t}}-\hat{h}_{0}-\hat{t}/24\rangle+\frac{1}{2}\hat{\mathcal{B}}_{0,\hat{t}}(\overline{\varphi}).\label{eq:firstexpansion}
\end{align}
Using \zcref{eq:noiserel}, we derive
\begin{align*}
\langle W_{T},\overline{\varphi}\rangle-\langle W_{T-\hat{t}},\overline{\varphi}\rangle & =\langle h_{T}-h_{T-\hat{t}}+\hat{t}/24,\overline{\varphi}\rangle-\frac{1}{2}\mathcal{B}_{T-\hat{t},T}(\overline{\varphi})=\langle\hat{h}_{0}-\hat{h}_{\hat{t}}+\hat{t}/24,\overline{\varphi}\rangle-\frac{1}{2}\hat{\mathcal{B}}_{0,\hat{t}}(\overline{\varphi}),
\end{align*}
with the second identity by \zcref{eq:hhatdef-new,eq:Bssame}. Further
using this in \zcref{eq:firstexpansion}, we obtain
\[
\langle\varphi,\hat{h}_{\hat{t}}-\hat{t}/24\rangle-\langle\varphi,\hat{h}_{0}\rangle-\frac{1}{2}\int^{\hat{t}}_{0}\langle\Delta\varphi,\hat{h}_{\hat{s}}\rangle\,\dif\hat{s}+\frac{1}{2}\hat{\mathcal{B}}_{0,\hat{t}}(\varphi)=\hat{\mathcal{M}}_{\hat{t}}(\tilde{\varphi})+\langle W_{T-\hat{t}},\overline{\varphi}\rangle-\langle W_{T},\overline{\varphi}\rangle\overset{\zcref{eq:Nhatdef}}{=}\hat{\mathcal{N}}_{\hat{t}}(\varphi),
\]
which is \zcref{eq:Ntexpr}, and is indeed a martingale. The correct
form of the quadratic variation process is verified by \zcref{eq:NhatQV}.
\end{proof}

We note at this stage that it follows immediately from \zcref{eq:Ntexpr,eq:Whatdef}
that, for any $\varphi\in\mathcal{S}_{\mathrm{Neu}}$, we have
\begin{align}
\langle\hat{W}_{\hat{t}}-\hat{W}_{\hat{s}},\varphi\rangle & =\langle\hat{h}_{\hat{t}}-\hat{h}_{\hat{s}}-(\hat{t}-\hat{s})/24,\varphi\rangle-\frac{1}{2}\int^{\hat{t}}_{\hat{s}}\langle\hat{h}_{\hat{r}},\Delta\varphi\rangle\,\dif\hat{r}+\frac{1}{2}\hat{\mathcal{B}}_{\hat{s},\hat{t}}(\varphi).\label{eq:noiserel-rev}
\end{align}

Now that we have constructed the backward noise $(\dif\hat{W}_{\hat{t}})_{\hat{t}}$,
we use it to construct mild solutions to the stochastic heat equation.
For $\uu,\vv\in\mathbb{R}$, let the process $(\hat{Z}_{\uu,\vv;\hat{t}})_{\hat{t}\in[0,T]}$
be the mild solution to the stochastic heat equation
\begin{subequations}
\label{eq:Zproblem-Robin-1}
\begin{align}
\dif\hat{Z}_{\uu,\vv;\hat{t}}(x) & =\frac{1}{2}\Delta\hat{Z}_{\uu,\vv;\hat{t}}(x)\dif\hat{t}-\hat{Z}_{\uu,\vv;\hat{t}}(x)\dif\hat{W}_{\hat{t}}(x), &  & \hat{t}>0,x\in(0,L);\label{eq:ZPDE-Robin-1}\\
\partial_{x}\hat{Z}_{\uu,\vv;\hat{t}}(0) & =-(\uu+\oh)\hat{Z}_{\uu,\vv;\hat{t}}(0)\qquad\text{and}\qquad\partial_{x}\hat{Z}_{\uu,\vv;\hat{t}}(L)=(\vv+\oh)\hat{Z}_{\hat{t}}(L), &  & t>0;\label{eq:Zbc-left-Robin-1}\\
\hat{Z}_{\uu,\vv;0}(x) & =\e^{-\int^{x}_{0}\hat{u}_{0}(y)\,\dif y}, &  & x\in(0,L),\label{eq:Zic-Robin-1}
\end{align}
\end{subequations}
again in the sense of \cites[Defn.~4.1]{parekh:2019:KPZ}. This stochastic
heat equation differs from that appearing in \zcref{Z-Robin} in that
the forward noise $(\dif W_{t})$ is replaced by the backward noise
$(\dif\hat{W}_{\hat{t}})$ and that some signs are changed. Similar
to the above, we abbreviate
\[
\hat{Z}_{\hat{t}}=\hat{Z}_{0,0;\hat{t}}.
\]
By \zcref{prop:cole-hopf,lem:hhat-as}, $\e^{-(\hat{h}_{\hat{t}}(x)-\hat{t}/24)-\hat{t}/24}=\e^{-\hat{h}_{\hat{t}}(x)}$
is a mild solution to \zcref{eq:Zproblem-Robin-1}, and thus by the
uniqueness of mild solutions we have
\begin{equation}
\hat{h}_{\hat{t}}=-\log\hat{Z}_{\hat{t}},\qquad\hat{t}\in[0,T].\label{eq:uhatZhat}
\end{equation}

\begin{rem}
The reader may find it slightly strange that in \zcref{eq:Zic-Robin-1},
the initial condition for the time-reversed problem is constructed
using $\hat{u}_{0}\overset{\zcref{eq:uhatdef}}{=}u_{T}\overset{\zcref{eq:abbrv}}{=}u_{0,0;T}$
rather than $u_{\uu,\vv;T}$. This is indeed essential, since $\hat{u}_{0}$
is independent of $(\dif\hat{W}_{\hat{t}})_{\hat{t}\in[0,T]}$, and
so we can consider an adapted solution to \zcref{eq:Zproblem-Robin-1}.
On the other hand, we have no reason to expect that $u_{\uu,\vv;T}$
is independent of $(\dif\hat{W}_{\hat{t}})_{\hat{t}\in[0,T]}$.
\end{rem}

The following proposition provides a rigorous counterpart of \zcref{e.917integral},
which is crucial for us to take the conditional expectation given
the ``future'' in the proof of the main theorem. It comes as a consequence
of \zcref{eq:noiserel,eq:noiserel-rev} and establishes a formula
to relate the forward and backward noises, the solutions, and the
nonlinear terms appearing in the KPZ equation. 
\begin{prop}
\label{prop:noise-relation}We have, for all $\varphi\in\mathcal{S}_{\mathrm{Neu}}$,
that
\begin{align}
\langle\varphi,W_{T}-W_{0}-(\hat{W}_{T}-\hat{W}_{0})\rangle & =-2\langle\varphi,\hat{h}_{T}-\hat{h}_{0}-T/24\rangle-\hat{\mathcal{B}}_{0,T}(\varphi)\label{eq:noise-relation}\\
 & =2\langle\varphi,h_{T}-h_{0}+T/24\rangle-\mathcal{B}_{0,T}(\varphi).\label{eq:noise-relation-fwd}
\end{align}
\end{prop}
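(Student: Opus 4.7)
The plan is to apply the integral identities \zcref{eq:noiserel} and \zcref{eq:noiserel-rev} with $s = 0$, $t = T$ (respectively $\hat{s} = 0$, $\hat{t} = T$), and then subtract them. From \zcref{eq:noiserel},
\[
\langle W_T - W_0, \varphi\rangle = \langle h_T - h_0 + T/24, \varphi\rangle - \tfrac{1}{2}\int_0^T \langle h_r, \Delta\varphi\rangle\,\dif r - \tfrac{1}{2}\mathcal{B}_{0,T}(\varphi),
\]
while from \zcref{eq:noiserel-rev},
\[
\langle \hat W_T - \hat W_0, \varphi\rangle = \langle \hat h_T - \hat h_0 - T/24, \varphi\rangle - \tfrac{1}{2}\int_0^T \langle \hat h_{\hat r}, \Delta\varphi\rangle\,\dif \hat r + \tfrac{1}{2}\hat{\mathcal{B}}_{0,T}(\varphi).
\]
The key simplification comes from the definition \zcref{eq:hhatdef-new} of $\hat h$. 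I would first substitute $\hat h_{\hat r}(x) = h_{T-\hat r}(x) + \hat h_0(x) - h_T(x)$ into the time integral and change variables $r = T - \hat r$, giving
\[
\int_0^T \langle \hat h_{\hat r}, \Delta\varphi\rangle\,\dif \hat r = \int_0^T \langle h_r, \Delta\varphi\rangle\,\dif r + T\langle \hat h_0 - h_T, \Delta\varphi\rangle.
\]

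The crucial observation is that $\hat h_0 - h_T$ is actually a spatial constant: by \zcref{eq:h0hatdef}, $\hat h_0(x) = \int_0^x \hat u_0 = \int_0^x \partial_y h_T(y)\,\dif y = h_T(x) - h_T(0)$, so $\hat h_0 - h_T \equiv -h_T(0)$. Since $\varphi \in \mathcal{S}_{\mathrm{Neu}}$, we have $\int_0^L \Delta\varphi(x)\,\dif x = \partial_x\varphi(L) - \partial_x\varphi(0) = 0$, so the extra term $T\langle \hat h_0 - h_T, \Delta\varphi\rangle$ vanishes. Combined with $\hat{\mathcal{B}}_{0,T}(\varphi) = \mathcal{B}_{0,T}(\varphi)$ from \zcref{eq:Bssame}, subtracting the two displays yields
\[
\langle W_T - W_0 - (\hat W_T - \hat W_0), \varphi\rangle = \langle h_T - h_0 - \hat h_T + \hat h_0 + T/12, \varphi\rangle - \mathcal{B}_{0,T}(\varphi).
\]

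To identify this with both sides of the claim, I would use \zcref{eq:hhatdef-new} at $\hat t = T$ to obtain $\hat h_T = h_0 + \hat h_0 - h_T$, which gives $h_T - h_0 - \hat h_T + \hat h_0 = 2(h_T - h_0)$. This immediately produces \zcref{eq:noise-relation-fwd}. For \zcref{eq:noise-relation}, the same relation $\hat h_T = h_0 + \hat h_0 - h_T$ rearranges to $h_T - h_0 = \hat h_0 - \hat h_T$, so $2\langle h_T - h_0 + T/24, \varphi\rangle = -2\langle \hat h_T - \hat h_0 - T/24, \varphi\rangle$, and $\mathcal{B}_{0,T}(\varphi) = \hat{\mathcal{B}}_{0,T}(\varphi)$ finishes the job. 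No step is really a major obstacle here since all the heavy lifting was already done in establishing \zcref{eq:noiserel} and \zcref{lem:hhat-as}; the mild subtlety is recognizing that $\hat h_0 - h_T$ is a spatial constant so that the inner product against $\Delta\varphi$ drops out under the Neumann test-function class.
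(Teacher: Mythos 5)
Your proposal is correct and follows essentially the same route as the paper: evaluate \zcref{eq:noiserel} and \zcref{eq:noiserel-rev} at the endpoints, observe that the two Laplacian time integrals coincide because $\hat h_{\hat r}$ and $h_{T-\hat r}$ differ by a spatial constant which is annihilated by $\Delta\varphi$ for $\varphi\in\mathcal{S}_{\mathrm{Neu}}$, subtract, and finish with \zcref{eq:Bssame} and \zcref{eq:hhatdef-new}. The only cosmetic difference is that you justify the constancy via \zcref{eq:h0hatdef} explicitly, whereas the paper cites \zcref{eq:htderivtimerev}; these are the same observation.
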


\begin{proof}
Taking $t=T$ and $s=0$ in \zcref{eq:noiserel}, we see that
\begin{align}
\langle\varphi,W_{T}-W_{0}\rangle & =\langle\varphi,h_{T}-h_{0}+T/24\rangle-\frac{1}{2}\int^{T}_{0}\langle\Delta\varphi,h_{s}\rangle\,\dif s-\frac{1}{2}\mathcal{B}_{0,T}(\varphi)\nonumber \\
\ovset{\zcref{eq:Bssame}} & =\langle\varphi,h_{T}-h_{0}+T/24\rangle-\frac{1}{2}\int^{T}_{0}\langle\Delta\varphi,h_{s}\rangle\,\dif s-\frac{1}{2}\hat{\mathcal{B}}_{0,T}(\varphi)\label{eq:noise-eqn-T}
\end{align}
Also, taking $\hat{t}=T$ in \zcref{eq:noiserel-rev} and using \zcref{eq:htderivtimerev},
we get 
\begin{align}
\langle\varphi,\hat{W}_{T}-\hat{W}_{0}\rangle & =\langle\varphi,\hat{h}_{T}-T/24\rangle-\langle\varphi,\hat{h}_{0}\rangle-\frac{1}{2}\int^{T}_{0}\langle\Delta\varphi,h_{s}\rangle\,\dif s+\frac{1}{2}\hat{\mathcal{B}}_{0,T}(\varphi).\label{eq:thatequalsT}
\end{align}
Now we subtract \zcref{eq:thatequalsT} from \zcref{eq:noise-eqn-T}
to obtain
\begin{align*}
\langle\varphi,W_{T}-W_{0}-(\hat{W}_{T}-\hat{W}_{0})\rangle & =\left\langle \varphi,h_{T}-h_{0}-(\hat{h}_{T}-\hat{h}_{0})+T/12\right\rangle -\hat{\mathcal{B}}_{0,T}(\varphi)\\
\ovset{\zcref{eq:hhatdef-new}} & =\left\langle \varphi,-2(\hat{h}_{T}-\hat{h}_{0})+T/12\right\rangle -\hat{\mathcal{B}}_{0,T}(\varphi),
\end{align*}
which is \zcref{eq:noise-relation}. The identity \zcref{eq:noise-relation-fwd}
in terms of the forward processes then follows from \zcref{eq:Bssame}
and \zcref{eq:hhatdef-new}.
\end{proof}

\subsection{The boundary potential approximation and change of measure\label{subsec:approx-inhomog}}

The previous two sections (aside from the definitions \zcref[comp, range]{Z-Robin,eq:uthtderiv})
concerned the case of homogeneous boundary condition $\uu=\vv=0$.
To deal with general $\uu,\vv\in\mathbb{R}$, first recall that on
a formal level, the inhomogeneous Neumann boundary condition is equivalent
with adding a singular boundary potential $-\uu\delta_{0}-\vv\delta_{L}$
to the equation and then studying the periodic case. With the change
of noise $\dif W_{t}\mapsto\dif W_{t}-\uu\delta_{0}-\vv\delta_{L}$,
it is natural to employ the Cameron--Martin theorem and consider
a change of measure. For this approach to work, we need the shift
to live in the Cameron--Martin space associated with the spacetime
white noise $\dif W$, which is the $L^{2}$ space. The goal of this
section is to approximate the singular boundary potential $-\uu\delta_{0}-\vv\delta_{L}$
in order to make such a change of measure possible.

\subsubsection{Equations with boundary potentials}

We start by defining a boundary potential. Some of the notation we
introduce here is not needed in the present section but is helpful
for consistency with the mollifiers that appear later in \zcref{sec:mollifying}.
Fix $\psi\in\mathcal{C}^{\infty}(\mathbb{R})$ such that 
\begin{equation}
\supp\psi\subset(-3/4,-1/4)\cup(1/4,3/4),\qquad\psi\ge0,\qquad\psi(\cdot)=\psi(-\cdot),\qquad\text{and}\qquad\int_{\mathbb{R}}\psi(x)\,\dif x=1.\label{eq:psiproperties}
\end{equation}
For $\eps>0$, define
\begin{equation}
\psi^{\eps}(x)=\eps^{-1}\psi(\eps^{-1}x).\label{eq:psiepsdef}
\end{equation}
The above assumptions imply that
\begin{equation}
\int^{\infty}_{0}\psi^{\eps}(x)\,\dif x=\int^{0}_{-\infty}\psi^{\eps}(x)\,\dif x=\frac{1}{2}.\label{eq:psiepshalflineint}
\end{equation}
For $M\in(0,\infty)$, we define
\begin{equation}
\Sh_{M}=\sum_{q\in M\mathbb{Z}}\delta_{q},\label{eq:Shdef}
\end{equation}
the sum of a Dirac delta distribution at each element of $M\mathbb{Z}$.
(The notation is chosen because the sequence of delta functions resembles
a repeating sequence of Cyrillic capital letter ``$\Sh$''s.) Now
we define, for $\eps\in(0,L^{-1})$,
\begin{equation}
\varphi^{\eps}_{\uu,\vv}(x)=-\uu(\Sh_{2L}*\psi^{\eps})(x)-\vv(\Sh_{2L}*\psi^{\eps})(x-L),\qquad x\in\mathbb{R}.\label{eq:varphiuvdef}
\end{equation}
The choice of the support for $\psi$ is to make sure that the boundary
potential $\varphi^{\eps}_{\uu,\vv}$ lives in each of the test function
spaces $\mathcal{S}_{\mathrm{Dir}}$ and $\mathcal{S}_{\mathrm{Neu}}$.
We also note in particular that
\begin{equation}
\begin{aligned}\langle\varphi^{\eps}_{\uu,\vv},1\rangle & =-\uu\int^{L}_{0}(\Sh_{2L}*\psi^{\eps})(x)\,\dif x-\vv\int^{0}_{-L}(\Sh_{2L}*\psi^{\eps})(x-L)\,\dif x\\
 & =-\uu\int^{\infty}_{0}\psi^{\eps}(x)\,\dif x-\vv\int^{0}_{-\infty}\psi^{\eps}(x)\,\dif x\overset{\zcref{eq:psiepshalflineint}}{=}-\frac{1}{2}(\uu+\vv),
\end{aligned}
\label{eq:phiint}
\end{equation}
where we used that $\eps<L^{-1}$ in the second identity, and also
for future use that (by \zcref{eq:psiepshalflineint,eq:varphiuvdef})
\begin{equation}
\lim_{\eps\downarrow0}\int^{x}_{0}\varphi^{\eps}_{\uu,\vv}(y)\,\dif x=-\frac{\uu}{2}\qquad\text{for any }x\in(0,L).\label{eq:convtodelta}
\end{equation}
It is also worth noting that, when restricted to $[0,L]$, the boundary
potential $\varphi^{\eps}_{\uu,\vv}$ approximates $-\tfrac{1}{2}\uu\delta_{0}-\tfrac{1}{2}\vv\delta_{L}$.
As we perform an even and periodic extension later, it approximates
$-\uu\delta_{0}-\vv\delta_{L}$ on the circle $\mathbb{R}/(2L\mathbb{Z})$.

Consider the mild solution to the stochastic heat equation
\begin{subequations}
\label{eq:Zeps}
\begin{align}
\dif Z^{\eps}_{\uu,\vv;t}(x) & =\oh\Delta Z^{\eps}_{\uu,\vv;t}(x)\,\dif t+\varphi^{\eps}_{\uu,\vv}(x)Z^{\eps}_{\uu,\vv;t}(x)\,\dif t+Z_{\uu,\vv;t}(x)\dif W_{t}(x), &  & t>0,x\in(0,L);\label{eq:Zepsuv-eqn}\\
\partial_{x}Z^{\eps}_{\uu,\vv;t}(0) & =-\oh Z^{\eps}_{\uu,\vv;t}(0)\qquad\text{and}\qquad\partial_{x}Z^{\eps}_{\uu,\vv;t}(L)=\oh Z^{\eps}_{\uu,\vv;t}(L), &  & t>0;\label{eq:Zepsuv-Robin}\\
Z^{\eps}_{\uu,\vv;0}(x) & =\e^{h_{0}(x)}, &  & x\in(0,L).\label{eq:Zepsuv-ic}
\end{align}
\end{subequations}
Compared to \zcref{Z-Robin}, the potential $\varphi^{\eps}_{\uu,\vv}$
in the above equation replaces the part of the boundary condition
coming from the $\uu,\vv$ parameters. Define
\begin{equation}
h^{\eps}_{\uu,\vv;t}\coloneqq\log Z^{\eps}_{\uu,\vv;t}\qquad\text{and}\qquad u^{\eps}_{\uu,\vv;t}\coloneqq\partial_{x}h^{\eps}_{\uu,\vv;t}.\label{eq:huepsuvt}
\end{equation}
Since $\varphi^{\eps}_{0,0}\equiv0$, the equations \zcref{Z-Robin,eq:Zeps}
match when $\uu=\vv=0$, and indeed we have
\begin{equation}
Z^{\eps}_{0,0;t}=Z_{0,0;t},\qquad h^{\eps}_{0,0;t}=h_{0,0;t},\qquad u^{\eps}_{0,0;t}=u_{0,0;t}\qquad\text{for any }\eps>0.\label{eq:zeroepsdoesntmatter}
\end{equation}

Similarly, we can consider the solution to the backward stochastic
heat equation
\begin{subequations}
\label{eq:Zhateps}
\begin{align}
\dif\hat{Z}^{\eps}_{\uu,\vv;\hat{t}}(x) & =\oh\Delta\hat{Z}^{\eps}_{\uu,\vv;\hat{t}}(x)\,\dif\hat{t}-\varphi^{\eps}_{\uu,\vv}(x)\hat{Z}^{\eps}_{\uu,\vv;\hat{t}}(x)\,\dif\hat{t}-Z_{\uu,\vv;\hat{t}}(x)\dif\hat{W}_{\hat{t}}(x), &  & \hat{t}>0,x\in(0,L);\label{eq:Zhatepsuv-eqn}\\
\partial_{x}\hat{Z}^{\eps}_{\uu,\vv;\hat{t}}(0) & =-\oh\hat{Z}^{\eps}_{\uu,\vv;\hat{t}}(0)\qquad\text{and}\qquad\partial_{x}\hat{Z}^{\eps}_{\uu,\vv;\hat{t}}(L)=\oh\hat{Z}^{\eps}_{\uu,\vv;\hat{t}}(L), &  & \hat{t}>0;\label{eq:Zhatepsuv-Robin}\\
\hat{Z}^{\eps}_{\uu,\vv;0}(x) & =\e^{-\int^{x}_{0}\hat{u}_{0}(y)\,\dif y}, &  & x\in(0,L),\label{eq:Zhatepsuv-ic}
\end{align}
\end{subequations}
and define
\begin{equation}
\hat{h}^{\eps}_{\uu,\vv;\hat{t}}\coloneqq-\log Z^{\eps}_{\uu,\vv;\hat{t}}\qquad\text{and}\qquad\hat{u}^{\eps}_{\uu,\vv;\hat{t}}\coloneqq\partial_{x}\hat{h}^{\eps}_{\uu,\vv;\hat{t}}.\label{eq:huhatepsuvt}
\end{equation}

The next proposition states that, as $\eps\downarrow0$, the boundary
potential $\varphi^{\eps}_{\uu,\vv}$ indeed plays the role of the
inhomogeneous Neumann boundary condition for the KPZ equation.
\begin{prop}
\label{prop:epstozeroconv}We have
\begin{equation}
\lim_{\eps\to0}\sup_{\substack{x\in[0,L]\\
t\in[0,T]
}
}|h^{\eps}_{\uu,\vv;t}-h_{\uu,\vv;t}|=0\qquad\text{and}\qquad\lim_{\eps\to0}\sup_{\substack{x\in[0,L]\\
\hat{t}\in[0,T]
}
}|\hat{h}^{\eps}_{\uu,\vv;\hat{t}}-\hat{h}_{\uu,\vv;\hat{t}}|=0\qquad\text{in probability.}\label{eq:bcconverges}
\end{equation}
In addition, for any $\chi\in(0,\oh)$ and $T>0$, the sequence $(\|h^{\eps}_{\uu,\vv}\|_{\mathcal{C}^{\chi}_{\mathfrak{s}}([0,T]\times[0,L])})_{\eps>0}$
is bounded in probability.
\end{prop}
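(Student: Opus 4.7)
The approach is to compare the mild formulations of the two stochastic heat equations \zcref{Z-Robin} and \zcref{eq:Zeps} and show that, as $\eps \downarrow 0$, the potential term $\varphi^\eps_{\uu,\vv} Z^\eps_{\uu,\vv}$ captures precisely the boundary contribution needed to switch the Robin-$(-\oh, \oh)$ boundary condition of $Z^\eps_{\uu,\vv}$ into the Robin-$(\uu-\oh, -\vv+\oh)$ boundary condition of $Z_{\uu,\vv}$. Denoting by $\mathcal{P}^R$ the Robin heat kernel corresponding to the $\uu = \vv = 0$ case and by $\mathcal{P}^{R_{\uu,\vv}}$ the Robin kernel with parameters $\uu,\vv$, I would iterate both mild formulations to obtain Wiener chaos expansions
\[
Z^\eps_{\uu,\vv;t}(x) = \sum_{n \ge 0} \mathcal{I}^{\eps,(n)}_t(x), \qquad Z_{\uu,\vv;t}(x) = \sum_{n \ge 0} \mathcal{I}^{(n)}_t(x),
\]
where $\mathcal{I}^{\eps,(n)}$ is built from $\mathcal{P}^R$ and $n$ factors of $\varphi^\eps_{\uu,\vv}(y)\,\dif y$ or $\dif W_s(y)$, and $\mathcal{I}^{(n)}$ is built from $\mathcal{P}^{R_{\uu,\vv}}$ and $n$ factors of $\dif W$. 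The common initial datum $\e^{h_0}$ appears in both.

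The heart of the argument is the classical Duhamel identity expressing $\mathcal{P}^{R_{\uu,\vv}}$ as a Neumann-type perturbation series of $\mathcal{P}^R$ with boundary insertions of $\uu\delta_0 + \vv\delta_L$, combined with the fact that $\varphi^\eps_{\uu,\vv}(y)\,\dif y$ converges weakly on $[0,L]$ to $-\oh(\uu\delta_0 + \vv\delta_L)$, as recorded in \zcref{eq:phiint,eq:convtodelta}; the factor $\oh$ matches the factor $2$ between a Robin boundary condition at $\{0,L\}$ and its associated even-periodic delta insertion, so the limits match term by term. Pointwise $L^2(\Omega)$ convergence $\mathcal{I}^{\eps,(n)}_t(x) \to \mathcal{I}^{(n)}_t(x)$ upgrades to uniform convergence in $(t,x) \in [0,T] \times [0,L]$ via Kolmogorov's criterion, using $\eps$-uniform moment bounds that follow from standard heat-kernel estimates for $\mathcal{P}^R$ near the boundary; these bounds are summable in $n$ so the limit holds for the full series. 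Combined with the strict positivity of $Z^\eps_{\uu,\vv}$ (from Feynman--Kac, with uniform-in-$\eps$ positive lower bounds in probability), this yields the first convergence in \zcref{eq:bcconverges}. The second convergence in \zcref{eq:bcconverges} follows by the same argument applied to \zcref{eq:Zproblem-Robin-1,eq:Zhateps} with the backward noise $\dif\hat{W}_{\hat{t}}$.

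The parabolic Hölder estimate on $h^\eps_{\uu,\vv}$ follows from Kolmogorov-type estimates on the chaos expansion, using the $\eps$-uniform moment bounds from the previous paragraph together with the standard fact that the open stochastic heat equation has sample paths in $\mathcal{C}^\chi_\mathfrak{s}$ for any $\chi < \oh$; the strict positivity transfers this regularity to the logarithm. The main obstacle in this scheme is the combinatorial step in which the $\eps \downarrow 0$ limit of the $\mathcal{P}^R$-expansion of $Z^\eps_{\uu,\vv}$ must be reassembled into the $\mathcal{P}^{R_{\uu,\vv}}$-expansion of $Z_{\uu,\vv}$; this is a purely deterministic and linear identification, and as an alternative one could run the whole comparison through the Feynman--Kac representation on reflected Brownian motion, deducing the limit from the convergence of the additive functional $\int_0^t \varphi^\eps_{\uu,\vv}(B_s)\,\dif s$ to the boundary local-time expression $-\uu L^0_t(B) - \vv L^L_t(B)$.
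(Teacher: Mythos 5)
Your proposal is sound in outline and contains all the right ingredients (pointwise convergence, $\eps$-uniform moment bounds, a Kolmogorov-type upgrade to uniform convergence, and positivity to pass to $h=\log Z$), but your primary route differs from the paper's. The paper (in \zcref{sec:Basic-properties-of}) does \emph{not} expand in Wiener chaos or invoke a Duhamel perturbation identity between the two Robin kernels. Instead it (i) shows $(Z^{\eps}_{\uu,\vv;t}(x))_{\eps}$ is Cauchy in $L^{2}(\Omega)$ by computing mixed second moments $\mathbb{E}[Z^{\eps_{1}}_{\uu,\vv;t}(x)Z^{\eps_{2}}_{\uu,\vv;t}(x)]$ via the Feynman--Kac formula \zcref{eq:FeynmanKac} and using that the occupation-time functionals $\int_{0}^{t}\varphi^{\eps}_{\uu,\vv}(X_{s})\,\dif s$ converge to boundary local times --- which is exactly the ``alternative'' you sketch in your last sentence; (ii) identifies the limit as the mild Robin solution by a Malliavin/Gaussian integration-by-parts argument on the stochastic integral in the mild formulation, rather than by reassembling a kernel perturbation series; and (iii) obtains the $\eps$-uniform $\mathcal{C}^{\chi}_{\mathfrak{s}}$ bounds by Schauder estimates on the mild equation (\zcref{p.holder}) rather than by chaos-by-chaos Kolmogorov estimates. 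Your chaos-expansion route buys a more explicit, term-by-term identification of how the boundary potential deforms the Robin parameter, at the cost of the combinatorial resummation step you flag; the paper's route avoids that step entirely by never needing an explicit formula for the limiting kernel.

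One soft spot worth tightening: the passage from $Z$ to $h=\log Z$ requires more than ``strict positivity from Feynman--Kac.'' To get uniform convergence of $h^{\eps}_{\uu,\vv}$ and boundedness in probability of its Hölder norm, you need \emph{uniform-in-$\eps$} control of $\sup_{t,x}(Z^{\eps}_{\uu,\vv;t}(x))^{-1}$ in probability. The paper gets this from uniform negative moment bounds (\zcref{prop:moment-bd}, proved by a Jensen/Hölder tilting trick on the Feynman--Kac representation) combined with a chaining argument using the Hölder modulus; simply knowing each $Z^{\eps}$ is a.s.\ positive is not enough, since the infimum could a priori degenerate as $\eps\downarrow 0$.
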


The proof of the above result relies on a corresponding approximation
result for the stochastic heat equation, combined with uniform positive
and negative moment estimates for $Z^{\eps}_{\uu,\vv}$. Since this
is a fairly classical argument, we defer the proof to \zcref{sec:Basic-properties-of}.

\subsubsection{Cameron--Martin changes of measure}

With the smooth boundary potential $\varphi^{\eps}_{\uu,\vv}$, we
change the underlying probability measure so that the law of $\dif W_{t}$
becomes that of $\dif W_{t}+\varphi^{\eps}_{\uu,\vv}$, and thus it
is enough for us to consider the homogeneous problem. More precisely,
by the Cameron--Martin theorem, the laws of the pairs $\left(Z^{\eps}_{\uu,\vv;t},\dif W_{t}+\varphi^{\eps}_{\uu,\vv}\right)_{t\in[0,T]}$
and $\left(Z_{t},\dif W_{t}\right)_{t\in[0,T]}$ are absolutely continuous
with respect to one another, with Radon--Nikodym derivative taking
the form
\begin{equation}
\frac{\dif\Law\left(\left(Z^{\eps}_{\uu,\vv;t},\dif W_{t}+\varphi^{\eps}_{\uu,\vv}\right)_{t\in[0,T]}\right)}{\dif\Law\left(\left(Z_{t},\dif W_{t}\right)_{t\in[0,T]}\right)}=\mathcal{Q}^{\eps}_{\uu,\vv;0,T}.\label{eq:apply-CM}
\end{equation}
Here we have defined
\begin{equation}
\mathcal{Q}^{\eps}_{\uu,\vv;0,T}\coloneqq\exp\left\{ \langle\varphi^{\eps}_{\uu,\vv},W_{T}-W_{0}\rangle-\frac{1}{2}T\|\varphi^{\eps}_{\uu,\vv}\|^{2}_{L^{2}([0,L])}\right\} ,\label{eq:Qepsdef}
\end{equation}
recalling that $\langle\cdot,\cdot\rangle$ is the pairing extending
the inner product on $L^{2}([0,L])$. Similarly, the laws of the processes
$\left(\hat{Z}^{\eps}_{\uu,\vv;\hat{t}},\dif\hat{W}_{\hat{t}}+\varphi^{\eps}_{\uu,\vv}\right)_{\hat{t}\in[0,T]}$
and $\left(\hat{Z}_{\hat{t}},\dif\hat{W}_{\hat{t}}\right)_{\hat{t}\in[0,T]}$
are absolutely continuous with Radon--Nikodym derivative
\begin{equation}
\frac{\dif\Law\left(\left(\hat{Z}^{\eps}_{\uu,\vv;\hat{t}},\dif\hat{W}_{\hat{t}}+\varphi^{\eps}_{\uu,\vv}\right)_{\hat{t}\in[0,T]}\right)}{\dif\Law\left(\left(\hat{Z}_{\hat{t}},\dif\hat{W}_{\hat{t}}\right)_{\hat{t}\in[0,T]}\right)}=\hat{\mathcal{Q}}^{\eps}_{\uu,\vv;0,T},\label{eq:apply-CM-rev}
\end{equation}
with
\begin{equation}
\hat{\mathcal{Q}}^{\eps}_{\uu,\vv;0,T}\coloneqq\exp\left\{ \langle\varphi^{\eps}_{\uu,\vv},\hat{W}_{T}-\hat{W}_{0}\rangle-\frac{1}{2}T\|\varphi^{\eps}_{\uu,\vv}\|^{2}_{L^{2}}\right\} .\label{eq:Qepshatdef}
\end{equation}

In particular, we have
\begin{equation}
\mathbb{E}\left[\mathcal{Q}^{\eps}_{\uu,\vv;0,T}\right]=\mathbb{E}\left[\hat{\mathcal{Q}}^{\eps}_{\uu,\vv;0,T}\right]=1.\label{eq:Qshavemean1}
\end{equation}
In the time reversal argument sketched in \zcref{subsec:Our-method},
it is crucial to rewrite the forward Radon-Nikodym derivative $\mathcal{Q}^{\eps}_{\uu,\vv;0,T}$
in terms of the backward solution and noise. Here is a key relationship
between $\mathcal{Q}^{\eps}_{\uu,\vv;0,T}$ and $\hat{\mathcal{Q}}^{\eps}_{\uu,\vv;0,T}$:
using \zcref{eq:noise-relation-fwd} in \zcref{eq:Qepsdef}, we have
\begin{align}
\mathcal{Q}^{\eps}_{\uu,\vv;0,T} & =\exp\left\{ \langle\varphi^{\eps}_{\uu,\vv},\hat{W}_{T}-\hat{W}_{0}\rangle+2\langle\varphi^{\eps}_{\uu,\vv},h_{T}-h_{0}+T/24\rangle-\mathcal{B}_{0,T}(\varphi^{\eps}_{\uu,\vv})-\frac{1}{2}T\|\varphi^{\eps}_{\uu,\vv}\|^{2}_{L^{2}}\right\} \nonumber \\
\ovset{\zcref{eq:Qepshatdef}} & =\hat{\mathcal{Q}}^{\eps}_{\uu,\vv;0,T}\exp\left\{ 2\langle\varphi^{\eps}_{\uu,\vv},h_{T}-h_{0}+T/24\rangle-\mathcal{B}_{0,T}(\varphi^{\eps}_{\uu,\vv})\right\} \label{eq:Qepsrel-fwd}\\
 & =\hat{\mathcal{Q}}^{\eps}_{\uu,\vv;0,T}\exp\left\{ -2\langle\varphi^{\eps}_{\uu,\vv},\hat{h}_{T}-\hat{h}_{0}-T/24\rangle-\hat{\mathcal{B}}_{0,T}(\varphi^{\eps}_{\uu,\vv})\right\} ,\label{eq:Qepsrel-rev}
\end{align}
where in the last identity we used \zcref{eq:Bssame} and \zcref{eq:hhatdef-new}.
This relation will be used in the proof of the main theorem when we
take the conditional expectation given the future. One should try
to draw the connection between the above expression and what appeared
on the right side of \eqref{e.9152}, where the formal term $\int^{T}_{0}\langle(\partial_{x}\hat{h}_{\hat{t}})^{2},\varphi^{\eps}_{\uu,\vv}\rangle\,\dif\hat{t}$
is now replaced by $\hat{\mathcal{B}}_{0,T}(\varphi^{\eps}_{\uu,\vv})$
which was defined through the energy solution theory.

With the change of measure, whatever statistical quantities of $Z^{\eps}_{\uu,\vv},h^{\eps}_{\uu,\vv},u^{\eps}_{\uu,\vv}$
one may want to calculate, it reduces to the case $\uu=\vv=0$, as
long as we include the Radon-Nikodym derivative $\mathcal{Q}^{\eps}_{\uu,\vv;0,T}$.
The same applies to backward quantities. In particular, we have 
\begin{lem}
\label{lem:l.conZeps}For any $F\in\mathcal{C}_{\mathrm{b}}(\mathcal{C}([0,T]\times[0,L]))$
we have
\begin{equation}
\lim_{\eps\to0}\mathbb{E}\left[\mathcal{Q}^{\eps}_{\uu,\vv;0,T}F\left((Z_{t})_{t\in[0,T]}\right)\right]=\mathbb{E}\left[F\left((Z_{\uu,\vv;t})_{t\in[0,T]}\right)\right]\label{eq:bcconverges-CM}
\end{equation}
and
\begin{equation}
\lim_{\eps\to0}\mathbb{E}\left[\hat{\mathcal{Q}}^{\eps}_{\uu,\vv;0,T}F\left((\hat{Z}_{\hat{t}})_{\hat{t}\in[0,T]}\right)\right]=\mathbb{E}\left[F\left((\hat{Z}_{\uu,\vv;\hat{t}})_{\hat{t}\in[0,T]}\right)\right].\label{eq:bcconverges-CM-rev}
\end{equation}
\end{lem}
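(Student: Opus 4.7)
The plan is to combine the Cameron--Martin identities \zcref{eq:apply-CM,eq:apply-CM-rev} with the approximation result \zcref{prop:epstozeroconv}. Specializing \zcref{eq:apply-CM} to functionals of the pair depending only on the $Z$-component immediately yields the exact identity
\begin{equation*}
\mathbb{E}\left[\mathcal{Q}^{\eps}_{\uu,\vv;0,T}\,F\left((Z_{t})_{t\in[0,T]}\right)\right] = \mathbb{E}\left[F\left((Z^{\eps}_{\uu,\vv;t})_{t\in[0,T]}\right)\right]
\end{equation*}
for any bounded measurable $F$. Conceptually this is just the observation that tilting $\mathbb{P}$ by $\mathcal{Q}^{\eps}_{\uu,\vv;0,T}$ turns the noise $\dif W$ into $\dif W+\varphi^{\eps}_{\uu,\vv}$, which converts the equation \zcref{Z-Robin} for $Z_{t}$ (with $\uu=\vv=0$) into the equation \zcref{eq:Zeps} for $Z^{\eps}_{\uu,\vv;t}$; uniqueness of mild solutions then identifies their laws.

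It therefore remains to let $\eps\downarrow 0$ on the right-hand side. From \zcref{prop:epstozeroconv} we have $h^{\eps}_{\uu,\vv}\to h_{\uu,\vv}$ uniformly on $[0,T]\times[0,L]$ in probability, together with tightness in $\mathcal{C}^{\chi}_{\mathfrak{s}}([0,T]\times[0,L])$ for some $\chi\in(0,\tfrac{1}{2})$. Since the exponential is locally Lipschitz, this upgrades to $Z^{\eps}_{\uu,\vv}=\e^{h^{\eps}_{\uu,\vv}}\to\e^{h_{\uu,\vv}}=Z_{\uu,\vv}$ in probability in $\mathcal{C}([0,T]\times[0,L])$. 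Boundedness and continuity of $F$, combined with dominated convergence, give
\begin{equation*}
\lim_{\eps\to 0}\mathbb{E}\left[F\left((Z^{\eps}_{\uu,\vv;t})_{t\in[0,T]}\right)\right] = \mathbb{E}\left[F\left((Z_{\uu,\vv;t})_{t\in[0,T]}\right)\right],
\end{equation*}
which together with the Cameron--Martin identity above proves \zcref{eq:bcconverges-CM}.

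The backward statement \zcref{eq:bcconverges-CM-rev} is handled by the identical argument: apply \zcref{eq:apply-CM-rev} to rewrite the tilted expectation as $\mathbb{E}[F((\hat Z^{\eps}_{\uu,\vv;\hat t})_{\hat t\in[0,T]})]$, and then invoke the second half of \zcref{prop:epstozeroconv} for convergence of $\hat{h}^{\eps}_{\uu,\vv}\to\hat{h}_{\uu,\vv}$, hence of $\hat{Z}^{\eps}_{\uu,\vv}\to\hat{Z}_{\uu,\vv}$ in $\mathcal{C}([0,T]\times[0,L])$ in probability. No genuine obstacle is expected here: the result is essentially packaging the already-established Cameron--Martin change of measure and the convergence proposition into the form in which they will be used in \zcref{sec:Proof-of-the}.
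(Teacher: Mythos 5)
Your proposal is correct and is essentially the paper's own argument: the paper likewise reduces \zcref{eq:bcconverges-CM} to the exact Cameron--Martin identity $\mathbb{E}[\mathcal{Q}^{\eps}_{\uu,\vv;0,T}F((Z_{t}))]=\mathbb{E}[F((Z^{\eps}_{\uu,\vv;t}))]$ from \zcref{eq:apply-CM} and then invokes the standard approximation $Z^{\eps}_{\uu,\vv}\to Z_{\uu,\vv}$ in probability in $\mathcal{C}([0,T]\times[0,L])$ (established in the appendix via pointwise convergence plus uniform Hölder moment bounds), with the symmetric argument for the backward equation. Your route through $h^{\eps}_{\uu,\vv}\to h_{\uu,\vv}$ and exponentiation is an immaterial variant of the same convergence input.
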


The proof of the above result relies on a standard approximation result
for the stochastic heat equation, namely $Z^{\eps}_{\uu,\vv;t}\to Z_{\uu,\vv;t}$
as $\eps\downarrow0$. We defer the proof to \zcref{subsec:Convergence-as-:}.

On a different note, the nonlinear terms appearing in \zcref{eq:Qepsrel-fwd}
and \zcref{eq:Qepsrel-rev} are defined for $h$ and $\hat{h}$. It
is convenient for us to consider the same quantity with $h,\hat{h}$
replaced by $h^{\eps},\hat{h}^{\eps}$ respectively. Similarly to
\zcref{eq:Bkappastdef} and \zcref{eq:Bhatstdef}, for any $\varphi\in\mathcal{S}_{\mathrm{Neu}}$,
define
\begin{equation}
\mathcal{B}^{\eps,[\kappa]}_{\uu,\vv;s,t}(\varphi)\coloneqq\int^{t}_{s}\int^{L}_{0}\left\{ (\nabla_{\kappa}h^{\eps}_{\uu,\vv;r}(x))^{2}-\frac{1}{\kappa}\right\} \varphi(x)\,\dif x\,\dif r\label{eq:Bepskappadef}
\end{equation}
 and
\begin{equation}
\hat{\mathcal{B}}^{\eps,[\kappa]}_{\uu,\vv;\hat{s},\hat{t}}(\varphi)\coloneqq\int^{\hat{t}}_{\hat{s}}\int^{L}_{0}\left\{ (\nabla_{\kappa}\hat{h}^{\eps}_{\uu,\vv;\hat{r}}(x))^{2}-\frac{1}{\kappa}\right\} \varphi(x)\,\dif x\,\dif\hat{r}.\label{eq:Bhatepskappadef}
\end{equation}
For any $p\in[1,2)$, we have
\begin{align*}
\mathbb{E}\left|\mathcal{B}^{\eps,[\kappa_{1}]}_{\uu,\vv;s,t}(\varphi)-\mathcal{B}^{\eps,[\kappa_{2}]}_{\uu,\vv;s,t}(\varphi)\right|^{p}\ovset{\zcref{eq:apply-CM}} & =\mathbb{E}\left[\mathcal{Q}^{\eps}_{\uu,\vv;0,T}\left|\mathcal{B}^{[\kappa_{1}]}_{s,t}(\varphi)-\mathcal{B}^{[\kappa_{2}]}_{s,t}(\varphi)\right|^{p}\right]\\
 & \le\left(\mathbb{E}[(\mathcal{Q}^{\eps}_{\uu,\vv;0,T})^{\frac{2}{2-p}}]\right)^{1-p/2}\left(\mathbb{E}\left|\mathcal{B}^{[\kappa_{1}]}_{s,t}(\varphi)-\mathcal{B}^{[\kappa_{2}]}_{s,t}(\varphi)\right|^{2}\right)^{p/2},
\end{align*}
and, for any fixed $\eps>0$, the right side goes to $0$ as $\kappa_{1},\kappa_{2}\downarrow0$
by \zcref{eq:Bstdef}. Therefore, the limit
\begin{equation}
\mathcal{B}^{\eps}_{\uu,\vv;s,t}(\varphi)\coloneqq\lim_{\kappa\downarrow0}\mathcal{B}^{\eps,[\kappa]}_{\uu,\vv;s,t}(\varphi)\qquad\text{exists in }L^{p}(\Omega)\text{ for }p\in[1,2).\label{eq:Bepslimit}
\end{equation}
Similarly, the limit
\begin{equation}
\hat{\mathcal{B}}^{\eps}_{\uu,\vv;\hat{s},\hat{t}}(\varphi)\coloneqq\lim_{\kappa\downarrow0}\hat{\mathcal{B}}^{\eps,[\kappa]}_{\uu,\vv;\hat{s},\hat{t}}(\varphi)\qquad\text{exists in }L^{p}(\Omega)\text{ for }p\in[1,2).\label{eq:Bepshatlimit}
\end{equation}

The following lemma is an ``inhomogeneous'' version of \eqref{eq:noiserel},
which we will use later in the proof.
\begin{lem}
We have, for any $f\in\mathcal{S}_{\mathrm{Neu}}$,
\begin{align}
\mathcal{B}^{\eps}_{\uu,\vv;0,T}(f) & =\langle2(h^{\eps}_{\uu,\vv;T}-h^{\eps}_{\uu,\vv;0})+T/12,f\rangle-\int^{T}_{0}\langle h^{\eps}_{\uu,\vv;r},\Delta f\rangle\,\dif r-2\langle W_{T}-W_{0}+T\varphi^{\eps}_{\uu,\vv},f\rangle\qquad\text{a.s.}\label{eq:Bepsnoiserel}
\end{align}
\end{lem}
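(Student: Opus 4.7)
The plan is to recognize the claimed identity as the Cameron--Martin translate of the integral representation \zcref{eq:noiserel} (taken with $s=0$ and $t=T$), pulled back along the bijection of path spaces induced by the noise shift $\dif W_{t}\mapsto\dif W_{t}+\varphi^{\eps}_{\uu,\vv}$. Concretely, rearranging \zcref{eq:noiserel} gives, $\mathbb{P}$-a.s.,
\begin{equation*}
\mathcal{B}_{0,T}(f) = \langle 2(h_{T}-h_{0})+T/12,f\rangle - \int^{T}_{0}\langle h_{r},\Delta f\rangle\,\dif r - 2\langle W_{T}-W_{0},f\rangle.
\end{equation*}
Both sides are measurable functionals of the path $(Z,W)$ on $[0,T]$: the only non-pointwise functional is $\mathcal{B}_{0,T}(f)$, which by \zcref{eq:Bstdef} is the $L^{2}(\mathbb{P})$-limit of the pointwise functionals $\mathcal{B}^{[\kappa]}_{0,T}(f)$ of $h=\log Z$. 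Write this identity schematically as $\Phi((Z,W))=0$ $\mathbb{P}$-a.s.

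Next, I would invoke \zcref{eq:apply-CM}, which asserts that the joint law of $(Z^{\eps}_{\uu,\vv;\cdot},\dif W_{\cdot}+\varphi^{\eps}_{\uu,\vv})$ under $\mathbb{P}$ is absolutely continuous with respect to the joint law of $(Z_{\cdot},\dif W_{\cdot})$ under $\mathbb{P}$, with Radon--Nikodym derivative $\mathcal{Q}^{\eps}_{\uu,\vv;0,T}$, which is $\mathbb{P}$-a.s.\ strictly positive. Since $\Phi$ is a fixed measurable map between path spaces, the $\mathbb{P}$-null event $\{\Phi((Z,W))\neq 0\}$ pulls back to a $\mathbb{P}$-null event under the substitution $(Z,W)\mapsto(Z^{\eps}_{\uu,\vv},W_{\cdot}+\cdot\,\varphi^{\eps}_{\uu,\vv})$. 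Hence the shifted identity $\Phi((Z^{\eps}_{\uu,\vv},W+\cdot\,\varphi^{\eps}_{\uu,\vv}))=0$ also holds $\mathbb{P}$-a.s.

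Finally, I would identify each term of the shifted identity with its counterpart: $h_{r}=\log Z_{r}$ becomes $h^{\eps}_{\uu,\vv;r}=\log Z^{\eps}_{\uu,\vv;r}$ (see \zcref{eq:huepsuvt}); the linear noise term $\langle W_{T}-W_{0},f\rangle$ becomes $\langle W_{T}-W_{0}+T\varphi^{\eps}_{\uu,\vv},f\rangle$; and the $L^{2}(\mathbb{P})$-limit defining $\mathcal{B}_{0,T}(f)$ transports along the substitution to a limit of $\mathcal{B}^{\eps,[\kappa]}_{\uu,\vv;0,T}(f)$ that coincides with $\mathcal{B}^{\eps}_{\uu,\vv;0,T}(f)$ as defined in \zcref{eq:Bepslimit}. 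Rearranging the resulting identity then produces precisely \zcref{eq:Bepsnoiserel}. The only technical subtlety is transferring the limit definition of $\mathcal{B}_{0,T}(f)$ to an almost-sure equality after the shift, rather than a mere convergence statement; this is resolved by the Cauchy--Schwarz computation carried out just before \zcref{eq:Bepslimit}, which combines with the finite moments of $\mathcal{Q}^{\eps}_{\uu,\vv;0,T}$ to upgrade $L^{2}(\mathbb{P})$ convergence for $\mathcal{B}^{[\kappa]}$ into $L^{p}(\mathbb{P})$ convergence ($p<2$) for $\mathcal{B}^{\eps,[\kappa]}_{\uu,\vv;0,T}$ and allows almost-sure identification along a subsequence. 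I do not expect any genuine obstacle beyond this bookkeeping.
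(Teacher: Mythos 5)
Your proposal is correct and follows essentially the same route as the paper: the paper likewise starts from \zcref{eq:noiserel}, observes that the identity holds with probability one, and transfers it to the shifted processes by writing $1=\mathbb{E}[\mathcal{Q}^{\eps}_{\uu,\vv;0,T};\,\cdot\,]$ and invoking \zcref{eq:apply-CM}. The subtlety you flag about transporting the $L^{2}$-limit defining $\mathcal{B}_{0,T}(f)$ is indeed resolved exactly by the Cauchy--Schwarz computation preceding \zcref{eq:Bepslimit}, as you say.
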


\begin{proof}
We have by \zcref{eq:noiserel} that
\[
\mathbb{P}\left(\mathcal{B}_{0,T}(f)=\langle2(h_{T}-h_{0})+T/12,f\rangle-\int^{T}_{0}\langle h_{r},\Delta f\rangle\,\dif r-2\langle W_{T}-W_{0},f\rangle\right)=1,
\]
which means that
\begin{align*}
1 & =\mathbb{E}\left[\mathcal{Q}^{\eps}_{\uu,\vv;0,T};\mathcal{B}_{0,T}(f)=\langle2(h_{T}-h_{0})+T/12,f\rangle-\int^{T}_{0}\langle h_{r},\Delta f\rangle\,\dif r-2\langle W_{T}-W_{0},f\rangle\right]\\
\ovset{\zcref{eq:apply-CM}} & =\mathbb{P}\left[\mathcal{B}_{0,T}(\varphi)=\langle2(h^{\eps}_{\uu,\vv;T}-h^{\eps}_{\uu,\vv;0})+T/12,f\rangle-\int^{T}_{0}\langle h^{\eps}_{\uu,\vv;r},\Delta f\rangle\,\dif r-2\langle W_{T}-W_{0}+T\varphi^{\eps}_{\uu,\vv},f\rangle\right],
\end{align*}
i.e. \zcref{eq:Bepsnoiserel} holds.
\end{proof}

For future use, we record the following symmetry in law under time
reversal. The symmetry requires reversing the signs of the boundary
parameters.
\begin{prop}
\label{prop:symmetry-timerev}We have
\begin{equation}
(Z^{\eps}_{\uu,\vv;t},h^{\eps}_{\uu,\vv;t},u^{\eps}_{\uu,\vv;t},\mathcal{B}^{\eps}_{\uu,\vv;0,t})_{t\in[0,T]}\overset{\mathrm{law}}{=}(\hat{Z}^{\eps}_{-\uu,-\vv;\hat{t}},-\hat{h}^{\eps}_{-\uu,-\vv;\hat{t}},-\hat{u}^{\eps}_{-\uu,-\vv;\hat{t}},\hat{\mathcal{B}}^{\eps}_{-\uu,-\vv;0,\hat{t}})_{\hat{t}\in[0,T]}\label{eq:epsmatchinlaw}
\end{equation}
and
\begin{equation}
(Z_{\uu,\vv;t},h_{\uu,\vv;t},u_{\uu,\vv;t})_{t\in[0,T]}\overset{\mathrm{law}}{=}(\hat{Z}_{-\uu,-\vv;\hat{t}},-\hat{h}_{-\uu,-\vv;\hat{t}},-\hat{u}_{-\uu,-\vv;\hat{t}})_{\hat{t}\in[0,T]}.\label{eq:eps0matchinlaw}
\end{equation}
\end{prop}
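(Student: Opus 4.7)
The strategy is to exhibit the backward system for parameters $(-\uu,-\vv)$ as structurally identical to the forward system for $(\uu,\vv)$, and then invoke strong uniqueness of mild solutions to the Robin stochastic heat equation. Noting that $\varphi^{\eps}_{-\uu,-\vv} = -\varphi^{\eps}_{\uu,\vv}$, the backward equation for $\hat{Z}^{\eps}_{-\uu,-\vv}$ rewrites as
\[
\dif\hat{Z}^{\eps}_{-\uu,-\vv;\hat{t}}(x) = \tfrac{1}{2}\Delta\hat{Z}^{\eps}_{-\uu,-\vv;\hat{t}}(x)\,\dif\hat{t} + \varphi^{\eps}_{\uu,\vv}(x)\hat{Z}^{\eps}_{-\uu,-\vv;\hat{t}}(x)\,\dif\hat{t} + \hat{Z}^{\eps}_{-\uu,-\vv;\hat{t}}(x)\,\dif(-\hat{W}_{\hat{t}})(x),
\]
with Robin boundary data $\partial_x\hat{Z}^{\eps}_{-\uu,-\vv;\hat{t}}(0) = -\tfrac{1}{2}\hat{Z}^{\eps}_{-\uu,-\vv;\hat{t}}(0)$ and $\partial_x\hat{Z}^{\eps}_{-\uu,-\vv;\hat{t}}(L) = \tfrac{1}{2}\hat{Z}^{\eps}_{-\uu,-\vv;\hat{t}}(L)$ matching the forward problem, and initial datum $\hat{Z}^{\eps}_{-\uu,-\vv;0}(x) = \e^{-\hat{h}_0(x)}$, where $\hat{h}_0(x) = \int_0^x \hat{u}_0(y)\,\dif y$. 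Thus $\hat{Z}^{\eps}_{-\uu,-\vv}$ solves the same Robin SHE as $Z^{\eps}_{\uu,\vv}$, driven now by the pair $(-\hat{h}_0,-\hat{W})$ in place of $(h_0, W)$.

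Next I would check that the two driving pairs agree in law with the same independence structure. For the noise, $-\hat{W} \overset{\mathrm{law}}{=} W$ as both are standard space-time white noises on $[0,T]\times[0,L]$. For the initial datum, by stationarity $\hat{u}_0 = u_T$ has the law of spatial white noise on $[0,L]$, so $\hat{h}_0$ is a standard Brownian motion with $\hat{h}_0(0)=0$, and hence $-\hat{h}_0 \overset{\mathrm{law}}{=} h_0$ by Brownian symmetry. Independence of $h_0$ and $W$ is built into the setup; on the backward side, $(\hat{W}_{\hat{t}})_{\hat{t}\in[0,T]}$ is an $\{\hat{\mathscr{F}}_{\hat{t}}\}$-adapted Brownian process starting at $0$, whose increments are therefore independent of $\hat{\mathscr{F}}_0 = \hat{\mathscr{G}}_0 \supseteq \sigma(\hat{u}_0) \supseteq \sigma(\hat{h}_0)$, giving $\hat{h}_0 \perp \hat{W}$.

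By strong uniqueness of mild solutions to the Robin SHE, $Z^{\eps}$ is a measurable functional of its initial datum and driving noise, so the law-matching of the two driving pairs lifts to $(Z^{\eps}_{\uu,\vv;t})_{t\in[0,T]} \overset{\mathrm{law}}{=} (\hat{Z}^{\eps}_{-\uu,-\vv;\hat{t}})_{\hat{t}\in[0,T]}$. The $h$- and $u$-components of the claim then follow from $h^{\eps}_{\uu,\vv} = \log Z^{\eps}_{\uu,\vv}$ and $-\hat{h}^{\eps}_{-\uu,-\vv} = \log \hat{Z}^{\eps}_{-\uu,-\vv}$ and spatial differentiation. For the $\mathcal{B}^{\eps}$-coordinate I would use that the $\kappa$-discretized quantity $\mathcal{B}^{\eps,[\kappa]}_{\uu,\vv;0,t}(\varphi)$ is a continuous functional of $h^{\eps}$ through the \emph{squared} increments $(\nabla_{\kappa}h^{\eps})^2$, which are invariant under $h^{\eps}\mapsto -h^{\eps}$. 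This upgrades the joint law matching to include $\mathcal{B}^{\eps,[\kappa]}_{\uu,\vv;0,t}(\varphi) \overset{\mathrm{law}}{=} \hat{\mathcal{B}}^{\eps,[\kappa]}_{-\uu,-\vv;0,\hat{t}}(\varphi)$ jointly for every $\kappa > 0$ and every $\varphi \in \mathcal{S}_{\mathrm{Neu}}$, and passing to the $L^p$-limit as $\kappa \to 0$ yields the first identity. The $\eps$-free identity follows by letting $\eps \to 0$ and invoking the uniform convergence in Proposition~\ref{prop:epstozeroconv}.

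The only delicate step is the independence of $\hat{W}$ from $\hat{u}_0$: this rests on the observation that $\hat{\mathscr{F}}_0 = \hat{\mathscr{G}}_0$ from the construction of the backward filtration, so that the general fact that a Brownian motion in a filtration, started from $0$, is independent of the initial $\sigma$-algebra applies. Everything else is routine bookkeeping of laws and an appeal to standard well-posedness.
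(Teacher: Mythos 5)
Your proof is correct and follows essentially the same route as the paper's: compare the forward and backward Robin SHEs, match the laws of the driving data (using $\varphi^{\eps}_{-\uu,-\vv}=-\varphi^{\eps}_{\uu,\vv}$, $-\hat{W}\overset{\mathrm{law}}{=}W$, $-\hat{h}_0\overset{\mathrm{law}}{=}h_0$), and conclude via uniqueness of mild solutions. You additionally spell out two points the paper leaves implicit — the independence of $\hat{W}$ from $\hat{u}_0$ via the $\{\hat{\mathscr{F}}_{\hat{t}}\}$-martingale property, and the transfer of the $\mathcal{B}^{\eps}$-coordinate through the $\kappa$-discretization and the sign-invariance of squared increments — both of which are handled correctly.
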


\begin{proof}
To see that $(Z^{\eps}_{\uu,\vv;t})_{t\in[0,T]}\overset{\mathrm{law}}{=}(\hat{Z}^{\eps}_{-\uu,-\vv;\hat{t}})_{\hat{t}\in[0,T]}$,
we simply compare \zcref{eq:Zeps} and \zcref{eq:Zhateps}, using
the facts that $\varphi^{\eps}_{\uu,\vv}=-\varphi^{\eps}_{-\uu,-\vv}$,
that $(\dif W_{t})_{t\in[0,T]}\overset{\mathrm{law}}{=}(-\dif\hat{W}_{\hat{t}})_{\hat{t}\in[0,T]}$,
and that $(h_{0}(x))_{x\in[0,L]}$ has the same law as $(-\int^{x}_{0}\hat{u}_{0}(y)\,\dif y)_{x}$
since both processes are standard Brownian motions starting at $0$.
This yields \zcref{eq:epsmatchinlaw}. The equivalence in law stated
in \zcref{eq:eps0matchinlaw} then follows from the convergence statements
in \zcref{prop:epstozeroconv}.
\end{proof}

\subsection{Noise mollification\label{sec:mollifying}}

Throughout our analysis, it will often be convenient to work with
equations with mollified noise, since this will allow us to take the
derivative with respect to the spatial variable and obtain a continuous
function. In this section we will introduce the mollified-noise problems
and show that they approximate the white-noise problems as one removes
the mollification. As these results are fairly standard, we defer
the proofs to the appendix.

Since our equations are posed on a finite domain, the notion of mollification
is slightly more subtle than in cases without boundary conditions,
in particular in terms of how the noise is treated near the boundary.
To address this issue, before introducing the mollification, we first
extend the noises $(\dif W_{t})$ and $(\dif\hat{W}_{\hat{t}})$ to
all of $\mathbb{R}$, first by reflecting them about the origin to
$[-L,L]$, and then by taking the $2L$-periodic extension. This means
that for any $f\in L^{2}(\mathbb{R})$, we have
\begin{equation}
\int^{t}_{s}\int_{\mathbb{R}}f(x)\,\dif W_{r}(x)=\int^{t}_{s}\int^{L}_{0}\sum_{q\in2L\mathbb{Z}}\left(f(x+q)+f(-x+q)\right)\,\dif W_{r}(x).\label{eq:extendthenoise}
\end{equation}
We perform a similar extension for $(\dif\hat{W}_{\hat{t}})$. Thus,
formally, we have
\[
\mathbb{E}[\dif W_{t}(x)\dif W_{t'}(x')]=\delta(t-t')\sum_{q\in2L\mathbb{Z}}[\delta(x-x'+q)+\delta(x+x'+q)]\overset{\zcref{eq:Shdef}}{=}\delta(t-t')\left[\Sh_{2L}(x-x')+\Sh_{2L}(x+x')\right],
\]
and similarly for $(\dif\hat{W}_{\hat{t}})$.

Now we introduce our choice of mollifier. Fix $\rho\in\mathcal{C}^{\infty}(\mathbb{R})$
such that 
\begin{equation}
\supp\rho\subset(-\oh,\oh),\qquad\rho\ge0,\qquad\rho(\cdot)=\rho(-\cdot),\qquad\text{and}\qquad\int_{\mathbb{R}}\rho(x)\,\dif x=1.\label{eq:rhoproperties}
\end{equation}
For $\zeta>0$, define
\begin{equation}
\rho^{\zeta}(x)\coloneqq\zeta^{-1}\rho(\zeta^{-1}x).\label{eq:rhoepsdef}
\end{equation}
We also define
\begin{equation}
R(x)\coloneqq\rho^{*2}(x)\qquad\text{and}\qquad R^{\zeta}(x)\coloneqq\rho^{\zeta}*\rho^{\zeta}(x)\overset{\zcref{eq:rhoepsdef}}{=}\zeta^{-1}R(\zeta^{-1}x).\label{eq:RRepsdef}
\end{equation}
Recalling the definition \zcref{eq:Shdef}, for $M>0$ and $x\in\mathbb{R}$,
we also define
\begin{equation}
\Sh^{\zeta}_{M}(x)=\Sh_{M}*R^{\zeta}(x)=\sum_{q\in M\mathbb{Z}}R^{\zeta}(x+q).\label{eq:Shepsdef}
\end{equation}
We note that
\begin{equation}
\Sh^{\zeta}_{M}(2x)\overset{\zcref{eq:Shepsdef}}{=}\sum_{q\in M\mathbb{Z}}R^{\zeta}(2x+q)=\sum_{q\in\frac{M}{2}\mathbb{Z}}R^{\zeta}(2(x+q))\overset{\zcref{eq:RRepsdef}}{=}\frac{1}{2}\sum_{q\in\frac{M}{2}\mathbb{Z}}(2/\zeta)R((2/\zeta)(x+q))\overset{\zcref{eq:Shepsdef}}{=}\frac{1}{2}\Sh^{\zeta/2}_{M/2}(x).\label{eq:Shrescale}
\end{equation}

Using the mollifier $\rho$ and its rescaling defined in \zcref[range]{eq:rhoproperties,eq:rhoepsdef}--\zcref{eq:rhoepsdef},
we define the mollified noises as
\begin{equation}
\dif W^{\zeta}_{r}(x)=\rho^{\zeta}*\dif W_{r}(x)\qquad\text{and}\qquad\dif\hat{W}^{\zeta}_{r}(x)=\rho^{\zeta}*\dif\hat{W}_{r}(x),\qquad x\in\mathbb{R}.\label{eq:difWzeta}
\end{equation}
Recalling the definitions \zcref{eq:RRepsdef,eq:Shepsdef}, we have
\begin{equation}
\begin{aligned}\mathbb{E}[\dif W^{\zeta}_{t}(x)\dif W^{\zeta}_{t'}(x')] & =\delta(t-t')\sum_{q\in2L\mathbb{Z}}[R^{\zeta}(x-x'+q)+R^{\zeta}(x+x'+q)]\\
 & =\delta(t-t')\left[\Sh^{\zeta}_{2L}(x-x')+\Sh^{\zeta}_{2L}(x+x')\right],
\end{aligned}
\label{eq:dWzetacov}
\end{equation}
and similarly for $(\dif\hat{W}_{\hat{t}})$. In particular, we have
the differential quadratic variation
\begin{equation}
\dif[W^{\zeta}(x)]_{t}=\left(\Sh^{\zeta}_{2L}(0)+\Sh^{\zeta}_{2L}(2x)\right)\dif t\overset{\zcref{eq:Shrescale}}{=}\left(\Sh^{\zeta}_{2L}(0)+\frac{1}{2}\Sh^{\zeta/2}_{L}(x)\right)\dif t.\label{eq:WzetaQV}
\end{equation}

We also need to mollify the initial data. First, extend the distributions
$u_{0}=\partial_{x}h_{0}$ and $\hat{u}_{0}=\partial_{x}\hat{h}_{0}$
(which are spatial white noises) from $[0,L]$ to $\mathbb{R}$ by
first performing an \emph{odd }extension to $[-L,L]$ and then periodizing.
Then we define
\begin{equation}
\eta^{\zeta}=\rho^{\zeta}*u_{0}\qquad\text{and}\qquad\hat{\eta}^{\zeta}=\rho^{\zeta}*\hat{u}_{0},\label{eq:etazetadef}
\end{equation}
as well as
\begin{equation}
A^{\zeta}(x)=\int^{x}_{0}\eta^{\zeta}(y)\,\dif y\qquad\text{and}\qquad\hat{A}^{\zeta}(x)=\int^{x}_{0}\hat{\eta}^{\zeta}(y)\,\dif y.\label{eq:Azetadef}
\end{equation}
Similarly to \zcref{eq:dWzetacov}, we have
\begin{equation}
\mathbb{E}[\eta^{\zeta}(x)\eta^{\zeta}(x')]=\Sh^{\zeta}_{2L}(x-x')-\Sh^{\zeta}_{2L}(x+x').\label{eq:etazetacov}
\end{equation}
The $-$ sign (as compared to the $+$ sign in \zcref{eq:dWzetacov})
is because we performed an odd rather than even extension. We also
note that $A^{\zeta}$ and $\hat{A}^{\zeta}$ are each even, $2L$-periodic
functions.

With the mollified noise and initial data, we define $(Z^{\eps,\zeta}_{\uu,\vv;t})_{t}$
to be the mild solution to the stochastic heat equation
\begin{subequations}
\label{Zepszeta}
\begin{align}
\dif Z^{\eps,\zeta}_{\uu,\vv;t}(x) & =\oh\Delta Z^{\eps,\zeta}_{\uu,\vv;t}(x)\dif t+Z^{\eps,\zeta}_{\uu,\vv;t}(x)\left(\varphi^{\eps}_{\uu,\vv}+\oh\Sh^{\zeta/2}_{L}\right)(x)\dif t+Z^{\eps,\zeta}_{\uu,\vv;t}(x)\dif W^{\zeta}_{t}(x), &  & t>0,x\in\mathbb{R};\label{eq:Zepszeta-eqn}\\
Z^{\eps,\zeta}_{\uu,\vv;0}(x) & =\e^{A^{\zeta}(x)}, &  & x\in\mathbb{R},\label{eq:Zepszeta-ic}
\end{align}
\end{subequations}
and similarly
\begin{subequations}
\label{Zepszetahat}
\begin{align}
\dif\hat{Z}^{\eps,\zeta}_{\uu,\vv;t}(x) & =\oh\Delta\hat{Z}^{\eps,\zeta}_{\uu,\vv;t}(x)\dif t-\hat{Z}^{\eps,\zeta}_{\uu,\vv;t}(x)\left(\varphi^{\eps}_{\uu,\vv}-\oh\Sh^{\zeta/2}_{L}\right)(x)\dif t-\hat{Z}^{\eps,\zeta}_{\uu,\vv;t}(x)\dif\hat{W}^{\zeta}_{t}(x), &  & t>0,x\in\mathbb{R};\label{eq:Zhatepszeta-eqn}\\
\hat{Z}^{\eps,\zeta}_{\uu,\vv;0}(x) & =\e^{-\hat{A}^{\zeta}(x)}, &  & x\in\mathbb{R},\label{eq:Zhatepszeta-ic}
\end{align}
\end{subequations}
These processes are smooth approximations of the processes $Z^{\eps}_{\uu,\vv},\hat{Z}^{\eps}_{\uu,\vv}$
defined in \zcref{eq:Zeps} and \zcref{eq:Zhateps}. In addition to
mollifying the white noise in the spatial variable and mollifying
the initial data, we have also approximated the boundary conditions
by the corresponding boundary potential terms. Although the equations
are written as if posed on $\mathbb{R}$, they should in fact be interpreted
on $[-L,L]$ with periodic boundary conditions.
\begin{rem}
In the equations for $Z^{\eps,\zeta}_{\uu,\vv}$ and $\hat{Z}^{\eps,\zeta}_{\uu,\vv}$,
two distinct boundary potentials appear: $\varphi^{\eps}_{\uu,\vv}$
and $\tfrac{1}{2}\Sh^{\zeta/2}_{L}$. These should not be confused.
The term $\varphi^{\eps}_{\uu,\vv}$ should be regarded as the approximation
of the \emph{inhomogeneous} Neumann boundary condition for the KPZ
equation with parameters $\uu,\vv$, whereas $\tfrac{1}{2}\Sh^{\zeta/2}_{L}$
approximates the Robin boundary condition for the \emph{homogeneous}
stochastic heat equation. For our purposes, we always assume $\zeta\ll\eps$. 
\end{rem}

Define
\begin{equation}
h^{\eps,\zeta}_{\uu,\vv;t}=\log Z^{\eps,\zeta}_{\uu,\vv;t}\qquad\text{and}\qquad\hat{h}^{\eps,\zeta}_{\uu,\vv;\hat{t}}=-\log Z^{\eps,\zeta}_{\uu,\vv;\hat{t}}\label{eq:hepszetalog}
\end{equation}
and
\[
u^{\eps,\zeta}_{\uu,\vv;t}=\partial_{x}h^{\eps,\zeta}_{\uu,\vv;t}\qquad\text{and}\qquad\hat{u}^{\eps,\zeta}_{\uu,\vv;\hat{t}}=\partial_{x}\hat{h}^{\eps,\zeta}_{\uu,\vv;\hat{t}}.
\]
In \zcref{Zepszeta,Zepszetahat}, the notion of mild solution is interpreted
in the classical sense, using the usual heat kernel
\begin{equation}
p_{t}(x)=\frac{\mathbf{1}_{t\ge0}}{\sqrt{2\pi t}}\e^{-x^{2}/(2t)}\label{eq:ptdef}
\end{equation}
(with the usual convention that $p_{0}$ is a delta distribution at
the origin), and we have
\begin{equation}
\begin{aligned}Z^{\eps,\zeta}_{\uu,\vv;t}(x) & =\int^{\infty}_{-\infty}p_{t}(x-y)\e^{A^{\zeta}(y)}\,\dif y+\int^{t}_{0}\int^{\infty}_{-\infty}p_{t-s}(x-y)Z^{\eps,\zeta}_{\uu,\vv;s}(y)\left(\varphi^{\eps}_{\uu,\vv}+\oh\Sh^{\zeta/2}_{L}\right)(y)\,\dif y\,\dif s\\
 & \qquad+\int^{t}_{0}\int^{\infty}_{-\infty}p_{t-s}(x-y)Z^{\eps,\zeta}_{\uu,\vv;s}(y)\,\dif W^{\zeta}_{s}(y).
\end{aligned}
\label{eq:Zepszetamild}
\end{equation}

By \zcref{eq:hepszetalog}, Itô's formula, and \zcref{eq:WzetaQV},
we deduce that the process $(h^{\eps,\zeta}_{\uu,\vv;t})$ satisfies
the KPZ equation with mollified noise:
\begin{subequations}
\label{eq:hepszeta}
\begin{align}
\dif h^{\eps,\zeta}_{\uu,\vv;t}(x) & =\left(\frac{1}{2}\Delta h^{\eps,\zeta}_{\uu,\vv;t}(x)+\frac{1}{2}|\nabla h^{\eps,\zeta}_{\uu,\vv;t}(x)|^{2}+\varphi^{\eps}_{\uu,\vv}(x)+\frac{1}{4}\Sh^{\zeta/2}_{L}(x)-\frac{1}{2}\Sh^{\zeta}_{2L}(0)\right)\dif t+\dif W^{\zeta}_{t}(x),\ \ t>0,x\in\mathbb{R};\label{eq:hepszeta-eqn}\\
h^{\eps,\zeta}_{\uu,\vv;0}(x) & =A^{\zeta}(x),\quad x\in\mathbb{R}.\label{eq:hepszeta-ic}
\end{align}
\end{subequations}
Here the coefficient $1/4$ in \eqref{eq:hepszeta-eqn} is different
from the $1/2$ in \eqref{eq:Zepszeta-eqn}, due to the Itô correction
term appearing in the Cole--Hopf transform. Similarly, the process
$(u^{\eps,\zeta}_{\uu,\vv;t})$ satisfies the stochastic Burgers equation
with mollified noise:
\begin{subequations}
\label{eq:uepszeta}
\begin{align}
\dif u^{\eps,\zeta}_{\uu,\vv;t}(x) & =\left(\frac{1}{2}\Delta u^{\eps,\zeta}_{\uu,\vv;t}+\frac{1}{2}\partial_{x}((u^{\eps,\zeta}_{\uu,\vv;t})^{2})+\partial_{x}\varphi^{\eps}_{\uu,\vv}+\frac{1}{4}\partial_{x}\Sh^{\zeta/2}_{L}\right)(x)\dif t+\partial_{x}\dif W^{\zeta}_{t}(x), &  & t>0,x\in\mathbb{R};\label{eq:uepszeta-eqn}\\
u^{\eps,\zeta}_{\uu,\vv;0}(x) & =\partial_{x}A^{\zeta}(x), &  & x\in\mathbb{R}.\label{eq:uepszeta-ic}
\end{align}
\end{subequations}

Here is the main result of this section, which will be proved in \zcref{subsec:Convergence-as-:}.

\begin{lem}
\label{lem:convofzeta}For each fixed $\eps>0$, we have
\[
\lim_{\zeta\to0}\sup_{\substack{t\in[0,T]\\
x\in[0,L]
}
}|h^{\eps,\zeta}_{\uu,\vv;t}(x)-h^{\eps}_{\uu,\vv;t}(x)|=0\qquad\text{and}\qquad\lim_{\zeta\to0}\sup_{\substack{\hat{t}\in[0,T]\\
x\in[0,L]
}
}|\hat{h}^{\eps,\zeta}_{\uu,\vv;\hat{t}}(x)-\hat{h}^{\eps}_{\uu,\vv;\hat{t}}(x)|=0
\]
in probability.
\end{lem}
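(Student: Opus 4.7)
The plan is to reduce, via the Cole--Hopf identities $h^{\eps,\zeta}_{\uu,\vv;t} = \log Z^{\eps,\zeta}_{\uu,\vv;t}$ from \eqref{eq:hepszetalog} and $h^{\eps}_{\uu,\vv;t} = \log Z^{\eps}_{\uu,\vv;t}$ from \eqref{eq:huepsuvt}, to the SHE-level convergence statement
\[
\lim_{\zeta\to 0} \sup_{t\in[0,T],\,x\in[0,L]} \bigl|Z^{\eps,\zeta}_{\uu,\vv;t}(x) - Z^{\eps}_{\uu,\vv;t}(x)\bigr| = 0 \qquad \text{in probability,}
\]
combined with a uniform-in-$\zeta$ strictly positive lower bound on both processes so that $\log$ is Lipschitz on the relevant range. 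The argument for the backward objects $\hat h^{\eps,\zeta}_{\uu,\vv}$ is identical after substituting $(W,h_0)$ by $(\hat W,-\hat h_0)$ throughout.

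The central observation is that the mollified problem \eqref{Zepszeta}, read on $[-L,L]$ with $2L$-periodic boundary conditions and an even solution, is designed to approximate the Robin-boundary problem \eqref{eq:Zeps} on $[0,L]$. Indeed, the even $2L$-periodic extension of a function satisfying the homogeneous Robin conditions $\partial_x Z^{\eps}(0) = -\tfrac{1}{2}Z^{\eps}(0)$ and $\partial_x Z^{\eps}(L) = \tfrac{1}{2}Z^{\eps}(L)$ has derivative jumps at $x\in L\mathbb{Z}$ whose distributional Laplacian contributes $-\tfrac{1}{2}Z^{\eps}\,\Sh_L$; this is precisely cancelled by the boundary potential $+\tfrac{1}{2}\Sh^{\zeta/2}_L$ appearing in \eqref{eq:Zepszeta-eqn}, the scale $\zeta/2$ rather than $\zeta$ reflecting the Itô self-interaction of the mollified noise as recorded in \eqref{eq:WzetaQV}. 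With this identification, both $Z^{\eps,\zeta}_{\uu,\vv}$ restricted to $[0,L]$ and $Z^{\eps}_{\uu,\vv}$ solve mild formulations against the same homogeneous Robin heat kernel on $[0,L]$, driven by the same smooth bounded potential $\varphi^{\eps}_{\uu,\vv}$, and differing only through the mollification scale $\zeta$ in the noise and initial data. A standard Picard--Grönwall estimate on the difference of the two mild formulations, combined with uniform $L^{p}$ moment bounds for $Z^{\eps,\zeta}_{\uu,\vv}$ independent of $\zeta$ (obtained by Burkholder--Davis--Gundy and iteration), then yields the desired uniform convergence in probability. The needed inputs are the uniform convergence of $A^{\zeta}\to h_0$ on $[0,L]$ and the convergence of $\dif W^{\zeta}$ to $\dif W$ tested against the Robin heat kernel, both straightforward consequences of the mollification definitions in \eqref{eq:difWzeta} and \eqref{eq:Azetadef}.

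To pass from $Z$ to $h=\log Z$, I would invoke strict positivity: $Z^{\eps}_{\uu,\vv}$ is almost surely bounded below by a strictly positive random constant on $[0,T]\times[0,L]$, a standard result for the linear stochastic heat equation with smooth bounded multiplicative potential, derivable either from a Feynman--Kac representation or from Mueller's comparison principle; and the same lower bound holds uniformly in $\zeta\in(0,\zeta_0)$ with high probability for the approximations $Z^{\eps,\zeta}_{\uu,\vv}$ by the same argument applied with $\zeta$-independent constants. Continuity of $\log$ on $[c,\infty)$ for $c>0$ converts the uniform convergence of the $Z$'s into uniform convergence of the $h$'s, completing the proof. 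The main technical obstacle is controlling the difference of the mild formulations in the presence of the boundary potential $\tfrac{1}{2}\Sh^{\zeta/2}_L$, whose $L^{\infty}$ norm diverges as $\zeta\downarrow0$; the resolution is to keep this singular potential paired with the heat semigroup throughout the estimate, so that the combined object is (an approximation of) the Robin heat kernel and enjoys $\zeta$-uniform bounds, rather than being split off via the triangle inequality.
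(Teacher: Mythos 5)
Your overall route is viable, but it is genuinely different from the paper's, and its central step is asserted rather than proved. The paper (in \zcref{subsec:Convergence-as-:}) never compares the two mild formulations directly. Instead it (i) proves \emph{pointwise} convergence in probability of $Z^{\eps,\zeta}_{\uu,\vv;t}(x)$ to $Z^{\eps}_{\uu,\vv;t}(x)$ by showing the family is Cauchy in $L^{2}(\mathbb{P})$, computing the mixed second moments $\mathbb{E}[Z^{\eps,\zeta_{1}}Z^{\eps,\zeta_{2}}]$ via the Feynman--Kac formula \zcref{eq:FeynmanKac} and replica Brownian motions, and identifying the limit through Malliavin integration by parts against the mild formula for \zcref{eq:Zeps}; and (ii) upgrades pointwise to uniform convergence by a tightness criterion, using $\zeta$-uniform parabolic H\"older bounds on $h^{\eps,\zeta}_{\uu,\vv}$ (\zcref{p.holder}), which are themselves obtained from Schauder estimates together with uniform positive \emph{and negative} moments of $Z^{\eps,\zeta}_{\uu,\vv}$ (\zcref{prop:moment-bd}). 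Your negative-moment/strict-positivity step for passing through the logarithm is essentially the same ingredient the paper uses, so that part matches in substance.

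The gap is in your reduction. At finite $\zeta$ it is \emph{not} true that $Z^{\eps,\zeta}_{\uu,\vv}$ restricted to $[0,L]$ solves a mild formulation against the Robin heat kernel: per \zcref{eq:Zepszetamild} it solves a Duhamel formula against the free (periodized) heat kernel with $\tfrac12\Sh^{\zeta/2}_{L}$ appearing as a forcing, and the identification with the Robin semigroup holds only in the limit $\zeta\to0$. Your proposed fix --- ``keep the singular potential paired with the heat semigroup'' so that the combined object is an approximation of the Robin kernel --- is exactly the hard analytic content of the lemma: one needs quantitative, $\zeta$-uniform convergence of the $\delta$-potential-perturbed semigroup to the Robin semigroup in a norm strong enough to close a sup-norm Gr\"onwall estimate, together with control of the stochastic-convolution difference coming from replacing $\dif W^{\zeta}$ by $\dif W$. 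This is known in principle (it is close to arguments of Parekh and Gerencs\'er--Hairer), but it is a substantial input that your proposal does not supply, and it is precisely what the paper's two-point-function/tightness decomposition is designed to avoid: the paper only needs the delta-potential-to-Robin identification \emph{at} $\zeta=0$, where it follows softly from the weak formulation, rather than uniformly along the approximation. If you want to pursue your route, the kernel comparison must be made quantitative; otherwise the Gr\"onwall step cannot be closed.
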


\section{\label{sec:Proof-of-the}Proof of the main theorem}

In this section, we state the two main technical results of the paper
and show how they combine to yield the proof of the main theorem.
As discussed in \zcref{subsec:Our-method}, what we require are suitable
versions of \zcref{e.paininthe} and \zcref{e.915ma}. \zcref{prop:DtildeMG,prop:boundaryterm}
below provide their rigorous counterparts.

We first introduce some notation. Recalling the definition of $\mathcal{Y}_{\uu,\vv}(u)$
given in \zcref{eq:Duv-def}, define
\begin{equation}
\tilde{\mathcal{Y}}^{\eps}_{\uu,\vv}(h)\coloneqq\exp\left\{ -2\left\langle \varphi^{\eps}_{\uu,\vv},h\right\rangle \right\} \mathcal{Y}_{\uu,\vv}(\partial_{x}h)\label{eq:Dtildeepsdef}
\end{equation}
and
\begin{equation}
\tilde{\mathcal{Y}}_{\uu,\vv}(h)\coloneqq\e^{\uu h(0)+\vv h(L)}\mathcal{Y}_{\uu,\vv}(\partial_{x}h)\overset{\zcref{eq:Duv-def}}{=}\mathfrak{Z}^{-1}_{\uu,\vv}\mathrm{E}_{B}\left[\e^{\uu B(0)+\vv B(L)}\left(\int^{L}_{0}\e^{-(h(x)-B(x))}\,\dif x\right)^{-\mathsf{u}-\mathsf{v}}\right].\label{eq:Dtildedef}
\end{equation}
The main ingredients in the proof of \zcref{thm:mainthm}, and indeed
the main technical results of our work, are the following two propositions.

The first proposition provides a rigorous counterpart to the heuristic
description in \zcref{e.915ma}. 
\begin{prop}
\label{prop:DtildeMG}The process $(\hat{M}_{\uu,\vv;\hat{t}})_{\hat{t}\in[0,T]}$
defined by
\begin{equation}
\hat{M}_{\uu,\vv;\hat{t}}\coloneqq\e^{\left((\uu^{3}+\vv^{3})/6-(\uu+\vv)/24\right)\hat{t}}\tilde{\mathcal{Y}}_{\uu,\vv}(\hat{h}_{\uu,\vv;\hat{t}})\label{eq:DtildeMG}
\end{equation}
is an $\{\hat{\mathscr{F}}_{\hat{t}}\}$-martingale. In particular,
\begin{equation}
\mathbb{E}\left[\e^{\left((\uu^{3}+\vv^{3})/6-(\uu+\vv)/24\right)T}\tilde{\mathcal{Y}}_{\uu,\vv}(\hat{h}_{\uu,\vv;T})\ \middle|\ \hat{\mathscr{F}}_{0}\right]=\tilde{\mathcal{Y}}_{\uu,\vv}(\hat{h}_{0}).\label{eq:MGapp}
\end{equation}
\end{prop}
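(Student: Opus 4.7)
The plan is to apply Itô's formula directly, exploiting the Cole--Hopf transform to replace the singular KPZ nonlinearity by the tractable linear Robin stochastic heat equation. Writing $\hat{Z}_{\uu,\vv;\hat{t}} = \e^{-\hat{h}_{\uu,\vv;\hat{t}}}$, the definition \zcref{eq:Dtildedef} becomes
\begin{equation*}
\tilde{\mathcal{Y}}_{\uu,\vv}(\hat{h}_{\uu,\vv;\hat{t}}) = \mathfrak{Z}^{-1}_{\uu,\vv}\mathrm{E}_{B}\bigl[\e^{\uu B(0)+\vv B(L)}\,I_{\hat{t}}(B)^{-\uu-\vv}\bigr],
\qquad I_{\hat{t}}(B)\coloneqq\int_0^L \hat{Z}_{\uu,\vv;\hat{t}}(x)\e^{B(x)}\,\dif x,
\end{equation*}
where $B$ is the auxiliary Brownian path, independent of $\hat{W}$. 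Since $\hat{Z}_{\uu,\vv;\hat{t}}$ is the mild solution to the linear equation \zcref{eq:Zproblem-Robin-1}, $\tilde{\mathcal{Y}}_{\uu,\vv}(\hat{h}_{\uu,\vv;\hat{t}})$ is a smooth (power-type) functional of $\hat{Z}_{\uu,\vv;\hat{t}}$, and infinite-dimensional Itô will yield an explicit decomposition into a $\dif\hat{W}$-martingale part and a drift.

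To carry out the Itô computation one conditions on $B$ and treats $I_{\hat{t}}(B)$ as a one-dimensional semimartingale in $\hat{t}$, with $\dif I_{\hat{t}}=\tfrac{1}{2}\int_0^L \e^{B(x)}\Delta\hat{Z}_{\uu,\vv;\hat{t}}(x)\,\dif x\,\dif\hat{t}-\int_0^L \e^{B(x)}\hat{Z}_{\uu,\vv;\hat{t}}(x)\,\dif\hat{W}_{\hat{t}}(x)$ and $\dif[I]_{\hat{t}}=\int_0^L \e^{2B(x)}\hat{Z}_{\uu,\vv;\hat{t}}(x)^2\,\dif x\,\dif\hat{t}$. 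Applying Itô to $I_{\hat{t}}^{\alpha}$ with $\alpha\coloneqq-\uu-\vv$, multiplying by $\e^{\uu B(0)+\vv B(L)}$ and taking $\mathrm{E}_B$ produces two drift contributions: a Laplacian piece proportional to $\mathrm{E}_B[\e^{\uu B(0)+\vv B(L)}I^{\alpha-1}\int_0^L\Delta\hat{Z}\,\e^{B}\,\dif x]$ and an Itô correction proportional to $\mathrm{E}_B[\e^{\uu B(0)+\vv B(L)}I^{\alpha-2}\int_0^L\hat{Z}^2\e^{2B}\,\dif x]$. The Laplacian piece is then reorganized by spatial integration by parts, applied to the $\hat{t}$-independent processes $\hat{Z}(y)\e^{B(y)}$ and $\partial_y\hat{Z}(y)\e^{B(y)}$ viewed as semimartingales in the spatial variable $y$ (with $B$ now regarded as a Brownian motion in $y$ with $B(0)=0$). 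Substituting the Robin boundary data $\partial_x\hat{Z}(0)=-(\uu+\oh)\hat{Z}(0)$ and $\partial_x\hat{Z}(L)=(\vv+\oh)\hat{Z}(L)$ produces boundary terms $\vv\hat{Z}(L)\e^{B(L)}$ and $(\uu+1)\hat{Z}(0)$, a bulk contribution proportional to $I$ coming from the spatial Itô correction, and residual $\dif B$-stochastic integrals. These last are handled by Malliavin integration by parts on Wiener space, using the explicit derivatives $D_y I=\int_y^L \hat{Z}(x)\e^{B(x)}\,\dif x$ and $D_y \e^{\uu B(0)+\vv B(L)}=\vv\e^{\uu B(0)+\vv B(L)}$, which re-express every $\dif B$-integral as an ordinary $\mathrm{E}_B$-average of the same type as the starting expression.

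After this reduction, every drift term is proportional either to $\mathrm{E}_B[\e^{\uu B(0)+\vv B(L)}I^{\alpha}]=\mathfrak{Z}_{\uu,\vv}\tilde{\mathcal{Y}}_{\uu,\vv}(\hat{h}_{\uu,\vv;\hat{t}})$ or to the two boundary objects $\mathrm{E}_B[\e^{\uu B(0)+\vv B(L)}I^{\alpha-1}\hat{Z}(0)]$ and $\mathrm{E}_B[\e^{\uu B(0)+\vv B(L)}I^{\alpha-1}\hat{Z}(L)\e^{B(L)}]$. The Gibbs factor $\e^{\uu\hat{h}(0)+\vv\hat{h}(L)}$ implicit in $\tilde{\mathcal{Y}}_{\uu,\vv}$, together with $\hat{h}(0)=-\log\hat{Z}(0)$ and $\hat{h}(L)=-\log\hat{Z}(L)$, is exactly what converts those boundary averages back into further multiples of $\tilde{\mathcal{Y}}_{\uu,\vv}(\hat{h}_{\uu,\vv;\hat{t}})$. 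Collecting all contributions, the total drift will equal $-\bigl((\uu^3+\vv^3)/6-(\uu+\vv)/24\bigr)\tilde{\mathcal{Y}}_{\uu,\vv}(\hat{h}_{\uu,\vv;\hat{t}})\,\dif\hat{t}$, which is exactly cancelled by $\dif\e^{\gamma\hat{t}}$ with $\gamma=(\uu^3+\vv^3)/6-(\uu+\vv)/24$. The local-to-true martingale upgrade then follows from positive and negative moment bounds on $\hat{Z}_{\uu,\vv;\hat{t}}$, in the same spirit as the estimates already underlying \zcref{prop:epstozeroconv}; applying the martingale property between $\hat{t}=0$ and $\hat{t}=T$ yields \zcref{eq:MGapp}.

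The hard part will be verifying that the cubic constants $\uu^3/6$, $\vv^3/6$ and the linear renormalization constants $-\uu/24$, $-\vv/24$ emerge with exactly the claimed coefficients. The cubic dependence must originate from the threefold interaction between the Robin coefficients $\uu+\oh,\vv+\oh$, the Gibbs exponents $\uu,\vv$ in $\tilde{\mathcal{Y}}_{\uu,\vv}$, and the Malliavin contribution $(\alpha-1)/2$ with $\alpha=-\uu-\vv$; the $1/24$-terms are the boundary analogue of the bulk Cole--Hopf shift $t/24$ from \zcref{eq:httildedef} and should emerge from the spatial Itô correction in the Brownian variable $B$. Tracking all these contributions cleanly through the iterated integrations by parts and Malliavin dualities is the delicate bookkeeping step, though no individual manipulation goes beyond classical stochastic calculus.
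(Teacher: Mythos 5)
Your overall strategy — Itô's formula in $\hat{t}$ applied to $I_{\hat{t}}(B)^{-\uu-\vv}$, followed by a second round of Itô/Gaussian integration by parts in the spatial variable treating $B$ as a Brownian motion in $x$ — is the same as the paper's, and the paper's \zcref{lem:B2-computation-1} is essentially the spatial-integration-by-parts step you describe. However, there is a genuine gap at the very first step: you apply Itô's formula directly to the mild solution $\hat{Z}_{\uu,\vv;\hat{t}}$ of the white-noise-driven Robin stochastic heat equation, writing $\dif I_{\hat{t}}=\tfrac{1}{2}\int_0^L\e^{B(x)}\Delta\hat{Z}_{\uu,\vv;\hat{t}}(x)\,\dif x\,\dif\hat{t}+\cdots$ and later "substituting the Robin boundary data $\partial_x\hat{Z}(0)=-(\uu+\oh)\hat{Z}(0)$." Neither object exists classically: $\hat{Z}_{\uu,\vv;\hat{t}}$ is only Hölder-$(\oh)^-$ in $x$, so $\Delta\hat{Z}$ is a distribution of regularity about $-\thrh$, and the Robin condition \zcref{eq:Zbc-left-Robin-1} is defined only through the Robin heat kernel in the mild formulation, not as a pointwise identity for $\partial_x\hat{Z}$. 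Moreover $\e^{B(x)}$ is not an admissible (smooth, Robin-compatible) test function, so one cannot even fall back on the weak formulation. The paper circumvents exactly this by running the whole computation on the mollified solutions $\hat{Z}^{\eps,\zeta}_{\uu,\vv}$ of \zcref{Zepszetahat}, for which Itô's formula is legitimate, and only then passing to the limit $\zeta\to0$, $\eps\to0$ using \zcref{prop:epstozeroconv,lem:convofzeta}; in that setting the boundary contributions arise from the potentials $\varphi^{\eps}_{\uu,\vv}$ and $\oh\Sh^{\zeta/2}_{L}$ converging to weighted Dirac masses, not from pointwise Robin derivatives.

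A second, smaller issue is that you explicitly defer the verification that the assembled drift equals $-\bigl((\uu^3+\vv^3)/6-(\uu+\vv)/24\bigr)\tilde{\mathcal{Y}}_{\uu,\vv}(\hat{h}_{\uu,\vv;\hat{t}})$, calling it "delicate bookkeeping." That computation is not a routine afterthought — it is the substantive content of the proposition, carried out in the paper via the identity \zcref{eq:key-comp} and the algebraic check $\tfrac{1}{2}(\uu+\vv)\bigl(\tfrac{1}{3}\uu^{2}+\tfrac{1}{3}\vv^{2}-\tfrac{1}{3}\uu\vv-\tfrac{1}{12}\bigr)=\tfrac{1}{6}(\uu^{3}+\vv^{3})-\tfrac{1}{24}(\uu+\vv)$. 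Without performing it, the proof establishes only that some exponential tilt makes the process a martingale, not that the tilt is the one claimed. To repair the argument you should (i) insert the mollification layer and justify the $\eps,\zeta\to0$ limits of each drift term, and (ii) actually carry out the integration-by-parts cascade to pin down the constant.
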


\begin{rem}
\label{rem:whyweneedh}The prefactor $\e^{\uu h(0)+\vv h(L)}$ in
\zcref{eq:Dtildedef} is crucial to ensure that the process $(\tilde{\mathcal{Y}}_{\uu,\vv}(\hat{h}_{\uu,\vv;\hat{t}}))_{\hat{t}}$
appearing in \zcref{eq:DtildeMG} is a semimartingale. The additional
exponential factor $\e^{\left((\uu^{3}+\vv^{3})/6-(\uu+\vv)/24\right)\hat{t}}$
cancels the drift in $\tilde{\mathcal{Y}}_{\uu,\vv}(\hat{h}_{\uu,\vv;\hat{t}})$
and yields a martingale. This is another reflection of the issue discussed
in \zcref{rem:why-not-add}, that in \zcref{prop:noise-relation},
we prefer to consider $(W_{T}-W_{0})-(\hat{W}_{T}-\hat{W}_{0})$ rather
than $(W_{T}-W_{0})+(\hat{W}_{T}-\hat{W}_{0})$. The latter expression
would not include the term $h_{T}-h_{0}$ that is essential to obtain
a semimartingale at this stage.
\end{rem}

Recall that $\hat{\mathcal{B}}^{\eps}_{\uu,\vv;0,T}$ was defined
in \zcref{eq:Bepshatlimit}, as a suitable version of the nonlinear
term appearing in the backward KPZ equation satisfied by $\hat{h}^{\eps}_{\uu,\vv}$.
The following proposition is a rigorous counterpart to the heuristic
description in \zcref{e.paininthe}. 
\begin{prop}
\label{prop:boundaryterm}Let $U$ be any $\hat{\mathscr{F}}_{0}$-measurable
random variable. Then 
\begin{equation}
\left((\hat{W}_{\hat{t}})_{\hat{t}\in[0,T]},U,\hat{\mathcal{B}}^{\eps}_{\uu,\vv;0,T}(\varphi^{\eps}_{\uu,\vv})\right)\xrightarrow[\eps\downarrow0]{\mathrm{law}}\left((\hat{W}_{\hat{t}})_{\hat{t}\in[0,T]},U,\hat{\Upsilon}_{\uu,\vv;0,T}\right),\label{eq:boundaryterm}
\end{equation}
where $\hat{\Upsilon}_{\uu,\vv;0,T}\sim\mathcal{N}(\frac{T}{2}(\uu^{2}+\vv^{2})V_{\psi}-\frac{T}{6}(\uu^{3}+\vv^{3}),T(\uu^{2}+\vv^{2})V_{\psi})$
is independent of $\left((\hat{W}_{\hat{t}})_{\hat{t}\in[0,T]},U\right)$,
and $V_{\psi}>0$ is the constant defined in \zcref{eq:redcherry-var-limit}
below which depends on $\psi$. In particular, we have
\begin{equation}
\mathbb{E}\e^{-\hat{\Upsilon}_{\uu,\vv;0,T}}=\e^{(\uu^{3}+\vv^{3})T/6}.\label{eq:exponential-moment-Upsilonhat}
\end{equation}
\end{prop}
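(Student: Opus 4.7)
The plan is to first use \zcref{lem:convofzeta} to replace $\hat{h}^{\eps}_{\uu,\vv;\hat{t}}$ by the spatially smooth mollified version $\hat{h}^{\eps,\zeta}_{\uu,\vv;\hat{t}}$, and to send $\zeta \downarrow 0$ first (with $\zeta \ll \eps$). After this reduction, the quantity $\hat{\mathcal{B}}^{\eps}_{\uu,\vv;0,T}(\varphi^{\eps}_{\uu,\vv})$ is, up to a vanishing error, an honest Lebesgue integral $\int_{0}^{T} \int_{0}^{L} \bigl(\partial_{x}\hat{h}^{\eps,\zeta}_{\uu,\vv;\hat{t}}(x)\bigr)^{2} \varphi^{\eps}_{\uu,\vv}(x) \, \dif x \, \dif \hat{t}$ minus the Itô/Wick counterterms from the $-\kappa^{-1}$ in \zcref{eq:Bhatepskappadef}. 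Since $\varphi^{\eps}_{\uu,\vv}$ concentrates near the boundary on scale $\eps$, we only need to understand the \emph{local behavior of $(\partial_{x}\hat{h}^{\eps,\zeta})^{2}$ near the boundary points} $0$ and $L$, where—crucially—the even periodic extension of $\hat{h}^{\eps,\zeta}$ makes $\partial_{x}\hat{h}^{\eps,\zeta}$ odd around each boundary point.

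Next I would solve \zcref{eq:Zhatepszeta-eqn} in the regularity structures framework tailored to our setup (Section~\ref{sec:Hairer-theory}) and obtain a modeled distribution expansion of $\partial_{x}\hat{h}^{\eps,\zeta}$ around the basepoints $x=0$ and $x=L$. Taking the pointwise square of this modeled distribution and applying the reconstruction theorem yields an expansion of $(\partial_{x}\hat{h}^{\eps,\zeta})^{2}$ into the form
\begin{equation*}
(\partial_{x}\hat{h}^{\eps,\zeta}_{\hat{t}}(x))^{2} \;=\; \sum_{\tau \in \mathcal{T}_{\text{rel}}} c_{\tau}(\hat{t}, x)\, \Pi^{\eps,\zeta} \tau(\hat{t}, x) \;+\; \mathcal{R}^{\eps,\zeta}(\hat{t}, x),
\end{equation*}
where $\mathcal{T}_{\text{rel}}$ is a finite set of symbols and the Gubinelli-type coefficients $c_{\tau}$ evaluated at the boundary basepoints are either explicit constants or vanish by the odd-around-boundary symmetry noted above. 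The remainder $\mathcal{R}^{\eps,\zeta}$, by the reconstruction estimate together with the $\eps$-localization and $O(1)$ total mass of $\varphi^{\eps}_{\uu,\vv}$, contributes $o(1)$ to the paired integral as $\eps \downarrow 0$. This step is conceptually the heart of the argument: it converts the ill-defined formal expansion \zcref{e.formalex} into a rigorous finite sum plus negligible remainder, and is the place where we must genuinely invoke the theory from \cite{hairer:2014:theory}, adapted to our boundary geometry in Section~\ref{sec:BPHZ}.

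The explicit terms in the sum then split into a single \emph{fluctuating} contribution and several \emph{deterministic} ones. The fluctuating piece is the Wick square of the linear stochastic term, namely $\int_{0}^{T} \int \bigl(\nabla \mathcal{G} \dif \hat{W}\bigr)^{\diamond 2}(\hat{t}, x)\, \varphi^{\eps}_{\uu,\vv}(x)\,\dif \hat{t}$, which by a standard Itô-isometry / fourth-moment CLT calculation (to be carried out in Section~\ref{sec:Explicit-calculations}) converges in law to $\mathcal{N}(0, T(\uu^{2}+\vv^{2}) V_{\psi})$, with $V_{\psi}$ the constant in \zcref{eq:redcherry-var-limit}. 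Because the variance of this piece is carried by spatial frequencies of order $\eps^{-1}$ in a thin boundary layer, a direct Wiener-chaos or Itô-isometry argument shows that it is \emph{asymptotically independent} of the fixed process $(\hat{W}_{\hat{t}})_{\hat{t}\in[0,T]}$ (whose energy is concentrated at low frequencies over any fixed mollification scale) and of any $\hat{\mathscr{F}}_{0}$-measurable $U$; this is where we upgrade the marginal CLT to the joint statement \zcref{eq:boundaryterm}. The remaining deterministic terms—$|\nabla \mathcal{G} \varphi^{\eps}_{\uu,\vv}|^{2}$ together with three higher-order diagrams listed in \zcref{tab:the-terms-1}—are computed term by term in Section~\ref{sec:Explicit-calculations}; the main obstacle is the explicit combinatorial bookkeeping to show that they sum precisely to $\frac{T}{2}(\uu^{2}+\vv^{2}) V_{\psi} - \frac{T}{6}(\uu^{3}+\vv^{3})$.

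Finally, \zcref{eq:exponential-moment-Upsilonhat} is the elementary Gaussian identity
\begin{equation*}
\mathbb{E} \e^{-\hat{\Upsilon}_{\uu,\vv;0,T}} \;=\; \exp\!\left(-\tfrac{T}{2}(\uu^{2}+\vv^{2})V_{\psi} + \tfrac{T}{6}(\uu^{3}+\vv^{3}) + \tfrac{1}{2} T(\uu^{2}+\vv^{2}) V_{\psi}\right) \;=\; \e^{(\uu^{3}+\vv^{3})T/6},
\end{equation*}
which is precisely the cancellation between $\mu$ and $\tfrac{1}{2}\sigma^{2}$ anticipated in the discussion after \zcref{e.915ma}: the $\psi$-dependent constant $V_{\psi}$ drops out, confirming that the martingale correction $-\mu + \tfrac{1}{2}\sigma^{2}$ is mollifier-independent as it must be.
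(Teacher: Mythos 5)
Your proposal is correct and follows essentially the same route as the paper: reduce to a finite regularity-structure expansion of $(\partial_x \hat h^{\eps,\zeta})^2$ based at the boundary points, where the odd reflection symmetry kills the recentering (Gubinelli) coefficients, control the remainder via the reconstruction theorem with uniform model bounds, and split the surviving explicit terms into one second-chaos Gaussian piece (handled by the Peccati--Tudor fourth-moment theorem, which is also what delivers the asymptotic independence from $(\hat W_{\hat t})$ and $U$) plus deterministic diagrams whose sum gives the stated mean. The only additional bookkeeping in the paper's version is (i) a preliminary reduction to the forward equation via the time-reversal symmetry in law, purely for notational convenience, and (ii) the replacement of the sharp time integral $\int_0^T\langle\cdot,\varphi^{\eps}_{\uu,\vv}\rangle\,\dif t$ by genuine local space-time averages (the $\Psi^{\eps}$ mollification defining $\mathcal{X}^{\eps}_{\uu,\vv;0,T}$), which is needed before the reconstruction estimate can be applied; neither alters your strategy.
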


Let us show how these two propositions combine together to prove \zcref{thm:mainthm}.
The proof is essentially a repetition of the first half of \zcref{s.keysteps},
except for a technical step to justify a uniform integrability argument. 
\begin{proof}[Proof of \zcref{thm:mainthm}]
Let $F$ be an arbitrary bounded function defined on the state space
of $(u_{t})_{t}$, which can be chosen for example as $\mathcal{C}^{-\kappa}[0,L]$
for some $\kappa>\tfrac{1}{2}$. The goal is to show that
\begin{equation}
\mathbb{E}[\mathcal{Y}_{\uu,\vv}(u_{\uu,\vv;0})F(u_{\uu,\vv;0})]=\mathbb{E}[\mathcal{Y}_{\uu,\vv}(u_{\uu,\vv;0})F(u_{\uu,\vv;T})].\label{eq:goal}
\end{equation}
 We can rewrite the left side of \zcref{eq:goal} as
\begin{equation}
\mathbb{E}[\mathcal{Y}_{\uu,\vv}(u_{\uu,\vv;0})F(u_{\uu,\vv;0})]\overset{\zcref{eq:ic-u0}}{=}\mathbb{E}[\mathcal{Y}_{\uu,\vv}(u_{0})F(u_{0})]\overset{\zcref{eq:utstationary}}{=}\mathbb{E}[\mathcal{Y}_{\uu,\vv}(u_{T})F(u_{T})]\overset{\zcref{eq:uhatdef}}{=}\mathbb{E}[\mathcal{Y}_{\uu,\vv}(\hat{u}_{0})F(\hat{u}_{0})].\label{eq:develop-left}
\end{equation}
We can also rewrite the right side of \zcref{eq:goal} as
\begin{equation}
\mathbb{E}[\mathcal{Y}_{\uu,\vv}(u_{\uu,\vv;0})F(u_{\uu,\vv;T})]\ovset{\zcref{eq:bcconverges-CM}}=\lim_{\eps\downarrow0}\mathbb{E}[\mathcal{Q}^{\eps}_{\uu,\vv;0,T}\mathcal{Y}_{\uu,\vv}(u_{0})F(u_{T})]\overset{\zcref{eq:uhatdef}}{=}\lim_{\eps\downarrow0}\mathbb{E}[\mathbb{E}[\mathcal{Q}^{\eps}_{\uu,\vv;0,T}\mathcal{Y}_{\uu,\vv}(u_{0})\mid\hat{\mathscr{F}}_{0}]F(\hat{u}_{0})].\label{eq:develop-right}
\end{equation}
We will show below that for any bounded random variable $U$ that
is $\hat{\mathscr{F}}_{0}-$measurable, 
\begin{equation}
\lim_{\eps\downarrow0}\mathbb{E}\left[\mathbb{E}[\mathcal{Q}^{\eps}_{\uu,\vv;0,T}\mathcal{Y}_{\uu,\vv}(u_{0})\mid\hat{\mathscr{F}}_{0}]U\right]=\mathbb{E}\left[\mathcal{Y}_{\uu,\vv}(\hat{u}_{0})U\right],\label{eq:goal1}
\end{equation}
which implies \zcref{eq:goal}.

To take the conditional expectation with respect to the backward
filtration, we first express the forward white noise which appears
in the expression of $\mathcal{Q}^{\eps}_{\uu,\vv;0,T}$ in terms
of the backward noise and solution:
\begin{align}
\mathbb{E}[\mathcal{Q}^{\eps}_{\uu,\vv;0,T}\mathcal{Y}_{\uu,\vv}(u_{0})\mid\hat{\mathscr{F}}_{0}]\ovset{\substack{\zcref{eq:uhatdef},\\
\zcref{eq:Qepsrel-rev}
}
} & =\mathbb{E}\left[\hat{\mathcal{Q}}^{\eps}_{\uu,\vv;0,T}\exp\left\{ -2\langle\varphi^{\eps}_{\uu,\vv},\hat{h}_{T}-\hat{h}_{0}-T/24\rangle-\hat{\mathcal{B}}_{0,T}(\varphi^{\eps}_{\uu,\vv})\right\} \mathcal{Y}_{\uu,\vv}(\hat{u}_{T})\ \middle|\ \hat{\mathscr{F}}_{0}\right]\nonumber \\
\ovset{\substack{\zcref{eq:phiint},\\
\zcref{eq:Dtildeepsdef}
}
} & =\e^{-\frac{1}{24}(\uu+\vv)T+2\langle\varphi^{\eps}_{\uu,\vv},\hat{h}_{0}\rangle}\mathbb{E}\left[\hat{\mathcal{Q}}^{\eps}_{\uu,\vv;0,T}\exp\left\{ -\hat{\mathcal{B}}_{\uu,\vv;0,T}(\varphi^{\eps}_{\uu,\vv})\right\} \tilde{\mathcal{Y}}^{\eps}_{\uu,\vv}(\hat{h}_{T})\ \middle|\ \hat{\mathscr{F}}_{0}\right]\nonumber \\
\ovset{\zcref{eq:apply-CM}} & =\e^{-\frac{1}{24}(\uu+\vv)T+2\langle\varphi^{\eps}_{\uu,\vv},\hat{h}_{0}\rangle}\mathbb{E}\left[\exp\left\{ -\hat{\mathcal{B}}^{\eps}_{\uu,\vv;0,T}(\varphi^{\eps}_{\uu,\vv})\right\} \tilde{\mathcal{Y}}^{\eps}_{\uu,\vv}(\hat{h}^{\eps}_{\uu,\vv;T})\ \middle|\ \hat{\mathscr{F}}_{0}\right].\label{eq:rewritewithtimerev}
\end{align}

We now define
\begin{equation}
\mathcal{J}^{\eps}_{\uu,\vv;0,T}\coloneqq\exp\left\{ -\frac{1}{24}(\uu+\vv)T+2\langle\varphi^{\eps}_{\uu,\vv},\hat{h}_{0}\rangle\right\} \tilde{\mathcal{Y}}^{\eps}_{\uu,\vv}(\hat{h}^{\eps}_{\uu,\vv;T})\label{eq:Jepsdef}
\end{equation}
and
\begin{equation}
\mathcal{J}_{\uu,\vv;0,T}\coloneqq\exp\left\{ -\frac{1}{24}(\uu+\vv)T-\uu\hat{h}_{0}(0)-\vv\hat{h}_{0}(L)\right\} \tilde{\mathcal{Y}}_{\uu,\vv}(\hat{h}_{\uu,\vv;T}),\label{eq:Jlimitdef}
\end{equation}
 so in particular
\begin{equation}
\mathbb{E}[\mathcal{Q}^{\eps}_{\uu,\vv;0,T}\mathcal{Y}_{\uu,\vv}(u_{0})\mid\hat{\mathscr{F}}_{0}]\overset{\zcref{eq:rewritewithtimerev}}{=}\mathbb{E}\left[\mathcal{J}^{\eps}_{\uu,\vv;0,T}\exp\left\{ -\hat{\mathcal{B}}^{\eps}_{0,T}(\varphi^{\eps}_{\uu,\vv})\right\} \ \middle|\ \hat{\mathscr{F}}_{0}\right].\label{eq:ourthingintermsofJ}
\end{equation}
Now it follows from \zcref{prop:boundaryterm,prop:epstozeroconv}
that, for any $\hat{\mathscr{F}}_{0}$-measurable bounded random variable
$U$, we have
\[
\left(\mathcal{J}^{\eps}_{\uu,\vv;0,T},U,\hat{\mathcal{B}}_{\uu,\vv;0,T}(\varphi^{\eps}_{\uu,\vv})\right)\xrightarrow[\eps\downarrow0]{\mathrm{law}}\left(\mathcal{J}_{\uu,\vv;0,T},U,\hat{\Upsilon}_{\uu,\vv;0,T}\right),
\]
and that the Gaussian random variable $\hat{\Upsilon}_{\uu,\vv;0,T}$
is independent of $(\mathcal{J}_{\uu,\vv;0,T},U)$. By Skorokhod's
representation theorem, we can therefore find a family of random variables
$((\underline{\mathcal{J}}^{\eps}_{\uu,\vv;0,T},\underline{U}^{\eps},\underline{\hat{\Upsilon}}^{\eps}_{\uu,\vv;0,T}))_{\eps>0}$
such that
\begin{equation}
\left(\underline{\mathcal{J}}^{\eps}_{\uu,\vv;0,T},\underline{U}^{\eps},\underline{\hat{\Upsilon}}^{\eps}_{\uu,\vv;0,T}\right)\overset{\mathrm{law}}{=}\left(\mathcal{J}^{\eps}_{\uu,\vv;0,T},U,\hat{\mathcal{B}}_{\uu,\vv;0,T}(\varphi^{\eps}_{\uu,\vv})\right)\label{eq:applySkorokhod}
\end{equation}
and
\begin{equation}
\lim_{\eps\downarrow0}\underline{\mathcal{J}}^{\eps}_{\uu,\vv;0,T}\e^{-\underline{\hat{\Upsilon}}^{\eps}_{\uu,\vv;0,T}}\underline{U}^{\eps}=\mathcal{J}_{\uu,\vv;0,T}\e^{-\hat{\Upsilon}_{\uu,\vv;0,T}}U\qquad\text{a.s.}\label{eq:asconvergence}
\end{equation}

We would like to upgrade \zcref{eq:asconvergence} to convergence
in $L^{1}$ so that we can take the conditional expectations. For
each $\eps>0$, we have
\begin{equation}
\begin{aligned}\mathbb{E}\left[\underline{\mathcal{J}}^{\eps}_{\uu,\vv;0,T}\e^{-\underline{\Upsilon}^{\eps}_{\uu,\vv;0,T}}\right]\ovset{\zcref{eq:applySkorokhod}} & =\mathbb{E}\left[\mathcal{J}^{\eps}_{\uu,\vv;0,T}\e^{-\hat{\mathcal{B}}_{\uu,\vv;0,T}(\varphi^{\eps}_{\uu,\vv})}\right]\\
\ovset{\zcref{eq:Jepsdef}} & =\mathbb{E}\left[\exp\left\{ -\frac{1}{24}(\uu+\vv)T+2\langle\varphi^{\eps}_{\uu,\vv},\hat{h}_{0}\rangle-\hat{\mathcal{B}}_{\uu,\vv;0,T}(\varphi^{\eps}_{\uu,\vv})\right\} \tilde{\mathcal{Y}}^{\eps}_{\uu,\vv}(\hat{h}^{\eps}_{\uu,\vv;T})\right]\\
\ovset{\zcref{eq:rewritewithtimerev}} & =\mathbb{E}\left[\mathcal{Q}^{\eps}_{\uu,\vv;0,T}\mathcal{Y}_{\uu,\vv}(u_{0})\right]=1.
\end{aligned}
\label{eq:limofexps-1}
\end{equation}
On the other hand, we have, using the independence of $\e^{-\hat{\Upsilon}_{\uu,\vv;0,T}}$
from everything else and the fact \zcref{eq:exponential-moment-Upsilonhat}
that $\mathbb{E}\e^{-\hat{\Upsilon}_{\uu,\vv;0,T}}=\e^{(\uu^{3}+\vv^{3})/6}$,
that
\begin{align}
\mathbb{E}\left[\mathcal{J}_{\uu,\vv;0,T}\e^{-\hat{\Upsilon}_{\uu,\vv;0,T}}\ \middle|\ \hat{\mathscr{F}}_{0}\right] & =\mathbb{E}[\e^{-\hat{\Upsilon}_{\uu,\vv;0,T}}]\mathbb{E}\left[\mathcal{J}_{\uu,\vv;0,T}\ \middle|\ \hat{\mathscr{F}}_{0}\right]\nonumber \\
\ovset{\zcref{eq:Jlimitdef}} & =\exp\left\{ -\uu\hat{h}_{0}(0)-\vv\hat{h}_{0}(L)\right\} \mathbb{E}\left[\exp\left\{ \left(\frac{1}{6}(\uu^{3}+\vv^{3})-\frac{1}{24}(\uu+\vv)\right)T\right\} \tilde{\mathcal{Y}}_{\uu,\vv}(\hat{h}_{\uu,\vv;T})\ \middle|\ \hat{\mathscr{F}}_{0}\right]\nonumber \\
\ovset{\zcref{eq:MGapp}} & =\exp\left\{ -\uu\hat{h}_{0}(0)-\vv\hat{h}_{0}(L)\right\} \tilde{\mathcal{Y}}_{\uu,\vv}(\hat{h}_{0})\overset{\zcref{eq:Dtildedef}}{=}\mathcal{Y}_{\uu,\vv}(\hat{u}_{0}),\label{eq:condexpoflim}
\end{align}
and in particular
\begin{equation}
\mathbb{E}\left[\mathcal{J}_{\uu,\vv;0,T}\e^{-\Upsilon_{\uu,\vv;0,T}}\right]=\mathbb{E}[\mathcal{Y}_{\uu,\vv}(\hat{u}_{0})]=1.\label{eq:expoflim-1}
\end{equation}
Since the right sides of \zcref{eq:limofexps-1} and \zcref{eq:expoflim-1}
match, we can use \cite[Thm.~4.6.3]{durrett:2019:probability} (with
the hypothesis of convergence in probability satisfied by \zcref{eq:asconvergence}
with $U\equiv1$) to conclude that the family $\left(\underline{\mathcal{J}}^{\eps}_{\uu,\vv;0,T}\e^{-\underline{\hat{\Upsilon}}^{\eps}_{\uu,\vv;0,T}}\right)_{\eps>0}$
is uniformly integrable. Since we assumed that $U$ and hence $\underline{U}^{\eps}$
is bounded, this implies that the family $\left(\underline{\mathcal{J}}^{\eps}_{\uu,\vv;0,T}\e^{-\underline{\hat{\Upsilon}}^{\eps}_{\uu,\vv;0,T}}\underline{U}^{\eps}\right)_{\eps>0}$
is uniformly integrable as well, and so another application of \cite[Thm.~4.6.3]{durrett:2019:probability}
implies that \zcref{eq:asconvergence} can be upgraded to
\[
\lim_{\eps\downarrow0}\mathbb{E}\left|\underline{\mathcal{J}}^{\eps}_{\uu,\vv;0,T}\e^{-\underline{\hat{\Upsilon}}^{\eps}_{\uu,\vv;0,T}}\underline{U}^{\eps}-\mathcal{J}_{\uu,\vv;0,T}\e^{-\hat{\Upsilon}_{\uu,\vv;0,T}}U\right|=0.
\]
In particular, we have
\[
\mathbb{E}\left[\mathcal{J}_{\uu,\vv;0,T}\e^{-\hat{\Upsilon}_{\uu,\vv;0,T}}U\right]=\lim_{\eps\downarrow0}\mathbb{E}\left[\underline{\mathcal{J}}^{\eps}_{\uu,\vv;0,T}\e^{-\underline{\hat{\Upsilon}}^{\eps}_{\uu,\vv;0,T}}\underline{U}^{\eps}\right]\overset{\zcref{eq:applySkorokhod}}{=}\lim_{\eps\downarrow0}\mathbb{E}\left[\mathcal{J}^{\eps}_{\uu,\vv;0,T}\e^{-\hat{\mathcal{B}}_{\uu,\vv;0,T}(\varphi^{\eps}_{\uu,\vv})}U\right].
\]
The left side equals $\mathbb{E}[\mathcal{Y}_{\uu,\vv}(\hat{u}_{0})U]$
by \zcref{eq:condexpoflim}, while the right side equals $\lim_{\eps\downarrow0}\mathbb{E}\left[\mathcal{Q}^{\eps}_{\uu,\vv;0,T}\mathcal{Y}_{\uu,\vv}(u_{0})U\right]$
by \zcref{eq:ourthingintermsofJ}. Thus, we have derived \zcref{eq:goal1}
and completed the proof. 
\end{proof}

\section{\label{sec:It=0000F4-formula-and}Analysis in the bulk: Itô's formula
and integration by parts}

In this section we prove \zcref{prop:DtildeMG}, which is the only
place in the paper where the explicit expression of the invariant
measure, given by \zcref{eq:Duv-def}, is used. The proof is an application
of Itô's formula to the solution of the mollified stochastic heat
equation, together with repeated applications of Gaussian integration
by parts with respect to the auxiliary Brownian motion appearing in
\zcref{eq:Duv-def}.

We define, for $\vv\in\mathbb{R}$,
\begin{equation}
B_{\mathsf{v}}(x)=B(x)+\vv x.\label{eq:Bvdef}
\end{equation}
By the Cameron--Martin theorem applied to \zcref{eq:Duv-def}, we
have
\begin{align*}
\mathcal{Y}_{\uu,\vv}(u) & =\tilde{\mathfrak{Z}}^{-1}_{\uu,\vv}\mathrm{E}_{B}\left[\e^{-\uu h(0)-\vv h(L)}\left(\int^{L}_{0}\e^{-(h(x)-B_{\vv}(x))}\,\dif x\right)^{-\uu-\vv}\right],
\end{align*}
where $\tilde{\mathfrak{Z}}^{-1}_{\uu,\vv}$ is a new normalizing
constant. Hence we have (recalling the definition \zcref{eq:Dtildedef})
\[
\tilde{\mathcal{Y}}_{\uu,\vv}(h)=\tilde{\mathfrak{Z}}^{-1}_{\uu,\vv}\mathrm{E}_{B}\left[\left(\int^{L}_{0}\e^{-(h(x)-B_{\vv}(x))}\,\dif x\right)^{-\uu-\vv}\right].
\]

\begin{proof}[Proof of \zcref{prop:DtildeMG}.]
To simplify the notation, let
\begin{equation}
\alpha=\frac{1}{6}(\uu^{3}+\vv^{3})-\frac{1}{24}(\uu+\vv).\label{eq:alphachoice}
\end{equation}
Recall that the goal was to show that $\big(\e^{\alpha\hat{t}}\tilde{\mathcal{Y}}_{\uu,\vv}(\hat{h}_{\uu,\vv;\hat{t}})\big)_{\hat{t}}$
is an $\{\hat{\mathscr{F}}_{\hat{t}}\}$-martingale. The starting
point is an application of Itô's formula to the smoothed solutions
introduced in \zcref{sec:mollifying}. Define 
\begin{align}
\hat{Y}^{\eps,\zeta}_{\hat{t}} & \coloneqq\tilde{\mathfrak{Z}}_{\uu,\vv}\e^{\alpha\hat{t}}\tilde{\mathcal{Y}}_{\uu,\vv}(\hat{h}^{\eps,\zeta}_{\uu,\vv;\hat{t}})\overset{\zcref{eq:Dtildedef}}{=}\e^{\alpha\hat{t}}\mathrm{E}_{B}\left[\left(\int^{L}_{0}\hat{Z}^{\eps,\zeta}_{\uu,\vv;\hat{t}}(x)\e^{B_{\vv}(x)}\,\dif x\right)^{-\uu-\vv}\right]=\e^{\alpha\hat{t}}\mathrm{E}_{B}\left[\hat{I}^{\eps,\zeta,0}_{\uu,\vv;\hat{t}}(B_{\vv})^{-\uu-\vv}\right],\label{eq:Dhatdef-1}
\end{align}
where we have defined
\begin{equation}
\hat{I}^{\eps,\zeta}_{\uu,\vv;\hat{t}}(B_{\vv})\coloneqq\int^{L}_{0}\hat{Z}^{\eps,\zeta}_{\uu,\vv;\hat{t}}(x)\e^{B_{\vv}(x)}\,\dif x.\label{eq:Ihatdef-1}
\end{equation}
Since $\hat{Z}^{\eps,\zeta}_{\uu,\vv;\hat{t}}(x)$ is the strong solution
to the mollified SHE \zcref{eq:Zhatepszeta-eqn}, we obtain by applying
Itô's formula that
\[
\hat{Y}^{\eps,\zeta}_{\hat{t}}-\hat{Y}^{\eps,\zeta}_{0}=\alpha\int^{\hat{t}}_{0}\hat{Y}^{\eps,\zeta}_{\hat{s}}\,\dif\hat{s}+J^{\eps,\zeta}_{1;\hat{t}}+J^{\eps,\zeta}_{2;\hat{t}}+J^{\eps,\zeta}_{3;\hat{t}}+N^{\eps,\zeta}_{\hat{t}},
\]
where
\begin{align*}
J^{\eps,\zeta}_{1;\hat{t}} & \coloneqq-\frac{1}{2}(\uu+\vv)\int^{\hat{t}}_{0}\e^{\alpha\hat{s}}\mathrm{E}_{B}\left[\hat{I}^{\eps,\zeta}_{\uu,\vv;\hat{s}}(B_{\vv})^{-\uu-\vv-1}\int^{L}_{0}\Delta\hat{Z}^{\eps,\zeta}_{\uu,\vv;\hat{s}}(x)\e^{B_{\vv}(x)}\,\dif x\right]\,\dif\hat{s},\\
J^{\eps,\zeta}_{2;\hat{t}} & \coloneqq(\uu+\vv)\int^{\hat{t}}_{0}\e^{\alpha\hat{s}}\mathrm{E}_{B}\left[\hat{I}^{\eps,\zeta}_{\uu,\vv;\hat{s}}(B_{\vv})^{-\uu-\vv-1}\int^{L}_{0}\left(\varphi^{\eps}_{\uu,\vv}-\frac{1}{2}\Sh^{\zeta/2}_{L}\right)(x)\hat{Z}^{\eps,\zeta}_{\uu,\vv;\hat{s}}(x)\e^{B_{\vv}(x)}\,\dif x\right]\,\dif\hat{s},\\
J^{\eps,\zeta}_{3;\hat{t}} & \coloneqq\frac{1}{2}(\uu+\vv)(\uu+\vv+1)\int^{\hat{t}}_{0}\e^{\alpha\hat{s}}\mathrm{E}_{\mathrm{B}}\left[\hat{I}^{\eps,\zeta}_{\uu,\vv;\hat{s}}(B_{\vv})^{-\uu-\vv-2}\iint_{[0,L]^{2}}\overline{R}^{\zeta}(x,y)\prod_{z\in\{x,y\}}\left(\hat{Z}^{\eps,\zeta}_{\uu,\vv;\hat{s}}(z)\e^{B_{\vv}(z)}\right)\,\dif x\,\dif y\right]\,\dif\hat{s},
\end{align*}
and
\[
\hat{N}^{\eps,\zeta}_{\hat{t}}\coloneqq(\uu+\vv)\int^{\hat{t}}_{0}\e^{\alpha\hat{s}}\mathrm{E}_{B}\left[\hat{I}^{\eps,\zeta}_{\uu,\vv;\hat{s}}(B_{\vv})^{-\uu-\vv-1}\left\langle \hat{Z}^{\eps,\zeta}_{\uu,\vv;\hat{s}}\e^{B_{\vv}},\dif\hat{W}_{\hat{s}}\right\rangle \right].
\]
Here, in the definition of $J^{\eps,\zeta}_{3;\hat{t}}$, we used
the abbreviated notation $\overline{R}^{\zeta}(x,y)=\Sh^{\zeta}_{2L}(x-y)+\Sh^{\zeta}_{2L}(x+y)$.

It remains to take the limit $\eps,\zeta\to0$ in the above expressions.
The main difficulty arises from the term $J^{\eps,\zeta}_{1;\hat{t}}$
which involves $\Delta\hat{Z}^{\eps,\zeta}_{\uu,\vv;\hat{s}}$ and
does not converge as $\eps,\zeta\to0$ as it is written. The key is
to exploit spatial integration together with the averaging induced
by the auxiliary Brownian motion. This is carried out in \zcref{lem:B2-computation-1}
below, which shows that, each fixed $\eps,\zeta>0$, 
\begin{align}
J^{\eps,\zeta}_{1;\hat{t}} & =-\alpha\int^{\hat{t}}_{0}\hat{Y}^{\eps,\zeta}_{\hat{s}}\,\dif\hat{s}+\frac{1}{2}(\uu+\vv)\left(\vv+\oh\right)\int^{\hat{t}}_{0}\e^{\alpha\hat{s}}\hat{Z}^{\eps,\zeta}_{\uu,\vv;\hat{s}}(L)\mathrm{E}_{B}\left[\hat{I}^{\eps,\zeta}_{\uu,\vv;\hat{s}}(B_{\vv})^{-\uu-\vv-1}\e^{B_{\vv}(L)}\right]\,\dif\hat{s}\nonumber \\
 & \qquad+\frac{1}{2}(\uu+\vv)\left(\uu+\oh\right)\int^{\hat{t}}_{0}\e^{\alpha\hat{s}}\hat{Z}^{\eps,\zeta}_{\uu,\vv;\hat{s}}(0)\mathrm{E}_{B}\left[\hat{I}^{\eps,\zeta,0}_{\uu,\vv;\hat{s}}(B_{\vv})^{-\uu-\vv-1}\right]\,\dif\hat{s}\nonumber \\
 & \qquad-\frac{1}{2}(\uu+\vv)(\uu+\vv+1)\int^{\hat{t}}_{0}\e^{\alpha\hat{s}}\mathrm{E}_{B}\left[\hat{I}^{\eps,\zeta,0}_{\uu,\vv;\hat{s}}(B_{\vv})^{-\uu-\vv-2}\int^{L}_{0}\hat{Z}^{\eps,\zeta}_{\uu,\vv;\hat{s}}(y)^{2}\e^{2B_{\vv}(y)}\,\dif y\right]\,\dif\hat{s},\label{eq:J3epszeta}
\end{align}
The first term on the right side of \zcref{eq:J3epszeta} comes from
the third term of the right side of \zcref{eq:key-comp}, together
with the fact that 
\[
\frac{1}{2}(\uu+\vv)\left(\frac{1}{3}\uu^{2}+\frac{1}{3}\vv^{2}-\frac{1}{3}\uu\vv-\frac{1}{12}\right)=\frac{1}{6}(\uu^{3}+\vv^{3})-\frac{1}{24}(\uu+\vv)\overset{\zcref{eq:alphachoice}}{=}\alpha.
\]

Let
\begin{align}
\hat{Y}_{\hat{t}} & \coloneqq\tilde{\mathfrak{Z}}_{\uu,\vv}\e^{\alpha\hat{t}}\tilde{\mathcal{Y}}_{\uu,\vv}(\hat{h}_{\uu,\vv;\hat{t}})\label{eq:Dhatdef-1-1}
\end{align}
and
\[
\hat{I}_{\uu,\vv;\hat{t}}(B_{\vv})\coloneqq\int^{L}_{0}\hat{Z}_{\uu,\vv;\hat{t}}(x)\e^{B_{\vv}(x)}\,\dif x.
\]
Then we can take $\zeta\to0$ and then $\eps\to0$ and use \zcref{prop:epstozeroconv}
and \zcref{lem:convofzeta} to obtain that
\begin{equation}
\hat{Y}_{\hat{t}}-\hat{Y}_{0}=J_{1;\hat{t}}+J_{2;\hat{t}}+J_{3;\hat{t}}+\hat{N}_{\hat{t}},\label{eq:Dhatteqn}
\end{equation}
where
\begin{align*}
J_{1;\hat{t}} & \coloneqq\frac{1}{2}(\uu+\vv)\left(\vv+\oh\right)\int^{\hat{t}}_{0}\e^{\alpha\hat{s}}\hat{Z}_{\uu,\vv;\hat{s}}(L)\mathrm{E}_{B}\left[\hat{I}_{\uu,\vv;\hat{s}}(B_{\vv})^{-\uu-\vv-1}\e^{B_{\vv}(L)}\right]\,\dif\hat{s}\\
 & \qquad+\frac{1}{2}(\uu+\vv)\left(\uu+\oh\right)\int^{\hat{t}}_{0}\e^{\alpha\hat{s}}\hat{Z}_{\uu,\vv;\hat{s}}(0)\mathrm{E}_{B}\left[\hat{I}_{\uu,\vv;\hat{s}}(B_{\vv})^{-\uu-\vv-1}\right]\,\dif\hat{s}\\
 & \qquad-\frac{1}{2}(\uu+\vv)(\uu+\vv+1)\int^{\hat{t}}_{0}\e^{\alpha\hat{s}}\mathrm{E}_{B}\left[\hat{I}_{\uu,\vv;\hat{s}}(B_{\vv})^{-\uu-\vv-2}\int^{L}_{0}\hat{Z}_{\uu,\vv;\hat{s}}(y)^{2}\e^{2B_{\vv}(y)}\,\dif y\right]\,\dif\hat{s},\\
J_{2;\hat{t}} & \coloneqq-\frac{1}{2}(\uu+\vv)\int^{\hat{t}}_{0}\e^{\alpha\hat{s}}\mathrm{E}_{B}\left[\hat{I}_{\uu,\vv;\hat{s}}(B_{\vv})^{-\uu-\vv-1}\left[\left(\uu+\oh\right)\hat{Z}_{\uu,\vv;\hat{s}}(0)+\left(\vv+\oh\right)\e^{B_{\vv}(L)}\hat{Z}_{\uu,\vv;\hat{s}}(L)\right]\right]\,\dif\hat{s},\\
J_{3;\hat{t}} & \coloneqq\frac{1}{2}(\uu+\vv)(\uu+\vv+1)\int^{\hat{t}}_{0}\e^{\alpha\hat{s}}\mathrm{E}_{\mathrm{B}}\left[\hat{I}_{\uu,\vv;\hat{s}}(B_{\vv})^{-\uu-\vv-2}\int^{L}_{0}\hat{Z}_{\uu,\vv;\hat{s}}(x)^{2}\e^{2B_{\vv}(x)}\,\dif x\right]\,\dif\hat{s},
\end{align*}
and 
\[
\hat{N}_{\hat{t}}=(\uu+\vv)\int^{\hat{t}}_{0}\e^{\alpha\hat{s}}\mathrm{E}_{B}\left[\hat{I}_{\uu,\vv;\hat{s}}(B_{\vv})^{-\uu-\vv-1}\left\langle \hat{Z}_{\uu,\vv;\hat{s}}\e^{B_{\vv}},\dif\hat{W}_{\hat{s}}\right\rangle \right].
\]
Note that in the convergence of $J^{\eps,\zeta}_{2;\hat{t}}\to J_{2;\hat{t}}$,
we use the facts that, restricted to $[0,L]$, $\varphi^{\eps}_{\uu,\vv}\to-\tfrac{1}{2}\uu\delta_{0}-\tfrac{1}{2}\vv\delta_{L}$
and $\Sh^{\zeta/2}_{L}\to\tfrac{1}{2}\delta_{0}+\tfrac{1}{2}\delta_{L}$.
Similarly, in the convergence of $J^{\eps,\zeta}_{3;\hat{t}}\to J_{3;\hat{t}}$,
we use that, restricted to $(x,y)\in[0,L]^{2}$, $\overline{R}^{\zeta}(x,y)=\Sh^{\zeta}_{2L}(x-y)+\Sh^{\zeta}_{2L}(x+y)\to\delta_{0}(x-y)$.
From these expressions, we check directly that $J_{1;\hat{t}}+J_{2;\hat{t}}+J_{3;\hat{t}}=0$,
so in fact we have $\hat{Y}_{\hat{t}}-\hat{Y}_{0}=\hat{N}_{\hat{t}}$.
Moreover, we see from \zcref{prop:moment-bd} that the quadratic variation
of $(\hat{N}_{\hat{t}})_{\hat{t}}$ has finite second moment, so $(\hat{N}_{\hat{t}})_{\hat{t}}$
and thus $(\hat{Y}_{\hat{t}})$ is an $\{\hat{\mathscr{F}}_{\hat{t}}\}_{\hat{t}}$-martingale.
But this means that $(\e^{\alpha\hat{t}}\tilde{\mathcal{Y}}_{\uu,\vv}(\hat{h}_{\uu,\vv;\hat{t}}))_{\hat{t}}$
is also an $\{\hat{\mathscr{F}}_{\hat{t}}\}$-martingale, as claimed.
\end{proof}

We dealt with the Laplacian term in the previous proof through the
following lemma.
\begin{lem}
\label{lem:B2-computation-1}Fix a deterministic positive function
$Z\in\mathcal{C}^{2}([0,L])$ such that
\begin{equation}
Z'(0)=Z'(L)=0.\label{eq:ZNeumann}
\end{equation}
Let $B$ be a standard Brownian motion on $[0,L]$ with $B(0)=0$,
and define $B_{\mathsf{v}}(x)=B(x)+\mathsf{v}x$ as in \zcref{eq:Bvdef}.
For $y\in[0,L]$, define
\begin{equation}
I_{k}(y)\coloneqq\int^{y}_{0}Z^{(k)}(x)\e^{B_{\mathsf{v}}(x)}\,\dif x,\qquad k=0,1,2,\label{eq:Ikdef-1}
\end{equation}
where $Z^{(k)}$ denotes the $k$th derivative of $Z$. Then we have,
for any $\uu,\vv\in\mathbb{R}$, that
\begin{align}
\mathrm{E}_{B}\left[I_{0}(L)^{-\uu-\vv-1}I_{2}(L)\right] & =-\left(\vv+\oh\right)Z(L)\mathrm{E}_{B}\left[I_{0}(L)^{-\uu-\vv-1}\e^{B_{\vv}(L)}\right]-\left(\uu+\oh\right)Z(0)\mathrm{E}_{B}\left[I_{0}(L)^{-\uu-\vv-1}\right]\nonumber \\
 & \qquad+\left(\frac{1}{3}\uu^{2}+\frac{1}{3}\vv^{2}-\frac{1}{3}\uu\vv-\frac{1}{12}\right)\mathrm{E}_{B}\left[I_{0}(L)^{-\uu-\vv}\right]\nonumber \\
 & \qquad+(\uu+\vv+1)\mathrm{E}_{B}\left[I_{0}(L)^{-\uu-\vv-2}\int^{L}_{0}Z(y)^{2}\e^{2B_{\vv}(y)}\,\dif y\right].\label{eq:key-comp}
\end{align}
\end{lem}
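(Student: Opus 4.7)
\medskip

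\textbf{Proof proposal.} The plan is to reduce $I_2(L)$ to an expression involving $I_0(L)$, boundary values, and Itô stochastic integrals with respect to $dB(x)$, by viewing $B_{\vv}$ as a semimartingale in the spatial variable $x$, and then to compute the expectations against $F \coloneqq I_0(L)^{-\uu-\vv-1}$ using Malliavin (Gaussian) integration by parts. The key Malliavin derivative is
\[
D_x I_0(L) \;=\; \int_{x}^{L} Z(y)\,\e^{B_{\vv}(y)}\,\dif y \;\eqqcolon\; G(x),
\]
so $D_x F = -(\uu+\vv+1)I_0(L)^{-\uu-\vv-2}G(x)$. The key deterministic identity, obtained from $G'(x)=-Z(x)\e^{B_{\vv}(x)}$, is
\[
\int_{0}^{L} Z(x)\,\e^{B_{\vv}(x)} G(x)^{k}\,\dif x \;=\; \frac{G(0)^{k+1}}{k+1} \;=\; \frac{I_0(L)^{k+1}}{k+1}, \qquad k=1,2.
\]

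\emph{Stage 1 (spatial integration by parts).} Apply Itô's formula to $Z^{(k)}(x)\e^{B_{\vv}(x)}$ with $d(\e^{B_{\vv}}) = \e^{B_{\vv}}\dif B + (\vv+\tfrac{1}{2})\e^{B_{\vv}}\dif x$. Using $Z'(0)=Z'(L)=0$ and integrating from $0$ to $L$ yields
\[
I_2(L) \;=\; -(\vv+\tfrac{1}{2})\,I_1(L) \;-\; \int_{0}^{L} Z'(x)\,\e^{B_{\vv}(x)}\,\dif B(x),
\]
and similarly
\[
I_1(L) \;=\; Z(L)\e^{B_{\vv}(L)} - Z(0) - (\vv+\tfrac{1}{2})\,I_0(L) - \int_{0}^{L} Z(x)\,\e^{B_{\vv}(x)}\,\dif B(x).
\]
Substituting the second into the first expresses $I_2(L)$ as a sum of boundary terms, a multiple of $I_0(L)$, and two Itô integrals.

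\emph{Stage 2 (Malliavin integration by parts).} For the easier Itô integral $\int_0^L Z\,\e^{B_{\vv}}\dif B$, a single Malliavin IBP and the $k=1$ identity above give $\mathrm{E}_B[F\int_0^L Z\,\e^{B_{\vv}}\dif B] = -\tfrac{1}{2}(\uu+\vv+1)\mathrm{E}_B[I_0(L)^{-\uu-\vv}]$. For the harder integral $\int_0^L Z'\,\e^{B_{\vv}}\dif B$, a first Malliavin IBP reduces it, after exchanging the order of integration, to $-(\uu+\vv+1)\mathrm{E}_B[I_0(L)^{-\uu-\vv-2}\int_0^L Z(y)\e^{B_{\vv}(y)}I_1(y)\,\dif y]$. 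Now resubstitute the Stage 1 formula for $I_1(y)$; this produces a term $\int_0^L Z(y)\e^{B_{\vv}(y)}K(y)\,\dif y$ with $K(y)=\int_0^y Z\,\e^{B_{\vv}}\dif B$. Because the prefactor $Z(y)\e^{B_{\vv}(y)}$ is not adapted at points $x<y$, Malliavin IBP applied to $K(y)$ produces \emph{two} contributions: one from $D_x$ hitting $I_0(L)^{-\uu-\vv-2}$, evaluated via the $k=2$ identity giving $-\tfrac{1}{3}(\uu+\vv+2)\mathrm{E}_B[I_0(L)^{-\uu-\vv}]$, and an extra ``non-adapted'' contribution from $D_x\e^{B_{\vv}(y)}=\mathbf{1}_{x<y}\e^{B_{\vv}(y)}$, which yields $\tfrac{1}{2}\mathrm{E}_B[I_0(L)^{-\uu-\vv}]$ by symmetry.

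\emph{Stage 3 (coefficient verification).} Assembling the six resulting contributions, the coefficients of $Z(L)\mathrm{E}_B[Fe^{B_{\vv}(L)}]$, $Z(0)\mathrm{E}_B[F]$, and $\mathrm{E}_B[I_0(L)^{-\uu-\vv-2}\int_0^L Z^2\e^{2B_{\vv}}\,\dif y]$ immediately come out to $-(\vv+\tfrac{1}{2})$, $-(\uu+\tfrac{1}{2})=(\vv+\tfrac{1}{2})-(\uu+\vv+1)$, and $(\uu+\vv+1)$ respectively. The delicate step is the coefficient of $\mathrm{E}_B[I_0(L)^{-\uu-\vv}]$, which is
\[
(\vv+\tfrac{1}{2})^2 - \tfrac{1}{2}(\vv+\tfrac{1}{2})(\uu+\vv+1) + (\uu+\vv+1)\Bigl(\tfrac{1}{3}\uu - \tfrac{1}{6}\vv - \tfrac{1}{12}\Bigr);
\]
a direct (but unforgiving) expansion collapses the linear-in-$\uu,\vv$ terms and produces exactly $\tfrac{1}{3}\uu^{2}+\tfrac{1}{3}\vv^{2}-\tfrac{1}{3}\uu\vv-\tfrac{1}{12}$. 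The main obstacle is precisely this bookkeeping: the exact cancellation of the spurious linear terms requires the ``non-adapted'' $\tfrac{1}{2}$ contribution from the second Malliavin IBP, which one could easily miss if one naively treated $\int_0^L Z\,\e^{B_{\vv}}G\,\dif B$ as an Itô integral (it is not, since $G$ is anticipating). That this constant coincides with $(\uu+\vv)^{-1}\cdot 2\alpha$, where $\alpha$ is defined in \zcref{eq:alphachoice}, is what makes the process in \zcref{eq:DtildeMG} a martingale.
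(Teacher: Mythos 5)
Your proposal is correct and follows essentially the same route as the paper's proof: the same Itô-formula reduction of $I_2(L)$ and $I_1(L)$ to boundary terms plus stochastic integrals, the same Gaussian (Malliavin) integration by parts with the crucial extra contribution from $D_x$ hitting the non-adapted factor $\e^{B_{\vv}(y)}$, and the same nested-integral identities $\int_0^L Z\e^{B_{\vv}}G^k = I_0(L)^{k+1}/(k+1)$ that the paper writes out as explicit double and triple integrals. Your coefficient for $\mathrm{E}_B[I_0(L)^{-\uu-\vv}]$ expands to exactly $\tfrac13\uu^2+\tfrac13\vv^2-\tfrac13\uu\vv-\tfrac1{12}$, matching \zcref{eq:key-comp}.
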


\begin{proof}
The proof consists of several steps. Since $B$ is a standard Brownian
motion, in this proof we will treat $x$ as the time variable and
apply Itô's formula to rewrite the integral \zcref{eq:Ikdef-1}. Each
application of Itô's formula reduces the order of derivatives while
introducing an additional Itô integral term. We deal with the Itô
integral terms using a Gaussian integration by parts \cite[Section 1.3.3]{nualart:2006:malliavin}.
\begin{thmstepnv}
\item \emph{Itô formula.} For $k\in\{1,2\}$, we apply Itô's formula to
obtain that
\begin{align}
I_{k}(y)\ovset{\zcref{eq:Ikdef-1}} & =\int^{y}_{0}Z^{(k)}(x)\e^{B_{\mathsf{v}}(x)}\,\dif x\nonumber \\
 & =Z^{(k-1)}(y)\e^{B_{\mathsf{v}}(y)}-Z^{(k-1)}(0)-\int^{y}_{0}Z^{(k-1)}(x)\e^{B_{\mathsf{v}}(x)}\,\dif B_{\mathsf{v}}(x)-\frac{1}{2}\int^{y}_{0}Z^{(k-1)}(x)\e^{B_{\mathsf{v}}(x)}\,\dif x\nonumber \\
\ovset{\zcref{eq:Bvdef}} & =Z^{(k-1)}(y)\e^{B_{\mathsf{v}}(y)}-Z^{(k-1)}(0)-\int^{y}_{0}Z^{(k-1)}(x)\e^{B_{\mathsf{v}}(x)}\,\dif B(x)-\left(\mathsf{v}+\oh\right)I_{k-1}(y).\label{eq:IkIto-1}
\end{align}
\item \emph{Reducing }$I_{2}(L)$. Continuing from \zcref{eq:IkIto-1} with
$k=2$ and applying \zcref{eq:ZNeumann}, we obtain
\begin{align}
I_{2}(L) & =-\int^{L}_{0}Z'(x)\e^{B_{\mathsf{v}}(x)}\,\dif B(x)-\left(\mathsf{v}+\oh\right)I_{1}(L).\label{eq:I2Lreduce-1}
\end{align}
Using Gaussian integration by parts, for any $k\in\{0,1\}$ we have
\begin{align}
\mathrm{E}_{B} & \left[I_{0}(L)^{-\mathsf{u}-\mathsf{v}-1}\int^{L}_{0}Z^{(k)}(x)\e^{B_{\mathsf{v}}(x)}\,\dif B(x)\right]\nonumber \\
 & =-(\mathsf{u}+\mathsf{v}+1)I_{0}(L)^{-\mathsf{u}-\mathsf{v}-2}\mathrm{E}_{B}\left[\int^{L}_{0}\int^{L}_{0}Z^{(k)}(x)Z(y)\e^{B_{\mathsf{v}}(x)}\e^{B_{\mathsf{v}}(y)}\mathbf{1}\{x\le y\}\,\dif y\,\dif x\right]\nonumber \\
 & =-(\mathsf{u}+\mathsf{v}+1)I_{0}(L)^{-\mathsf{u}-\mathsf{v}-2}\mathrm{E}_{B}\left[\int^{L}_{0}Z(y)\e^{B_{\mathsf{v}}(y)}I_{k}(y)\,\dif y\right].\label{eq:IBP-1}
\end{align}
Using \zcref{eq:I2Lreduce-1} along with \zcref{eq:IBP-1} with $k=1$,
we obtain
\begin{align}
\mathrm{E}_{B}[I_{0}(L)^{-\mathsf{u}-\mathsf{v}-1}I_{2}(L)] & =-\left(\mathsf{v}+\oh\right)\mathrm{E}_{B}\left[I_{0}(L)^{-\mathsf{u}-\mathsf{v}-1}I_{1}(L)\right]\nonumber \\
 & \qquad+(\mathsf{u}+\mathsf{v}+1)\mathrm{E}_{B}\left[I_{0}(L)^{-\mathsf{u}-\mathsf{v}-2}\int^{L}_{0}Z(y)\e^{B_{\mathsf{v}}(y)}I_{1}(y)\,\dif y\right].\label{eq:EI2thing-1}
\end{align}
\item \emph{Reducing $I_{1}$.} By \zcref{eq:IkIto-1} with $k=1$, we obtain
\begin{equation}
I_{1}(y)=Z(y)\e^{B_{\mathsf{v}}(y)}-Z(0)-\int^{y}_{0}Z(x)\e^{B_{\mathsf{v}}(x)}\,\dif B(x)-\left(\mathsf{v}+\oh\right)I_{0}(y).\label{eq:I1expand-1}
\end{equation}
This means that
\begin{equation}
\mathrm{E}_{B}\left[I_{0}(L)^{-\mathsf{u}-\mathsf{v}-1}I_{1}(L)\right]=\mathrm{E}_{B}\left[I_{0}(L)^{-\mathsf{u}-\mathsf{v}-1}\left(Z(L)\e^{B_{\mathsf{v}}(L)}-Z(0)-\int^{L}_{0}Z(x)\e^{B_{\mathsf{v}}(x)}\,\dif B(x)-\left(\mathsf{v}+\oh\right)I_{0}(L)\right)\right].\label{eq:EI0Luv1I1L-1}
\end{equation}
For the Itô integral term, using \zcref{eq:IBP-1} with $k=0$, we
get
\begin{align*}
\mathrm{E}_{B}\left[I_{0}(L)^{-\mathsf{u}-\mathsf{v}-1}\int^{L}_{0}Z(x)\e^{B_{\mathsf{v}}(x)}\,\dif B(x)\right] & =-(\mathsf{u}+\mathsf{v}+1)\mathrm{E}_{B}\left[I_{0}(L)^{-\mathsf{u}-\mathsf{v}-2}\int^{L}_{0}Z(y)\e^{B_{\mathsf{v}}(y)}I_{0}(y)\,\dif y\right]\\
 & =-\frac{1}{2}(\mathsf{u}+\mathsf{v}+1)\mathrm{E}_{B}\left[I_{0}(L)^{-\mathsf{u}-\mathsf{v}}\right].
\end{align*}
Using this in \zcref{eq:EI0Luv1I1L-1}, we get
\begin{align}
\mathrm{E}_{B}\left[I_{0}(L)^{-\mathsf{u}-\mathsf{v}-1}I_{1}(L)\right] & =\mathrm{E}_{B}\left[I_{0}(L)^{-\mathsf{u}-\mathsf{v}-1}\left(Z(L)\e^{B_{\mathsf{v}}(L)}-Z(0)\right)\right]\nonumber \\
 & \qquad+\frac{1}{2}(\mathsf{u}+\mathsf{v}+1)\mathrm{E}_{B}\left[I_{0}(L)^{-\mathsf{u}-\mathsf{v}}\right]-\left(\mathsf{v}+\oh\right)\mathrm{E}_{B}\left[I_{0}(L)^{-\mathsf{u}-\mathsf{v}}\right]\nonumber \\
 & =\mathrm{E}_{B}\left[I_{0}(L)^{-\mathsf{u}-\mathsf{v}-1}\left(Z(L)\e^{B_{\mathsf{v}}(L)}-Z(0)\right)\right]+\frac{1}{2}(\mathsf{u}-\mathsf{v})\mathrm{E}_{B}\left[I_{0}(L)^{-\mathsf{u}-\mathsf{v}}\right].\label{eq:firstline-1}
\end{align}
\item \emph{The second line of \zcref{eq:EI2thing-1}.} Using \zcref{eq:I1expand-1},
we can write 
\begin{align}
\mathrm{E}_{B} & \left[I_{0}(L)^{-\mathsf{u}-\mathsf{v}-2}\int^{L}_{0}Z(y)\e^{B_{\mathsf{v}}(y)}I_{1}(y)\,\dif y\right]\nonumber \\
 & =\mathrm{E}_{B}\left[I_{0}(L)^{-\mathsf{u}-\mathsf{v}-2}\int^{L}_{0}Z(y)^{2}\e^{2B_{\mathsf{v}}(y)}\,\dif y\right]-Z(0)\mathrm{E}_{B}\left[I_{0}(L)^{-\mathsf{u}-\mathsf{v}-1}\right]-\frac{1}{2}\left(\mathsf{v}+\oh\right)\mathrm{E}_{B}\left[I_{0}(L)^{-\mathsf{u}-\mathsf{v}}\right]\nonumber \\
 & \qquad-\mathrm{E}_{B}\left[I_{0}(L)^{-\mathsf{u}-\mathsf{v}-2}\int^{L}_{0}Z(y)\e^{B_{\mathsf{v}}(y)}\left(\int^{y}_{0}Z(x)\e^{B_{\mathsf{v}}(x)}\,\dif B(x)\right)\,\dif y\right].\label{eq:secondline-EI2thing-2}
\end{align}
We can evaluate the last term, using Gaussian integration by parts
again, as
\begin{align*}
 & \int^{L}_{0}Z(y)\mathrm{E}_{B}\left[I_{0}(L)^{-\mathsf{u}-\mathsf{v}-2}\e^{B_{\mathsf{v}}(y)}\int^{y}_{0}Z(x)\e^{B_{\mathsf{v}}(x)}\,\dif B(x)\right]\,\dif y\\
 & \quad=(-\mathsf{u}-\mathsf{v}-2)\int^{L}_{0}Z(y)\mathrm{E}_{B}\left[I_{0}(L)^{-\mathsf{u}-\mathsf{v}-3}\e^{B_{\mathsf{v}}(y)}\int^{y}_{0}\left(\int^{L}_{x}Z(z)\e^{B_{\mathsf{v}}(z)}\,\dif z\right)Z(x)\e^{B_{\mathsf{v}}(x)}\,\dif x\right]\,\dif y\\
 & \qquad+\int^{L}_{0}Z(y)\mathrm{E}_{B}\left[I_{0}(L)^{-\mathsf{u}-\mathsf{v}-2}\e^{B_{\mathsf{v}}(y)}\int^{y}_{0}Z(x)\e^{B_{\mathsf{v}}(x)}\,\dif x\right]\,\dif y\\
 & \quad=-(\mathsf{u}+\mathsf{v}+2)\mathrm{E}_{B}\left[I_{0}(L)^{-\mathsf{u}-\mathsf{v}-3}\iiint_{0\le x_{1}\le x_{2}\wedge x_{3}\le L}\prod^{3}_{i=1}\left(Z(x_{i})\e^{B_{\vv}(x_{i})}\right)\,\dif x_{1}\,\dif x_{2}\,\dif x_{3}\right]+\frac{1}{2}\mathrm{E}_{B}\left[I_{0}(L)^{-\mathsf{u}-\mathsf{v}}\right]\\
 & \quad=\left(\frac{1}{2}-\frac{1}{3}(\mathsf{u}+\mathsf{v}+2)\right)\mathrm{E}_{B}\left[I_{0}(L)^{-\mathsf{u}-\mathsf{v}}\right].
\end{align*}
Using the above equation in \zcref{eq:secondline-EI2thing-2}, we
get
\begin{align}
\mathbb{E}\left[I_{0}(L)^{-\mathsf{u}-\mathsf{v}-2}\int^{L}_{0}Z(y)\e^{B_{\mathsf{v}}(y)}I_{1}(y)\,\dif y\right] & =\mathrm{E}_{B}\left[I_{0}(L)^{-\mathsf{u}-\mathsf{v}-2}\int^{L}_{0}Z(y)^{2}\e^{2B_{\mathsf{v}}(y)}\,\dif y\right]-Z(0)\mathrm{E}_{B}\left[I_{0}(L)^{-\mathsf{u}-\mathsf{v}-1}\right]\nonumber \\
 & \qquad+\left(\frac{1}{3}(\mathsf{u}+\mathsf{v}+2)-\frac{1}{2}-\frac{1}{2}\left(\mathsf{v}+\oh\right)\right)\mathbb{E}\left[I_{0}(L)^{-\mathsf{u}-\mathsf{v}}\right].\label{eq:secondline-EI2thing-1-1}
\end{align}
\item \emph{Putting things together.} Using \zcref{eq:firstline-1,eq:secondline-EI2thing-1-1}
in \zcref{eq:EI2thing-1}, we obtain
\begin{align*}
\mathrm{E}_{B}\left[I_{0}(L)^{-\uu-\vv-1}I_{2}(L)\right] & =-\left(\vv+\oh\right)\left(\mathrm{E}_{B}\left[I_{0}(L)^{-\uu-\vv-1}\left(Z(L)\e^{B_{\vv}(L)}-Z(0)\right)\right]+\frac{1}{2}(\uu-\vv)\mathrm{E}_{B}[I_{0}(L)^{-\uu-\vv}]\right)\\
 & \qquad+(\uu+\vv+1)\left(\mathrm{E}_{B}\left[I_{0}(L)^{-\uu-\vv-2}\int^{L}_{0}Z(y)^{2}\e^{2B_{\vv}(y)}\,\dif y\right]-Z(0)\mathrm{E}_{B}\left[I_{0}(L)^{-\uu-\vv-1}\right]\right)\\
 & \qquad+(\uu+\vv+1)\left(\frac{1}{3}(\uu+\vv+2)-\frac{1}{2}-\frac{1}{2}\left(\vv+\frac{1}{2}\right)\right)\mathrm{E}_{B}\left[I_{0}(L)^{-\uu-\vv}\right].
\end{align*}
Then \zcref{eq:key-comp} follows by algebra.\qedhere
\end{thmstepnv}
\end{proof}

\section{Regularity structure for the KPZ equation with boundary potentials\label{sec:Hairer-theory}}

To prove \zcref{prop:boundaryterm}, we will use the theory of regularity
structures developed in \cite{hairer:2014:theory}. The KPZ equation
with Neumann boundary conditions was previously studied using regularity
structures in \cite{gerencer:hairer:2019:singular}. However, that
work, while more general, does not approach the boundary condition
via boundary potentials as we do in the present work, and also does
not obtain control of the solutions close to the boundary sufficient
for our purposes. (See the beginning of \zcref{sec:BPHZ} for further
discussion of these issues.) Therefore, we will use a slightly different
regularity structure to perform our analysis, in particular one which
features an additional ``noise''-type term representing the boundary
potential, and the stochastic estimates on the model (performed in
\zcref{sec:BPHZ} must be performed more carefully). The primary reference
for the theory of regularity structures remains the original paper
\cite{hairer:2014:theory}; we also refer the reader to the surveys
\cite{friz:hairer:2020:course,bailleul:hoshino:2025:tourists} and
to \cite{hairer:quastel:2018:class} for an explicit description of
the construction of the regularity structure describing the periodic
KPZ equation.

\subsection{The regularity structure\label{subsec:regularity-structure}}

To define a regularity structure in the sense of \cites[Defn.~2.1]{hairer:2014:theory},
we must build an index set $A\subseteq\mathbb{R}$ of \emph{homogeneities}
(assumed the locally finite, bounded below, and containing $0$);
an $A$-graded vector space $\mathcal{T}=\bigoplus_{\alpha\in A}\mathcal{T}_{\alpha}$,
called the \emph{model space}, with $\mathcal{T}_{0}=\langle\rsone\rangle\cong\mathbb{R}$;
and a group $\mathcal{G}$ of linear operators on $\mathcal{T}$,
called the \emph{structure group}, such that $\Gamma\rsone=\rsone$
and $\Gamma\tau-\tau\in\bigoplus_{\beta<\alpha}\mathcal{T}_{\beta}$
whenever $\Gamma\in\mathcal{G}$, $\alpha\in A$, and $\tau\in\mathcal{T}_{\alpha}$.
Because the Neumann boundary conditions also play an important role
in our study, we also introduce immediately the group $\mathscr{S}$
of Euclidean isometries of $\mathbb{R}$ generated by 
\begin{equation}
\sigma_{\mathrm{refl}}(x)\coloneqq-x\qquad\text{and}\qquad\sigma_{\mathrm{trans}}(x)\coloneqq x+2L.\label{eq:sym_group_generators}
\end{equation}
We use the usual notation $\sigma^{*}f=f\circ\sigma^{-1}$ and note
that $\sigma^{*}f=f$ for all $\sigma\in\mathscr{S}$ if and only
if $f$ is even and $2L$-periodic.

\subsubsection{Model space}

The model space $\mathcal{T}$ is spanned by a countable basis $\mathsf{T}$
of symbols, most of which are represented by trees representing successive
terms of the Wild expansion of the KPZ equation. Our construction
will exactly mirror the usual construction of the regularity structure
for the KPZ equation, as described in \cites[§15.2]{friz:hairer:2020:course}[§3.1]{hairer:quastel:2018:class},
except that we will include an additional forcing term to describe
the addition of the boundary potential $\varphi^{\eps}_{\uu,\vv}$
on the right side of \zcref{eq:hepszeta-eqn}.

It will be useful to keep track of several different types of basis
elements, so we decompose
\[
\mathsf{T}=\mathsf{T}_{\mathrm{poly}}\sqcup\mathsf{T}_{\bullet}\sqcup\mathsf{T}_{\mathcal{I}}\sqcup\mathsf{T}_{\mathcal{I}'}\sqcup\mathsf{T}_{\star}
\]
and define 
\[
\mathcal{T}=\langle\mathsf{T}\rangle\qquad\text{and}\qquad\mathcal{T}_{\square}=\langle\mathsf{T}_{\square}\rangle\text{ for }\square\in\{\mathrm{poly},\bullet,\mathcal{I},\mathcal{I}',\star\},
\]
where, for a finite set $\mathsf{S}$, $\langle\mathsf{S}\rangle$
denotes the $\mathbb{R}$-vector space of formal $\mathbb{R}$-linear
combinations of elements of $\mathsf{S}$. Thus, our model space
$\mathcal{T}$ is decomposed into subspaces of the form
\[
\mathcal{T}=\mathcal{T}_{\mathrm{poly}}\oplus\mathcal{T}_{\bullet}\oplus\mathcal{T}_{\mathcal{I}}\oplus\mathcal{T}_{\mathcal{I}'}\oplus\mathcal{T}_{\star}.
\]
We also equip each basis $\tau\in\mathsf{T}$ with a homogeneity,
denoted $|\tau|$ (which despite the notation need not be positive),
and we will ultimately define
\begin{equation}
A\coloneqq\{|\tau|\st\tau\in\mathsf{T}\}.\label{eq:Adef}
\end{equation}
In defining the homogeneities, we will make use of a small parameter
\begin{equation}
\kappa\in(0,\nicefrac{1}{100})\setminus\mathbb{Q},\label{eq:kappaprop}
\end{equation}
which we hereby fix once and for all.

Now we enumerate the basis elements.

\paragraph*{Polynomial terms}

We first define $\mathsf{T}_{\mathrm{poly}}$ to be the usual polynomial
basis in two variables, denoted $\rst$ and $\rsx$, namely
\[
\mathsf{T}_{\mathrm{poly}}=\{\rstn{n_{1}}\rsxn{n_{2}}\st n_{1},n_{2}\in\mathbb{Z}_{\ge0}\}.
\]
In practice we will not use the time variable in the expansion;
see \zcref{eq:Tstar} below. We abbreviate $\rsone\coloneqq\rstn 0\rsxn 0$
and $\rsx\coloneqq\rstn 0\rsxn 1$. We define 
\[
|\rstn{n_{1}}\rsxn{n_{2}}|\coloneqq2n_{1}+n_{2},
\]
reflecting the parabolic scaling $\mathfrak{s}$ of the problem\@.
In particular we have 
\[
|\rsone|=0\qquad\text{and}\qquad|\rsx|=1.
\]

\paragraph*{Basic forcing terms}

We define \nomenclature{$\rsnoise,\rspotential$}{Noise, boundary potential}
\begin{equation}
\mathsf{T}_{\bullet}\coloneqq\{\rsnoise,\rspotential\},\label{eq:Tdotdef}
\end{equation}
where $\rsnoise$ represents the random noise $(\dif W^{\zeta}_{t}(x))$
and $\rspotential$ represents the boundary potential $\varphi^{\eps}_{\uu,\vv}$
on the right side of \zcref{eq:hepszeta-eqn}. We set 
\[
|\rsnoise|\coloneqq-\thrh-\kappa\qquad\text{and}\qquad|\rspotential|\coloneqq-1-\kappa.
\]
This reflects the fact that one-dimensional space-time white noise
lives in any negative Hölder space of regularity strictly less than
$-\thrh$, while a spatial Dirac function lives in the negative Hölder
space of regularity $-1$ and hence in any space of lower regularity
as well. We do not precisely set $|\rspotential|$ as $-1$ to avoid
complications stemming from terms of integer regularity.
\begin{rem}
The reader will notice that, in addition to the terms $\dif W^{\zeta}_{t}(x)$
and $\varphi^{\eps}_{\uu,\vv}$, the right side of \zcref{eq:hepszeta-eqn}
also features the forcing term $\frac{1}{4}\Sh^{\zeta/2}_{L}(x)-\frac{1}{2}\Sh^{\zeta}_{2L}(0)$.
This term is not represented via the regularity structure but will
instead arise in the course of the renormalization procedure below;
see \zcref{subsec:Renormalized-canonical-lifts,eq:Echerryrr}.
\end{rem}

\paragraph*{Inductive construction}

Now we turn to the construction of terms arising in the iterative
Wild expansion of the solution to \zcref{eq:hepszeta}. This is of
course done iteratively. We construct these terms by applying the
following two rules repeatedly:
\begin{enumerate}
[label=(R\arabic*),ref=R\arabic*]
\item \label{enu:integrate}If $\tau\in\mathsf{T}_{\bullet}\cup\mathsf{T}_{\mathcal{I}}\cup\mathsf{T}_{\mathcal{I}'}\cup\mathsf{T}_{\star}$,
then we add new symbols $\mathcal{I}\tau$ and $\mathcal{I}'\tau$
to $\mathsf{T}_{\mathcal{I}}$ and $\mathsf{T}_{\mathcal{I}'}$, respectively.
These are abstract representations of the heat kernel and its derivative,
respectively, convolved with $\tau$. We set 
\[
|\mathcal{I}\tau|\coloneqq|\tau|+2\qquad\text{and}\qquad|\mathcal{I}'\tau|\coloneqq|\tau|+1.
\]
\item \label{enu:multiply}If $\tau_{1}\in\mathsf{T}_{\mathcal{I}'}\cup\mathsf{T}_{\mathrm{poly}}\setminus\{\rsone\}$
and $\tau_{2}\in\mathsf{T}_{\mathcal{I}'}$, or vice versa, then we
add a new symbol $\tau_{1}\tau_{2}$ to $\mathsf{T}_{\star}$, which
is an abstract representation of the product of $\tau_{1}$ and $\tau_{2}$.
We impose the relation that $\tau_{1}\tau_{2}=\tau_{2}\tau_{1}$.
We define the homogeneity
\[
|\tau_{1}\tau_{2}|=|\tau_{1}|+|\tau_{2}|.
\]
\end{enumerate}
The reason for the slightly strange restrictions on $\tau_{1}$ and
$\tau_{2}$ in \zcref{enu:multiply} is that if $\tau_{1}$ and $\tau_{2}$
are both polynomials, then their product already exists in $\mathsf{T}_{\mathrm{poly}}$,
and if $\tau_{1}=\rsone$, then we do not want to add another symbol
$\rsone\tau_{2}$ since we should have $\rsone\tau_{2}=\tau_{2}$.
See \zcref{subsec:Operations-on-the} below.

It is straightforward to check that this procedure generates a countably
infinite set of symbols, with a locally finite set of homogeneities
that is diverging to $+\infty$, so the first condition in \cites[Defn.~2.1]{hairer:2014:theory}
is satisfied.

\paragraph*{Homogeneity subspaces}

We also define, for $\alpha\in\mathbb{R}$,
\[
\mathsf{T}_{\alpha}\coloneqq\{\tau\in\mathsf{T}\st|\tau|=\alpha\},\qquad\mathcal{T}_{\alpha}\coloneqq\langle\mathsf{T}_{\alpha}\rangle,\qquad\mathsf{T}_{<\alpha}\coloneqq\bigcup_{\beta<\alpha}\mathsf{T}_{\beta}\qquad\text{and}\qquad\mathcal{T}_{<\alpha}\coloneqq\langle\mathsf{T}_{<\alpha}\rangle,
\]
and similarly with ``$<$'' replaced by ``$\le$'', ``$>$'',
or ``$\ge$''. Similarly, for $\square\in\{\mathrm{poly},\bullet,\mathcal{I},\mathcal{I}',\star\}$
and $\alpha\in\mathbb{R}$, we define
\[
\mathsf{T}_{\square;\alpha}\coloneqq\{\tau\in\mathsf{T}_{\square}\st|\tau|=\alpha\},\qquad\mathcal{T}_{\square;\alpha}\coloneqq\langle\mathsf{T}_{\square;\alpha}\rangle,
\]
and similarly with ``$\alpha$'' replaced by ``$<\alpha$'', ``$\le\alpha$'',
etc. In particular, this gives $\mathcal{T}$ the structure of a graded
vector space. We also define
\[
A_{<\alpha}\coloneqq\{\ell\in A\st\ell<\alpha\}
\]
and similarly with ``$<\alpha$'' replaced by ``$\le\alpha$'',
etc. For $\tau\in\mathcal{T}$ and $\ell\in A$, we define $\|\tau\|_{\ell}$
to be the norm of the projection of $\tau$ onto $\mathcal{T}_{\ell}$,
the choice of norm being irrelevant since $\mathcal{T}_{\ell}$ is
finite-dimensional.

\subsubsection{Structure group}

To complete the definition of the regularity structure, we must define
the structure group $\mathcal{G}$. Again, the construction here is
the same as that of \cites[§15.3]{friz:hairer:2020:course} except
that we add the potential term $\rspotential$. For completeness we
recall it here briefly. We define $\mathcal{T}^{+}$ to be the free
commutative algebra generated by $\rst$, $\rsx$, and the set of
formal expressions 
\[
\{\mathcal{J}_{n_{1},n_{2}}(\tau)\st n_{1},n_{2}\in\mathbb{Z},\tau\in\mathsf{T}_{\bullet}\sqcup\mathsf{T}_{\mathcal{I}}\sqcup\mathsf{T}_{\mathcal{I}'}\sqcup\mathsf{T}_{\star},|\tau|+2>2n_{1}+n_{2}\}.
\]
We will abuse notation and also write $\rsone$ for the identity element
of $\mathcal{T}^{+}$. Then we define a linear map $\varDelta\colon\mathcal{T}\to\mathcal{T}\otimes\mathcal{T}^{+}$
by 
\[
\varDelta\rsone=\rsone\otimes\rsone,\quad\varDelta\rsnoise=\rsnoise\otimes\rsone,\quad\varDelta\rspotential=\rspotential\otimes\rsone,\quad\varDelta\rst=\rst\otimes\rsone+\rsone\otimes\rst,\quad\varDelta\rsx=\rsx\otimes\rsone+\rsone\otimes\rsx,
\]
and via the recursion
\begin{align}
\varDelta(\tau_{1}\tau_{2}) & \coloneqq(\varDelta\tau_{1})(\varDelta\tau_{2});\label{eq:Delta-prod-1}\\
\varDelta\mathcal{I}(\tau) & \coloneqq(\mathcal{I}\otimes\id)\varDelta\tau+\sum_{\substack{n_{1},n_{2}\in\mathbb{Z}_{\ge0}\\
2n_{1}+n_{2}<|\tau|+2
}
}\frac{\rstn{n_{1}}\rsxn{n_{2}}}{n_{1}!n_{2}!}\otimes\mathcal{J}_{n_{1},n_{2}}(\tau);\label{eq:Delta-I-1}\\
\Delta\mathcal{I}'(\tau) & \coloneqq(\mathcal{I}'\otimes\id)\varDelta\tau+\sum_{\substack{n_{1},n_{2}\in\mathbb{Z}_{\ge0}\\
2n_{1}+n_{2}<|\tau|+1
}
}\frac{\rstn{n_{1}}\rsxn{n_{2}}}{n_{1}!n_{2}!}\otimes\mathcal{J}_{n_{1},n_{2}+1}(\tau).\label{eq:Delta-Iprime-1}
\end{align}

We let 
\begin{equation}
\mathcal{G}^{+}\text{ denote the set of algebra homomorphisms }f\colon\mathcal{T}^{+}\to\mathbb{R}\label{eq:Gplusdef}
\end{equation}
and let 
\begin{equation}
\mathcal{G}=\{\Gamma_{f}\colon f\in\mathcal{G}_{+}\},\qquad\text{where}\qquad\Gamma_{f}\tau\coloneqq(\id\otimes f)\Delta\tau.\label{eq:GGammadef}
\end{equation}
It can be checked that $\mathcal{G}$ is a group under composition,
and satisfies the other required conditions, following the proof given
in \cites[§8.1]{hairer:2014:theory}. The only difference in our
setting is the addition of $\rspotential$, which can be considered
as a second noise (with a different homogeneity) and handled in the
same way as pointed out in \cite[Rmk.~8.9]{hairer:2014:theory}.

\subsubsection{\label{subsec:Operations-on-the}Operations on the regularity structure}

The regularity structure $(A,\mathcal{T},\mathcal{G})$ is equipped
with product, integration, and differentiation maps, defined on sectors.
We first note that the subspaces $\mathcal{T}_{\mathrm{poly}}\oplus\mathcal{T}_{\mathcal{I}'}$
and $\mathcal{T}_{\mathrm{poly}}\oplus\mathcal{T}_{\mathcal{I}}$
are each sectors of the regularity structure $\mathcal{T}$, in the
sense of \cites[Defn.~2.5]{hairer:2014:theory}. The only condition
that requires checking is that these subspaces are closed under the
action of $\mathcal{G}$, but this is clear from the definitions \zcref[range]{eq:Delta-I-1,eq:GGammadef}.

\paragraph*{Product}

We have a product map $\star\colon(\mathcal{T}_{\mathrm{poly}}\oplus\mathcal{T}_{\mathcal{I}'})^{2}\to\mathcal{T}$.
This map is defined on $\mathsf{T}^{2}_{\mathrm{poly}}$ by the usual
polynomial multiplication, on $(\{\rsone\}\times\mathsf{T}_{\mathcal{I}'})\cup(\mathsf{T}_{\mathcal{I}'}\times\{\rsone\})$
by defining $\rsone\star\tau=\tau\star\rsone=\tau$, and on $(\mathsf{T}_{\mathcal{I}'}\cup\mathcal{T}_{\mathrm{poly}}\setminus\{\rsone\})\times\{\mathsf{T}_{\mathcal{I}'}\}$
and its symmetric set by putting $\tau_{1}\star\tau_{2}=\tau_{1}\tau_{2}$.
Then it is extended to all of $(\mathcal{T}_{\mathrm{poly}}\oplus\mathcal{T}_{\mathcal{I}'})^{2}$
by linearity. It is immediately clear that $\star$ is a product on
the regularity structure in the sense of \cites[Defn.~4.1]{hairer:2014:theory}.

\paragraph*{Integration}

We define an integration map $\mathcal{I}\colon\mathcal{T}\to\mathcal{T}_{\mathcal{I}}\subset\mathcal{T}$
by defining $\mathcal{I}(\tau)=\mathcal{I}\tau$ for $\tau\in\mathsf{T}_{\bullet}\cup\mathsf{T}_{\mathcal{I}}\cup\mathsf{T}_{\mathcal{I}'}\cup\mathsf{T}_{\star}$,
$\mathcal{I}\tau=0$ for $\tau\in\mathsf{T}_{\mathrm{poly}}$, and
then extending $\mathcal{I}$ to all of $\mathcal{T}$ by linearity.

\paragraph*{Differentiation}

Finally, we define a spatial differentiation map $\partial\colon\mathcal{T}_{\mathrm{poly}}\oplus\mathcal{T}_{\mathcal{I}}\to\mathcal{T}_{\mathrm{poly}}\oplus\mathcal{T}_{\mathcal{I}'}$
by setting $\partial(\rstn{n_{1}}\rsxn{n_{2}})=n_{2}\rstn{n_{1}}\rsxn{n_{2}-1}$,
$\partial(\mathcal{I}\tau)=\mathcal{I}'\tau$ on basis elements and
extending by linearity. It is not difficult to check that the map
$\partial$ is an abstract gradient in the sense of \cite[Defn.~5.25]{hairer:2014:theory},
the key point being that $\Delta\partial\tau=(\partial\otimes\id)\Delta\tau.$

\subsubsection{Symmetries}

We now discuss an important symmetry of the regularity structure,
reflecting the even/odd extension of the noise and solution corresponding
to the Neumann boundary conditions. We use the formalism introduced
in \cites[§3.6]{hairer:2014:theory}. We associate a ``parity''
$\sgn(\tau)\in\{\pm1\}$ to each basis element $\tau\in\mathsf{T}$
by 
\begin{equation}
\sgn(\rsnoise)=\sgn(\rspotential)=\sgn(\rsone)=\sgn(\rst)=1,\qquad\sgn(\rsx)=-1,\label{eq:sgnbasecase}
\end{equation}
and then impose the recursion
\begin{equation}
\sgn(\mathcal{I}\tau)=\sgn(\tau),\qquad\sgn(\mathcal{I}'\tau)=-\sgn(\tau),\qquad\sgn(\tau_{1}\tau_{2})=\sgn(\tau_{1})\sgn(\tau_{2}).\label{eq:sgnrecursion}
\end{equation}
Strictly speaking, we are imposing this relation only on basis elements
defined according to the recursion \zcref[range]{enu:integrate,enu:multiply},
but it is easy to check that in fact according to this definition,
$\sgn(\mathcal{I}\tau)=\sgn(\tau)$ for all $\tau\in\mathsf{T}$ and
$\sgn(\tau_{1}\star\tau_{2})=\sgn(\tau_{1})\sgn(\tau_{2})$ for all
$\tau_{1},\tau_{2}\in\mathsf{T}_{\mathrm{poly}}\cup\mathsf{T}_{\mathcal{I}'}$. 

Naturally, $\sgn(\tau)$ represents whether the object represented
by $\tau$ is even or odd; see \zcref{lem:reconstruct-symmetry} below.
Thus we define a representation of the symmetry group $\mathscr{S}$
(from \zcref{eq:sym_group_generators}) on $\mathcal{T}$ by defining,
for $\tau\in\underline{\mathcal{T}}$, 
\begin{equation}
\sigma_{\mathrm{trans}}\cdot\tau=\tau\qquad\text{and}\qquad\sigma_{\mathrm{refl}}\cdot\tau=\sgn(\tau)\tau,\label{eq:sigma-action}
\end{equation}
and extending each of these maps to $\mathcal{T}$ by linearity. It
is easy to check that these maps extend to a group representation
of $\mathscr{S}$. One can also readily check that 
\begin{equation}
\sigma_{\mathrm{refl}}\cdot(\partial\tau)=-\partial(\sigma_{\mathrm{refl}}\cdot\tau)\qquad\text{and}\qquad\sigma_{\mathrm{trans}}\cdot(\partial\tau)=\partial(\sigma_{\mathrm{trans}}\cdot\tau)\qquad\text{for }\tau\in\mathcal{T}_{\mathrm{poly}}\oplus\mathcal{T}_{\mathcal{I}},\label{eq:sigma-deriv-commute}
\end{equation}
as well as
\begin{equation}
\sigma\cdot(\tau_{1}\star\tau_{2})=(\sigma\cdot\tau_{1})\star(\sigma\cdot\tau_{2})\qquad\text{for }\tau_{1},\tau_{2}\in\mathcal{T}_{\mathrm{poly}}\oplus\mathcal{T}_{\mathcal{I}'}\text{ and }\sigma\in\mathscr{S}.\label{eq:sigma-product-commute}
\end{equation}

\subsubsection{Truncating the regularity structure\label{sec:trunc-model-space}}

It will turn out that we can do all of our computation in a truncated
regularity structure that includes just finitely many symbols. Namely,
we define
\begin{equation}
\check{\mathcal{T}}\coloneqq\langle\check{\mathsf{T}}\rangle,\qquad\check{\mathsf{T}}\coloneqq\bigcup_{\square\in\{\mathrm{poly},\bullet,\mathcal{I},\mathcal{I}',\star\}}\check{\mathsf{T}}_{\square},\label{eq:Tstar}
\end{equation}
where
\[
\check{\mathsf{T}}_{\square}\coloneqq\mathsf{T}_{\square;\le\gamma_{\square}},\qquad\gamma_{\square}\coloneqq\begin{cases}
1, & \text{if }\square=\mathrm{poly};\\
\infty, & \text{if }\square=\bullet;\\
\thrh, & \text{if }\square=\mathcal{I};\\
\oh, & \text{if }\square=\mathcal{I}';\\
0, & \text{if }\square=\star.
\end{cases}
\]
In other words, in the truncated regularity structure $\check{\mathcal{T}}$,
we ignore all polynomials of homogeneity greater than $1$, all elements
of $\mathcal{T}_{\bullet}$ of homogeneity greater than $\thrh$,
etc. It is straightforward to show that $\check{\mathcal{T}}$ defines
a new regularity structure when equipped with appropriate truncations
$\check{A}$ of $A$ and $\check{\mathcal{G}}$ of $\mathcal{G}$.
We define the projection 
\[
\check{\varpi}\colon\mathcal{T}\to\check{\mathcal{T}}
\]
according to the inclusion of bases $\check{\mathsf{T}}\subset\mathsf{T}$.

\subsubsection{Tree diagrams\label{subsec:Tree-diagrams}}

\nomenclature{$\rsballoonn,\rslollipopn$}{Integration against basic kernels}We
would like to write down all of the symbols in $\check{\mathsf{T}}$
explicitly. To facilitate this, we define a more suggestive representation
of these symbols using tree diagrams. This notation will be further
developed later on, in particular in \zcref{subsec:feynman-diagrams}
below; see p.~\pageref{tree-notation-index} for an index of the
tree notation introduced in the course of the paper. We will represent
$\mathcal{I}$ by a squiggly line $\rsballoonn$ and $\partial\mathcal{I}$
by a straight line $\rslollipopn$, and we will represent multiplication
by joining two trees at their roots. Thus, for example, we have $\rsballoonb=\mathcal{I}\rspotential$,
$\rscherryrr=(\mathcal{I}'\rsnoise)^{2}$, and $\rselkrrb=(\mathcal{I}'\rspotential)(\mathcal{I}'((\mathcal{I}'\rsnoise)^{2}))$.
We do not use a tree notation for polynomial symbols, which we write
simply as $\rsone$ and $\rsx$. Nor do we have a tree notation for
symbols that involve a product of a polynomial with a tree-represented
object, but it turns out that no symbol of this form occurs in $\check{\mathsf{T}}$
anyway. With this notation, we record the elements of $\check{\mathsf{T}}$,
along with their homogeneities, parities, coproducts, integration
maps, and derivative maps, in \zcref{tab:RS-table}. Similarly, we
record the table of values for $\star$, truncated by $\check{\varpi}$,
in \zcref{tab:multiplication-table}.
\begin{table}
\begin{centering}
\newcommand{\tablelabel}[2]{\hspace{-3em}\ldelim\lvert{#1}{*}[$\check{\mathsf{T}}_{#2}$ ]}%
\begin{tabular}{rccclcc}
\cmidrule{2-7}
 & $\tau$ & $|\tau|$ & $\sgn(\tau)$ & $\varDelta\tau-\tau\otimes\rsone$ & $\check{\varpi}(\mathcal{I}\tau)$ & $\partial\tau$\tabularnewline
\cmidrule{2-7}
\tablelabel{2}{\bullet} & $\rsnoise$ & $-\thrh-\kappa$ & $\hphantom{-}1$ & $0$ & $\rsballoonr$ & \tabularnewline
 & $\rspotential$ & $-1-\kappa$ & $\hphantom{-}1$ & $0$ & $\rsballoonb$ & \tabularnewline
\cmidrule{2-7}
\tablelabel{2}{\mathrm{poly}} & $\rsone$ & $0-0\kappa$ & $\hphantom{-}1$ & $0$ & $0$ & $0$\tabularnewline
 & $\rsx$ & $1-0\kappa$ & $-1$ & $\rsone\otimes\rsx$ & $0$ & $\rsone$\tabularnewline
\cmidrule{2-7}
\tablelabel{6}{\mathcal{I}} & $\rsballoonr$ & $\oh-\kappa$ & $\hphantom{-}1$ & $\rsone\otimes\mathcal{J}_{0,0}(\rsnoise)$ & $0$ & $\rslollipopr$\tabularnewline
 & $\rsicherryrr$ & $1-2\kappa$ & $\hphantom{-}1$ & $\rsone\otimes\mathcal{J}_{0,0}(\rscherryrr)$ & $0$ & $\rsipcherryrr$\tabularnewline
 & $\rsballoonb$ & $1-\kappa$ & $\hphantom{-}1$ & $\rsone\otimes\mathcal{J}_{0,0}(\rspotential)$ & $0$ & $\rslollipopb$\tabularnewline
 & $\rsielkrrr$ & $\thrh-3\kappa$ & $\hphantom{-}1$ & $\rsone\otimes\mathcal{J}_{0,0}(\rselkrrr)+\rsx\otimes\mathcal{J}_{0,1}(\rselkrrr)$ & $0$ & $\rsipelkrrr$\tabularnewline
 & $\rsicherryrb$ & $\thrh-2\kappa$ & $\hphantom{-}1$ & $\rsone\otimes\mathcal{J}_{0,0}(\rscherryrb)+\rsx\otimes\mathcal{J}_{0,1}(\rscherryrb)$ & $0$ & $\rsipcherryrb$\tabularnewline
 & $\rsilollipopr$ & $\thrh-\kappa$ & $-1$ & $\rsone\otimes\mathcal{J}_{0,0}(\rslollipopr)+\rsx\otimes\mathcal{J}_{0,1}(\rslollipopr)$ & $0$ & $\rsiplollipopr$\tabularnewline
\cmidrule{2-7}
\tablelabel{6}{\mathcal{I}'} & $\rslollipopr$ & $-\oh-\kappa$ & $-1$ & $0$ & $\rsilollipopr$ & \tabularnewline
 & $\rsipcherryrr$ & $0-2\kappa$ & $-1$ & $0$ & $0$ & \tabularnewline
 & $\rslollipopb$ & $0-\kappa$ & $-1$ & $0$ & $0$ & \tabularnewline
 & $\rsipelkrrr$ & $\oh-3\kappa$ & $-1$ & $\rsone\otimes\mathcal{J}_{0,1}(\rselkrrr)$ & $0$ & \tabularnewline
 & $\rsipcherryrb$ & $\oh-2\kappa$ & $-1$ & $\rsone\otimes\mathcal{J}_{0,1}(\rscherryrb)$ & $0$ & \tabularnewline
 & $\rsiplollipopr$ & $\oh-\kappa$ & $\hphantom{-}1$ & $\rsone\otimes\mathcal{J}_{0,1}(\rslollipopr)$ & $0$ & \tabularnewline
\cmidrule{2-7}
\tablelabel{9}{\star} & $\rscherryrr$ & $-1-2\kappa$ & $\hphantom{-}1$ & $0$ & $\rsicherryrr$ & \tabularnewline
 & $\rscherryrb$ & $-\oh-2\kappa$ & $\hphantom{-}1$ & $0$ & $\rsicherryrb$ & \tabularnewline
 & $\rselkrrr$ & $-\oh-3\kappa$ & $\hphantom{-}1$ & $0$ & $\rsielkrrr$ & \tabularnewline
 & $\rscandelabrarrrr$ & $0-4\kappa$ & $\hphantom{-}1$ & $0$ & $0$ & \tabularnewline
 & $\rsmooserrrr$ & $0-4\kappa$ & $\hphantom{-}1$ & $\rslollipopr\otimes\mathcal{J}_{0,1}(\rselkrrr)$ & $0$ & \tabularnewline
 & $\rselkrbr$ & $0-3\kappa$ & $\hphantom{-}1$ & $\rslollipopr\otimes\mathcal{J}_{0,1}(\rscherryrb)$ & $0$ & \tabularnewline
 & $\rselkrrb$ & $0-3\kappa$ & $\hphantom{-}1$ & $0$ & $0$ & \tabularnewline
 & $\rscherrybb$ & $0-2\kappa$ & $\hphantom{-}1$ & $0$ & $0$ & \tabularnewline
 & $\rsclawrr$ & $0-2\kappa$ & $-1$ & $\rslollipopr\otimes\mathcal{J}_{0,1}(\rslollipopr)$ & $0$ & \tabularnewline
\cmidrule{2-7}
\end{tabular}
\par\end{centering}
\caption[Elements of $\check{\mathsf{T}}$, homogeneities, coproducts, integrations,
and derivatives]{\label{tab:RS-table}The elements of $\check{\mathsf{T}}$. Each
basis element $\tau$ is listed along with its homogeneity $|\tau|$,
coproduct $\varDelta\tau$ (with $\tau\otimes\protect\rsone$, which
appears in each one, subtracted), integration $\mathcal{I}\tau$ (projected
back onto $\check{\mathcal{T}}$ via $\check{\varpi}$), and, where
applicable, derivative $\partial\tau$.}
\end{table}

\begin{table}
\begin{centering}
\begin{tabular}{c|cccccccc}
$\check{\varpi}\circ\star$ & $\rslollipopr$ & $\rsipcherryrr$ & $\rslollipopb$ & $\rsone$ & $\rsipelkrrr$ & $\rsipcherryrb$ & $\rsiplollipopr$ & $\rsx$\tabularnewline
\hline 
$\rslollipopr$ & $\rscherryrr$ & $\rselkrrr$ & $\rscherryrb$ & $\rslollipopr$ & $\rsmooserrrr$ & $\rselkrbr$ & $\rsclawrr$ & $0$\tabularnewline
$\rsipcherryrr$ & $\rselkrrr$ & $\rscandelabrarrrr$ & $\rselkrrb$ & $\rsipcherryrr$ & $0$ & $0$ & $0$ & $0$\tabularnewline
$\rslollipopb$ & $\rscherryrb$ & $\rselkrrb$ & $\rscherrybb$ & $\rslollipopb$ & $0$ & $0$ & $0$ & $0$\tabularnewline
$\rsone$ & $\rslollipopr$ & $\rsipcherryrr$ & $\rslollipopb$ & $\rsone$ & $\rsipelkrrr$ & $\rsipcherryrb$ & $\rsiplollipopr$ & $\rsx$\tabularnewline
$\rsipelkrrr$ & $\rsmooserrrr$ & $0$ & $0$ & $\rsipelkrrr$ & $0$ & $0$ & $0$ & $0$\tabularnewline
$\rsipcherryrb$ & $\rselkrbr$ & $0$ & $0$ & $\rsipcherryrb$ & $0$ & $0$ & $0$ & $0$\tabularnewline
$\rsiplollipopr$ & $\rsclawrr$ & $0$ & $0$ & $\rsiplollipopr$ & $0$ & $0$ & $0$ & $0$\tabularnewline
$\rsx$ & $0$ & $0$ & $0$ & $\rsx$ & $0$ & $0$ & $0$ & $0$\tabularnewline
\end{tabular}
\par\end{centering}
\caption[Multiplication table]{\label{tab:multiplication-table}Table of values of $\check{\varpi}(\tau_{1}\star\tau_{2})$
for $\tau_{1},\tau_{2}\in\check{\mathsf{T}}_{\mathrm{poly}}\cup\check{\mathsf{T}}_{\mathcal{I}'}$.}
\end{table}

\subsection{\label{subsec:The-kernels}The kernels}

Before we can define the model, we need to discuss the kernels that
we use to represent the integration maps. For two space-time functions
$f,g\colon\mathbb{R}^{2}\to\mathbb{R}$, we define the space-time
convolution
\[
(f\circledast g)_{t}(x)=\iint_{\mathbb{R}^{2}}f_{s}(y)g_{t-s}(x-y)\,\dif y\,\dif s
\]
whenever the right side is well-defined. By \cites[Lem.~5.24]{hairer:2014:theory},
we can find kernels $K$ and $\tilde{K}$ such that the heat kernel
$p_{t}(x)$ (recalling \zcref{eq:ptdef}) can be decomposed as
\begin{equation}
p_{t}(x)=K_{t}(x)+\tilde{K}_{t}(x),\label{eq:pissumofKs}
\end{equation}
$(t,x)\mapsto\tilde{K}_{t}(x)$ is smooth, $K_{t}(x)=p_{t}(x)$ whenever
$|t|^{\oh}+|x|\le\oh$, 
\begin{equation}
K_{t}(x)=0\qquad\text{whenever }|t|^{\oh}+|x|\ge1,\label{eq:Kcompactsupport}
\end{equation}
and
\begin{equation}
\int_{\mathbb{R}}K_{t}(x)\,\dif x=\int_{\mathbb{R}}xK_{t}(x)\,\dif x=0\qquad\text{for all }t>0.\label{eq:Kpolyvanish}
\end{equation}
Moreover, we can ensure that
\begin{equation}
K_{t}(x)=K_{t}(-x)\qquad\text{and}\qquad\tilde{K}_{t}(x)=\tilde{K}_{t}(-x)\qquad\text{for all }x\in\mathbb{R}.\label{eq:KKhateven}
\end{equation}
For notational convenience, we will use the notation
\[
p'\coloneqq\partial_{x}p,\qquad K'=\partial_{x}K,\qquad\text{and}\qquad\tilde{K}'=\partial_{x}\tilde{K}
\]
throughout.

\subsection{Canonical lifts\label{subsec:Canonical-lift}}

Before defining the model on our regularity structure, we define some
``canonical lifts'' (roughly in the sense of \cites[(15.12–14)]{friz:hairer:2020:course}).
These represent ``naïve'' reconstruction of the formal expressions
in the regularity structure $\mathcal{T}$. This is not to be confused
with the ``true'' reconstruction of a modeled distribution, discussed
in \zcref{subsec:Lift-of-the} below, but it will be used in the construction
of the model. The point is that the canonical lift does not include
recentering (also known as positive renormalization).

\subsubsection{Main canonical lift}

We let $\mathcal{D}'(\mathbb{R}^{2})$ denote the classical space
of distributions on $\mathbb{R}^{2}$. For $\eps,\zeta>0$, we define
a maps $\mathbf{\boldsymbol{\varPi}}^{\eps,\zeta}\colon\mathcal{T}\to\mathcal{D}'(\mathbb{R}^{2})$
by an inductive procedure, paralleling the steps described in \zcref{subsec:regularity-structure}.
We first define, for $(t,x)\in\mathbb{R}^{2}$,
\begin{equation}
\begin{aligned}\mathbf{\boldsymbol{\varPi}}^{\eps,\zeta}(\rsnoise)_{t}(x) & \coloneqq\dif W^{\zeta}_{t}(x), & \mathbf{\boldsymbol{\varPi}}^{\eps,\zeta}(\rspotential)_{t}(x) & \coloneqq\varphi^{\eps}_{\uu,\vv}(x),\\
\mathbf{\boldsymbol{\varPi}}^{\eps,\zeta}(\rsone)_{t}(x) & =1,\qquad\text{and} & \mathbf{\boldsymbol{\varPi}}^{\eps,\zeta}(\rsx)_{t}(x) & =x,
\end{aligned}
\label{eq:basic-reconstruction}
\end{equation}
with $\dif W^{\zeta}_{t}(x)$ and $\varphi^{\eps}_{\uu,\vv}(x)$ defined
in \zcref{eq:difWzeta,eq:varphiuvdef}, respectively.\footnote{We caution the reader that the symbol $\boldsymbol{\varPi}$ is not
to be confused with the symbol $\Pi$ used for the model in \zcref{subsec:Model}
below.} We then inductively define
\begin{subequations}
\label{eq:Qinductive}
\begin{gather}
\mathbf{\boldsymbol{\varPi}}^{\eps,\zeta}(\mathcal{I}\tau)_{t}(x)\coloneqq(K\circledast\mathbf{\boldsymbol{\varPi}}^{\eps,\zeta}(\tau))_{t}(x),\qquad\qquad\mathbf{\boldsymbol{\varPi}}^{\eps,\zeta}(\mathcal{I}'\tau)_{t}(x)\coloneqq(K'\circledast\mathbf{\boldsymbol{\varPi}}^{\eps,\zeta}(\tau))_{t}(x),\label{eq:QI}\\
\mathbf{\boldsymbol{\varPi}}^{\eps,\zeta}(\tau\rho)_{t}(x)\coloneqq\mathbf{\boldsymbol{\varPi}}^{\eps,\zeta}(\tau)_{t}(x)\cdot\mathbf{\boldsymbol{\varPi}}^{\eps,\zeta}(\rho)_{t}(x)\label{eq:Qprodderivs}
\end{gather}
\end{subequations}
 We note that, since $K$ is compactly supported, there is no issue
in defining the space-time convolutions in \zcref{eq:QI}. For notational
convenience going forward, we will denote
\begin{equation}
\boxop{\tau}^{\eps,\zeta}_{t}(x)\coloneqq\mathbf{\boldsymbol{\varPi}}^{\eps,\zeta}(\tau)_{t}(x).\label{eq:tauboxdef}
\end{equation}
When\nomenclature{$\boxop{\phantom{\rsmooserrrr}}$}{Canonical lift using $K$ kernel}
$\tau$ does not contain any $\rsnoise$ symbols or factors of $\rst$,
then this quantity is independent of $\zeta$ and $t$, so we will
abbreviate it by $\boxop{\tau}^{\eps}(x)$. Similarly, if $\tau$
does not contain any $\rspotential$ symbols, then the right side
of \zcref{eq:tauboxdef} is independent of $\eps$, so we will abbreviate
it by $\boxop{\tau}^{\zeta}_{t}(x)$.

\subsubsection{Modified canonical lift}

The theory of regularity structures has been developed using the kernel
$K$, which satisfies the somewhat restrictive conditions \zcref[range]{eq:Kcompactsupport,eq:Kpolyvanish}.
On the other hand, in several of the explicit calculations we perform
in \zcref{sec:Explicit-calculations} below, it will be much more
convenient to work with the original kernel $p$, which satisfies
the heat equation. Thus we will construct canonical lifts corresponding
to both kernels. Our canonical lifts will represent each of the symbols
at stationarity. A minor complication is that some of the terms of
$\mathsf{T}$ (in particular, many of the elements of $\mathsf{T}_{\mathcal{I}}$)
cannot actually correspond to time-stationary distributions when the
kernel $p$ is used, since they feature a growing zero-frequency mode.
It turns out that we will only need to consider this ``modified canonical''
lift on a restricted set of terms anyway, which (among other properties)
do indeed have stationary versions when interpreted using the kernel
$p$. Thus we begin by defining this restricted subset of terms.

\global\long\def\ES{\operatorname{E/S}}%

\begin{defn}
\label{def:We-define-a}We define a subset $\mathsf{T}_{\ES}\subset\mathsf{T}$
by an inductive procedure. We start by declaring that $\rsnoise,\rspotential\in\mathsf{T}_{\ES}$
(i.e. that $\mathsf{T}_{\bullet}\subseteq\mathsf{T}_{\ES}$). Then
we declare that if $\tau\in\mathsf{T}_{\ES}\cap(\mathsf{T}_{\star}\cup\mathsf{T}_{\bullet})$,
then $\mathcal{I}'\tau\in\mathsf{T}_{\ES}$ as well, and if $\tau_{1},\tau_{2}\in\mathsf{T}_{\ES}\cap\mathsf{T}_{\mathcal{I}'}$,
then $\tau_{1}\tau_{2}\in\mathsf{T}_{\ES}$ as well. We let $\mathsf{T}_{\ES}$
be the set of symbols obtained by applying these rules \emph{ad infinitum}.
In particular we see that $\mathsf{T}_{\ES}\subset\mathsf{T}_{\bullet}\cup\mathsf{T}_{\mathcal{I}'}\cup\mathsf{T}_{\star}$.
We also define
\begin{equation}
\check{\mathsf{T}}_{\ES}\coloneqq\mathsf{T}_{\ES}\cap\check{\mathsf{T}}\label{eq:Tcheckstatdef}
\end{equation}
and define
\[
\mathcal{T}_{\ES}\coloneqq\langle\mathsf{T}_{\ES}\rangle\qquad\text{and}\qquad\check{\mathcal{T}}_{\ES}\coloneqq\langle\check{\mathsf{T}}_{\ES}\rangle.
\]
\end{defn}

The subscript ``$\ES$'' stands for ``explicit/stationary.'' The
terms in $\mathsf{T}_{\ES}$ will represent \emph{explicit} terms
in the expansion of the solution, since they do not contain any polynomial
symbols that will be multiplied by non-explicit parts of solution.
They also are guaranteed to have stationary versions since terms in
$\mathcal{T}_{\mathcal{I}}$ are excluded. We also note that it follows
from \zcref{eq:Tcheckstatdef} and an inspection of \zcref{tab:RS-table}
that 
\begin{equation}
\check{\mathsf{T}}_{\mathrm{\ES}}=(\check{\mathsf{T}}_{\bullet}\cup\check{\mathsf{T}}_{\mathcal{I}'}\cup\check{\mathsf{T}}_{\star})\setminus\{\rsiplollipopr,\rsclawrr\}.\label{eq:Texpcheckexplicit}
\end{equation}
A particular simplification arising from restricting to $\mathsf{T}_{\ES}$,
which we will use later, is that
\begin{equation}
\sgn(\tau)=-1\text{ for }\tau\in\mathsf{T}_{\ES}\cap\mathsf{T}_{\mathcal{I}'}\qquad\text{and}\qquad\sgn(\tau)=1\ \text{for }\tau\in\mathsf{T}_{\ES}\cap(\mathsf{T}_{\bullet}\cup\mathsf{T}_{\star}),\label{eq:sgnofTexp}
\end{equation}
as is easily verified by induction (or, for $\tau\in\check{\mathsf{T}}$,
by a quick inspection of \zcref{tab:RS-table}).

Now we define a map $\tilde{\mathbf{\boldsymbol{\varPi}}}^{\eps,\zeta}\colon\mathcal{T}_{\ES}\to\mathcal{D}'(\mathbb{R}^{2})$,
the space of distributions on $\mathbb{R}^{2}$, inductively by first
defining
\begin{subequations}
\label{eq:Qinductive-hat}
\begin{equation}
\tilde{\mathbf{\boldsymbol{\varPi}}}^{\eps,\zeta}(\rsnoise)\coloneqq\dif W^{\zeta}\qquad\text{and}\qquad\tilde{\mathbf{\boldsymbol{\varPi}}}^{\eps,\zeta}(\rspotential)_{t}(x)\coloneqq\rspotential[rsK]^{\eps}_{t}(x),\label{eq:basic-reconstruction-2-1}
\end{equation}
and then inductively defining
\begin{align}
\tilde{\mathbf{\boldsymbol{\varPi}}}^{\eps,\zeta}(\mathcal{I}'\tau)_{t}(x) & \coloneqq(p'\circledast\tilde{\mathbf{\boldsymbol{\varPi}}}^{\eps,\zeta}(\tau))_{t}(x) &  & \text{for }\tau\in\mathsf{T}_{\ES}\cap(\mathsf{T}_{\bullet}\cup\mathsf{T}_{\star});\label{eq:Qpartial-hat}\\
\tilde{\mathbf{\boldsymbol{\varPi}}}^{\eps,\zeta}(\tau_{1}\tau_{2})_{t}(x) & \coloneqq\tilde{\mathbf{\boldsymbol{\varPi}}}^{\eps,\zeta}(\tau_{1})_{t}(x)\cdot\tilde{\mathbf{\boldsymbol{\varPi}}}^{\eps,\zeta}(\tau_{2})_{t}(x) &  & \text{for }\tau_{1},\tau_{2}\in\mathsf{T}_{\ES}\cap\mathsf{T}_{\mathcal{I}'}.\label{eq:Qprodderivs-hat}
\end{align}
\end{subequations}
Since the heat kernel $p$ is not compactly supported, we should make
sure that the convolutions in \zcref[comp=true]{eq:Qpartial-hat}
actually make sense. For $\tau=\rsnoise$, this is straightforward
to check directly (e.g. using Gaussian estimates). For other $\tau\in\mathsf{T}_{\ES}\cap(\mathsf{T}_{\bullet}\cup\mathsf{T}_{\star})\setminus\{\rspotential\}$,
$\tilde{\mathbf{\boldsymbol{\varPi}}}^{\eps,\zeta}(\tau)$ is a $2L$-periodic
function that is stationary in time with all moments uniformly bounded.
Since $\sup_{x}\sum_{q\in2L\mathbb{Z}}p'_{t}(x)$ decays exponentially
as $t\to\infty$, this readily implies that that \zcref{eq:Qpartial-hat}
is well-defined. (The same is not true for $\sum_{q\in2L\mathbb{Z}}p{}_{t}(x)$,
which is why we do not define $\tilde{\mathbf{\boldsymbol{\varPi}}}^{\eps,\zeta}$
on $\mathcal{T}_{\mathcal{I}}$.)

For notational convenience, analogously to \zcref{eq:tauboxdef},
we define
\[
\bboxop{\tau}^{\eps,\zeta}_{t}(x)\coloneqq\tilde{\mathbf{\boldsymbol{\varPi}}}^{\eps,\zeta}(\tau)_{t}(x).
\]
We\nomenclature{$\bboxop{\phantom{\rsmooserrrr}}$}{Canonical lift using $p$ kernel}
will abbreviate this symbol by $\bboxop{\tau}^{\eps}$ or $\bboxop{\tau}^{\zeta}_{t}$
as appropriate in the same way as described after \zcref{eq:tauboxdef}.

\subsubsection{\label{subsec:Renormalized-canonical-lifts}Renormalized canonical
lifts}

To aid in the definition of the renormalized model later on, we actually
define $\boldsymbol{\varPi}^{\eps,\zeta}$ and $\tilde{\boldsymbol{\varPi}}^{\eps,\zeta}$
on a few additional trees, which will represent renormalizations of
the trees that have already been defined. We introduce the new sets
of symbols
\[
\check{\mathsf{T}}_{\mathrm{R}}\coloneqq\{\rsspecial{C^{(2)}},\rsrenorm,\rsirenorm,\rsiprenorm,\rscherryrenormr,\rscherryrenormb,\rscherryrenormrenorm,\rselkrenormrr\}\qquad\text{and}\qquad\check{\mathsf{T}}_{\mathrm{R},\ES}\coloneqq\check{\mathsf{T}}_{\mathrm{R}}\setminus\{\rsirenorm\},
\]
and then define
\[
\check{\mathcal{T}}_{\mathrm{R}}\coloneqq\langle\check{\mathsf{T}}_{\mathrm{R}}\rangle\qquad\text{and}\qquad\mathcal{T}_{\mathrm{R,\ES}}\coloneqq\langle\check{\mathsf{T}}_{\mathrm{R},\ES}\rangle.
\]
The symbol $\rsrenorm$ should be thought of as a renormalized version
of $\rscherryrr$, and the symbol $\rsspecial{C^{(2)}}$ represents
a spatially constant renormalization; see \zcref[range]{eq:C2renorm,eq:Qtilderenorm}
below. We define a linear map $\hat{M}\colon\check{\mathcal{T}}\to\check{\mathcal{T}}\oplus\check{\mathcal{T}}_{\mathrm{R}}$
as follows. First, we define it on elements of $\check{\mathsf{T}}$
as in the following table:

\begin{equation}\label{tab:Mhatdef}\begin{tabular}{r|cccccccccc}
\toprule 
$\tau$ & $\rscherryrr$ & $\rselkrrr$ & $\rscandelabrarrrr$ & $\rsmooserrrr$ & $\rselkrrb$ & $\rsipcherryrr$ & $\rsicherryrr$ & $\rsielkrrr$ & $\rsipelkrrr$ & other $\tau\in\check{\mathsf{T}}$\tabularnewline
\midrule
$\hat{M}\tau$ & $\rsrenorm$ & $\rscherryrenormr$ & $\rscherryrenormrenorm-\rsspecial{C^{(2)}}$ & $\rselkrenormrr+\frac{1}{4}\rsspecial{C^{(2)}}$ & $\rscherryrenormb$ & $\rsiprenorm$ & $\rsirenorm$ & $\rsicherryrenormr$ & $\rsipcherryrenormr$ & $\tau$\tabularnewline
\bottomrule
\end{tabular}.
\end{equation}Then, we extend $\hat{M}$ to all of $\check{\mathcal{T}}$ by linearity.\footnote{The notation $\hat{M}$ overlaps with the notation for the martingale
defined in \zcref{eq:DtildeMG}. Since this martingale will not be
used in subsequent sections, we hope that this will not cause confusion.} Informally, $\hat{M}\tau$ replaces each occurrence of $\rscherryrr$
in $\tau$ by the symbol $\rsrenorm$, and adds an additional necessary
renormalization term to each of $\rsmooserrrr$ and $\rscandelabrarrrr$.

We define the renormalization constant
\begin{align}
C^{(1)}_{\zeta}(x) & \coloneqq\mathbb{E}\left[\rscherryrr[rsP]^{\zeta}_{t}(x)\right].\label{eq:C1zetadef}
\end{align}
This quantity does not depend on $t$ since the law of $\rscherryrr[rsP]^{\eps,\zeta}_{t}(x)$
is time-independent. Then we define
\begin{equation}
\mathbf{\boldsymbol{\varPi}}^{\eps,\zeta}(\rsrenorm)_{t}(x)\coloneqq\mathbf{\boldsymbol{\varPi}}^{\eps,\zeta}(\rscherryrr)_{t}(x)-C^{(1)}_{\zeta}(x),\label{eq:Qrenorm}
\end{equation}
and
\begin{equation}
\tilde{\mathbf{\boldsymbol{\varPi}}}^{\eps,\zeta}(\rsrenorm)_{t}(x)\coloneqq\tilde{\mathbf{\boldsymbol{\varPi}}}^{\eps,\zeta}(\rscherryrr)_{t}(x)-C^{(1)}_{\zeta}(x).\label{eq:Qtilderenorm}
\end{equation}
Finally, we impose \zcref{eq:Qinductive,eq:Qinductive-hat} again
(with the obvious notation) to extend the definitions of $\mathbf{\boldsymbol{\varPi}}^{\eps,\zeta}$
and $\tilde{\mathbf{\boldsymbol{\boldsymbol{\varPi}}}}^{\eps,\zeta}$
to all of the elements of $\check{\mathsf{T}}_{\mathrm{R}}\setminus\{\rsspecial{C^{(2)}}\}$
and $\check{\mathsf{T}}_{\mathrm{R},\ES}\setminus\{\rsspecial{C^{(2)}}\}$,
respectively. Then, we define
\begin{equation}
C^{(2)}_{\zeta}\coloneqq\frac{1}{L}\int^{L}_{0}\mathbb{E}\left[\rscherryrenormrenorm[rsP]^{\zeta}_{t}(x)\right]\,\dif x,\label{eq:C2zetadef}
\end{equation}
which similarly does not depend on $t$, and put
\begin{equation}
\mathbf{\boldsymbol{\varPi}}^{\eps,\zeta}(\rsspecial{C^{(2)}})_{t}(x)\coloneqq\mathbf{\tilde{\boldsymbol{\varPi}}}^{\eps,\zeta}(\rsspecial{C^{(2)}})_{t}(x)\coloneqq C^{(2)}_{\zeta}.\label{eq:C2renorm}
\end{equation}
Then we can extend $\mathbf{\boldsymbol{\varPi}}^{\eps,\zeta}$ and
$\tilde{\mathbf{\boldsymbol{\boldsymbol{\varPi}}}}^{\eps,\zeta}$
to all of $\check{\mathcal{T}}\oplus\check{\mathcal{T}}_{\mathrm{R}}$
and $\check{\mathcal{T}}_{\ES}\oplus\check{\mathcal{T}}_{\mathrm{R},\ES}$,
respectively, by linearity. We note in particular the immediate consequence
of \zcref{eq:C1zetadef,eq:Qtilderenorm} that
\begin{equation}
\rsrenorm[rsP]^{\zeta}_{t}(x)=\rscherryrr[rsP]^{\zeta}_{t}(x)-\mathbb{E}\left[\rscherryrr[rsP]^{\zeta}_{t}(x)\right],\label{eq:subtract-exp}
\end{equation}
which\nomenclature{$\rsrenorm$}{Renormalized version of $\rscherryrr$}
is the projection of $\rscherryrr[rsP]^{\eps,\zeta}_{t}$ onto the
homogeneous second Wiener chaos. In particular,
\begin{equation}
\mathbb{E}\left[\rsrenorm[rsP]^{\zeta}_{t}(x)\right]=0.\label{eq:renormmeanzero}
\end{equation}
On the other hand, we have
\begin{equation}
\mathbb{E}\left[\rsrenorm[rsK]^{\zeta}_{t}(x)\right]=\mathbb{E}\left[\rscherryrr[rsK]^{\zeta}_{t}(x)\right]-\mathbb{E}\left[\rscherryrr[rsP]^{\zeta}_{t}(x)\right],\label{eq:exp-renorm}
\end{equation}
which is in general nonzero, but which we will see later in \zcref{eq:renorm-constants-same}
is bounded uniformly in $t,x,\zeta$. Finally, we define a new ``renormalized''
lift 
\begin{equation}
\hat{\boldsymbol{\varPi}}^{\eps,\zeta}(\tau)\coloneqq\boldsymbol{\varPi}^{\eps,\zeta}(\hat{M}\tau).\label{eq:Pihatdef}
\end{equation}

The map $\hat{\boldsymbol{\varPi}}^{\eps,\zeta}$ involves renormalization
and thus does \emph{not} satisfy a multiplicative property analogous
to \zcref{eq:Qprodderivs}/\zcref{eq:Qprodderivs-hat}. However,
it is straightforward to check that
\begin{equation}
\hat{\boldsymbol{\varPi}}^{\eps,\zeta}(\tau_{1}\tau_{2})_{t}(x)=\hat{\boldsymbol{\varPi}}^{\eps,\zeta}(\tau_{1})_{t}(x)\cdot\hat{\boldsymbol{\varPi}}^{\eps,\zeta}(\tau_{2})_{t}(x)-C_{\zeta}[\tau_{1},\tau_{2}](x)\qquad\text{for }\tau_{1},\tau_{2}\in\check{\mathsf{T}}_{\mathrm{poly}}\cup\check{\mathsf{T}}_{\mathcal{I}'}\label{eq:Pihatmultiply}
\end{equation}
and also that
\begin{equation}
\tilde{\boldsymbol{\varPi}}^{\eps,\zeta}(\hat{M}(\tau_{1}\tau_{2}))_{t}(x)=\tilde{\boldsymbol{\varPi}}^{\eps,\zeta}(\hat{M}\tau_{1})_{t}(x)\cdot\tilde{\boldsymbol{\varPi}}^{\eps,\zeta}(\hat{M}\tau_{2})_{t}(x)-C_{\zeta}[\tau_{1},\tau_{2}](x)\qquad\text{for }\tau_{1},\tau_{2}\in\check{\mathsf{T}}_{\mathcal{I}'}\cap\check{\mathsf{T}}_{\ES},\label{eq:PitildeMhatmultiply}
\end{equation}
where
\begin{equation}
C_{\zeta}[\tau_{1},\tau_{2}](x)\coloneqq\begin{cases}
C^{(1)}_{\zeta}(x), & \tau_{1}=\tau_{2}=\rslollipopr;\\
C^{(2)}_{\zeta}, & \tau_{1}=\tau_{2}=\rsipcherryrr;\\
-\frac{1}{4}C^{(2)}_{\zeta}, & \{\tau_{1},\tau_{2}\}=\{\rslollipopr,\rsipelkrrr\};\\
0, & \text{otherwise}.
\end{cases}\label{eq:Ctau1tau2def}
\end{equation}
Moreover, the renormalized canonical lift crucially \emph{does} satisfy
the analogue of \zcref{eq:QI}: we have
\begin{equation}
\hat{\boldsymbol{\varPi}}^{\eps,\zeta}(\mathcal{I}(\tau))=K\circledast\hat{\boldsymbol{\varPi}}^{\eps,\zeta}\tau\qquad\text{and}\qquad\hat{\boldsymbol{\varPi}}^{\eps,\zeta}(\partial\mathcal{I}(\tau))=K'\circledast\hat{\boldsymbol{\varPi}}^{\eps,\zeta}\tau\qquad\text{for any }\tau\in\check{\mathcal{T}}\label{eq:Pihatintegration}
\end{equation}
as well as
\begin{equation}
\tilde{\boldsymbol{\varPi}}^{\eps,\zeta}(\hat{M}\mathcal{I}'(\tau))=p'\circledast\tilde{\boldsymbol{\varPi}}^{\eps,\zeta}(\hat{M}\tau)\qquad\text{for any }\tau\in\check{\mathcal{T}}_{\ES}\cap\check{\mathcal{T}}_{\star}.\label{eq:Pihatintegration-1}
\end{equation}
These identities are straightforward to check case-by-case using the
definitions.
\begin{rem}
The reader may wonder why we defined $C^{(1)}_{\zeta}(x)$ and $C^{(2)}_{\zeta}$
as in \zcref[range]{eq:C1zetadef,eq:C2zetadef}, using $\tilde{\boldsymbol{\varPi}}$
(i.e.\ via $\rscherryrr[rsP]$ and $\rscandelabrarrrr[rsP]$) rather
than $\boldsymbol{\varPi}$ (i.e.\ via $\rscherryrr[rsK]$ and $\rscandelabrarrrr[rsK]$).
The reason is that we ultimately want to recover the equation \zcref{eq:hepszeta-eqn}
exactly; see \zcref{prop:RSrecovers,prop:C1zeta} below. If we wanted
to use $\rscherryrr[rsK]$ to define $C^{(1)}_{\zeta}(x)$, then we
could have replaced $\oh\Sh^{\zeta/2}_{L}$ in \zcref{eq:Zepszeta-eqn}
with a different boundary renormalization function, but this different
function would have to depend on the choice of the kernel $K$.
\end{rem}

\begin{rem}
The symbols in $\check{\mathsf{T}}_{\mathrm{R}}$ are not part of
the regularity structure; they are solely defined for use with the
canonical lifts. Presumably one could make them part of the regularity
structure, in particular define the structure group on them. However,
in our setting the benefits that this would yield are easy enough
to check by hand, so we forego this additional complexity.
\end{rem}

The following lemma on the symmetry properties of the canonical lift
in particular justifies our definition of the $\sgn$ function on
$\mathsf{T}$ in \zcref[range]{eq:sgnbasecase,eq:sgnrecursion} above.
We extend the $\sgn$ function to $\mathsf{T}\cup\check{\mathsf{T}}_{\mathrm{R}}$
by setting $\sgn(\rsrenorm)=\sgn(\rsspecial{C^{(2)}})=1$ and then
extending by the recursions \zcref{eq:sgnrecursion}.
\begin{lem}
\label{lem:reconstruct-symmetry}For all $t,x\in\mathbb{R}$ and all
$\tau\in\mathsf{T}\cup\check{\mathsf{T}}_{\mathrm{R}}$ (only $\mathsf{T}_{\ES}\cup\check{\mathsf{T}}_{\mathrm{R},\ES}$
in the second identity below), we have
\begin{equation}
\boxop{\tau}^{\eps,\zeta}_{t}(-x)=\sgn(\tau)\boxop{\tau}^{\eps,\zeta}_{t}(x),\quad\bboxop{\tau}^{\eps,\zeta}_{t}(-x)=\sgn(\tau)\bboxop{\tau}^{\eps,\zeta}_{t}(x),\quad\text{and}\quad\hat{\boldsymbol{\varPi}}^{\eps,\zeta}(\tau)_{t}(x)=\sgn(\tau)\hat{\boldsymbol{\varPi}}^{\eps,\zeta}(\tau)_{t}(-x).\label{eq:varPireflect}
\end{equation}
Also, for all $t,x\in\mathbb{R}$ and $\tau\in(\check{\mathsf{T}}\setminus\{\rsx\})\cup\check{\mathsf{T}}_{\mathrm{R}}$
(again with $\check{\mathsf{T}}$ replaced by $\check{\mathsf{T}}_{\ES}$
for the second identity), we have
\begin{equation}
\boxop{\tau}^{\eps,\zeta}_{t}(x+2L)=\boxop{\tau}^{\eps,\zeta}_{t}(x),\quad\bboxop{\tau}^{\eps,\zeta}_{t}(x+2L)=\bboxop{\tau}^{\eps,\zeta}_{t}(x),\quad\text{and}\quad\hat{\boldsymbol{\varPi}}^{\eps,\zeta}(\tau)_{t}(x+2L)=\hat{\boldsymbol{\varPi}}^{\eps,\zeta}(\tau)_{t}(x).\label{eq:varPitranslate}
\end{equation}
\end{lem}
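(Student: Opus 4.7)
The plan is to prove all three statements simultaneously by strong induction on the construction of $\tau$. The argument is essentially routine: at each step we read off a parity/periodicity transformation law for the object represented by $\tau$ and check that the inductive rules defining $\sgn(\tau)$ and the definitions of $\mathcal{I}$, $\mathcal{I}'$, and multiplication are consistent with it.

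First I would verify the base cases $\tau \in \{\rsnoise, \rspotential, \rsone, \rsx\}$ directly. The extension of $\dif W^{\zeta}$ to $\mathbb{R}$ described just above \zcref{eq:difWzeta} is even and $2L$-periodic, which matches $\sgn(\rsnoise)=+1$. The potential $\varphi^{\eps}_{\uu,\vv}$ is even and $2L$-periodic: evenness follows because $\Sh_{2L}*\psi^{\eps}$ is an even $2L$-periodic function (since $\psi$ is even by \zcref{eq:psiproperties}) together with the $2L$-periodic identity $(\Sh_{2L}*\psi^{\eps})(-x-L) = (\Sh_{2L}*\psi^{\eps})(x-L)$, and periodicity is immediate from the definition \zcref{eq:varphiuvdef}; this matches $\sgn(\rspotential)=+1$. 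The cases $\rsone$ and $\rsx$ match $\sgn=+1$ and $\sgn=-1$ respectively, and $\rsx$ is the unique base symbol lacking $2L$-periodicity, which is why it is excluded from \zcref{eq:varPitranslate}.

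Next I would handle the inductive step for the non-renormalized lifts $\boxop{\cdot}^{\eps,\zeta}$ and $\bboxop{\cdot}^{\eps,\zeta}$. For the integration rule \zcref{eq:Qinductive} (and its variant \zcref{eq:Qinductive-hat}), the key inputs are that $K_t(\cdot)$ is even by \zcref{eq:KKhateven} while $K'_t(\cdot)$ is therefore odd, and analogously for $p$ and $p'$. Substituting $y \mapsto -y$ inside the space-time convolution $K \circledast \boxop{\tau}^{\eps,\zeta}$ then yields $\boxop{\mathcal{I}\tau}^{\eps,\zeta}_t(-x) = \sgn(\tau)\boxop{\mathcal{I}\tau}^{\eps,\zeta}_t(x)$, which matches $\sgn(\mathcal{I}\tau)=\sgn(\tau)$; the corresponding argument with $K'$ produces an extra sign and matches $\sgn(\mathcal{I}'\tau)=-\sgn(\tau)$. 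For the product rule \zcref{eq:Qprodderivs}, the multiplicativity $\sgn(\tau_1\tau_2)=\sgn(\tau_1)\sgn(\tau_2)$ handles both identities trivially. Periodicity under $x \mapsto x+2L$ is preserved by convolution (shift $y$) and by products, so the induction carries it through. The only thing to check is that the inductive step never introduces an $\rsx$ factor when $\tau \ne \rsx$ itself, which is clear from how multiplication is defined in \zcref{enu:multiply}.

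Finally I would extend the result to the renormalized lift $\hat{\boldsymbol{\varPi}}^{\eps,\zeta}$ and to the additional symbols of $\check{\mathsf{T}}_{\mathrm{R}}$. The main new input is the symmetry of the renormalization constants: by the already-proven symmetry of $\bboxop{\rscherryrr}^{\zeta}$, the function $x \mapsto C^{(1)}_{\zeta}(x)$ defined in \zcref{eq:C1zetadef} is even and $2L$-periodic, while $C^{(2)}_{\zeta}$ defined in \zcref{eq:C2zetadef} is constant; with the convention $\sgn(\rsrenorm)=\sgn(\rsspecial{C^{(2)}})=+1$, both \zcref{eq:Qrenorm} and \zcref{eq:C2renorm} then respect the reflection and translation rules. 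The remaining symbols in $\check{\mathsf{T}}_{\mathrm{R}}$ are built from $\rsrenorm$ and $\rsspecial{C^{(2)}}$ by the same integration and multiplication operations, so the induction extends verbatim; the extra subtractive terms appearing in \zcref{eq:Pihatmultiply} are accounted for by the fact that each $C_{\zeta}[\tau_1,\tau_2](x)$ in \zcref{eq:Ctau1tau2def} is itself even and $2L$-periodic, with the correct sign matching $\sgn(\tau_1)\sgn(\tau_2)$ (which equals $+1$ in every case where $C_{\zeta}[\tau_1,\tau_2]$ is nonzero, as can be read off from \zcref{eq:sgnofTexp}). There is no serious obstacle here; the only thing requiring a moment of care is the bookkeeping for the symbols $\rsmooserrrr$ and $\rscandelabrarrrr$ in \zcref{tab:Mhatdef} where $\hat M\tau$ includes an additive $\rsspecial{C^{(2)}}$ correction, but since that correction is a constant (hence even and $2L$-periodic) of matching sign, it is preserved by the induction.
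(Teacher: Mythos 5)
Your proposal is correct and follows essentially the same route as the paper: induction over the construction of $\tau$, matching the $\sgn$ recursion \zcref{eq:sgnrecursion} against the evenness of $K$ and $p$ from \zcref{eq:KKhateven}, then checking the renormalized symbols case-by-case via \zcref{tab:Mhatdef} and the evenness/periodicity of $C^{(1)}_\zeta$ and constancy of $C^{(2)}_\zeta$. Your additional verification that the nonzero $C_\zeta[\tau_1,\tau_2]$ only occur when $\sgn(\tau_1)\sgn(\tau_2)=+1$ is a nice explicit check that the paper leaves implicit, and the whole argument goes through.
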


\begin{proof}
The first two identities in \zcref{eq:varPireflect} follow simply
by induction, noting that the relations \zcref{eq:sgnrecursion} match
the effects of the relations \zcref{eq:Qinductive,eq:Qinductive-hat}
on functions by \zcref{eq:KKhateven} (and the fact that $p_{t}$
is even for each $t$). The last identity in \zcref{eq:varPireflect}
is straightforward to check case-by-case using \zcref{tab:Mhatdef}.
The identities in \zcref{eq:varPitranslate} are similarly straightforward
to check, using the fact that $(\dif W^{\zeta}_{t})$, $\varphi^{\eps}_{\uu,\vv}$,
and $C^{(1)}_{\zeta}$ are $2L$-periodic functions, and that no element
of $(\check{\mathsf{T}}\setminus\{\rsx\})\cup\check{\mathsf{T}}_{\mathrm{R}}$
involves multiplication by $\rsx$ (which is the only building block
of the regularity that does not represent a $2L$-periodic object).
\end{proof}

\subsection{\label{subsec:Model}Model}

We can now construct a family of models; see \cites[Defn.~2.17]{hairer:2014:theory}
for the definition. For simplicity, and because it is all we will
need, we define the model only on the truncated regularity structure
$(\check{A},\check{\mathcal{T}},\check{\mathcal{G}})$. We will in
fact ensure that our models $\left((\hat{\Pi}^{\eps,\zeta}_{t,x},\hat{\Gamma}^{\eps,\zeta}_{t,x})\right)_{t,x\in\mathbb{R}}$
are \emph{admissible} in the sense of \cite[§3.4]{hairer:quastel:2018:class},
and we construct them from the renormalized canonical lift $\hat{\boldsymbol{\varPi}}$
exactly following the construction there. Rather than repeating the
definition of the construction here, we simply summarize the resulting
definition of $(\hat{\Pi}^{\eps,\zeta},\hat{\Gamma}^{\eps,\zeta})$
on $\check{\mathsf{T}}$ in \zcref{tab:model-defns}.
\begin{table}
\hfill{}%
\begin{tabular}{lll}
\toprule 
$\tau$ & $\hat{\Pi}^{\eps,\zeta}_{(s,y)}(\tau)_{t}(x)$ & $\hat{\Gamma}^{\eps,\zeta}_{(s,y),(s',y')}(\tau)-\tau$\tabularnewline
\midrule
$\begin{aligned} & \rsone,\rsnoise,\rspotential,\rslollipopr,\rslollipopb,\\
 & \quad\rscherryrb,\rscherrybb
\end{aligned}
$ & $\boxop{\tau}^{\eps,\zeta}_{t}(x)$ & $0$\tabularnewline
$\rsx,\rsballoonr,\rsballoonb,\rsipcherryrb,\rsiplollipopr$ & $\boxop{\tau}^{\eps,\zeta}_{t}(x)-\boxop{\tau}^{\eps,\zeta}_{s}(y)$ & $\left(\boxop{\tau}^{\eps,\zeta}_{s}(y)-\boxop{\tau}^{\eps,\zeta}_{s'}(y')\right)\rsone$\tabularnewline
$\rsicherryrb,\rsilollipopr$ & $\boxop{\tau}^{\eps,\zeta}_{t}(x)-\boxop{\tau}^{\eps,\zeta}_{s}(y)-\partial_{x}\boxop{\tau}^{\eps,\zeta}_{s}(y)(x-y)$ & $\begin{aligned} & \left(\boxop{\tau}^{\eps,\zeta}_{s}(y)-\boxop{\tau}^{\eps,\zeta}_{s'}(y')-\partial_{x}\boxop{\tau}^{\eps,\zeta}_{s'}(y')(y-y')\right)\rsone\\
 & \qquad+\left(\partial_{x}\boxop{\tau}^{\eps,\zeta}_{s}(y)-\partial_{x}\boxop{\tau}^{\eps,\zeta}_{s'}(y')\right)\rsx
\end{aligned}
$\tabularnewline
$\rscherryrr$ & $\rsrenorm[rsK]^{\eps,\zeta}_{t}(x)$ & $0$\tabularnewline
$\rsicherryrr$ & $\rsirenorm[rsK]^{\eps,\zeta}_{t}(x)-\rsirenorm[rsK]^{\eps,\zeta}_{s}(y)$ & $\left(\rsirenorm[rsK]^{\eps,\zeta}_{s}(y)-\rsirenorm[rsK]^{\eps,\zeta}_{s'}(y')\right)\rsone$\tabularnewline
$\rsipcherryrr$ & $\rsiprenorm[rsK]^{\eps,\zeta}_{t}(x)$ & $0$\tabularnewline
$\rselkrbr$ & $\left(\rsipcherryrb[rsK]^{\eps,\zeta}_{t}(x)-\rsipcherryrb[rsK]^{\eps,\zeta}_{s}(y)\right)\rslollipopr[rsK]^{\eps,\zeta}_{t}(x)$ & $\left(\rsipcherryrb[rsK]^{\eps,\zeta}_{s}(y)-\rsipcherryrb[rsK]^{\eps,\zeta}_{s'}(y')\right)\rslollipopr$\tabularnewline
$\rsclawrr$ & $\left(\rsiplollipopr[rsK]^{\eps,\zeta}_{t}(x)-\rsiplollipopr[rsK]^{\eps,\zeta}_{s}(y)\right)\rslollipopr[rsK]^{\eps,\zeta}_{t}(x)$ & $\left(\rsiplollipopr[rsK]^{\eps,\zeta}_{s}(y)-\rsiplollipopr[rsK]^{\eps,\zeta}_{s'}(y')\right)\rslollipopr$\tabularnewline
$\rselkrrr$ & $\rscherryrenormr[rsK]^{\eps,\zeta}_{t}(x)$ & $0$\tabularnewline
$\rselkrrb$ & $\rscherryrenormb[rsK]^{\eps,\zeta}_{t}(x)$ & $0$\tabularnewline
$\rscandelabrarrrr$ & $\rscherryrenormrenorm[rsK]^{\eps,\zeta}_{t}(x)-C^{(2)}_{\zeta}$ & $0$\tabularnewline
$\rsielkrrr$ & $\rsicherryrenormr[rsK]^{\eps,\zeta}_{t}(x)-\rsicherryrenormr[rsK]^{\eps,\zeta}_{s}(y)$ & $\left(\rsicherryrenormr[rsK]^{\eps,\zeta}_{s}(y)-\rsicherryrenormr[rsK]^{\eps,\zeta}_{s'}(y')\right)\rsone$\tabularnewline
$\rsipelkrrr$ & $\rsipcherryrenormr[rsK]^{\eps,\zeta}_{t}(x)-\rsipcherryrenormr[rsK]^{\eps,\zeta}_{s}(y)$ & $\left(\rsipcherryrenormr[rsK]^{\eps,\zeta}_{s}(y)-\rsipcherryrenormr[rsK]^{\eps,\zeta}_{s'}(y')\right)\rsone$\tabularnewline
$\rsmooserrrr$ & $\left(\rsipcherryrenormr[rsK]^{\eps,\zeta}_{t}(x)-\rsipcherryrenormr[rsK]^{\eps,\zeta}_{s}(y)\right)\rslollipopr[rsK]^{\eps,\zeta}_{t}(x)+\frac{1}{4}C^{(2)}_{\zeta}$ & $\left(\rsipcherryrenormr[rsK]^{\eps,\zeta}_{s}(y)-\rsipcherryrenormr[rsK]^{\eps,\zeta}_{s'}(y')\right)\rslollipopr$\tabularnewline
\bottomrule
\end{tabular}\hfill{}

\caption[Definition of the model.]{\label{tab:model-defns}Definition of the model on each of the trees
$\tau$ in $\check{\mathsf{T}}$.}
\end{table}
It is straightforward to check that the model $(\hat{\Pi}^{\eps,\zeta},\hat{\Gamma}^{\eps,\zeta})$
is compatible with $\partial$ in the sense of \cites[Defn.~5.26]{hairer:2014:theory}
and adapted to the action \zcref{eq:sigma-action} of $\mathscr{S}$
in the sense of \cite[Defn.~3.33]{hairer:2014:theory}. Similarly
to \zcref[range]{eq:Pihatmultiply,eq:PitildeMhatmultiply}, it can
also be checked case-by-case referring to \zcref{tab:model-defns}
that, for any $\tau_{1},\tau_{2}\in\check{\mathsf{T}}_{\mathrm{poly}}\cup\check{\mathsf{T}}_{\mathcal{I}'}$
and $t,x\in\mathbb{R}$, we have
\begin{equation}
\hat{\Pi}^{\eps,\zeta}_{t,x}(\tau_{1}\tau_{2})_{t}(x)=\hat{\Pi}^{\eps,\zeta}_{t,x}(\tau_{1})_{t}(x)\cdot\hat{\Pi}^{\eps,\zeta}_{t,x}(\tau_{2})_{t}(x)-C_{\zeta}[\tau_{1},\tau_{2}](x),\label{lem:mult-model}
\end{equation}
where $C_{\zeta}[\tau_{1},\tau_{2}]$ is defined in \zcref{eq:Ctau1tau2def}.

We will denote by $\|(\hat{\Pi}^{\eps,\zeta},\hat{\Gamma}^{\eps,\zeta})\|_{T}$
the norm of the model restricted to $[-T,T]\times\mathbb{R}$, in
the sense of \cite[Defn.~2.17 and Rmk.~2.20]{hairer:2014:theory}.
The following theorem encapsulates the stochastic estimates on the
model that are essential for the application of the regularity structure
theory.
\begin{thm}
\label{thm:model-norm-bounded}We have, for all $p\in[1,\infty)$
and $\kappa\in(0,\nicefrac{1}{100})\setminus\mathbb{Q}$, that
\[
\adjustlimits\sup_{\eps\in(0,1]}\sup_{\zeta\in(0,\eps)}\mathbb{E}\left[\|(\hat{\Pi}^{\eps,\zeta},\hat{\Gamma}^{\eps,\zeta})\|^{p}_{T}\right]<\infty.
\]
\end{thm}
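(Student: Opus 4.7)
The plan is to apply a Kolmogorov-type criterion for models in the theory of regularity structures, as in \cites[Thm.~10.7]{hairer:2014:theory}, which reduces the uniform estimate on $\|(\hat{\Pi}^{\eps,\zeta},\hat{\Gamma}^{\eps,\zeta})\|_T$ to verifying, for each $\tau\in\check{\mathsf{T}}$, moment bounds of the form
\[
\mathbb{E}\left|\langle \hat{\Pi}^{\eps,\zeta}_{t,x}(\tau),\varphi^{\lambda}_{t,x}\rangle\right|^p \lesssim \lambda^{p|\tau|}
\]
together with analogous estimates on the reexpansion $\hat{\Pi}^{\eps,\zeta}_{t,x}(\tau)-\hat{\Gamma}^{\eps,\zeta}_{t,x;s,y}\hat{\Pi}^{\eps,\zeta}_{s,y}(\tau)$, uniformly in $\eps\in(0,1]$, $\zeta\in(0,\eps)$, basepoints $(t,x)\in[-T,T]\times\mathbb{R}$, and rescaled test functions at scales $\lambda\in(0,1]$. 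Estimates on $\hat{\Gamma}^{\eps,\zeta}$ then follow from those for $\hat{\Pi}^{\eps,\zeta}$ by admissibility and the explicit formulas recorded in \zcref{tab:model-defns}.

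I would organize the verification by splitting $\check{\mathsf{T}}$ into three classes: (a) trees built solely from $\rsnoise$ (and the integration/multiplication operations), (b) trees built solely from $\rspotential$, and (c) mixed trees. For class (a), the bounds follow by inserting the Wiener chaos decomposition of $\hat{\Pi}^{\eps,\zeta}_{t,x}(\tau)$, subtracting the renormalization constants $C^{(1)}_{\zeta}(x)$ and $C^{(2)}_{\zeta}$ defined in \zcref{eq:C1zetadef,eq:C2zetadef} to kill the divergent contractions produced by $\rscherryrr$-subtrees and by the divergent pieces of $\rscandelabrarrrr$ and $\rsmooserrrr$, and then applying the standard BPHZ-style graph bounds on the remaining integrals against $K$. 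These are essentially the bounds carried out in the periodic KPZ setting of \cite{hairer:quastel:2018:class} and \cites[§15]{friz:hairer:2020:course}, now with the additional mollification parameter $\zeta$, which is handled in a classical way since $K$ is smooth. For class (b), the trees are purely deterministic; the required bounds reduce to analytic estimates on iterated $K$- and $K'$-convolutions against $\varphi^{\eps}_{\uu,\vv}$, using that although $\varphi^{\eps}_{\uu,\vv}$ concentrates on the boundary as $\eps\downarrow 0$, its total mass and the homogeneity $|\rspotential|=-1-\kappa$ are consistent with uniform bounds.

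The heart of the argument, and the main obstacle, lies in class (c): the mixed trees $\rscherryrb,\rscherrybb,\rselkrrb,\rselkrbr$ and the corresponding integrated symbols. For these I would write $\hat{\Pi}^{\eps,\zeta}_{t,x}(\tau)=\boxop{\tau}^{\eps,\zeta}_t$ (plus the recentering polynomials prescribed in \zcref{tab:model-defns}) and decompose over Wiener chaoses in $\rsnoise$, treating the boundary potential part of the kernel as a deterministic ``leg'' of the resulting Feynman diagrams. Each chaos contribution is then bounded by a singular-kernel graph estimate in the style of Hairer, but the inputs must track both the mollification scale $\zeta$ of the noise and the boundary-localization scale $\eps$ of $\varphi^{\eps}_{\uu,\vv}$. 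Because $\varphi^{\eps}_{\uu,\vv}$ is supported near $\{0,L\}$ while the basepoint $x$ is arbitrary, the bound $\lambda^{p|\tau|}$ cannot be obtained by a naïve estimate: one must use the fact that a $K$-convolution of $\varphi^{\eps}_{\uu,\vv}$ decays off the boundary, and trade factors of $\eps$ against the distance from the basepoint to $\{0,L\}$ and against the test-function scale $\lambda$. This is precisely the reason our analysis differs in substance from \cite{gerencer:hairer:2019:singular}, where the boundary expansion is avoided.

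The reexpansion (``$\hat{\Pi}^{\eps,\zeta}_{t,x}-\hat{\Gamma}\hat{\Pi}^{\eps,\zeta}_{s,y}$'') bounds are obtained by the same scheme after writing the recentered object via Taylor formulas in $x$ around the second basepoint $y$, the only additional input being smoothness estimates on $\partial_x\boxop{\tau}^{\eps,\zeta}$ of the same type. Combining the three classes of trees with the Kolmogorov criterion produces the desired moment bound on $\|(\hat{\Pi}^{\eps,\zeta},\hat{\Gamma}^{\eps,\zeta})\|_T$, uniformly in $\eps\in(0,1]$ and $\zeta\in(0,\eps)$. The detailed verification of the graph bounds, together with the explicit Feynman-diagram accounting for each of the mixed trees, is carried out in \zcref{sec:BPHZ}.
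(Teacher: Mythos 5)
Your top-level architecture matches the paper's: reduce the model norm to the non-polynomial, non-integrated trees, apply Gaussian hypercontractivity (the Kolmogorov-type criterion of \cites[Thm.~10.7]{hairer:2014:theory}) to pass from $p$-th moments of suprema to second moments tested against $g^{\lambda}_{(t,x)}$, and then verify those second moments tree by tree via renormalized Feynman-diagram bounds. This is exactly the chain \zcref{lem:red-to-stoch} $\to$ \zcref{prop:hypercontractivity} $\to$ \zcref{prop:stochastic-estimates}. However, your account of where the work lies contains a claim that would fail if executed as written, and it is precisely the point of \zcref{sec:BPHZ}.

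You assert that the pure-noise trees (your class (a)) are handled by "essentially the bounds carried out in the periodic KPZ setting" of \cite{hairer:quastel:2018:class}. They are not. The noise here is the even, $2L$-periodic extension of white noise, so its covariance is $\delta(t-t')\left[\Sh^{\zeta}_{2L}(x-x')+\Sh^{\zeta}_{2L}(x+x')\right]$; the reflected term $\Sh^{\zeta}_{2L}(x+x')$ destroys translation invariance and is the source of all the genuine difficulty. Concretely: the first renormalization constant $C^{(1)}_{\zeta}(x)$ is spatially inhomogeneous and blows up at the boundary; the expectation of the contracted claw $\rsclawrrcA$ does \emph{not} vanish by antisymmetry as in the periodic case and must be shown bounded by an explicit Fourier computation (\zcref{lem:claw-symm-bd}, cf.\ \zcref{rem:claw-exp-isnt-smooth}); the logarithmically divergent BPHZ subtrees acquire an extra factor $|\sigma_{\mathrm{refl}}\mathbf{x}-\mathbf{z}|^{-1}_{\mathfrak{s}}$ that threatens a $\log(1/|x|)$ blow-up at the boundary and requires the dedicated kernel estimates of \zcref{lem:bphzbd}; and because the two-point functions (not just integrated quantities) must be controlled in the reflected pseudo-metric $d_{\mathfrak{s},\mathscr{S}}$, one needs the quantitative graph bound \zcref{prop:graph-bound}/\zcref{prop:hepp-prop}, which is not available off the shelf. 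Conversely, you overstate the difficulty of the mixed trees: since $K'\circledast\varphi^{\eps}_{\uu,\vv}$ (i.e.\ $\rslollipopb[rsK]^{\eps}$) is uniformly bounded in $\eps$ by the explicit formula \zcref{eq:lollipopb-explicit}, the potential legs contribute only a bounded factor, and no trading of powers of $\eps$ against the distance to the boundary is needed for the model bound. So your outline is structurally right but would need its core estimates reworked before it constitutes a proof.
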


\begin{proof}
This result is proven in \zcref{sec:BPHZ}. In particular, it is a
consequence of \zcref{lem:red-to-stoch,prop:hypercontractivity,prop:stochastic-estimates}.
\end{proof}

\subsection{\label{subsec:Lift-of-the}Lift of the KPZ solution}

With the regularity structure and the (admissible) model constructed,
the solution theory of the KPZ equation via regularity structures
follows exactly that developed in \cite[§7]{hairer:2014:theory}.
In this section we explain the details of the parts that are relevant
for our application. We emphasize that our goal in using regularity
structures is not to \emph{construct} the solution of the KPZ equation,
which we have already done in \zcref{sec:solntheory} via the Cole--Hopf
transform. Rather, we seek to use the reconstruction theorem to expand
the nonlinearity around the boundary. In order to do this, it is important
that the approximate solution defined via the regularity structure
actually matches the solution constructed in \zcref{sec:mollifying}.
This has been ensured by our precise choices of the renormalization
constants in \zcref{subsec:Canonical-lift} and will be verified carefully
below.

\subsubsection{Modeled distributions}

In order to state the results that we will use, we recall the notion
of modeled distribution from \cite[§3]{hairer:2014:theory} and of
singular modeled distribution from \cites[§6]{hairer:2014:theory}.
In particular, we need to take into account a possible singularity
at $t=0$ coming from the non-smooth initial condition and the singularities
of the kernels $p$ and $K$ at $t=0$, as in \cite[§6]{hairer:2014:theory}.
\begin{defn}[{Special cases of \cite[Defn.~3.1 and 6.2]{hairer:2014:theory}}]
For $\gamma>0$, the space $\mathcal{D}^{\gamma;\eps,\chi}$ of \emph{modeled
distributions }comprises all functions $f\colon\mathbb{R}^{2}\to\mathcal{T}_{<\gamma}$
such that for any $T<\infty$, we have
\[
\vvvert f\vvvert_{\mathcal{D}^{\gamma;\eps,\chi};T}\coloneqq\sup_{\substack{x\in\mathbb{R},t\in[-T,T]\\
\ell\in A_{<\gamma}
}
}\|f_{t}(x)\|_{\ell}+\sup_{\substack{x,y\in\mathbb{R}\\
t,s\in[-T,T]\\
\ell\in A_{<\gamma}
}
}\frac{\left\Vert f_{t}(x)-\hat{\Gamma}^{\eps,\zeta}_{(t,x),(s,y)}f_{s}(y)\right\Vert _{\ell}}{\left(|t-s|^{1/2}+|x-y|\right)^{\gamma-\ell}}<\infty.
\]
Also, for $\gamma>0$ and $\chi\in\mathbb{R}$, the space $\mathcal{D}^{\gamma,\chi;\eps,\zeta}$
of \emph{singular modeled distributions} comprises all functions $f\colon\mathbb{R}_{>0}\times\mathbb{R}^{2}\to\mathcal{T}_{<\gamma}$
such that for any $T<\infty$, we have 
\begin{equation}
\vvvert f\vvvert_{\mathcal{D}^{\gamma,\chi;\eps,\zeta};T}\coloneqq\sup_{\substack{x\in\mathbb{R},t\in(0,T]\\
\ell\in A_{<\gamma}
}
}\frac{\|f_{t}(x)\|_{\ell}}{t^{(\chi-\ell)\wedge0}}+\sup_{\substack{x,y\in\mathbb{R}\\
t,s\in(0,T]\\
\ell\in A_{<\gamma}
}
}\frac{\left\Vert f_{t}(x)-\hat{\Gamma}^{\eps,\zeta}_{(t,x),(s,y)}f_{s}(y)\right\Vert _{\ell}}{\left(|t-s|^{1/2}+|x-y|\right)^{\gamma-\ell}(t\wedge s)^{\chi-\gamma}}<\infty.\label{eq:modeled-distn-norm}
\end{equation}
We will also use the notation, for $V$ a sector of $\mathcal{T}$,
\[
\mathcal{D}^{\gamma,\chi;\eps,\zeta}(V)=\{f\in\mathcal{D}^{\gamma,\chi;\eps,\zeta}\st f_{t}(x)\in V\text{ for all }t,x\}\ \ \text{and}\ \ \mathcal{D}^{\gamma;\eps,\zeta}(V)=\{f\in\mathcal{D}^{\gamma;\eps,\zeta}\st f_{t}(x)\in V\text{ for all }t,x\}.
\]
Recalling the notation
\[
\mathcal{D}^{\gamma,\chi;\eps,\zeta}_{\beta}\coloneqq\mathcal{D}^{\gamma,\chi;\eps,\zeta}(\mathcal{T}_{\ge\beta})\qquad\text{and}\qquad\mathcal{D}^{\gamma;\eps,\zeta}_{\beta}\coloneqq\mathcal{D}^{\gamma,\chi;\eps,\zeta}(\mathcal{T}_{\ge\beta}).
\]
\end{defn}

It will also be important in our analysis to be able to keep track
of the symmetry of the problem. Recall the group $\mathscr{S}$ from
\zcref{eq:sym_group_generators} and its action \zcref{eq:sigma-action}
on the regularity structure $\mathcal{T}$.
\begin{defn}[{\cite[Defn.~3.33]{hairer:2014:theory}}]
\label{def:symmetric}We say that a modeled distribution is \emph{symmetric}
if, for all $\sigma\in\mathscr{S}$ and $t,x\in\mathbb{R}$, we have
\begin{equation}
\sigma\cdot f_{t}(\sigma(x))=f_{t}(x).\label{eq:symmetry-eqn}
\end{equation}
\end{defn}

\subsubsection{Reconstruction operator}

For $\beta>-3$, we define the reconstruction operator $\hat{\mathcal{R}}^{\eps,\zeta}\colon\mathcal{D}^{\gamma,\chi;\eps,\zeta}_{\beta}\to\mathcal{D}'(\mathbb{R}^{2})$
for the models $(\hat{\Pi}^{\eps,\zeta},\hat{\Gamma}^{\eps,\zeta})$,
as in \cite[Prop.~6.9]{hairer:2014:theory}. (The condition $\beta>-3$
is imposed in the statement of \cite[Prop.~6.9]{hairer:2014:theory}
and comes from the effective dimension of $\mathbb{R}^{2}$ under
the parabolic scaling, i.e. $3=2+1$ since ``time counts double.'')
In fact, because we work with $\eps,\zeta>0$, we can mostly define
the reconstruction operator simply in terms of the model by the argument
of \cites[Rmk.~3.15]{hairer:2014:theory}. There is a minor difference
in our setting, which is that we only consider spatial rather than
space-time mollifications of the white noise, and so the assumption
of \cites[Rmk.~3.15]{hairer:2014:theory} is not quite satisfied,
because $\hat{\Pi}^{\eps,\zeta}_{t,x}\rsnoise$ is not continuous
in time. (Indeed, it is white in time.) However, it is straightforward
to check that if $\tau\in\check{\mathsf{T}}\setminus\{\rsnoise\}=\check{\mathsf{T}}_{\ge-\thrh}$,
then, for any $t,x\in\mathbb{R}$, the distribution $\hat{\Pi}^{\eps,\zeta}_{t,x}\tau$
is a Hölder-continuous space-time function. (This Hölder continuity
is not uniform in $\eps$ and $\zeta$ unless $|\tau|\ge0$.) Since
$\langle\check{\mathsf{T}}\setminus\{\rsnoise\}\rangle$ is a sector
of $\check{\mathsf{T}}$ in the sense of \cite[Defn.~2.5]{hairer:2014:theory},
we can conclude by \cites[Rmk.~3.15]{hairer:2014:theory} that
\begin{equation}
\hat{\mathcal{R}}^{\eps,\zeta}(f)_{t}(x)=\hat{\Pi}^{\eps,\zeta}_{t,x}(f_{t}(x))_{t}(x)\qquad\text{if }f\in\mathcal{D}^{\gamma,\chi;\eps,\zeta}_{\beta}\text{ for some }\gamma>0\text{ and }\beta\ge-\thrh.\label{eq:reconstruction-is-model-1}
\end{equation}
On the other hand, it is also clear from the definitions that 
\begin{equation}
\hat{\mathcal{R}}^{\eps,\zeta}(\rsnoise)_{t}(x)=\dif W^{\zeta}_{t}(x)\label{eq:reconstruction-of-noise}
\end{equation}
in the sense of distributions. We will not have occasion to consider
modeled distributions that involve $\rsnoise$ multiplied by a non-constant
function, so \zcref{eq:reconstruction-is-model-1,eq:reconstruction-of-noise}
will suffice to compute $\hat{\mathcal{R}}^{\eps,\zeta}$ applied
to all modeled distributions of interest. We also note that the
hypothesis of \cites[Prop.~5.28]{hairer:2014:theory} is satisfied
by the compatibility of the model with $\partial$ noted in \zcref{subsec:Model},
and so in particular we have the identity
\begin{equation}
\hat{\mathcal{R}}^{\eps,\zeta}(\partial f)=\partial_{x}\hat{\mathcal{R}}^{\eps,\zeta}f.\label{eq:Rhat-commutes-deriv}
\end{equation}

\subsubsection{Integration operators and solution theory}

At this point, we fix once and for all an arbitrary 
\begin{equation}
\chi\in(\oh-\kappa,\oh),\label{eq:chirange}
\end{equation}
representing the regularity of the initial condition. We also will
define separate sectors of the regularity structure for the solution
of the KPZ equation and the forcing. We will need to expand the solution
up to regularity just greater than $\thrh$ and the forcing up to
regularity just greater than $0$. Therefore, we define
\[
\check{\mathcal{T}}_{\mathrm{S};\mathrm{poly}}\coloneqq\langle\rsone,\rsx\rangle\qquad\text{and}\qquad\check{\mathcal{T}}_{\mathrm{F};\mathrm{poly}}\coloneqq\langle\rsone\rangle
\]
as well as the sectors
\[
\check{\mathcal{T}}_{\mathrm{S}}\coloneqq\check{\mathcal{T}}_{\mathrm{S};\mathrm{poly}}\oplus\check{\mathcal{T}}_{\mathcal{I}}\qquad\text{and}\qquad\check{\mathcal{T}}_{\mathrm{F}}\coloneqq\check{\mathcal{T}}_{\mathrm{F};\mathrm{poly}}\oplus\check{\mathcal{T}}_{\bullet}\oplus\check{\mathcal{T}}_{\mathcal{I}'}\oplus\mathcal{\check{T}}_{\star}
\]
and the projection maps
\begin{equation}
\check{\varpi}_{\mathrm{S}}\colon\mathcal{T}\to\check{\mathcal{T}}_{\mathrm{S}}\qquad\text{and}\qquad\check{\varpi}_{\mathrm{F}}\colon\mathcal{T}\to\check{\mathcal{T}}_{\mathrm{F}}.\label{eq:SFprojections}
\end{equation}
Again, it is straightforward to check that these subspaces of the
model space really are sectors. The regularity (i.e., the lowest-degree
homogeneity) of the sector $\check{\mathcal{T}_{\mathrm{S}}}$ is
$0$, while the regularity of the sector $\check{\mathcal{T}}_{\mathrm{F}}$
is $-\thrh-\kappa$. We will solve the KPZ equation in the space of
modeled distributions
\begin{equation}
\mathcal{D}^{\eps,\zeta}_{\mathrm{S}}\coloneqq\mathcal{D}^{\thrh+10\kappa,\chi;\eps,\zeta}(\check{\mathcal{T}}_{\mathrm{S}}),\label{eq:DKPZ}
\end{equation}
and resolve the forcing on the right side of the KPZ equation in the
space
\begin{equation}
\mathcal{D}^{\eps,\zeta}_{\mathrm{F}}\coloneqq\mathcal{D}^{9\kappa,-1-2\kappa;\eps,\zeta}(\check{\mathcal{T}}_{\mathrm{F}}).\label{eq:DF}
\end{equation}
We will also use the notations
\begin{equation}
\mathcal{D}_{\mathrm{S};\mathrm{poly}}\coloneqq\mathcal{D}^{3/2+10\kappa,\chi;\eps,\zeta}(\check{\mathcal{T}}_{\mathrm{S};\mathrm{poly}})\qquad\text{and}\qquad\mathcal{D}_{\mathrm{F};\mathrm{poly}}\coloneqq\mathcal{D}^{9\kappa,-1-2\kappa;\eps,\zeta}(\check{\mathcal{T}}_{\mathrm{F};\mathrm{poly}}).\label{eq:DKPZpoly}
\end{equation}
(These spaces do not depend on $\eps$ and $\zeta$ since the model
restricted to $\mathcal{T}_{\mathrm{poly}}$ does not, but this will
not really be important.) The choices of $\gamma$ and $\chi$ in
these spaces are justified by the computations in \zcref[comp=true]{eq:DGspace,eq:DG2space,eq:squaring-cts}
below. The important point in the sequel will be that in \zcref{eq:DF}
we have $\gamma=9\kappa>0$, so we can apply the reconstruction theorem
in \zcref{subsec:Reduction-to-the} (see \zcref{prop:apply-reconstruction-boundary}).

We define the map $\mathcal{P}_{0}\coloneqq\mathcal{C}^{\chi}(\mathbb{R}/(2L\mathbb{Z}))\to\mathcal{D}_{\mathrm{S};\mathrm{poly}}$
by
\begin{equation}
(\mathcal{P}_{0}h_{0})_{t}(x)=(p_{t}*h_{0})(x)\rsone+(p'_{t}*h_{0})(x)\rsx.\label{eq:P0def}
\end{equation}
That the right side of \zcref{eq:P0def} does live in $\mathcal{D}_{\mathrm{S};\mathrm{poly}}$
is a consequence of \cite[Lem.~7.5]{hairer:2014:theory}. It is an
immediate consequence of \zcref{eq:P0def,eq:reconstruction-is-model-1}
that
\begin{equation}
(\hat{\mathcal{R}}^{\eps,\zeta}\mathcal{P}_{0}h_{0})_{t}(x)=(p_{t}*h_{0})(x).\label{eq:reconstruct-ic}
\end{equation}

We also recall from \cite{hairer:2014:theory} the definitions of
operators on the space of modeled distributions that represent convolution
by $K$ and by $\tilde{K}$. For $\tilde{K}$, we simply define
\begin{equation}
(\tilde{\mathcal{K}}^{\eps,\zeta}f)(x)=(\tilde{K}\circledast\hat{\mathcal{R}}^{\eps,\zeta}f)_{t}(x)\rsone+(\tilde{K}'\circledast\hat{\mathcal{R}}^{\eps,\zeta}f)_{t}(x)\rsx\label{eq:scrKhatdef}
\end{equation}
for $f\in\mathcal{D}^{\eps,\zeta}_{\mathrm{F}}$ for which the two
convolutions are well-defined. From this it is easy to conclude that
(for such $f$)
\begin{equation}
\hat{\mathcal{R}}^{\eps,\zeta}\tilde{\mathcal{K}}^{\eps,\zeta}f=\tilde{K}\circledast\hat{\mathcal{R}}^{\eps,\zeta}f.\label{eq:RKhat}
\end{equation}
For $K$, we define a map $\mathcal{K}^{\eps,\zeta}=\mathcal{D}^{\eps,\zeta}_{\mathrm{F}}\to\mathcal{D}^{\eps,\zeta}_{\mathrm{S}}$
by the construction of \cites[(5.15)]{hairer:2014:theory}. We use
\cites[Prop.~6.16]{hairer:2014:theory}, the assumption \zcref{eq:chirange}
that $\chi>\oh-\kappa$, and also the definition of $\mathcal{K}^{\eps,\zeta}$
along with the definitions of the relevant sectors of $\mathcal{T}$
in \zcref{subsec:regularity-structure} to note that it really does
map between those spaces. The key features of $\mathcal{K}^{\eps,\zeta}$
are that
\begin{equation}
\hat{\mathcal{R}}^{\eps,\zeta}\mathcal{K}^{\eps,\zeta}f=K\circledast\hat{\mathcal{R}}^{\eps,\zeta}f\label{eq:RK}
\end{equation}
by \cites[(5.17)]{hairer:2014:theory} and that
\begin{equation}
(\mathcal{K}^{\eps,\zeta}f)_{t}(x)-\mathcal{I}(f_{t}(x))\in\mathcal{T}_{\mathrm{poly}}\qquad\text{for all }t,x\label{eq:KfisIpluspoly}
\end{equation}
by the definition \cites[(5.15)]{hairer:2014:theory}. Combining \zcref{eq:RKhat,eq:RK},
we see that
\begin{equation}
\hat{\mathcal{R}}^{\eps,\zeta}(\mathcal{K}^{\eps,\zeta}+\tilde{\mathcal{K}}^{\eps,\zeta})f=(K+\tilde{K})\circledast\hat{\mathcal{R}}^{\eps,\zeta}f=p\circledast\hat{\mathcal{R}}^{\eps,\zeta}f.\label{eq:reconstruct-heatkernel}
\end{equation}

The final object necessary to write the solution theory for the KPZ
equation is the object $\mathbf{1}_{>0}\colon\mathbb{R}^{2}\to\mathbb{R}$
given by
\[
\left(\mathbf{1}_{>0}\right)_{t}(x)=\mathbf{1}\{t>0\}.
\]
Then, similarly to \cites[(15.8)]{friz:hairer:2020:course} (but with
the extra potential term $\rspotential$ added), we can write the
abstract version of the KPZ equation with boundary potential as
\begin{equation}
H=\check{\varpi}_{\mathrm{S}}(\mathcal{K}^{\eps,\zeta}+\tilde{\mathcal{K}}^{\eps,\zeta})\left(\mathbf{1}_{>0}\left(\oh(\partial H)^{2}+\rsnoise+\rspotential\right)\right)+\mathcal{P}_{0}h_{0}\label{eq:fixedpointeqn}
\end{equation}
for any initial condition $h_{0}\in\mathcal{C}^{\chi}$. As written,
this is a fixed point problem for functions defined on $\mathbb{R}^{2}$,
but we will say that $H$ is a solution to \zcref{eq:fixedpointeqn}
on $[0,T]$ if the left and right sides agree when evaluated at $(t,x)$
for $t\in[0,T]$ and $x\in\mathbb{R}$. 

To see that the equation \zcref{eq:fixedpointeqn} at least makes
sense, we note that if $G\in\mathcal{D}^{\eps,\zeta}_{\mathrm{S}}$,
then 
\begin{equation}
\partial G\in\mathcal{D}^{\oh+10\kappa,\chi-1;\eps,\zeta}(\mathcal{T}_{\mathrm{poly}}\oplus\mathcal{T}_{\mathcal{I}'})\label{eq:DGspace}
\end{equation}
by \cite[Prop.~6.15]{hairer:2014:theory}. From this, we then see
by \cite[Prop.~6.12]{hairer:2014:theory} (using that the regularity
of the sector $\mathcal{T}_{\mathrm{poly}}\oplus\mathcal{T}_{\mathcal{I}'}$
is $-\oh-\kappa$ by \zcref{tab:RS-table} to check the hypothesis)
that 
\begin{equation}
(\partial G)^{2}\in\mathcal{D}^{9\kappa,(-1-2\kappa)\wedge(2\chi-2);\eps,\zeta}(\mathcal{T}_{\mathrm{poly}}\oplus\mathcal{T}_{\mathcal{I}'}\oplus\mathcal{T}_{\star})=\mathcal{D}^{9\kappa,-1-2\kappa;\eps,\zeta}(\mathcal{T}_{\mathrm{poly}}\oplus\mathcal{T}_{\mathcal{I}'}\oplus\mathcal{T}_{\star})\subseteq\mathcal{D}^{\eps,\zeta}_{\mathrm{F}}.\label{eq:DG2space}
\end{equation}
In the second identity in \zcref{eq:DG2space} we used that, by the
assumption \zcref{eq:chirange} that $\chi>\oh-\kappa$, we have $2\chi-2>-1-2\kappa$.
In fact, the referenced propositions also include continuity statements,
and in particular
\begin{equation}
\text{the map }\mathcal{D}^{\eps,\zeta}_{\mathrm{S}}\ni G\mapsto(\partial G)^{2}\in\mathcal{D}^{\eps,\zeta}_{\mathrm{F}}\text{ is continuous.}\label{eq:squaring-cts}
\end{equation}

With these definitions in place, we have the following theorem.
\begin{thm}
\label{thm:fixedpointproblem}There is a $T>0$ (possibly depending
on the model and the initial condition $h_{0}$) such that \zcref{eq:fixedpointeqn}
has a unique solution $H$ on $[0,T]$, and moreover that
\begin{equation}
\text{if }T<\infty\text{, then }\lim_{t\uparrow T}\|(\hat{\mathcal{R}}^{\eps,\zeta}H)_{t}\|_{\mathcal{C}^{\chi}}=\infty.\label{eq:blowuptime}
\end{equation}
Moreover, the map $(\hat{\Pi}^{\eps,\zeta},\hat{\Gamma}^{\eps,\zeta})\mapsto(T,H)$
is can be chosen in a continuous manner. Finally, the solution $H$
is symmetric in the sense of \zcref{def:symmetric}.
\end{thm}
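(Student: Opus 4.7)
The plan is to solve \zcref{eq:fixedpointeqn} by a Banach fixed-point argument in the space $\mathcal{D}^{\eps,\zeta}_{\mathrm{S}}$ on a short time interval $[0,T]$, closely following the template of \cites[§7]{hairer:2014:theory} (cf.\ the account for periodic KPZ in \cites[§15.4]{friz:hairer:2020:course}), with the only novelty being the additional potential term $\rspotential$ appearing linearly in the forcing. I would first verify that the right-hand side of \zcref{eq:fixedpointeqn} defines a map $\Phi\colon\mathcal{D}^{\eps,\zeta}_{\mathrm{S}}\to\mathcal{D}^{\eps,\zeta}_{\mathrm{S}}$. Given $H\in\mathcal{D}^{\eps,\zeta}_{\mathrm{S}}$, the observations \zcref{eq:DGspace,eq:DG2space} show that $\oh(\partial H)^{2}\in\mathcal{D}^{\eps,\zeta}_{\mathrm{F}}$; the noise and potential symbols $\rsnoise,\rspotential$ (viewed as constant modeled distributions) also lie in $\mathcal{D}^{\eps,\zeta}_{\mathrm{F}}$ since both are in $\check{\mathsf{T}}_{\bullet}\subset\check{\mathcal{T}}_{\mathrm{F}}$ and have sufficiently negative homogeneity. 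Multiplication by $\mathbf{1}_{>0}$ keeps the result in $\mathcal{D}^{\eps,\zeta}_{\mathrm{F}}$ and additionally allows one to gain a small factor of $T^{\theta}$ for some $\theta>0$ when restricted to $[0,T]$ (cf.\ \cites[Prop.~6.9]{hairer:2014:theory}). Applying $\mathcal{K}^{\eps,\zeta}+\tilde{\mathcal{K}}^{\eps,\zeta}$ then raises the homogeneity by $2$ via \cites[Prop.~6.16]{hairer:2014:theory}, so the output lies in $\mathcal{D}^{9\kappa+2,1-2\kappa;\eps,\zeta}(\check{\mathcal{T}})$; after projection by $\check{\varpi}_{\mathrm{S}}$ one lands in $\mathcal{D}^{\eps,\zeta}_{\mathrm{S}}$, since $9\kappa+2>\thrh+10\kappa$ and $1-2\kappa>\chi$ by \zcref{eq:chirange}. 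Finally, $\mathcal{P}_{0}h_{0}\in\mathcal{D}_{\mathrm{S};\mathrm{poly}}\subset\mathcal{D}^{\eps,\zeta}_{\mathrm{S}}$.

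The next step is to establish the contraction estimate. The continuity statement \zcref{eq:squaring-cts} yields a local Lipschitz bound for $G\mapsto(\partial G)^{2}$, while the combined integration operator $\mathcal{K}^{\eps,\zeta}+\tilde{\mathcal{K}}^{\eps,\zeta}$ is bounded uniformly in $T\in(0,1]$ on modeled distributions supported in $\{t\ge0\}$, and acquires the aforementioned small prefactor $T^{\theta}$ on the singular sector after multiplication by $\mathbf{1}_{>0}$. Combining these, for $H_{1},H_{2}$ in the ball of radius $R$ in $\mathcal{D}^{\eps,\zeta}_{\mathrm{S}}$ restricted to $[0,T]$, one obtains
\[
\vvvert\Phi(H_{1})-\Phi(H_{2})\vvvert_{\mathcal{D}^{\eps,\zeta}_{\mathrm{S}};T}\le CT^{\theta}R\,\vvvert H_{1}-H_{2}\vvvert_{\mathcal{D}^{\eps,\zeta}_{\mathrm{S}};T},
\]
with $C$ depending polynomially on $\|(\hat\Pi^{\eps,\zeta},\hat\Gamma^{\eps,\zeta})\|_{1}$ and $\|h_{0}\|_{\mathcal{C}^{\chi}}$. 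Choosing $R$ large enough to absorb $\Phi(0)$ and then $T$ small enough yields a contraction and hence a unique fixed point, which can be extended maximally to an interval $[0,T_{\max})$ by iteration. The blow-up criterion \zcref{eq:blowuptime} follows because if $\|(\hat{\mathcal{R}}^{\eps,\zeta}H)_{t}\|_{\mathcal{C}^{\chi}}$ stays bounded as $t\uparrow T_{\max}$ one can restart the fixed-point argument from time $t$, contradicting maximality. Continuous dependence of the pair $(T,H)$ on the model follows from the fact that $\Phi$ depends continuously on $(\hat\Pi^{\eps,\zeta},\hat\Gamma^{\eps,\zeta})$ at each step—this is again a packaged consequence of the continuity statements in \cites[§5--6]{hairer:2014:theory}—combined with the standard implicit-function-theorem argument for fixed points of contractions.

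For the symmetry claim, I would check that each building block of $\Phi$ commutes with the $\mathscr{S}$-action defined in \zcref{eq:sigma-action}. The sectors $\check{\mathcal{T}}_{\mathrm{S}}$ and $\check{\mathcal{T}}_{\mathrm{F}}$ are stable under this action, and, because the model is adapted to $\mathscr{S}$ in the sense of \cites[Defn.~3.33]{hairer:2014:theory}, the operators $\mathcal{K}^{\eps,\zeta}+\tilde{\mathcal{K}}^{\eps,\zeta}$ preserve symmetric modeled distributions by \cites[Prop.~3.36]{hairer:2014:theory}. The product $(\partial H)^{2}$ is symmetric whenever $H$ is, by \zcref{eq:sigma-deriv-commute,eq:sigma-product-commute}; the constant symbols $\rsnoise$ and $\rspotential$ are symmetric by \zcref{eq:sgnbasecase} together with the even/periodic extensions built into $\dif W^{\zeta}$ and $\varphi^{\eps}_{\uu,\vv}$; and $\mathcal{P}_{0}h_{0}$ is symmetric because the initial data has been extended to an even, $2L$-periodic function $A^{\zeta}$ via \zcref{eq:Azetadef}. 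Hence $\Phi$ maps symmetric elements to symmetric elements, and since symmetry is preserved under the $\vvvert\cdot\vvvert_{\mathcal{D}^{\eps,\zeta}_{\mathrm{S}};T}$-closure, the Banach fixed point is symmetric.

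The main technical ingredient is the extraction of the small factor $T^{\theta}$ from the combined operator $\mathbf{1}_{>0}\cdot\mathcal{K}^{\eps,\zeta}+\tilde{\mathcal{K}}^{\eps,\zeta}$, which is exactly where the compatibility of the singular modeled distribution norm \zcref{eq:modeled-distn-norm} with the integration bounds of \cites[Thm.~5.12, Prop.~6.16]{hairer:2014:theory} comes into play; given the standard nature of this bookkeeping and the fact that the model has been verified to satisfy the required estimates by \zcref{thm:model-norm-bounded}, no substantial new difficulty arises beyond tracking that the parameters in the definitions \zcref{eq:DKPZ,eq:DF} of $\mathcal{D}^{\eps,\zeta}_{\mathrm{S}}$ and $\mathcal{D}^{\eps,\zeta}_{\mathrm{F}}$ are arranged consistently with \zcref{eq:chirange} and the homogeneities listed in \zcref{tab:RS-table}.
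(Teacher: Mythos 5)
Your argument is the standard Picard/Banach fixed-point scheme in the weighted modeled-distribution spaces, combined with the restart argument for the blow-up criterion and the check that every building block of the fixed-point map commutes with the $\mathscr{S}$-action; this is exactly the route the paper takes, which omits the details and simply refers to \cites[Thm.~7.8]{hairer:2014:theory}, \cites[Thm.~4.16]{hairer:quastel:2018:class}, and \cite[Prop.~7.11]{hairer:2014:theory}. One bookkeeping slip worth noting: after applying $\mathcal{K}^{\eps,\zeta}+\tilde{\mathcal{K}}^{\eps,\zeta}$ to an element of $\mathcal{D}^{9\kappa,-1-2\kappa;\eps,\zeta}(\check{\mathcal{T}}_{\mathrm{F}})$, the singularity index produced by \cites[Prop.~6.16]{hairer:2014:theory} is $(\eta\wedge\alpha)+2=\oh-\kappa$ (the sector $\check{\mathcal{T}}_{\mathrm{F}}$ has regularity $\alpha=-\thrh-\kappa$), not $1-2\kappa$, so the comparison with $\chi$ requires the finer analysis of the low-homogeneity components as in Hairer's theorem rather than the naive inclusion you invoke — but this does not change the structure of the argument.
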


The uniqueness statement in \zcref{thm:fixedpointproblem} can be
proved in an identical manner to \cites[Thm.~7.8]{hairer:2014:theory}
or \cites[Thm.~4.16]{hairer:quastel:2018:class}, and the fact that
the solution can be continued up until the blow-up time in $\mathcal{C}^{\chi}$
is proved as in \cite[Prop.~7.11]{hairer:2014:theory}. Thus we omit
the details of the proof.\textbf{} We will shortly see that we can
in fact take $T=\infty$; see \zcref{cor:Tisinfinity} below.

\subsection{Relationship between the lifted and original problems}

In this section we show that the solution given via regularity structures
coincides with the Cole--Hopf solution to \zcref{eq:hepszeta} defined
in \zcref{sec:mollifying}. We begin by establishing an expansion
of a solution $H$ to \zcref{eq:fixedpointeqn} in terms of elements
of the regularity structure, which will also be crucial for our calculations
of the boundary flux in \zcref{prop:expand-nonlinearity-boundary}
below. The following proposition should be compared to the computations
in the proof of \cite[Prop.~15.26]{friz:hairer:2020:course} or \cite[(4.4)]{hairer:quastel:2018:class}.
\begin{prop}
Fix $\eps,\zeta>0$ and suppose that $H\in\mathcal{D}^{\eps,\zeta}_{\mathrm{S}}$
is a solution to \zcref{eq:fixedpointeqn} on $[0,T]$. Then we have
continuous functions $h,h'\colon(0,T]\times\mathbb{R}\to\mathbb{R}$
such that
\begin{equation}
H=\rsballoonr+\rsballoonb+\frac{1}{2}\rsicherryrr+\rsicherryrb+h'\rsilollipopr+\frac{1}{2}\rsielkrrr+h\rsone+h'\rsx\qquad\text{on }[0,T]\times\mathbb{R}\label{eq:Hexpansion-0-1-1}
\end{equation}
and so
\begin{equation}
(\partial H)^{2}=\rscherryrr+2\rscherryrb+\rselkrrr+2\rselkrbr+2h'\rsclawrr+\rsmooserrrr+\rscherrybb+\rselkrrb+\frac{1}{4}\rscandelabrarrrr+h'\left(2\rslollipopr+2\rslollipopb+\rsipcherryrr\right)+(h')^{2}\rsone\qquad\text{on }[0,T]\times\mathbb{R}.\label{eq:nonlinearity-expansion}
\end{equation}
Moreover, for each $t\in(0,T]$, $h_{t}$ and $h_{t}'$ are both $2L$-periodic,
while $h_{t}$ is even and $h'_{t}$ is odd, and in particular we
have
\begin{equation}
h'_{t}(x_{0})=0\qquad\text{for all }t\in(0,T]\text{ and }x_{0}\in L\mathbb{Z}.\label{eq:h'0is0}
\end{equation}
\end{prop}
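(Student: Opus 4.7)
The plan is to derive \eqref{eq:Hexpansion-0-1-1} by iterating the fixed point equation \eqref{eq:fixedpointeqn} inside $\check{\mathcal{T}}_{\mathrm{S}}$, read off \eqref{eq:nonlinearity-expansion} via the abstract multiplication table \zcref{tab:multiplication-table}, and obtain the symmetry properties (and hence the vanishing claim \eqref{eq:h'0is0}) from the uniqueness of the fixed point together with the $\mathscr{S}$-symmetry of the data.

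Since $H\in\mathcal{D}^{\eps,\zeta}_{\mathrm{S}} = \mathcal{D}^{3/2+10\kappa,\chi;\eps,\zeta}(\check{\mathcal{T}}_{\mathrm{S}})$, at each $(t,x)$ the value $H_t(x)$ is a linear combination of the eight basis elements $\check{\mathsf{T}}_{\mathrm{S}} = \{\rsone,\rsx,\rsballoonr,\rsballoonb,\rsicherryrr,\rsicherryrb,\rsielkrrr,\rsilollipopr\}$ with coefficients that are continuous on $(0,T]\times\mathbb{R}$ (by \eqref{eq:modeled-distn-norm}). Substituting such an ansatz into the right side of \eqref{eq:fixedpointeqn}, using $\partial\rsone=0$, $\partial\rsx=\rsone$ and $\partial\mathcal{I}=\mathcal{I}'$ to compute $\partial H$, squaring via \zcref{tab:multiplication-table}, and applying $\oh\mathcal{K}$, I would match the coefficient of each basis element on both sides. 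The coefficients of $\rsnoise$ and $\rspotential$ in the forcing $\mathbf{1}_{>0}(\oh(\partial H)^{2}+\rsnoise+\rspotential)$ are each $1$, producing $\rsballoonr,\rsballoonb$ with coefficient $1$ in $H$; the constants $\oh,1,\oh$ for $\rsicherryrr,\rsicherryrb,\rsielkrrr$ then arise from squaring the leading $\rslollipopr+\rslollipopb$ part of $\partial H$ (via $\rslollipopr\star\rslollipopr=\rscherryrr$, $2\rslollipopr\star\rslollipopb=2\rscherryrb$, $2\rslollipopr\star\oh\rsipcherryrr=\rselkrrr$). The coefficient of $\rsilollipopr$ is forced to equal the coefficient of $\rsx$ (both equal to $h'$), because the only source of $\rslollipopr$ in $(\partial H)^{2}$ is the product $2(h'\rsone)\star\rslollipopr = 2h'\rslollipopr$, and $h'$ is by definition the $\rsone$-coefficient of $\partial H$ and hence the $\rsx$-coefficient of $H$. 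All remaining contributions project onto $\langle\rsone,\rsx\rangle$ via the polynomial correction of $\mathcal{K}$ and are absorbed into $h$ and $h'$.

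Given \eqref{eq:Hexpansion-0-1-1}, the expansion \eqref{eq:nonlinearity-expansion} then follows by writing $\partial H = \rslollipopr+\rslollipopb+\oh\rsipcherryrr+\rsipcherryrb+\oh\rsipelkrrr+h'\rsiplollipopr+h'\rsone$ and squaring term by term using \zcref{tab:multiplication-table}, discarding all products of homogeneity $\ge 9\kappa$ (the truncation level for $(\partial H)^{2}\in\mathcal{D}^{\eps,\zeta}_{\mathrm{F}}$). For instance $h'\rsipelkrrr,\,h'\rsipcherryrb,\,h'\rsiplollipopr$ all have homogeneity at least $\oh-3\kappa>9\kappa$ (using $\kappa<\nicefrac{1}{100}$) and drop out, while $h'\rsipcherryrr$ (homogeneity $-2\kappa$) survives, yielding the factor $h'(2\rslollipopr+2\rslollipopb+\rsipcherryrr)$, and $h'\rsone\star h'\rsone=(h')^{2}\rsone$ gives the final term. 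For the symmetry, every ingredient of the fixed point equation is $\mathscr{S}$-symmetric: the noise $\dif W^{\zeta}$ and the potential $\varphi^{\eps}_{\uu,\vv}$ are even and $2L$-periodic by construction (\eqref{eq:extendthenoise} and \eqref{eq:varphiuvdef}), and $A^{\zeta}$ is even and $2L$-periodic because its derivative $\eta^{\zeta}$ is an odd, periodic mollification (\eqref{eq:etazetadef}). The identities \eqref{eq:sigma-deriv-commute} and \eqref{eq:sigma-product-commute} show that the fixed point map commutes with the $\mathscr{S}$-action, so uniqueness in \zcref{thm:fixedpointproblem} forces $H$ itself to be symmetric in the sense of \zcref{def:symmetric}. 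Reading this pointwise off \eqref{eq:Hexpansion-0-1-1} yields $c_{\tau}(t,-x) = \sgn(\tau)\,c_{\tau}(t,x)$ for each basis element $\tau$; since $\sgn(\rsone)=+1$ and $\sgn(\rsx)=-1$, this gives $h_{t}$ even and $h'_{t}$ odd, while $\sigma_{\mathrm{trans}}$-invariance gives $2L$-periodicity of both. Finally, an odd $2L$-periodic function vanishes at $x=0$ (by oddness) and at $x=L$ (since $h'_{t}(L)=h'_{t}(-L)=-h'_{t}(L)$), hence at every $x_{0}\in L\mathbb{Z}$, establishing \eqref{eq:h'0is0}.

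The main obstacle is the careful bookkeeping in the first step: one must check that every tree up to the truncation level $3/2+10\kappa$ produced by iterating the fixed point is accounted for, and verify that terms such as $\rscherrybb,\rselkrrb,\rscandelabrarrrr,\rsmooserrrr$ appearing in $(\partial H)^{2}$ contribute only to the polynomial sector of $H$ (because their would-be $\mathcal{I}$-integrals exceed the homogeneity threshold $\gamma_{\mathcal{I}}=\thrh$ and so are killed by $\check{\varpi}$, leaving only the polynomial correction of $\mathcal{K}$). This matching is algorithmic but delicate, and relies on the specific truncation conventions of \zcref{tab:RS-table} and \zcref{tab:multiplication-table}.
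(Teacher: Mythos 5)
Your proposal is correct and follows essentially the same route as the paper: the paper likewise extracts \eqref{eq:Hexpansion-0-1-1} by iterating the fixed point equation (writing $H=\rsballoonr+\rsballoonb+\tfrac12\check{\varpi}_{\mathrm{S}}\mathcal{I}(\partial H)^2+h\rsone+h'\rsx$ and using homogeneity lower bounds on $\partial H-\rslollipopr$ to kill all but the listed trees under $\check{\varpi}_{\mathrm{S}}\mathcal{I}$), then squares $\partial H$ via the truncated product, and obtains the parity and periodicity of $h,h'$ from the $\mathscr{S}$-symmetry of $H$ asserted in \zcref{thm:fixedpointproblem}. The only difference is organizational — the paper's homogeneity counting replaces your term-by-term coefficient matching — and your handling of the key subtleties (the $\rsilollipopr$-coefficient coinciding with the $\rsx$-coefficient, and the high-homogeneity trees surviving only through the polynomial correction of $\mathcal{K}$) matches the paper's.
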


\begin{proof}
Since $\tilde{\mathcal{K}}^{\eps,\zeta}f,\mathcal{K}^{\eps,\zeta}f-\mathcal{I}f\in\mathcal{D}_{\mathrm{S};\mathrm{poly}}$
(recalling \zcref[range]{eq:KfisIpluspoly,eq:DKPZpoly}), we have
functions $h,h'$ that are continuous on $(0,T]\times\mathbb{R}$
such that, on $[0,T]\times\mathbb{R}$, we have
\begin{equation}
H=\rsballoonr+\rsballoonb+\frac{1}{2}\check{\varpi}_{\mathrm{S}}\mathcal{I}(\partial H)^{2}+h\rsone+h'\rsx.\label{eq:Hexpansion-0}
\end{equation}
We can differentiate this to obtain
\begin{equation}
\partial H=\rslollipopr+\rslollipopb+\frac{1}{2}\partial\check{\varpi}_{\mathrm{S}}\mathcal{I}(\partial H)^{2}+h'\rsone,\label{eq:Hexpansion-deriv}
\end{equation}
and then square and apply $\mathcal{I}$ to obtain
\begin{equation}
\mathcal{I}(\partial H)^{2}=\rsicherryrr+2\rsicherryrb+2h'\rsilollipopr+\mathcal{I}\left((\rslollipopr+\rslollipopb+h'\rsone)\partial\check{\varpi}_{\mathrm{S}}\mathcal{I}(\partial H)^{2}\right)+\frac{1}{4}\mathcal{I}\left(\left(\partial\check{\varpi}_{\mathrm{S}}\mathcal{I}(\partial H)^{2}\right)^{2}\right).\label{eq:IpartialH}
\end{equation}
From \zcref{eq:Hexpansion-deriv} we see that $\partial H-\rslollipopr$
has terms only of homogeneity at least $-\kappa$, and so $(\partial H)^{2}-\rscherryrr$
has terms only of homogeneity at least $-\oh-2\kappa$, so $\partial\check{\varpi}_{\mathrm{S}}\mathcal{I}(\partial H)^{2}-\rsipcherryrr$
has terms only of homogeneity at least $\oh-2\kappa$. This has two
consequences in particular:
\begin{enumerate}
\item The quantity $\partial\check{\varpi}_{\mathrm{S}}\mathcal{I}(\partial H)^{2}$
has terms only of homogeneity at least $|\rsipcherryrr|=-2\kappa$,
so $\left(\partial\mathcal{I}(\partial H)^{2}\right)^{2}$ has terms
only of homogeneity at least $-4\kappa$, and hence $\check{\varpi}_{\mathrm{S}}\mathcal{I}\left(\left(\partial\mathcal{I}(\partial H)^{2}\right)^{2}\right)=0$.
\item We can write
\[
\mathcal{I}\left((\rslollipopr+\rslollipopb+h_{0}'\rsone)\partial\mathcal{I}(\partial H)^{2}\right)=\rsielkrrr+\mathcal{I}\left((\rslollipopb+h'\rsone)\partial\mathcal{I}(\partial H)^{2}\right)+\mathcal{I}\left(\rslollipopr\left(\partial\mathcal{I}(\partial H)^{2}-\rsipcherryrr\right)\right).
\]
Both the second and third terms on the right have terms only of homogeneity
at least $2-3\kappa$, so
\[
\check{\varpi}_{\mathrm{S}}\mathcal{I}\left((\rslollipopr+\rslollipopb+h'\rsone)\partial\mathcal{I}(\partial H)^{2}\right)=\rsielkrrr.
\]
\end{enumerate}
Using these two observations in \zcref{eq:IpartialH}, we see that
\begin{equation}
\check{\varpi}_{\mathrm{S}}\mathcal{I}(\partial H)^{2}=\rsicherryrr+2\rsicherryrb+2h'\rsilollipopr+\rsielkrrr,\label{eq:IpartialH-1}
\end{equation}
and using this in \zcref{eq:Hexpansion-0}, we obtain \zcref{eq:Hexpansion-0-1-1}.
Then \zcref{eq:nonlinearity-expansion} follows by the definitions
of the operations.

Since $H$ is symmetric as noted in \zcref{thm:fixedpointproblem},
we can use \zcref{eq:Hexpansion-0-1-1} twice along with \zcref{eq:sym_group_generators,eq:sigma-action},
to write
\begin{equation}
\begin{aligned}\rsballoonr & +\rsballoonb+\frac{1}{2}\rsicherryrr+\rsicherryrb+h'_{t}(x+2L)\rsilollipopr+\frac{1}{2}\rsielkrrr+h_{t}(x+2L)\rsone+h'_{t}(x+2L)\rsx=\sigma_{\mathrm{trans}}\cdot H_{t}(\sigma_{\mathrm{trans}}(x))=H_{t}(x)\\
 & =\rsballoonr+\rsballoonb+\frac{1}{2}\rsicherryrr+\rsicherryrb+h'_{t}(x)\rsilollipopr+\frac{1}{2}\rsielkrrr+h_{t}(x)\rsone+h'_{t}(x)\rsx.
\end{aligned}
\label{eq:trans-symmetry-apply}
\end{equation}
Hence, $h_{t}$ and $h_{t}'$ are $2L$-periodic for each $t$. Similarly,
we write
\begin{equation}
\begin{aligned}\rsballoonr & +\rsballoonb+\frac{1}{2}\rsicherryrr+\rsicherryrb-h'_{t}(-x)\rsilollipopr+\frac{1}{2}\rsielkrrr+h_{t}(-x)\rsone-h'_{t}(-x)\rsx=\sigma_{\mathrm{refl}}\cdot H_{t}(\sigma_{\mathrm{refl}}(x))=H_{t}(x)\\
 & =\rsballoonr+\rsballoonb+\frac{1}{2}\rsicherryrr+\rsicherryrb+h'_{t}(x)\rsilollipopr+\frac{1}{2}\rsielkrrr+h_{t}(x)\rsone+h'_{t}(x)\rsx,
\end{aligned}
\label{eq:refl-symmetry-apply}
\end{equation}
and from this we see that $h_{t}$ is even and $h'_{t}$ is odd for
each $t$. Given these considerations, \zcref{eq:h'0is0} follows
from the continuity of $h'_{t}$. 
\end{proof}

The following proposition is crucial to the argument. It says that
the reconstruction of the modeled distribution $(\partial H)^{2}$
is precisely the renormalized nonlinearity of the Cole--Hopf solution
to the KPZ equation.
\begin{prop}
\label{prop:RSrecovers}Let $A^{\zeta}$ be as in \zcref{eq:Azetadef}
and $h^{\eps,\zeta}_{\uu,\vv;t}$ be as in \zcref{eq:hepszetalog},
so $(h^{\eps,\zeta}_{\uu,\vv;t})$ solves \zcref[range]{eq:hepszeta-eqn,eq:hepszeta-ic}.
If $H$ solves \zcref{eq:fixedpointeqn} with 
\begin{equation}
h_{0}=A^{\zeta},\label{eq:h0isAzeta}
\end{equation}
then 
\begin{equation}
h^{\eps,\zeta}_{\uu,\vv;t}(x)=(\hat{\mathcal{R}}^{\eps,\zeta}H)_{t}(x)\qquad\text{for all }t\ge0\text{ and all }x\in\mathbb{R},\label{eq:hisRH}
\end{equation}
and moreover
\begin{equation}
\left(\partial_{x}h^{\eps,\zeta}_{\uu,\vv;t}(x)\right)^{2}-C^{(1)}_{\zeta}(x)=(\hat{\mathcal{R}}^{\eps,\zeta}(\partial H)^{2})_{t}(x)\qquad\text{for all }t\ge0\text{ and all }x\in\mathbb{R}.\label{eq:nonlinearityisRnonlinearity}
\end{equation}
\end{prop}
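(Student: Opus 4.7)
The plan is to apply the reconstruction operator $\hat{\mathcal{R}}^{\eps,\zeta}$ to both sides of \eqref{eq:fixedpointeqn}, rewrite the result as an integral equation satisfied by $h^{*}\coloneqq\hat{\mathcal{R}}^{\eps,\zeta}H$, match this equation with the mild form of \eqref{eq:hepszeta-eqn}, and invoke uniqueness of mild solutions in the smooth regime $\eps,\zeta>0$ to force $h^{*}=h^{\eps,\zeta}_{\uu,\vv}$. The identity \eqref{eq:nonlinearityisRnonlinearity} will fall out of the key computation of $\hat{\mathcal{R}}^{\eps,\zeta}(\partial H)^{2}$ that enters the matching step. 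Using \eqref{eq:reconstruct-ic} on the initial-condition contribution, \eqref{eq:reconstruct-heatkernel} to convert the abstract integration operators into convolution with the heat kernel $p$, and Table~\ref{tab:model-defns} together with \eqref{eq:reconstruction-of-noise} to identify $\hat{\mathcal{R}}^{\eps,\zeta}\rsnoise=\dif W^{\zeta}$ and $\hat{\mathcal{R}}^{\eps,\zeta}\rspotential=\varphi^{\eps}_{\uu,\vv}$, I would first obtain
\[
h^{*}_t(x)=(p_t*A^{\zeta})(x)+\bigl(p\circledast\mathbf{1}_{>0}\bigl(\tfrac{1}{2}\hat{\mathcal{R}}^{\eps,\zeta}(\partial H)^{2}+\dif W^{\zeta}+\varphi^{\eps}_{\uu,\vv}\bigr)\bigr)_t(x).
\]
The projection $\check{\varpi}_{\mathrm{S}}$ in \eqref{eq:fixedpointeqn} does not affect this reconstruction because it only removes components of homogeneity above $\tfrac{3}{2}+10\kappa$, which contribute nothing when a model element is evaluated at its own basepoint.

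Second, I would compute $\hat{\mathcal{R}}^{\eps,\zeta}(\partial H)^{2}$ using \eqref{eq:nonlinearity-expansion} (equivalently, by squaring the expression for $\partial H$ obtained from \eqref{eq:Hexpansion-0-1-1} via \eqref{eq:Hexpansion-deriv}). For every abstract product $\tau_{i}\star\tau_{j}$ occurring in $(\partial H)^{2}$, the multiplicativity identity \eqref{lem:mult-model} gives
\[
\hat{\Pi}^{\eps,\zeta}_{t,x}(\tau_{i}\tau_{j})_t(x)=\hat{\Pi}^{\eps,\zeta}_{t,x}(\tau_{i})_t(x)\,\hat{\Pi}^{\eps,\zeta}_{t,x}(\tau_{j})_t(x)-C_{\zeta}[\tau_{i},\tau_{j}](x),
\]
with $C_{\zeta}$ as in \eqref{eq:Ctau1tau2def}. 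Writing $\partial H=\sum_{i}\beta_{i}\tau_{i}$ from \eqref{eq:Hexpansion-deriv}, so that the nonzero coefficients are $\beta=1$ on $\rslollipopr,\rslollipopb,\rsipcherryrb$, $\beta=\tfrac{1}{2}$ on $\rsipcherryrr,\rsipelkrrr$, and $\beta=h'$ on $\rsiplollipopr,\rsone$, the total counterterm correction $\sum_{ij}\beta_{i}\beta_{j}C_{\zeta}[\tau_{i},\tau_{j}](x)$ equals
\[
\underbrace{1\cdot C^{(1)}_{\zeta}(x)}_{\rslollipopr\star\rslollipopr}+\underbrace{\tfrac{1}{4}\cdot C^{(2)}_{\zeta}}_{\rsipcherryrr\star\rsipcherryrr}+\underbrace{2\cdot\tfrac{1}{2}\cdot(-\tfrac{1}{4}C^{(2)}_{\zeta})}_{\rslollipopr\star\rsipelkrrr\text{ and symm.}}=C^{(1)}_{\zeta}(x),
\]
the $C^{(2)}_{\zeta}$ contributions cancelling exactly between the $\rscandelabrarrrr$ and $\rsmooserrrr$ terms. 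Combined with $\hat{\mathcal{R}}^{\eps,\zeta}(\partial H)_t(x)=\partial_{x}h^{*}_t(x)$ from \eqref{eq:Rhat-commutes-deriv}, this yields
\[
\hat{\mathcal{R}}^{\eps,\zeta}(\partial H)^{2}_t(x)=(\partial_{x}h^{*}_t(x))^{2}-C^{(1)}_{\zeta}(x),
\]
which is \eqref{eq:nonlinearityisRnonlinearity} conditional on \eqref{eq:hisRH}.

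Substituting this identity into the reconstructed fixed point equation, $h^{*}$ satisfies a mild-form semilinear SHE. Its right-hand side coincides with the mild form of \eqref{eq:hepszeta-eqn} once one checks the renormalization identity $C^{(1)}_{\zeta}(x)=\Sh^{\zeta}_{2L}(0)-\tfrac{1}{2}\Sh^{\zeta/2}_{L}(x)$, which is a direct second-moment Gaussian computation from the definition \eqref{eq:C1zetadef} and the covariance \eqref{eq:dWzetacov} of $\dif W^{\zeta}$; this is the content of the companion Proposition~\ref{prop:C1zeta} alluded to after \eqref{eq:exp-renorm}. Since both $h^{*}$ and $h^{\eps,\zeta}_{\uu,\vv}$ have initial datum $A^{\zeta}$, standard uniqueness for mild solutions of semilinear parabolic equations with smooth data in the regime $\eps,\zeta>0$ then gives $h^{*}=h^{\eps,\zeta}_{\uu,\vv}$, which is \eqref{eq:hisRH}. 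The main obstacle is the algebraic bookkeeping in the second step: one must correctly enumerate the abstract products contributing to \eqref{eq:nonlinearity-expansion}, account for the symmetrization factor of $2$ on off-diagonal products, and match them to the entries of \eqref{eq:Ctau1tau2def} in order to see the cancellation of the $C^{(2)}_{\zeta}$ counterterms; the remaining ingredients (heat-kernel reconstruction, the covariance identity for $C^{(1)}_{\zeta}$, and uniqueness of smooth mild solutions) are essentially routine once this cancellation is verified.
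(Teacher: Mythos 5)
Your proposal is correct and follows essentially the same route as the paper: reconstruct the fixed-point equation via \zcref{eq:reconstruct-heatkernel,eq:reconstruct-ic}, identify $\hat{\mathcal{R}}^{\eps,\zeta}(\partial H)^{2}=(\partial_{x}\hat{\mathcal{R}}^{\eps,\zeta}H)^{2}-C^{(1)}_{\zeta}$, invoke \zcref{prop:C1zeta} to match the mild formulation of \zcref{eq:hepszeta}, and conclude by uniqueness. The only (cosmetic) difference is that you organize the key cancellation as a bilinear sum of counterterms $\sum_{i,j}\beta_{i}\beta_{j}C_{\zeta}[\tau_{i},\tau_{j}]$ via \zcref{lem:mult-model}, whereas the paper compares the two explicit term-by-term expansions of $(\partial_{x}\hat{\mathcal{R}}^{\eps,\zeta}H)^{2}$ and $\hat{\mathcal{R}}^{\eps,\zeta}((\partial H)^{2})$; both rest on the same case-by-case facts from \zcref{tab:model-defns}, and your accounting of the $C^{(2)}_{\zeta}$ cancellation between the $\rscandelabrarrrr$ and $\rsmooserrrr$ contributions is accurate.
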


Before we prove \zcref{prop:RSrecovers}, we note the following corollary.
\begin{cor}
\label{cor:Tisinfinity}With probability $1$, \zcref{thm:fixedpointproblem}
holds with $T=\infty$. Moreover, for any $T<\infty$, the map $(\hat{\Pi}^{\eps,\zeta},\hat{\Gamma}^{\eps,\zeta})\mapsto H^{\eps,\zeta}$
is continuous.
\end{cor}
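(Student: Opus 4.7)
The plan is to combine the blow-up criterion \zcref{eq:blowuptime} from \zcref{thm:fixedpointproblem} with the identification of reconstructions in \zcref{prop:RSrecovers}, so as to transfer the question of global existence from the abstract fixed point problem back to the concrete Cole--Hopf solution $h^{\eps,\zeta}_{\uu,\vv}$, which we already control by elementary means. Let $T=T^{\eps,\zeta}$ denote the maximal existence time produced by \zcref{thm:fixedpointproblem} for the initial data $h_{0}=A^{\zeta}$. By \zcref{prop:RSrecovers}, on $[0,T)$ we have $(\hat{\mathcal{R}}^{\eps,\zeta}H)_{t}(x)=h^{\eps,\zeta}_{\uu,\vv;t}(x)$. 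If $T<\infty$, then \zcref{eq:blowuptime} would force $\|h^{\eps,\zeta}_{\uu,\vv;t}\|_{\mathcal{C}^{\chi}(\mathbb{R}/(2L\mathbb{Z}))}\to\infty$ as $t\uparrow T$, so it suffices to produce an a priori bound on this Hölder norm on any compact time interval.

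For the a priori bound I would argue as follows. For each fixed $\eps,\zeta>0$, the equation \zcref{eq:Zepszeta-eqn} is a classical linear stochastic heat equation on the torus $\mathbb{R}/(2L\mathbb{Z})$ driven by the spatially smooth noise $\dif W^{\zeta}$, with initial datum $\e^{A^{\zeta}}>0$ and bounded smooth multiplicative coefficient $\varphi^{\eps}_{\uu,\vv}+\tfrac{1}{2}\Sh^{\zeta/2}_{L}$. Standard estimates (e.g., using the mild formulation \zcref{eq:Zepszetamild}, the smoothness of the mollified-noise stochastic convolution, and positivity via the Feynman--Kac representation) give, for every $q\in[1,\infty)$, uniform positive and negative moment bounds $\mathbb{E}[(Z^{\eps,\zeta}_{\uu,\vv;t}(x))^{\pm q}]\le C_{q,T,\eps,\zeta}$ on $[0,T]\times\mathbb{R}$, together with a $\mathcal{C}^{\alpha}_{\mathfrak{s}}$ bound in probability for any $\alpha<1$, since the noise is spatially smooth. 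Passing through $h^{\eps,\zeta}_{\uu,\vv}=\log Z^{\eps,\zeta}_{\uu,\vv}$ by using $|\log a-\log b|\le(a^{-1}\vee b^{-1})|a-b|$ combined with the negative-moment bound, we obtain $h^{\eps,\zeta}_{\uu,\vv}\in\mathcal{C}^{\chi}_{\mathfrak{s}}([0,T]\times\mathbb{R}/(2L\mathbb{Z}))$ almost surely for any $\chi\in(\oh-\kappa,\oh)$ as in \zcref{eq:chirange}. This precludes blow-up, so $T=\infty$ almost surely.

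For the second claim, I would use the local continuity assertion already contained in \zcref{thm:fixedpointproblem}: the map $(\hat{\Pi}^{\eps,\zeta},\hat{\Gamma}^{\eps,\zeta})\mapsto H^{\eps,\zeta}$ can be chosen to be continuous on the interval where existence is asserted. Since global existence has now been established, one iterates this local continuity. Given $T<\infty$, cover $[0,T]$ by finitely many overlapping subintervals $[t_{k},t_{k+1}]$ so small that the local construction of \zcref{thm:fixedpointproblem}, restarted from the datum $(\hat{\mathcal{R}}^{\eps,\zeta}H)_{t_{k}}\in\mathcal{C}^{\chi}$, produces a solution on $[t_{k},t_{k+1}]$; uniqueness ensures the restarted solutions concatenate into $H^{\eps,\zeta}$. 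Since the datum at $t_{k}$ depends continuously on the model (again by \zcref{thm:fixedpointproblem} on the preceding interval), a straightforward induction shows that the full solution on $[0,T]$ depends continuously on the model.

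The step I expect to be the main (minor) obstacle is the first one: making the a priori $\mathcal{C}^{\chi}_{\mathfrak{s}}$-bound sharp enough without leaning on the theory of regularity structures itself, so that the argument is genuinely non-circular. The key point is that at fixed $\zeta>0$ everything is classical, so it is only a matter of recalling the appropriate moment and regularity estimates for the mollified-noise SHE and combining them with the Cole--Hopf transform. No new stochastic analysis beyond what is already used in \zcref{sec:solntheory} and \zcref{sec:mollifying} is required.
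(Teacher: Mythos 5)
Your proposal is correct and follows essentially the same route as the paper: identify $\hat{\mathcal{R}}^{\eps,\zeta}H$ with the Cole--Hopf solution $h^{\eps,\zeta}_{\uu,\vv}$ via \zcref{prop:RSrecovers}, rule out blow-up using the a priori positivity/moment and Hölder bounds for the mollified stochastic heat equation (the paper cites \zcref{prop:moment-bd} and the appendix estimates rather than rederiving them), and obtain continuity of the solution map by restarting the local fixed point argument. No gaps.
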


\begin{proof}
We know that $\|h^{\eps,\zeta}_{\uu,\vv;t}\|_{\mathcal{C}^{\chi}}<\infty$
for all $t<\infty$ with probability $1$ by the Cole--Hopf transform
(since the solution to \zcref{Zepszeta} remains finite and positive
for all time with probability $1$ by \zcref{prop:moment-bd}), so
the only way that \zcref{eq:blowuptime} can hold is if $T=\infty$.
The continuity of the solution map follows from the continuity statement
in \zcref{thm:fixedpointproblem} along with the fact that the solution
can be restarted at any time at which it does not blow up, as shown
in \cite[Prop.~7.11]{hairer:2014:theory}.
\end{proof}

To prove \zcref{prop:RSrecovers}, we start with the following lemma.
Recall the definition \zcref{eq:C1zetadef} of $C^{(1)}_{\zeta}(x)$.
\begin{lem}
If $H$ solves \zcref{eq:fixedpointeqn}, then
\begin{equation}
\left(\partial_{x}(\hat{\mathcal{R}}^{\eps,\zeta}H)_{s}(x)\right)^{2}-\left(\hat{\mathcal{R}}^{\eps,\zeta}(\partial H)^{2}\right)_{s}(x)=C^{(1)}_{\zeta}(x).\label{eq:sqdiff}
\end{equation}
\end{lem}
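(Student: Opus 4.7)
The plan is to combine the explicit expansion of $H$ from the previous proposition with the multiplicativity-with-correction formula \zcref{lem:mult-model}. Applying $\partial$ to \zcref{eq:Hexpansion-0-1-1} and using $\partial \rsballoonr = \rslollipopr$, $\partial \rsicherryrr = \rsipcherryrr$, etc., gives
\[
\partial H = \rslollipopr + \rslollipopb + \tfrac{1}{2}\rsipcherryrr + \rsipcherryrb + h'\rsiplollipopr + \tfrac{1}{2}\rsipelkrrr + h'\rsone,
\]
whose summands all lie in $\check{\mathcal{T}}_{\mathrm{poly}}\oplus\check{\mathcal{T}}_{\mathcal{I}'}$, so that \zcref{lem:mult-model} applies to every product occurring in the star square $(\partial H)\star(\partial H)$.

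Next, I would use \zcref{eq:Rhat-commutes-deriv} and \zcref{eq:reconstruction-is-model-1} to write
\[
\partial_x (\hat{\mathcal{R}}^{\eps,\zeta}H)_s(x) = \hat{\Pi}^{\eps,\zeta}_{s,x}\bigl((\partial H)_s(x)\bigr)_s(x) = \sum_i a_i(s,x)\,\hat{\Pi}^{\eps,\zeta}_{s,x}(\tau_i)_s(x),
\]
where the $\tau_i$ and coefficients $a_i$ are read off from the display above. Squaring and invoking \zcref{lem:mult-model} pointwise for each pair $(\tau_i,\tau_j)$, I get
\[
\bigl(\partial_x (\hat{\mathcal{R}}^{\eps,\zeta}H)_s(x)\bigr)^2 = \sum_{i,j}a_i(s,x)a_j(s,x)\Bigl[\hat{\Pi}^{\eps,\zeta}_{s,x}(\tau_i\star\tau_j)_s(x) + C_\zeta[\tau_i,\tau_j](x)\Bigr].
\]
Since $(\hat{\mathcal{R}}^{\eps,\zeta}(\partial H)^2)_s(x) = \sum_{i,j}a_ia_j\,\hat{\Pi}^{\eps,\zeta}_{s,x}(\tau_i\star\tau_j)_s(x)$ by \zcref{eq:reconstruction-is-model-1} and bilinearity of $\star$, subtracting reduces \zcref{eq:sqdiff} to the purely algebraic identity
\[
\sum_{i,j} a_i(s,x)a_j(s,x)\,C_\zeta[\tau_i,\tau_j](x) = C^{(1)}_\zeta(x).
\]

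The remaining (and only nontrivial) step is to evaluate this sum via the case-by-case definition \zcref{eq:Ctau1tau2def}. Only three kinds of pairs contribute: the pair $(\rslollipopr,\rslollipopr)$ with coefficient product $1\cdot 1$ gives $C^{(1)}_\zeta(x)$; the pair $(\rsipcherryrr,\rsipcherryrr)$ with coefficient product $\tfrac{1}{2}\cdot\tfrac{1}{2}$ gives $\tfrac{1}{4}C^{(2)}_\zeta$; and the two ordered pairs in $\{\rslollipopr,\rsipelkrrr\}$, with coefficient products $1\cdot\tfrac{1}{2}$ each, contribute $2\cdot\tfrac{1}{2}\cdot(-\tfrac{1}{4}C^{(2)}_\zeta) = -\tfrac{1}{4}C^{(2)}_\zeta$. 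The last two terms cancel exactly, leaving $C^{(1)}_\zeta(x)$, as required.

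The only real obstacle is careful bookkeeping: one must verify that every pair $(\tau_i,\tau_j)$ coming from products in $(\partial H)^2$ is accounted for, that pairs not listed in \zcref{eq:Ctau1tau2def} genuinely contribute zero (including those that are truncated away by $\check\varpi$), and that the specific numerical coefficients $\tfrac{1}{2}$ appearing in front of $\rsipcherryrr$ and $\rsipelkrrr$ in the derivative expansion conspire with the values of $C_\zeta[\cdot,\cdot]$ so that the $C^{(2)}_\zeta$ contributions cancel. Beyond this verification the proof is mechanical.
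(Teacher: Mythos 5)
Your argument is correct, and it reaches the same cancellation as the paper by a slightly different route. The paper's proof evaluates both sides explicitly: it reads off every entry $\hat{\Pi}^{\eps,\zeta}_{s,x}(\tau)_{s}(x)$ from \zcref{tab:model-defns}, observes that all recentered terms vanish at their own base point, and then subtracts two fully expanded lists, at which point everything except $\rscherryrr[rsK]-\rsrenorm[rsK]=C^{(1)}_{\zeta}$ cancels. You instead invoke \zcref{lem:mult-model} once per pair, which collapses the whole comparison to the purely combinatorial identity $\sum_{i,j}a_{i}a_{j}C_{\zeta}[\tau_{i},\tau_{j}]=C^{(1)}_{\zeta}$; this makes the cancellation of the two $\tfrac{1}{4}C^{(2)}_{\zeta}$ contributions (from $(\rsipcherryrr,\rsipcherryrr)$ and the two ordered pairs in $\{\rslollipopr,\rsipelkrrr\}$) transparent and avoids writing out the model values at all. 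What you must still justify — and you correctly flag this — is that $\hat{\mathcal{R}}^{\eps,\zeta}((\partial H)^{2})_{s}(x)=\sum_{i,j}a_{i}a_{j}\hat{\Pi}^{\eps,\zeta}_{s,x}(\tau_{i}\star\tau_{j})_{s}(x)$ with the \emph{truncated} product, which is fine because \zcref{lem:mult-model} is stated for exactly that truncated product (so pairs killed by $\check{\varpi}$ carry $C_{\zeta}[\tau_{i},\tau_{j}]=0$ and the corresponding model products indeed vanish at the base point), and that the remaining pairs involving $\rsone$, $\rslollipopb$, $\rsipcherryrb$, $\rsiplollipopr$ all have vanishing correction by \zcref{eq:Ctau1tau2def}. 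With those checks the two proofs are computationally equivalent; yours trades the paper's explicit table lookup for a cleaner appeal to the multiplicativity-with-correction structure.
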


\begin{proof}
Applying $\partial$ to both sides of \zcref{eq:Hexpansion-0-1-1},
we get
\begin{equation}
\partial H=\rslollipopr+\rslollipopb+\frac{1}{2}\rsipcherryrr+\rsipcherryrb+h'\rsiplollipopr+\frac{1}{2}\rsipelkrrr+h'\rsone.\label{eq:dH}
\end{equation}
Thus we can expand
\begin{align*}
\partial_{x}(\hat{\mathcal{R}}^{\eps,\zeta}H)_{s}(x)\ovset{\zcref{eq:Rhat-commutes-deriv}} & =(\hat{\mathcal{R}}^{\eps,\zeta}\partial H)_{s}(x)\\
\ovset{\zcref{eq:dH}} & =\left(\hat{\mathcal{R}}^{\eps,\zeta}\left(\rslollipopr+\rslollipopb+\frac{1}{2}\rsipcherryrr+\rsipcherryrb+h'\rsiplollipopr+\frac{1}{2}\rsipelkrrr+h'\rsone\right)\right)_{s}(x)\\
\ovset{\zcref{eq:reconstruction-is-model-1}} & =\hat{\Pi}^{\eps,\zeta}_{s,x}(\rslollipopr)_{s}(x)+\hat{\Pi}^{\eps,\zeta}_{s,x}(\rslollipopb)_{s}(x)+\frac{1}{2}\hat{\Pi}^{\eps,\zeta}_{s,x}(\rsipcherryrr)_{s}(x)+\hat{\Pi}^{\eps,\zeta}_{s,x}(\rsipcherryrb)_{s}(x)\\
 & \qquad+h'_{s}(x)\cdot\hat{\Pi}^{\eps,\zeta}_{s,x}(\rsiplollipopr)_{s}(x)+\frac{1}{2}\hat{\Pi}^{\eps,\zeta}_{s,x}(\rsipelkrrr)_{s}(x)+h'_{s}(x)\hat{\Pi}^{\eps,\zeta}_{s,x}(\rsipcherryrb)_{s}(\rsone)\\
 & =\rslollipopr[rsK]^{\eps,\zeta}_{s}(x)+\rslollipopb[rsK]^{\eps,\zeta}_{s}(x)+\frac{1}{2}\rsiprenorm[rsK]^{\eps,\zeta}_{s}(x)+h'_{s}(x),
\end{align*}
where in the last identity we used the table of values for $\hat{\Pi}^{\eps,\zeta}_{s,x}$
recorded in \zcref{tab:model-defns}. Squaring this (in the ordinary
sense of functions), we obtain
\begin{align}
\left(\partial_{x}(\hat{\mathcal{R}}^{\eps,\zeta}H)_{s}(x)\right)^{2} & =\left(\rslollipopr[rsK]^{\eps,\zeta}_{s}(x)+\rslollipopb[rsK]^{\eps,\zeta}_{s}(x)+\frac{1}{2}\rsiprenorm[rsK]^{\eps,\zeta}_{s}(x)+h'_{s}(x)\right)^{2}\nonumber \\
 & =\rscherryrr[rsK]^{\eps,\zeta}_{s}(x)+\rscherrybb[rsK]^{\eps,\zeta}_{s}(x)+\frac{1}{4}\rscherryrenormrenorm[rsK]^{\eps,\zeta}_{s}(x)+2\rscherryrb[rsK]^{\eps,\zeta}_{s}(x)+\rscherryrenormr[rsK]^{\eps,\zeta}_{s}(x)+2\rscherryrenormb[rsK]^{\eps,\zeta}_{s}(x)\nonumber \\
 & \qquad+h'_{s}(x)\left(2\rslollipopr[rsK]^{\eps,\zeta}_{s}(x)+2\rslollipopb[rsK]^{\eps,\zeta}_{s}(x)+\rsiprenorm[rsK]^{\eps,\zeta}_{s}(x)\right)+h'_{s}(x)^{2}.\label{eq:applysquare}
\end{align}
On the other hand, performing a similar expansion on \zcref{eq:nonlinearity-expansion},
we obtain
\begin{align}
\hat{\mathcal{R}}^{\eps,\zeta}((\partial H)^{2})_{s}(x)\ovset{\zcref{eq:reconstruction-is-model-1}} & =\hat{\Pi}^{\eps,\zeta}_{s,x}(\rscherryrr)_{s}(x)+2\hat{\Pi}^{\eps,\zeta}_{s,x}(\rscherryrb)_{s}(x)+\hat{\Pi}^{\eps,\zeta}_{s,x}(\rselkrrr)_{s}(x)+\hat{\Pi}^{\eps,\zeta}_{s,x}(\rsmooserrrr)_{s}(x)+\hat{\Pi}^{\eps,\zeta}_{s,x}(\rscherrybb)_{s}(x)\nonumber \\
 & \quad+\hat{\Pi}^{\eps,\zeta}_{s,x}(\rselkrrb)_{s}(x)+\frac{1}{4}\hat{\Pi}^{\eps,\zeta}_{s,x}(\rscandelabrarrrr)_{s}(x)\nonumber \\
 & \quad+h'_{s}(x)\left(2\hat{\Pi}^{\eps,\zeta}_{s,x}(\rslollipopr)_{s}(x)+2\hat{\Pi}^{\eps,\zeta}_{s,x}(\rslollipopb)_{s}(x)+\hat{\Pi}^{\eps,\zeta}_{s,x}(\rsipcherryrr)_{s}(x)\right)+h'_{s}(x)^{2}\nonumber \\
 & =\rsrenorm[rsK]^{\eps,\zeta}_{s}(x)+2\rscherryrb[rsK]^{\eps,\zeta}_{s}(x)+\rscherryrenormr[rsK]^{\eps,\zeta}_{s}(x)+\rscherrybb[rsK]^{\eps,\zeta}_{s}(x)+\rscherryrenormb[rsK]^{\eps,\zeta}_{s}(x)+\frac{1}{4}\rscherryrenormrenorm[rsK]^{\eps,\zeta}_{s}(x)\nonumber \\
 & \quad+h'_{s}(x)\left(2\rslollipopr[rsK]^{\eps,\zeta}_{s}(x)+2\rslollipopb[rsK]^{\eps,\zeta}_{s}(x)+\rsiprenorm[rsK]^{\eps,\zeta}_{s}(x)\right)+h'_{s}(x)^{2},\label{eq:Rofsquare}
\end{align}
where in the last identity we again used \zcref{tab:model-defns}.
Now we can subtract \zcref{eq:Rofsquare} from \zcref{eq:applysquare}
to get
\[
\left(\partial_{x}(\hat{\mathcal{R}}^{\eps,\zeta}H)_{s}(x)\right)^{2}-\left(\hat{\mathcal{R}}^{\eps,\zeta}[(\partial H)^{2}]\right)_{s}(x)=\rscherryrr[rsK]^{\eps,\zeta}_{s}(x)-\rsrenorm[rsK]^{\eps,\zeta}_{s}(x)\overset{\zcref{eq:Qrenorm}}{=}C^{(1)}_{\zeta}(x),
\]
which is \zcref{eq:sqdiff}.
\end{proof}

We will also need to an explicit formula for the renormalization constant
$C^{(1)}_{\zeta}$. The following proposition is an analogue in our
setting of \cite[Lem.~6.3]{hairer:2013:solving}. Note that the reflection
of the noise leads to the spatial inhomogeneity. In the limit $\zeta\to0$,
the spatial inhomogeneity converges to a delta function on each boundary.
\begin{prop}
\label{prop:C1zeta}We have 
\begin{equation}
C^{(1)}_{\zeta}(x)=\Sh^{\zeta}_{2L}(0)-\oh\Sh^{\zeta/2}_{L}(x).\label{eq:Echerryrr}
\end{equation}
\end{prop}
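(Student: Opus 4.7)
The plan is to compute the variance of $\tilde{\boldsymbol{\varPi}}^{\eps,\zeta}(\rslollipopr)_t(x) = (p'\circledast \dif W^\zeta)_t(x)$ directly from the explicit covariance of the periodized, reflected noise given in \zcref{eq:dWzetacov}. By the definition \zcref{eq:C1zetadef} and the Itô isometry,
\begin{equation*}
C^{(1)}_{\zeta}(x) \;=\; \mathbb{E}\bigl[(p' \circledast \dif W^{\zeta})_t(x)^{2}\bigr]
 \;=\; \int_{0}^{\infty}\!\!\iint p'_{r}(x-y)\, p'_{r}(x-z) \bigl[\Sh^{\zeta}_{2L}(y-z) + \Sh^{\zeta}_{2L}(y+z)\bigr]\,\dif y\,\dif z\,\dif r,
\end{equation*}
so the task reduces to evaluating two explicit triple integrals, one ``direct'' ($A(x)$) and one ``reflected'' ($B(x)$), corresponding to the two summands.

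For $A(x)$, I would expand $\Sh^{\zeta}_{2L} = \sum_{q\in 2L\mathbb{Z}} R^{\zeta}(\cdot+q)$, pull the sum outside, and evaluate the inner integral via the Fourier transform. Using $\widehat{p'_r}(\xi) = i\xi\,e^{-r\xi^{2}/2}$ and $\widehat{R^{\zeta}} = |\widehat{\rho^{\zeta}}|^{2}$, each term of the sum becomes a single integral of $\xi^{2} e^{-r\xi^{2}} \widehat{R^{\zeta}}(\xi) e^{i\xi q}$ against $\dif r$; exchanging the $r$-integral and using $\int_{0}^{\infty}\xi^{2}e^{-r\xi^{2}}\,\dif r = 1$ collapses the result to $\sum_{q} R^{\zeta}(q) = \Sh^{\zeta}_{2L}(0)$, independent of $x$. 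This piece encodes the usual bulk Itô renormalization.

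For $B(x)$, the presence of $y+z$ rather than $y-z$ breaks translation invariance, and this is where the $x$-dependence arises. I would apply the substitution $z\mapsto -z-q$ to convert the reflected covariance into a direct one, at the cost of replacing $p'_r(x-z)$ by $p'_r(x+z+q)$. Following the same Fourier computation as for $A$, the only change is that the second factor is no longer ``reflected'' into an $\star$ (cross-correlation) but into an ordinary convolution with $p'_r$; concretely, $\widehat{p'_r} \cdot \widehat{p'_r} = -\xi^{2} e^{-r\xi^{2}}$ (as opposed to $+\xi^{2} e^{-r\xi^{2}}$ for $A$), reflecting the antisymmetry of $p'_r$. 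The $r$-integral then gives $-R^{\zeta}(2x+q)$ for each $q$, so $B(x) = -\Sh^{\zeta}_{2L}(2x)$. Adding and invoking the rescaling identity $\Sh^{\zeta}_{2L}(2x) = \tfrac{1}{2}\Sh^{\zeta/2}_{L}(x)$ from \zcref{eq:Shrescale} yields the claimed formula $C^{(1)}_{\zeta}(x) = \Sh^{\zeta}_{2L}(0) - \tfrac{1}{2}\Sh^{\zeta/2}_{L}(x)$.

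The proof is essentially a bookkeeping exercise; the main point of care is tracking signs stemming from the antisymmetry of $p'_r$ and the substitution used to reduce the reflected kernel to a direct one, as an error here would yield the wrong sign in $B(x)$. The use of Fubini's theorem and the Fourier representation is legitimate because, for fixed $\zeta>0$, $\Sh^{\zeta}_{2L}$ is a smooth bounded function, $\widehat{R^{\zeta}}(\xi) = |\widehat{\rho^{\zeta}}(\xi)|^{2}$ decays fast enough to compensate the $\xi^{2}$ factor, and $p'_r$ is Gaussian-integrable, so all integrals converge absolutely.
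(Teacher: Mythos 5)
Your proof is correct and takes essentially the same route as the paper: \zcref{prop:C1zeta} is obtained there as the equal-time, equal-point specialization of \zcref{lem:EW-cov}, whose proof computes exactly the second moment you compute, splitting the reflected covariance into the same ``direct'' and ``reflected'' pieces with the same signs. The only difference is the tool used to evaluate the resulting integrals --- the paper uses the heat semigroup identity (rewriting $\Delta p_{2(t-s)}$ as a time derivative) while you use the spatial Fourier transform --- and both yield $\Sh^{\zeta}_{2L}(0)-\Sh^{\zeta}_{2L}(2x)$, after which \zcref{eq:Shrescale} finishes the argument.
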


The proof of \zcref{prop:C1zeta} is a rather straightforward calculation
that we carry out in \zcref{subsec:First-renormalization-constant}
below. For now, we use it to complete the proof of \zcref{prop:RSrecovers}.
\begin{proof}[Proof of \zcref{prop:RSrecovers}]
Applying the reconstruction operator $\hat{\mathcal{R}}^{\eps,\zeta}$
to both sides of \zcref{eq:fixedpointeqn}, and then using \zcref{eq:reconstruct-heatkernel,eq:reconstruct-ic,eq:nonlinearity-expansion},
we get
\begin{align}
\hat{\mathcal{R}}^{\eps,\zeta}H & =\hat{\mathcal{R}}^{\eps,\zeta}(\mathcal{K}^{\eps,\zeta}+\tilde{\mathcal{K}}^{\eps,\zeta})\left[\mathbf{1}_{>0}\left(\oh(\partial H)^{2}+\rsnoise+\rspotential\right)\right]+\hat{\mathcal{R}}\mathcal{P}_{0}h_{0}\nonumber \\
 & =p\circledast\left[\hat{\mathcal{R}}^{\eps,\zeta}\left(\mathbf{1}_{>0}\left(\oh(\partial H)^{2}+\rsnoise+\rspotential\right)\right)\right]+p*h_{0}\nonumber \\
\ovset{\zcref{eq:sqdiff}} & =p\circledast\left[\mathbf{1}_{>0}\left(\oh\left(\partial_{x}(\hat{\mathcal{R}}^{\eps,\zeta}H)\right)^{2}-\oh C^{(1)}_{\zeta}+\rsnoise[rsK]^{\eps,\zeta}+\rspotential[rsK]^{\eps,\zeta}\right)\right]+p*h_{0}.\label{eq:reconstruct-mild}
\end{align}
Using the values of $\rsnoise[rsK]^{\eps,\zeta}$, $\rspotential[rsK]^{\eps,\zeta}$,
$C^{(1)}_{\zeta}$, and $h_{0}$ fixed in \zcref{eq:basic-reconstruction,eq:IpartialH-1,eq:Echerryrr},
we see that \zcref{eq:reconstruct-mild} is exactly the mild solution
formula for the problem \zcref{eq:hepszeta}, and hence we obtain
\zcref{eq:hisRH}. Then \zcref{eq:nonlinearityisRnonlinearity} follows
from \zcref{eq:hisRH} and another application of \zcref{eq:sqdiff}.
\end{proof}

\subsection{Canonical lifts as modeled distributions}

It will be useful later on to be able to understand the canonical
lifts $\mathbf{\hat{\boldsymbol{\boldsymbol{\varPi}}}}^{\eps,\zeta}\tau$
and $\mathbf{\tilde{\boldsymbol{\boldsymbol{\varPi}}}}^{\eps,\zeta}(\hat{M}\tau)$
as modeled distributions, so that we can use the tools of regularity
structures to study them. This is a sort of ``inverse'' to the procedure
carried out in \zcref{subsec:Model}: there, we defined the model
in terms of the canonical lifts, whereas now we seek to write the
canonical lifts in terms of the model. A similar procedure was outlined
in \cite[Rmk.~15.13]{friz:hairer:2020:course}, which we essentially
follow, but we need to include additional considerations because we
want our relationship to apply to the renormalized model and lifts.
Since we will only need this construction for elements of $\check{\mathcal{T}}_{\ES}$
(recalling \zcref{def:We-define-a}) and $\mathbf{\tilde{\boldsymbol{\boldsymbol{\varPi}}}}^{\eps,\zeta}(\hat{M}\tau)$
has only been defined for elements of $\check{\mathcal{T}}_{\ES}$
anyway, we will restrict our construction that space.

For $V$ a sector of $\mathcal{T}$, we define the modeled distribution
spaces
\[
\mathcal{D}^{\eps,\zeta}_{\mathcal{I}'}(V)\coloneqq\mathcal{D}^{\oh+10\kappa;\eps,\zeta}(V)\qquad\text{and}\qquad\mathcal{D}^{\eps,\zeta}_{\bullet}\coloneqq\mathcal{D}^{\eps,\zeta}_{\star}\coloneqq\mathcal{D}^{9\kappa;\eps,\zeta}(V)
\]
and also the projection maps $\check{\varpi}_{\mathcal{I}'}\colon\mathcal{T}\to\mathcal{T}_{\le\oh+10\kappa}=\mathcal{T}_{\le\oh-\kappa}$
and $\check{\varpi}_{\bullet}=\check{\varpi}_{\star}\colon\mathcal{T}\to\mathcal{T}_{\le9\kappa}=\mathcal{T}_{\le0}$.
Along the lines of \cite[Rmk.~15.13]{friz:hairer:2020:course}, we
define, for $\square\in\{\bullet,\mathcal{I}',\star\}$, maps 
\begin{equation}
\mathcal{H}^{\eps,\zeta}_{\square},\tilde{\mathcal{H}}^{\eps,\zeta}_{\square}\colon\check{\mathcal{T}}_{\ES}\cap\check{\mathcal{T}}_{\square}\to\mathcal{D}^{\eps,\zeta}_{\square}(\check{\mathcal{T}})\label{eq:Hspaces}
\end{equation}
 by the mutually recursive inductive relations
\begin{equation}
\mathcal{H}^{\eps,\zeta}_{\bullet}(\tau)\coloneqq\tilde{\mathcal{H}}^{\eps,\zeta}_{\bullet}(\tau)\coloneqq\tau\qquad\text{for }\tau\in\{\rsnoise,\rspotential\},\label{eq:HHtildebullet}
\end{equation}
\begin{equation}
\begin{aligned}\mathcal{H}^{\eps,\zeta}_{\mathcal{I}'}(\mathcal{I}'\tau)=\check{\varpi}_{\mathcal{I}'}\partial\mathcal{K}^{\eps,\zeta}\mathcal{H}^{\eps,\zeta}_{\star}(\tau)\qquad\text{and}\qquad\tilde{\mathcal{H}}^{\eps,\zeta}_{\mathcal{I}'}(\mathcal{I}'\tau)=\check{\varpi}_{\mathcal{I}'}\partial(\mathcal{K}^{\eps,\zeta}+\tilde{\mathcal{K}}^{\eps,\zeta})\tilde{\mathcal{H}}^{\eps,\zeta}_{\star}(\tau)\qquad\\
\text{for }\tau\in\check{\mathsf{T}}_{\ES}\cap(\check{\mathsf{T}}_{\bullet}\cup\check{\mathsf{T}}_{\star});
\end{aligned}
\label{eq:HHtildeIprime}
\end{equation}
and
\begin{equation}
\begin{aligned}\mathcal{H}^{\eps,\zeta}_{\star}(\tau_{1}\tau_{2})=\check{\varpi}_{\star}\left[\mathcal{H}^{\eps,\zeta}_{\mathcal{I}'}(\tau_{1})\star\mathcal{H}^{\eps,\zeta}_{\mathcal{I}'}(\tau_{2})\right]\qquad\text{and}\qquad\tilde{\mathcal{H}}^{\eps,\zeta}_{\star}(\tau_{1}\tau_{2})=\check{\varpi}_{\star}\left[\tilde{\mathcal{H}}^{\eps,\zeta}_{\mathcal{I}'}(\tau_{1})\star\tilde{\mathcal{H}}^{\eps,\zeta}_{\mathcal{I}'}(\tau_{2})\right]\qquad\\
\text{for }\tau_{1},\tau_{2}\in\check{\mathsf{T}}_{\ES}\cap\check{\mathsf{T}}_{\mathcal{I}'};
\end{aligned}
\label{eq:HHtildeproduct}
\end{equation}
and then extending these operators to each of their respective $\check{\mathcal{T}}_{\ES}\cap\check{\mathcal{T}}_{\square}$
spaces by linearity. We will check in \zcref{lem:Lranges} below
that the range of these operators really is as claimed in \zcref{eq:Hspaces}.
We finally define $\mathcal{H}^{\eps,\zeta},\tilde{\mathcal{H}}^{\eps,\zeta}\colon\check{\mathcal{T}}_{\ES}\to\mathcal{D}^{\eps,\zeta}_{\star}(\check{\mathcal{T}})$
by
\[
\mathcal{H}^{\eps,\zeta}\tau=\mathcal{H}^{\eps,\zeta}_{\bullet}\tau_{\bullet}+\mathcal{H}^{\eps,\zeta}_{\mathcal{I}'}\tau_{\mathcal{I}'}+\mathcal{H}^{\eps,\zeta}_{\star}\tau_{\star}\qquad\text{and}\qquad\tilde{\mathcal{H}}^{\eps,\zeta}\tau=\tilde{\mathcal{H}}^{\eps,\zeta}_{\bullet}\tau_{\bullet}+\tilde{\mathcal{H}}^{\eps,\zeta}_{\mathcal{I}'}\tau_{\mathcal{I}'}+\tilde{\mathcal{H}}^{\eps,\zeta}_{\star}\tau_{\star}
\]
for $\tau=\tau_{\bullet}+\tau_{\mathcal{I}'}+\tau_{\star}$ with $\tau\in\check{\mathcal{T}}_{\ES}$
and $\tau_{\square}\in\check{\mathcal{T}}_{\ES}\cap\check{\mathcal{T}}_{\square}$
for $\square\in\{\bullet,\mathcal{I}',\star\}$.

The second definition in \zcref{eq:HHtildeIprime} is a minor abuse
of notation: in general, $\tilde{\mathcal{K}}^{\eps,\zeta}\tilde{\mathcal{H}}^{\eps,\zeta}_{\star}(\tau)$
may not be defined, since the first space-time convolution on the
right side of \zcref{eq:scrKhatdef} can blow up for large negative
times. However, because of the presence of the gradient, this term
does not appear in the definition of $\tilde{\mathcal{H}}^{\eps,\zeta}(\mathcal{I}'\tau)$
anyway. So the second definition in \zcref{eq:HHtildeIprime} should
really be interpreted as
\[
\tilde{\mathcal{H}}^{\eps,\zeta}(\mathcal{I}'\tau)=\check{\omega}_{\mathrm{F}}\partial\mathcal{K}^{\eps,\zeta}\tilde{\mathcal{H}}^{\eps,\zeta}(\tau)+\tilde{K}'\circledast(\hat{\mathcal{R}}^{\eps,\zeta}\tilde{\mathcal{H}}^{\eps,\zeta}(\tau))\rsone.
\]
The last integral is indeed well-defined, for a reason analogous to
that described in the discussion following \zcref{eq:Qinductive-hat}
(in particular given \zcref{eq:RHatHtildeisPitilde} below).

We define
\[
\mathcal{L}^{\eps,\zeta}_{\square}\tau=\mathcal{H}^{\eps,\zeta}_{\square}\tau-\tau\qquad\text{and}\qquad\tilde{\mathcal{L}}^{\eps,\zeta}_{\square}\tau=\tilde{\mathcal{H}}^{\eps,\zeta}_{\square}\tau-\tau.
\]
We have not yet checked that the ranges of $\mathcal{H}^{\eps,\zeta}_{\square},\tilde{\mathcal{H}}^{\eps,\zeta}_{\square}$
are actually as described in \zcref{eq:Hspaces}; we will do that
now, and actually prove a somewhat stronger result that will be useful
later. Define, for $\square\in\{\bullet,\mathcal{I}',\star\}$,
\begin{equation}
\mathcal{T}_{\mathrm{rem},\square}=\langle\mathsf{T}_{\mathrm{rem},\square}\rangle,\qquad\mathsf{T}_{\mathrm{rem},\square}\coloneqq\begin{cases}
\varnothing, & \text{if }\square=\bullet;\\
\{\rsone,\rsiplollipopr\}, & \text{if }\square=\mathcal{I}';\\
\{\rsone,\rsclawrr,\rsiplollipopr\}\cup\check{\mathsf{T}}_{\mathcal{I}'}, & \text{if }\square=\star.
\end{cases}\label{eq:Tremainder-1-1}
\end{equation}

\begin{lem}
\label{lem:Lranges}We have
\begin{equation}
\mathcal{L}^{\eps,\zeta}_{\square}\tau,\tilde{\mathcal{L}}^{\eps,\zeta}_{\square}\tau\in\mathcal{T}_{\mathrm{rem,\square}}\qquad\text{for each }\square\in\{\bullet,\mathcal{I},\star\}\text{ and }\tau\in\check{\mathcal{T}}_{\ES}\cap\check{\mathcal{T}}_{\square}.\label{eq:Lranges}
\end{equation}
As a consequence, the ranges of $\mathcal{H}^{\eps,\zeta}_{\square},\tilde{\mathcal{H}}^{\eps,\zeta}_{\square}$
are as described in \zcref{eq:Hspaces}. In fact, for each $\tau\in\check{\mathsf{T}}_{\ES}\cap\check{\mathsf{T}}_{\square}$,
$\|\mathcal{H}^{\eps,\zeta}_{\square}\tau\|_{\mathcal{D}^{\eps,\zeta}_{\square}}$
and $\|\tilde{\mathcal{H}}^{\eps,\zeta}_{\square}\tau\|_{\mathcal{D}^{\eps,\zeta}_{\square}}$
are each bounded by a polynomial in $\|(\hat{\Pi}^{\eps,\zeta},\hat{\Gamma}^{\eps,\zeta})\|$
depending on $\tau$.
\end{lem}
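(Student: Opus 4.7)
The plan is to prove \zcref{eq:Lranges} by structural induction on the inductive construction of $\check{\mathsf{T}}_{\ES}$ via the rules \zcref{enu:integrate} and \zcref{enu:multiply}, and then derive the range statement and polynomial bounds from it. For the base case $\tau\in\mathsf{T}_{\bullet}$, both $\mathcal{L}^{\eps,\zeta}_{\bullet}\tau=0$ and $\tilde{\mathcal{L}}^{\eps,\zeta}_{\bullet}\tau=0$ by \zcref{eq:HHtildebullet}, which trivially lies in $\mathcal{T}_{\mathrm{rem},\bullet}=\{0\}$.

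For the inductive step at $\tau=\mathcal{I}'\tau_{0}$ with $\tau_{0}\in\check{\mathsf{T}}_{\ES}\cap(\check{\mathsf{T}}_{\bullet}\cup\check{\mathsf{T}}_{\star})$, write $\mathcal{H}^{\eps,\zeta}_{\star}(\tau_{0})=\tau_{0}+r_{0}$ with $r_{0}$ taking values in $\mathcal{T}_{\mathrm{rem},\star}$ by the inductive hypothesis. By \zcref{eq:KfisIpluspoly}, we can split $\mathcal{K}^{\eps,\zeta}\tau_{0}=\mathcal{I}\tau_{0}+Q(\tau_{0})$ where $Q(\tau_{0})$ takes values in $\check{\mathcal{T}}_{\mathrm{S};\mathrm{poly}}=\langle\rsone,\rsx\rangle$. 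Applying $\check{\varpi}_{\mathcal{I}'}\partial$ annihilates the $\rsone$-coefficient and turns the $\rsx$-coefficient into a coefficient on $\rsone$, so this contribution lies in $\langle\rsone\rangle\subset\mathcal{T}_{\mathrm{rem},\mathcal{I}'}$. For the remainder $r_{0}$, inspection of \zcref{eq:Tremainder-1-1} shows that $r_{0}$ is supported on $\{\rsone,\rsclawrr,\rsiplollipopr\}\cup\check{\mathsf{T}}_{\mathcal{I}'}$; applying $\check{\varpi}_{\mathcal{I}'}\partial\mathcal{K}^{\eps,\zeta}$ term by term, one checks using \zcref{tab:RS-table} that each contribution lands either in the polynomial sector (hence in $\langle\rsone\rangle$) or in $\langle\rsiplollipopr\rangle$ via $\partial\mathcal{I}\rslollipopr=\rsiplollipopr$, all within the required set $\mathcal{T}_{\mathrm{rem},\mathcal{I}'}=\langle\rsone,\rsiplollipopr\rangle$. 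The truncation levels built into $\check{\mathcal{T}}$ and the projection $\check{\varpi}_{\mathcal{I}'}$ to homogeneity $<\oh+10\kappa$ are what cut off any higher-order contributions.

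For the inductive step at $\tau=\tau_{1}\tau_{2}$ with $\tau_{1},\tau_{2}\in\check{\mathsf{T}}_{\ES}\cap\check{\mathsf{T}}_{\mathcal{I}'}$, write $\mathcal{H}^{\eps,\zeta}_{\mathcal{I}'}(\tau_{i})=\tau_{i}+r_{i}$ with $r_{i}$ valued in $\langle\rsone,\rsiplollipopr\rangle$. Since $\tau_{1},\tau_{2}\in\check{\mathsf{T}}_{\mathcal{I}'}$, the rule \zcref{enu:multiply} gives $\tau_{1}\star\tau_{2}=\tau_{1}\tau_{2}$, so
\[
\mathcal{L}^{\eps,\zeta}_{\star}(\tau_{1}\tau_{2})=\check{\varpi}_{\star}(\tau_{1}\star r_{2}+r_{1}\star\tau_{2}+r_{1}\star r_{2}).
\]
A case-by-case check using \zcref{tab:multiplication-table} confirms that each such product has values in $\mathcal{T}_{\mathrm{rem},\star}=\langle\rsone,\rsclawrr,\rsiplollipopr\rangle\oplus\check{\mathcal{T}}_{\mathcal{I}'}$: indeed $\tau_{i}\star\rsone=\tau_{i}\in\check{\mathsf{T}}_{\mathcal{I}'}$, the product $\rsiplollipopr\star\tau_{i}$ is either $0$ or $\rsclawrr$ (the latter only when $\tau_{i}=\rslollipopr$), and products of elements of $\{\rsone,\rsiplollipopr\}$ yield elements of $\{0,\rsone,\rsiplollipopr\}$. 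The parallel argument for $\tilde{\mathcal{H}}^{\eps,\zeta}$ is identical, except that the additional operator $\tilde{\mathcal{K}}^{\eps,\zeta}$ takes values in $\langle\rsone,\rsx\rangle$ by \zcref{eq:scrKhatdef}, so $\check{\varpi}_{\mathcal{I}'}\partial\tilde{\mathcal{K}}^{\eps,\zeta}$ adds only $\langle\rsone\rangle$-valued contributions.

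Once \zcref{eq:Lranges} is established, the range claims follow because each elementary operation in the recursion is bounded between the appropriate modeled distribution spaces: $\mathcal{K}^{\eps,\zeta}$ and $\tilde{\mathcal{K}}^{\eps,\zeta}$ act continuously via \cite[Prop.~6.16]{hairer:2014:theory}, $\partial$ is continuous by \cite[Prop.~5.28]{hairer:2014:theory}, products satisfy the bounds of \cite[Prop.~6.12]{hairer:2014:theory}, and the projections $\check{\varpi}_{\square}$ are trivially continuous. Since each of these bounds is linear in the model norm (quadratic for $\star$), composing them through the finite recursion defining $\mathcal{H}^{\eps,\zeta}_{\square}\tau$ and $\tilde{\mathcal{H}}^{\eps,\zeta}_{\square}\tau$ yields the polynomial bound in $\|(\hat{\Pi}^{\eps,\zeta},\hat{\Gamma}^{\eps,\zeta})\|$ asserted at the end of the lemma. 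The main obstacle is the combinatorial verification in the integration step, which requires confirming case-by-case that the truncation cutoffs and the structure of $\check{\mathsf{T}}$ leave no terms outside $\mathcal{T}_{\mathrm{rem},\mathcal{I}'}$; the sets $\mathcal{T}_{\mathrm{rem},\square}$ in \zcref{eq:Tremainder-1-1} have been chosen exactly so that this inductive invariant closes.
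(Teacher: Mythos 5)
Your proof is correct and follows essentially the same route as the paper's: structural induction on the construction of $\check{\mathsf{T}}_{\ES}$, with the integration step handled by \zcref{eq:KfisIpluspoly} plus the observation that $\check{\varpi}_{\mathcal{I}'}$ annihilates everything in $\mathcal{I}'\check{\mathsf{T}}_{\mathcal{I}'}$ except $\rsiplollipopr$, the product step handled by expanding $(\tau_1+r_1)\star(\tau_2+r_2)-\tau_1\tau_2$ against \zcref{tab:multiplication-table}, and the polynomial bounds obtained by composing the continuity statements for $\mathcal{K}^{\eps,\zeta}$, $\partial$, and $\star$ from \cite{hairer:2014:theory} through the finite recursion. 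The only cosmetic difference is which versions of Hairer's integration/multiplication theorems you cite (the singular-space variants rather than Thm.~4.7 and Thm.~5.12), which does not affect the argument.
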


\begin{proof}
The proof of \zcref{eq:Lranges} (of course) proceeds by induction.
The base case $\tau\in\check{\mathcal{T}}_{\bullet}$ is obvious from
the definitions. If $\tau\in\check{\mathcal{T}}_{\ES}\cap(\check{\mathcal{T}}_{\bullet}\oplus\check{\mathcal{T}}_{\star})$
and \zcref{eq:Lranges} holds for $\tau$, then from the definitions
of $\mathcal{K}^{\eps,\zeta}$ and $\tilde{\mathcal{K}}^{\eps,\zeta}$
and the inductive hypothesis we see that $\partial\mathcal{K}^{\eps,\zeta}\mathcal{H}^{\eps,\zeta}_{\star}\tau-\tau$
and $\partial(\mathcal{K}^{\eps,\zeta}+\tilde{\mathcal{K}}^{\eps,\zeta})\tilde{\mathcal{H}}^{\eps,\zeta}_{\star}\tau-\tau$
take values in the span of $\rsone$ and $\mathcal{I}'\check{\mathsf{T}}_{\mathcal{I}'}$.
But the only basis element of $\mathcal{I}'\check{\mathsf{T}}_{\mathcal{I}'}$
that is not annihilated by the projection $\check{\varpi}_{\mathcal{I}'}$
is $\rsiplollipopr$, so \zcref{eq:Lranges} holds for $\mathcal{I}'\tau$.
Finally, if $\tau_{1},\tau_{2}\in\check{\mathcal{T}}_{\ES}\cap\check{\mathcal{T}}_{\mathcal{I}'}$,
then
\[
\mathcal{L}^{\eps,\zeta}_{\star}(\tau_{1}\tau_{2})=\check{\varpi}_{\star}\left[(\mathcal{H}^{\eps,\zeta}_{\mathcal{I}'}\tau_{1})(\mathcal{H}^{\eps,\zeta}_{\mathcal{I}'}\tau_{2})-\tau_{1}\tau_{2}\right]=\check{\varpi}_{\star}\left[\tau_{1}\mathcal{L}^{\eps,\zeta}_{\mathcal{I}'}\tau_{2}+\tau_{2}\mathcal{L}^{\eps,\zeta}_{\mathcal{I}'}\tau_{1}+(\mathcal{L}^{\eps,\zeta}_{\mathcal{I}'}\tau_{1})(\mathcal{L}^{\eps,\zeta}_{\mathcal{I}'}\tau_{2})\right],
\]
which takes values in the span of $\rsone,\rsclawrr,\rsiplollipopr,\check{\mathsf{T}}_{\mathcal{I}'}$
by the inductive hypothesis and \zcref{tab:RS-table,tab:multiplication-table}.
The same holds with $\mathcal{L}^{\eps,\zeta}_{\star}$ replaced by
$\tilde{\mathcal{L}}^{\eps,\zeta}_{\star}$, so \zcref{eq:Lranges}
holds for $\tau_{1}\tau_{2}$ as well.

The last claim of the lemma is also proved by induction. It follows
from \zcref{eq:Lranges} that, for any $\tau_{1},\tau_{2}\in\check{\mathcal{T}}_{\ES}\cap\check{\mathcal{T}}_{\mathcal{I}'}$,
$\mathcal{H}^{\eps,\zeta}_{\star}\tau_{i}$ ($i=1,2$) takes values
in a sector of regularity $-\oh-\kappa$, so by the inductive hypothesis
and \cites[Thm.~4.7]{hairer:2014:theory},\footnote{The hypothesis that the sectors are $\gamma$-regular is easily checked
in the same manner as for the usual KPZ equation.} $(\mathcal{H}^{\eps,\zeta}_{\star}\tau_{1})(\mathcal{H}^{\eps,\zeta}_{\star}\tau_{2})\in\mathcal{D}^{-\oh-\kappa+\oh+10\kappa;\eps,\zeta}=\mathcal{D}^{9\kappa;\eps,\zeta}$,
and so the same is true for the projection in the definition of $\mathcal{H}^{\eps,\zeta}_{\star}$
in \zcref{eq:HHtildeproduct}. Moreover, for $\tau\in\check{\mathcal{T}}_{\ES}\cap(\check{\mathcal{T}}_{\bullet}\oplus\check{\mathcal{T}}_{\star})$,
if $\mathcal{H}^{\eps,\zeta}_{\star}(\tau)\in\mathcal{D}^{9\kappa;\eps,\zeta}$,
then from the definition \zcref{eq:HHtildeIprime} and \cites[Thm.~5.12 and Prop.~5.28]{hairer:2014:theory},
we have $\mathcal{H}^{\eps,\zeta}_{\star}(\tau)\in\mathcal{D}^{1+9\kappa;\eps,\zeta}\supseteq\mathcal{D}^{\oh+10\kappa;\eps,\zeta}.$
The same arguments work with $\mathcal{H}$ replaced by $\tilde{\mathcal{H}}$.
Finally, each of these statements from \cite{hairer:2014:theory}
comes with a polynomial bound, so the polynomial boundedness statement
follows easily by induction as well.
\end{proof}

The purpose of defining $\mathcal{H}^{\eps,\zeta}$ and $\tilde{\mathcal{H}}^{\eps,\zeta}$
in this way was to obtain the following.
\begin{prop}
\label{prop:RH}For all $\tau\in\check{\mathcal{T}}_{\ES}$ and all
$\eps,\zeta>0$, we have 
\begin{equation}
\hat{\mathcal{R}}^{\eps,\zeta}\mathcal{H}^{\eps,\zeta}\tau=\hat{\boldsymbol{\varPi}}^{\eps,\zeta}\tau\label{eq:RhatHisPi}
\end{equation}
and
\begin{equation}
\hat{\mathcal{R}}^{\eps,\zeta}\tilde{\mathcal{H}}^{\eps,\zeta}\tau=\tilde{\boldsymbol{\varPi}}^{\eps,\zeta}(\hat{M}\tau).\label{eq:RHatHtildeisPitilde}
\end{equation}
\end{prop}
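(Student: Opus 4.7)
The plan is to prove both identities \zcref{eq:RhatHisPi} and \zcref{eq:RHatHtildeisPitilde} in parallel by induction on the construction of $\tau\in\check{\mathsf{T}}_{\ES}$ via the rules \zcref{enu:integrate} and \zcref{enu:multiply}, which is exactly the recursion used to define $\mathcal{H}^{\eps,\zeta}$ and $\tilde{\mathcal{H}}^{\eps,\zeta}$ in \zcref{eq:HHtildebullet,eq:HHtildeIprime,eq:HHtildeproduct}. The base case $\tau\in\{\rsnoise,\rspotential\}$ is immediate: by \zcref{eq:HHtildebullet}, $\mathcal{H}^{\eps,\zeta}\tau=\tilde{\mathcal{H}}^{\eps,\zeta}\tau=\tau$, and from \zcref{eq:reconstruction-of-noise,eq:basic-reconstruction} and \zcref{tab:model-defns} one checks directly that $\hat{\mathcal{R}}^{\eps,\zeta}\tau=\hat{\boldsymbol{\varPi}}^{\eps,\zeta}\tau=\tilde{\boldsymbol{\varPi}}^{\eps,\zeta}(\hat{M}\tau)$ (using $\hat{M}\tau=\tau$ on $\check{\mathsf{T}}_{\bullet}$ per \zcref{tab:Mhatdef}).

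For the integration step, assuming the identity holds for some $\tau\in\check{\mathsf{T}}_{\ES}\cap(\check{\mathsf{T}}_{\bullet}\cup\check{\mathsf{T}}_{\star})$, I would chain \zcref{eq:Rhat-commutes-deriv,eq:RK}, the inductive hypothesis, and \zcref{eq:Pihatintegration} to compute
\[
\hat{\mathcal{R}}^{\eps,\zeta}\bigl[\partial\mathcal{K}^{\eps,\zeta}\mathcal{H}^{\eps,\zeta}_{\star}\tau\bigr]=\partial_{x}\bigl(K\circledast\hat{\mathcal{R}}^{\eps,\zeta}\mathcal{H}^{\eps,\zeta}_{\star}\tau\bigr)=K'\circledast\hat{\boldsymbol{\varPi}}^{\eps,\zeta}\tau=\hat{\boldsymbol{\varPi}}^{\eps,\zeta}(\mathcal{I}'\tau).
\]
To handle the projection $\check{\varpi}_{\mathcal{I}'}$ appearing in \zcref{eq:HHtildeIprime}, I would verify that each dropped basis element (those of homogeneity above $\oh-\kappa$; the most prominent being $\rsx$, for which $\hat{\Pi}^{\eps,\zeta}_{t,x}(\rsx)_{t}(x)=x-x=0$) satisfies $\hat{\Pi}^{\eps,\zeta}_{t,x}(\cdot)_{t}(x)=0$ by inspection of \zcref{tab:model-defns}, so the projection does not alter the pointwise reconstruction. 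The second identity for $\tilde{\mathcal{H}}^{\eps,\zeta}$ proceeds in exactly the same way, using \zcref{eq:reconstruct-heatkernel} to bring in the full heat kernel $p=K+\tilde{K}$ and \zcref{eq:Pihatintegration-1} to close the induction.

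The multiplication step is where the renormalization constants enter. Given $\tau_{1},\tau_{2}\in\check{\mathsf{T}}_{\ES}\cap\check{\mathsf{T}}_{\mathcal{I}'}$, I would use \zcref{eq:Lranges} to write $\mathcal{H}^{\eps,\zeta}_{\mathcal{I}'}\tau_{i}(t,x)=\tau_{i}+a_{i}(t,x)\rsone+b_{i}(t,x)\rsiplollipopr$, apply $\hat{\Pi}^{\eps,\zeta}_{t,x}(\cdot)_{t}(x)$ to $\mathcal{H}^{\eps,\zeta}_{\mathcal{I}'}\tau_{1}\star\mathcal{H}^{\eps,\zeta}_{\mathcal{I}'}\tau_{2}$, and expand bilinearly. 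The pointwise multiplicativity of the model \zcref{lem:mult-model} replaces each $\hat{\Pi}^{\eps,\zeta}_{t,x}(\sigma\star\sigma')_{t}(x)$ by $\hat{\Pi}^{\eps,\zeta}_{t,x}(\sigma)_{t}(x)\,\hat{\Pi}^{\eps,\zeta}_{t,x}(\sigma')_{t}(x)-C_{\zeta}[\sigma,\sigma'](x)$, and from \zcref{tab:model-defns} together with \zcref{eq:Ctau1tau2def} every cross-term carrying a factor of $\rsiplollipopr$ collapses (since $\hat{\Pi}^{\eps,\zeta}_{t,x}(\rsiplollipopr)_{t}(x)=0$ and $C_{\zeta}[\cdot,\rsiplollipopr]=0$). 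Collecting what remains and applying the induction hypothesis, the value becomes $\hat{\boldsymbol{\varPi}}^{\eps,\zeta}\tau_{1}(t,x)\cdot\hat{\boldsymbol{\varPi}}^{\eps,\zeta}\tau_{2}(t,x)-C_{\zeta}[\tau_{1},\tau_{2}](x)=\hat{\boldsymbol{\varPi}}^{\eps,\zeta}(\tau_{1}\tau_{2})(t,x)$ by \zcref{eq:Pihatmultiply}. The corresponding statement for $\tilde{\mathcal{H}}^{\eps,\zeta}$ follows the same computation, with \zcref{eq:PitildeMhatmultiply} in place of \zcref{eq:Pihatmultiply}.

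The main obstacle is controlling the effect of the projections $\check{\varpi}_{\mathcal{I}'}$ and $\check{\varpi}_{\star}$ in the two inductive steps: I need every dropped basis symbol to satisfy $\hat{\Pi}^{\eps,\zeta}_{t,x}(\cdot)_{t}(x)=0$. This is a short but tedious case check. Recentered symbols such as $\rsipelkrrr,\rsipcherryrb,\rsiplollipopr$ vanish at the basepoint directly from \zcref{tab:model-defns}, while formal products like $\rsiplollipopr\star\rsiplollipopr$, $\rsipcherryrr\star\rsiplollipopr$, or $\rsipelkrrr\star\rslollipopb$ vanish by \zcref{lem:mult-model} combined with the vanishing of the relevant entries of $C_{\zeta}[\cdot,\cdot]$ recorded in \zcref{eq:Ctau1tau2def}. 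Once this bookkeeping is handled, the remainder of the proof is a direct induction matching the algebraic recursions defining $\mathcal{H}^{\eps,\zeta}$ and $\tilde{\mathcal{H}}^{\eps,\zeta}$ to the pointwise identities \zcref{eq:Pihatintegration,eq:Pihatintegration-1,eq:Pihatmultiply,eq:PitildeMhatmultiply} available on the canonical-lift side.
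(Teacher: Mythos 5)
Your proposal is correct and follows essentially the same route as the paper's proof: induction along the recursive construction of $\check{\mathsf{T}}_{\ES}$, chaining \zcref{eq:Rhat-commutes-deriv,eq:RK,eq:RKhat} with \zcref{eq:Pihatintegration,eq:Pihatintegration-1} for the integration step, and using \zcref{eq:Lranges}, the pointwise multiplicativity \zcref{lem:mult-model}, and the vanishing of the off-diagonal entries of $C_{\zeta}[\cdot,\cdot]$ for the product step. Your explicit verification that the symbols dropped by $\check{\varpi}_{\mathcal{I}'}$ and $\check{\varpi}_{\star}$ have vanishing model at the basepoint is a point the paper handles only implicitly, and is a welcome addition rather than a deviation.
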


\begin{proof}
By linearity it suffices to check these relations on the basis $\check{\mathsf{T}}_{\ES}$.
We work by induction, following the recursion in \zcref{def:We-define-a}.
For the base case, we note that \zcref[comp=true, range]{eq:RhatHisPi,eq:RHatHtildeisPitilde}
are clear for $\tau\in\{\rsnoise,\rspotential\}$ immediately from
the definitions in \zcref{eq:HHtildebullet}.

Now suppose that $\tau\in\check{\mathsf{T}}_{\ES}\cap(\check{\mathsf{T}}_{\bullet}\cup\check{\mathsf{T}}_{\star})$
and \zcref[comp=true, range]{eq:RhatHisPi,eq:RHatHtildeisPitilde}
hold for $\tau$. We claim that they also hold for $\mathcal{I}'\tau$.
Indeed, we can compute
\begin{align*}
\hat{\mathcal{R}}^{\eps,\zeta}\mathcal{H}^{\eps,\zeta}(\mathcal{I}'\tau)\ovset{\zcref{eq:HHtildeIprime}} & =\hat{\mathcal{R}}^{\eps,\zeta}\left[\check{\varpi}_{\mathrm{F}}\partial\mathcal{K}^{\eps,\zeta}\mathcal{H}^{\eps,\zeta}(\tau)\right]\ovset{\zcref{eq:Rhat-commutes-deriv}}=\partial_{x}\hat{\mathcal{R}}^{\eps,\zeta}\left[\mathcal{K}^{\eps,\zeta}\mathcal{H}^{\eps,\zeta}(\tau)\right]\ovset{\zcref{eq:RK}}=K'\circledast\hat{\mathcal{R}}^{\eps,\zeta}\mathcal{H}^{\eps,\zeta}(\tau)\\
\ovset{\zcref{eq:RhatHisPi}} & =K'\circledast\hat{\boldsymbol{\varPi}}^{\eps,\zeta}(\tau)\overset{\zcref{eq:Pihatintegration}}{=}\hat{\boldsymbol{\varPi}}^{\eps,\zeta}(\mathcal{I}'\tau),
\end{align*}
which is \zcref{eq:RhatHisPi} for $\mathcal{I}'\tau$. Similarly,
using the second rather than the first definition in \zcref{eq:HHtildeIprime},
\zcref{eq:RKhat} in addition to \zcref{eq:RK}, and \zcref{eq:RHatHtildeisPitilde}
in place of \zcref{eq:RhatHisPi}, we have
\[
\hat{\mathcal{R}}^{\eps,\zeta}\tilde{\mathcal{H}}^{\eps,\zeta}(\mathcal{I}'\tau)=(K'+\tilde{K}')\circledast\tilde{\boldsymbol{\varPi}}^{\eps,\zeta}(\hat{M}\tau)\overset{\zcref{eq:pissumofKs}}{=}p'\circledast\tilde{\boldsymbol{\varPi}}^{\eps,\zeta}(\hat{M}\tau)\overset{\zcref{eq:Pihatintegration-1}}{=}\tilde{\boldsymbol{\varPi}}^{\eps,\zeta}(\mathcal{I}'\tau),
\]
which is \zcref{eq:RHatHtildeisPitilde} for $\mathcal{I}'\tau$.

Now suppose that $\tau_{1},\tau_{2}\in\check{\mathcal{T}}_{\ES}\cap\mathcal{T}_{\mathcal{I}'}$
and \zcref[comp=true]{eq:RhatHisPi,eq:RHatHtildeisPitilde} hold for
$\tau_{1}$ and $\tau_{2}$. We must verify that \zcref{eq:RhatHisPi,eq:RHatHtildeisPitilde}
hold for $\tau_{1}\tau_{2}$.  To evaluate the left side of \zcref{eq:RhatHisPi},
we use \zcref{lem:mult-model,lem:Lranges}. We can write
\begin{align}
\hat{\mathcal{R}}^{\eps,\zeta}\left(\mathcal{H}^{\eps,\zeta}(\tau_{1}\tau_{2})\right)_{t}(x)\ovset{\zcref{eq:HHtildeproduct}} & =\hat{\mathcal{R}}^{\eps,\zeta}\left(\check{\varpi}_{\mathrm{F}}\left[\mathcal{H}^{\eps,\zeta}(\tau_{1})\mathcal{H}^{\eps,\zeta}(\tau_{2})\right]\right)_{t}(x)\nonumber \\
\ovset{\zcref{eq:Lranges}} & =\hat{\mathcal{R}}^{\eps,\zeta}\left(\prod^{2}_{i=1}\left(\tau_{i}+f_{i}\rsone+g_{i}\rsiplollipopr\right)\right)_{t}(x)\nonumber \\
\ovset{\zcref{eq:reconstruction-is-model-1}} & =\hat{\Pi}^{\eps,\zeta}_{t,x}\left(\prod^{2}_{i=1}\left(\tau_{i}+f_{i}\rsone+g_{i}\rsiplollipopr\right)\right)_{t}(x).\label{eq:pihatproduct}
\end{align}
for some continuous functions $f_{i}$ and $g_{i}$. Now when we
expand the product on the right side of \zcref{eq:pihatproduct},
there are six terms. For the first one, we can use \zcref{lem:mult-model}
to write
\[
\hat{\Pi}^{\eps,\zeta}_{t,x}(\tau_{1}\tau_{2})_{t}(x)=\hat{\Pi}^{\eps,\zeta}_{t,x}(\tau_{1})_{t}(x)\hat{\Pi}^{\eps,\zeta}_{t,x}(\tau_{2})_{t}(x)-C_{\zeta}[\tau_{1},\tau_{2}](x)\overset{\zcref{eq:reconstruction-is-model-1}}{=}\hat{\mathcal{R}}^{\eps,\zeta}(\tau_{1})_{t}(x)\hat{\mathcal{R}}^{\eps,\zeta}(\tau_{2})_{t}(x)-C^{\tau_{1},\tau_{2}}_{\zeta}(x).
\]
We can also use \zcref{lem:mult-model} on the remaining five terms
in the product on the right side of \zcref{eq:pihatproduct}, but
for those terms there is no renormalization since
\begin{equation}
C_{\zeta}[\tau_{i},\rsone](x)=C_{\zeta}[\tau_{i},\rsiplollipopr](x)=C_{\zeta}[\rsone,\rsiplollipopr](x)=C_{\zeta}[\rsone,\rsone](x)=C_{\zeta}[\rsiplollipopr,\rsiplollipopr](x)=0\label{eq:Csarezero}
\end{equation}
as can be seen from the definition \zcref{eq:Ctau1tau2def} of $C^{\tau_{1},\tau_{2}}_{\zeta}$.
Thus, we in fact have 
\begin{align*}
\hat{\mathcal{R}}^{\eps,\zeta}\left(\mathcal{H}^{\eps,\zeta}(\tau_{1}\tau_{2})\right)_{t}(x) & =\hat{\mathcal{R}}^{\eps,\zeta}\left(\mathcal{H}^{\eps,\zeta}(\tau_{1})\right)_{t}(x)\hat{\mathcal{R}}^{\eps,\zeta}\left(\mathcal{H}^{\eps,\zeta}(\tau_{1})\right)_{t}(x)-C^{\tau_{1},\tau_{2}}_{\zeta}(x)\\
\ovset{\zcref{eq:RhatHisPi}} & =\hat{\boldsymbol{\varPi}}^{\eps,\zeta}(\tau_{1})_{t}(x)\hat{\boldsymbol{\varPi}}^{\eps,\zeta}(\tau_{2})_{t}(x)-C^{\partial\tau_{1},\partial\tau_{2}}_{\zeta}(x)\ovset{\zcref{eq:Pihatmultiply}}=\hat{\boldsymbol{\varPi}}^{\eps,\zeta}(\tau_{1}\tau_{2})_{t}(x).
\end{align*}
This completes the proof of \zcref{eq:RhatHisPi} in this final case.
\end{proof}

We will also need the following symmetry property of $\mathcal{H}^{\eps,\zeta}\tau$
and $\tilde{\mathcal{H}}^{\eps,\zeta}\tau$, which should be thought
of as a lift of \zcref{eq:sgnofTexp}.
\begin{lem}
We have, for each $\tau\in\check{\mathsf{T}}_{\ES}$ and each $\sigma\in\mathscr{S}$,
that
\begin{equation}
\sigma_{\mathrm{trans}}\cdot(\mathcal{H}^{\eps,\zeta}\tau)_{t}(\sigma_{\mathrm{trans}}(x))=(\mathcal{H}^{\eps,\zeta}\tau)_{t}(x)\quad\text{and}\quad\sigma_{\mathrm{trans}}\cdot(\tilde{\mathcal{H}}^{\eps,\zeta}\tau)_{t}(\sigma_{\mathrm{trans}}(x))=(\tilde{\mathcal{H}}^{\eps,\zeta}\tau)_{t}(x)\label{eq:Hsymmetry-trans}
\end{equation}
as well as
\begin{equation}
\sigma_{\mathrm{refl}}\cdot(\mathcal{H}^{\eps,\zeta}\tau)_{t}(\sigma_{\mathrm{refl}}(x))=\sgn(\tau)(\mathcal{H}^{\eps,\zeta}\tau)_{t}(x)\quad\text{and}\quad\sigma_{\mathrm{refl}}\cdot(\tilde{\mathcal{H}}^{\eps,\zeta}\tau)_{t}(\sigma_{\mathrm{refl}}(x))=\sgn(\tau)(\tilde{\mathcal{H}}^{\eps,\zeta}\tau)_{t}(x).\label{eq:Hsymmetry-refl}
\end{equation}
(In the case $\tau=\rsnoise$, these identities are interpreted in
the obvious distributional sense.)
\end{lem}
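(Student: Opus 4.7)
The plan is to induct on the recursive construction of $\check{\mathsf{T}}_{\ES}$ from \zcref{def:We-define-a}, exploiting the compatibility of the various operations on the regularity structure with the action \zcref{eq:sigma-action} of $\mathscr{S}$ on $\mathcal{T}$. Both $\mathcal{H}^{\eps,\zeta}$ and $\tilde{\mathcal{H}}^{\eps,\zeta}$ are linear in $\tau$, so it suffices to verify \zcref{eq:Hsymmetry-trans,eq:Hsymmetry-refl} on basis elements $\tau\in\check{\mathsf{T}}_{\ES}$.

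The base case $\tau\in\{\rsnoise,\rspotential\}$ is immediate: by \zcref{eq:HHtildebullet} the output is the constant modeled distribution equal to $\tau$, and the claim reduces to the relations $\sigma_{\mathrm{refl}}\cdot\tau=\sgn(\tau)\tau$ and $\sigma_{\mathrm{trans}}\cdot\tau=\tau$ from \zcref{eq:sigma-action}. For the multiplicative step with $\tau_{1},\tau_{2}\in\check{\mathsf{T}}_{\ES}\cap\check{\mathsf{T}}_{\mathcal{I}'}$, I would unwind the definition \zcref{eq:HHtildeproduct}, push $\sigma$ across $\star$ using \zcref{eq:sigma-product-commute}, note that $\check{\varpi}_{\star}$ is homogeneity-graded and therefore commutes with the $\mathscr{S}$-action, and then invoke the inductive hypothesis together with the multiplicativity of $\sgn$ from \zcref{eq:sgnrecursion}. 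This yields $\sigma_{\mathrm{refl}}\cdot(\mathcal{H}^{\eps,\zeta}(\tau_{1}\tau_{2}))_{t}(\sigma_{\mathrm{refl}}(x))=\sgn(\tau_{1})\sgn(\tau_{2})(\mathcal{H}^{\eps,\zeta}(\tau_{1}\tau_{2}))_{t}(x)=\sgn(\tau_{1}\tau_{2})(\mathcal{H}^{\eps,\zeta}(\tau_{1}\tau_{2}))_{t}(x)$, and analogously for $\sigma_{\mathrm{trans}}$ and for $\tilde{\mathcal{H}}^{\eps,\zeta}$.

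For the integration step with $\tau\in\check{\mathsf{T}}_{\ES}\cap(\check{\mathsf{T}}_{\bullet}\cup\check{\mathsf{T}}_{\star})$, I would start from the definition \zcref{eq:HHtildeIprime} of $\mathcal{H}^{\eps,\zeta}_{\mathcal{I}'}(\mathcal{I}'\tau)=\check{\varpi}_{\mathcal{I}'}\partial\mathcal{K}^{\eps,\zeta}\mathcal{H}^{\eps,\zeta}_{\star}(\tau)$ and combine three ingredients. First, commute $\partial$ past $\sigma_{\mathrm{refl}}$ at the cost of a sign, using \zcref{eq:sigma-deriv-commute}, while under $\sigma_{\mathrm{trans}}$ the derivative commutes trivially. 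Second, reduce to showing that the convolution operator $\mathcal{K}^{\eps,\zeta}$ (and, for the $\tilde{\mathcal{H}}$-identities, also $\tilde{\mathcal{K}}^{\eps,\zeta}$) preserves symmetric modeled distributions in the sense of \zcref{def:symmetric}; this relies on the translation invariance of $K$ and $\tilde{K}$ and on their evenness \zcref{eq:KKhateven}. Third, apply the inductive hypothesis for $\tau$. Combining these with the sign relation $\sgn(\mathcal{I}'\tau)=-\sgn(\tau)$ from \zcref{eq:sgnrecursion} yields the claimed identities for $\mathcal{I}'\tau$.

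The main (and really the only) technical obstacle is justifying that $\mathcal{K}^{\eps,\zeta}$ preserves symmetry. This is a standard result in the theory of regularity structures, analogous to \cites[Prop.~3.38]{hairer:2014:theory}, but it is worth writing out because Hairer's definition \cites[(5.15)]{hairer:2014:theory} of $\mathcal{K}^{\eps,\zeta}$ involves polynomial Taylor-type corrections in addition to the convolution with $K$, and one must check that each of these corrections is $\mathscr{S}$-symmetric. This in turn follows from the translation invariance and evenness of $K$, together with the fact that the model $(\hat{\Pi}^{\eps,\zeta},\hat{\Gamma}^{\eps,\zeta})$ is adapted to the action of $\mathscr{S}$ (as noted after \zcref{tab:model-defns}). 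The same considerations apply to $\tilde{\mathcal{K}}^{\eps,\zeta}$ via its definition \zcref{eq:scrKhatdef}.
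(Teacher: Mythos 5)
Your proposal is correct and follows essentially the same route as the paper: induction along the recursive construction of $\check{\mathsf{T}}_{\ES}$, with the base case from \zcref{eq:HHtildebullet}, the product step from \zcref{eq:sigma-product-commute} and multiplicativity of $\sgn$, and the integration step reduced to the fact that $\mathcal{K}^{\eps,\zeta}$ (and $\tilde{\mathcal{K}}^{\eps,\zeta}$) preserve symmetric modeled distributions. The only cosmetic difference is that the paper simply cites \cites[Prop.~5.23]{hairer:2014:theory} for that last fact (using the adaptedness of the model noted in \zcref{subsec:Model}), whereas you propose to re-derive it from the evenness and translation invariance of $K$ — which is exactly what that proposition rests on.
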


\begin{proof}
We proceed by induction. By \zcref{eq:HHtildebullet}, the conclusions
\zcref[range]{eq:Hsymmetry-trans,eq:Hsymmetry-refl} clearly hold
for $\tau\in\{\rspotential,\rsnoise\}$.

Now we proceed with the inductive step. First suppose that $\tau=\mathcal{I}\rho$
for $\rho\in\check{\mathsf{T}}_{\ES}\cap\check{\mathsf{T}}_{\star}$
and that \zcref[range]{eq:Hsymmetry-trans,eq:Hsymmetry-refl} hold
for $\rho$. Then we have, for $\sigma\in\mathscr{S}$, that
\[
\sigma\cdot(\mathcal{H}^{\eps,\zeta}(\mathcal{I}\rho))_{t}(\sigma(x))\overset{\zcref{eq:HHtildeIprime}}{=}\sigma\cdot(\mathcal{K}^{\eps,\rho}(\mathcal{H}^{\eps,\zeta}\rho))_{t}(\sigma(x))=(\mathcal{H}^{\eps,\zeta}\rho)_{t}(x),
\]
with the last identity by the inductive hypothesis and \cites[Prop.~5.23]{hairer:2014:theory},
as well as
\[
\sigma\cdot(\tilde{\mathcal{H}}^{\eps,\zeta}(\mathcal{I}\rho))_{t}(\sigma(x))\overset{\zcref{eq:HHtildeIprime}}{=}\sigma\cdot((\mathcal{K}^{\eps,\rho}+\tilde{\mathcal{K}}^{\eps,\rho})(\tilde{\mathcal{H}}^{\eps,\zeta}\rho))_{t}(\sigma(x))=(\tilde{\mathcal{H}}^{\eps,\zeta}\rho)_{t}(x),
\]
with the last identity by the inductive hypothesis, \cites[Prop.~5.23]{hairer:2014:theory},
and an easier-to-prove analogue of \cites[Prop.~5.23]{hairer:2014:theory}
with $\mathcal{K}^{\eps,\rho}$ replaced by $\tilde{\mathcal{K}}^{\eps,\rho}$.
Since $\sgn(\tau)=1$ in this case by \zcref{eq:sgnofTexp}, this
proves \zcref{eq:Hsymmetry-trans,eq:Hsymmetry-refl} for $\tau$.

Finally, if $\tau=\rho_{1}\rho_{2}$ for $\rho_{1},\rho_{2}\in\check{\mathsf{T}}_{\ES}\cap\check{\mathsf{T}}_{\mathcal{I}'}$
and \zcref{eq:Hsymmetry-trans,eq:Hsymmetry-refl} hold for $\rho_{1}$
and $\rho_{2}$, then we can write
\begin{multline*}
\sigma\cdot(\mathcal{H}^{\eps,\zeta}(\rho_{1}\rho_{2}))_{t}(\sigma(x))\sigma\ovset{\zcref{eq:HHtildeproduct}}=\cdot[(\mathcal{H}^{\eps,\zeta}\rho_{1})_{t}(\sigma(x))\cdot((\mathcal{H}^{\eps,\zeta}\rho_{2})_{t}(\sigma(x))]\\
\ovset{\zcref{eq:sigma-product-commute}}=\sigma\cdot(\mathcal{H}^{\eps,\zeta}\rho_{1})_{t}(\sigma(x))\sigma\cdot((\mathcal{H}^{\eps,\zeta}\rho_{2})_{t}(\sigma(x))=(\mathcal{H}^{\eps,\zeta}\rho_{1})_{t}(x)(\mathcal{H}^{\eps,\zeta}\rho_{2})_{t}(x)\ovset{\zcref{eq:HHtildeproduct}}=(\mathcal{H}^{\eps,\zeta}(\rho_{1}\rho_{2}))_{t}(x),
\end{multline*}
with the penultimate identity by the inductive hypothesis, and similarly
with $\tilde{\mathcal{H}}$ replacing $\mathcal{H}$.
\end{proof}

\begin{prop}
\label{prop:eq-boundary}For each $\tau\in\check{\mathcal{T}}_{\ES}$,
we have
\begin{equation}
(\mathcal{H}^{\eps,\zeta}\tau)_{t}(x)=(\tilde{\mathcal{H}}^{\eps,\zeta}\tau)_{t}(x)\qquad\text{for all }t\in\mathbb{R}\text{ and }x_{0}\in\{0,L\}.\label{eq:eq-bdry}
\end{equation}
\end{prop}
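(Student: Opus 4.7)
By linearity it suffices to prove \zcref{eq:eq-bdry} for each basis element $\tau\in\check{\mathsf{T}}_{\ES}$. Setting $\delta\mathcal{H}^{\eps,\zeta}\tau\coloneqq\mathcal{H}^{\eps,\zeta}\tau-\tilde{\mathcal{H}}^{\eps,\zeta}\tau$, I would prove that $(\delta\mathcal{H}^{\eps,\zeta}\tau)_{t}(x_{0})=0$ for all $t\in\mathbb{R}$ and $x_{0}\in\{0,L\}$, via a short three-case analysis keyed to the recursive construction of $\check{\mathsf{T}}_{\ES}$ in \zcref{def:We-define-a}. The base case $\tau\in\check{\mathsf{T}}_{\bullet}=\{\rsnoise,\rspotential\}$ is immediate, since \zcref{eq:HHtildebullet} gives $\mathcal{H}^{\eps,\zeta}\tau=\tilde{\mathcal{H}}^{\eps,\zeta}\tau=\tau$ identically.

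The crucial case is $\tau\in\check{\mathsf{T}}_{\ES}\cap\check{\mathsf{T}}_{\mathcal{I}'}$, and it will be handled purely by the reflection symmetry. Subtracting the two instances of \zcref{eq:Hsymmetry-refl} gives
\[
\sigma_{\mathrm{refl}}\cdot(\delta\mathcal{H}^{\eps,\zeta}\tau)_{t}(\sigma_{\mathrm{refl}}(x))=\sgn(\tau)\,(\delta\mathcal{H}^{\eps,\zeta}\tau)_{t}(x),
\]
which, together with the $2L$-translation invariance \zcref{eq:Hsymmetry-trans}, covers both $x_{0}=0$ (a fixed point of $\sigma_{\mathrm{refl}}$) and $x_{0}=L$ (since $\sigma_{\mathrm{refl}}(L)=-L$ coincides with $L$ modulo $2L$). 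By \zcref{eq:Lranges}, $\delta\mathcal{H}^{\eps,\zeta}\tau$ takes values in $\mathcal{T}_{\mathrm{rem},\mathcal{I}'}=\langle\rsone,\rsiplollipopr\rangle$; writing $(\delta\mathcal{H}^{\eps,\zeta}\tau)_{t}(x_{0})=a\rsone+b\rsiplollipopr$ and recalling from \zcref{tab:RS-table} that $\sgn(\rsone)=\sgn(\rsiplollipopr)=+1$ while $\sgn(\tau)=-1$ by \zcref{eq:sgnofTexp}, the displayed symmetry reads $a\rsone+b\rsiplollipopr=-(a\rsone+b\rsiplollipopr)$, forcing $a=b=0$. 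Note that no inductive input on the structure of $\tau=\mathcal{I}'\rho$ is needed for this argument: \zcref{eq:Lranges} and the matching of signs do all the work.

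Finally, any $\tau\in\check{\mathsf{T}}_{\ES}\cap\check{\mathsf{T}}_{\star}$ is a product $\rho_{1}\rho_{2}$ with $\rho_{1},\rho_{2}\in\check{\mathsf{T}}_{\ES}\cap\check{\mathsf{T}}_{\mathcal{I}'}$ (by \zcref{def:We-define-a} and a brief inspection of \zcref{tab:multiplication-table}). Subtracting the two instances of \zcref{eq:HHtildeproduct} via the pointwise Leibniz identity yields
\[
\delta\mathcal{H}^{\eps,\zeta}(\rho_{1}\rho_{2})=\check{\varpi}_{\star}\left[\delta\mathcal{H}^{\eps,\zeta}\rho_{1}\star\mathcal{H}^{\eps,\zeta}\rho_{2}+\tilde{\mathcal{H}}^{\eps,\zeta}\rho_{1}\star\delta\mathcal{H}^{\eps,\zeta}\rho_{2}\right].
\]
The previous case, applied to each $\rho_{i}$, gives $(\delta\mathcal{H}^{\eps,\zeta}\rho_{i})_{t}(x_{0})=0$; since $\star$ is a pointwise operation on $\mathcal{T}$-valued functions, both summands—hence their $\check{\varpi}_{\star}$-projection—vanish at $(t,x_{0})$. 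There is no genuine obstacle in this proof: the essential ingredient is already packaged into \zcref{eq:Lranges}, which pins down that the remainder $\mathcal{T}_{\mathrm{rem},\mathcal{I}'}$ consists entirely of even basis elements, and this is exactly orthogonal to the odd parity carried by the elements of $\check{\mathsf{T}}_{\ES}\cap\check{\mathsf{T}}_{\mathcal{I}'}$.
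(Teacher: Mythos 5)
Your proof is correct, and its skeleton---a case analysis along the recursive construction of $\check{\mathsf{T}}_{\ES}$, with the reflection symmetry killing boundary values and bilinearity of $\star$ handling products---is the same as the paper's. The one place you genuinely diverge is the $\mathcal{I}'$ step. The paper unwinds $\mathcal{K}^{\eps,\zeta}=\mathcal{I}+(\text{polynomial part})$ via \zcref{eq:KfisIpluspoly} and invokes the inductive hypothesis on the inner tree to reduce the difference $\mathcal{H}^{\eps,\zeta}(\mathcal{I}'\rho)-\tilde{\mathcal{H}}^{\eps,\zeta}(\mathcal{I}'\rho)$ at $x_{0}$ to a multiple of $\rsone$, which is then killed by parity. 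You instead quote \zcref{eq:Lranges} to place this difference in $\langle\rsone,\rsiplollipopr\rangle$ at \emph{every} point, observe that both of these basis elements are even while $\sgn(\mathcal{I}'\rho)=-1$ by \zcref{eq:sgnofTexp}, and conclude directly. This is a real (if modest) simplification: as you note, the inductive hypothesis becomes unnecessary for this step, so the induction collapses to ``do $\mathcal{I}'$ first, then $\star$''; the price is leaning on \zcref{lem:Lranges}, which is itself proved by induction but is already available at this point in the paper. Your treatment of $x_{0}=L$ via $\sigma_{\mathrm{trans}}\circ\sigma_{\mathrm{refl}}$ and of the product case via the Leibniz decomposition of $A_{1}\star A_{2}-B_{1}\star B_{2}$ matches the paper's argument in substance, and the whole argument is sound.
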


\begin{proof}
By linearity, it suffices to check \zcref{eq:eq-bdry} for $\tau\in\check{\mathsf{T}}_{\ES}$.
We proceed inductively. For the base case, we note that if $\tau\in\{\rsnoise,\rspotential\}$,
then \zcref{eq:eq-bdry} follows immediately from \zcref{eq:HHtildebullet}.

Now we proceed to the inductive step. If $\tau\in\check{\mathsf{T}}_{\ES}\cap\mathsf{T}_{\star}$
and \zcref{eq:eq-bdry} holds for $\tau$, then 
\begin{equation}
\sgn(\tau)=-1\label{eq:tau-odd}
\end{equation}
by \zcref{eq:sgnofTexp}, and we have by \zcref{eq:HHtildeIprime,eq:KfisIpluspoly}
that
\begin{align*}
\mathcal{H}^{\eps,\zeta}(\mathcal{I}'\tau)-\tilde{\mathcal{H}}^{\eps,\zeta}(\mathcal{I}'\tau) & =\check{\varpi}_{\mathcal{I}'}\left[\partial\tilde{\mathcal{K}}^{\eps,\zeta}\tilde{\mathcal{H}}^{\eps,\zeta}\tau+\partial\mathcal{K}^{\eps,\zeta}\left(\mathcal{H}^{\eps,\zeta}\tau-\tilde{\mathcal{H}}^{\eps,\zeta}\tau\right)\right]=\check{\varpi}_{\mathcal{I}'}\partial\mathcal{I}\left(\mathcal{H}^{\eps,\zeta}\tau-\tilde{\mathcal{H}}^{\eps,\zeta}\tau\right)+f'\rsone
\end{align*}
for some continuous function $f'$. By the inductive hypothesis, this
means that
\begin{equation}
\left(\mathcal{H}^{\eps,\zeta}\tau-\tilde{\mathcal{H}}^{\eps,\zeta}\tau\right)_{t}(x_{0})=f'_{t}(x_{0})\rsone\qquad\text{for }x_{0}\in\{0,L\}.\label{eq:multipleof1}
\end{equation}
Using \zcref{eq:Hsymmetry-refl,eq:tau-odd} and recalling the action
\zcref{eq:sigma-action} of $\sigma_{\mathrm{refl}}$ on $\mathcal{T}$,
we see that
\begin{align*}
f'_{t}(0)\rsone & =\sigma_{\mathrm{refl}}\cdot\left(\mathcal{H}^{\eps,\zeta}\tau-\tilde{\mathcal{H}}^{\eps,\zeta}\tau\right)_{t}(\sigma_{\mathrm{refl}}(0))=-\left(\mathcal{H}^{\eps,\zeta}\tau-\tilde{\mathcal{H}}^{\eps,\zeta}\tau\right)_{t}(0)=-f'_{t}(0)\rsone,
\end{align*}
which means that $f'_{t}(0)=0$. The same argument with $0$ replaced
by $L$ and $\sigma_{\mathrm{refl}}$ replaced by $\sigma_{\mathrm{trans}}\sigma_{\mathrm{refl}}$
(since $\sigma_{\mathrm{trans}}\sigma_{\mathrm{refl}}(L)=L$ by \zcref{eq:sym_group_generators})
implies that $f_{t}'(L)=0$. Using these observations in \zcref{eq:multipleof1},
we conclude that \zcref{eq:eq-bdry} holds for $\mathcal{I}'\tau$.

Finally, we see that if $\tau_{1},\tau_{2}\in\check{\mathsf{T}}_{\ES}\cap\mathsf{T}_{\mathcal{I}'}$
and \zcref{eq:eq-bdry} holds for $\tau_{1},\tau_{2}$, then for $x_{0}\in\{0,L\}$
we have
\begin{align*}
(\mathcal{H}^{\eps,\zeta}(\tau_{1}\tau_{1}))_{t}(x_{0})\overset{\zcref{eq:HHtildeproduct}}{=}(\mathcal{H}^{\eps,\zeta}\tau_{1})_{t}(x_{0})(\mathcal{H}^{\eps,\zeta}\tau_{2})_{t}(x_{0})\ovset{\zcref{eq:eq-bdry}} & =(\tilde{\mathcal{H}}^{\eps,\zeta}\tau_{1})_{t}(x_{0})(\tilde{\mathcal{H}}^{\eps,\zeta}\tau_{2})_{t}(x_{0})\\
\ovset{\zcref{eq:HHtildeproduct}} & =(\tilde{\mathcal{H}}^{\eps,\zeta}(\tau_{1}\tau_{1}))_{t}(x_{0}),
\end{align*}
and so \zcref{eq:eq-bdry} holds for $\tau_{1}\tau_{2}$ as well.
This completes the proof by induction.
\end{proof}

\section{\label{sec:Analysis-of-the}Analysis of the boundary layer: preliminaries}

Now that we have introduced the framework of regularity structures
for studying the open KPZ equation, we are ready to describe the strategy
of the proof of \zcref{prop:boundaryterm} and make some preliminary
reductions. The first reduction is purely for notational convenience:
we study the forward KPZ solution $h$ rather than the backward one
$\hat{h}$.
\begin{prop}
\label{prop:boundaryterm-1-1}Fix $\uu,\vv\in\mathbb{R}$. For any
$\mathscr{F}_{0}$-measurable random variable $U$, we have
\begin{equation}
\left((W_{t})_{t\in[0,T]},U,\mathcal{B}^{\eps}_{\uu,\vv;0,T}(\varphi^{\eps}_{\uu,\vv})\right)\xrightarrow[\eps\downarrow0]{\mathrm{law}}\left((W_{t})_{t\in[0,T]},U,\Upsilon_{\uu,\vv;0,T}\right),\label{eq:boundaryterm-1-1}
\end{equation}
where 
\[
\Upsilon_{\uu,\vv;0,T}\sim\mathcal{N}\left(-\oh(\uu^{2}+\vv^{2})V_{\psi}T-\nicefrac{1}{6}T(\uu^{3}+\vv^{3}),(\uu^{2}+\vv^{2})V_{\psi}T\right)
\]
is independent of $\mathscr{F}_{T}$, with the constant $V_{\psi}$
defined in \zcref{eq:redcherry-var-limit}. In particular, we have
\begin{equation}
\mathbb{E}\left[\exp\left\{ \Upsilon_{\uu,\vv;0,T}\right\} \right]=\exp\left\{ -\frac{1}{6}(\uu+\vv)T\right\} .\label{eq:ExpUpsilon-1-1}
\end{equation}
\end{prop}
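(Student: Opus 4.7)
The plan is to deduce this forward statement as a direct corollary of the backward version \zcref{prop:boundaryterm}, using the time-reversal symmetry \zcref{prop:symmetry-timerev}. All the genuine analytic work is already encoded in the backward proposition; what remains is a book-keeping exercise.

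First I would invoke \zcref{prop:symmetry-timerev}, whose proof constructs (from $(\dif W_t)_{t\in[0,T]}\overset{\mathrm{law}}{=}(-\dif\hat W_{\hat t})_{\hat t\in[0,T]}$ and $h_0\overset{\mathrm{law}}{=}-\int_0^\cdot\hat u_0(y)\,\dif y$) a coupling that identifies the joint law of the forward tuple $\bigl((W_t)_{t\in[0,T]},\,h_0,\,\mathcal{B}^{\eps}_{\uu,\vv;0,T}(\cdot)\bigr)$ with that of the corresponding backward tuple for boundary parameters $(-\uu,-\vv)$. The third coordinate matches without a sign flip, since $\mathcal{B}^{\eps}$ is quadratic in $\partial_x h^{\eps}$ and hence invariant under $h^{\eps}\mapsto -h^{\eps}$. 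Combining this with the linearity of $\mathcal{B}^{\eps}$ in its test function and the identity $\varphi^{\eps}_{\uu,\vv}=-\varphi^{\eps}_{-\uu,-\vv}$ (immediate from \zcref{eq:varphiuvdef}), I obtain
\[
\bigl((W_t)_{t\in[0,T]},\,U,\,\mathcal{B}^{\eps}_{\uu,\vv;0,T}(\varphi^{\eps}_{\uu,\vv})\bigr)\overset{\mathrm{law}}{=}\bigl(\Phi\bigl((\hat W_{\hat t})_{\hat t\in[0,T]}\bigr),\,\hat U,\,-\hat{\mathcal{B}}^{\eps}_{-\uu,-\vv;0,T}(\varphi^{\eps}_{-\uu,-\vv})\bigr),
\]
where $\Phi$ denotes the deterministic time-reversal-and-sign-flip of the noise path and $\hat U$ is the $\hat{\mathscr{F}}_0$-measurable random variable corresponding to $U$ under the identification of initial data.

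Next I would apply \zcref{prop:boundaryterm} to the right side, with boundary parameters $(-\uu,-\vv)$ and the $\hat{\mathscr{F}}_0$-measurable variable $\hat U$, to conclude that the third coordinate converges in law to
\[
\hat\Upsilon_{-\uu,-\vv;0,T}\sim\mathcal{N}\bigl(\tfrac{T}{2}(\uu^2+\vv^2)V_{\psi}+\tfrac{T}{6}(\uu^3+\vv^3),\,T(\uu^2+\vv^2)V_{\psi}\bigr)
\]
independently of the first two. Pulling back through $\Phi$ and $U\leftrightarrow\hat U$ yields \zcref{eq:boundaryterm-1-1} with $\Upsilon_{\uu,\vv;0,T}\coloneqq-\hat\Upsilon_{-\uu,-\vv;0,T}$, whose mean and variance are exactly as stated (the $V_{\psi}$ term is even in $(\uu,\vv)$ and picks up one sign from the overall negation; the cubic term picks up two sign flips), and which is independent of $\mathscr{F}_T=\mathscr{F}_0\vee\sigma((W_t)_{t\in[0,T]})$ by a standard monotone-class argument (letting $U$ range over bounded $\mathscr{F}_0$-measurable functions). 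Finally, \zcref{eq:ExpUpsilon-1-1} is the elementary formula $\mathbb{E}\e^{X}=\e^{\mathbb{E}X+\tfrac{1}{2}\Var X}$ applied to $X=\Upsilon_{\uu,\vv;0,T}$, in which the $\pm\tfrac{T}{2}(\uu^2+\vv^2)V_{\psi}$ contributions cancel.

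The principal obstacle has already been resolved in \zcref{prop:boundaryterm}. The only point that warrants care here is that the coupling of \zcref{prop:symmetry-timerev} simultaneously aligns the noise paths, the initial data, and the boundary-flux functional on a single probability space; this is transparent from the construction recalled in the proof of that proposition, together with the sign-invariance of $\mathcal{B}^{\eps}$ noted above.
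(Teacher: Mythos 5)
Your argument is circular within the logical architecture of the paper. You propose to deduce \zcref{prop:boundaryterm-1-1} from \zcref{prop:boundaryterm} via the time-reversal symmetry \zcref{prop:symmetry-timerev}, on the premise that ``all the genuine analytic work is already encoded in the backward proposition.'' But the dependence runs the other way: \zcref{prop:boundaryterm} has no independent proof anywhere in the paper --- its only proof is the short derivation \emph{from} \zcref{prop:boundaryterm-1-1}, using exactly the symmetry $(\hat{W},\hat{h}_{0},\hat{\mathcal{B}}^{\eps}_{\uu,\vv;0,T}(\varphi^{\eps}_{\uu,\vv}))\overset{\mathrm{law}}{=}(W,h_{0},\mathcal{B}^{\eps}_{-\uu,-\vv;0,T}(\varphi^{\eps}_{\uu,\vv}))$ and the identity $\varphi^{\eps}_{\uu,\vv}=-\varphi^{\eps}_{-\uu,-\vv}$ that you invoke (your sign bookkeeping for the mean and variance is, for what it is worth, consistent with the paper's). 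So taking \zcref{prop:boundaryterm} as given begs the question: the forward statement is the one that must be proved from scratch.

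The actual proof is the substantive endpoint of Sections 5--8 and consists of three non-trivial ingredients, none of which appear in your proposal: (i) \zcref{lem:Bepsnoiserel-zeta}, which replaces $\mathcal{B}^{\eps}_{\uu,\vv;0,T}(\varphi^{\eps}_{\uu,\vv})$ (defined through the energy-solution limit) by the space-time--localized quantity $\tilde{\mathcal{B}}^{\eps}_{\uu,\vv;0,T}$ up to an error vanishing in probability; (ii) \zcref{prop:apply-reconstruction-boundary} combined with \zcref{thm:model-norm-bounded}, which uses the reconstruction theorem with basepoint on the boundary --- where the symmetry kills the Gubinelli derivatives --- to reduce $\tilde{\mathcal{B}}^{\eps}_{\uu,\vv;0,T}$ to the explicit finite sum $\mathcal{X}^{\eps}_{\uu,\vv;0,T}(Y^{\eps,\zeta}+\nicefrac{1}{12})$; and (iii) \zcref{prop:convergence-explicit}, which computes the limiting law of that finite sum (the variance coming solely from $\rsrenorm$ via a fourth-moment theorem, the mean from the remaining trees as in \zcref{tab:the-terms-1}). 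A correct blind proof must supply, or at least correctly cite and assemble, this chain; the time-reversal step you describe is the easy corollary in the opposite direction, not a proof of the statement at hand.
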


We will prove \zcref{prop:boundaryterm-1-1} at the end of \zcref{subsec:Outline-of-the}.
First we show how it is equivalent to \zcref{prop:boundaryterm}.
\begin{proof}[Proof of \zcref{prop:boundaryterm} given \zcref{prop:boundaryterm-1-1}]
First, by \zcref{prop:symmetry-timerev,eq:varphiuvdef,eq:boundaryterm-1-1},
we have that
\[
\left((\hat{W}_{t})_{t\in[0,T]},\hat{h}_{0},\hat{\mathcal{B}}^{\eps}_{\uu,\vv;0,T}(\varphi^{\eps}_{\uu,\vv})\right)\overset{\mathrm{law}}{=}\left((W_{t})_{t\in[0,T]},h_{0},\mathcal{B}^{\eps}_{-\uu,-\vv;0,T}(\varphi^{\eps}_{\uu,\vv})\right).
\]
Now by \zcref{eq:boundaryterm-1-1} and the fact that $\mathcal{B}^{\eps}_{-\uu,-\vv;0,T}(\varphi^{\eps}_{\uu,\vv})=-\mathcal{B}^{\eps}_{-\uu,-\vv;0,T}(\varphi^{\eps}_{-\uu,-\vv})$,
we know that the third component on the right side of the above display
converges in law to $-\Upsilon_{\uu,\vv;0,T}$ and moreover in the
limit is independent of $\mathscr{F}_{T}$. This completes the proof.
\end{proof}

With the above simple reduction, the proof of \zcref{thm:mainthm}
reduces to the proof of \zcref{prop:boundaryterm-1-1}, and in particular
to the analysis of the nonlinear term $\mathcal{B}^{\eps}_{\uu,\vv;0,T}(\varphi^{\eps}_{\uu,\vv})$.
As outlined in \zcref{subsec:Our-method}, the strategy is to expand
the KPZ solution locally near the boundary to justify its approximation
by finitely many terms, using the reconstruction theorem. The proof
is divided into a few steps. We first show that the remainder in the
expansion is negligible, which reduces the problem to the analysis
of finitely many terms. We then study these terms in detail.

\subsection{\label{subsec:Reduction-to-the}Reduction to the analysis of finitely
many terms: reconstruction theorem}

We would like to use the reconstruction theorem from the theory of
regularity structures to approximate the random variable $\mathcal{B}^{\eps}_{\uu,\vv;0,T}(\varphi^{\eps}_{\uu,\vv})$.
The reconstruction theorem is concerned with approximation of a function
by its ``Taylor expansion'' given by a modeled distribution, and
so it is ideally suited for studying ``local'' quantities. However,
the quantity $\mathcal{B}^{\eps}_{\uu,\vv;0,T}(\varphi^{\eps}_{\uu,\vv})$
should be thought of as an average of the KPZ nonlinearity on $[0,T]\times\supp\varphi^{\eps}_{\uu,\vv}$,
which is localized in space but not in time. Thus, in order to use
the reconstruction theorem, we first need to approximate this quantity
by local \emph{space-time }averages of the KPZ nonlinearity.

Recalling the properties \zcref[range]{eq:psiproperties,eq:psiepsdef}
of $\psi$ and its rescaling, we define for any $s<t$ the quantity
\begin{equation}
\Psi^{\eps}_{s,t;r}=\int^{t-\eps^{2}}_{s+\eps^{2}}\psi^{\eps^{2}}(q-r)\,\dif q,\label{eq:Psidef}
\end{equation}
so in particular we have
\begin{equation}
0\le\Psi^{\eps}_{s,t;r}\le1\text{ for all }r,\qquad\Psi^{\eps}_{s,t;r}=1\text{ for }r\in[s+2\eps^{2},t-2\eps^{2}],\qquad\text{and}\qquad\Psi^{\eps}_{s,t;r}=0\text{ for }r\not\in(s,t).\label{eq:Psiproperties}
\end{equation}
For a function $g$ on $[0,T]\times\mathbb{R}$, we define
\begin{equation}
\mathcal{X}^{\eps}_{\uu,\vv;s,t}(g)\coloneqq\int_{\mathbb{R}}\Psi^{\eps}_{s,t;r}\langle\varphi^{\eps}_{\uu,\vv},g_{r}\rangle\,\dif r\overset{\zcref{eq:Psidef}}{=}\int^{t-\eps^{2}}_{s+\eps^{2}}\left(\int_{\mathbb{R}}\psi^{\eps^{2}}(r-q)\langle\varphi^{\eps}_{\uu,\vv},g_{r}\rangle\,\dif r\right)\,\dif q,\label{eq:Xepsdef}
\end{equation}
so one should view $\mathcal{X}^{\eps}_{\uu,\vv;s,t}$ as a space-time
distribution that acts linearly on $g$. The inner integral $\int_{\mathbb{R}}\psi^{\eps^{2}}(r-q)\langle\varphi^{\eps}_{\uu,\vv},g_{r}\rangle\,\dif r$
should be viewed as a local \emph{space-time} average of the function
$g$ near the points $(q,0)$ and $(q,L)$, since the inner product
$\langle\cdot,\cdot\rangle$ represents a spatial integral while the
integral over $r$ is a time integral. Thus, one should view $\mathcal{X}^{\eps}_{\uu,\vv;s,t}$
as an approximation of $\int^{t}_{s}\langle\varphi^{\eps}_{\uu,\vv},g_{r}\rangle\,\dif r$,
which we write as an integral over $q$ of local space-time averages.

We now introduce an approximation of the nonlinearity $\mathcal{B}^{\eps}_{\uu,\vv;s,t}(\varphi^{\eps}_{\uu,\vv})$.
Recall first that the function $h^{\eps,\zeta}_{\uu,\vv}$ solves
\zcref{eq:hepszeta}, with the noise also mollified in the spatial
variable and the renormalized nonlinear term given by $\oh((u^{\eps,\zeta}_{\uu,\vv})^{2}-\Sh^{\zeta}_{2L}(0)+\oh\Sh^{\zeta/2}_{L})$.
On the other hand, the nonlinearity $\mathcal{B}^{\eps}_{\uu,\vv;s,t}$
is defined through the energy solution, which differs from the Hopf--Cole
solution by a factor of $\nicefrac{1}{24}$. Taking these differences
into consideration, we define
\begin{align}
\tilde{\mathcal{B}}^{\eps,\zeta}_{\uu,\vv;s,t} & \coloneqq\mathcal{X}^{\eps}_{\uu,\vv;s,t}\left((u^{\eps,\zeta}_{\uu,\vv})^{2}-\Sh^{\zeta}_{2L}(0)+\frac{1}{2}\Sh^{\zeta/2}_{\mathrm{L}}+\frac{1}{12}\right)\nonumber \\
\ovset{\zcref{eq:Xepsdef}} & =\int_{\mathbb{R}}\Psi^{\eps}_{s,t;r}\left\langle (u^{\eps,\zeta}_{\uu,\vv;r})^{2}-\Sh^{\zeta}_{2L}(0)+\frac{1}{2}\Sh^{\zeta/2}_{\mathrm{L}}+\frac{1}{12},\varphi^{\eps}_{\uu,\vv}\right\rangle \,\dif r.\label{eq:Btildeepszetadef}
\end{align}
We recall from \zcref{eq:Psiproperties} that $\Psi^{\eps}_{s,t;r}$
is an approximation of the indicator $\mathbf{1}_{[s,t]}(r)$, so
the following lemma seems natural. The proof, however, is somewhat
technical, so we defer it to \zcref{subsec:proof-bepsnoiserel-zeta}.
\begin{lem}
\label{lem:Bepsnoiserel-zeta}For any $s<t$ and each $\eps>0$, the
limit
\begin{equation}
\tilde{\mathcal{B}}^{\eps}_{\uu,\vv;s,t}\coloneqq\lim_{\zeta\to0}\tilde{\mathcal{B}}^{\eps,\zeta}_{\uu,\vv;s,t}\qquad\text{in probability}\label{eq:Btildeepszetalimit}
\end{equation}
exists. Moreover, we have
\begin{equation}
\lim_{\eps\to0}\left|\tilde{\mathcal{B}}^{\eps}_{\uu,\vv;s,t}-\mathcal{B}^{\eps}_{\uu,\vv;s,t}(\varphi^{\eps}_{\uu,\vv})\right|=0\qquad\text{in probability.}\label{eq:Btildegoodapprox}
\end{equation}
\end{lem}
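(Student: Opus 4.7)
The plan is to sandwich both $\tilde{\mathcal{B}}^{\eps,\zeta}_{\uu,\vv;s,t}$ and $\mathcal{B}^{\eps}_{\uu,\vv;s,t}(\varphi^{\eps}_{\uu,\vv})$ between a single family of approximations indexed by a discrete scale $\kappa$, and then exchange the order of the limits $\kappa\to 0$ and $\zeta\to 0$. As a preliminary reduction, the Cameron--Martin identity \zcref{eq:apply-CM} lets us pass, at fixed $\eps > 0$, from $h^{\eps,\zeta}_{\uu,\vv}$ to the homogeneous process $h^{\zeta}_{0,0}$ (with $\varphi^{\eps}_{\uu,\vv}$ still appearing inside $\mathcal{X}^{\eps}_{\uu,\vv;s,t}$), at the cost of a multiplicative Radon--Nikodym factor $\mathcal{Q}^{\eps}_{\uu,\vv;0,T}$ whose positive and negative moments are bounded uniformly in $\zeta\in(0,\eps)$. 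I therefore restrict attention to the $\uu=\vv=0$ case and only recover general $(\uu,\vv)$ at the end.

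For the first claim (existence of the $\zeta\to 0$ limit), I introduce the common approximant
\begin{equation*}
\mathcal{B}^{\zeta,[\kappa]}_{s,t}(\varphi)\coloneqq\int_{\mathbb{R}}\Psi^{\eps}_{s,t;r}\Big\langle\varphi,(\nabla_{\kappa}h^{\zeta}_{0,0;r})^{2}-\tfrac{1}{\kappa}\Big\rangle\,\dif r.
\end{equation*}
For fixed $\zeta > 0$, $h^{\zeta}_{0,0}$ is spatially smooth, so $\nabla_{\kappa}h^{\zeta}_{0,0;r}\to\partial_{x}h^{\zeta}_{0,0;r}=u^{\zeta}_{0,0;r}$ as $\kappa\to 0$. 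A direct second-moment computation using the stationarity of $u^{\zeta}_{0,0}$ and the covariance \zcref{eq:dWzetacov} shows $\mathbb{E}[(\nabla_{\kappa}h^{\zeta}_{0,0;r}(x))^{2}]-\tfrac{1}{\kappa}\to -\Sh^{\zeta}_{2L}(0)+\tfrac{1}{2}\Sh^{\zeta/2}_{L}(x)+\tfrac{1}{12}$ as $\kappa\to 0$, uniformly in $(r,x)$, where the $\tfrac{1}{12}$ arises from the Itô discrepancy between the Cole--Hopf solution $h^{\zeta}_{0,0}$ and the energy solution $h^{\zeta}_{0,0}+t/24$ of the mollified analogue of \zcref{prop:The-process-}. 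Consequently $\lim_{\kappa\to 0}\mathcal{B}^{\zeta,[\kappa]}_{s,t}(\varphi^{\eps}_{\uu,\vv})=\tilde{\mathcal{B}}^{\eps,\zeta}_{0,0;s,t}$ in $L^{2}$. On the other hand, the Cauchy-in-$\kappa$ bounds underlying \zcref{eq:Bstdef} rely only on the stationarity of the one-time marginal of $u^{\zeta}_{0,0}$ and on the martingale identity provided by the spatially mollified analogue of \zcref{eq:noiserel}, both of which remain uniform in $\zeta\in(0,1]$. Exchanging the limits $\kappa\to 0$ and $\zeta\to 0$ then produces the limit $\tilde{\mathcal{B}}^{\eps}_{0,0;s,t}$, and Cameron--Martin transfers it to $\tilde{\mathcal{B}}^{\eps}_{\uu,\vv;s,t}$.

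For the second claim \zcref{eq:Btildegoodapprox}, writing $\mathcal{B}^{[\kappa]}_{s,t}(\varphi^{\eps}_{\uu,\vv})$ with the sharp cutoff $\mathbf{1}_{[s,t]}$ in place of $\Psi^{\eps}_{s,t;\cdot}$ in the $\zeta=0$ version of $\mathcal{B}^{\zeta,[\kappa]}_{s,t}$, the difference $\mathcal{B}^{[\kappa]}_{s,t}(\varphi^{\eps}_{\uu,\vv})-\tilde{\mathcal{B}}^{\eps,\zeta}_{0,0;s,t}$ splits, after sending $\zeta\to 0$ and $\kappa\to 0$ using the joint convergence established above, into (a) a time-boundary layer of width $O(\eps^{2})$ near $r\in\{s,t\}$, and (b) a contribution that vanishes once the renormalization constants $\Sh^{\zeta}_{2L}(0)$, $\tfrac{1}{2}\Sh^{\zeta/2}_{L}$, and $\tfrac{1}{12}$ are properly absorbed. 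Part (a) vanishes in probability as $\eps\downarrow 0$ thanks to $L^{2}$-continuity of $r\mapsto\mathcal{B}_{0,r}(\varphi^{\eps}_{\uu,\vv})$, which follows from the quadratic-variation structure of Proposition~\ref{prop:The-process-}; a final Cameron--Martin transfer gives the full statement.

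The main obstacle is establishing the $\zeta$-uniformity of the Cauchy-in-$\kappa$ estimates for $\mathcal{B}^{\zeta,[\kappa]}_{s,t}$. The combinatorial second-moment proof of \cite{goncalves:perkowski:simon:2020:derivation} is written for the WASEP-derived energy solution rather than its mollified SPDE counterpart, so one must verify that the same quadratic-form bounds transfer to $h^{\zeta}_{0,0}$ with constants independent of $\zeta$. This should follow by a parallel Itô-trick computation based on the mollified noise covariance \zcref{eq:dWzetacov}, but it requires care in tracking the boundary contributions from $\Sh^{\zeta/2}_{L}$ to ensure that they do not degrade as $\zeta\to 0$.
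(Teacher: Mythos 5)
There is a genuine gap, and it sits at the center of your argument. Your single approximant $\mathcal{B}^{\zeta,[\kappa]}_{s,t}$ cannot converge to $\tilde{\mathcal{B}}^{\eps,\zeta}_{\uu,\vv;s,t}$ as $\kappa\to0$ at fixed $\zeta>0$: the field $h^{\zeta}_{0,0;r}$ is spatially smooth at scale $\zeta$, so $(\nabla_{\kappa}h^{\zeta}_{0,0;r})^{2}$ converges to the bounded function $(u^{\zeta}_{0,0;r})^{2}$ while the counterterm $\kappa^{-1}$ diverges; since $\langle\varphi^{\eps}_{\uu,\vv},1\rangle=-(\uu+\vv)/2\neq0$ in general (see \zcref{eq:phiint}), your approximant tends to $-\infty$, not to the quantity in \zcref{eq:Btildeepszetadef}, whose counterterm is $\Sh^{\zeta}_{2L}(0)-\tfrac{1}{2}\Sh^{\zeta/2}_{L}$. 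For the same reason the claimed finite limit of $\mathbb{E}[(\nabla_{\kappa}h^{\zeta}_{0,0;r}(x))^{2}]-\kappa^{-1}$ cannot hold. The two renormalizations are only compatible in a joint limit with $\kappa$ and $\zeta$ suitably coupled, and the ingredient that would make any such interchange work --- a Cauchy-in-$\kappa$ estimate uniform in $\zeta$ --- is exactly what you defer to ``a parallel Itô-trick computation.'' That is not a routine verification: the bounds behind \zcref{eq:Bstdef} come from the second-order Boltzmann--Gibbs machinery for the \emph{stationary} energy solution, whereas $u^{\zeta}_{0,0;r}$ is not stationary with an explicit Gaussian marginal (the mollified white noise is not invariant for the mollified dynamics, and the paper never claims it is). There is also a smaller mismatch in your Cameron--Martin reduction: shifting the noise by $\varphi^{\eps}_{\uu,\vv}$ produces the potential $\rho^{\zeta}*\varphi^{\eps}_{\uu,\vv}$ in the mollified equation \zcref{eq:Zepszeta-eqn}, not $\varphi^{\eps}_{\uu,\vv}$ itself.

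The paper avoids all of this with one observation you have not exploited: $\Psi^{\eps}_{s,t;\cdot}$ vanishes at $r=s$ and $r=t$. Substituting the mollified equation \zcref{eq:hepszeta-eqn} for the nonlinearity inside \zcref{eq:Btildeepszetadef} and integrating by parts in time therefore produces no boundary terms and rewrites $\tilde{\mathcal{B}}^{\eps,\zeta}_{\uu,\vv;s,t}$ entirely in terms of $h^{\eps,\zeta}_{\uu,\vv}$, $W^{\zeta}$, and the smooth data $\varphi^{\eps}_{\uu,\vv}$, $\Delta\varphi^{\eps}_{\uu,\vv}$, $\dif\Psi^{\eps}$; no squared gradients and no counterterms survive. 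The limit \zcref{eq:Btildeepszetalimit} is then immediate from \zcref{lem:convofzeta}, and \zcref{eq:Btildegoodapprox} follows by putting $\mathcal{B}^{\eps}_{\uu,\vv;s,t}(\varphi^{\eps}_{\uu,\vv})$ into the analogous integral form via \zcref{eq:Bepsnoiserel}, so that the difference reduces to time-boundary layers of width $O(\eps^{2})$, controlled by the Hölder bounds of \zcref{p.holder} and an elementary variance estimate on the noise term. If you wish to keep your limit-interchange architecture you must actually prove a $\zeta$-uniform mollified Itô trick with matched counterterms; otherwise the integration-by-parts route is the one to take.
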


With the above lemma in hand, the analysis of the nonlinearity $\mathcal{B}^{\eps}_{\uu,\vv;0,T}(\varphi^{\eps}_{\uu,\vv})$
as $\eps\to0$ reduces to the study of $\tilde{\mathcal{B}}^{\eps}_{\uu,\vv;0,T}$
in the same limit. Using \zcref{eq:Btildeepszetadef,eq:Btildeepszetalimit},
the latter can be approximated by an integral of a local space-time
average of the function $(u^{\eps,\zeta}_{\uu,\vv})^{2}-\Sh^{\zeta}_{2L}(0)+\oh\Sh^{\zeta}_{L}+\nicefrac{1}{12}$.
This allows us to apply the reconstruction theorem.

Let $H^{\eps,\zeta}$ be the solution to the lifted KPZ equation
\zcref{eq:fixedpointeqn} on $[0,T]$. We will soon apply the reconstruction
theorem to show that the remainder in the local expansion is small.
But first, we show that the model with basepoint on the boundary can
be written completely explicitly, and in fact does not depend on the
basepoint as long as it is on the boundary. This is the key proposition
where we use the symmetry of the problem at hand.
\begin{prop}
\label{prop:expand-nonlinearity-boundary}Fix $\eps,\zeta>0$. For
any $q,r\in\mathbb{R}$, any $x_{0}\in L\mathbb{Z}$, and any $x\in\mathbb{R}$,
we have
\begin{equation}
\begin{aligned}\hat{\Pi}^{\eps,\zeta}_{(q,x_{0})}\left(((\partial H)^{2})_{q}(x_{0})\right)_{r}(x)=Y^{\eps,\zeta}_{r}(x) & \coloneqq\rsrenorm[rsK]^{\eps,\zeta}_{r}(x)+2\rscherryrb[rsK]^{\eps,\zeta}_{r}(x)+\rscherryrenormr[rsK]^{\eps,\zeta}_{r}(x)+2\rselkrbr[rsK]^{\eps,\zeta}_{r}(x)\\
 & \qquad+\rselkrenormrr[rsK]^{\eps,\zeta}_{r}(x)+\rscherrybb[rsK]^{\eps,\zeta}_{r}(x)+\rscherryrenormb[rsK]^{\eps,\zeta}_{r}(x)+\frac{1}{4}\rscherryrenormrenorm[rsK]^{\eps,\zeta}_{r}(x).
\end{aligned}
\label{eq:expand-nonlinearity-boundary}
\end{equation}
\end{prop}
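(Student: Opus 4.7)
The plan is a short direct computation carried out in three steps; the only nontrivial input is a parity argument at the boundary. First, I would specialize the expansion \zcref{eq:nonlinearity-expansion} of $(\partial H)^2$ to the spacetime point $(q,x_0)$. Since $h'_q(x_0)=0$ for every $x_0\in L\mathbb{Z}$ by \zcref{eq:h'0is0}, every term on the right-hand side of \zcref{eq:nonlinearity-expansion} carrying a factor of $h'$ or $(h')^2$ drops out, leaving the eight-term element
\begin{equation*}
((\partial H)^{2})_{q}(x_{0}) \;=\; \rscherryrr + 2\rscherryrb + \rselkrrr + 2\rselkrbr + \rsmooserrrr + \rscherrybb + \rselkrrb + \tfrac{1}{4}\rscandelabrarrrr \;\in\;\check{\mathcal{T}}.
\end{equation*}

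Second, I would apply $\hat{\Pi}^{\eps,\zeta}_{(q,x_{0})}$ termwise using \zcref{tab:model-defns}. Six of the eight basis elements give direct (non-recentered) contributions: together they produce
\begin{equation*}
\rsrenorm[rsK]^{\eps,\zeta}_{r}(x) + 2\rscherryrb[rsK]^{\eps,\zeta}_{r}(x) + \rscherryrenormr[rsK]^{\eps,\zeta}_{r}(x) + \rscherrybb[rsK]^{\eps,\zeta}_{r}(x) + \rscherryrenormb[rsK]^{\eps,\zeta}_{r}(x) + \tfrac{1}{4}\rscherryrenormrenorm[rsK]^{\eps,\zeta}_{r}(x) - \tfrac{1}{4}C^{(2)}_{\zeta},
\end{equation*}
while the two remaining ``recentered'' terms give
\begin{align*}
\hat{\Pi}^{\eps,\zeta}_{(q,x_{0})}(2\rselkrbr)_{r}(x) &= 2\bigl(\rsipcherryrb[rsK]^{\eps,\zeta}_{r}(x) - \rsipcherryrb[rsK]^{\eps,\zeta}_{q}(x_{0})\bigr)\rslollipopr[rsK]^{\eps,\zeta}_{r}(x), \\
\hat{\Pi}^{\eps,\zeta}_{(q,x_{0})}(\rsmooserrrr)_{r}(x) &= \bigl(\rsipcherryrenormr[rsK]^{\eps,\zeta}_{r}(x) - \rsipcherryrenormr[rsK]^{\eps,\zeta}_{q}(x_{0})\bigr)\rslollipopr[rsK]^{\eps,\zeta}_{r}(x) + \tfrac{1}{4}C^{(2)}_{\zeta}.
\end{align*}

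The only real content of the proposition — and the one place where the assumption $x_{0}\in L\mathbb{Z}$ is used — is that the two recentering subtractions vanish. This is the symmetry input: the recursion \zcref{eq:sgnrecursion} gives $\sgn(\rsipcherryrb) = -1$ directly (consistent with \zcref{tab:RS-table}), and since $\rsipcherryrenormr$ is obtained from $\rsipelkrrr$ by replacing the subtree $\rscherryrr$ (parity $+1$) by $\rsrenorm$ (also parity $+1$), one has $\sgn(\rsipcherryrenormr) = \sgn(\rsipelkrrr) = -1$ as well. By \zcref{lem:reconstruct-symmetry} — concretely \zcref{eq:varPireflect,eq:varPitranslate} — the two spatial functions $x\mapsto\rsipcherryrb[rsK]^{\eps,\zeta}_{q}(x)$ and $x\mapsto\rsipcherryrenormr[rsK]^{\eps,\zeta}_{q}(x)$ are therefore odd and $2L$-periodic, and hence vanish at every $x_{0}\in L\mathbb{Z}$. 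Once these subtraction terms are removed, the two $\pm\tfrac{1}{4}C^{(2)}_{\zeta}$ contributions (from $\rsmooserrrr$ and from $\tfrac{1}{4}\rscandelabrarrrr$) cancel exactly, the surviving products $2\rsipcherryrb[rsK]\,\rslollipopr[rsK]$ and $\rsipcherryrenormr[rsK]\,\rslollipopr[rsK]$ collapse to $2\rselkrbr[rsK]^{\eps,\zeta}_{r}(x)$ and $\rselkrenormrr[rsK]^{\eps,\zeta}_{r}(x)$ respectively by the multiplicativity of the canonical lift, and summing everything yields exactly $Y^{\eps,\zeta}_{r}(x)$. There is no substantive obstacle; the proposition is essentially a bookkeeping verification confirming that the renormalization constants $C^{(1)}_{\zeta}$ and $C^{(2)}_{\zeta}$ fixed in \zcref{subsec:Renormalized-canonical-lifts} were calibrated precisely so that this clean identity holds when the basepoint sits on the boundary.
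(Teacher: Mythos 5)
Your proposal is correct and follows essentially the same route as the paper's proof: specialize \zcref{eq:nonlinearity-expansion} at $(q,x_0)$ using \zcref{eq:h'0is0}, apply the model termwise via \zcref{tab:model-defns}, kill the two recentering subtractions by the odd $2L$-periodic symmetry of $\rsipcherryrb[rsK]^{\eps,\zeta}_{q}$ and $\rsipcherryrenormr[rsK]^{\eps,\zeta}_{q}$ from \zcref{lem:reconstruct-symmetry}, and cancel the two $\pm\tfrac14 C^{(2)}_\zeta$ contributions. The only additions beyond the paper's argument are the explicit parity bookkeeping and the remark on multiplicativity of the lift, both of which are correct.
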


\begin{proof}
From \zcref{eq:nonlinearity-expansion,eq:h'0is0} we can write, for
any $q\in(s+\eps^{2},t-\eps^{2})$, any $r\in(s,t)$, and any $x\in\mathbb{R}$,
that
\begin{align}
\hat{\Pi}^{\eps,\zeta}_{(q,x_{0})} & \left(((\partial H)^{2})_{q}(x_{0})\right)_{r}(x)=\hat{\Pi}^{\eps,\zeta}_{(q,x_{0})}\left(\rscherryrr+2\rscherryrb+\rselkrrr+2\rselkrbr+\rsmooserrrr+\rscherrybb+\rselkrrb+\frac{1}{4}\rscandelabrarrrr\right)_{r}(x)\nonumber \\
 & =\rsrenorm[rsK]^{\eps,\zeta}_{r}(x)+2\rscherryrb[rsK]^{\eps,\zeta}_{r}(x)+\rscherryrenormr[rsK]^{\eps,\zeta}_{r}(x)+2\rselkrbr[rsK]^{\eps,\zeta}_{r}(x)-2\rsipcherryrb[rsK]^{\eps,\zeta}_{q}(x_{0})\rslollipopr[rsK]^{\eps,\zeta}_{r}(x)\nonumber \\
 & \qquad+\rselkrenormrr[rsK]^{\eps,\zeta}_{r}(x)-\rsipcherryrenormr[rsK]^{\eps,\zeta}_{q}(x_{0})+\frac{1}{4}C^{(2)}_{\zeta}+\rscherrybb[rsK]^{\eps,\zeta}_{r}(x)+\rscherryrenormb[rsK]^{\eps,\zeta}_{r}(x)+\frac{1}{4}\rscherryrenormrenorm[rsK]^{\eps,\zeta}_{r}(x)-\frac{1}{4}C^{(2)}_{\zeta},\label{eq:expansion-model}
\end{align}
where for the second ``$=$'' we used \zcref{tab:model-defns}.
Now we observe that $\rsipcherryrb[rsK]^{\eps,\zeta}_{q}$ and $\rsipcherryrenormr[rsK]^{\eps,\zeta}_{q}$
are odd, $2L$-periodic functions by \zcref{lem:reconstruct-symmetry},
so in fact they are $0$ when evaluated at $x_{0}\in L\mathbb{Z}$,
as they are in \zcref{eq:expansion-model}. Thus these terms disappear
from the above sum and we obtain \zcref{eq:expand-nonlinearity-boundary}.
\end{proof}

The $Y^{\eps,\zeta}$ defined in \zcref{eq:expand-nonlinearity-boundary}
is the local expansion of the KPZ nonlinearity at the boundary. One
should think of it as the ``useful'' terms from the formal expansion
\zcref{e.formalex} discussed in \zcref{subsec:Our-method}. The main
reason we rely on the theory of regularity structures is the following
key proposition, which is a completely deterministic estimate and
shows that, if the model is bounded uniformly in $\eps,\zeta$, then
the remainder in the local expansion is small as $\eps\to0$. As a
result, the analysis of $\tilde{\mathcal{B}}^{\eps}_{\uu,\vv;s,t}$
is reduced to the study of the finitely many terms on the right side
of \zcref{eq:expand-nonlinearity-boundary}. Recall that we have
fixed parameters $\kappa$ (which is very small) and $\chi$ (which
is just less than $\oh$) in \zcref{eq:kappaprop,eq:chirange}, respectively. 
\begin{prop}
\label{prop:apply-reconstruction-boundary}For any $J<\infty$, we
have a constant $C_{J}<\infty$ such that the following holds. Let
$\eps,\zeta>0$ and let $H^{\eps,\zeta}$ be solve \zcref{eq:fixedpointeqn}
on $[0,T]$. Then if $\|(\hat{\Pi}^{\eps,\zeta},\hat{\Gamma}^{\eps,\zeta})\|_{T}\le J$,
then 
\begin{equation}
\left|\tilde{\mathcal{B}}^{\eps,\zeta}_{\uu,\vv;0,T}-\mathcal{X}^{\eps}_{\uu,\vv;0,T}(Y^{\eps,\zeta}+\nicefrac{1}{12})\right|\le C_{J}\eps^{5\kappa}.\label{eq:apply-boundary-reconstruction-goal}
\end{equation}
\end{prop}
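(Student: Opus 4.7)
The strategy is to recognize the quantity
\(\tilde{\mathcal{B}}^{\eps,\zeta}_{\uu,\vv;0,T}-\mathcal{X}^{\eps}_{\uu,\vv;0,T}(Y^{\eps,\zeta}+\nicefrac{1}{12})\)
as a reconstruction error for the modeled distribution \((\partial H)^{2}\in\mathcal{D}^{\eps,\zeta}_{\mathrm{F}}=\mathcal{D}^{9\kappa,-1-2\kappa;\eps,\zeta}\), localized near the boundary basepoints \(x_{0}\in\{0,L\}\), and then apply the reconstruction theorem for singular modeled distributions. First, I would combine \zcref{prop:RSrecovers} with \zcref{prop:C1zeta} to identify \(u^{\eps,\zeta}_{\uu,\vv;r}(x)^{2}-\Sh^{\zeta}_{2L}(0)+\oh\Sh^{\zeta/2}_{L}(x)=(\hat{\mathcal{R}}^{\eps,\zeta}(\partial H)^{2})_{r}(x)\), so that by \zcref{eq:Btildeepszetadef} the difference in \zcref{eq:apply-boundary-reconstruction-goal} reduces to
\[
\mathcal{X}^{\eps}_{\uu,\vv;0,T}\bigl(\hat{\mathcal{R}}^{\eps,\zeta}(\partial H)^{2}-Y^{\eps,\zeta}\bigr).
\]
Then I would invoke \zcref{prop:expand-nonlinearity-boundary} to recognize \(Y^{\eps,\zeta}_{r}(x)=\hat{\Pi}^{\eps,\zeta}_{(q,x_{0})}((\partial H)^{2}_{q}(x_{0}))_{r}(x)\) for every \(q\in\mathbb{R}\) and every \(x_{0}\in L\mathbb{Z}\). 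The key point is that \(Y^{\eps,\zeta}\) does not depend on the choice of basepoint \((q,x_{0})\): this is the crucial use of the boundary symmetry, since the terms that would a priori depend on \(q\) and \(x_{0}\) (those involving the coefficient \(h'_{q}(x_{0})\)) are killed by the odd symmetry \eqref{eq:h'0is0}.

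The next step is to localize the spatial averaging. Splitting \(\varphi^{\eps}_{\uu,\vv}\) into the sum \(\varphi^{\eps,0}_{\uu,\vv}+\varphi^{\eps,L}_{\uu,\vv}\) of pieces supported in \(\eps\)-neighborhoods of \(\{0\}+2L\mathbb{Z}\) and \(\{L\}+2L\mathbb{Z}\) respectively (which is possible by \zcref{eq:psiproperties} and \zcref{eq:varphiuvdef}, and compatible with the periodic and even extensions via \zcref{lem:reconstruct-symmetry}), one sees that for each fixed \(q\in[\eps^{2},T-\eps^{2}]\) and each \(x_{0}\in\{0,L\}\), the function
\[
\phi^{\eps}_{(q,x_{0})}(r,x)\coloneqq\psi^{\eps^{2}}(r-q)\,\varphi^{\eps,x_{0}}_{\uu,\vv}(x)
\]
is a test function centered at \((q,x_{0})\), of parabolic scale \(\eps\) under the scaling \(\mathfrak{s}=(2,1)\), with uniformly bounded \(L^{1}\)-norm. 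Hence the quantity we wish to estimate is an integral over \(q\in[\eps^{2},T-\eps^{2}]\) and a sum over \(x_{0}\in\{0,L\}\) of reconstruction errors
\[
\left\langle \phi^{\eps}_{(q,x_{0})},\,\hat{\mathcal{R}}^{\eps,\zeta}(\partial H)^{2}-\hat{\Pi}^{\eps,\zeta}_{(q,x_{0})}((\partial H)^{2}_{q}(x_{0}))\right\rangle .
\]

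The next step is to apply the reconstruction theorem for singular modeled distributions (\cites[Prop.~6.9 and Prop.~6.16]{hairer:2014:theory}) to the modeled distribution \((\partial H)^{2}\), whose norm in \(\mathcal{D}^{\eps,\zeta}_{\mathrm{F}}\) is bounded by a polynomial in \(\|(\hat{\Pi}^{\eps,\zeta},\hat{\Gamma}^{\eps,\zeta})\|_{T}\le J\) by the continuity \zcref{eq:squaring-cts} applied to the solution \(H\) furnished by \zcref{thm:fixedpointproblem}/\zcref{cor:Tisinfinity}. Since \(\gamma=9\kappa>0\) and the constraint \(q\ge\eps^{2}\) guarantees the time support \([q-\eps^{2},q+\eps^{2}]\) of \(\phi^{\eps}_{(q,x_{0})}\) lies in \((0,\infty)\), away from the singular hyperplane \(\{t=0\}\), the reconstruction estimate gives a bound of the form
\[
\left|\bigl\langle \phi^{\eps}_{(q,x_{0})},\,\hat{\mathcal{R}}^{\eps,\zeta}(\partial H)^{2}-\hat{\Pi}^{\eps,\zeta}_{(q,x_{0})}((\partial H)^{2}_{q}(x_{0}))\bigr\rangle \right|\le C_{J}\,\eps^{9\kappa}\,q^{(\chi-\gamma)/2}
\]
with \(\chi-\gamma=-1-11\kappa\); since \(\kappa<\nicefrac{1}{100}\), the exponent \((\chi-\gamma)/2=-\oh-\tfrac{11\kappa}{2}\) is strictly larger than \(-1\), so \(\int_{\eps^{2}}^{T}q^{(\chi-\gamma)/2}\,dq\) is bounded uniformly in \(\eps\). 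Summing over \(x_{0}\in\{0,L\}\) and absorbing the resulting \(O(1)\) factor into \(C_{J}\) yields the bound \(C_{J}\eps^{9\kappa}\le C_{J}\eps^{5\kappa}\), which is the desired \zcref{eq:apply-boundary-reconstruction-goal}.

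The main obstacle is the interaction between the test function scale \(\eps\) (for which the reconstruction error would naively be \(O(\eps^{9\kappa})\)) and the time singularity of \((\partial H)^{2}\) near \(t=0\), which forces one to work with the refined singular version of the reconstruction theorem and to verify that the resulting weight \(q^{(\chi-\gamma)/2}\) remains integrable down to \(q=\eps^{2}\); this works precisely because \(\kappa\) is small. A secondary subtlety, already handled in the formulation, is that the basepoint-independence of \(Y^{\eps,\zeta}\) on the boundary lattice \(L\mathbb{Z}\) is nontrivial and rests on both the reflection/translation symmetry of the model (\zcref{lem:reconstruct-symmetry}) and the symmetry of the solution \(H\) (\zcref{thm:fixedpointproblem}); without this input one would be left with \(q\)-dependent terms that do not fit into a clean reconstruction framework.
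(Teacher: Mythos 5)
Your proposal is correct and follows essentially the same route as the paper: identify the renormalized nonlinearity with $\hat{\mathcal{R}}^{\eps,\zeta}(\partial H)^{2}$ via \zcref{prop:RSrecovers}, use the boundary symmetry through \zcref{prop:expand-nonlinearity-boundary} to write $Y^{\eps,\zeta}$ as the basepoint-independent model term $\hat{\Pi}^{\eps,\zeta}_{(q,x_{0})}(((\partial H)^{2})_{q}(x_{0}))$, view $\psi^{\eps^{2}}(\cdot-q)\psi^{\eps}(\cdot-x_{0})$ as a scale-$\eps$ test function centered at $(q,x_{0})$, apply the localized reconstruction theorem for singular modeled distributions, and integrate over $q$. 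The only divergence is bookkeeping of the time weight near $t=0$: you take the weight $q^{(\chi-\gamma)/2}$, which is integrable and yields $C_{J}\eps^{9\kappa}$, whereas the paper uses the weight $q^{-1-2\kappa}$ from \cites[Prop.~7.2]{hairer:2014:theory} and loses a factor $\eps^{-4\kappa}$ from the integral down to $q=\eps^{2}$, arriving at $\eps^{5\kappa}$; either way the claimed bound \zcref{eq:apply-boundary-reconstruction-goal} follows.
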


\begin{proof}
Recalling \zcref{eq:expand-nonlinearity-boundary}, we see that
\begin{equation}
\tilde{\mathcal{B}}^{\eps,\zeta}_{\uu,\vv;0,T}-\mathcal{X}^{\eps}_{\uu,\vv;0,T}(Y^{\eps,\zeta}+\nicefrac{1}{12})=\mathcal{X}^{\eps}_{\uu,\vv;0,T}\left((u^{\eps,\zeta}_{\uu,\vv})^{2}-\Sh^{\zeta}_{2L}(0)+\frac{1}{2}\Sh^{\zeta/2}_{L}-Y^{\eps,\zeta}\right).\label{eq:cancelY}
\end{equation}
On the other hand, we have by \zcref{eq:nonlinearityisRnonlinearity,eq:Echerryrr}
that 
\begin{equation}
\left(u^{\eps,\zeta}_{\uu,\vv;q}(x)\right)^{2}-\Sh^{\zeta}_{2L}(0)+\oh\Sh^{\zeta/2}_{L}(x)=\hat{\mathcal{R}}^{\eps,\zeta}((\partial H)^{2})_{q}(x).\label{eq:beginning-part}
\end{equation}
Using \zcref{eq:beginning-part} in \zcref{eq:cancelY} and then expanding
the definition \zcref{eq:Xepsdef} of $\mathcal{X}^{\eps}_{\uu,\vv;0,T}$,
we get
\begin{multline*}
\tilde{\mathcal{B}}^{\eps,\zeta}_{\uu,\vv;0,T}-\mathcal{X}^{\eps}_{\uu,\vv;0,T}(Y^{\eps,\zeta}+\nicefrac{1}{12})=\int^{T-\eps^{2}}_{\eps^{2}}\left(\int_{\mathbb{R}}\psi^{\eps^{2}}(r-q)\left\langle \varphi^{\eps}_{\uu,\vv},\hat{\mathcal{R}}^{\eps,\zeta}((\partial H)^{2})_{r}-Y^{\eps,\zeta}_{r}\right\rangle \,\dif r\right)\,\dif q\\
\ovset{\zcref{eq:varphiuvdef}}=\frac{1}{2}\sum_{x_{0}\in\{0,L\}}\int^{T-\eps^{2}}_{\eps^{2}}\left(\iint_{\mathbb{R}^{2}}\psi^{\eps^{2}}(r-q)\psi^{\eps}(x-x_{0})\left(\hat{\mathcal{R}}^{\eps,\zeta}((\partial H)^{2})_{r}(x)-Y^{\eps,\zeta}_{r}(x)\right)\,\dif r\,\dif x\right)\,\dif q,
\end{multline*}
where the factor of $1/2$ is because $\langle\cdot,\cdot\rangle$
only integrates over half of the support of $\psi^{\eps}(\cdot-x_{0})$
for $x_{0}\in\{0,L\}$. Then we use \zcref{eq:expand-nonlinearity-boundary}
to rewrite this as
\begin{align*}
 & \tilde{\mathcal{B}}^{\eps,\zeta}_{\uu,\vv;0,T}-\mathcal{X}^{\eps}_{\uu,\vv;0,T}(Y^{\eps,\zeta}+\nicefrac{1}{12})\\
 & \ =\frac{1}{2}\sum_{x_{0}\in\{0,L\}}\int^{T-\eps^{2}}_{\eps^{2}}\left(\iint_{\mathbb{R}^{2}}\psi^{\eps^{2}}(r-q)\psi^{\eps}(x-x_{0})\left(\hat{\mathcal{R}}^{\eps,\zeta}((\partial H)^{2})_{r}(x)-\hat{\Pi}^{\eps,\zeta}_{(q,x_{0})}\left(((\partial H)^{2})_{q}(x_{0})\right)_{r}(x)\right)\,\dif r\,\dif x\right)\,\dif q.
\end{align*}
We note that when we apply \zcref{eq:expand-nonlinearity-boundary},
the choice of the basepoint $(q,x_{0})$ is arbitrary, but we choose
it to match the basepoint of the space-time mollifier $\psi^{\eps^{2}}(\cdot-q)\psi^{\eps}(\cdot-x_{0})$
so that we get a good bound when we apply the reconstruction theorem
below.

Then by \cites[Prop.~7.2]{hairer:2014:theory}, a local version of
the reconstruction theorem, we have
\begin{equation}
\begin{aligned} & \left|\iint_{\mathbb{R}^{2}}\psi^{\eps^{2}}(r-q)\psi^{\eps}(x-x_{0})\left(\hat{\mathcal{R}}^{\eps,\zeta}((\partial H)^{2})_{q}(x)-\hat{\Pi}^{\eps,\zeta}_{(q,x_{0})}\left(((\partial H)^{2})_{q}(x_{0})\right)_{r}(x)\right)\,\dif x\,\dif r\right|\\
 & \qquad\le C\|(\partial H)^{2}\|_{\mathcal{D}^{\eps,\zeta}_{\mathrm{F}};T}(1+\|(\hat{\Pi}^{\eps,\zeta},\hat{\Gamma}^{\eps,\zeta})\|^{2})\eps^{9\kappa}q^{-1-2\kappa}.
\end{aligned}
\label{eq:apply-reconstruction}
\end{equation}
Here the exponents $9\kappa$ and $-1-2\kappa$ come from the exponents
in the definition \zcref{eq:DF} of $\mathcal{D}^{\eps,\zeta}_{\mathrm{F}}$.
Integrating \zcref{eq:apply-reconstruction} in time using the triangle
inequality, we get 
\begin{align*}
 & \left|\int^{T-\eps^{2}}_{\eps^{2}}\left(\iint_{\mathbb{R}^{2}}\psi^{\eps^{2}}(r-q)\psi^{\eps}(x-x_{0})\left(\hat{\mathcal{R}}^{\eps,\zeta}((\partial H)^{2})_{q}(x)-\hat{\Pi}^{\eps,\zeta}_{(q,x_{0})}\left(((\partial H)^{2})_{q}(x_{0})\right)_{r}(x)\right)\,\dif x\,\dif r\right)\,\dif q\right|\\
 & \qquad\le C\|(\partial H)^{2}\|_{\mathcal{D}^{\eps,\zeta}_{\mathrm{F}};T}(1+\|(\hat{\Pi}^{\eps,\zeta},\hat{\Gamma}^{\eps,\zeta})\|)\eps^{5\kappa}
\end{align*}
for a new constant $C$. The claimed result follows as $\|(\partial H)^{2}\|_{\mathcal{D}^{\eps,\zeta}_{\mathrm{F}};T}$
can be bounded in terms of $\|(\hat{\Pi}^{\eps,\zeta},\hat{\Gamma}^{\eps,\zeta})\|_{T}$
by \zcref{eq:squaring-cts} and the continuity statement in \zcref{cor:Tisinfinity}.
\end{proof}

\subsection{\label{subsec:Outline-of-the}Outline of the analysis of the explicit
terms}

With \zcref{prop:expand-nonlinearity-boundary,prop:apply-reconstruction-boundary},
we have reduced the problem of studying $\mathcal{B}^{\eps}_{\uu,\vv;s,t}$
to the study of $Y^{\eps,\zeta}$. The terms in the definition \zcref{eq:expand-nonlinearity-boundary}
of $Y^{\eps,\zeta}$ are completely explicit and indeed all live in
the first four Wiener chaoses, so in principle their analysis should
be straightforward. Despite this, since we require quite precise calculations
at the boundary, the analysis involved is quite lengthy. The computations
of explicit terms are carried out in \zcref{sec:Explicit-calculations},
and estimates on certain variances via the BPHZ theory (which allows
to ``automate'' some of the more repetitive bounds, and is also
used to prove \zcref{thm:model-norm-bounded}) are done in \zcref{sec:BPHZ}.
In this section, we summarize these results and show how they fit
together to prove \zcref{prop:boundaryterm-1-1}.

There is one remaining technical issue to be handled before we can
use the estimates in \zcref{sec:Explicit-calculations,sec:BPHZ}.
We will derive the results in \zcref{sec:Explicit-calculations} with
calculations done using the true heat kernel $p$ (see \zcref{eq:ptdef}),
but we obtain the estimates in \zcref{sec:BPHZ} are performed using
the truncated heat kernel $K$ (see \zcref[range]{eq:pissumofKs,eq:KKhateven}).
The use of the true heat kernel in \zcref{sec:Explicit-calculations}
is more convenient for us, because the exact formula for the heat
kernel and its Fourier transform allow us to perform exact calculations.
On the other hand, to use the results from these two different sections
together, we need to compare the stochastic objects that result from
the two distinct kernels. The following proposition will suffice for
our needs and will be proved in \zcref{subsec:prove-explicit-comparison}
below.
\begin{prop}
\label{prop:explicit-comparison}For each $\tau\in\left\{ \rsrenorm,\rscherryrb,\rscherryrenormr,\rselkrbr,\rselkrenormrr,\rscherrybb,\rscherryrenormb,\rscherryrenormrenorm\right\} $(i.e.
for each of the trees appearing on the right side of \zcref{eq:expand-nonlinearity-boundary})
and any $p<\infty$, we have
\begin{equation}
\adjustlimits\lim_{\eps\downarrow0}\sup_{\zeta\in(0,\nicefrac{\eps}{100})}\mathbb{E}\left[\left|\mathcal{X}^{\eps}_{\uu,\vv;0,T}\left(\bboxop{\tau}^{\eps,\zeta}\right)-\mathcal{X}^{\eps}_{\uu,\vv;0,T}\left(\boxop{\tau}^{\eps,\zeta}\right)\right|^{p}\right]=0.\label{eq:boundary-moments-same}
\end{equation}
\end{prop}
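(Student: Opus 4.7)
The plan is to exploit the decomposition $p = K + \tilde{K}$ (and hence $p' = K' + \tilde{K}'$) recorded in \zcref{eq:pissumofKs}, together with hypercontractivity. Since each of the trees in the statement lies in a sum of Wiener chaoses of order at most four, it suffices to estimate the $L^{2}$ norm of $\mathcal{X}^{\eps}_{\uu,\vv;0,T}(\bboxop{\tau} - \boxop{\tau})$, the $L^{p}$ norm being controlled by $C_{p}$ times the $L^{2}$ norm. I would then expand $\bboxop{\tau}$ by multilinearity: each occurrence of $p'$ becomes $K' + \tilde{K}'$, yielding a finite sum of ``mixed'' terms, the summand with $K'$ in every slot coinciding with $\boxop{\tau}$. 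Here it is crucial that the renormalization constants $C^{(1)}_{\zeta}$ and $C^{(2)}_{\zeta}$ are defined via $\bboxop{\cdot}$ in \zcref{eq:C1zetadef,eq:C2zetadef} and enter both $\bboxop{\rsrenorm}$ and $\boxop{\rsrenorm}$ identically, and analogously for $\bboxop{\rsspecial{C^{(2)}}}$ and $\boxop{\rsspecial{C^{(2)}}}$, see \zcref{eq:Qrenorm,eq:Qtilderenorm,eq:C2renorm}. Consequently $\bboxop{\tau} - \boxop{\tau}$ is a sum of finitely many mixed terms, each containing at least one smooth factor $\tilde{K}$ or $\tilde{K}'$.

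The central observation is that every such mixed term vanishes when evaluated at $x \in L\mathbb{Z}$, up to exponentially small corrections in $\zeta$. Indeed, each leaf-level factor of the form $K' \circledast \mathrm{d}W^{\zeta}$, $\tilde{K}' \circledast \mathrm{d}W^{\zeta}$, $K' * \varphi^{\eps}_{\uu,\vv}$, or $\tilde{K}' * \varphi^{\eps}_{\uu,\vv}$ is an odd function of $x$ about both $x = 0$ and $x = L$: by \zcref{eq:KKhateven} the kernels $K$ and $\tilde{K}$ are even, so their spatial derivatives are odd, whereas $\mathrm{d}W^{\zeta}$ and $\varphi^{\eps}_{\uu,\vv}$ have been extended evenly and $2L$-periodically by the procedure of \zcref{sec:mollifying}. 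Each of the eight trees listed has the structure of a product in which at least one factor is built from such a ``derivative leaf,'' so every mixed term built from it vanishes at $x \in L\mathbb{Z}$. The explicit formula \zcref{eq:Echerryrr}, combined with \zcref{eq:Shrescale}, furthermore shows that $C^{(1)}_{\zeta}(x_{0})$ is exponentially small in $\zeta$ for $x_{0} \in L\mathbb{Z}$, so the subtracted renormalization contributes negligibly at the boundary.

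Given this boundary vanishing, $\mathcal{X}^{\eps}_{\uu,\vv;0,T}$ applied to a mixed term $g$ only sees the behavior of $g$ in a boundary layer of spatial width $\eps$, and a direct Wick-type computation of $\mathbb{E}[|\mathcal{X}^{\eps}(g)|^{2}]$ using the covariance \zcref{eq:dWzetacov} of $\mathrm{d}W^{\zeta}$ yields an estimate of the form $C \eps^{\alpha}$ for some $\alpha > 0$ and some $C$ independent of $\zeta \in (0,\eps/100)$. Summing over the finitely many mixed terms produces \zcref{eq:boundary-moments-same}. The main technical obstacle will be establishing this $\eps^{\alpha}$ gain for the second-chaos mixed terms of the form $(K' \circledast \mathrm{d}W^{\zeta})(\tilde{K}' \circledast \mathrm{d}W^{\zeta})$, and their analogues in higher chaos appearing inside the trees $\rscherryrenormrenorm$ and $\rselkrenormrr$, in which one factor is the singular field with only negative Hölder regularity; the improvement comes from the smoothness of $\tilde{K}'$, which allows one to replace the smooth factor by its Hölder-continuous representative and then extract a power of $\eps$ from its vanishing at the boundary. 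This is a simplified analogue of the bookkeeping carried out in \zcref{sec:BPHZ} for the model bounds, and can be handled by the same Feynman-diagram estimates.
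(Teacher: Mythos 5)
Your overall plan---decompose $p'=K'+\tilde K'$ via \zcref{eq:pissumofKs}, expand multilinearly, note that the renormalization constants cancel in the difference, and show that every mixed term containing a smooth factor contributes negligibly when tested against $\varphi^{\eps}_{\uu,\vv}$---is a legitimate alternative strategy in principle, and the reduction from $L^{p}$ to $L^{2}$ by hypercontractivity on a fixed finite chaos is fine. The genuine gap is at the quantitative core: the claim that each mixed term yields a bound $C\eps^{\alpha}$ uniformly over $\zeta\in(0,\nicefrac{\eps}{100})$ is asserted rather than proved, and the justification you offer (each factor is odd and continuous, hence vanishes on $L\mathbb{Z}$, and the smooth factor supplies a power of $\eps$) is not uniform in $\zeta$. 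For $\rsrenorm$ your computation does go through, because every Wick pairing in the variance involves the uniformly bounded, boundary-vanishing covariance of $\tilde K'\circledast\dif W^{\zeta}$. But already for $\rscherryrenormrenorm$ a mixed term such as $(\tilde K'\circledast\rsrenorm[rsP]^{\zeta})\cdot(K'\circledast\rsrenorm[rsK]^{\zeta})$ has a singular factor with pointwise variance of order $\log(1/\zeta)$, and the mixed terms of $\rselkrbr$ and $\rselkrenormrr$ contain an outer factor $K'\circledast\dif W^{\zeta}$ of pointwise size $\zeta^{-1/2}$ multiplying an object whose mixedness is buried deep in the tree; the naive product bounds $\eps\sqrt{\log(1/\zeta)}$ or $\eps\,\zeta^{-1/2}$ do not tend to zero uniformly over $\zeta\in(0,\nicefrac{\eps}{100})$. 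Closing this requires exploiting decorrelation in time inside $\mathcal{X}^{\eps}_{\uu,\vv;0,T}$ and proving that the relevant two-point functions themselves vanish at the boundary \emph{uniformly in} $\zeta$---precisely the content of the estimates \zcref{e:newrbc,e:newrbc2,e:newrbc3,e:newrbc4,e:newrbc5,e:newrbc6} and \zcref{prop:var-boundary}, which need the recentering and zigzag machinery of \zcref{sec:BPHZ}. Your ``direct Wick-type computation'' would have to reproduce a variant of that entire analysis for every mixed term of every tree; this is much more than the ``simplified analogue'' you describe, and you have also misidentified the hard case (the second-chaos term you single out is in fact the easy one).

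For contrast, the paper's proof is entirely different and much softer: by \zcref{prop:RH} both $\boxop{\hat{M}\tau}^{\eps,\zeta}$ and $\bboxop{\hat{M}\tau}^{\eps,\zeta}$ are reconstructions of the modeled distributions $\mathcal{H}^{\eps,\zeta}\tau$ and $\tilde{\mathcal{H}}^{\eps,\zeta}\tau$; by \zcref{prop:eq-boundary} these modeled distributions \emph{coincide} at $x_{0}\in\{0,L\}$ (this is where the parity argument you have in mind is carried out, once and for all, at the level of the regularity structure); and the reconstruction theorem with basepoint on the boundary then gives a bound of order $\eps^{9\kappa}$ times a polynomial in the model norm, whose moments are supplied by \zcref{thm:model-norm-bounded}. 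If you wish to salvage your route, the minimal repair is to replace the pointwise vanishing claim by uniform-in-$\zeta$ boundary estimates on the covariances of the mixed terms, but at that point it is more economical to invoke the reconstruction theorem as the paper does.
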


Now we can state the following proposition, which is about the joint
convergence of the nonlinearity and the noise, and relies on \zcref{prop:explicit-comparison}
and the results of \zcref{sec:Explicit-calculations,sec:BPHZ}.
\begin{table}
\hfill{}%
\begin{tabular}{cll}
\toprule 
$g^{\eps,\zeta}$ & $\lim\limits_{\eps\downarrow0}\lim\limits_{\zeta\downarrow0}\Law\left(\mathcal{X}^{\eps}_{\uu,\vv;0,T}\left(g^{\eps,\zeta}\right)\right)$ & Key estimates\tabularnewline
\midrule
$\rsrenorm[rsK]^{\zeta}$ & $\mathcal{N}\left(0,T(\uu^{2}+\vv^{2})V_{\psi}\right)$ & \zcref{prop:Xredcherrylimit}\tabularnewline
$2\rscherryrb[rsK]^{\eps,\zeta}$ & deterministic $0$ & symmetry, \zcref{prop:lollibd}, and \zcref{prop:var-boundary}\tabularnewline
$\rscherryrenormr[rsK]^{\zeta}$ & deterministic $0$ & symmetry and \zcref{prop:elkrrr,prop:var-boundary}\tabularnewline
$2\rselkrbr[rsK]^{\eps,\zeta}$ & deterministic $-\frac{T}{2}(\uu^{2}+\vv^{2})V_{\psi}$ & \zcref{prop:red-blue-red-cherry,prop:rbrelk,prop:var-boundary}\tabularnewline
$\rscherrybb[rsK]^{\eps}$ & deterministic $-\frac{T}{6}(\uu^{3}+\vv^{3})$ & \zcref{prop:bluecherrycontrib}\tabularnewline
$\rscherryrenormb[rsK]^{\eps,\zeta}$ & deterministic $0$ & \zcref{eq:subtract-exp,prop:rrbelk,prop:var-boundary}\tabularnewline
$\frac{1}{4}\rscherryrenormrenorm[rsK]^{\zeta}+\rselkrenormrr[rsK]^{\zeta}$ & deterministic $\frac{T}{24}(\uu+\vv)$ & \zcref{prop:boundary-value,prop:candelabra,prop:moose,prop:var-boundary}\tabularnewline
\bottomrule
\end{tabular}\hfill{}

\caption[Summary of calculations of explicit terms]{\label{tab:the-terms-1}The limiting laws of the terms appearing
on the right side of \zcref{eq:expand-nonlinearity-boundary}. The
quantity $V_{\psi}$ is defined in \zcref{eq:redcherry-var-limit},
but we will not need to know anything about it besides that it is
finite.}
\end{table}

\begin{prop}
\label{prop:convergence-explicit}For any $T\in\mathbb{R}$ and $\mathscr{F}_{T}$-measurable
random variable $Q$, we have
\begin{equation}
\begin{aligned}\lim_{\eps\downarrow0}\lim_{\zeta\downarrow0} & \left(\mathcal{X}^{\eps}_{\uu,\vv;0,T}(Y^{\eps,\zeta}+\nicefrac{1}{12}),Q\right)=(\Upsilon_{\uu,\vv;0,T},Q)\end{aligned}
\label{eq:convergence-explicit}
\end{equation}
in distribution, where $\Upsilon_{\uu,\vv;0,T}$ is as in the statement
of \zcref{prop:boundaryterm-1-1} and is independent of $Q$.
\end{prop}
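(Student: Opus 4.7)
The strategy is to decompose by linearity of $\mathcal{X}^{\eps}_{\uu,\vv;0,T}$ and to handle each of the eight summands defining $Y^{\eps,\zeta}$ (together with the constant $\nicefrac{1}{12}$) separately, applying to each the result recorded in the corresponding row of \zcref{tab:the-terms-1}. Before doing this I would invoke \zcref{prop:explicit-comparison} to freely replace the $\boxop{\cdot}$ canonical lifts (built from $K$) by the $\bboxop{\cdot}$ lifts (built from the full heat kernel $p$) whenever the latter is more convenient for the explicit computations of \zcref{sec:Explicit-calculations}. This replacement is harmless in $L^p$ uniformly in $\zeta\in(0,\eps/100)$, and therefore in the joint distribution with any fixed $Q$.

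For each of the seven ``deterministic limit'' rows of \zcref{tab:the-terms-1}, I would combine the mean computation from \zcref{sec:Explicit-calculations} with the variance bound coming from \zcref{prop:var-boundary} to get convergence in $L^2$, hence in probability, to the claimed constant. The $\rscherrybb$ row carries no noise at all and contributes only its mean via \zcref{prop:bluecherrycontrib}. The constant $\nicefrac{1}{12}$ satisfies
\[
\mathcal{X}^{\eps}_{\uu,\vv;0,T}\bigl(\nicefrac{1}{12}\bigr)=\frac{1}{12}\int_{\mathbb{R}}\Psi^{\eps}_{0,T;r}\,\dif r\cdot\langle\varphi^{\eps}_{\uu,\vv},1\rangle\xrightarrow[\eps\downarrow 0]{}-\tfrac{T}{24}(\uu+\vv)
\]
by \zcref{eq:phiint} and \zcref{eq:Psiproperties}, which exactly cancels the $+\tfrac{T}{24}(\uu+\vv)$ coming from the $\tfrac{1}{4}\rscherryrenormrenorm+\rselkrenormrr$ row. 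Summing the remaining drifts gives $-\tfrac{T}{2}(\uu^2+\vv^2)V_{\psi}-\tfrac{T}{6}(\uu^3+\vv^3)$, matching the mean of $\Upsilon_{\uu,\vv;0,T}$.

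The only summand carrying randomness is $\mathcal{X}^{\eps}_{\uu,\vv;0,T}(\rsrenorm[rsK]^{\zeta})$, which lives in the second homogeneous Wiener chaos by \zcref{eq:subtract-exp}. For this term I would appeal to \zcref{prop:Xredcherrylimit}, which provides the central limit theorem to $\mathcal{N}(0,T(\uu^2+\vv^2)V_{\psi})$ together with asymptotic independence from $\mathscr{F}_T$. Combining all pieces via Slutsky's lemma produces the joint convergence of $\mathcal{X}^{\eps}_{\uu,\vv;0,T}(Y^{\eps,\zeta}+\nicefrac{1}{12})$ to $\Upsilon_{\uu,\vv;0,T}$ together with $Q$, with $\Upsilon_{\uu,\vv;0,T}$ independent of $Q$: for the deterministic summands joint convergence with $Q$ is automatic (convergence in probability of one coordinate to a constant upgrades marginal convergence to joint convergence), and for the Gaussian summand the limiting independence from $\mathscr{F}_T$ transfers to independence from $Q$.

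The main obstacle, and the reason the argument is nontrivial, is the asymptotic independence of $\mathcal{X}^{\eps}_{\uu,\vv;0,T}(\rsrenorm[rsK]^{\zeta})$ from $\mathscr{F}_T$. Heuristically, as $\eps\downarrow 0$ the Gaussian fluctuation is carried by noise modes concentrated in a thin boundary layer at high spatial frequency, which decouple from any bounded functional of the bulk noise on $[0,T]\times(0,L)$; this is made rigorous by the explicit second-chaos analysis that underlies \zcref{prop:Xredcherrylimit}, computing both the limiting variance and the vanishing of the inner product with any $\mathscr{F}_T$-measurable Wiener functional. All other steps---the mean/variance accounting for the deterministic rows and the final application of Slutsky---are essentially bookkeeping once the table entries and \zcref{prop:explicit-comparison} are accepted.
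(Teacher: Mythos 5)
Your proposal is correct and follows essentially the same route as the paper: decompose $Y^{\eps,\zeta}$ term by term, kill the variances of the seven non‑Gaussian trees via \zcref{prop:var-boundary}, feed in the means from \zcref{sec:Explicit-calculations} (switching kernels with \zcref{prop:explicit-comparison}), treat $\rsrenorm$ via the fourth‑moment CLT of \zcref{prop:Xredcherrylimit}, and assemble with Slutsky. Your explicit verification that $\mathcal{X}^{\eps}_{\uu,\vv;0,T}(\nicefrac{1}{12})\to-\tfrac{T}{24}(\uu+\vv)$ cancels the $+\tfrac{T}{24}(\uu+\vv)$ from the $\tfrac14\rscherryrenormrenorm+\rselkrenormrr$ row is a detail the paper leaves implicit, and is a welcome addition.
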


\begin{proof}
First, by \zcref{prop:var-boundary}, we derive that for each $\tau\in\left\{ \rscherryrb,\rscherryrenormr,\rselkrbr,\rselkrenormrr,\rscherrybb,\rscherryrenormb,\rscherryrenormrenorm\right\} $
(i.e. each of the trees appearing on the right side of \zcref{eq:expand-nonlinearity-boundary}
except for $\rsrenorm$), we have 
\begin{equation}
\adjustlimits\lim_{\eps\downarrow0}\sup_{\zeta\in(0,\eps)}\Var\left(\mathcal{X}^{\eps}_{\uu,\vv;0,T}\left(\boxop{\tau}^{\eps,\zeta}\right)\right)=0.\label{eq:variances0}
\end{equation}
To compute the expectations of these terms, we first note that 
\begin{equation}
\mathbb{E}\left[\rscherryrb[rsK]^{\eps,\zeta}_{t}(x)\right]=\mathbb{E}\left[\rscherryrenormr[rsK]^{\zeta}_{t}(x)\right]=0\qquad\text{for all }t,x\in\mathbb{R}\label{eq:1st}
\end{equation}
by symmetry, since each of these terms contains an odd number of copies
of the noise. We also have that
\begin{equation}
\mathbb{E}\left[\rsrenorm[rsP]^{\zeta}_{t}(x)\right]=\mathbb{E}\left[\rscherryrenormb[rsP]^{\eps,\zeta}_{t}(x)\right]=0\qquad\text{for all }t,x\in\mathbb{R}\label{eq:2nd}
\end{equation}
by \zcref{eq:subtract-exp}, which implies by \zcref{prop:explicit-comparison}
that 
\begin{equation}
\adjustlimits\lim_{\eps\downarrow0}\lim_{\zeta\downarrow0}\mathbb{E}\left[\mathcal{X}^{\eps}_{\uu,\vv;0,T}\left(\boxop{\tau}^{\eps,\zeta}\right)\right]=0\qquad\text{for }\tau\in\left\{ \rsrenorm,\rscherryrenormb\right\} .\label{eq:3rd}
\end{equation}
The expectations of the rest of the terms are computed in \zcref{sec:Explicit-calculations}.
Indeed, by combining \zcref{prop:red-blue-red-cherry,prop:explicit-comparison},
we see that
\begin{equation}
\adjustlimits\lim_{\eps\downarrow0}\lim_{\zeta\downarrow0}\mathbb{E}\left[\mathcal{X}^{\eps}_{\uu,\vv;0,T}\left(\rselkrbr[rsK]^{\eps,\zeta}\right)\right]=-\frac{T}{4}(\uu^{2}+\vv^{2})V_{\psi}.\label{eq:4th}
\end{equation}
Similarly, by combining \zcref{prop:bluecherrycontrib,prop:explicit-comparison},
we obtain 
\begin{equation}
\lim_{\eps\downarrow0}\mathbb{E}\left[\mathcal{X}^{\eps}_{\uu,\vv;0,T}\left(\rscherrybb[rsK]^{\eps}\right)\right]=-\frac{T}{6}(\uu^{3}+\vv^{3}).\label{eq:5th}
\end{equation}
Finally, using \zcref{prop:boundary-value,prop:explicit-comparison},
we see that 
\begin{equation}
\adjustlimits\lim_{\eps\downarrow0}\lim_{\zeta\downarrow0}\mathbb{E}\left[\mathcal{X}^{\eps}_{\uu,\vv;0,T}\left(\rselkrenormrr[rsK]^{\zeta}+\frac{1}{4}\rscherryrenormrenorm[rsK]^{\zeta}\right)\right]=\frac{T}{24}(\uu+\vv).\label{eq:6th}
\end{equation}
Combining \zcref[comp=true]{eq:variances0,eq:1st,eq:2nd,eq:3rd,eq:4th,eq:5th,eq:6th}
and defining
\[
\tilde{Y}^{\eps,\zeta}_{r}(x)\coloneqq2\rscherryrb[rsK]^{\eps,\zeta}_{r}(x)+\rscherryrenormr[rsK]^{\zeta}_{r}(x)+2\rselkrbr[rsK]^{\zeta}_{r}(x)+\rselkrenormrr[rsK]^{\zeta}_{r}(x)+\rscherrybb[rsK]^{\eps}_{r}(x)+\rscherryrenormb[rsK]^{\eps,\zeta}_{r}(x)+\frac{1}{4}\rscherryrenormrenorm[rsK]^{\zeta}_{r}(x)
\]
so that
\begin{equation}
Y^{\eps,\zeta}_{r}(x)=\rsrenorm[rsK]^{\zeta}_{r}(x)+\tilde{Y}^{\eps,\zeta}_{r}(x),\label{eq:Ydecomp}
\end{equation}
we have
\begin{equation}
\lim_{\eps\downarrow0}\lim_{\zeta\downarrow0}\mathcal{X}^{\eps}_{\uu,\vv;0,T}(\tilde{Y}^{\eps,\zeta}_{r})=-\frac{T}{2}(\uu^{2}+\vv^{2})V_{\psi}-\frac{T}{6}(\uu^{3}+\vv^{3})+\frac{T}{24}(\uu+\vv)\qquad\text{in probability.}\label{eq:XYtildeconverges}
\end{equation}
On the other hand, combining \zcref{prop:Xredcherrylimit,prop:explicit-comparison},
we have
\begin{equation}
\lim_{\eps\downarrow0}\lim_{\zeta\downarrow0}\Law\left(\mathcal{X}^{\eps}_{\uu,\vv;0,T}\left(\rsrenorm[rsK]^{\eps}\right),Q\right)=\mathcal{N}(0,T(\uu^{2}+\vv^{2})V_{\psi})\otimes\Law(Q)\qquad\text{weakly},\label{eq:limitindistribution-apply}
\end{equation}
where $Q$ is an arbitrary $\mathscr{F}_{T}$-measurable random variable.
Combining \zcref[comp=true]{eq:Ydecomp,eq:XYtildeconverges,eq:limitindistribution-apply},
we obtain \zcref{eq:convergence-explicit}.
\end{proof}

We have finally assembled all the ingredients necessary to complete
the proof of \zcref{prop:boundaryterm-1-1}, and hence the proof of
\zcref{thm:mainthm}.
\begin{proof}[Proof of \zcref{prop:boundaryterm-1-1}]
By the portmanteau lemma, it is sufficient to show that if $Q$ is
$\mathscr{F}_{T}$-measurable, then 
\[
(\mathcal{B}^{\eps}_{\uu,\vv;0,T}(\varphi^{\eps}_{\uu,\vv}),Q)\xrightarrow[\eps\downarrow0]{\mathrm{law}}(\Upsilon_{\uu,\vv;0,T},Q)\qquad\text{as }\eps\downarrow0,
\]
where $\Upsilon_{\uu,\vv;0,T}$ is independent of $Q$. This result
is obtained by combining \zcref{eq:Btildegoodapprox} of \zcref{lem:Bepsnoiserel-zeta}
to approximate $\mathcal{B}^{\eps}_{\uu,\vv;0,T}(\varphi^{\eps}_{\uu,\vv})$
by $\tilde{\mathcal{B}}^{\eps}_{\uu,\vv;s,t}$ in probability, \zcref{prop:apply-reconstruction-boundary,thm:model-norm-bounded}
to approximate $\tilde{\mathcal{B}}^{\eps}_{\uu,\vv;s,t}$ by $\mathcal{X}^{\eps}_{\uu,\vv;0,T}(Y^{\eps,\zeta}+\nicefrac{1}{12})$
in probability, and finally \zcref{prop:convergence-explicit} to
approximate $(\mathcal{X}^{\eps}_{\uu,\vv;0,T}(Y^{\eps,\zeta}+\nicefrac{1}{12}),Q)$
by $(\Upsilon_{\uu,\vv;0,T},Q)$ in distribution.
\end{proof}

\subsection{Endpoints of the time interval: proof of \texorpdfstring{\zcref{lem:Bepsnoiserel-zeta}}{Lemma~\ref{lem:Bepsnoiserel-zeta}}
\label{subsec:proof-bepsnoiserel-zeta}}

In this section, we prove \zcref{lem:Bepsnoiserel-zeta}. Recall that
we introduced the function $\Psi^{\eps}_{s,t;r}$ as an approximation
of the indicator $\mathbf{1}_{[s,t]}(r)$, with the only discrepancy
occurring in an $\eps^{2}$ neighborhood of the endpoints $s$ and
$t$. The object of interest, $\tilde{\mathcal{B}}^{\eps,\zeta}_{\uu,\vv;s,t}$,
is expressed as an integral involving the renormalized nonlinearity
of the KPZ equation against $\Psi^{\eps}_{s,t;r}$. Our goal is to
justify that, as $\eps,\zeta\to0$, we can replace $\Psi^{\eps}_{s,t;r}$
by $\mathbf{1}_{[s,t]}(r)$ in this expression, with a small error.

Given \zcref{eq:Psiproperties}, this seems quite reasonable, but
a direct proof runs into technical issues because the results of \cite{hairer:2014:theory}
only give control over $(u^{\eps,\zeta}_{\uu,\vv})^{2}$ when integrated
against a smooth test function, but difference between $\Psi^{\eps}_{s,t;\cdot}$
and the indicator function is smooth. To avoid this, we use the mollified
KPZ equation \zcref{eq:hepszeta-eqn} to write $(u^{\eps,\zeta}_{\uu,\vv})^{2}$
as a sum of terms whose distributional regularity can be understood
classically. The cost of doing this is that we lose some regularity
on $(u^{\eps,\zeta}_{\uu,\vv})^{2}$, since $\partial_{t}h^{\eps,\zeta}_{\uu,\vv}$
and $\Delta h^{\eps,\zeta}_{\uu,\vv}$ have parabolic regularity $-\thrh-$
compared to the regularity $-1-$ of $(u^{\eps,\zeta}_{\uu,\vv})^{2}$.
It turns out that, since we are trying to prove a very modest statement
anyway, we can tolerate this loss.
\begin{proof}[Proof of \zcref{lem:Bepsnoiserel-zeta}.]
Throughout the proof, since $s<t$ are fixed, we abbreviate $\Psi^{\eps}_{r}=\Psi^{\eps}_{s,t;r}$.
First, using \zcref{eq:hepszeta-eqn} in \zcref{eq:Btildeepszetadef}
and noting that $\Psi^{\eps}_{s}=\Psi^{\eps}_{t}=0$ to integrate
by parts without boundary terms, we write 
\begin{align*}
\tilde{\mathcal{B}}^{\eps,\zeta}_{\uu,\vv;s,t} & =2\int^{t}_{s}\Psi^{\eps}_{r}\left\langle \dif h^{\eps,\zeta}_{\uu,\vv;r}-\dif W^{\zeta}_{r}+\left(\nicefrac{1}{24}-\varphi^{\eps}_{\uu,\vv}\right)\dif r,\varphi^{\eps}_{\uu,\vv}\right\rangle -\int^{t}_{s}\left\langle h^{\eps,\zeta}_{\uu,\vv;r},\Delta\varphi^{\eps}_{\uu,\vv}\right\rangle \Psi^{\eps}_{r}\,\dif r\\
 & =-2\int^{t}_{s}\left\langle h^{\eps,\zeta}_{\uu,\vv;r}-W^{\zeta}_{r},\varphi^{\eps}_{\uu,\vv}\right\rangle \,\dif\Psi^{\eps}_{r}+\int^{t}_{s}\left(\left\langle \nicefrac{1}{12}-2\varphi^{\eps}_{\uu,\vv},\varphi^{\eps}_{\uu,\vv}\right\rangle -\left\langle h^{\eps,\zeta}_{\uu,\vv;r},\Delta\varphi^{\eps}_{\uu,\vv}\right\rangle \right)\Psi^{\eps}_{r}\,\dif r.
\end{align*}
For fixed $\eps>0$, taking $\zeta\to0$ and using \zcref{lem:convofzeta},
we see that \zcref{eq:Btildeepszetalimit} indeed holds, and indeed
the limit takes the form
\begin{multline}
\tilde{\mathcal{B}}^{\eps}_{\uu,\vv;s,t}=-2\int^{t}_{s}\left\langle h^{\eps}_{\uu,\vv;r}-W_{r},\varphi^{\eps}_{\uu,\vv}\right\rangle \,\dif\Psi^{\eps}_{r}+\int^{t}_{s}\left(\left\langle \nicefrac{1}{12}-2\varphi^{\eps}_{\uu,\vv},\varphi^{\eps}_{\uu,\vv}\right\rangle -\left\langle h^{\eps}_{\uu,\vv;r},\Delta\varphi^{\eps}_{\uu,\vv}\right\rangle \right)\Psi^{\eps}_{r}\,\dif r\\
=-2\left(\int^{s+2\eps^{2}}_{s}+\int^{t}_{t-2\eps^{2}}\right)\left\langle h^{\eps}_{\uu,\vv;r}-W_{r},\varphi^{\eps}_{\uu,\vv}\right\rangle \,\dif\Psi^{\eps}_{r}+\int^{t}_{s}\left(\left\langle \nicefrac{1}{12}-2\varphi^{\eps}_{\uu,\vv},\varphi^{\eps}_{\uu,\vv}\right\rangle -\left\langle h^{\eps}_{\uu,\vv;r},\Delta\varphi^{\eps}_{\uu,\vv}\right\rangle \right)\Psi^{\eps}_{r}\,\dif r.\label{eq:Btildedef-1}
\end{multline}

On the other hand, for the time-integrated nonlinearity, we observe
from \zcref{eq:Bepsnoiserel,lem:convofzeta} that
\begin{align}
\mathcal{B}^{\eps}_{\uu,\vv;s,t}(\varphi^{\eps}_{\uu,\vv}) & =\left\langle 2(h^{\eps}_{\uu,\vv;t}-h^{\eps}_{\uu,\vv;s})-2(W_{t}-W_{s})+(t-s)(\nicefrac{1}{12}-2\varphi^{\eps}_{\uu,\vv}),\varphi^{\eps}_{\uu,\vv}\right\rangle -\int^{t}_{s}\left\langle h^{\eps}_{\uu,\vv;r},\Delta\varphi^{\eps}_{\uu,\vv}\right\rangle \,\dif r\nonumber \\
 & =-2\int^{s+\eps^{2}}_{s}\langle h^{\eps}_{\uu,\vv;s}-2W_{s},\varphi^{\eps}_{\uu,\vv}\rangle\,\dif\Psi^{\eps}_{r}-2\int^{t}_{t-\eps^{2}}\langle h^{\eps}_{\uu,\vv;t}-2W_{t},\varphi^{\eps}_{\uu,\vv}\rangle\,\dif\Psi^{\eps}_{r}\nonumber \\
 & \qquad+\int^{t}_{s}\langle\nicefrac{1}{12}-2\varphi^{\eps}_{\uu,\vv},\varphi^{\eps}_{\uu,\vv}\rangle\,\dif r-\int^{t}_{s}\left\langle h^{\eps}_{\uu,\vv;r},\Delta\varphi^{\eps}_{\uu,\vv}\right\rangle \,\dif r,\label{eq:Bepsnoiserel-apply}
\end{align}
where we used the fact that $\int^{s+\eps^{2}}_{s}\dif\Psi^{\eps}_{r}=\Psi^{\eps}_{s+\eps^{2}}-\Psi^{\eps}_{s}=1$
and $\int^{t}_{t-\eps^{2}}\dif\Psi^{\eps}_{r}=\Psi^{\eps}_{t}-\Psi^{\eps}_{t-\eps^{2}}=-1$.
Subtracting \zcref{eq:Bepsnoiserel-apply} from \zcref{eq:Btildedef-1}
and again using the fact that $\Psi^{\eps}_{r}=1$ for $r\in(s+2\eps^{2},t-2\eps^{2})$,
we get
\begin{align}
\tilde{\mathcal{B}}^{\eps}_{\uu,\vv;s,t}-\mathcal{B}^{\eps}_{\uu,\vv;s,t}(\varphi^{\eps}_{\uu,\vv}) & =-2\int^{s+2\eps^{2}}_{s}\left\langle h^{\eps}_{\uu,\vv;r}-h^{\eps}_{\uu,\vv;s}-(W_{r}-W_{s}),\varphi^{\eps}_{\uu,\vv}\right\rangle \,\dif\Psi^{\eps}_{r}\nonumber \\
 & \qquad+2\int^{t}_{t-2\eps^{2}}\left\langle h_{\uu,\vv;t}-h^{\eps}_{\uu,\vv;r}-(W_{t}-W_{r}),\varphi^{\eps}_{\uu,\vv}\right\rangle \,\dif\Psi^{\eps}_{r}\nonumber \\
 & \qquad+\left(\int^{s+2\eps^{2}}_{s}+\int^{t}_{t-\eps^{2}}\right)\left(\left\langle h^{\eps}_{\uu,\vv;r},\Delta\varphi^{\eps}_{\uu,\vv}\right\rangle -\left\langle \nicefrac{1}{12}-2\varphi^{\eps}_{\uu,\vv},\varphi^{\eps}_{\uu,\vv}\right\rangle \right)(1-\Psi^{\eps}_{r})\,\dif r.\label{eq:BBtildedif}
\end{align}
In the following, we will estimate each term on the right side of
\zcref{eq:BBtildedif} separately.

First, we can estimate 
\begin{align*}
\left|\int^{s+2\eps^{2}}_{s}\left\langle h^{\eps}_{\uu,\vv;r}-h^{\eps}_{\uu,\vv;s},\varphi^{\eps}_{\uu,\vv}\right\rangle \,\dif\Psi^{\eps}_{r}\right| & \le\|\varphi^{\eps}_{\uu,\vv}\|_{L^{1}([0,L])}\|h^{\eps}_{\uu,\vv}\|_{\mathcal{C}^{\chi}_{\mathfrak{s}}([s,t]\times[0,L])}\int^{s+2\eps^{2}}_{s}(r-s)^{\chi}\left|\partial_{r}\Psi^{\eps}_{r}\right|\,\dif r\\
 & \le C\eps^{2\chi}\|h^{\eps}_{\uu,\vv}\|_{\mathcal{C}^{\chi}_{\mathfrak{s}}([s,t]\times[0,L])},
\end{align*}
where $\chi$ can be any constant in $(0,\oh)$ and the constant $C$
depends only on $\uu,\vv,\psi,\chi$. Here we used that $\int^{s+2\eps^{2}}_{s}\left|\partial_{r}\Psi^{\eps}_{r}\right|\,\dif r$
is bounded by a constant independent of $\eps$. So by \zcref{p.holder}
we have (with an identical argument for the other end of the time
interval)
\begin{equation}
\left|\int^{s+2\eps^{2}}_{s}\left\langle h^{\eps}_{\uu,\vv;r}-h^{\eps}_{\uu,\vv;s},\varphi^{\eps}_{\uu,\vv}\right\rangle \,\dif\Psi^{\eps}_{r}\right|,\left|\int^{t}_{t-2\eps^{2}}\left\langle h^{\eps}_{\uu,\vv;r}-h^{\eps}_{\uu,\vv;s},\varphi^{\eps}_{\uu,\vv}\right\rangle \,\dif\Psi^{\eps}_{r}\right|\to0\qquad\text{in probability as }\eps\to0.\label{eq:dtsto0}
\end{equation}

Second, for the terms involving the noise $W$, we write
\[
\int^{s+2\eps^{2}}_{s}\left\langle W_{r}-W_{s},\varphi^{\eps}_{\uu,\vv}\right\rangle \,\dif\Psi^{\eps}_{r}=\int^{s+2\eps^{2}}_{s}\left\langle \int^{s+2\eps^{2}}_{q}\varphi^{\eps}_{\uu,\vv}\partial_{r}\Psi^{\eps}_{r}\,\dif r,\dif W_{q}\right\rangle ,
\]
and hence 
\begin{align}
\mathbb{E}\left[\int^{s+2\eps^{2}}_{s}\left\langle W_{r}-W_{s},\varphi^{\eps}_{\uu,\vv}\right\rangle \,\dif\Psi^{\eps}_{r}\right]^{2} & =\int^{s+2\eps^{2}}_{s}\int^{L}_{0}\left(\int^{s+2\eps^{2}}_{q}\varphi^{\eps}_{\uu,\vv}(x)\partial_{r}\Psi^{\eps}_{r}\,\dif r\right)^{2}\,\dif x\,\dif q\nonumber \\
 & \le\eps^{2}\int^{L}_{0}\varphi^{\eps}_{\uu,\vv}(x)^{2}\,\dif x\le C\eps\label{eq:noisebd}
\end{align}
for a new constant $C$ depending only on $\uu,\vv,\eps$. Again using
a similar argument for the other end of the time interval, we obtain
\begin{equation}
\int^{s+2\eps^{2}}_{s}\left\langle W_{r}-W_{s},\varphi^{\eps}_{\uu,\vv}\right\rangle \,\dif\Psi^{\eps}_{r},\int^{t}_{t-2\eps^{2}}\left\langle W_{t}-W_{r},\varphi^{\eps}_{\uu,\vv}\right\rangle \,\dif\Psi^{\eps}_{r}\to0\qquad\text{in probability as }\eps\to0.\label{eq:Wsto0}
\end{equation}

In addition, we have the following deterministic bound:
\begin{equation}
\left|\left(\int^{s+\eps^{2}}_{s}+\int^{t}_{t-\eps^{2}}\right)\left\langle \nicefrac{1}{12}-2\varphi^{\eps}_{\uu,\vv},\varphi^{\eps}_{\uu,\vv}\right\rangle \left(1-\Psi^{\eps}_{r}\right)\,\dif r\right|\le C\eps.\label{eq:detbd}
\end{equation}

To deal with the spatial integral integral $\langle h^{\eps}_{\uu,\vv;r},\Delta\varphi^{\eps}_{\uu,\vv}\rangle$,
first recall that for $\eps\ll1$, the function $\varphi^{\eps}_{\uu,\vv}$
restricted to $[0,L]$ is supported on $\eps$-neighborhoods of $0$
and $L$, so (using also the evenness of $\varphi^{\eps}_{\uu,\vv}$
about $0$ and $L$) we have $\int^{\eps}_{0}\Delta\varphi^{\eps}_{\uu,\vv}(x)\,\dif x=\int^{L}_{L-\eps}\Delta\varphi^{\eps}_{\uu,\vv}(x)\,\dif x=0$.
Thus, one can write $\int^{\eps}_{0}h^{\eps}_{\uu,\vv;r}(x)\Delta\varphi^{\eps}_{\uu,\vv}(x)\,\dif x=\int^{\eps}_{0}\left[h^{\eps}_{\uu,\vv;r}(x)-h^{\eps}_{\uu,\vv;r}(0)\right]\Delta\varphi^{\eps}_{\uu,\vv}(x)\,\dif x$
and then make use of the spatial regularity of $h^{\eps}_{\uu,\vv;r}$.
Of course, a similar argument holds for the integral on $[L-\eps,L]$.
Thus we can write (by symmetry with the same argument for the spatial
integrals near $0$ and near $L$) that
\begin{align*}
\left(\int^{s+2\eps^{2}}_{s}+\int^{t}_{t-2\eps^{2}}\right)\left|\left\langle h^{\eps}_{\uu,\vv;r},\Delta\varphi^{\eps}_{\uu,\vv}\right\rangle \right|\,\dif r & \le C\eps^{2}\|h^{\eps}_{\uu,\vv;r}\|_{\mathcal{C}^{\chi}_{\mathfrak{s}}([s,t]\times[0,L])}\int^{\eps}_{0}|x|^{\chi}|\Delta\varphi^{\eps}_{\uu,\vv}(x)|\,\dif x\\
 & \le C\eps^{2+\chi}\|h^{\eps}_{\uu,\vv;r}\|_{\mathcal{C}^{\chi}_{\mathfrak{s}}([s,t]\times[0,L])}\|\Delta\varphi^{\eps}_{\uu,\vv}(x)\|_{L^{1}([0,L])}\\
 & \le C\eps^{\chi}\|h^{\eps}_{\uu,\vv;r}\|_{\mathcal{C}^{\chi}_{\mathfrak{s}}([s,t]\times[0,L])},
\end{align*}
where the constant $C$ again depends only on $\uu,\vv,\psi$. This
means (again using \zcref{p.holder}) that
\begin{equation}
\left(\int^{s+2\eps^{2}}_{s}+\int^{t}_{t-2\eps^{2}}\right)\left\langle h^{\eps}_{\uu,\vv;r},\Delta\varphi^{\eps}_{\uu,\vv}\right\rangle \left(1-\Psi^{\eps}_{r}\right)\,\dif r\to0\qquad\text{in probability as }\eps\to0\label{eq:laplacianto0}
\end{equation}
as well. Now using \zcref[comp=true]{eq:dtsto0,eq:Wsto0,eq:detbd,eq:laplacianto0}
in \zcref{eq:BBtildedif}, we see that 
\[
\tilde{\mathcal{B}}^{\eps}_{\uu,\vv;s,t}-\mathcal{B}^{\eps}_{\uu,\vv;s,t}(\varphi^{\eps}_{\uu,\vv})\to0\qquad\text{in probability as }\eps\to0,
\]
and the proof is complete.
\end{proof}

\subsection{Relationship between the kernels: proof of \texorpdfstring{\zcref{prop:explicit-comparison}}{Proposition~\ref{prop:explicit-comparison}}
\label{subsec:prove-explicit-comparison}}

We now prove \zcref{prop:explicit-comparison}. The key ingredients
are \zcref{prop:RH,prop:eq-boundary}.
\begin{proof}[Proof of \zcref{prop:explicit-comparison}]
A brief inspection of \zcref{eq:Texpcheckexplicit} along with \zcref{tab:RS-table,tab:Mhatdef}
shows that it suffices to show that, for all $\tau\in\check{\mathsf{T}}_{\ES}$,
we have 
\begin{equation}
\adjustlimits\lim_{\eps\downarrow0}\sup_{\zeta\in(0,\nicefrac{\eps}{100})}\mathbb{E}\left[\left|\mathcal{X}^{\eps}_{\uu,\vv;0,T}\left(\boxop{\hat{M}\tau}^{\eps,\zeta}\right)-\mathcal{X}^{\eps}_{\uu,\vv;0,T}\left(\bboxop{\hat{M}\tau}^{\eps,\zeta}\right)\right|^{p}\right]=0.\label{eq:goal-abstract}
\end{equation}
Using \zcref{prop:RH}, we can write the difference in \zcref{eq:goal-abstract}
as 
\[
\boxop{\hat{M}\tau}^{\eps,\zeta}_{t}(x)-\bboxop{\hat{M}\tau}^{\eps,\zeta}_{t}(x)=\hat{\mathcal{R}}^{\eps,\zeta}\left(\mathcal{H}^{\eps,\zeta}\tau-\tilde{\mathcal{H}}^{\eps,\zeta}\tau\right)_{t}(x).
\]
Therefore, we have, using the reconstruction theorem \cites[Thm.~3.10]{hairer:2014:theory}
in a similar manner to \zcref{eq:apply-reconstruction} but using
\zcref{prop:eq-boundary} to see that $\left(\mathcal{H}^{\eps,\zeta}\tau-\tilde{\mathcal{H}}^{\eps,\zeta}\tau\right)_{q}(x_{0})=0$,
that for all $q\in\mathbb{R}$,
\begin{align*}
 & \left|\iint_{\mathbb{R}^{2}}\psi^{\eps^{2}}(r-q)\psi^{\eps}(x-x_{0})\left(\boxop{\hat{M}\tau}^{\eps,\zeta}_{r}(x)-\bboxop{\hat{M}\tau}^{\eps,\zeta}_{r}(x)\right)\,\dif x\,\dif r\right|\\
 & \qquad=\left|\iint_{\mathbb{R}^{2}}\psi^{\eps^{2}}(r-q)\psi^{\eps}(x-x_{0})\left(\hat{\mathcal{R}}^{\eps,\zeta}\left(\mathcal{H}^{\eps,\zeta}\tau-\tilde{\mathcal{H}}^{\eps,\zeta}\tau\right)_{t}(x)-\hat{\Pi}^{\eps,\zeta}_{(q,x_{0})}\left(\left(\mathcal{H}^{\eps,\zeta}\tau-\tilde{\mathcal{H}}^{\eps,\zeta}\tau\right)_{q}(x_{0})\right)_{r}(x)\right)\,\dif x\,\dif r\right|\\
 & \qquad\le C(1+\|(\hat{\Pi}^{\eps,\zeta},\hat{\Gamma}^{\eps,\zeta})\|^{2})\|\mathcal{H}^{\eps,\zeta}\tau-\tilde{\mathcal{H}}^{\eps,\zeta}\tau\|_{\mathcal{D}^{9\kappa;\eps,\zeta}}\eps^{9\kappa}.
\end{align*}
Here we can use the standard reconstruction theorem rather than the
localized version because we are considering stationary objects so
there is no singularity at $t=0$. We can estimate by the triangle
inequality that
\[
\|\mathcal{H}^{\eps,\zeta}\tau-\tilde{\mathcal{H}}^{\eps,\zeta}\tau\|_{\mathcal{D}^{9\kappa;\eps,\zeta}}\le\|\mathcal{H}^{\eps,\zeta}\tau\|_{\mathcal{D}^{9\kappa;\eps,\zeta}}+\|\tilde{\mathcal{H}}^{\eps,\zeta}\tau\|_{\mathcal{D}^{9\kappa;\eps,\zeta}},
\]
and this is bounded by a polynomial in $\|(\hat{\Pi}^{\eps,\zeta},\hat{\Gamma}^{\eps,\zeta})\|$
by the last statement n \zcref{lem:Lranges}. Integrating in time
and taking the $p$th moment using \zcref{thm:model-norm-bounded},
we obtain \zcref{eq:goal-abstract}.
\end{proof}

\section{\label{sec:Explicit-calculations}Calculations on the explicit terms}

In this section we compute the expectation of each term appearing
on the right side of \zcref{eq:expand-nonlinearity-boundary}, as
well as the renormalization constants $C^{(1)}_{\zeta}(x)$ and $C^{(2)}$
defined in \zcref[range]{eq:C1zetadef,eq:C2renorm}, and the variance/correlation
function of $\rscherryrr[rsP]^{\zeta}$. The common thread among the
calculations in this section is that they are really calculations
of explicit, nonzero quantities, rather than upper/lower bounds. In
contrast, the subsequent \zcref{sec:BPHZ} concerns upper bounds on
correlation functions that go to zero in the limit $\eps,\zeta\to0$.

\subsection{\label{subsec:First-renormalization-constant}First renormalization
constant}

We begin by investigating the two-point correlation function of $\rslollipopr[rsP]^{\zeta}$.
We recall that $\rslollipopr[rsP]^{\zeta}$ is the stationary gradient
of the reflected and periodized solution to the Edwards--Wilkinson
equation, so the following calculation closely parallels the corresponding
calculation for the Edwards--Wilkinson equation on the line or on
the torus.
\begin{lem}
\label{lem:EW-cov}We have, for all $t,t',x,x'\in\mathbb{R}$, that
\begin{equation}
\mathbb{E}\left[\rslollipopr[rsP]^{\zeta}_{t}(x)\rslollipopr[rsP]^{\zeta}_{t'}(x')\right]=p_{|t-t'|}*\Sh^{\zeta}_{2L}(x-x')-p_{|t-t'|}*\Sh^{\zeta}_{2L}(x+x').\label{eq:EW-cov}
\end{equation}
\end{lem}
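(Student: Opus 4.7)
The plan is to compute the left-hand side directly via the Itô isometry applied to the stochastic integral representation of $\rslollipopr[rsP]^{\zeta}$, and then reduce the resulting integral to a total derivative using the heat equation.

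By the inductive definitions \eqref{eq:basic-reconstruction-2-1}--\eqref{eq:Qpartial-hat}, we have the stationary representation
\[
\rslollipopr[rsP]^{\zeta}_{t}(x) = \int_{-\infty}^{t} \int_{\mathbb{R}} p'_{t-s}(x-y)\, \dif W^{\zeta}_{s}(y),
\]
so the first step is to apply the formal covariance \eqref{eq:dWzetacov} of $\dif W^{\zeta}$ to write, assuming without loss of generality that $t \ge t'$,
\[
\mathbb{E}\left[\rslollipopr[rsP]^{\zeta}_{t}(x)\rslollipopr[rsP]^{\zeta}_{t'}(x')\right] = \int_{-\infty}^{t'} \iint_{\mathbb{R}^2} p'_{t-s}(x-y)\, p'_{t'-s}(x'-y')\, \bigl[\Sh^{\zeta}_{2L}(y-y') + \Sh^{\zeta}_{2L}(y+y')\bigr]\, \dif y\, \dif y'\, \dif s.
\]

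Next I would evaluate the two spatial convolutions separately. Changing variables $u = x-y$, $v = x'-y'$ and using the oddness $p'_{r}(-w) = -p'_{r}(w)$ together with the semigroup identity $p'_{r} * p'_{r'} = p''_{r+r'}$, the first piece becomes $-\bigl(\Sh^{\zeta}_{2L} * p''_{t+t'-2s}\bigr)(x-x')$, while the second piece becomes $+\bigl(\Sh^{\zeta}_{2L} * p''_{t+t'-2s}\bigr)(x+x')$. The difference in sign between the two terms is exactly what is expected from the even reflection built into the extension of the noise.

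The remaining $s$-integral is then handled by the substitution $u = t+t'-2s$ together with the heat equation identity $p''_{u} = 2\partial_{u} p_{u}$, so the integrand becomes an exact derivative in $u$. Evaluating by the fundamental theorem of calculus, the lower endpoint $u = t-t' = |t-t'|$ gives precisely the right-hand side of \eqref{eq:EW-cov}, while the upper endpoint $u = \infty$ contributes zero: indeed, since $\Sh^{\zeta}_{2L}$ is $2L$-periodic with mean $\tfrac{1}{2L}$, the heat semigroup drives $\Sh^{\zeta}_{2L} * p_{u}(z)$ to the constant $\tfrac{1}{2L}$ uniformly in $z$ as $u \to \infty$, so the two surviving boundary contributions at $u = \infty$ cancel.

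No step of this computation is genuinely hard; the only things worth double-checking are the sign arising from the odd parity of $p'$ in the first convolution, and the fact that the $u \to \infty$ limits cancel (which is what forces the covariance to depend only on the periodization $p_{|t-t'|} * \Sh^{\zeta}_{2L}$ rather than on an additional constant). Everything else is a standard Gaussian-isometry calculation of the type used repeatedly in the Edwards--Wilkinson literature.
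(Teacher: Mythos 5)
Your proof is correct and follows essentially the same route as the paper's: Itô isometry with the reflected-periodized covariance, reduction of the double spatial convolution to $\mp\,\Sh^{\zeta}_{2L}*p''_{t+t'-2s}$ via the parity of $p'$, and then recognizing the time integrand as an exact derivative via the heat equation. Your explicit treatment of the boundary term at $u=\infty$ (the cancellation of the two constants $\tfrac{1}{2L}$) is a point the paper leaves implicit, but the argument is the same.
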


\begin{proof}
From the definitions \zcref{eq:basic-reconstruction,eq:Qinductive-hat}
we see that 
\[
\rslollipopr[rsP]^{\zeta}_{t}(x)=\int^{t}_{-\infty}\int_{\mathbb{R}}p_{t-s}(x-y)\partial_{x}\dif W^{\zeta}_{s}(y)=\int^{t}_{-\infty}\int_{\mathbb{R}}\partial_{x}p_{t-s}(x-y)\,\dif W^{\zeta}_{s}(y).
\]
We can thus compute the second moment as
\begin{align*}
\mathbb{E}\left[\rslollipopr[rsP]^{\zeta}_{t}(x)\rslollipopr[rsP]^{\eps,\zeta}_{t'}(x')\right] & =\int^{t\wedge t'}_{-\infty}\iint_{\mathbb{R}^{2}}\partial_{x}p_{t-s}(x-y)\partial_{x}p_{t-s}(x'-y')\left(\Sh^{\zeta}_{2L}(y-y')+\Sh^{\zeta}_{2L}(y+y')\right)\,\dif y\,\dif y'\,\dif s\\
 & =-\int^{t\wedge t'}_{-\infty}\left(\Delta p_{t+t'-2s}*\Sh^{\zeta}_{2L}(x-x')-\Delta p_{t+t'-2s}*\Sh^{\zeta}_{2L}(x+x')\right)\,\dif s\\
 & =\int^{t\wedge t'}_{-\infty}\frac{\dif}{\dif s}\left(p_{t+t'-2s}*\Sh^{\zeta}_{2L}(x-x')-p_{t+t'-2s}*\Sh^{\zeta}_{2L}(x+x')\right)\,\dif s\\
 & =p_{|t-t'|}*\Sh^{\zeta}_{2L}(x-x')-p_{|t-t'|}*\Sh^{\zeta}_{2L}(x+x'),
\end{align*}
which is \zcref{eq:EW-cov}.
\end{proof}

From this we can in particular prove \zcref{prop:C1zeta}.
\begin{proof}[Proof of \zcref{prop:C1zeta}]
From \zcref{eq:EW-cov} we compute
\[
\mathbb{E}\left[\rscherryrr[rsP]^{\zeta}_{t}(x)\right]=\mathbb{E}\left[\rslollipopr[rsP]^{\zeta}_{t}(x)\rslollipopr[rsP]^{\zeta}_{t}(x)\right]=\Sh^{\zeta}_{2L}(0)-\Sh^{\zeta}_{2L}(2x)\overset{\zcref{eq:Shrescale}}{=}\Sh^{\zeta}_{2L}(0)-\oh\Sh^{\zeta/2}_{L}(x).\qedhere
\]
\end{proof}

\subsection{Fourier preliminaries}

Let us first fix our normalization of Fourier series. We will write
a $2L$-periodic function $f$ in terms of its Fourier series
\begin{equation}
f(x)=\sum_{k\in\mathbb{Z}}\hat{f}(k)\e^{\pi\ii kx/L},\ x\in\mathbb{R},\qquad\text{with}\qquad\hat{f}(k)=\frac{1}{2L}\int^{L}_{-L}f(x)\e^{-\pi\ii kx/L}\,\dif x,\ k\in\mathbb{Z}.\label{eq:fourierseries}
\end{equation}
With this normalization, we have the Fourier identities
\begin{equation}
\widehat{\partial_{x}f}(k)=\frac{\pi\ii k}{L}\hat{f}(k),\quad\widehat{fg}(k)=\sum_{j\in\mathbb{Z}}\hat{f}(j)\hat{g}(k-j),\quad\text{and}\quad\widehat{p_{t}*f}(k)=\e^{-\frac{k^{2}\pi^{2}}{2L^{2}}t}\hat{f}(k)\quad\text{for all }k\in\mathbb{Z},\label{eq:fourier-identities}
\end{equation}
as well as
\begin{equation}
\frac{1}{2L}\int^{L}_{-L}\e^{\pi\ii jx/L}\,\dif x=\delta_{j}\coloneqq\mathbf{1}_{j=0}.\label{eq:integratedelta}
\end{equation}
We also recall that
\begin{equation}
f\text{ even and real}\iff\hat{f}\text{ even and real}.\label{eq:fevenreal}
\end{equation}
In particular, if $f$ and $g$ are even real functions, then
\begin{equation}
\int^{L}_{0}f(x)g(x)\,\dif x=L\cdot\frac{1}{2L}\int^{L}_{-L}f(x)g(x)\,\dif x\overset{\zcref{eq:fourierseries}}{=}L\widehat{fg}(0)\overset{\zcref{eq:fourier-identities}}{=}L\sum_{j\in\mathbb{Z}}\hat{f}(j)\hat{g}(-j)\overset{\zcref{eq:fevenreal}}{=}L\sum_{j\in\mathbb{Z}}\hat{f}(j)\hat{g}(j).\label{eq:feven-int}
\end{equation}

We will only use the Fourier transform in space, never in time, and
for the realizations of our trees we will use the aesthetically nicer
notation, for example, $\widehat{\rsipcherryrb[rsP]}^{\eps,\zeta}_{t}(j)$,
rather than the perhaps-more-correct $\widehat{\rsipcherryrb[rsP]^{\eps,\zeta}_{t}}(j)$.
It is an immediate consequence of \zcref{eq:fourier-identities} that,
if we now assume that $f$ is a time-dependent periodic function,
then
\begin{equation}
\widehat{(p\circledast f)_{t}}(k)=\int^{t}_{-\infty}\e^{-\frac{\pi^{2}k^{2}}{2L^{2}}(t-s)}\hat{f}_{s}(k)\,\dif s\quad\text{and hence}\quad\widehat{(\partial_{x}p\circledast f)_{t}}(k)=\frac{\pi\ii k}{L}\int^{t}_{-\infty}\e^{-\frac{\pi^{2}k^{2}}{2L^{2}}(t-s)}\hat{f}_{s}(k)\,\dif s.\label{eq:HK-spacetime-Fourier}
\end{equation}
We also record the fact that, for each fixed $k\in\mathbb{Z}$, we
have
\begin{equation}
\lim_{\zeta\downarrow0}\widehat{\Sh}^{\zeta}_{2L}(k)=\lim_{\zeta\downarrow0}\frac{1}{2L}\int^{L}_{-L}\Sh^{\zeta}_{2L}(x)\e^{-\pi\ii kx/L}\,\dif x=\frac{1}{2L}\label{eq:limShfourier}
\end{equation}
by the definition \zcref{eq:Shepsdef}, and moreover that
\begin{equation}
\widehat{\Sh}^{\zeta}_{2L}(k)=\widehat{\Sh}^{\zeta}_{2L}(-k)\qquad\text{for all }k\in\mathbb{Z}\label{eq:Fourier-symmetric}
\end{equation}
since $\Sh^{\zeta}_{2L}$ is real and even.

We can also write
\begin{align}
\widehat{\varphi}^{\eps}_{\uu,\vv} & (k)\ovset{\zcref{eq:varphiuvdef}}=-\frac{\uu}{2L}\int^{L}_{-L}\e^{-\pi\ii kx/L}\Sh_{2L}*\psi^{\eps}(x)\,\dif x-\frac{\vv}{2L}\int^{L}_{-L}\e^{-\pi\ii kx/L}\Sh_{2L}*\psi^{\eps}(x-L)\,\dif x\nonumber \\
 & =-\frac{\uu}{2L}\int^{\infty}_{-\infty}\e^{-\pi\ii kx/L}\psi^{\eps}(x)\,\dif x-\frac{\vv}{2L}\int^{\infty}_{-\infty}\e^{-\pi\ii kx/L}\psi^{\eps}(x-L)\,\dif x\nonumber \\
\ovset{\zcref{eq:psiepsdef}} & =-\frac{\uu\eps^{-1}}{2L}\int^{\infty}_{-\infty}\e^{-\pi\ii kx/L}\psi(\eps^{-1}x)\,\dif x-\frac{\vv\eps^{-1}}{2L}\int^{\infty}_{-\infty}\e^{-\pi\ii kx/L}\psi(\eps^{-1}(x-L))\,\dif x\nonumber \\
 & =-\frac{\uu}{2L}\int^{\infty}_{-\infty}\e^{-\pi\ii\eps kx/L}\psi(x)\,\dif x-(-1)^{k}\frac{\vv}{2L}\int^{\infty}_{-\infty}\e^{-\pi\ii\eps kx/L}\psi(x)\,\dif x=-\frac{1}{2L}\left(\uu+(-1)^{k}\vv\right)\widehat{\psi}(\eps k/L)\label{eq:phihatepsilon}
\end{align}
as long as $\eps<\frac{4}{3}L$, which allows the third identity to
hold (recalling from \zcref{eq:psiproperties} that $\supp\psi\subseteq(-\nicefrac{3}{4},\nicefrac{3}{4})$).
Here we have used the notation
\[
\hat{\psi}(k)\coloneqq\int^{\infty}_{-\infty}\e^{-\pi\ii kx}\psi(x)\,\dif x
\]
for the full-line Fourier transform of $\psi$. Since $\psi$ is not
$2L$-periodic and we will only use the full-line Fourier transform
for $\psi$, we trust that this slightly abusive notation will not
cause confusion. In particular, by \zcref{eq:psiepshalflineint} we
have $\hat{\psi}(0)=1$ and hence 
\begin{equation}
\widehat{\varphi}^{\eps}_{\uu,\vv}(0)=-\frac{\uu+\vv}{2L}.\label{eq:psihatzero}
\end{equation}

We will frequently use the following statements on the Fourier transforms
of $\rslollipopr[rsP]^{\zeta}$ and $\rslollipopb[rsP]^{\eps}_{t}$:
\begin{lem}
\label{lem:EW-cov-fourier}We have
\begin{equation}
\mathbb{E}\left[\widehat{\rslollipopr[rsP]}^{\zeta}_{t}(k)\widehat{\rslollipopr[rsP]}^{\zeta}_{s}(\ell)\right]=\left(\delta_{k+\ell}-\delta_{k-\ell}\right)\widehat{\Sh}^{\zeta}_{2L}(k)\e^{-\frac{\pi^{2}k^{2}}{2L^{2}}|t-s|}\label{eq:EW-cov-fourier}
\end{equation}
and
\begin{equation}
\widehat{\rslollipopb[rsP]}^{\eps}(k)=\frac{2\ii L(1-\delta_{k})}{\pi k}\widehat{\varphi}^{\eps}_{\uu,\vv}(k).\label{eq:lollipopft}
\end{equation}
\end{lem}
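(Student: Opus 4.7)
\medskip

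\noindent\textbf{Proof proposal.} The plan is to prove the two identities separately, both by direct Fourier computation; neither requires any machinery beyond the definitions of the modified canonical lift and the identities recorded in \zcref[range]{eq:fourierseries,eq:integratedelta}.

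For \zcref{eq:EW-cov-fourier}, the starting point is the covariance formula \zcref{eq:EW-cov} from \zcref{lem:EW-cov}. I would apply the $2L$-periodic Fourier coefficient map in both spatial variables to the identity
\[
\mathbb{E}\left[\rslollipopr[rsP]^{\zeta}_{t}(x)\rslollipopr[rsP]^{\zeta}_{s}(x')\right]=p_{|t-s|}*\Sh^{\zeta}_{2L}(x-x')-p_{|t-s|}*\Sh^{\zeta}_{2L}(x+x').
\]
Writing $G \coloneqq p_{|t-s|}*\Sh^{\zeta}_{2L}$ and expanding $G$ in its Fourier series, the first term becomes $\sum_{m} \hat{G}(m) e^{\pi\ii m(x-x')/L}$ and the second $\sum_m \hat{G}(m) e^{\pi\ii m(x+x')/L}$. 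Using \zcref{eq:integratedelta} to compute the $2L$-periodic Fourier coefficient in each of $x$ and $x'$ yields $\hat{G}(k)\delta_{k+\ell}$ and $\hat{G}(k)\delta_{k-\ell}$, respectively; combining the two terms with the correct signs produces the prefactor $\delta_{k+\ell}-\delta_{k-\ell}$. The final input is the heat-kernel identity in the third equation of \zcref{eq:fourier-identities}, which gives $\hat{G}(k)=\e^{-\pi^2 k^2|t-s|/(2L^2)}\widehat{\Sh}^{\zeta}_{2L}(k)$, producing \zcref{eq:EW-cov-fourier}.

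For \zcref{eq:lollipopft}, the first step is to unwind the modified canonical lift: by \zcref{eq:basic-reconstruction-2-1,eq:Qpartial-hat} we have $\rslollipopb[rsP]^{\eps}_{t}(x) = (p'\circledast \varphi^{\eps}_{\uu,\vv})(x)$, i.e.\ a space-time convolution of $p'$ with the time-independent periodic function $\varphi^{\eps}_{\uu,\vv}$. I would take the spatial Fourier coefficient and apply \zcref{eq:HK-spacetime-Fourier} with $\hat{f}_{s}(k)=\widehat{\varphi}^{\eps}_{\uu,\vv}(k)$ to get, for $k\ne 0$,
\[
\widehat{\rslollipopb[rsP]}^{\eps}(k) \;=\; \frac{\pi\ii k}{L}\,\widehat{\varphi}^{\eps}_{\uu,\vv}(k)\int_{0}^{\infty}\e^{-\frac{\pi^{2}k^{2}}{2L^{2}}s}\,\dif s \;=\; \frac{\pi\ii k}{L}\,\widehat{\varphi}^{\eps}_{\uu,\vv}(k)\cdot\frac{2L^{2}}{\pi^{2}k^{2}} \;=\; \frac{2\ii L}{\pi k}\,\widehat{\varphi}^{\eps}_{\uu,\vv}(k),
\]
which matches \zcref{eq:lollipopft}.

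The only point that warrants care is the $k=0$ mode: the integral in the displayed formula above diverges, but the prefactor $\pi\ii k/L$ vanishes, so the calculation via \zcref{eq:HK-spacetime-Fourier} is ambiguous and needs to be read off directly from the definition. Since $\int_{\mathbb{R}}\partial_{x}p_{t-s}(x-y)\,\dif x = 0$ for every $s<t$ and $y$, integrating also against $\varphi^\eps_{\uu,\vv}(y)$ and over $s\in(-\infty,t)$ shows that the zero mode of $\rslollipopb[rsP]^{\eps}$ vanishes, which is exactly what the factor $(1-\delta_{k})$ in \zcref{eq:lollipopft} encodes. This is not really an obstacle but rather a bookkeeping point; there is no serious difficulty in the lemma, and both statements should follow in a short, self-contained computation of a few lines each.
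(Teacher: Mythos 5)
Your proposal is correct and follows essentially the same route as the paper: both identities are obtained by direct Fourier computation from \zcref{eq:EW-cov}, \zcref{eq:integratedelta}, \zcref{eq:fourier-identities}, and \zcref{eq:HK-spacetime-Fourier} (the paper changes variables in the double integral where you expand $p_{|t-s|}*\Sh^{\zeta}_{2L}$ in its Fourier series, which is an immaterial difference). Your explicit treatment of the $k=0$ mode of $\rslollipopb[rsP]^{\eps}$ is a valid and slightly more careful bookkeeping point than the paper's, which simply inserts the factor $(1-\delta_{k})$ without comment.
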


\begin{proof}
For \zcref{eq:EW-cov-fourier}, we can write 
\begin{align*}
\mathbb{E}\left[\widehat{\rslollipopr[rsP]}^{\zeta}_{t}(k)\widehat{\rslollipopr[rsP]}^{\zeta}_{s}(\ell)\right]\ovset{\zcref{eq:fourier-identities}} & =\frac{1}{(2L)^{2}}\iint_{[-L,L]^{2}}\mathbb{E}\left[\rslollipopr[rsP]^{\zeta}_{t}(x)\rslollipopr[rsP]^{\zeta}_{s}(y)\right]\e^{-\pi\ii(kx+\ell y)/L}\,\dif x\,\dif y\\
\ovset{\zcref{eq:EW-cov}} & =\frac{1}{(2L)^{2}}\iint_{[-L,L]^{2}}\left(p_{|t-s|}*\Sh^{\zeta}_{2L}(x-y)-p_{|t-s|}*\Sh^{\zeta}_{2L}(x+y)\right)\e^{-\pi\ii(kx+\ell y)/L}\,\dif x\,\dif y\\
 & =\frac{1}{(2L)^{2}}\iint_{[-L,L]^{2}}p_{|t-s|}*\Sh^{\zeta}_{2L}(x)\left(\e^{-\pi\ii(k(x+y)+\ell y)/L}-\e^{-\pi\ii(k(x-y)+\ell y)/L}\right)\,\dif x\,\dif y\\
\ovset{\zcref{eq:integratedelta}} & =\left(\delta_{k+\ell}-\delta_{k-\ell}\right)\left(p_{|t-s|}*\Sh^{\zeta}_{2L}\right)^{\wedge}(j)\ovset{\zcref{eq:fourier-identities}}=\left(\delta_{k+\ell}-\delta_{k-\ell}\right)\e^{-\frac{\pi^{2}k^{2}}{2L^{2}}|t-s|}\widehat{\Sh}^{\zeta}_{2L}(j).
\end{align*}
For \zcref{eq:lollipopft}, we simply compute 
\[
\widehat{\rslollipopb[rsP]}^{\eps}(k)\overset{\zcref{eq:HK-spacetime-Fourier}}{=}\frac{\pi\ii k}{L}\int^{0}_{-\infty}\e^{\frac{\pi^{2}k^{2}}{2L^{2}}s}\widehat{\varphi}^{\eps}_{\uu,\vv}(k)\,\dif s=\frac{2\ii L(1-\delta_{k})}{\pi k}\widehat{\varphi}^{\eps}_{\uu,\vv}(k).\qedhere
\]
\end{proof}

The following lemma will be important in some of our Fourier calculations.
\begin{lem}
\label{lem:partialfractions}For any $\omega\in\mathbb{C}\setminus(\ii\RR)$
and any $y\in((2\Re\omega)^{-1}\mathbb{Z})\setminus((\omega^{-1}\mathbb{Z})\cup(\overline{\omega}^{-1}\mathbb{Z}))$,
we have
\begin{equation}
\sum_{k\in\mathbb{Z}}\frac{|\omega|^{2}y^{2}-k^{2}}{|\omega|^{4}y^{4}-2\Re(\omega^{2})k^{2}y^{2}+k^{4}}=0.\label{eq:sumiscot}
\end{equation}
In particular, for any $n\in\mathbb{Z}\setminus\{0\}$, we have
\begin{equation}
\sum_{k\in\mathbb{Z}}\frac{n^{2}-k^{2}}{n^{4}+n^{2}k^{2}+k^{4}}=0\qquad\text{and}\qquad\sum_{k\in\mathbb{Z}}\frac{n^{2}-2k^{2}}{n^{4}+4k^{4}}=0.\label{eq:seriesiszero}
\end{equation}
\end{lem}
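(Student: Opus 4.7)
The plan is to factor the denominator, apply a partial fraction decomposition, evaluate each piece with the classical cotangent series, and then exploit a sum-to-product identity to see the vanishing.

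First I would observe that
\[
|\omega|^{4}y^{4}-2\Re(\omega^{2})k^{2}y^{2}+k^{4}=(k^{2}-\omega^{2}y^{2})(k^{2}-\bar\omega^{2}y^{2}),
\]
since $\omega^{2}+\bar\omega^{2}=2\Re(\omega^{2})$ and $\omega^{2}\bar\omega^{2}=|\omega|^{4}$. The hypothesis $y\notin(\omega^{-1}\ZZ)\cup(\bar\omega^{-1}\ZZ)$ guarantees that neither factor ever vanishes, so every term of the series is well-defined. A short calculation (solve $A(k^{2}-\bar\omega^{2}y^{2})+B(k^{2}-\omega^{2}y^{2})=|\omega|^{2}y^{2}-k^{2}$) gives the partial fraction decomposition
\[
\frac{|\omega|^{2}y^{2}-k^{2}}{(k^{2}-\omega^{2}y^{2})(k^{2}-\bar\omega^{2}y^{2})}=\frac{-1}{2\Re\omega}\left(\frac{\omega}{k^{2}-\omega^{2}y^{2}}+\frac{\bar\omega}{k^{2}-\bar\omega^{2}y^{2}}\right),
\]
with the $\omega\notin\ii\RR$ assumption ensuring $2\Re\omega\ne 0$. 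Both the original summand and each piece on the right are $O(k^{-2})$, so we can split the absolutely convergent series over $k\in\ZZ$ term-by-term.

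Next I would invoke the classical Mittag--Leffler identity
\[
\sum_{k\in\ZZ}\frac{1}{k^{2}-z^{2}}=-\frac{\pi\cot(\pi z)}{z},\qquad z\in\CC\setminus\ZZ,
\]
applied with $z=\omega y$ and $z=\bar\omega y$ (both allowed by the exclusion condition on $y$). This gives
\[
\sum_{k\in\ZZ}\frac{|\omega|^{2}y^{2}-k^{2}}{(k^{2}-\omega^{2}y^{2})(k^{2}-\bar\omega^{2}y^{2})}=\frac{\pi}{2y\Re\omega}\bigl(\cot(\pi\omega y)+\cot(\pi\bar\omega y)\bigr).
\]
Applying the standard identity $\cot\alpha+\cot\beta=\sin(\alpha+\beta)/(\sin\alpha\sin\beta)$ with $\alpha=\pi\omega y$, $\beta=\pi\bar\omega y$ yields
\[
\cot(\pi\omega y)+\cot(\pi\bar\omega y)=\frac{\sin(2\pi(\Re\omega)y)}{\sin(\pi\omega y)\sin(\pi\bar\omega y)}.
\]
The denominator is nonzero by the hypothesis on $y$, while the numerator vanishes exactly when $2(\Re\omega)y\in\ZZ$, i.e.\ $y\in(2\Re\omega)^{-1}\ZZ$. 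This proves \zcref{eq:sumiscot}.

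For \zcref{eq:seriesiszero}, the first identity follows by taking $\omega=\e^{\ii\pi/3}$ (so $|\omega|=1$, $\Re\omega=1/2$, $\Re(\omega^{2})=-1/2$) and $y=n$: then the hypotheses are satisfied since $(2\Re\omega)^{-1}\ZZ=\ZZ$ and $\omega^{\pm 1}\ZZ\cap\RR=\{0\}$. The second follows by taking $\omega=\e^{\ii\pi/4}$ (so $\Re(\omega^{2})=0$, $|\omega|=1$, $2\Re\omega=\sqrt{2}$) and $y=n/\sqrt{2}$, after multiplying numerator and denominator of the summand by $2$. I do not anticipate any genuine obstacle; the only mild care needed is verifying the parameter conditions in the two special cases so that the general formula applies.
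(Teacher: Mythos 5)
Your proof is correct and follows essentially the same route as the paper: both reduce the sum to $\frac{\pi}{2y\Re\omega}\bigl(\cot(\pi\omega y)+\cot(\pi\bar\omega y)\bigr)$ via the Euler cotangent expansion (you run the computation backwards through an explicit partial-fraction decomposition, the paper runs it forwards by adding the two cotangent series), and both then observe this vanishes precisely for $y\in(2\Re\omega)^{-1}\mathbb{Z}$ away from the poles — you via the sum-to-product identity, the paper via the exponential form of cotangent. The specializations for \zcref{eq:seriesiszero} are the same choices of $\omega$ up to a harmless rescaling.
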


\begin{proof}
We have the identity of meromorphic functions
\begin{equation}
\pi\cot(\pi y)=\frac{1}{y}+2y\sum^{\infty}_{k=1}\frac{1}{y^{2}-k^{2}}=\sum_{k\in\mathbb{Z}}\frac{y}{y^{2}-k^{2}},\label{eq:Euleridentity}
\end{equation}
with a pole at each element of $\mathbb{Z}$. This is a classical
formula due to Euler; see e.g.\ \cite[Chapter 26]{aigner:ziegler:proofs:2018}
for a modern exposition. Replacing $y$ with $\omega y$ in \zcref{eq:Euleridentity},
we see that, for any $y\not\in\omega^{-1}\mathbb{Z}$,
\[
\pi\cot(\pi\omega y)=\sum_{k\in\mathbb{Z}}\frac{\omega y}{\omega^{2}y^{2}-k^{2}},
\]
and hence, if $y\not\in(\omega^{-1}\mathbb{Z})\cup(\overline{\omega}^{-1}\mathbb{Z})$,
then
\begin{multline}
\pi\left(\cot(\pi\omega y)+\cot(\pi\overline{\omega}y)\right)=\sum_{k\in\mathbb{Z}}\left(\frac{\omega y}{\omega^{2}y^{2}-k^{2}}+\frac{\overline{\omega}y}{\overline{\omega}^{2}y^{2}-k^{2}}\right)\\
=\sum_{k\in\mathbb{Z}}\frac{\omega y(\overline{\omega}^{2}y^{2}-k^{2})+\overline{\omega}y(\omega^{2}y^{2}-k^{2})}{(\omega^{2}y^{2}-k^{2})(\overline{\omega}^{2}y^{2}-k^{2})}=2y(\Re\omega)\sum_{k\in\mathbb{Z}}\frac{|\omega|^{2}y^{2}-k^{2}}{|\omega|^{4}y^{4}-2\Re(\omega^{2})k^{2}y^{2}+k^{4}}.\label{eq:expandcotsum}
\end{multline}
Rearranging using the assumption $\Re\omega\ne0$, we get
\begin{align*}
\sum_{k\in\mathbb{Z}}\frac{|\omega|^{2}y^{2}-k^{2}}{|\omega|^{4}y^{4}-2\Re(\omega^{2})k^{2}y^{2}+k^{4}} & =\frac{\pi}{2y(\Re\omega)}\left(\cot(\pi\omega y)+\cot(\pi\overline{\omega}y)\right)\\
 & =\frac{\pi\ii(\e^{4\pi\ii y\Re\omega}-1)}{y(\Re\omega)(\e^{2\pi\ii y\omega}-1)(\e^{2\pi\ii y\overline{\omega}}-1)},
\end{align*}
and the last quantity is evidently zero if $y\in(2\Re\omega)^{-1}\mathbb{Z}\setminus((\omega^{-1}\mathbb{Z})\cup(\overline{\omega}^{-1}\mathbb{Z}))$,
so we obtain \zcref{eq:sumiscot}. The claims in \zcref{eq:seriesiszero}
then follow by using \zcref{eq:sumiscot} with $\omega=\frac{1}{2}\left(1+\ii\sqrt{3}\right)$
and $\omega=\frac{1}{2}(1+\ii)$, respectively.
\end{proof}

\subsection{The random boundary term}

Recall the definition \zcref{eq:Xepsdef} of $\mathcal{X}^{\eps}_{\uu,\vv;0,t}$.
In this section we calculate the limiting behavior of $\mathcal{X}^{\eps}_{\uu,\vv;0,t}\left(\rsrenorm[rsP]^{\zeta}\right)$.
Unlike the contributions from the other trees appearing on the right
side of \zcref{eq:expand-nonlinearity-boundary}, this one does not
converge to a constant when we take $\zeta\downarrow0$ and then $\eps\downarrow0$.
Instead, it converges in distribution to a Gaussian random variable
that is independent of the driving noise. Since $\mathcal{X}^{\eps}_{\uu,\vv;0,t}\left(\rsrenorm[rsP]^{\zeta}\right)$
lives in the homogeneous second Wiener chaos of the driving noise
(as it is mean $0$ by \zcref{eq:C1zetadef,eq:Qtilderenorm}), we
can study its limiting Gaussian behavior using the fourth moment theorem.
In fact, an enhanced version of the fourth moment theorem proved in
\cite{peccati:tudor:2005:gaussian} will give us the independence
from the driving noise with no additional work. The main result of
this section is the following.
\begin{prop}
\label{prop:Xredcherrylimit}We have
\begin{equation}
\lim_{\eps\downarrow0}\lim_{\zeta\downarrow0}\Law\left(\mathcal{X}^{\eps}_{\uu,\vv;0,t}\left(\rsrenorm[rsP]^{\zeta}\right),(\dif W_{t})\right)=\mathcal{N}(0,t(\uu^{2}+\vv^{2})V_{\psi})\otimes\Law\left((\dif W_{t})\right)\qquad\text{weakly},\label{eq:Xredcherryconvindistn}
\end{equation}
where 
\begin{equation}
V_{\psi}\coloneqq\frac{1}{\pi^{2}}\int_{\mathbb{R}^{2}}\frac{\hat{\psi}(x)\left(\widehat{\psi}(x)-\widehat{\psi}(x-2y)\right)}{x^{2}+(x-y)^{2}}\,\dif x\,\dif y.\label{eq:Vpsidef}
\end{equation}
\end{prop}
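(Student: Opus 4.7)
The key observation is that $\mathcal{X}^\eps_{\uu,\vv;0,t}(\rsrenorm[rsP]^\zeta)$ is an element of the second homogeneous Wiener chaos of the driving white noise $(\dif W_t)$. Indeed, from \eqref{eq:basic-reconstruction-2-1} and \eqref{eq:Qpartial-hat} one can write $\rslollipopr[rsP]^\zeta_r(x)=I_1(\phi^\zeta_{r,x})$ for an explicit kernel $\phi^\zeta_{r,x}\in L^2$ (built from the spatial derivative of the reflected/periodized heat kernel convolved with $\rho^\zeta$), and then \eqref{eq:subtract-exp} gives $\rsrenorm[rsP]^\zeta_r(x)=I_2(\phi^\zeta_{r,x}\otimes\phi^\zeta_{r,x})$ as the corresponding Wick square. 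Consequently, $\mathcal{X}^\eps_{\uu,\vv;0,t}(\rsrenorm[rsP]^\zeta)=I_2(f_{\eps,\zeta})$ for the symmetric $L^2$ kernel $f_{\eps,\zeta}$ obtained by integrating $\phi^\zeta_{r,x}\otimes\phi^\zeta_{r,x}$ against $\Psi^\eps_{0,t;r}\varphi^\eps_{\uu,\vv}(x)$. This reduces the claim to a central limit theorem for an element of a fixed Wiener chaos, together with its joint behaviour with the first chaos.

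The heart of the argument is the computation of $\sigma^2_{\eps,\zeta}\coloneqq\Var(\mathcal{X}^\eps_{\uu,\vv;0,t}(\rsrenorm[rsP]^\zeta))$, which by the Wiener--It\^o isometry equals $2\|f_{\eps,\zeta}\|^2_{L^2}$. Using Wick's formula together with the covariance \eqref{eq:EW-cov}, one obtains
\begin{equation*}
\sigma^2_{\eps,\zeta}=2\int_0^t\!\int_0^t\Psi^\eps_{0,t;r}\Psi^\eps_{0,t;r'}\iint_{[0,L]^2}\varphi^\eps_{\uu,\vv}(x)\varphi^\eps_{\uu,\vv}(y)\bigl(\mathbb{E}[\rslollipopr[rsP]^\zeta_r(x)\rslollipopr[rsP]^\zeta_{r'}(y)]\bigr)^2\,\dif x\,\dif y\,\dif r\,\dif r'.
\end{equation*}
I would pass to Fourier series via \eqref{eq:feven-int}, \eqref{eq:phihatepsilon} and \eqref{eq:EW-cov-fourier}, send $\zeta\downarrow0$ using \eqref{eq:limShfourier}, and perform the time integral (which produces a factor roughly $2tL^2/(\pi^2(j^2+(k-j)^2))$), converting the variance into a double Fourier sum over $(k,j)\in\mathbb{Z}^2$. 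The dominant contribution sits in the regime $|k|,|j|\sim L/\eps$; rescaling $(k,j)=L\eps^{-1}(\xi,\eta)$ turns the sum into a Riemann approximation of the integral in \eqref{eq:Vpsidef}. The parity-dependent weight $(\uu+(-1)^k\vv)^2$ averages to $\uu^2+\vv^2$, reflecting the fact that the boundary supports of $\varphi^\eps_{\uu,\vv}$ near $0$ and near $L$ are separated by a macroscopic distance and therefore the cross-boundary contributions vanish. This yields $\sigma^2_{\eps,\zeta}\to t(\uu^2+\vv^2)V_\psi$.

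Marginal Gaussianity follows from the Nualart--Peccati fourth moment theorem: one bounds the contraction norm $\|f_{\eps,\zeta}\otimes_1 f_{\eps,\zeta}\|_{L^2}$ using the same Fourier representation as in the variance calculation, where the additional contraction produces an extra heat-kernel damping factor that makes this norm vanish in the limit. To upgrade marginal convergence to joint convergence with the noise, it is enough (by a Cram\'er--Wold argument) to test against any finite family $I_1(g_1),\ldots,I_1(g_m)$ living in the first Wiener chaos. Since Wiener chaoses of different orders are orthogonal, the joint covariance matrix of $(\mathcal{X}^\eps_{\uu,\vv;0,t}(\rsrenorm[rsP]^\zeta),I_1(g_1),\ldots,I_1(g_m))$ is block-diagonal and converges to a block-diagonal limit. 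The Peccati--Tudor multivariate CLT for vectors with components in fixed Wiener chaoses, combined with the marginal CLT already established, then gives joint convergence to a Gaussian vector whose covariance is block-diagonal; the independence statement in \eqref{eq:Xredcherryconvindistn} is exactly this block-diagonal structure.

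The main obstacle is the variance identification in the second step: carefully extracting the precise constant \eqref{eq:Vpsidef} requires tracking the subtle cancellation coming from the reflected structure of \eqref{eq:EW-cov} (a difference of two heat kernels), the high-frequency Fourier sum at scale $L/\eps$ which sits at the threshold of summability, and the interplay with the time averaging through $\Psi^\eps_{0,t;\cdot}$. The contraction estimate needed for the fourth moment theorem is a variation on this same calculation, and the passage to joint convergence is soft given orthogonality of the chaoses, so once the variance limit is in place the remaining steps are essentially mechanical.
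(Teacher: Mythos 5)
Your proposal is correct and follows essentially the same route as the paper: the variance is computed exactly as in \zcref{lem:redcherry-variance} (Isserlis plus the Fourier representation \zcref{eq:EW-cov-fourier}, the $\zeta\downarrow0$ limit, the explicit time integral, and the Riemann-sum rescaling with the parity average $\tfrac12[(\uu+\vv)^2+(\uu-\vv)^2]=\uu^2+\vv^2$), and the joint Gaussian limit with independence from the noise is obtained from the Peccati--Tudor theorem using that the random variable lies in the second homogeneous Wiener chaos. The only cosmetic difference is that you verify the fourth-moment criterion via the contraction norm $\|f_{\eps,\zeta}\otimes_1 f_{\eps,\zeta}\|\to0$, whereas the paper computes the fourth moment directly and shows the connected Isserlis pairings vanish by dominated convergence (\zcref{lem:fourthmoment}); these are equivalent formulations of the same estimate.
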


As mentioned, the proof of \zcref{prop:Xredcherrylimit} (which we
complete at the end of this section) relies on a version of the fourth
moment theorem, so we begin by computing the second and fourth moments.
\begin{lem}
\label{lem:redcherry-variance}We have
\begin{equation}
\adjustlimits\lim_{\eps\downarrow0}\lim_{\zeta\downarrow0}\Var\left(\mathcal{X}^{\eps}_{\uu,\vv;0,t}\left(\rsrenorm[rsP]^{\zeta}\right)\right)=t(\uu^{2}+\vv^{2})V_{\psi},\label{eq:redcherry-var-limit}
\end{equation}
where $V_{\psi}$ is defined as in \zcref{eq:Vpsidef}.
\end{lem}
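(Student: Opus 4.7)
The plan is to reduce the variance to a double Fourier sum via Isserlis' theorem, then pass to a Riemann integral in the limit $\zeta\downarrow0$ and $\eps\downarrow0$. Since $\rsrenorm[rsP]^{\zeta}_{r}(x)$ lives in the homogeneous second Wiener chaos with
\[
\mathbb{E}\!\left[\rsrenorm[rsP]^{\zeta}_{r_{1}}(x_{1})\rsrenorm[rsP]^{\zeta}_{r_{2}}(x_{2})\right]=2\,\mathbb{E}\!\left[\rslollipopr[rsP]^{\zeta}_{r_{1}}(x_{1})\rslollipopr[rsP]^{\zeta}_{r_{2}}(x_{2})\right]^{2},
\]
by Isserlis, using the even, $2L$-periodic nature of both $\varphi^{\eps}_{\uu,\vv}$ and $\rsrenorm[rsP]^{\zeta}_{r}$ together with \zcref{eq:feven-int} I would write
\[
\mathcal{X}^{\eps}_{\uu,\vv;0,t}(\rsrenorm[rsP]^{\zeta})=L\int_{\mathbb{R}}\Psi^{\eps}_{0,t;r}\sum_{k\in\mathbb{Z}}\widehat{\varphi}^{\eps}_{\uu,\vv}(k)\widehat{\rsrenorm[rsP]}^{\zeta}_{r}(k)\,\dif r.
\]
Combining this with \zcref{eq:EW-cov-fourier} and Wick's theorem in Fourier expresses the variance as a quadruple sum over modes that collapses, via the Kronecker deltas in \zcref{eq:EW-cov-fourier}, to a single sum indexed by $k\in\mathbb{Z}$.

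After carrying out the time integral in closed form using the Ornstein--Uhlenbeck-type kernel $e^{-\pi^{2}k^{2}|r_{1}-r_{2}|/(2L^{2})}$, and using that $\Psi^{\eps}_{0,t;r}\to\mathbf{1}_{[0,t]}(r)$, I expect an expression of the schematic form
\[
\Var\left(\mathcal{X}^{\eps}_{\uu,\vv;0,t}(\rsrenorm[rsP]^{\zeta})\right)=\frac{4L^{2}}{\pi^{2}}\sum_{k,\ell\in\mathbb{Z}}\widehat{\varphi}^{\eps}_{\uu,\vv}(k)\,\widehat{\varphi}^{\eps}_{\uu,\vv}(\ell)\,\widehat{\Sh}^{\zeta}_{2L}(k)\,\widehat{\Sh}^{\zeta}_{2L}(\ell)\,\mathcal{T}^{\eps}_{t}(k,\ell),
\]
where $\mathcal{T}^{\eps}_{t}(k,\ell)$ is an explicit time integral that is $\sim t/(k^{2}+\ell^{2})$ for large $k,\ell$ uniformly in $\eps$, with boundary corrections that are negligible once integrated against the Fourier coefficients. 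The limit $\zeta\downarrow0$ is painless because $\widehat{\Sh}^{\zeta}_{2L}(k)\to(2L)^{-1}$ by \zcref{eq:limShfourier} and the sums converge absolutely thanks to the decay of $\hat{\psi}$ together with the $(k^{2}+\ell^{2})^{-1}$ factor.

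Next I would use the Fourier formula \zcref{eq:phihatepsilon}, which gives
\[
\widehat{\varphi}^{\eps}_{\uu,\vv}(k)\widehat{\varphi}^{\eps}_{\uu,\vv}(\ell)=\frac{1}{4L^{2}}\bigl(\uu+(-1)^{k}\vv\bigr)\bigl(\uu+(-1)^{\ell}\vv\bigr)\widehat{\psi}(\eps k/L)\widehat{\psi}(\eps\ell/L),
\]
and perform the change of variables $y_{1}=\eps k/L$, $y_{2}=\eps\ell/L$. The diagonal (in $(\uu,\vv)$) contributions $\uu^{2}$ and $\vv^{2}$ produce Riemann sums of mesh $\eps/L$ that converge, with the $\eps$-dependence of $\mathcal{T}^{\eps}_{t}$ providing exactly the rescaling needed to yield a $t$-linear limit integrated against a $y$-dependent kernel; the cross term with coefficient $2\uu\vv$ carries the oscillating factor $(-1)^{k+\ell}$, which in the continuum limit produces a rapidly oscillating integrand that vanishes (this is the quantitative manifestation of the fact that the two boundaries decorrelate in the limit). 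Collecting everything and matching to the explicit double integral in \zcref{eq:Vpsidef} will give \zcref{eq:redcherry-var-limit}.

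The main obstacle I anticipate is the bookkeeping in step three: justifying term-by-term passage of the $\eps\downarrow0$ limit inside the double sum and verifying that the cross term really vanishes rather than contributing a spurious constant. The control needed is essentially a dominated-convergence/Riemann-sum statement in two variables, with the bound provided by the decay of $\hat{\psi}$ (which is Schwartz since $\psi\in\mathcal{C}^{\infty}_{c}$) combined with \zcref{lem:partialfractions}-type identities to handle the near-diagonal $k=\pm\ell$ contributions. The identification of the constant with the precise form \zcref{eq:Vpsidef} then follows by a final change of variables $(y_{1},y_{2})\mapsto(x,x-y)$ in the integral.
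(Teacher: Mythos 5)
Your overall route is the same as the paper's: write $\mathcal{X}^{\eps}_{\uu,\vv;0,t}$ in Fourier via \zcref{eq:feven-int}, apply Isserlis to reduce the covariance of $\rsrenorm[rsP]^{\zeta}$ to products of two copies of \zcref{eq:EW-cov-fourier}, evaluate the time integral of the exponential kernel explicitly, send $\zeta\downarrow0$ using \zcref{eq:limShfourier}, and finish with a Riemann-sum limit in $\eps$ using \zcref{eq:phihatepsilon}. The error bounds you anticipate (the $\Psi^{\eps}$ boundary layers and the dominated-convergence control of the near-diagonal modes) are exactly the $E^{(1)}_{\eps}$, $E^{(2)}_{j,\ell}$ terms in the paper's argument, and \zcref{lem:partialfractions} is in fact not needed here.

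The one step that would not survive contact with the computation is your schematic form of the variance and, consequently, your mechanism for killing the $2\uu\vv$ cross term. The Kronecker deltas coming from the two factors of \zcref{eq:EW-cov-fourier} do not leave two independent frequencies $k,\ell$ each carrying a factor $\widehat{\varphi}^{\eps}_{\uu,\vv}$; after summing out one index they collapse to $\delta_{j+k}-\delta_{j-k-2\ell}-\delta_{j+k-2\ell}+\delta_{j-k}$, so the two $\varphi$-frequencies are forced to differ by an even integer. Hence the parity factors agree, the product of the two Fourier coefficients is proportional to $\bigl(\uu+(-1)^{j}\vv\bigr)^{2}\,\widehat{\psi}(\eps j/L)\bigl(\widehat{\psi}(\eps j/L)-\widehat{\psi}(\eps(j-2\ell)/L)\bigr)$, and there is no $(-1)^{k+\ell}$ oscillation over independent indices. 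The cross term $2(-1)^{j}\uu\vv$ instead cancels by averaging the Riemann sums over even and odd $j$: each sublattice contributes half of the full integral, so $\tfrac12\bigl[(\uu+\vv)^{2}+(\uu-\vv)^{2}\bigr]=\uu^{2}+\vv^{2}$. This is a cleaner cancellation than the oscillatory one you propose, and it is the reason no separate "rapid oscillation" estimate is required; with this correction the rest of your plan goes through as written.
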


\begin{proof}
We have
\begin{align*}
\mathcal{X}^{\eps}_{\uu,\vv;0,t}\left(\rsrenorm[rsP]^{\zeta}\right)\overset{\zcref{eq:Xepsdef}}{=}\int^{t}_{0}\Psi^{\eps}_{s} & \int^{L}_{0}\varphi^{\eps}_{\uu,\vv}(x)\rsrenorm[rsP]^{\zeta}_{s}(x)\,\dif x\,\dif s\overset{\zcref{eq:feven-int}}{=}L\int^{t}_{0}\Psi^{\eps}_{s}\sum_{k\in\mathbb{Z}}\widehat{\varphi}^{\eps}_{\uu,\vv}(k)\widehat{\rsrenorm[rsP]}^{\eps,\zeta}_{s}(k)\,\dif s,
\end{align*}
so
\begin{align}
\Var & \left(\mathcal{X}^{\eps}_{\uu,\vv;0,t}\left(\rsrenorm[rsP]^{\zeta}\right)\right)\nonumber \\
 & =L^{2}\sum_{j,k\in\mathbb{Z}}\widehat{\varphi}^{\eps}_{\uu,\vv}(j)\widehat{\varphi}^{\eps}_{\uu,\vv}(k)\int^{t}_{0}\int^{t}_{0}\Psi^{\eps}_{r}\Psi^{\eps}_{s}\Cov\left(\widehat{\rsrenorm[rsP]}^{\zeta}_{r}(j),\widehat{\rsrenorm[rsP]}^{\zeta}_{s}(k)\right)\,\dif s\,\dif r\nonumber \\
\ovset{\zcref{eq:subtract-exp}} & =L^{2}\sum_{j,k\in\mathbb{Z}}\widehat{\varphi}^{\eps}_{\uu,\vv}(j)\widehat{\varphi}^{\eps}_{\uu,\vv}(k)\int^{t}_{0}\int^{t}_{0}\Psi^{\eps}_{r}\Psi^{\eps}_{s}\Cov\left(\widehat{\rscherryrr[rsP]}^{\zeta}_{r}(j),\widehat{\rscherryrr[rsP]}^{\zeta}_{s}(k)\right)\,\dif s\,\dif r\nonumber \\
 & =2L^{2}\sum_{j,k,\ell,m\in\mathbb{Z}}\widehat{\varphi}^{\eps}_{\uu,\vv}(j)\widehat{\varphi}^{\eps}_{\uu,\vv}(k)\int^{t}_{0}\int^{t}_{0}\Psi^{\eps}_{r}\Psi^{\eps}_{s}\mathbb{E}\left[\widehat{\rslollipopr[rsP]}^{\zeta}_{r}(\ell)\widehat{\rslollipopr[rsP]}^{\zeta}_{s}(m)\right]\mathbb{E}\left[\widehat{\rslollipopr[rsP]}^{\zeta}_{r}(j-\ell)\widehat{\rslollipopr[rsP]}^{\zeta}_{s}(k-m)\right]\,\dif s\,\dif r\nonumber \\
\ovset{\zcref{eq:EW-cov-fourier}} & =2L^{2}\sum_{j,k,\ell,m\in\mathbb{Z}}\widehat{\varphi}^{\eps}_{\uu,\vv}(j)\widehat{\varphi}^{\eps}_{\uu,\vv}(k)(\delta_{\ell+m}-\delta_{\ell-m})(\delta_{j-\ell+k-m}-\delta_{j-\ell-(k-m)})\widehat{\Sh}^{\zeta}_{2L}(\ell)\widehat{\Sh}^{\zeta}_{2L}(j-\ell)\nonumber \\
 & \qquad\qquad\times\int^{t}_{0}\int^{t}_{0}\Psi^{\eps}_{r}\Psi^{\eps}_{s}\e^{-\frac{\pi^{2}(\ell^{2}+(j-\ell)^{2})}{2L^{2}}|r-s|}\,\dif s\,\dif r.\label{eq:varcomp}
\end{align}
We simplify the product of deltas in \zcref{eq:varcomp} by computing
\begin{align}
\sum_{m\in\mathbb{Z}}(\delta_{\ell+m}-\delta_{\ell-m})(\delta_{j-\ell+k-m}-\delta_{j-\ell-(k-m)}) & =\left(\delta_{j-\ell+k+\ell}-\delta_{j-\ell-(k+\ell)}\right)-\left(\delta_{j-\ell+k-\ell}-\delta_{j-\ell-(k-\ell)}\right)\nonumber \\
 & =\delta_{j+k}-\delta_{j-k-2\ell}-\delta_{j+k-2\ell}+\delta_{j-k}.\label{eq:prodofdeltas}
\end{align}

To estimate the integral in \zcref{eq:varcomp}, we start by observing
that, for $j,\ell$ not both $0$,
\begin{align*}
I_{j,\ell} & \coloneqq\int^{t}_{0}\int^{t}_{0}\Psi^{\eps}_{r}\Psi^{\eps}_{s}\e^{-\frac{\pi^{2}(\ell^{2}+(j-\ell)^{2})}{2L^{2}}|r-s|}\,\dif s\,\dif r=2\int^{t}_{0}\int^{t}_{r}\Psi^{\eps}_{r}\Psi^{\eps}_{s}\e^{-\frac{\pi^{2}(\ell^{2}+(j-\ell)^{2})}{2L^{2}}(s-r)}\,\dif s\,\dif r\\
 & =2\int^{t}_{0}\int^{t-r}_{0}\Psi^{\eps}_{r}\Psi^{\eps}_{s+r}\e^{-\frac{\pi^{2}(\ell^{2}+(j-\ell)^{2})}{2L^{2}}s}\,\dif s\,\dif r=2\int^{t}_{0}\left(\int^{t-s}_{0}\Psi^{\eps}_{r}\Psi^{\eps}_{s+r}\,\dif r\right)\e^{-\frac{\pi^{2}(\ell^{2}+(j-\ell)^{2})}{2L^{2}}s}\,\dif s\\
 & =2\int^{\infty}_{0}\left(\mathbf{1}_{s<t}\int^{t-s}_{0}\Psi^{\eps}_{r}\Psi^{\eps}_{s+r}\,\dif r\right)\e^{-\frac{\pi^{2}(\ell^{2}+(j-\ell)^{2})}{2L^{2}}s}\,\dif s.
\end{align*}
We note that
\[
\left|\int^{t-s}_{0}\Psi^{\eps}_{r}\Psi^{\eps}_{s+r}\,\dif r-t\right|\le|s|+\int^{t-s}_{0}\left|\Psi^{\eps}_{r}\Psi^{\eps}_{s+r}-1\right|\,\dif r\overset{\zcref{eq:Psiproperties}}{\le}|s|+4\eps^{2},
\]
so we can write 
\begin{equation}
I_{j,\ell}=\frac{4L^{2}}{\pi^{2}(\ell^{2}+(j-\ell)^{2})}(t+E^{(1)}_{\eps}+E^{(2)}_{j,\ell}),\label{eq:Ierr}
\end{equation}
where
\begin{equation}
\left|E^{(1)}_{\eps}\right|\le16\eps^{2}\label{eq:E1bd}
\end{equation}
and
\begin{align}
|E^{(2)}_{j,\ell}| & \le\frac{2\pi^{2}(\ell^{2}+(j-\ell)^{2})}{L^{2}}\int^{\infty}_{0}s^{2}\e^{-\frac{\pi^{2}(\ell^{2}+(j-\ell)^{2})}{2L^{2}}s}\,\dif s+\frac{\pi^{2}(\ell^{2}+(j-\ell)^{2})t}{L^{2}}\int^{\infty}_{t}\e^{-\frac{\pi^{2}(\ell^{2}+(j-\ell)^{2})}{2L^{2}}s}\,\dif s\nonumber \\
 & =\frac{8L^{4}}{\pi^{4}(\ell^{2}+(j-\ell)^{2})^{2}}+\e^{-\frac{\pi^{2}(\ell^{2}+(j-\ell)^{2})}{2L^{2}}t}\le\frac{C_{t,L}}{(\ell^{2}+(j-\ell)^{2})^{2}}\label{eq:E2bd}
\end{align}
for a constant $C_{t,L}$ depending only on $t$ and $L$.

Using \zcref{eq:prodofdeltas,eq:Ierr} in \zcref{eq:varcomp}, we
obtain 
\begin{align}
\Var & \left(\mathcal{X}^{\eps}_{\uu,\vv;0,t}\left(\rsrenorm[rsP]^{\zeta}\right)\right)\nonumber \\
 & =\frac{2L^{4}}{\pi^{2}}\sum_{j,\ell\in\mathbb{Z}}\widehat{\varphi}^{\eps}_{\uu,\vv}(j)\left(\widehat{\varphi}^{\eps}_{\uu,\vv}(-j)-\widehat{\varphi}^{\eps}_{\uu,\vv}(j-2\ell)-\widehat{\varphi}^{\eps}_{\uu,\vv}(2\ell-j)+\widehat{\varphi}^{\eps}_{\uu,\vv}(j)\right)\widehat{\Sh}^{\zeta}_{2L}(\ell)\widehat{\Sh}^{\zeta}_{2L}(j-\ell)\frac{t+E^{(1)}_{\eps}+E^{(2)}_{j,\ell}}{\ell^{2}+(j-\ell)^{2}}\nonumber \\
 & =\frac{4L^{4}}{\pi^{2}}\sum_{j,\ell\in\mathbb{Z}}\frac{\widehat{\varphi}^{\eps}_{\uu,\vv}(j)\left(\widehat{\varphi}^{\eps}_{\uu,\vv}(j)-\widehat{\varphi}^{\eps}_{\uu,\vv}(j-2\ell)\right)}{\ell^{2}+(j-\ell)^{2}}\widehat{\Sh}^{\zeta}_{2L}(\ell)\widehat{\Sh}^{\zeta}_{2L}(j-\ell)(t+E^{(1)}_{\eps}+E^{(2)}_{j,\ell})\nonumber \\
 & \xrightarrow[\zeta\to0]{}\frac{4L^{2}}{\pi^{2}}\sum_{j,\ell\in\mathbb{Z}}\frac{\widehat{\varphi}^{\eps}_{\uu,\vv}(j)\left(\widehat{\varphi}^{\eps}_{\uu,\vv}(j)-\widehat{\varphi}^{\eps}_{\uu,\vv}(j-2\ell)\right)}{\ell^{2}+(j-\ell)^{2}}(t+E^{(1)}_{\eps}+E^{(2)}_{j,\ell})\nonumber \\
\ovset{\zcref{eq:phihatepsilon}} & =\frac{1}{\pi^{2}}\sum_{j,\ell\in\mathbb{Z}}\frac{\left(\uu+(-1)^{j}\vv\right)\hat{\psi}\left(\frac{\eps j}{L}\right)\left(\left(\uu+(-1)^{j}\vv\right)\widehat{\psi}\left(\frac{\eps j}{L}\right)-\left(\uu+(-1)^{j-2\ell}\vv\right)\widehat{\psi}\left(\frac{\eps(j-2\ell)}{L}\right)\right)}{\ell^{2}+(j-\ell)^{2}}(t+E^{(1)}_{\eps}+E^{(2)}_{j,\ell})\nonumber \\
 & =\frac{1}{\pi^{2}}\sum_{j,\ell\in\mathbb{Z}}\frac{\left(\uu+(-1)^{j}\vv\right)^{2}\hat{\psi}\left(\frac{\eps j}{L}\right)\left(\widehat{\psi}\left(\frac{\eps j}{L}\right)-\widehat{\psi}\left(\frac{\eps(j-2\ell)}{L}\right)\right)}{\ell^{2}+(j-\ell)^{2}}(t+E^{(1)}_{\eps}+E^{(2)}_{j,\ell}).\label{eq:var-comp}
\end{align}
For fixed $j$ and $\ell$, we have $\widehat{\psi}(\eps j/L)-\widehat{\psi}(\eps(j-2\ell)/L)\to0$
as $\eps\to0$, so by \zcref{eq:E2bd} and dominated convergence we
have
\begin{equation}
\lim_{\eps\downarrow0}\sum_{j,\ell\in\mathbb{Z}}\frac{\left(\uu+(-1)^{j}\vv\right)^{2}\hat{\psi}\left(\frac{\eps j}{L}\right)\left(\widehat{\psi}\left(\frac{\eps j}{L}\right)-\widehat{\psi}\left(\frac{\eps(j-2\ell)}{L}\right)\right)}{\ell^{2}+(j-\ell)^{2}}E^{(2)}_{j,\ell}=0.\label{eq:time-error}
\end{equation}
On the other hand, we can compute
\begin{align}
\sum_{j,\ell\in\mathbb{Z}} & \frac{\left(\uu+(-1)^{j}\vv\right)^{2}\hat{\psi}\left(\frac{\eps j}{L}\right)\left(\widehat{\psi}\left(\frac{\eps j}{L}\right)-\widehat{\psi}\left(\frac{\eps(j-2\ell)}{L}\right)\right)}{\ell^{2}+(j-\ell)^{2}}\nonumber \\
 & =\frac{\eps^{2}}{L^{2}}\sum_{x,y\in\frac{\eps}{L}\mathbb{Z}}\frac{\left(\uu+(-1)^{\frac{L}{\eps}x}\vv\right)^{2}\hat{\psi}(x)\left(\widehat{\psi}(x)-\widehat{\psi}(x-2y)\right)}{y^{2}+(x-y)^{2}}\nonumber \\
 & =(\uu+\vv)^{2}\sum_{\substack{x\in\frac{2\eps}{L}\mathbb{Z}\\
y\in\frac{\eps}{L}\mathbb{Z}
}
}\frac{\hat{\psi}(x)\left(\widehat{\psi}(x)-\widehat{\psi}(x-2y)\right)}{y^{2}+(x-y)^{2}}+(\uu-\vv)^{2}\sum_{\substack{x\in\frac{\eps}{L}(2\mathbb{Z}+1)\\
y\in\frac{\eps}{L}\mathbb{Z}
}
}\frac{\hat{\psi}(x)\left(\widehat{\psi}(x)-\widehat{\psi}(x-2y)\right)}{y^{2}+(x-y)^{2}}\nonumber \\
 & \xrightarrow[\eps\to0]{}\frac{1}{2}[(\uu+\vv)^{2}+(\uu-\vv)^{2}]\int_{\mathbb{R}^{2}}\frac{\hat{\psi}(x)\left(\widehat{\psi}(x)-\widehat{\psi}(x-2y)\right)}{y^{2}+(x-y)^{2}}\,\dif x\,\dif y\nonumber \\
 & =(\uu^{2}+\vv^{2})\int_{\mathbb{R}^{2}}\frac{\hat{\psi}(x)\left(\widehat{\psi}(x)-\widehat{\psi}(x-2y)\right)}{y^{2}+(x-y)^{2}}\,\dif x\,\dif y,\label{eq:t-term}
\end{align}
where the limit is by the Riemann sum approximation of the integral.
Using \zcref{eq:E1bd,eq:time-error,eq:t-term} in \zcref{eq:var-comp},
we obtain \zcref{eq:redcherry-var-limit}.
\end{proof}

\begin{lem}
\label{lem:fourthmoment}We have
\begin{equation}
\adjustlimits\lim_{\eps\downarrow0}\lim_{\zeta\downarrow0}\mathbb{E}\left[\mathcal{X}^{\eps}_{\uu,\vv;0,t}\left(\rsrenorm[rsP]^{\zeta}\right)\right]^{4}=3\left(\mathbb{E}\left[\mathcal{X}^{\eps}_{\uu,\vv;0,t}\left(\rsrenorm[rsP]^{\zeta}\right)\right]^{2}\right)^{2}.\label{eq:fourmomenthypothesis}
\end{equation}
\end{lem}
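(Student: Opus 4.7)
The plan is to deduce \eqref{eq:fourmomenthypothesis} from the fourth moment theorem of Nualart and Peccati in the second Wiener chaos. Since $\rslollipopr[rsP]^{\zeta}_{s}(x)$ is a centred Gaussian field in the first chaos of $\dif W^{\zeta}$, the product-formula identity $I_{1}(g)^{2}-\|g\|_{L^{2}}^{2}=I_{2}(g\otimes g)$ gives $\rsrenorm[rsP]^{\zeta}_{s}(x)\in\mathscr{H}_{2}$ for each $(s,x)$, and consequently $F^{\eps,\zeta}\coloneqq\mathcal{X}^{\eps}_{\uu,\vv;0,t}\left(\rsrenorm[rsP]^{\zeta}\right)=I_{2}(f^{\eps,\zeta})$ for a symmetric kernel
\[
f^{\eps,\zeta}\bigl((r_{1},y_{1}),(r_{2},y_{2})\bigr)=\int_{0}^{t}\Psi^{\eps}_{s}\int_{\mathbb{R}}\varphi^{\eps}_{\uu,\vv}(x)\,\partial_{x}p_{s-r_{1}}^{\zeta}(x-y_{1})\,\partial_{x}p_{s-r_{2}}^{\zeta}(x-y_{2})\,\dif x\,\dif s
\]
(with $p^{\zeta}$ denoting the periodised heat kernel convolved with $\rho^{\zeta}$). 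Using the product formula twice and orthogonality between chaoses yields the identity
\[
\mathbb{E}\left[I_{2}(f)^{4}\right]-3\bigl(\mathbb{E}\left[I_{2}(f)^{2}\right]\bigr)^{2}=C\,\|f\otimes_{1}f\|_{L^{2}}^{2}
\]
for an absolute constant $C>0$ (see e.g.\ Nualart's book, Prop.~5.2.2), so that verifying \eqref{eq:fourmomenthypothesis} reduces to showing
\begin{equation}\label{eq:planned-contraction-vanishes}
\adjustlimits\lim_{\eps\downarrow0}\lim_{\zeta\downarrow0}\|f^{\eps,\zeta}\otimes_{1}f^{\eps,\zeta}\|_{L^{2}}^{2}=0.
\end{equation}

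To prove \eqref{eq:planned-contraction-vanishes}, the plan is to transcribe the Fourier computation of \zcref{lem:redcherry-variance} to the once-contracted kernel and carry out the analogous power-counting estimate. Expanding $\|f^{\eps,\zeta}\otimes_{1}f^{\eps,\zeta}\|_{L^{2}}^{2}$ via \zcref{lem:EW-cov-fourier} produces a sum over four spatial Fourier indices $j_{1},j_{2},\ell_{1},\ell_{2}\in\mathbb{Z}$ (instead of the two indices appearing in \eqref{eq:varcomp}), four time integrations against $\Psi^{\eps}$, and $\delta$-constraints coming from the two "cross" contractions that realise the $\otimes_{1}$ pairing. The qualitative picture is that, relative to \eqref{eq:var-comp}, there is one additional quadratic denominator of the form $(\ell^{2}+(j-\ell)^{2})^{-1}$ for each of the two extra sums, but also one additional factor $\widehat{\varphi}^{\eps}_{\uu,\vv}(j)=-(2L)^{-1}(\uu+(-1)^{j}\vv)\hat\psi(\eps j/L)$, which is bounded uniformly in $\eps$ and Schwartz-decaying in $\eps j/L$. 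The Riemann-sum heuristic predicts that converting the four $j$-sums to integrals costs a factor $\eps^{-4}$ of "volume" which must be matched against the four quadratic denominators; because the cross pairing constraints reduce the number of independent $\ell$-sums by only one, the sum converges in the Riemann sense to a \emph{finite} integral multiplied by a strict positive power of $\eps$, giving the required vanishing. The time factors contribute a uniformly bounded constant by the same bookkeeping as in \eqref{eq:Ierr}--\eqref{eq:E2bd}.

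The main obstacle will be the combinatorial analysis of the pairing structure: namely, verifying that when one expands Wick's formula in the contraction $f^{\eps,\zeta}\otimes_{1}f^{\eps,\zeta}$, the resulting $\delta$-constraints (analogous to \eqref{eq:prodofdeltas}) do not degenerate into the constraints appearing in $\bigl(\mathbb{E}[F^{\eps,\zeta}]^{2}\bigr)^{2}$. The centring \eqref{eq:subtract-exp} eliminates the only "tadpole" diagrams (those contracting two legs within the same $\rsrenorm$ factor), which is precisely what guarantees that the surviving pairings produce a genuine contraction, with an extra summation variable relative to the planar case. Once this structural point is pinned down, the rest of the argument is an entirely parallel application of the estimates \eqref{eq:E1bd}--\eqref{eq:t-term}, and one should be able to obtain an explicit rate $\|f^{\eps,\zeta}\otimes_{1}f^{\eps,\zeta}\|_{L^{2}}^{2}\lesssim\eps^{\alpha}$ uniformly in $\zeta\in(0,\eps)$ for some $\alpha>0$ depending only on $\psi$.
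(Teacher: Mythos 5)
Your reduction is structurally the same as the paper's, just packaged through the Nualart--Peccati contraction identity rather than through a direct Isserlis expansion: the quantity $\|f^{\eps,\zeta}\otimes_{1}f^{\eps,\zeta}\|_{L^{2}}^{2}$ is, up to a combinatorial constant, exactly the contribution $E^{\eps,\zeta}$ of the connected pairings that the paper isolates in \zcref{eq:Edef} after discarding the ``tadpole'' matchings (killed by the centring \zcref{eq:subtract-exp}) and the disconnected matchings (which produce $3(\mathbb{E}[F^{2}])^{2}$ exactly). So the first half of your argument is correct and equivalent to the paper's. Where you diverge is in how to show the connected contraction vanishes, and this is where the proposal has a genuine gap: the entire analytic content of the lemma is the limit \zcref{eq:planned-contraction-vanishes}, and you only offer a ``Riemann-sum heuristic'' with an asserted rate $\eps^{\alpha}$. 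The stated justification --- that the cross-pairing constraints ``reduce the number of independent $\ell$-sums by only one'' --- is not a power count, and it does not identify the actual source of smallness. Note in particular that a naive momentum count cannot distinguish the connected from the disconnected diagrams variable-by-variable; the smallness comes from the \emph{joint} structure of the four time integrations over a connected pairing graph, and your sketch does not engage with this.

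The paper's route for this step is shorter and worth knowing: writing each pairing factor in physical space as $\mathbb{E}[Y_{j}Y_{k}]=p_{|s_{j}-s_{k}|}*\Sh^{\zeta}_{2L}(x_{j}-x_{k})-p_{|s_{j}-s_{k}|}*\Sh^{\zeta}_{2L}(x_{j}+x_{k})$, one sees that for \emph{fixed distinct} times this difference tends to $0$ uniformly as both spatial arguments are forced by $\varphi^{\eps}_{\uu,\vv}$ into $\eps$-neighbourhoods of $\{0,L\}$ (the reflection antisymmetry at the boundary); combined with the crude bound $|\mathbb{E}[Y_{j}Y_{k}]|\lesssim|s_{j}-s_{k}|^{-1/2}$, whose product over the \emph{connected} pairing graph is integrable on $[0,t]^{4}$ (this is \zcref{lem:integration-lemma}; the analogous product for the disconnected graph is \emph{not} integrable, which is precisely why those diagrams survive), dominated convergence finishes the proof with no rate needed. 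Two further cautions if you insist on the Fourier route: your kernel $f^{\eps,\zeta}$ as written omits the reflected part of the noise covariance --- the correct Fourier structure is the $\delta_{k+\ell}-\delta_{k-\ell}$ combination of \zcref{eq:EW-cov-fourier}, and the cancellation between these two terms is exactly the boundary antisymmetry that drives the limit --- and the time integrals must be handled jointly (as in \zcref{eq:Ierr}) rather than bounded by ``a uniformly bounded constant,'' since it is their connected structure that produces the extra factor of $\eps$.
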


\begin{proof}
We have
\begin{align}
\mathbb{E}\left[\mathcal{X}^{\eps}_{\uu,\vv;0,t}\left(\rsrenorm[rsP]^{\zeta}\right)\right]^{4} & =\iiiint_{[0,t]^{4}}\left(\prod^{4}_{i=1}\Phi^{\eps}_{s_{i}}\right)\iiiint_{[0,L]^{4}}\left(\prod^{4}_{i=1}\varphi^{\eps}_{\uu,\vv}(x_{i})\right)\mathbb{E}\left[\prod^{4}_{i=1}\rsrenorm[rsP]^{\zeta}_{s_{i}}(x_{i})\right]\,\dif\mathbf{x}\,\dif\mathbf{s}\nonumber \\
\ovset{\zcref{eq:subtract-exp}} & =\iiiint_{[0,t]^{4}}\left(\prod^{4}_{i=1}\Phi^{\eps}_{s_{i}}\right)\iiiint_{[0,L]^{4}}\left(\prod^{4}_{i=1}\varphi^{\eps}_{\uu,\vv}(x_{i})\right)\mathbb{E}\left[\prod^{4}_{i=1}\left(Y_{i}Z_{i}-\mathbb{E}[Y_{i}Z_{i}]\right)\right]\,\dif\mathbf{x}\,\dif\mathbf{s},\label{eq:YiZiformula}
\end{align}
where $\mathbf{x}=(x_{1},x_{2},x_{3},x_{4})$ and $\mathbf{s}=(s_{1},s_{2},s_{3},s_{4})$
and we have defined
\begin{equation}
Y_{i}\coloneqq Z_{i}\coloneqq\rslollipopr[rsP]^{\zeta}_{s_{i}}(x_{i})^{2}\label{eq:YiZidef}
\end{equation}
(These quantities of course depend on $\mathbf{x}$ and $\mathbf{s}$
as well.) Now when we compute expectation using the Isserlis theorem,
we must sum over all matchings of the eight symbols $\{Y_{i},Z_{i}\}^{4}_{i=1}$.
However, we can exclude any matching that pairs $Y_{i}$ to $Z_{i}$
for any $i$, since the contribution from such matchings is cancelled
by the expectation that is subtracted in \zcref{eq:YiZiformula}.
Of the remaining matchings, there are six possible partners for $Y_{1}$,
so we can assume by symmetry that $Y_{1}$ is matched to $Y_{2}$.
If $Z_{1}$ is matched to $Z_{2}$, then there are two possible resulting
matchings, but both are symmetric and so we can assume that the matching
is $\{(Y_{1},Y_{2}),(Z_{1},Z_{2}),(Y_{3},Y_{4}),(Z_{3},Z_{4})\}$.
The resulting contribution to \zcref{eq:YiZiformula} is
\begin{align}
12 & \iiiint_{[0,t]^{4}}\left(\prod^{4}_{i=1}\Phi^{\eps}_{s_{i}}\right)\iiiint_{[0,L]^{4}}\left(\prod^{4}_{i=1}\varphi^{\eps}_{\uu,\vv}(x_{i})\right)\mathbb{E}[Y_{1}Y_{2}]\mathbb{E}[Z_{1}Z_{2}]\mathbb{E}[Y_{3}Y_{4}]\mathbb{E}[Z_{3}Z_{4}]\,\dif\mathbf{x}\,\dif\mathbf{s}\nonumber \\
 & =12\left(\iint_{[0,t]^{2}}\left(\prod^{4}_{i=1}\Phi^{\eps}_{s_{i}}\right)\iint_{[0,L]^{2}}\left(\prod^{4}_{i=1}\varphi^{\eps}_{\uu,\vv}(x_{i})\right)\mathbb{E}[Y_{1}Y_{2}]\mathbb{E}[Z_{1}Z_{2}]\,\dif x_{1}\,\dif x_{2}\,\dif s_{1}\,\dif s_{2}\right)^{2}\nonumber \\
 & =3\left(\mathbb{E}\left[\mathcal{X}^{\eps}_{\uu,\vv;0,t}\left(\rsrenorm[rsP]^{\zeta}\right)\right]^{2}\right)^{2}.\label{eq:secondmomentpiece}
\end{align}

On the other hand, if $Z_{1}$ is not matched to $Z_{2}$, then the
remaining four choices for the partner of $Z_{1}$ are again symmetric,
so we can assume that $Z_{1}$ is matched to $Z_{3}$. Now $Y_{3}$
cannot be matched to $Z_{2}$ since then $Y_{4}$ would have to be
matched to $Z_{4}$ which we have already excluded, so there are two
symmetric choices for the partner of $Y_{3}$. Thus we assume that
$Y_{3}$ is matched to $Y_{4}$ and hence $Z_{2}$ is matched to $Z_{4}$.
The contribution to \zcref{eq:YiZiformula} is then 
\begin{align}
48E^{\eps,\zeta} & \coloneqq48\iiiint_{[0,t]^{4}}\left(\prod^{4}_{i=1}\Psi^{\eps}_{s_{i}}\right)F^{\eps,\zeta}(\mathbf{s})\,\dif\mathbf{s},\label{eq:Edef}
\end{align}
where
\begin{equation}
F^{\eps,\zeta}(\mathbf{s})\coloneqq\iiiint_{[0,L]^{4}}\left(\prod^{4}_{i=1}\varphi^{\eps}_{\uu,\vv}(x_{i})\right)\mathbb{E}[Y_{1}Y_{2}]\mathbb{E}[Y_{3}Y_{4}]\mathbb{E}[Z_{1}Z_{3}]\mathbb{E}[Z_{2}Z_{4}]\,\dif\mathbf{x}.\label{eq:Fdef}
\end{equation}
Given \zcref{eq:secondmomentpiece}, to prove \zcref{eq:fourmomenthypothesis}
it will suffice to show that
\begin{equation}
\adjustlimits\lim_{\eps\downarrow0}\lim_{\zeta\downarrow0}E^{\eps,\zeta}=0.\label{eq:Eto0}
\end{equation}

To prove \zcref{eq:Eto0}, we start by noting that, for $j\ne k$,
we have by \zcref{lem:EW-cov} that
\begin{equation}
\mathbb{E}[Y_{j}Y_{k}]=\mathbb{E}[Z_{j}Z_{k}]=p_{|s_{j}-s_{k}|}*\Sh^{\zeta}_{2L}(x_{j}-x_{k})-p_{|s_{j}-s_{k}|}*\Sh^{\zeta}_{2L}(x_{j}+x_{k}).\label{eq:EYjYk}
\end{equation}
From this it is clear that, if $s_{1},s_{2},s_{3},s_{4}$ are fixed
and all distinct, then 
\begin{equation}
\adjustlimits\lim_{\eps\downarrow0}\sup_{\zeta\in(0,\eps)}F^{\eps,\zeta}(\mathbf{s})=0.\label{eq:Flimit}
\end{equation}
On the other hand, from \zcref{eq:EYjYk} we see that there is a constant
$C<\infty$, depending only on $L$, such that as long as $\zeta\le L$
we have
\[
|\mathbb{E}[Y_{j}Y_{k}]|=|\mathbb{E}[Z_{j}Z_{k}]|\le C|s_{j}-s_{k}|^{-\oh}.
\]
Using this in \zcref{eq:Fdef}, we see that 
\begin{equation}
|F^{\eps,\zeta}(\mathbf{s})|\le C^{4}|s_{1}-s_{2}|^{-\oh}|s_{3}-s_{4}|^{-\oh}|s_{1}-s_{3}|^{-\oh}|s_{2}-s_{4}|^{-\oh}.\label{eq:Fbd}
\end{equation}
We claim that the right side of \zcref{eq:Fbd} is integrable on $[0,L]^{4}$.
Since it is symmetric in $s_{2},s_{3}$, it suffices to integrate
it over the domain $\{s_{2}<s_{3}\}$. We estimate using \zcref{lem:integration-lemma}
below that (with a possibly larger constant $C$)
\begin{align*}
\iint_{s_{2}<s_{3}} & \left(\iint_{[0,t]^{2}}|s_{1}-s_{2}|^{-\oh}|s_{3}-s_{4}|^{-\oh}|s_{1}-s_{3}|^{-\oh}|s_{2}-s_{4}|^{-\oh}\,\dif s_{1}\,\dif s_{4}\right)\,\dif s_{2}\,\dif s_{3}\\
 & =\iint_{s_{2}<s_{3}}\left(\int^{t}_{0}|s_{1}-s_{2}|^{-\oh}|s_{1}-s_{3}|^{-\oh}\,\dif s_{1}\right)\left(\int^{t}_{0}|s_{3}-s_{4}|^{-\oh}|s_{2}-s_{4}|^{-\oh}\,\dif s_{4}\right)\,\dif s_{2}\,\dif s_{3}\\
\ovset{\zcref{eq:integration-lemma}} & \le C\iint_{s_{2}<s_{3}}\left(1+\left|\log\frac{s_{3}-s_{2}}{s_{2}}\right|+\left|\log\frac{t-s_{3}}{s_{3}-s_{2}}\right|\right)^{2}\,\dif s_{2}\,\dif s_{4}<\infty.
\end{align*}
Therefore, the right side of \zcref{eq:Fbd} is indeed integrable,
so by \zcref{eq:Fdef}, \zcref{eq:Flimit}, and the dominated convergence
theorem, the limit \zcref{eq:Eto0} is proved.
\end{proof}

We used the following lemma.
\begin{lem}
\label{lem:integration-lemma}For any fixed $t>0$, we have a constant
$C=C(t)<\infty$ such that, whenever $0<s_{1}<s_{2}<t$, we have
\begin{equation}
\int^{t}_{0}|s-s_{1}|^{-\oh}|s-s_{2}|^{-\oh}\,\dif s\le C\left(1+\left|\log\frac{s_{2}-s_{1}}{s_{1}}\right|+\left|\log\frac{t-s_{2}}{s_{2}-s_{1}}\right|\right).\label{eq:integration-lemma}
\end{equation}
\end{lem}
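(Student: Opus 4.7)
The plan is to decompose the interval $[0,t]$ into three natural pieces, according to the position of $s$ relative to the singular points $s_1$ and $s_2$:
$$[0,t] = [0,s_1] \cup [s_1, s_2] \cup [s_2, t],$$
and handle each separately. On $[s_1, s_2]$, both singularities are integrable endpoint singularities, and a direct substitution shows
$$\int_{s_1}^{s_2}(s-s_1)^{-1/2}(s_2-s)^{-1/2}\,\dif s = B(\tfrac12,\tfrac12) = \pi,$$
a constant independent of $s_1, s_2$. So this piece contributes a bounded term.

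The remaining two pieces are structurally identical after a reflection. On $[0, s_1]$, write $|s-s_1| = s_1 - s$ and $|s-s_2| = s_2 - s = (s_1 - s) + a$ with $a \coloneqq s_2 - s_1 > 0$. Substituting $u = s_1 - s$ and setting $x = s_1/a$ gives, after the standard antiderivative $2\operatorname{arsinh}(\sqrt{u/a})$,
$$\int_0^{s_1}(s_1-s)^{-1/2}(s_2-s)^{-1/2}\,\dif s = 2\operatorname{arsinh}(\sqrt{x}) = 2\log\bigl(\sqrt{x} + \sqrt{x+1}\bigr).$$
An elementary check shows that $2\log(\sqrt{x}+\sqrt{x+1})$ is bounded by $C(1 + |\log x|)$ uniformly in $x > 0$ (bounded near $x = 0$, and asymptotic to $\tfrac12 \log x + \log 2$ as $x \to \infty$). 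Since $x = s_1/(s_2 - s_1)$, this yields a bound of the form $C(1 + |\log(s_1/(s_2-s_1))|) = C(1 + |\log((s_2-s_1)/s_1)|)$.

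The integral on $[s_2, t]$ is handled symmetrically by substituting $u = s - s_2$ and setting $y = (t-s_2)/(s_2 - s_1)$; this produces $2\log(\sqrt{y} + \sqrt{y+1})$, bounded by $C(1 + |\log((t-s_2)/(s_2-s_1))|)$. Combining the three contributions gives \zcref{eq:integration-lemma}. There is no real obstacle here — the estimate is essentially explicit — the only mildly delicate point is the uniform bound $2\log(\sqrt{x}+\sqrt{x+1}) \le C(1 + |\log x|)$, which follows from the two asymptotic regimes.
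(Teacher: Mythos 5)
Your proof is correct and follows essentially the same route as the paper: the same three-piece decomposition of $[0,t]$, the same $\operatorname{arsinh}$ antiderivative for the two outer pieces (yielding $1+|\log(s_1/(s_2-s_1))|$ and $1+|\log((t-s_2)/(s_2-s_1))|$), and the same constant Beta-type integral for the middle piece. No issues.
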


\begin{proof}
We break the integral on the left side of \zcref{eq:integration-lemma}
into three parts. For $s<s_{1}$, we write
\begin{align*}
\int^{s_{1}}_{0}|s-s_{1}|^{-\oh}|s-s_{2}|^{-\oh}\,\dif s & =2\arsinh\sqrt{\frac{s_{1}}{s_{2}-s_{1}}}\lesssim1+\left|\log\frac{s_{1}}{|s_{2}-s_{1}|}\right|,
\end{align*}
and symmetrically
\[
\int^{t}_{s_{2}}|s-s_{1}|^{-\oh}|s-s_{2}|^{-\oh}\,\dif s\lesssim1+\left|\log\frac{t-s_{2}}{|s_{2}-s_{1}|}\right|.
\]
For the middle section, we simply change variables to write
\[
\int^{s_{2}}_{s_{1}}|s-s_{1}|^{-\oh}|s-s_{2}|^{-\oh}=\int^{1}_{0}|t|^{-\oh}|1-t|^{-\oh}<\infty.
\]
Combining the last three displays yields \zcref{eq:integration-lemma}.
\end{proof}

Now we can complete the proof of the main result of this subsection. 
\begin{proof}[Proof of \zcref{prop:Xredcherrylimit}]
Let $Q$ be a bounded, continuous function of finitely many $L^{2}(\mathbb{R}^{2})$
functions tested against $(\dif W_{t})_{t}$. Recalling \zcref{eq:C1zetadef,eq:Qtilderenorm},
we note that the random variable $\mathcal{X}^{\eps}_{\uu,\vv;0,t}\left(\rsrenorm[rsP]^{\zeta}\right)$
has mean $0$ and thus lies in the second homogeneous Wiener chaos.
Then we can use the fourth moment theorem of Peccati and Tudor, \cites[Thm.~1]{peccati:tudor:2005:gaussian},
to see that, given the second and fourth moment estimates proved in
\zcref{lem:redcherry-variance,lem:fourthmoment}, we have
\[
\lim_{\eps\downarrow0}\lim_{\zeta\downarrow0}\Law\left(\mathcal{X}^{\eps}_{\uu,\vv;0,t}\left(\rsrenorm[rsP]^{\zeta}\right),Q\right)=\mathcal{N}(0,t(\uu^{2}+\vv^{2})V_{\psi})\otimes\Law\left(Q\right)\qquad\text{weakly}.
\]
Since $Q$ was arbitrary, this implies \zcref{eq:Xredcherryconvindistn}.
\end{proof}

\subsection{Terms involving the boundary potential}

We start with the deterministic term.
\begin{prop}
\label{prop:bluecherrycontrib}We have
\begin{equation}
\lim_{\eps\downarrow0}\int^{L}_{0}\varphi^{\eps}_{\uu,\vv}(x)\rscherrybb[rsP]^{\eps}(x)\,\dif x=-\frac{1}{6}(\uu^{3}+\vv^{3}).\label{eq:bluecherrycontrib}
\end{equation}
\end{prop}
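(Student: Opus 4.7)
The plan is to exploit the fact that $\rslollipopb[rsP]^{\eps}$ is a smooth, time-independent function (since $\varphi^{\eps}_{\uu,\vv}$ is time-independent), reduce the integral to a boundary plus a bulk term via integration by parts, and evaluate the resulting pieces explicitly. Throughout, write $g \coloneqq \rslollipopb[rsP]^{\eps}$, so that $\rscherrybb[rsP]^{\eps}(x) = g(x)^{2}$.

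First I would derive the key differential identity
\begin{equation*}
\partial_{x} g(x) = -2\varphi^{\eps}_{\uu,\vv}(x) - \frac{\uu+\vv}{L}
\end{equation*}
by taking $\tfrac{\pi \ii k}{L}$ times \zcref{eq:lollipopft}, which yields $\widehat{\partial_{x}g}(k) = -2(1-\delta_{k})\widehat{\varphi}^{\eps}_{\uu,\vv}(k)$, and then using $\widehat{\varphi}^{\eps}_{\uu,\vv}(0)=-\frac{\uu+\vv}{2L}$ from \zcref{eq:psihatzero}. Next, I would invoke \zcref{lem:reconstruct-symmetry} together with $\sgn(\rslollipopb)=-1$ to conclude that $g$ is odd and $2L$-periodic, hence odd about both $0$ and $L$; since $g$ is smooth for each $\eps>0$, this gives $g(0)=g(L)=0$.

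Next, substituting $\varphi^{\eps}_{\uu,\vv} = -\tfrac12 \partial_{x} g - \tfrac{\uu+\vv}{2L}$ and integrating by parts,
\begin{equation*}
\int^{L}_{0} \varphi^{\eps}_{\uu,\vv}(x) g(x)^{2}\,\dif x
= -\tfrac{1}{6}\bigl[g(L)^{3}-g(0)^{3}\bigr] - \tfrac{\uu+\vv}{2L}\int^{L}_{0} g(x)^{2}\,\dif x
= -\tfrac{\uu+\vv}{2L}\int^{L}_{0} g(x)^{2}\,\dif x,
\end{equation*}
where the boundary term vanishes by the previous step. It therefore suffices to show
\begin{equation*}
\lim_{\eps\downarrow 0}\int^{L}_{0} g(x)^{2}\,\dif x = \frac{L(\uu^{2}-\uu\vv+\vv^{2})}{3},
\end{equation*}
since then $-\tfrac{\uu+\vv}{2L}\cdot \tfrac{L(\uu^{2}-\uu\vv+\vv^{2})}{3} = -\tfrac{1}{6}(\uu^{3}+\vv^{3})$ by the factorization $\uu^{3}+\vv^{3}=(\uu+\vv)(\uu^{2}-\uu\vv+\vv^{2})$.

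For the limit, I would use Plancherel and the explicit formula from \zcref{eq:lollipopft}, combined with \zcref{eq:phihatepsilon}, to write
\begin{equation*}
\int^{L}_{0} g^{2}\,\dif x = \tfrac{1}{2}\|g\|_{L^{2}([-L,L])}^{2}
= \sum_{k\neq 0}\frac{(\uu+(-1)^{k}\vv)^{2}}{\pi^{2}k^{2}}\bigl|\widehat{\psi}(\eps k/L)\bigr|^{2}.
\end{equation*}
Since $\widehat{\psi}$ is bounded, dominated convergence yields the limit $\sum_{k\neq 0}\frac{(\uu+(-1)^{k}\vv)^{2}}{\pi^{2}k^{2}}$, which evaluates to $\tfrac{\uu^{2}+\vv^{2}}{3} - \tfrac{\uu\vv}{3}$ using $\sum_{k\neq 0} k^{-2} = \pi^{2}/3$ and $\sum_{k\neq 0}(-1)^{k} k^{-2}=-\pi^{2}/6$. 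Multiplying by $L$ completes the identification.

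The proof is essentially a calculation; there is no real obstacle beyond keeping track of the combinatorial identity $\uu^{3}+\vv^{3}=(\uu+\vv)(\uu^{2}-\uu\vv+\vv^{2})$ and verifying that the symmetry forces the boundary values $g(0),g(L)$ to vanish (so that an integration by parts produces only the $L^{2}$-norm term). The result is interesting in that it exhibits the non-Gaussian contribution $-\tfrac{1}{6}(\uu^{3}+\vv^{3})$ coming purely from the deterministic boundary potential, without any stochastic input.
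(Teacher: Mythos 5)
Your argument is correct and follows essentially the same route as the paper's proof: both hinge on the first-order identity $\partial_{x}g=-2\varphi^{\eps}_{\uu,\vv}-(\uu+\vv)/L$ for $g=\rslollipopb[rsP]^{\eps}$ (the paper obtains it by integrating the elliptic equation $0=\tfrac12\Delta g+\partial_{x}\varphi^{\eps}_{\uu,\vv}$ in real space, you by Fourier inversion of \zcref{eq:lollipopft} together with \zcref{eq:psihatzero}), on the vanishing $g(0)=g(L)=0$ forced by oddness and periodicity, and on the resulting reduction of the integral to $-\tfrac{\uu+\vv}{2L}\int^{L}_{0}g^{2}$. The only real difference is the evaluation of the final limit: the paper uses the explicit real-space formula $g(x)=-2\int^{x}_{0}\varphi^{\eps}_{\uu,\vv}-\tfrac{\uu+\vv}{L}x\to\uu-\tfrac{\uu+\vv}{L}x$ and integrates a quadratic, whereas you use Plancherel and the classical series $\sum_{k\ne0}k^{-2}=\pi^{2}/3$ and $\sum_{k\ne0}(-1)^{k}k^{-2}=-\pi^{2}/6$; both yield $\lim_{\eps\downarrow0}\int^{L}_{0}g^{2}=\tfrac{L}{3}(\uu^{2}-\uu\vv+\vv^{2})$ and hence the stated constant. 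One bookkeeping slip to fix: with the normalization \zcref{eq:fourierseries} one has $\tfrac12\|g\|^{2}_{L^{2}([-L,L])}=L\sum_{k}|\hat{g}(k)|^{2}$, so your displayed Plancherel identity is off by the factor $L$ that you only reinstate in the closing sentence.
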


\begin{proof}
It follows immediately from \zcref{eq:ptdef,eq:basic-reconstruction,eq:Qpartial-hat}
that $\rslollipopb[rsP]^{\eps}(x)$ solves the PDE
\begin{equation}
0=\frac{1}{2}\Delta\rslollipopb[rsP]^{\eps}(x)+\partial_{x}\varphi^{\eps}_{\uu,\vv}(x),\qquad x\in\mathbb{R}.\label{eq:lolsatisfiesPDE}
\end{equation}
Since $\rslollipopb[rsP]^{\eps}$ is odd, $2L$-periodic, and continuous,
we also have 
\begin{equation}
\rslollipopb[rsP]^{\eps}(0)=\rslollipopb[rsP]^{\eps}(L)=0.\label{eq:lolbbc}
\end{equation}
Therefore, using \zcref{eq:lolbbc}, we see that in fact
\begin{equation}
0=\frac{1}{2}\partial_{x}\rslollipopb[rsP]^{\eps}(x)+\varphi^{\eps}_{\uu,\vv}(x)+\frac{\uu+\vv}{2L}.\label{eq:lolb1storder}
\end{equation}
Thus, we can write
\begin{align}
\int^{L}_{0}\varphi^{\eps}_{\uu,\vv}(x)\rscherrybb[rsP]^{\eps}(x)\,\dif x\ovset{\zcref{eq:Qprodderivs-hat}} & =\int^{L}_{0}\varphi^{\eps}_{\uu,\vv}(x)\rslollipopb[rsP]^{\eps}(x)^{2}\,\dif x\nonumber \\
\ovset{\zcref{eq:lolb1storder}} & =-\int^{L}_{0}\left(\partial_{x}\rslollipopb[rsP]^{\eps}(x)+\frac{\uu+\vv}{2L}\right)\rslollipopb[rsP]^{\eps}(x)^{2}\,\dif x=-\frac{\uu+\vv}{2L}\int^{L}_{0}\rslollipopb[rsP]^{\eps}(x)^{2}\,\dif x,\label{eq:computephichbb}
\end{align}
with the last identity again by \zcref{eq:lolbbc}. On the other hand,
from \zcref{eq:lolbbc,eq:lolb1storder} we can write explicitly
\begin{equation}
\rslollipopb[rsP]^{\eps}(x)=-2\int^{x}_{0}\varphi^{\eps}_{\uu,\vv}(y)\,\dif y-\frac{\uu+\vv}{L}x,\label{eq:lollipopb-explicit}
\end{equation}
which by \zcref{eq:convtodelta} means that
\[
\lim_{\eps\downarrow0}\rslollipopb[rsP]^{\eps}(x)=\uu-\frac{\uu+\vv}{L}x.
\]
Using this along with the dominated convergence theorem in \zcref{eq:computephichbb},
we obtain
\[
\lim_{\eps\downarrow0}\int^{L}_{0}\varphi^{\eps}_{\uu,\vv}(x)\rscherrybb[rsP]^{\eps}(x)\,\dif x=-\frac{\uu+\vv}{2L}\int^{L}_{0}\left(\uu-\frac{\uu+\vv}{L}x\right)^{2}\,\dif x.
\]
Evaluating the integral yields \zcref{eq:bluecherrycontrib}.
\end{proof}

\begin{prop}
\label{prop:red-blue-red-cherry}We have, for any $t\in\mathbb{R}$,
that
\begin{equation}
\adjustlimits\lim_{\eps\downarrow0}\lim_{\zeta\downarrow0}\mathbb{E}\left[\int^{L}_{0}\varphi^{\eps}_{\uu,\vv}(x)\rselkrbr[rsP]^{\eps,\zeta}_{t}(x)\,\dif x\right]=-\frac{\uu^{2}+\vv^{2}}{4}V_{\psi},\label{eq:elkrbrboundary}
\end{equation}
where $V_{\psi}$ is defined in \zcref{eq:redcherry-var-limit}.
\end{prop}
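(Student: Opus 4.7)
The approach is to evaluate $\mathbb{E}[\rselkrbr[rsP]^{\eps,\zeta}_t(x)]$ via a single Gaussian contraction, pass to Fourier series, and identify the $\eps\downarrow0$ limit (after a symmetrization) with half the variance integral defining $V_\psi$.

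From \zcref{tab:multiplication-table}, $\rselkrbr = \rsipcherryrb\cdot\rslollipopr$, so by \zcref{eq:Qpartial-hat,eq:Qprodderivs-hat}
\begin{equation*}
\rselkrbr[rsP]^{\eps,\zeta}_t(x) = \rsipcherryrb[rsP]^{\eps,\zeta}_t(x)\cdot\rslollipopr[rsP]^\zeta_t(x), \qquad \rsipcherryrb[rsP]^{\eps,\zeta} = p'\circledast\bigl(\rslollipopr[rsP]^\zeta\cdot\rslollipopb[rsP]^\eps\bigr),
\end{equation*}
with $\rslollipopb[rsP]^\eps$ deterministic and the two copies of $\rslollipopr[rsP]^\zeta$ living in the first Wiener chaos of $\dif W^\zeta$. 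Taking expectation therefore collapses to the single Wick contraction
\begin{equation*}
\mathbb{E}\bigl[\rselkrbr[rsP]^{\eps,\zeta}_t(x)\bigr] = \int_{-\infty}^t\!\int_{\mathbb{R}} p'_{t-s}(x-y)\,\rslollipopb[rsP]^\eps(y)\,\mathbb{E}\bigl[\rslollipopr[rsP]^\zeta_s(y)\,\rslollipopr[rsP]^\zeta_t(x)\bigr]\,\dif y\,\dif s,
\end{equation*}
which is $t$-independent by stationarity.

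Next, I would transition to Fourier series. Using \zcref{eq:feven-int}, \zcref{lem:EW-cov-fourier} (for the covariance of $\rslollipopr[rsP]^\zeta$ together with the explicit formula for $\widehat{\rslollipopb[rsP]}^\eps$), and \zcref{eq:phihatepsilon}, and then evaluating the $\dif s$ integral explicitly, the pairing $\int_0^L\varphi^\eps_{\uu,\vv}(x)\mathbb{E}[\rselkrbr[rsP]^{\eps,\zeta}_t(x)]\,\dif x$ reduces to a double sum of the form
\begin{equation*}
C_L\sum_{k,m\in\mathbb{Z}}\frac{\widehat{\varphi}^\eps_{\uu,\vv}(k)\,\widehat{\Sh}^\zeta_{2L}(m)\,(k-m)}{(k-m)^2+m^2}\left[\frac{(1-\delta_k)\widehat{\varphi}^\eps_{\uu,\vv}(k)}{k} - \frac{(1-\delta_{k-2m})\widehat{\varphi}^\eps_{\uu,\vv}(k-2m)}{k-2m}\right]
\end{equation*}
for an explicit constant $C_L$. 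Taking $\zeta\downarrow0$ replaces $\widehat{\Sh}^\zeta_{2L}(m)$ by $1/(2L)$, and then the subsequent $\eps\downarrow0$ Riemann sum on the mesh $(\eps/L)\mathbb{Z}^2$, with $x=\eps k/L$ and $y=\eps m/L$, is justified by exactly the same dominated convergence bounds used in the proofs of \zcref{lem:redcherry-variance,lem:fourthmoment}. The oscillating factor $(-1)^k$ in $\widehat{\varphi}^\eps_{\uu,\vv}$ averages against the mesh to $\uu^2+\vv^2$, as in the passage \zcref{eq:t-term}.

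The main obstacle is matching the resulting asymmetric integrand with the symmetric one defining $V_\psi$: term by term the Riemann limit produces
\begin{equation*}
-\frac{\uu^2+\vv^2}{2\pi^2}\iint_{\mathbb{R}^2}\widehat{\psi}(x)\,\frac{x-y}{(x-y)^2+y^2}\left[\frac{\widehat{\psi}(x)}{x} - \frac{\widehat{\psi}(x-2y)}{x-2y}\right]\dif x\,\dif y,
\end{equation*}
which is not visibly equal to $-\tfrac{1}{4}(\uu^2+\vv^2)V_\psi$. The resolution is to exploit the symmetry of the Wick contraction under $m\mapsto k-m$, which in the limit becomes the involution $y\mapsto x-y$. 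Averaging the integrand under this involution, and using that $\widehat{\psi}$ is even so that $\widehat{\psi}(x-2y)/(x-2y)$ changes sign while $\widehat{\psi}(x)/x$ does not, the combination
\begin{equation*}
(x-y)\left[\frac{\widehat{\psi}(x)}{x}-\frac{\widehat{\psi}(x-2y)}{x-2y}\right] + y\left[\frac{\widehat{\psi}(x)}{x}+\frac{\widehat{\psi}(x-2y)}{x-2y}\right] = \widehat{\psi}(x) - \widehat{\psi}(x-2y)
\end{equation*}
collapses, producing (after division by $2$) precisely the $V_\psi$ integrand of \zcref{eq:Vpsidef}. This gives the claimed limit $-\tfrac{1}{4}(\uu^2+\vv^2)V_\psi$; the rest of the argument is bookkeeping.
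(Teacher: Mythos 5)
Your skeleton is the same as the paper's (single Wick contraction, Fourier series, Riemann limit), and your final symmetrization identity is algebraically correct: averaging the integrand under $y\mapsto x-y$ and using that $\widehat{\psi}$ is even does collapse the bracket to $\widehat{\psi}(x)-\widehat{\psi}(x-2y)$, yielding $-\tfrac14(\uu^{2}+\vv^{2})V_{\psi}$. The gap is in the order of operations. Your intermediate objects are only conditionally convergent, so the limit interchanges you invoke are not justified as stated. Concretely: (i) the piece of the double sum proportional to $\widehat{\varphi}^{\eps}_{\uu,\vv}(k)/k$ involves $\sum_{m}\widehat{\Sh}^{\zeta}_{2L}(m)\,\tfrac{k-m}{(k-m)^{2}+m^{2}}$, whose summand decays only like $1/|m|$; replacing $\widehat{\Sh}^{\zeta}_{2L}(m)$ by $1/(2L)$ as $\zeta\downarrow0$ and then passing to a Riemann sum cannot be justified by domination, because the resulting series and the resulting integral are not absolutely convergent. (ii) The continuum integrand you write down before symmetrizing,
\begin{equation*}
\widehat{\psi}(x)\,\frac{x-y}{(x-y)^{2}+y^{2}}\left[\frac{\widehat{\psi}(x)}{x}-\frac{\widehat{\psi}(x-2y)}{x-2y}\right],
\end{equation*}
behaves like $1/(2|y|)$ as $|y|\to\infty$ for fixed $x$ and like $-1/(2xy)$ near $x=0$, so the ``asymmetric Riemann limit'' is at best a principal value, and the appeal to ``the same dominated convergence bounds'' as in the variance lemmas (whose integrands are absolutely integrable) fails. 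The cure is to perform the symmetrization \emph{before} taking limits, at the level of the discrete sums — which is exactly what the paper does: it symmetrizes the conditionally convergent $m$-sum using $\widehat{\Sh}^{\zeta}_{2L}(m)=\widehat{\Sh}^{\zeta}_{2L}(-m)$ to get an absolutely convergent expression (the $j^{4}+4k^{4}$ denominators), applies a partial-fraction rearrangement to the other piece, and then uses the Euler cotangent identity of \zcref{lem:partialfractions} to show the leftover asymmetric series vanishes identically, arriving at the absolutely convergent symmetric sum $\sum_{j,k}\widehat{\varphi}^{\eps}_{\uu,\vv}(j)(\widehat{\varphi}^{\eps}_{\uu,\vv}(j)-\widehat{\varphi}^{\eps}_{\uu,\vv}(2k-j))/(k^{2}+(j-k)^{2})$ before invoking the Riemann-sum limit. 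Your continuum symmetrization reproduces the same cancellation formally, but only after the steps that needed it to already have taken place.
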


\begin{proof}
We begin by computing $\mathbb{E}\rselkrbr[rsP]^{\eps,\zeta}_{t}(x)$,
using the Fourier transform. We have
\begin{align*}
\widehat{\rsipcherryrb[rsP]}^{\eps,\zeta}_{t}(k) & \overset{\zcref{eq:HK-spacetime-Fourier}}{=}\frac{\pi\ii k}{L}\int^{t}_{-\infty}\e^{-\frac{\pi^{2}k^{2}}{2L^{2}}(t-s)}\widehat{\rscherryrb[rsP]}^{\eps,\zeta}_{s}(k)\,\dif s\overset{\zcref{eq:fourier-identities}}{=}\frac{\pi\ii k}{L}\int^{t}_{-\infty}\e^{-\frac{\pi^{2}k^{2}}{2L^{2}}(t-s)}\sum_{\ell\in\mathbb{Z}}\widehat{\rslollipopr[rsP]}^{\zeta}_{s}(\ell)\widehat{\rslollipopb[rsP]}^{\eps}(k-\ell)\,\dif s,
\end{align*}
and hence
\begin{align*}
\widehat{\rselkrbr[rsP]}^{\eps,\zeta}_{t}(j)\ovset{\zcref{eq:fourier-identities}} & =\sum_{k\in\mathbb{Z}}\widehat{\rslollipopr[rsP]}^{\zeta}_{t}(j-k)\widehat{\rsipcherryrb[rsP]}^{\eps,\zeta}_{t}(k)=\frac{\pi\ii}{L}\sum_{k\in\mathbb{Z}}k\widehat{\rslollipopr[rsP]}^{\zeta}_{t}(j-k)\int^{t}_{-\infty}\e^{-\frac{\pi^{2}k^{2}}{2L^{2}}(t-s)}\sum_{\ell\in\mathbb{Z}}\widehat{\rslollipopr[rsP]}^{\zeta}_{s}(\ell)\widehat{\rslollipopb[rsP]}^{\eps}(k-\ell)\,\dif s.
\end{align*}
Taking expectation and using \zcref{eq:EW-cov-fourier}, we get
\begin{align}
\mathbb{E}\widehat{\rselkrbr[rsP]}^{\eps,\zeta}_{t}(j) & =\frac{\pi\ii}{L}\sum_{k\in\mathbb{Z}}k\int^{t}_{-\infty}\e^{-\frac{\pi^{2}k^{2}}{2L^{2}}(t-s)}\sum_{\ell\in\mathbb{Z}}(\delta_{j-k+\ell}-\delta_{j-k-\ell})\widehat{\Sh}^{\zeta}_{2L}(j-k)\e^{-\frac{\pi^{2}(j-k)^{2}}{2L^{2}}(t-s)}\widehat{\rslollipopb[rsP]}^{\eps}(k-\ell)\,\dif s\nonumber \\
 & =\frac{2\ii L}{\pi}\sum_{k\in\mathbb{Z}}k\widehat{\Sh}^{\zeta}_{2L}(j-k)\frac{\widehat{\rslollipopb[rsP]}^{\eps}(j)-\widehat{\rslollipopb[rsP]}^{\eps}(2k-j)}{k^{2}+(j-k)^{2}}\nonumber \\
\ovset{\zcref{eq:lollipopft}} & =\frac{4L^{2}}{\pi^{2}}\sum_{k\in\mathbb{Z}}\frac{k\widehat{\Sh}^{\zeta}_{2L}(j-k)}{k^{2}+(j-k)^{2}}\left(\frac{\widehat{\varphi}^{\eps}_{\uu,\vv}(2k-j)(1-\delta_{2k-j})}{2k-j}-\frac{\widehat{\varphi}^{\eps}_{\uu,\vv}(j)(1-\delta_{j})}{j}\right).\label{eq:take-expectation}
\end{align}
For the first summand, we can take the limit as $\zeta\to0$ and obtain
using \zcref{eq:limShfourier} that
\begin{equation}
\frac{4L^{2}}{\pi^{2}}\sum_{k\in\mathbb{Z}}\frac{k(1-\delta_{2k-j})}{k^{2}+(j-k)^{2}}\widehat{\Sh}^{\zeta}_{2L}(j-k)\frac{\widehat{\varphi}^{\eps}_{\uu,\vv}(2k-j)}{2k-j}\xrightarrow[\zeta\to0]{}\frac{2L}{\pi^{2}}\sum_{k\in\mathbb{Z}}\frac{k\widehat{\varphi}^{\eps}_{\uu,\vv}(2k-j)(1-\delta_{2k-j})}{(k^{2}+(j-k)^{2})(2k-j)},\label{eq:first-term}
\end{equation}
since this series is absolutely summable. For the second summand in
\zcref{eq:take-expectation}, we first substitute $k\to k+j$ and
then symmetrize using \zcref{eq:Fourier-symmetric} to write
\begin{align}
 & -\frac{4L^{2}}{\pi^{2}}\sum_{k\in\mathbb{Z}}\frac{k}{k^{2}+(j-k)^{2}}\widehat{\Sh}^{\zeta}_{2L}(j-k)=-\frac{4L^{2}}{\pi^{2}}\sum_{k\in\mathbb{Z}}\frac{k+j}{(j+k)^{2}+k^{2}}\widehat{\Sh}^{\zeta}_{2L}(k)\nonumber \\
 & \quad=-\frac{2L^{2}}{\pi^{2}}\sum_{k\in\mathbb{Z}}\left(\frac{k+j}{(j+k)^{2}+k^{2}}+\frac{-k+j}{(j-k)^{2}+k^{2}}\right)\widehat{\Sh}^{\zeta}_{2L}(k)=-\frac{4L^{2}j^{3}}{\pi^{2}}\sum_{k\in\mathbb{Z}}\frac{\widehat{\Sh}^{\zeta}_{2L}(k)}{j^{4}+4k^{4}}\xrightarrow[\zeta\to0]{\zcref{eq:limShfourier}}-\frac{2Lj^{3}}{\pi^{2}}\sum_{k\in\mathbb{Z}}\frac{1}{j^{4}+4k^{4}}.\label{eq:second-term}
\end{align}
Using \zcref{eq:first-term,eq:second-term} in \zcref{eq:take-expectation},
we get
\begin{align}
\lim_{\zeta\downarrow0}\mathbb{E}\widehat{\rselkrbr[rsP]}^{\eps,\zeta}_{t}(j) & =\frac{2L}{\pi^{2}}\sum_{k\in\mathbb{Z}}\frac{k\widehat{\varphi}^{\eps}_{\uu,\vv}(2k-j)(1-\delta_{2k-j})}{(k^{2}+(j-k)^{2})(2k-j)}-\frac{2Lj^{2}\widehat{\varphi}^{\eps}_{\uu,\vv}(j)(1-\delta_{j})}{\pi^{2}}\sum_{k\in\mathbb{Z}}\frac{1}{j^{4}+4k^{4}}.\label{eq:eval-limit}
\end{align}
In particular, taking $j=0$, we have
\begin{equation}
\lim_{\zeta\downarrow0}\mathbb{E}\widehat{\rselkrbr[rsP]}^{\eps,\zeta}_{t}(0)=\frac{L}{2\pi^{2}}\sum_{k\in\mathbb{Z}\setminus\{0\}}\frac{\widehat{\varphi}^{\eps}_{\uu,\vv}(2k)}{k^{2}}.\label{eq:j0}
\end{equation}
Using \zcref{eq:eval-limit,eq:j0}, we can compute
\begin{align}
 & \lim_{\zeta\downarrow0}\int^{L}_{0}\varphi^{\eps}_{\uu,\vv}(x)\mathbb{E}\rselkrbr[rsP]^{\eps,\zeta}_{t}(x)\,\dif x\ovset{\zcref{eq:feven-int}}=L\sum_{j\in\mathbb{Z}}\widehat{\varphi}^{\eps}_{\uu,\vv}(j)\mathbb{E}\widehat{\rselkrbr[rsP]}^{\eps,\zeta}_{t}(j)\nonumber \\
 & =\frac{L^{2}}{2\pi^{2}}\widehat{\varphi}^{\eps}_{\uu,\vv}(0)\sum_{k\in\mathbb{Z}\setminus\{0\}}\frac{\widehat{\varphi}^{\eps}_{\uu,\vv}(2k)}{k^{2}}+\frac{2L^{2}}{\pi^{2}}\sum_{j\in\mathbb{Z}\setminus\{0\}}\widehat{\varphi}^{\eps}_{\uu,\vv}(j)\sum_{k\in\mathbb{Z}}\left(\frac{k\widehat{\varphi}^{\eps}_{\uu,\vv}(2k-j)(1-\delta_{2k-j})}{(k^{2}+(j-k)^{2})(2k-j)}-\widehat{\varphi}^{\eps}_{\uu,\vv}(j)\frac{j^{2}}{j^{4}+4k^{4}}\right).\label{eq:eval-integral}
\end{align}
We develop the first part of the last sum as
\begin{align}
\frac{2L^{2}}{\pi^{2}} & \sum_{\substack{j,k\in\mathbb{Z}\\
j\ne0
}
}\widehat{\varphi}^{\eps}_{\uu,\vv}(j)\frac{k}{2k-j}\cdot\frac{\widehat{\varphi}^{\eps}_{\uu,\vv}(2k-j)(1-\delta_{2k-j})}{k^{2}+(j-k)^{2}}=\frac{2L^{2}}{\pi^{2}}\sum_{\substack{j,k\in\mathbb{Z}\\
|j|\ne|k|
}
}\widehat{\varphi}^{\eps}_{\uu,\vv}(j+k)\widehat{\varphi}^{\eps}_{\uu,\vv}(k-j)\frac{k}{(k^{2}+j^{2})(k-j)}\nonumber \\
 & =\frac{L^{2}}{\pi^{2}}\sum_{\substack{j,k\in\mathbb{Z}\\
|j|\ne|k|
}
}\widehat{\varphi}^{\eps}_{\uu,\vv}(j+k)\widehat{\varphi}^{\eps}_{\uu,\vv}(k-j)\frac{k}{k^{2}+j^{2}}\left(\frac{1}{k-j}+\frac{1}{k+j}\right)\nonumber \\
 & =\frac{2L^{2}}{\pi^{2}}\sum_{\substack{j,k\in\mathbb{Z}\\
|j|\ne|k|
}
}\widehat{\varphi}^{\eps}_{\uu,\vv}(j+k)\widehat{\varphi}^{\eps}_{\uu,\vv}(k-j)\frac{k^{2}}{(k^{2}+j^{2})(k^{2}-j^{2})}=\frac{L^{2}}{\pi^{2}}\sum_{\substack{j,k\in\mathbb{Z}\\
|j|\ne|k|
}
}\frac{\widehat{\varphi}^{\eps}_{\uu,\vv}(j+k)\widehat{\varphi}^{\eps}_{\uu,\vv}(k-j)}{k^{2}+j^{2}}\nonumber \\
 & =\frac{L^{2}}{\pi^{2}}\sum_{k\in\mathbb{Z}}\sum_{j\in\mathbb{Z}\setminus\{0,2k\}}\frac{\widehat{\varphi}^{\eps}_{\uu,\vv}(j)\widehat{\varphi}^{\eps}_{\uu,\vv}(2k-j)}{k^{2}+(j-k)^{2}}\nonumber \\
 & =\frac{L^{2}}{\pi^{2}}\sum_{j\in\mathbb{Z}\setminus\{0\}}\sum_{k\in\mathbb{Z}}\frac{\widehat{\varphi}^{\eps}_{\uu,\vv}(j)\widehat{\varphi}^{\eps}_{\uu,\vv}(2k-j)}{k^{2}+(j-k)^{2}}-\frac{L^{2}}{\pi^{2}}\widehat{\varphi}^{\eps}_{\uu,\vv}(0)\sum_{k\in\mathbb{Z}\setminus\{0\}}\frac{\widehat{\varphi}^{\eps}_{\uu,\vv}(2k)}{2k^{2}}.\label{eq:first-part-of-sum}
\end{align}
On the other hand, we can write
\begin{equation}
\sum_{k\in\mathbb{Z}}\left(\frac{2j^{2}}{j^{4}+4k^{4}}-\frac{1}{k^{2}+(j-k)^{2}}\right)=\sum_{k\in\mathbb{Z}}\frac{j^{2}-2k^{2}-2jk}{j^{4}+4k^{4}}=0,\label{eq:second-part-of-sum}
\end{equation}
where in the last identity we used \zcref{eq:seriesiszero} and symmetry.
Using \zcref{eq:first-part-of-sum,eq:second-part-of-sum} in \zcref{eq:eval-integral},
we get
\begin{align*}
\lim_{\zeta\downarrow0}\int^{L}_{0}\varphi^{\eps}_{\uu,\vv}(x)\mathbb{E}\rselkrbr[rsP]^{\eps,\zeta}_{t}(x)\,\dif x & =-\frac{L^{2}}{\pi^{2}}\sum_{j\in\mathbb{Z}\setminus\{0\}}\sum_{k\in\mathbb{Z}}\frac{\widehat{\varphi}^{\eps}_{\uu,\vv}(j)\left(\widehat{\varphi}^{\eps}_{\uu,\vv}(j)-\widehat{\varphi}^{\eps}_{\uu,\vv}(2k-j)\right)}{k^{2}+(j-k)^{2}}\\
\ovset{\zcref{eq:phihatepsilon}} & =-\frac{1}{4\pi^{2}}\sum_{j\in\mathbb{Z}\setminus\{0\}}\left(\uu+(-1)^{j}\vv\right)^{2}\sum_{k\in\mathbb{Z}}\frac{\widehat{\psi}(\eps j/L)\left(\widehat{\psi}(\eps j/L)-\widehat{\psi}(\eps(j-2k)/L)\right)}{k^{2}+(j-k)^{2}}\\
 & =-\frac{\eps^{2}}{4\pi^{2}L^{2}}\left(\uu+\vv\right)^{2}\sum_{j\in2\mathbb{Z}\setminus\{0\}}\sum_{k\in\mathbb{Z}}\frac{\widehat{\psi}(\eps j/L)\left(\widehat{\psi}(\eps j)-\widehat{\psi}(\eps(j-2k)/L)\right)}{(\eps k/L)^{2}+(\eps(j-k)/L)^{2}}\\
 & \qquad\qquad-\frac{\eps^{2}}{4\pi^{2}L^{2}}\left(\uu-\vv\right)^{2}\sum_{j\in2\mathbb{Z}+1}\sum_{k\in\mathbb{Z}}\frac{\widehat{\psi}(\eps j/L)\left(\widehat{\psi}(\eps j/L)-\widehat{\psi}(\eps(j-2k)/L)\right)}{(\eps k)^{2}+(\eps(j-k)/L)^{2}}\\
 & \xrightarrow[\eps\downarrow0]{}-\frac{\uu^{2}+\vv^{2}}{4\pi^{2}}\iint_{\mathbb{R}^{2}}\frac{\widehat{\psi}(x)\left(\widehat{\psi}(x)-\widehat{\psi}(x-2y)\right)}{y^{2}+(x-y)^{2}}\,\dif x\,\dif y\overset{\zcref{eq:Vpsidef}}{=}-\frac{\uu^{2}+\vv^{2}}{4}V_{\psi},
\end{align*}
with the limit by the Riemann sum approximation.
\end{proof}

\subsection{Terms in the fourth chaos}

We finally turn to the trees $\rsmooserrrr$ and $\rscandelabrarrrr$,
whose realizations live in the fourth Wiener chaos. In this setting,
we expect both $\mathbb{E}\left[\rselkrenormrr[rsP]^{\eps,\zeta}_{t}(x)\right]$
and $\mathbb{E}\left[\rscherryrenormrenorm[rsP]^{\eps,\zeta}_{t}(x)\right]$
to diverge logarithmically as $\zeta\to0$, but in fact these logarithmic
divergences cancel when we consider the sum $\mathbb{E}\left[\rselkrenormrr[rsP]^{\eps,\zeta}_{t}(x)+4\rscherryrenormrenorm[rsP]^{\eps,\zeta}_{t}(x)\right]$.
See \cites[Lem.~6.4]{hairer:2013:solving} for the corresponding
situation in the periodic case. To analyze these terms, we follow
the proof strategy of \cites[Lem.~6.5]{hairer:2013:solving}, working
in the Fourier domain. A similar result is also established in the
proof of \cite[Thm.~6.5]{hairer:quastel:2018:class}. Since we want
to actually compute the limiting behavior of $\mathbb{E}\left[\rselkrenormrr[rsP]^{\eps,\zeta}_{t}(x)+4\rscherryrenormrenorm[rsP]^{\eps,\zeta}_{t}(x)\right]$
exactly, rather than just up to a finite constant.  we need to proceed
more precisely than in the proof of \cites[Thm.~6.5]{hairer:quastel:2018:class}.
The proof in our setting is also more complicated than that of \cites[Lem.~6.5]{hairer:2013:solving}
because the expectations $\mathbb{E}\left[\rselkrenormrr[rsP]^{\eps,\zeta}_{t}(x)\right]$
and $\mathbb{E}\left[\rscherryrenormrenorm[rsP]^{\eps,\zeta}_{t}(x)\right]$
depend on $x$. Thus, we need to study $\mathbb{E}\left[\rselkrenormrr[rsP]^{\eps,\zeta}_{t}(x)+4\rscherryrenormrenorm[rsP]^{\eps,\zeta}_{t}(x)\right]$really
as a function of $x$, not simply as a number as in the spatially
homogeneous setting. In the Fourier domain, this means that we have
to compute all of the Fourier coefficients, rather than just that
of the zero-frequency mode.

The main results of this section are \zcref{prop:boundary-value,prop:sumofcancellinglogs}
in \zcref{subsec:Conclusion} below. To work towards them, we first
perform the calculations of the Fourier coefficients of $\rscandelabrarrrr[rsP]^{\zeta}_{t}$
in \zcref{subsec:candelabra-fourier} and of $\rsmooserrrr[rsP]^{\zeta}_{t}$
in \zcref{subsec:moose-fourier}. 

\subsubsection{\label{subsec:candelabra-fourier}Fourier coefficients of $\protect\rscandelabrarrrr[rsP]^{\zeta}_{t}$}
\begin{lem}
We have, for each $n\in\mathbb{Z}$, that
\[
\mathbb{E}\left[\widehat{\rscherryrenormrenorm[rsP]}^{\zeta}_{t}(2n+1)\right]=0
\]
and
\begin{equation}
\mathbb{E}\left[\widehat{\rscherryrenormrenorm[rsP]}^{\zeta}_{t}(2n)\right]=-\frac{16L^{2}}{\pi^{2}}\sum_{k\in\mathbb{Z}\setminus\{0,2n\}}\frac{k(2n-k)}{k^{2}+(2n-k)^{2}}\sum_{\ell\in\mathbb{Z}}\widehat{\Sh}^{\zeta}_{2L}(\ell)\widehat{\Sh}^{\zeta}_{2L}(k-\ell)\frac{\delta_{n}-\delta_{n-\ell}-\delta_{k-\ell-n}+\delta_{k-n}}{k^{2}+\ell^{2}+(k-\ell)^{2}}.\label{eq:Ecandelabra-Fourier}
\end{equation}
In particular,
\begin{equation}
\mathbb{E}\left[\widehat{\rscherryrenormrenorm[rsP]}^{\zeta}_{t}(0)\right]=\frac{8L^{2}}{\pi^{2}}\sum_{k\in\mathbb{Z}\setminus\{0\}}\sum_{\ell\in\mathbb{Z}\setminus\{0,k\}}\frac{\widehat{\Sh}^{\zeta}_{2L}(\ell)\widehat{\Sh}^{\zeta}_{2L}(k-\ell)}{k^{2}+\ell^{2}+(k-\ell)^{2}}.\label{eq:Ecandelabra-Fourier-0}
\end{equation}
\end{lem}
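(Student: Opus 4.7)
The computation is a direct Fourier-domain evaluation that exploits the second-chaos structure of $\rsrenorm$. The starting point is the observation (from \zcref{tab:Mhatdef} and the inductive definitions in \zcref{subsec:Canonical-lift}) that $\rscandelabrarrrr = (\rsipcherryrr)^2$ and hence $\rscherryrenormrenorm = (\rsiprenorm)^2$, so that Fourier convolution gives
\[
\widehat{\rscherryrenormrenorm[rsP]}^{\zeta}_{t}(K) = \sum_{k_{1}\in\mathbb{Z}} \widehat{\rsiprenorm[rsP]}^{\zeta}_{t}(k_{1})\,\widehat{\rsiprenorm[rsP]}^{\zeta}_{t}(K-k_{1}).
\]
I would then use \eqref{eq:HK-spacetime-Fourier} to write each factor as $\widehat{\rsiprenorm[rsP]}^{\zeta}_{t}(k) = \frac{\pi\ii k}{L}\int_{-\infty}^{t} e^{-\frac{\pi^{2}k^{2}}{2L^{2}}(t-s)}\widehat{\rsrenorm[rsP]}^{\zeta}_{s}(k)\,\dif s$ and expand $\widehat{\rsrenorm[rsP]}^{\zeta}_{s}(k) = \sum_{\ell} \widehat{\rslollipopr[rsP]}^{\zeta}_{s}(\ell)\widehat{\rslollipopr[rsP]}^{\zeta}_{s}(k-\ell) - \widehat{C^{(1)}_{\zeta}}(k)$; since $\mathbb{E}[\widehat{\rsrenorm[rsP]}^{\zeta}_{s}(k)] = 0$, the subtraction precisely removes the diagonal Wick contractions when we take the expectation of a product.

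The next step is to apply Isserlis's theorem to
\[
\mathbb{E}\bigl[\widehat{\rsrenorm[rsP]}^{\zeta}_{s_{1}}(k_{1})\widehat{\rsrenorm[rsP]}^{\zeta}_{s_{2}}(k_{2})\bigr],
\]
which by the chaos subtraction reduces to the two ``cross'' pairings, each contributing a product of two covariances evaluated via \eqref{eq:EW-cov-fourier}, i.e., factors $(\delta_{\text{sum}}-\delta_{\text{difference}})\widehat{\Sh}^{\zeta}_{2L}(\cdot)e^{-\frac{\pi^{2}}{2L^{2}}(\cdot)\,|s_{1}-s_{2}|}$. The parity claim $\mathbb{E}[\widehat{\rscherryrenormrenorm[rsP]}^{\zeta}_{t}(2n+1)] = 0$ drops out immediately at this stage: expanding the product of two $(\delta_{\pm}-\delta_{\pm})$ factors produces four delta combinations, and each of them forces $K = k_{1}+k_{2}$ to lie in $2\mathbb{Z}$ (or else vanishes identically).

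For $K = 2n$, I would then carry out the time integration via the elementary identity
\[
\int_{0}^{\infty}\!\!\int_{0}^{\infty}\! e^{-au_{1}-bu_{2}-c|u_{1}-u_{2}|}\,\dif u_{1}\,\dif u_{2} = \frac{a+b+2c}{(a+b)(a+c)(b+c)},
\]
obtained by splitting $u_{1}\lessgtr u_{2}$. With $a = \frac{\pi^{2}k_{1}^{2}}{2L^{2}}$, $b = \frac{\pi^{2}k_{2}^{2}}{2L^{2}}$, and $c = \frac{\pi^{2}\alpha}{2L^{2}}$ (where $\alpha = \ell^{2}+(k_{1}-\ell)^{2}$ records the internal momenta of the contraction), the constraint $k_{1}+k_{2} = 2n$ allows one to cancel the numerator $k_{1}^{2}+k_{2}^{2}+2\alpha = k_{1}^{2}+k_{2}^{2}+2\ell^{2}+2(k_{1}-\ell)^{2}$ against the factor $(k_{1}^{2}+\alpha)(k_{2}^{2}+\alpha)$ in a way that produces the advertised $(k^{2}+\ell^{2}+(k-\ell)^{2})^{-1}$ and $(k^{2}+(2n-k)^{2})^{-1}$ structure after setting $k = k_{1}$. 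The four delta possibilities collapse into the signed sum $\delta_{n}-\delta_{n-\ell}-\delta_{k-\ell-n}+\delta_{k-n}$, and the prefactor $\frac{16L^{2}}{\pi^{2}}$ assembles from $\frac{\pi\ii k_{1}}{L}\cdot\frac{\pi\ii k_{2}}{L}\cdot\frac{4L^{4}}{\pi^{4}}$ together with the factor $2$ from symmetrizing the two Wick pairings. The exclusion $k\notin\{0,2n\}$ reflects the vanishing of $\widehat{\rsiprenorm[rsP]}^{\zeta}_{t}(0)$ from the $\pi\ii k$ prefactor. Equation \eqref{eq:Ecandelabra-Fourier-0} is then obtained by specializing \eqref{eq:Ecandelabra-Fourier} at $n=0$: the deltas $\delta_{n}=\delta_{k-n}=1$ and $\delta_{n-\ell}=\delta_{k-\ell-n}=\delta_{\ell}$ (the latter requiring $\ell\in\{0,k\}$, which is excluded by the summation range), leaving only a $+2$ for each retained $(k,\ell)$ and producing the sign and factor changes as claimed.

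The main obstacle is the combinatorial bookkeeping in identifying how the eight signed delta-constraint choices (two pairings times four products of deltas) assemble into the four signed deltas of \eqref{eq:Ecandelabra-Fourier}, and in recognizing the algebraic simplification $k_{1}^{2}+k_{2}^{2}+2\alpha = 2(k^{2}+\ell^{2}+(k-\ell)^{2})$ when $k_{1}+k_{2}=2n$ and $\alpha=\ell^{2}+(k_{1}-\ell)^{2}$ after appropriate relabeling. Once that identification is made, the remaining steps are routine manipulations of Fourier series and the explicit heat-kernel formulas, and no further analytic input beyond \eqref{eq:EW-cov-fourier} is required.
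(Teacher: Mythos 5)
Your route is the same as the paper's: write the quantity on the left of \eqref{eq:Ecandelabra-Fourier} as a Fourier convolution of two copies of the integrated renormalized cherry, express each factor via \eqref{eq:HK-spacetime-Fourier}, reduce the expectation to a covariance of two second-chaos products so that Isserlis leaves only the two cross pairings, insert \eqref{eq:EW-cov-fourier}, and evaluate the double time integral. Your integral identity
\[
\int_{0}^{\infty}\!\!\int_{0}^{\infty}e^{-au_{1}-bu_{2}-c|u_{1}-u_{2}|}\,\dif u_{1}\,\dif u_{2}=\frac{a+b+2c}{(a+b)(a+c)(b+c)}
\]
is correct, as are the parity argument and the factor of $2$ from the two pairings.

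The gap is in the step you yourself flag as the crux. With $a\propto k^{2}$, $b\propto(2n-k)^{2}$, $c\propto\ell^{2}+(k-\ell)^{2}$, the identity you propose, $k_{1}^{2}+k_{2}^{2}+2\alpha=2\left(k^{2}+\ell^{2}+(k-\ell)^{2}\right)$, is equivalent to $(2n-k)^{2}=k^{2}$, which holds only when $n=0$ or $n=k$ — that is, only under the constraints $\delta_{n}$ and $\delta_{k-n}$. For the terms $\delta_{n-\ell}$ and $\delta_{k-\ell-n}$ there is no pointwise cancellation. What does work is the partial-fraction split $a+b+2c=(a+c)+(b+c)$, giving $\frac{1}{(a+b)(a+c)}+\frac{1}{(a+b)(b+c)}$, followed by the observation that in each of the four delta cases the $(k,\ell)$-summand is invariant under $k\mapsto 2n-k$: this exchanges $a$ and $b$, preserves $k(2n-k)$ and the delta constraint, and — because that constraint pins either $\ell$ or $k-\ell$ to $n$ — preserves $c$ and the $\widehat{\Sh}^{\zeta}_{2L}$ weights by evenness. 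Summing over $k$ then converts the $(b+c)$ term into the $(a+c)$ term and produces the single denominator $\left(k^{2}+(2n-k)^{2}\right)\left(k^{2}+\ell^{2}+(k-\ell)^{2}\right)$ of \eqref{eq:Ecandelabra-Fourier}. Separately, your $n=0$ specialization is garbled: on the range $k\neq0$ one has $\delta_{k-n}=\delta_{k}=0$ (not $1$), the surviving numerator for $\ell\notin\{0,k\}$ is $1$ rather than $2$, and the sign and the factor $\tfrac{8L^{2}}{\pi^{2}}$ come from $\frac{k(2n-k)}{k^{2}+(2n-k)^{2}}\big|_{n=0}=-\tfrac12$. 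Both issues are repairable, but as written the key algebraic step would fail.
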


\begin{proof}
Using \zcref{eq:Qpartial-hat,eq:fourier-identities,eq:HK-spacetime-Fourier},
we can write
\[
\widehat{\rsipcherryrr[rsP]}^{\zeta}_{t}(k)=\frac{\pi\ii k}{L}\int^{t}_{-\infty}\e^{-\frac{\pi^{2}k^{2}}{2L^{2}}(t-s)}\widehat{\rscherryrr[rsP]}^{\zeta}_{s}(k)\,\dif s=\frac{\pi\ii k}{L}\int^{t}_{-\infty}\e^{-\frac{\pi^{2}k^{2}}{2L^{2}}(t-s)}\sum_{\ell\in\mathbb{Z}}\widehat{\rslollipopr[rsP]}^{\zeta}_{s}(\ell)\widehat{\rslollipopr[rsP]}^{\zeta}_{s}(k-\ell)\,\dif s.
\]
Using this and recalling \zcref{eq:Qprodderivs-hat,eq:subtract-exp},
we see that
\begin{align}
\mathbb{E} & \left[\widehat{\rscherryrenormrenorm[rsP]}^{\zeta}_{t}(j)\right]=\sum_{k\in\mathbb{Z}}\Cov\left(\widehat{\rsipcherryrr[rsP]}^{\zeta}_{t}(k),\widehat{\rsipcherryrr[rsP]}^{\zeta}_{t}(j-k)\right)\nonumber \\
 & =-\frac{\pi^{2}}{L^{2}}\sum_{k\in\mathbb{Z}\setminus\{0,j\}}k(j-k)\iint_{(-\infty,t]^{2}}\e^{-\frac{\pi^{2}k^{2}}{2L^{2}}(t-s)-\frac{\pi^{2}(j-k)^{2}}{2L^{2}}(t-q)}\nonumber \\
 & \qquad\qquad\qquad\qquad\qquad\times\sum_{\ell,m\in\mathbb{Z}}\Cov\left(\widehat{\rslollipopr[rsP]}^{\zeta}_{s}(\ell)\widehat{\rslollipopr[rsP]}^{\zeta}_{s}(k-\ell),\widehat{\rslollipopr[rsP]}^{\zeta}_{q}(m)\widehat{\rslollipopr[rsP]}^{\zeta}_{q}(j-k-m)\right)\,\dif s\,\dif q\nonumber \\
 & =-\frac{2\pi^{2}}{L^{2}}\sum_{k\in\mathbb{Z}\setminus\{0,j\}}k(j-k)\iint_{(-\infty,t]^{2}}\e^{-\frac{\pi^{2}k^{2}}{2L^{2}}(t-s)-\frac{\pi^{2}(j-k)^{2}}{2L^{2}}(t-q)}\nonumber \\
 & \qquad\qquad\qquad\qquad\qquad\times\sum_{\ell,m\in\mathbb{Z}}\mathbb{E}\left[\widehat{\rslollipopr[rsP]}^{\zeta}_{s}(\ell)\widehat{\rslollipopr[rsP]}^{\zeta}_{s}(m)\right]\mathbb{E}\left[\widehat{\rslollipopr[rsP]}^{\zeta}_{q}(k-\ell)\widehat{\rslollipopr[rsP]}^{\zeta}_{q}(j-k-m)\right]\,\dif s\,\dif q,\label{eq:apply-isserlis-candelabra}
\end{align}
where in the last identity we used the Isserlis theorem and the symmetry
of the sum under the exchange $m\leftrightarrow j-k-m$. Then we can
use \zcref{lem:EW-cov-fourier} to compute
\begin{align*}
\mathbb{E} & \left[\widehat{\rslollipopr[rsP]}^{\zeta}_{s}(\ell)\widehat{\rslollipopr[rsP]}^{\zeta}_{s}(m)\right]\mathbb{E}\left[\widehat{\rslollipopr[rsP]}^{\zeta}_{q}(k-\ell)\widehat{\rslollipopr[rsP]}^{\zeta}_{q}(j-k-m)\right]\\
 & =\left(\delta_{\ell+m}-\delta_{\ell-m=0}\right)\left(\delta_{-\ell+j-m}-\delta_{2k-\ell-j+m}\right)\widehat{\Sh}^{\zeta}_{2L}(\ell)\widehat{\Sh}^{\zeta}_{2L}(k-\ell)\e^{-\frac{\pi^{2}|s-q|}{2L^{2}}(\ell^{2}+(k-\ell)^{2})}\\
 & =\left(\delta_{\ell+m}\delta_{j}-\delta_{\ell-m}\delta_{-2\ell+j}-\delta_{\ell+m}\delta_{2k-2\ell-j}+\delta_{\ell-m}\delta_{2k-j}\right)\widehat{\Sh}^{\zeta}_{2L}(\ell)\widehat{\Sh}^{\zeta}_{2L}(k-\ell)\e^{-\frac{\pi^{2}|s-q|}{2L^{2}}(\ell^{2}+(k-\ell)^{2})}.
\end{align*}
Using this in \zcref{eq:apply-isserlis-candelabra}, we get
\begin{align*}
\mathbb{E}\left[\widehat{\rscherryrenormrenorm[rsP]}^{\zeta}_{t}(j)\right] & =-\frac{2\pi^{2}}{L^{2}}\sum_{k\in\mathbb{Z}\setminus\{0,j\}}k(j-k)\sum_{\ell\in\mathbb{Z}}\widehat{\Sh}^{\zeta}_{2L}(\ell)\widehat{\Sh}^{\zeta}_{2L}(k-\ell)\left(\delta_{j}-\delta_{-2\ell+j}-\delta_{2k-2\ell-j}+\delta_{2k-j}\right)\\
 & \qquad\times\iint_{(-\infty,t]^{2}}\exp\left\{ -\frac{\pi^{2}}{2L^{2}}\left(-k^{2}(t-s)-(j-k)^{2}(t-q)-|s-q|(\ell^{2}+(k-\ell)^{2})\right)\right\} \,\dif s\,\dif q\\
 & =-\frac{16L^{2}}{\pi^{2}}\sum_{k\in\mathbb{Z}\setminus\{0,j\}}\frac{k(j-k)}{k^{2}+(j-k)^{2}}\sum_{\ell\in\mathbb{Z}}\widehat{\Sh}^{\zeta}_{2L}(\ell)\widehat{\Sh}^{\zeta}_{2L}(k-\ell)\frac{\delta_{j}-\delta_{-2\ell+j}-\delta_{2k-2\ell-j}+\delta_{2k-j}}{k^{2}+\ell^{2}+(k-\ell)^{2}},
\end{align*}
where for the last identity we simply evaluated the integral, which
is simply an exponential in the integration variables. It is clear
that this expression is nonzero only when $j$ is even, and setting
$j=2n$ we get \zcref{eq:Ecandelabra-Fourier}.

For the last identity in the statement, we simply plug $n=0$ into
\zcref{eq:Ecandelabra-Fourier} to obtain
\begin{align}
\mathbb{E}\left[\widehat{\rscherryrenormrenorm[rsP]}^{\zeta}_{t}(0)\right] & =\frac{8L^{2}}{\pi^{2}}\sum_{k\in\mathbb{Z}\setminus\{0\}}\sum_{\ell\in\mathbb{Z}}\widehat{\Sh}^{\zeta}_{2L}(\ell)\widehat{\Sh}^{\zeta}_{2L}(k-\ell)\frac{1-\delta_{\ell}-\delta_{k-\ell}}{k^{2}+\ell^{2}+(k-\ell)^{2}}.\label{eq:pluginj=00003D0}
\end{align}
We note that, since $k\ne0$, we have
\[
1-\delta_{\ell}-\delta_{k-\ell}=\begin{cases}
0, & \ell\in\{0,k\};\\
1, & \text{otherwise,}
\end{cases}
\]
so in fact \zcref{eq:pluginj=00003D0} becomes \zcref{eq:Ecandelabra-Fourier-0}.
\end{proof}

\begin{cor}
We have an absolute constant $C<\infty$ such that
\begin{equation}
\left|\mathbb{E}\left[\widehat{\rscherryrenormrenorm[rsP]}^{\zeta}_{t}(2n)\right]\right|\le\frac{C}{|n|}\qquad\text{for all }\zeta>0\text{ and }n\in\mathbb{Z}\setminus\{0\}\label{eq:candelabrabound}
\end{equation}
and moreover that, for each $n\in\mathbb{Z}\setminus\{0\}$,
\begin{equation}
\lim_{\zeta\downarrow0}\mathbb{E}\left[\widehat{\rscherryrenormrenorm[rsP]}^{\zeta}_{t}(2n)\right]=\frac{1}{\pi^{2}}\sum_{k\in\mathbb{Z}}\frac{n^{2}-3k^{2}}{n^{4}+n^{2}k^{2}+k^{4}}.\label{eq:Ecandelabralimit}
\end{equation}
\end{cor}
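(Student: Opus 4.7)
The plan is to start from the explicit formula \zcref{eq:Ecandelabra-Fourier} and specialize to $n \neq 0$, which kills the $\delta_n$ term in the numerator. Of the three remaining deltas, those from $\delta_{n-\ell}$ (forcing $\ell = n$) and $\delta_{k-\ell-n}$ (forcing $\ell = k-n$) give identical contributions, while the one from $\delta_{k-n}$ fixes $k = n$ in the outer sum and leaves a single sum over $\ell$. After the change of variable $m = k-n$ and the identities $k^2+(2n-k)^2 = 2(m^2+n^2)$ and $k^2+n^2+(k-n)^2 = 2(m^2+mn+n^2)$, this recasts the expectation as
\begin{equation*}
\mathbb{E}\bigl[\widehat{\rscherryrenormrenorm[rsP]}^\zeta_t(2n)\bigr] = \frac{8L^2}{\pi^2}\widehat{\Sh}^\zeta_{2L}(n)\sum_{m\in\mathbb{Z}}\frac{(n^2-m^2)\widehat{\Sh}^\zeta_{2L}(m)}{(m^2+n^2)(m^2+mn+n^2)} - \frac{4L^2}{\pi^2}\sum_{\ell\in\mathbb{Z}} \frac{\widehat{\Sh}^\zeta_{2L}(\ell)\widehat{\Sh}^\zeta_{2L}(n-\ell)}{\ell^2-n\ell+n^2},
\end{equation*}
where the summand in the first sum vanishes at $m = \pm n$ so the constraint inherited from $k \neq 0, 2n$ is automatic.

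For the uniform bound \zcref{eq:candelabrabound}, I would use $|\widehat{\Sh}^\zeta_{2L}(k)| \leq 1/(2L)$ (which follows since $\Sh^\zeta_{2L} \geq 0$ has integral $1$ over a period) together with the coercivity $m^2+mn+n^2 = (m+n/2)^2 + 3n^2/4 \geq 3n^2/4$. Splitting $\sum_m 1/(m^2+mn+n^2)$ into $|m| \leq |n|$ (at most $2|n|+1$ terms of size $\leq 4/(3n^2)$) and $|m| > |n|$ (tail bounded by $\sum 4/m^2 \lesssim 1/|n|$) yields an $O(1/|n|)$ estimate; the extra factor $|n^2-m^2|/(m^2+n^2) \leq 1$ does not harm the bound, and a parallel argument controls the $\ell$-sum. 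For the limit \zcref{eq:Ecandelabralimit}, dominated convergence with this summable envelope, combined with the pointwise convergence $\widehat{\Sh}^\zeta_{2L}(k) \to 1/(2L)$, gives
\begin{equation*}
\lim_{\zeta\downarrow 0}\mathbb{E}\bigl[\widehat{\rscherryrenormrenorm[rsP]}^\zeta_t(2n)\bigr] = \frac{2}{\pi^2}\sum_{m\in\mathbb{Z}}\frac{n^2-m^2}{(m^2+n^2)(m^2+mn+n^2)} - \frac{1}{\pi^2}\sum_{\ell\in\mathbb{Z}}\frac{1}{\ell^2-n\ell+n^2}.
\end{equation*}

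The algebraic heart of the argument is identifying this with the target $\pi^{-2}\sum_k (n^2-3k^2)/(n^4+n^2k^2+k^4)$. Using the factorization $n^4+n^2k^2+k^4 = (k^2+nk+n^2)(k^2-nk+n^2)$, partial fractions give
\begin{equation*}
\tfrac{n^2-m^2}{(m^2+n^2)(m^2+mn+n^2)} = \tfrac{-2m/n}{m^2+n^2} + \tfrac{2m/n+1}{m^2+mn+n^2}\quad\text{and}\quad \tfrac{n^2-3k^2}{n^4+n^2k^2+k^4} = \tfrac{2k/n+1/2}{k^2+nk+n^2} + \tfrac{-2k/n+1/2}{k^2-nk+n^2}.
\end{equation*}
The key symmetry is $k \to -n-k$, which leaves $k^2+nk+n^2$ invariant while sending $2k/n+1 \mapsto -(2k/n+1)$ and $2k/n+1/2 \mapsto -(2k/n+3/2)$. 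Applied to symmetric truncations $|k| \leq N$ of each absolutely convergent original sum (with the $O(|n|/N)$ boundary corrections vanishing as $N \to \infty$), this forces $\sum_m (2m/n+1)/(m^2+mn+n^2) = 0$ and $\sum_k (2k/n+1/2)/(k^2+nk+n^2) = -S_n/2$, where $S_n \coloneqq \sum_k 1/(k^2+nk+n^2)$. Together with $\sum_m m/(m^2+n^2) = 0$ by oddness and $\sum_\ell 1/(\ell^2-n\ell+n^2) = S_n$ by $\ell \to -\ell$, both the LHS candidate limit and the target RHS collapse to $-S_n/\pi^2$, completing the identification.

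The main obstacle is the rigorous justification of the $k \to -n-k$ symmetry argument on the partial-fraction pieces, which are only conditionally convergent even though the original series are absolutely convergent. This requires operating on symmetric truncations and controlling the $O(|n|/N)$ corrections introduced by the shift altering the summation range. Once this technicality is handled, the dominated-convergence envelope from the first part suffices to interchange limit and sum, and the coincidence of both sides with $-S_n/\pi^2$ is entirely algebraic.
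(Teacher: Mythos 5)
Your proposal is correct, and the first half (evaluating the three surviving Kronecker deltas in \zcref{eq:Ecandelabra-Fourier}, the envelope $|\widehat{\Sh}^{\zeta}_{2L}(k)|\le 1/(2L)$ combined with $m^2+mn+n^2\ge 3n^2/4$ for the uniform $O(1/|n|)$ bound, and dominated convergence for the limit) matches the paper's argument essentially term by term: your two sums are exactly the paper's $S^{\zeta}_1(n)$ and $S^{\zeta}_2(n)$, except that the paper additionally symmetrizes the first sum over $m\mapsto -m$, which converts $(m^2+n^2)(m^2+mn+n^2)$ into $n^4+n^2m^2+m^4$ while everything stays absolutely convergent. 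Where you genuinely diverge is the final algebraic identification. The paper never leaves the realm of absolutely convergent series: after the $m\mapsto-m$ symmetrization the first sum is already $\frac{2}{\pi^2}\sum_k\frac{n^2-k^2}{n^4+n^2k^2+k^4}$, the second is rewritten as $-\frac{1}{\pi^2}\sum_k\frac{n^2+k^2}{n^4+n^2k^2+k^4}$ by the same trick, and the target drops out by subtraction — three lines, no conditional convergence. You instead split into partial fractions whose individual pieces decay only like $1/m$, and use the affine symmetry $k\mapsto -n-k$ to show that both your limit and the target collapse to $-\pi^{-2}\sum_k(k^2+nk+n^2)^{-1}$; this is valid, but it forces you to work with symmetric truncations and to verify that the $O(1/N)$ boundary corrections from shifting the summation window vanish, which is precisely the technicality the paper's symmetrization avoids. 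Your route does have the minor virtue of exhibiting the closed form $-S_n/\pi^2$ for the limit (which could then be evaluated via the cotangent identity of \zcref{lem:partialfractions}), but for the purposes of this corollary the absolutely convergent symmetrization is the shorter and safer path, and I would recommend replacing the partial-fraction step with it.
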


\begin{proof}
Making the change of variables $k\mapsto k+\ell$ in \zcref{eq:Ecandelabra-Fourier}
we obtain for $n\ne0$ that
\begin{align*}
\mathbb{E}\left[\widehat{\rscherryrenormrenorm[rsP]}^{\zeta}_{t}(2n)\right] & =-\frac{16L^{2}}{\pi^{2}}\sum_{k,\ell\in\mathbb{Z}}\widehat{\Sh}^{\zeta}_{2L}(\ell)\widehat{\Sh}^{\zeta}_{2L}(k)\frac{(k+\ell)(2n-k-\ell)}{(k+\ell)^{2}+(2n-k-\ell)^{2}}\cdot\frac{-\delta_{n-\ell}-\delta_{k-n}+\delta_{k+\ell-n}}{(k+\ell)^{2}+\ell^{2}+k^{2}}.
\end{align*}
The right side is symmetric in $k$ and $\ell$, so in fact we get
\[
\mathbb{E}\left[\widehat{\rscherryrenormrenorm[rsP]}^{\zeta}_{t}(2n)\right]=S^{\zeta}_{1}(n)+S^{\zeta}_{2}(n),
\]
where
\begin{align*}
S^{\zeta}_{1}(n) & \coloneqq\frac{32L^{2}}{\pi^{2}}\sum_{k\in\mathbb{Z}}\widehat{\Sh}^{\zeta}_{2L}(n)\widehat{\Sh}^{\zeta}_{2L}(k)\frac{(k+n)(n-k)}{\left((n+k)^{2}+(n-k)^{2}\right)\left((n+k)^{2}+n^{2}+k^{2}\right)}\\
 & =\frac{16L^{2}}{\pi^{2}}\sum_{k\in\mathbb{Z}}\widehat{\Sh}^{\zeta}_{2L}(n)\widehat{\Sh}^{\zeta}_{2L}(k)\frac{n^{2}-k^{2}}{\left(n^{2}+k^{2}\right)\left((n+k)^{2}+n^{2}+k^{2}\right)}\\
 & =\frac{8L^{2}}{\pi^{2}}\sum_{k\in\mathbb{Z}}\widehat{\Sh}^{\zeta}_{2L}(n)\widehat{\Sh}^{\zeta}_{2L}(k)\frac{n^{2}-k^{2}}{n^{2}+k^{2}}\left(\frac{1}{(n+k)^{2}+n^{2}+k^{2}}+\frac{1}{(n-k)^{2}+n^{2}+k^{2}}\right)\\
 & =\frac{8L^{2}}{\pi^{2}}\sum_{k\in\mathbb{Z}}\widehat{\Sh}^{\zeta}_{2L}(n)\widehat{\Sh}^{\zeta}_{2L}(k)\frac{n^{2}-k^{2}}{n^{4}+n^{2}k^{2}+k^{4}}
\end{align*}
and
\[
S^{\zeta}_{2}(n)\coloneqq-\frac{8L^{2}}{\pi^{2}}\sum_{k\in\mathbb{Z}}\frac{\widehat{\Sh}^{\zeta}_{2L}(n-k)\widehat{\Sh}^{\zeta}_{2L}(k)}{n^{2}+(n-k)^{2}+k^{2}}.
\]
From these expressions it is straightforward to see that \zcref{eq:candelabrabound}
holds, and moreover (recalling \zcref{eq:limShfourier}) that
\begin{align*}
\lim_{\zeta\downarrow0}\mathbb{E}\left[\widehat{\rscherryrenormrenorm[rsP]}^{\zeta}_{t}(2n)\right] & =\frac{2}{\pi^{2}}\sum_{k\in\mathbb{Z}}\frac{n^{2}-k^{2}}{n^{4}+n^{2}k^{2}+k^{4}}-\frac{2}{\pi^{2}}\sum_{k\in\mathbb{Z}}\frac{1}{n^{2}+(n-k)^{2}+k^{2}}\\
 & =\frac{2}{\pi^{2}}\sum_{k\in\mathbb{Z}}\frac{n^{2}-k^{2}}{n^{4}+n^{2}k^{2}+k^{4}}-\frac{1}{\pi^{2}}\sum_{k\in\mathbb{Z}}\frac{n^{2}+k^{2}}{n^{4}+n^{2}k^{2}+k^{4}}\\
 & =\frac{1}{\pi^{2}}\sum_{k\in\mathbb{Z}}\frac{n^{2}-3k^{2}}{n^{4}+n^{2}k^{2}+k^{4}},
\end{align*}
which is \zcref{eq:Ecandelabralimit}.
\end{proof}

\subsubsection{\label{subsec:moose-fourier}Fourier coefficients of $\protect\rsmooserrrr[rsP]^{\zeta}_{t}$}
\begin{lem}
\label{lem:moose-exp-fourier}For each $n\in\mathbb{Z}$, we have
\[
\mathbb{E}\left[\widehat{\rselkrenormrr[rsP]}^{\zeta}_{t}(2n+1)\right]=0
\]
and
\begin{equation}
\mathbb{E}\left[\widehat{\rselkrenormrr[rsP]}^{\zeta}_{t}(2n)\right]=-\frac{8L^{2}}{\pi^{2}}\sum_{k,\ell\in\mathbb{Z}\setminus\{0\}}\frac{k\ell}{k^{2}+(2n-k)^{2}}\widehat{\Sh}^{\zeta}_{2L}(2n-k)\widehat{\Sh}^{\zeta}_{2L}(k-\ell)\frac{\delta_{n}-\delta_{n-k}-\delta_{k-\ell-n}+\delta_{\ell-n}}{\ell^{2}+(2n-k)^{2}+(k-\ell)^{2}}.\label{eq:moose-exp-fourier}
\end{equation}
In particular, we have
\begin{equation}
\mathbb{E}\left[\widehat{\rselkrenormrr[rsP]}^{\zeta}_{t}(0)\right]=-\frac{4L^{2}}{\pi^{2}}\sum_{k\in\mathbb{Z}\setminus\{0\}}\sum_{\ell\in\mathbb{Z}\setminus\{0,k\}}\frac{\ell}{k}\cdot\frac{\widehat{\Sh}^{\zeta}_{2L}(k)\widehat{\Sh}^{\zeta}_{2L}(k-\ell)}{\ell^{2}+k^{2}+(k-\ell)^{2}},\label{eq:moose-exp-fourier-0}
\end{equation}
\end{lem}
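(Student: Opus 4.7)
The plan is to parallel the proof of the previous lemma (the candelabra calculation) exactly, iterating the Fourier decomposition one layer at a time. Before starting, I would invoke \eqref{eq:PitildeMhatmultiply} with $\tau_1 = \rslollipopr$, $\tau_2 = \rsipelkrrr$, using $C_\zeta[\rslollipopr,\rsipelkrrr] = -\tfrac{1}{4}C^{(2)}_\zeta$ from \eqref{eq:Ctau1tau2def}, to observe that the $\tfrac{1}{4}\rsspecial{C^{(2)}}$ in $\hat{M}\rsmooserrrr$ (cf.\ \eqref{tab:Mhatdef}) is cancelled exactly by the Wick-product constant, so that $\rselkrenormrr[rsP]^\zeta_t(x) = \rslollipopr[rsP]^\zeta_t(x)\cdot\rsipcherryrenormr[rsP]^\zeta_t(x)$. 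Iteratively applying \eqref{eq:fourier-identities} and \eqref{eq:HK-spacetime-Fourier} to peel off the two $\partial\mathcal{I}$'s and the two products then expresses $\widehat{\rselkrenormrr[rsP]}^\zeta_t(j)$ as a quadruple sum over momenta $k,m,p$ (with the fourth being $j-k-m-p$) and a double time integral, featuring the four leaves $\widehat{\rslollipopr[rsP]}^\zeta_t(k)$, $\widehat{\rslollipopr[rsP]}^\zeta_s(m)$, $\widehat{\rslollipopr[rsP]}^\zeta_q(p)$, and $\widehat{\rslollipopr[rsP]}^\zeta_q(j-k-m-p)$, with the latter two Wick-ordered thanks to the $-\widehat{C^{(1)}_\zeta}(\cdot)$ subtraction built into $\widehat{\rsrenorm[rsP]}^\zeta_q$.

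To compute the expectation, I would apply Isserlis' theorem to the four-leaf Gaussian product. The pairing that contracts the two innermost leaves together (inside $\rsrenorm$) is annihilated automatically by the Wick ordering, in direct correspondence with the renormalization by $C^{(1)}_\zeta$ defined in \eqref{eq:C1zetadef}. The two remaining cross-pairings are equal by the substitution $p \mapsto j-k-m-p$ in the inner sum (which exchanges the two leaves of $\rscherryrr$), and thus contribute a combinatorial factor of $2$. Substituting \eqref{eq:EW-cov-fourier} into the surviving pairing yields the Kronecker-delta factor
\begin{equation*}
\sum_p (\delta_{k+p}-\delta_{k-p})(\delta_{j-k-p}-\delta_{2m-j+k+p}) = \delta_j - \delta_{2m-j} - \delta_{j-2k} + \delta_{2m-j+2k},
\end{equation*}
together with $\widehat{\Sh}^\zeta_{2L}(k)\widehat{\Sh}^\zeta_{2L}(m)$ and four exponential decay factors in the time differences. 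Each of the four surviving Kronecker deltas forces $j$ to be even, which immediately yields the vanishing of $\mathbb{E}[\widehat{\rselkrenormrr[rsP]}^\zeta_t(2n+1)]=0$.

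For $j = 2n$, the double time integral over $q \le s \le t$ is elementary (a product of two exponentials in $u = t-s$ and $v = s-q$), evaluating to $(4L^4/\pi^4)\cdot\{[(j-k)^2+k^2][(j-k-m)^2+k^2+m^2]\}^{-1}$. Combining the $(\pi i)^2/L^2$ prefactor from the two $\partial\mathcal{I}$'s, the combinatorial factor of $2$ from the symmetric pairings, and this time integral gives the overall coefficient $-8L^2/\pi^2$. Finally, the relabeling $k \mapsto 2n-k$, $m \mapsto k-\ell$ converts the numerator factors $(j-k)(j-k-m)$ into $k\ell$ and puts the denominators into the form appearing in \eqref{eq:moose-exp-fourier}; the four deltas transform (using $j=2n$) into $\delta_n - \delta_{k-\ell-n} - \delta_{n-k} + \delta_{\ell-n}$, and the restrictions $k,\ell\ne 0$ are automatic from the $k\ell$ numerator. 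The specialization \eqref{eq:moose-exp-fourier-0} is then immediate by setting $n = 0$: the three deltas $\delta_{n-k}$, $\delta_{\ell-n}$, and $\delta_n$ are either annihilated by the exclusions $k,\ell\ne 0$ or equal to $1$, and the remaining $-\delta_{k-\ell-n} = -\delta_{k-\ell}$ enforces the additional exclusion $\ell \ne k$, while $\widehat{\Sh}^\zeta_{2L}(2n-k) = \widehat{\Sh}^\zeta_{2L}(k)$ by \eqref{eq:Fourier-symmetric}.

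The argument is essentially an exercise in Fourier bookkeeping, and the only real obstacle is ensuring that the four Kronecker-delta terms combine with the correct signs to give the stated formula, and in particular that the parity constraints from the deltas align to force odd Fourier modes to vanish. There is no stochastic content beyond \eqref{eq:EW-cov-fourier} and Wick's theorem, and no analytic content beyond the evaluation of an elementary double exponential integral.
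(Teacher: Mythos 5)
Your proposal is correct and follows essentially the same route as the paper's proof: peel off the two $\partial\mathcal{I}$'s and products in Fourier, apply Isserlis' theorem with the Wick-ordering of the innermost cherry killing the internal pairing and symmetry giving the factor of $2$, insert \eqref{eq:EW-cov-fourier}, evaluate the elementary double exponential time integral, and read off the parity constraint from the Kronecker deltas. The only difference is a cosmetic relabeling of the summation indices at the end (the paper's choice of momenta produces \eqref{eq:moose-exp-fourier} directly), and your delta computation, sign bookkeeping, and the $n=0$ specialization all check out.
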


\begin{proof}
We start by writing
\begin{align*}
\widehat{\rscherryrenormr[rsP]}^{\zeta}_{t}(k) & =\sum_{\ell\in\mathbb{Z}}\widehat{\rsiprenorm[rsP]}^{\zeta}_{t}(\ell)\widehat{\rslollipopr[rsP]}^{\zeta}_{t}(k-\ell)\\
 & =\frac{\pi\ii}{L}\sum_{\ell\in\mathbb{Z}}\ell\widehat{\rslollipopr[rsP]}^{\zeta}_{t}(k-\ell)\int^{t}_{-\infty}\e^{-\frac{\pi^{2}\ell^{2}}{2L^{2}}(t-r)}\sum_{m\in\mathbb{Z}}\left(\widehat{\rslollipopr[rsP]}^{\zeta}_{r}(m)\widehat{\rslollipopr[rsP]}^{\zeta}_{r}(\ell-m)-\mathbb{E}\left[\widehat{\rslollipopr[rsP]}^{\zeta}_{r}(m)\widehat{\rslollipopr[rsP]}^{\zeta}_{r}(\ell-m)\right]\right)\,\dif r.
\end{align*}
Therefore, we have
\begin{align*}
\widehat{\rselkrenormrr[rsP]}^{\zeta}_{t}(j) & =\frac{\pi\ii}{L}\sum_{k\in\mathbb{Z}}k\widehat{\rslollipopr[rsP]}^{\zeta}_{t}(j-k)\int^{t}_{-\infty}\e^{-\frac{\pi^{2}k^{2}}{2L^{2}}(t-s)}\widehat{\rscherryrenormr[rsP]}^{\zeta}_{s}(k)\,\dif s\\
 & =-\frac{\pi^{2}}{L^{2}}\sum_{k,\ell,m\in\mathbb{Z}}k\ell\int^{t}_{-\infty}\int^{s}_{-\infty}\e^{-\frac{\pi^{2}k^{2}}{2L^{2}}(t-s)-\frac{\pi^{2}\ell^{2}}{2L^{2}}(s-r)}\widehat{\rslollipopr[rsP]}^{\zeta}_{t}(j-k)\widehat{\rslollipopr[rsP]}^{\zeta}_{s}(k-\ell)\\
 & \qquad\qquad\qquad\qquad\qquad\qquad\times\left(\widehat{\rslollipopr[rsP]}^{\zeta}_{r}(m)\widehat{\rslollipopr[rsP]}^{\zeta}_{r}(\ell-m)-\mathbb{E}\left[\widehat{\rslollipopr[rsP]}^{\zeta}_{r}(m)\widehat{\rslollipopr[rsP]}^{\zeta}_{r}(\ell-m)\right]\right)\,\dif r\,\dif s.
\end{align*}
Now we can take expectations to get
\begin{align}
\mathbb{E}\left[\widehat{\rselkrenormrr[rsP]}^{\zeta}_{t}(j)\right] & =-\frac{\pi^{2}}{L^{2}}\sum_{k,\ell,m\in\mathbb{Z}}k\ell\int^{t}_{-\infty}\int^{s}_{-\infty}\e^{-\frac{\pi^{2}k^{2}}{2L^{2}}(t-s)-\frac{\pi^{2}\ell^{2}}{2L^{2}}(s-r)}\nonumber \\
 & \qquad\qquad\qquad\times\Cov\left(\widehat{\rslollipopr[rsP]}^{\zeta}_{t}(j-k)\widehat{\rslollipopr[rsP]}^{\zeta}_{s}(k-\ell),\widehat{\rslollipopr[rsP]}^{\zeta}_{r}(m)\widehat{\rslollipopr[rsP]}^{\zeta}_{r}(\ell-m)\right)\,\dif r\,\dif s\nonumber \\
 & =-\frac{2\pi^{2}}{L^{2}}\sum_{k,\ell,m\in\mathbb{Z}}k\ell\int^{t}_{-\infty}\int^{s}_{-\infty}\e^{-\frac{\pi^{2}k^{2}}{2L^{2}}(t-s)-\frac{\pi^{2}\ell^{2}}{2L^{2}}(s-r)}\nonumber \\
 & \qquad\qquad\qquad\times\mathbb{E}\left[\widehat{\rslollipopr[rsP]}^{\zeta}_{t}(j-k)\widehat{\rslollipopr[rsP]}^{\zeta}_{r}(m)\right]\mathbb{E}\left[\widehat{\rslollipopr[rsP]}^{\zeta}_{t}(k-\ell)\widehat{\rslollipopr[rsP]}^{\zeta}_{r}(\ell-m)\right]\,\dif r\,\dif s,\label{eq:moose-expectation}
\end{align}
where in the last identity we used the Isserlis theorem and the symmetry
under the swap $m\leftrightarrow\ell-m$. Now we use \zcref{lem:EW-cov-fourier}
to compute
\begin{align*}
\mathbb{E} & \left[\widehat{\rslollipopr[rsP]}^{\zeta}_{t}(j-k)\widehat{\rslollipopr[rsP]}^{\zeta}_{r}(m)\right]\mathbb{E}\left[\widehat{\rslollipopr[rsP]}^{\zeta}_{t}(k-\ell)\widehat{\rslollipopr[rsP]}^{\zeta}_{r}(\ell-m)\right]\\
 & =\widehat{\Sh}^{\zeta}_{2L}(j-k)\widehat{\Sh}^{\zeta}_{2L}(k-\ell)\left(\delta_{j-k+m}-\delta_{j-k-m}\right)\left(\delta_{k-m}-\delta_{k+m-2\ell}\right)\e^{-\frac{\pi^{2}}{2L^{2}}\left((j-k)^{2}(t-r)+(k-\ell)^{2}(s-r)\right)}\\
 & =\widehat{\Sh}^{\zeta}_{2L}(j-k)\widehat{\Sh}^{\zeta}_{2L}(k-\ell)\left(\delta_{j}\delta_{m-k}-\delta_{j-2k}\delta_{m-k}-\delta_{2(k-\ell)-j}\delta_{k+m-2\ell}+\delta_{2\ell-j}\delta_{k+m-2\ell}\right)\\
 & \qquad\times\e^{-\frac{\pi^{2}}{2L^{2}}\left((j-k)^{2}(t-r)+(k-\ell)^{2}(s-r)\right)}.
\end{align*}
Using this in \zcref{eq:moose-expectation}, we get
\begin{align*}
\mathbb{E}\left[\widehat{\rselkrenormrr[rsP]}^{\zeta}_{t}(j)\right] & =-\frac{2\pi^{2}}{L^{2}}\sum_{k,\ell\in\mathbb{Z}\setminus\{0\}}k\ell\widehat{\Sh}^{\zeta}_{2L}(j-k)\widehat{\Sh}^{\zeta}_{2L}(k-\ell)\left(\delta_{j}-\delta_{j-2k}-\delta_{2(k-\ell)-j}+\delta_{2\ell-j}\right)\\
 & \qquad\times\int^{t}_{-\infty}\int^{s}_{-\infty}\exp\left\{ -\frac{\pi^{2}}{2L^{2}}\left(k^{2}(t-s)+\ell^{2}(s-r)+(j-k)^{2}(t-r)+(k-\ell)^{2}(s-r)\right)\right\} \,\dif r\,\dif s\\
 & =-\frac{8L^{2}}{\pi^{2}}\sum_{k,\ell\in\mathbb{Z}\setminus\{0\}}\frac{k\ell}{k^{2}+(j-k)^{2}}\widehat{\Sh}^{\zeta}_{2L}(j-k)\widehat{\Sh}^{\zeta}_{2L}(k-\ell)\frac{\delta_{j}-\delta_{j-2k}-\delta_{2(k-\ell)-j}+\delta_{2\ell-j}}{\ell^{2}+(j-k)^{2}+(k-\ell)^{2}},
\end{align*}
where for the last identity we evaluated the integral. This expression
is evidently nonzero only for even $j$, and for $j=2n$ we obtain
\zcref{eq:moose-exp-fourier}.

For \zcref{eq:moose-exp-fourier-0}, we set $n=0$ in \zcref{eq:moose-exp-fourier}
to obtain
\[
\mathbb{E}\left[\widehat{\rselkrenormrr[rsP]}^{\zeta}_{t}(0)\right]=-\frac{4L^{2}}{\pi^{2}}\sum_{k,\ell\in\mathbb{Z}\setminus\{0\}}\frac{\ell}{k}\widehat{\Sh}^{\zeta}_{2L}(k)\widehat{\Sh}^{\zeta}_{2L}(k-\ell)\frac{1-\delta_{k-\ell}}{\ell^{2}+k^{2}+(k-\ell)^{2}},
\]
and since the indicator is zero if $k=\ell$ we obtain \zcref{eq:moose-exp-fourier-0}.
\end{proof}

\begin{cor}
There is an absolute constant $C<\infty$ such that, for all $n\in\mathbb{Z}\setminus\{0\}$,
we have
\begin{equation}
\left|\mathbb{E}\left[\widehat{\rselkrenormrr[rsP]}^{\zeta}_{t}(2n)\right]\right|\le\frac{C}{|n|},\label{eq:moosebd}
\end{equation}
and moreover we have
\begin{equation}
\lim_{\zeta\to0}\mathbb{E}\left[\widehat{\rselkrenormrr[rsP]}^{\zeta}_{t}(2n)\right]=\frac{n^{2}}{2\pi^{2}}\sum_{\ell\in\mathbb{Z}}\frac{1}{\ell^{4}+\ell^{2}n^{2}+n^{4}}.\label{eq:mooselimit}
\end{equation}
\end{cor}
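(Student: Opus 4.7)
The plan is to mirror the treatment given just above for $\mathbb{E}[\widehat{\rscandelabrarrrr[rsP]}^\zeta_t(2n)]$: start from the explicit formula in \zcref{lem:moose-exp-fourier}, split the sum according to which of the three Kronecker deltas is active (since $n\ne 0$ kills the $\delta_n$ term), symmetrize by changes of variables, pass to the limit $\zeta\downarrow 0$ using $\widehat{\Sh}^\zeta_{2L}(k)\to \frac{1}{2L}$ (and dominated convergence thanks to the uniform bound $|\widehat{\Sh}^\zeta_{2L}(k)|\le\tfrac{1}{2L}$), and finally use \zcref{lem:partialfractions} to collapse the resulting rational sums in $k$ to a single series in $\ell$ giving the claimed $\frac{n^2}{2\pi^2}\sum_\ell(\ell^4+\ell^2 n^2+n^4)^{-1}$.

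Concretely, from \zcref{eq:moose-exp-fourier} I write $\mathbb{E}[\widehat{\rselkrenormrr[rsP]}^\zeta_t(2n)] = S_A^\zeta(n)+S_B^\zeta(n)+S_C^\zeta(n)$, where $S_A$ corresponds to the substitution $k=n$ (from $\delta_{n-k}$), $S_B$ to $\ell=k-n$ (from $\delta_{k-\ell-n}$), and $S_C$ to $\ell=n$ (from $\delta_{\ell-n}$). In $S_B$ I shift $k\mapsto k+n$, and in $S_C$ I shift $k\mapsto k+n$, so that each piece carries a common prefactor $\widehat{\Sh}^\zeta_{2L}(n)$ times a sum over a single index involving $\widehat{\Sh}^\zeta_{2L}$ of that index. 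Expanding
$$\frac{k}{k^2+(2n-k)^2}=\frac{1}{2}\left(\frac{1}{k}+\frac{n-k}{k^2+(n-k)^2}\right)\cdot\text{(suitable algebraic manipulation)}$$
and then pairing the summand with its image under $k\mapsto -k$ (using $\widehat{\Sh}^\zeta_{2L}(-k)=\widehat{\Sh}^\zeta_{2L}(k)$, \zcref{eq:Fourier-symmetric}) eliminates all odd-in-$k$ contributions. This symmetrization is exactly what produces the bound $|S_\bullet^\zeta(n)|\le C/|n|$: after symmetrizing, each summand decays like $|n|/(k^2+n^2)^{3/2}$ uniformly in $\zeta$, and summing over $k$ gives $O(1/|n|)$. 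Adding the three bounds yields \zcref{eq:moosebd}.

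To get \zcref{eq:mooselimit}, I pass to $\zeta\downarrow 0$ in each symmetrized piece, replacing $\widehat{\Sh}^\zeta_{2L}$ by $1/(2L)$, and obtain three explicit rational sums in a single index. I then combine the three limits over a common denominator and apply \zcref{lem:partialfractions}: taking $\omega=\tfrac{1}{2}(1+i\sqrt{3})$ gives the first identity in \zcref{eq:seriesiszero}, which shows that the partial-fraction identity
$$\frac{1}{k^2+n^2+(n-k)^2}+\frac{1}{k^2+n^2+(n+k)^2} = \frac{2(n^2+k^2)}{(n^2+nk+k^2)(n^2-nk+k^2)} \cdot \frac{1}{\text{something}}$$
allows me to collapse the combined sum to $\sum_\ell (\ell^4+\ell^2n^2+n^4)^{-1}$ up to cancellations of pure cotangent-series type.

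The bookkeeping of the three contributions under symmetrization, and in particular tracking how the $\widehat{\Sh}^\zeta_{2L}(n)$ prefactor in each piece combines with the different sum ranges (the $k\ne 0, k\ne n$ restrictions that were present as $\ell\ne 0$ in the original sum), is the main obstacle: if the algebra is not arranged carefully, the leading $O(\log(1/\zeta))$ divergences that individually plague $S_A^\zeta$, $S_B^\zeta$, $S_C^\zeta$ (and whose cancellation against the corresponding divergence in $\mathbb{E}[\widehat{\rscherryrenormrenorm[rsP]}^\zeta_t(2n)]$ is the whole point of the pairing in \zcref{prop:sumofcancellinglogs}) can be lost in the symmetrization. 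I expect this to be resolvable by performing the symmetrization \emph{before} evaluating any of the delta functions, i.e.\ first rewriting $\frac{k\ell}{k^2+(2n-k)^2}$ as a sum of terms each symmetric under an appropriate reflection, and only then picking out the three resonant diagonals.
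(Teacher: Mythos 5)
Your proposal follows essentially the same route as the paper: split \zcref{eq:moose-exp-fourier} according to the three surviving Kronecker deltas (the $\delta_n$ term dying since $n\ne0$), symmetrize under $k\mapsto-k$ after shifting by $n$ using \zcref{eq:Fourier-symmetric}, bound each symmetrized piece by $C/|n|$ uniformly in $\zeta$, and invoke \zcref{eq:seriesiszero} to show the two pieces coming from $\delta_{k-\ell-n}$ and $\delta_{\ell-n}$ cancel in the limit, leaving exactly $\frac{n^{2}}{2\pi^{2}}\sum_{\ell}(\ell^{4}+\ell^{2}n^{2}+n^{4})^{-1}$ from the piece where the deltas force $k=n$ (note $((n+\ell)^{2}+n^{2}+\ell^{2})((n-\ell)^{2}+n^{2}+\ell^{2})=4(\ell^{4}+\ell^{2}n^{2}+n^{4})$). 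One correction: for $n\ne0$ none of the three pieces carries an $O(\log(1/\zeta))$ divergence --- the only delicacy is that the unsymmetrized sum in the $k=n$ piece is conditionally rather than absolutely convergent, which the $\ell\mapsto-\ell$ averaging fixes; the genuine logarithmic divergence and its cancellation against $\mathbb{E}[\widehat{\rscherryrenormrenorm[rsP]}^{\zeta}_{t}(0)]$ arise only for the zero-frequency mode treated in \zcref{lem:zeromodeminus13,prop:sumofcancellinglogs}, so the pre-delta symmetrization you propose as a safeguard is unnecessary here.
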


\begin{proof}
For $n\in\mathbb{Z}\setminus\{0\}$, we can write \zcref{eq:moose-exp-fourier}
as
\begin{align}
\mathbb{E}\left[\widehat{\rselkrenormrr[rsP]}^{\zeta}_{t}(2n)\right] & =S^{\zeta}_{3}(n)+S^{\zeta}_{4}(n)+S^{\zeta}_{5}(n),\label{eq:moose-split}
\end{align}
where 
\begin{align*}
S^{\zeta}_{3}(n) & \coloneqq\frac{4L^{2}}{\pi^{2}n}\widehat{\Sh}^{\zeta}_{2L}(n)\sum_{\ell\in\mathbb{Z}}\widehat{\Sh}^{\zeta}_{2L}(n-\ell)\frac{\ell}{\ell^{2}+n^{2}+(n-\ell)^{2}};\\
S^{\zeta}_{4}(n) & \coloneqq\frac{8L^{2}}{\pi^{2}}\sum_{k\in\mathbb{Z}}\widehat{\Sh}^{\zeta}_{2L}(2n-k)\widehat{\Sh}^{\zeta}_{2L}(n)\frac{k(k-n)}{\left((n-k)^{2}+(2n-k)^{2}+n^{2}\right)\left(k^{2}+(2n-k)^{2}\right)};\\
S^{\zeta}_{5}(n) & \coloneqq-\frac{8L^{2}}{\pi^{2}}\sum_{k\in\mathbb{Z}}\widehat{\Sh}^{\zeta}_{2L}(2n-k)\widehat{\Sh}^{\zeta}_{2L}(k-n)\frac{kn}{\left(n^{2}+(2n-k)^{2}+(k-n)^{2}\right)\left(k^{2}+(2n-k)^{2}\right)}.
\end{align*}
We consider each term in turn.

The first term, $S^{\zeta}_{3}(n)$, is the most challenging, because
the series $\sum_{\ell\in\mathbb{Z}}\frac{\ell}{\ell^{2}+n^{2}+(n-\ell)^{2}}$
is not absolutely convergent. But we can change variables $\ell\mapsto n+\ell$
and then average with the change of variables $\ell\mapsto-\ell$
to obtain
\begin{align*}
S^{\zeta}_{3}(n) & =\frac{4L^{2}}{\pi^{2}n}\widehat{\Sh}^{\zeta}_{2L}(n)\sum_{\ell\in\mathbb{Z}}\widehat{\Sh}^{\zeta}_{2L}(\ell)\frac{n+\ell}{(n+\ell)^{2}+n^{2}+\ell^{2}}\\
 & =\frac{2L^{2}}{\pi^{2}n}\widehat{\Sh}^{\zeta}_{2L}(n)\sum_{\ell\in\mathbb{Z}}\widehat{\Sh}^{\zeta}_{2L}(\ell)\left(\frac{n+\ell}{(n+\ell)^{2}+n^{2}+\ell^{2}}+\frac{n-\ell}{(n-\ell)^{2}+n^{2}+\ell^{2}}\right)\\
 & =\frac{8n^{2}L^{2}}{\pi^{2}}\widehat{\Sh}^{\zeta}_{2L}(n)\sum_{\ell\in\mathbb{Z}}\frac{\widehat{\Sh}^{\zeta}_{2L}(\ell)}{\left((n+\ell)^{2}+n^{2}+\ell^{2}\right)\left((n-\ell)^{2}+n^{2}+\ell^{2}\right)}.
\end{align*}
Thus if we define
\begin{align}
S_{3}(n) & \coloneqq\frac{2n^{2}}{\pi^{2}}\sum_{\ell\in\mathbb{Z}}\frac{1}{\left((n+\ell)^{2}+n^{2}+\ell^{2}\right)\left((n-\ell)^{2}+n^{2}+\ell^{2}\right)}=\frac{n^{2}}{2\pi^{2}}\sum_{\ell\in\mathbb{Z}}\frac{1}{\ell^{4}+\ell^{2}n^{2}+n^{4}},\label{eq:S3def}
\end{align}
then we have
\begin{equation}
|S^{\zeta}_{3}(n)|\le S_{3}(n)\le\frac{2}{\pi^{2}}\sum_{\ell\in\mathbb{Z}}\frac{1}{n^{2}+\ell^{2}}\le\frac{C}{|n|}\qquad\text{for all }\zeta>0,n\in\mathbb{Z}\setminus\{0\},\label{eq:S3zetabound}
\end{equation}
for an absolute constant $C<\infty$ (independent of $\zeta$ and
$n$), and moreover
\begin{equation}
\lim_{\zeta\downarrow0}S^{\zeta}_{3}(n)=S_{3}(n).\label{eq:S3zetalimit}
\end{equation}

Next, we write
\[
S^{\zeta}_{4}(n)\coloneqq\frac{8L^{2}}{\pi^{2}}\sum_{k\in\mathbb{Z}}\widehat{\Sh}^{\zeta}_{2L}(2n-k)\widehat{\Sh}^{\zeta}_{2L}(n)\frac{k(k-n)}{\left((n-k)^{2}+(2n-k)^{2}+n^{2}\right)\left(k^{2}+(2n-k)^{2}\right)},
\]
from which we see that 
\begin{align}
|S^{\zeta}_{4}(n)| & \le\frac{2}{\pi^{2}}\sum_{k\in\mathbb{Z}}\frac{|k(k-n)|}{\left((n-k)^{2}+(2n-k)^{2}+n^{2}\right)\left(k^{2}+(2n-k)^{2}\right)}\nonumber \\
 & \le\frac{2}{\pi^{2}}\sum_{k\in\mathbb{Z}}\frac{1}{(n-k)^{2}+(2n-k)^{2}+n^{2}}\le\frac{C}{|n|}\label{eq:S4zetabd}
\end{align}
and also
\begin{align}
\lim_{\zeta\downarrow0}S^{\zeta}_{4}(n)=S_{4}(n) & \coloneqq\frac{2}{\pi^{2}}\sum_{k\in\mathbb{Z}}\frac{k(k-n)}{\left((n-k)^{2}+(2n-k)^{2}+n^{2}\right)\left(k^{2}+(2n-k)^{2}\right)}\nonumber \\
 & =\frac{1}{\pi^{2}}\sum_{k\in\mathbb{Z}}\frac{(k+n)k}{\left(k^{2}+(n-k)^{2}+n^{2}\right)\left(n^{2}+k^{2}\right)},\label{eq:S4zetalimit}
\end{align}
where in the last identity we changed variables $k\mapsto k+n$. 

Similarly, we can estimate
\begin{equation}
|S^{\zeta}_{5}(n)|\le\frac{2}{\pi^{2}}\sum_{k\in\mathbb{Z}}\frac{|kn|}{\left(n^{2}+(2n-k)^{2}+(k-n)^{2}\right)\left(k^{2}+(2n-k)^{2}\right)}\le\frac{C}{|n|}\label{eq:S5zetabd}
\end{equation}
and
\begin{align}
\lim_{\zeta\downarrow0}S^{\zeta}_{5}(n)=S_{5}(n) & \coloneqq-\frac{2}{\pi^{2}}\sum_{k\in\mathbb{Z}}\frac{kn}{\left(n^{2}+(2n-k)^{2}+(k-n)^{2}\right)\left(k^{2}+(2n-k)^{2}\right)}\nonumber \\
 & =-\frac{1}{\pi^{2}}\sum_{k\in\mathbb{Z}}\frac{(k+n)n}{\left(n^{2}+(n-k)^{2}+k^{2}\right)\left(n^{2}+k^{2}\right)},\label{eq:S5zetalimit}
\end{align}
where again in the last identity we changed variables $k\mapsto k+n$.

We can also evaluate the sum of \zcref{eq:S4zetalimit,eq:S5zetabd}
as
\begin{align}
S_{4} & (n)+S_{5}(n)=\frac{1}{\pi^{2}}\sum_{k\in\mathbb{Z}}\frac{k^{2}-n^{2}}{\left(k^{2}+(n-k)^{2}+n^{2}\right)\left(n^{2}+k^{2}\right)}\nonumber \\
 & =\frac{1}{2\pi^{2}}\sum_{k\in\mathbb{Z}}\frac{k^{2}-n^{2}}{n^{2}+k^{2}}\left(\frac{1}{k^{2}+(n-k)^{2}+n^{2}}+\frac{1}{k^{2}+(n+k)^{2}+n^{2}}\right)=\frac{1}{2\pi^{2}}\sum_{k\in\mathbb{Z}}\frac{k^{2}-n^{2}}{k^{4}+n^{2}k^{2}+n^{4}}=0,\label{eq:S4plusS5}
\end{align}
with the last identity by \zcref{eq:seriesiszero}.

Using the bounds \zcref{eq:S3zetabound,eq:S4zetabd,eq:S5zetabd} in
\zcref{eq:moose-split} immediately gives us \zcref{eq:moosebd} by
the triangle inequality. We also see by using the limits \zcref{eq:S3zetalimit,eq:S4zetalimit,eq:S5zetalimit}
in \zcref{eq:moose-split}, and then using \zcref{eq:S3def,eq:S4plusS5},
that 
\begin{align*}
\lim_{\zeta\downarrow0}\mathbb{E}\left[\widehat{\rselkrenormrr[rsP]}^{\zeta}_{t}(2n)\right] & =S_{3}(n)+S_{4}(n)+S_{5}(n)=\frac{n^{2}}{2\pi^{2}}\sum_{\ell\in\mathbb{Z}}\frac{1}{\ell^{4}+\ell^{2}n^{2}+n^{4}},
\end{align*}
which is \zcref{eq:mooselimit}.
\end{proof}

\subsubsection{Fourier coefficients of the sum}

We now combine the results of the previous two subsections to study
the Fourier coefficients of the sum $\mathbb{E}\left[\rscherryrenormrenorm[rsP]^{\zeta}_{t}+4\rselkrenormrr[rsP]^{\zeta}_{t}\right]$.
First we consider the zero-frequency mode:
\begin{lem}
\label{lem:zeromodeminus13}We have
\begin{equation}
\lim_{\zeta\to0}\mathbb{E}\left[\widehat{\rscherryrenormrenorm[rsP]}^{\zeta}_{t}(0)+4\widehat{\rselkrenormrr[rsP]}^{\zeta}_{t}(0)\right]=-\frac{1}{3}.\label{eq:zeromodeminus13}
\end{equation}
\end{lem}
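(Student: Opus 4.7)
Combining the Fourier formulas \zcref{eq:Ecandelabra-Fourier-0} and \zcref{eq:moose-exp-fourier-0} from the previous two subsections, the identity \zcref{eq:zeromodeminus13} becomes
\[
\lim_{\zeta \downarrow 0} \frac{8L^{2}}{\pi^{2}} \sum_{\substack{k\ne 0\\ \ell \ne 0,k}} \frac{\widehat{\Sh}^{\zeta}_{2L}(\ell)\, \widehat{\Sh}^{\zeta}_{2L}(k-\ell) \;-\; 2(\ell/k)\,\widehat{\Sh}^{\zeta}_{2L}(k)\, \widehat{\Sh}^{\zeta}_{2L}(k-\ell)}{k^{2} + \ell^{2} + (k-\ell)^{2}} \;=\; -\frac{1}{3}.
\]
Each of the two sums is individually only logarithmically convergent as $\zeta \downarrow 0$; the content of the statement is that their divergences cancel and leave a finite, $L$- and mollifier-independent remainder. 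This is the reflected analogue of the second-order renormalization identity established for the periodic KPZ regularity structure in \cites[Lem.~6.4]{hairer:2013:solving}, so I expect a similar Fourier-side argument to work here.

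The key structural input is the involution $\ell \mapsto k - \ell$, which preserves the denominator $D_{k,\ell} \coloneqq k^{2}+\ell^{2}+(k-\ell)^{2}$ and the summation domain, while sending the factor $\ell/k$ to $1 - \ell/k$. Applying it to the moose-type sum (the one weighted by $\ell/k$) and averaging with the original expression turns the combined numerator into the manifestly symmetric quantity
\[
\widehat{\Sh}^{\zeta}_{2L}(\ell)\widehat{\Sh}^{\zeta}_{2L}(k-\ell) - \widehat{\Sh}^{\zeta}_{2L}(k)\bigl[(\ell/k)\widehat{\Sh}^{\zeta}_{2L}(k-\ell) + (1-\ell/k)\widehat{\Sh}^{\zeta}_{2L}(\ell)\bigr],
\]
which vanishes identically upon replacing every $\widehat{\Sh}^{\zeta}_{2L}(\cdot)$ by the constant $(2L)^{-1}$. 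Writing $\widehat{\Sh}^{\zeta}_{2L}(m) = (2L)^{-1} + \varepsilon^{\zeta}(m)$, where $\varepsilon^{\zeta}(m)$ is uniformly bounded and tends to $0$ for each fixed $m$ while decaying rapidly for $|m|\gg\zeta^{-1}$, and expanding the numerator, the pure $(2L)^{-2}$ contribution cancels termwise, leaving pieces with one or two factors of $\varepsilon^{\zeta}$. A second round of symmetrization reduces these pieces to absolutely convergent double sums; their inner sums in $\ell$ are then evaluated in closed form via the partial-fraction identities of \zcref{lem:partialfractions} (and the cotangent expansion \zcref{eq:Euleridentity} that underlies it). The resulting one-dimensional sum in $k$ can be summed explicitly, yielding the value $-\frac{1}{3}$.

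The main obstacle is that the starting sums are only conditionally convergent, so the termwise rearrangements and the exchange of the summation order with the limit $\zeta \downarrow 0$ must be justified with care. My proposed way around this is to work throughout with a symmetric truncation $|k|, |\ell|, |k-\ell| \le N$, to carry out all the algebraic identities and the cancellation of the leading $(2L)^{-2}$ piece at finite $N$, and only then to send first $\zeta \downarrow 0$ (by dominated convergence using $|\varepsilon^{\zeta}(m)| \le (2L)^{-1}$ and pointwise convergence to $0$) and finally $N \to \infty$. Uniform tail bounds for the $N \to \infty$ step come from the rapid decay of $\widehat{\Sh}^{\zeta}_{2L}(m)$ beyond $|m| \sim \zeta^{-1}$, combined with the explicit rational-function expressions produced by \zcref{lem:partialfractions}, which guarantee that the limiting sums converge absolutely.
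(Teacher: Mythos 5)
Your setup and the first symmetrization are fine: the involution $\ell\mapsto k-\ell$ does preserve the domain and the denominator, and the averaged numerator does vanish termwise when every $\widehat{\Sh}^{\zeta}_{2L}(\cdot)$ is replaced by $(2L)^{-1}$. But this very observation is what breaks your proposed order of limits. Since $\widehat{\Sh}^{\zeta}_{2L}(m)\to(2L)^{-1}$ for each fixed $m$ as $\zeta\downarrow0$ (see \zcref{eq:limShfourier}), every individual term of your symmetrized sum tends to $0$. Hence for each fixed truncation $N$ the truncated sum tends to $0$ as $\zeta\downarrow0$, and your iterated limit ``first $\zeta\downarrow0$, then $N\to\infty$'' produces $0$, not $-\tfrac13$. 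Equivalently: the $\zeta$-uniform tail bounds you invoke cannot exist, because if they did, the full limit would also be $0$. The entire contribution $-\tfrac13$ lives in the high-frequency region $|k|\gtrsim\zeta^{-1}$, which escapes every fixed truncation; the ``rapid decay of $\widehat{\Sh}^{\zeta}_{2L}(m)$ beyond $|m|\sim\zeta^{-1}$'' is useless for a bound uniform in $\zeta$ precisely because the threshold diverges. A related problem afflicts the $\varepsilon^{\zeta}$-expansion itself: $\varepsilon^{\zeta}(m)=\widehat{\Sh}^{\zeta}_{2L}(m)-(2L)^{-1}$ does not decay in $m$ (it tends to $-(2L)^{-1}$), so for instance the piece proportional to $\varepsilon^{\zeta}(k)$ alone contributes, after summing in $\ell$, something of order $\varepsilon^{\zeta}(k)/|k|$, whose sum over $k$ diverges even at fixed $\zeta$. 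The individual $\varepsilon$-pieces are therefore not absolutely convergent double sums, and the termwise splitting is not justified.

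The fix is to do all the algebra at fixed $\zeta>0$, where the original double sums converge absolutely thanks to the rapid decay of $\widehat{\Sh}^{\zeta}_{2L}$, and to take $\zeta\downarrow0$ only on a closed-form expression. This is what the paper does: after the change of variables one adds and subtracts the diagonal $\ell=k$; the completed sum over all $k,\ell\ne0$ is shown to be \emph{identically zero for every $\zeta$} by symmetrizing under $(k,\ell)\mapsto(-\ell,-k)$, which reduces the summand to $\widehat{\Sh}^{\zeta}_{2L}(k)\widehat{\Sh}^{\zeta}_{2L}(\ell)/(k\ell)$ and hence the sum to $\bigl(\sum_{k\ne0}\widehat{\Sh}^{\zeta}_{2L}(k)/k\bigr)^{2}=0$ by the evenness \zcref{eq:Fourier-symmetric}; no partial-fraction or cotangent identities are needed here. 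The value $-\tfrac13$ then comes entirely from the excluded diagonal, namely $-\tfrac12\sum_{k\ne0}\widehat{\Sh}^{\zeta}_{2L}(k)^{2}/k^{2}$, a single absolutely convergent sum to which dominated convergence legitimately applies, giving $-\tfrac{1}{8L^{2}}\cdot\tfrac{\pi^{2}}{3}$ and hence $-\tfrac13$ after the prefactor. You should restructure your argument around this diagonal term rather than around the $\varepsilon^{\zeta}$-expansion.
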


\begin{proof}
Combining \zcref{eq:Ecandelabra-Fourier-0,eq:moose-exp-fourier-0}
and then making a change of variables $\ell\leftarrow k-\ell$ in
the second sum, we obtain
\begin{align}
\frac{\pi^{2}}{8L^{2}}\mathbb{E}\left[\widehat{\rscherryrenormrenorm[rsP]}^{\zeta}_{t}(0)+4\widehat{\rselkrenormrr[rsP]}^{\zeta}_{t}(0)\right] & =\sum_{\substack{k,\ell\in\mathbb{Z}\setminus\{0\}\\
k\ne\ell
}
}\frac{\widehat{\Sh}^{\zeta}_{2L}(\ell)\widehat{\Sh}^{\zeta}_{2L}(k-\ell)}{k^{2}+\ell^{2}+(k-\ell)^{2}}-2\sum_{\substack{k,\ell\in\mathbb{Z}\setminus\{0\}\\
k\ne\ell
}
}\frac{\ell}{k}\cdot\frac{\widehat{\Sh}^{\zeta}_{2L}(k)\widehat{\Sh}^{\zeta}_{2L}(k-\ell)}{\ell^{2}+k^{2}+(k-\ell)^{2}}\nonumber \\
 & =\sum_{\substack{k,\ell\in\mathbb{Z}\\
k,\ell\ne0\text{ and }k\ne\ell
}
}\left(1-\frac{2(k-\ell)}{k}\right)\frac{\widehat{\Sh}^{\zeta}_{2L}(\ell)\widehat{\Sh}^{\zeta}_{2L}(k)}{(k-\ell)^{2}+\ell^{2}+k^{2}}\nonumber \\
 & =\sum_{k,\ell\in\mathbb{Z}\setminus\{0\}}\left(1-\frac{2(k-\ell)}{k}\right)\frac{\widehat{\Sh}^{\zeta}_{2L}(\ell)\widehat{\Sh}^{\zeta}_{2L}(k)}{(k-\ell)^{2}+\ell^{2}+k^{2}}-\frac{1}{2}\sum_{k\in\mathbb{Z}\setminus\{0\}}\frac{\widehat{\Sh}^{\zeta}_{2L}(k)^{2}}{k^{2}}.\label{eq:combine-fourierzero}
\end{align}
Exactly as in \cite[(127)]{hairer:2013:solving}, we can write
\begin{align}
 & \sum_{k,\ell\in\mathbb{Z}\setminus\{0\}}\widehat{\Sh}^{\zeta}_{2L}(\ell)\widehat{\Sh}^{\zeta}_{2L}(k)\left(\frac{1-\frac{2(k-\ell)}{k}}{(k-\ell)^{2}+\ell^{2}+k^{2}}\right)=\frac{1}{2}\sum_{k,\ell\in\mathbb{Z}\setminus\{0\}}\widehat{\Sh}^{\zeta}_{2L}(\ell)\widehat{\Sh}^{\zeta}_{2L}(k)\frac{\frac{2\ell-k}{k}+\frac{2k-\ell}{\ell}}{(k-\ell)^{2}+\ell^{2}+k^{2}}\nonumber \\
 & \ =\sum_{k,\ell\in\mathbb{Z}\setminus\{0\}}\widehat{\Sh}^{\zeta}_{2L}(\ell)\widehat{\Sh}^{\zeta}_{2L}(k)\frac{\ell^{2}-k\ell+k^{2}}{k\ell[(k-\ell)^{2}+\ell^{2}+k^{2}]}=\sum_{k,\ell\in\mathbb{Z}\setminus\{0\}}\frac{\widehat{\Sh}^{\zeta}_{2L}(\ell)\widehat{\Sh}^{\zeta}_{2L}(k)}{k\ell}=\left(\sum_{k\in\mathbb{Z}\setminus\{0\}}\frac{\widehat{\Sh}^{\zeta}_{2L}(k)}{k}\right)^{2}=0,\label{eq:first-cancellation}
\end{align}
where in the first identity we used the symmetry of the sum under
exchanging $(k,\ell)\leftrightarrow(-\ell,-k)$ and in the last identity
we recalled \zcref{eq:Fourier-symmetric}. Using \zcref{eq:first-cancellation}
in \zcref{eq:combine-fourierzero}, we get
\[
\mathbb{E}\left[\widehat{\rscherryrenormrenorm[rsP]}^{\zeta}_{t}(0)+4\widehat{\rselkrenormrr[rsP]}^{\zeta}_{t}(0)\right]=-\frac{4L^{2}}{\pi^{2}}\sum_{k\in\mathbb{Z}\setminus\{0\}}\frac{\widehat{\Sh}^{\zeta}_{2L}(k)^{2}}{k^{2}},
\]
and hence
\[
\lim_{\zeta\to0}\mathbb{E}\left[\widehat{\rscherryrenormrenorm[rsP]}^{\zeta}_{t}(0)+4\widehat{\rselkrenormrr[rsP]}^{\zeta}_{t}(0)\right]=-\frac{4L^{2}}{\pi^{2}}\lim_{\zeta\to0}\sum_{k\in\mathbb{Z}\setminus\{0\}}\frac{\widehat{\Sh}^{\zeta}_{2L}(k)^{2}}{k^{2}}\overset{\zcref{eq:limShfourier}}{=}-\frac{1}{\pi^{2}}\sum_{k\in\mathbb{Z}\setminus\{0\}}\frac{1}{k^{2}}=-\frac{1}{3},
\]
which is \zcref{eq:zeromodeminus13}.
\end{proof}

Now, we consider the higher modes:
\begin{prop}
There is an absolute constant $C<\infty$ such that
\begin{equation}
\left|\mathbb{E}\left[\widehat{\rscherryrenormrenorm[rsP]}^{\zeta}_{t}(2n)+4\widehat{\rselkrenormrr[rsP]}^{\zeta}_{t}(2n)\right]\right|\le\frac{C}{|n|}\qquad\text{for all }\zeta>0\text{ and }n\in\mathbb{Z}\setminus\{0\}.\label{eq:candelabra-moose-bd}
\end{equation}
Moreover, we have
\begin{equation}
\lim_{\zeta\to0}\mathbb{E}\left[\widehat{\rscherryrenormrenorm[rsP]}^{\zeta}_{t}(2n)+4\widehat{\rselkrenormrr[rsP]}^{\zeta}_{t}(2n)\right]=0\qquad\text{for all }n\in\mathbb{Z}\setminus\{0\}.\label{eq:Ecandelabra4mooselimitwhennisnotzero}
\end{equation}
\end{prop}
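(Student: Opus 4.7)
The bound \zcref{eq:candelabra-moose-bd} is immediate: apply the triangle inequality to the two already-established pointwise bounds \zcref{eq:candelabrabound} and \zcref{eq:moosebd}, with the constant on the right-hand side of \zcref{eq:candelabra-moose-bd} being the sum of the constants in those two estimates. No further work is required for this part.

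For the limiting identity \zcref{eq:Ecandelabra4mooselimitwhennisnotzero}, the plan is to simply add the two known limits \zcref{eq:Ecandelabralimit} and \zcref{eq:mooselimit} and then recognize the result as a multiple of one of the vanishing sums in \zcref{eq:seriesiszero}. Specifically, the bounds \zcref{eq:candelabrabound} and \zcref{eq:moosebd} provide the uniform-in-$\zeta$ integrability needed to pass the limit through the sum (though since these are merely numerical limits of convergent series this is trivial here), and so for $n\in\mathbb{Z}\setminus\{0\}$ the combined limit equals
\begin{equation*}
\frac{1}{\pi^{2}}\sum_{k\in\mathbb{Z}}\frac{n^{2}-3k^{2}}{n^{4}+n^{2}k^{2}+k^{4}}+\frac{4\cdot n^{2}}{2\pi^{2}}\sum_{k\in\mathbb{Z}}\frac{1}{k^{4}+k^{2}n^{2}+n^{4}}=\frac{1}{\pi^{2}}\sum_{k\in\mathbb{Z}}\frac{3n^{2}-3k^{2}}{n^{4}+n^{2}k^{2}+k^{4}}.
\end{equation*}
The factor of $3$ pulls out of the sum, leaving exactly $\frac{3}{\pi^{2}}\sum_{k\in\mathbb{Z}}\frac{n^{2}-k^{2}}{n^{4}+n^{2}k^{2}+k^{4}}$, which vanishes by the first identity in \zcref{eq:seriesiszero} (a consequence of the Euler cotangent partial-fraction formula proved in \zcref{lem:partialfractions}, specialized to $\omega=\tfrac{1}{2}(1+\ii\sqrt{3})$).

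There is no real obstacle here: both ingredients have been assembled in the preceding lemmas, and the argument is a direct algebraic combination followed by an appeal to \zcref{eq:seriesiszero}. The only mild observation worth making explicit in the write-up is that the ``$+2n^{2}$'' contribution coming from the factor $4$ multiplying the moose term is precisely what converts the numerator $n^{2}-3k^{2}$ of the candelabra limit into the symmetric numerator $3(n^{2}-k^{2})$ that is annihilated by the Euler-type identity — this is the point at which the logarithmic divergences implicitly present in each of $\mathbb{E}[\widehat{\rscherryrenormrenorm[rsP]}^{\zeta}_t(2n)]$ and $\mathbb{E}[\widehat{\rselkrenormrr[rsP]}^{\zeta}_t(2n)]$ (had we kept finer information) conspire to cancel, analogously to the zero-frequency cancellation already carried out in \zcref{lem:zeromodeminus13}.
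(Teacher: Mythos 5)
Your proposal is correct and follows exactly the paper's argument: the triangle inequality applied to \zcref{eq:candelabrabound,eq:moosebd} gives \zcref{eq:candelabra-moose-bd}, and adding \zcref{eq:Ecandelabralimit} to four times \zcref{eq:mooselimit} yields $\frac{3}{\pi^{2}}\sum_{k\in\mathbb{Z}}\frac{n^{2}-k^{2}}{n^{4}+n^{2}k^{2}+k^{4}}=0$ by \zcref{eq:seriesiszero}. The arithmetic checks out and no further justification is needed.
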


\begin{proof}
The bound \zcref{eq:candelabra-moose-bd} follows immediately from
the triangle inequality applied to \zcref{eq:candelabrabound,eq:moosebd}.
For \zcref{eq:Ecandelabra4mooselimitwhennisnotzero}, we combine \zcref{eq:Ecandelabralimit,eq:mooselimit}
to obtain
\begin{align*}
\lim_{\zeta\to0}\mathbb{E}\left[\widehat{\rscherryrenormrenorm[rsP]}^{\zeta}_{t}(2n)+4\widehat{\rselkrenormrr[rsP]}^{\zeta}_{t}(2n)\right] & =\frac{3}{\pi^{2}}\sum_{k\in\mathbb{Z}}\frac{n^{2}-k^{2}}{n^{4}+n^{2}k^{2}+k^{4}}.
\end{align*}
Then we conclude \zcref{eq:Ecandelabra4mooselimitwhennisnotzero}
by \zcref{eq:seriesiszero}.
\end{proof}

\subsubsection{\label{subsec:Conclusion}Conclusion}
\begin{prop}
\label{prop:boundary-value}We have, for any fixed $\eps<\frac{4}{3}L$,
that
\begin{equation}
\lim_{\zeta\downarrow0}\mathbb{E}\left[\int^{L}_{0}\varphi^{\eps}_{\uu,\vv}(x)\left(\rscherryrenormrenorm[rsP]^{\zeta}_{t}(x)+4\rselkrenormrr[rsP]^{\zeta}_{t}(x)\right)\,\dif x\right]=\frac{1}{6}(\uu+\vv).\label{eq:moose-candelabra-boundary}
\end{equation}
\end{prop}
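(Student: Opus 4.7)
The plan is to pass to the Fourier domain and then apply dominated convergence, reducing the whole statement to a single-term extraction from the zero-frequency mode. First I would verify that $\varphi^{\eps}_{\uu,\vv}$ is real, even, and $2L$-periodic: this is immediate from \zcref{eq:varphiuvdef} together with the evenness of $\psi$ and the periodization by $\Sh_{2L}$. By \zcref{lem:reconstruct-symmetry}, the same symmetries hold for $\rscherryrenormrenorm[rsP]^{\zeta}_t$ and $\rselkrenormrr[rsP]^{\zeta}_t$. The Parseval-type identity \zcref{eq:feven-int} then gives
\[
\mathbb{E}\!\left[\int_0^L\varphi^{\eps}_{\uu,\vv}(x)\!\left(\rscherryrenormrenorm[rsP]^{\zeta}_t(x)+4\rselkrenormrr[rsP]^{\zeta}_t(x)\right)\dif x\right]=L\sum_{j\in\mathbb{Z}}\widehat{\varphi}^{\eps}_{\uu,\vv}(j)\,\mathbb{E}\!\left[\widehat{\rscherryrenormrenorm[rsP]}^{\zeta}_t(j)+4\widehat{\rselkrenormrr[rsP]}^{\zeta}_t(j)\right].
\]

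Next, I would invoke the Fourier computations of \zcref{subsec:candelabra-fourier} and \zcref{subsec:moose-fourier}: odd-indexed modes vanish identically, so only $j=2n$ contributes. The proposition preceding this one supplies both the uniform bound \zcref{eq:candelabra-moose-bd}, namely $|\mathbb{E}[\widehat{\rscherryrenormrenorm[rsP]}^{\zeta}_t(2n)+4\widehat{\rselkrenormrr[rsP]}^{\zeta}_t(2n)]|\le C/|n|$ for $n\neq 0$, and the pointwise limit \zcref{eq:Ecandelabra4mooselimitwhennisnotzero} which kills these modes as $\zeta\downarrow0$. Only the $n=0$ mode retains mass in the limit, and it tends to $-\nicefrac{1}{3}$ by \zcref{lem:zeromodeminus13}.

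The last step is to justify swapping the limit and the sum by dominated convergence. For fixed $\eps$, the formula \zcref{eq:phihatepsilon} reduces to $\widehat{\varphi}^{\eps}_{\uu,\vv}(2n)=-\tfrac{\uu+\vv}{2L}\widehat{\psi}(2\eps n/L)$ (using $(-1)^{2n}=1$), and $\widehat{\psi}$ is Schwartz since $\psi\in\mathcal{C}^{\infty}_c$; so $|\widehat{\varphi}^{\eps}_{\uu,\vv}(2n)|\cdot C/|n|$ is summable, providing a $\zeta$-uniform majorant. Passing to the limit, every $n\ne 0$ term dies, leaving
\[
L\cdot\widehat{\varphi}^{\eps}_{\uu,\vv}(0)\cdot\left(-\tfrac{1}{3}\right)\overset{\zcref{eq:psihatzero}}{=}L\cdot\left(-\tfrac{\uu+\vv}{2L}\right)\cdot\left(-\tfrac{1}{3}\right)=\tfrac{1}{6}(\uu+\vv),
\]
which is exactly \zcref{eq:moose-candelabra-boundary}. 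The combinatorial heart of the argument is really the cancellation of logarithmic divergences carried out in \zcref{lem:zeromodeminus13}; once that is in hand, the present proposition is a straightforward Parseval-plus-dominated-convergence synthesis, and I anticipate no further obstacle.
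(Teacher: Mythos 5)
Your proposal is correct and follows essentially the same route as the paper's own proof: Parseval via \zcref{eq:feven-int}, the uniform bound \zcref{eq:candelabra-moose-bd} together with the rapid decay of $\widehat{\varphi}^{\eps}_{\uu,\vv}$ to justify exchanging the $\zeta$-limit with the sum, the vanishing of the nonzero even modes, and the value $-\nicefrac{1}{3}$ of the zero mode from \zcref{lem:zeromodeminus13} combined with \zcref{eq:psihatzero}. If anything, you are slightly more explicit than the paper in checking the evenness hypotheses for \zcref{eq:feven-int} and in citing \zcref{eq:Ecandelabra4mooselimitwhennisnotzero} for the disappearance of the $n\neq 0$ modes.
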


\begin{proof}
By \zcref{eq:feven-int}, the expectation on the left side of \zcref{eq:moose-candelabra-boundary}
is equal to 
\begin{equation}
L\sum_{j\in\mathbb{Z}}\widehat{\varphi^{\eps}_{\uu,\vv}}(j)\mathbb{E}\left[\widehat{\rscherryrenormrenorm[rsP]}^{\zeta}_{t}(k)+4\widehat{\rselkrenormrr[rsP]}^{\zeta}_{t}(k)\right].\label{eq:boundary-fourier}
\end{equation}
Note that we were allowed to interchange the limits and expectations
using \zcref{eq:candelabra-moose-bd} and the fact that $\hat{\varphi}^{\eps}_{\uu,\vv}(j)$
decays rapidly with $j$ since $\varphi^{\eps}_{\uu,\vv}$ is qualitatively
smooth. Indeed, this same summability allows us to take the limit
$\zeta\downarrow0$ in \zcref{eq:boundary-fourier} using \zcref{eq:zeromodeminus13,eq:candelabra-moose-bd}
to obtain
\[
\lim_{\zeta\downarrow0}\mathbb{E}\left[\int^{L}_{0}\varphi^{\eps}_{\uu,\vv}(x)\left(\rscherryrenormrenorm[rsP]^{\zeta}_{t}(x)+4\rselkrenormrr[rsP]^{\eps,\zeta}_{t}(x)\right)\,\dif x\right]=-\frac{1}{3}L\hat{\varphi}^{\eps}_{\uu,\vv}(0)\overset{\zcref{eq:psihatzero}}{=}\frac{1}{6}(\uu+\vv).\qedhere
\]
\end{proof}

\begin{prop}
\label{prop:sumofcancellinglogs}For any $\tilde{\kappa}>0$, we have
\begin{equation}
\limsup_{\zeta\downarrow0}\left\Vert \mathbb{E}\left[\rscherryrenormrenorm[rsP]^{\zeta}_{t}\right]-C^{(2)}_{\zeta}\right\Vert _{\mathcal{C}^{-\tilde{\kappa}}(\mathbb{R})}<\infty.\label{eq:moose-renorm-close}
\end{equation}
and
\begin{equation}
\limsup_{\zeta\downarrow0}\left\Vert 4\mathbb{E}\left[\rselkrenormrr[rsP]^{\zeta}_{t}\right]+C^{(2)}_{\zeta}\right\Vert _{\mathcal{C}^{-\tilde{\kappa}}(\mathbb{R})}<\infty.\label{eq:cherry-renorm-close}
\end{equation}
\end{prop}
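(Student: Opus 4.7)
The plan is to pass to Fourier space on the torus $\mathbb{R}/(2L\mathbb{Z})$ and obtain uniform, mode-by-mode estimates on the two target functions. For each fixed $t$, both $\mathbb{E}[\rscherryrenormrenorm[rsP]^{\zeta}_t]$ and $\mathbb{E}[\rselkrenormrr[rsP]^{\zeta}_t]$ are $2L$-periodic and even in $x$, and therefore admit the Fourier series of \zcref{eq:fourierseries}. By the lemmas that establish \zcref{eq:candelabrabound,eq:moosebd}, their odd Fourier modes vanish, while for the even modes $k=2n$ with $n\ne 0$ the estimates \zcref{eq:candelabrabound,eq:moosebd} give the uniform-in-$\zeta$ bounds
\[
\bigl|\mathbb{E}[\widehat{\rscherryrenormrenorm[rsP]}^{\zeta}_t(2n)]\bigr|,\ \bigl|\mathbb{E}[\widehat{\rselkrenormrr[rsP]}^{\zeta}_t(2n)]\bigr|\le \frac{C}{|n|}.
\]

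The zero Fourier modes are handled separately, and this is where the renormalization constant $C^{(2)}_\zeta$ earns its keep. By the very definition \zcref{eq:C2zetadef}, $\widehat{\mathbb{E}[\rscherryrenormrenorm[rsP]^{\zeta}_t]}(0)=C^{(2)}_\zeta$, so the zero mode of $\mathbb{E}[\rscherryrenormrenorm[rsP]^{\zeta}_t]-C^{(2)}_\zeta$ vanishes identically. The zero mode of $4\mathbb{E}[\rselkrenormrr[rsP]^{\zeta}_t]+C^{(2)}_\zeta$ is instead $\mathbb{E}[\widehat{\rscherryrenormrenorm[rsP]}^{\zeta}_t(0)+4\widehat{\rselkrenormrr[rsP]}^{\zeta}_t(0)]$, which by \zcref{lem:zeromodeminus13} converges to $-\tfrac13$ as $\zeta\downarrow 0$ and is in particular uniformly bounded in $\zeta$.

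To convert these Fourier-side estimates into a $\mathcal{C}^{-\tilde\kappa}$ bound, I would use the Littlewood--Paley characterization of $\mathcal{C}^{s}=B^s_{\infty,\infty}$ for $s<0$. Writing $f^\zeta$ for either target function, the bound $|\widehat{f^\zeta}(k)|\le C/|k|$ for $k\ne 0$ together with a uniform bound on $\widehat{f^\zeta}(0)$ yields, for each dyadic block,
\[
\|\Delta_j f^\zeta\|_{L^\infty}\le\sum_{|k|\sim 2^j}|\widehat{f^\zeta}(k)|\le C,
\]
uniformly in $\zeta$ and $j$. Hence $f^\zeta$ is uniformly bounded in $B^0_{\infty,\infty}$ on the torus, and the trivial embedding $B^0_{\infty,\infty}\hookrightarrow B^{-\tilde\kappa}_{\infty,\infty}=\mathcal{C}^{-\tilde\kappa}$ for any $\tilde\kappa>0$ gives the torus-level bound. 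Since $f^\zeta$ is $2L$-periodic, this transfers to a uniform bound on $\|f^\zeta\|_{\mathcal{C}^{-\tilde\kappa}(\mathbb{R})}$, proving both \zcref{eq:moose-renorm-close,eq:cherry-renorm-close}.

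I do not foresee serious obstacles, because the delicate work --- in particular the cancellation of the logarithmic divergence at the zero mode --- has already been performed in the preceding Fourier computations, culminating in \zcref{lem:zeromodeminus13}. The only routine step is the Littlewood--Paley conversion from $C/|n|$ Fourier decay to uniform Besov boundedness, which is a standard dyadic-shell argument that can be written out in a few lines or simply cited.
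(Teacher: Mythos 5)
Your proposal is correct and follows essentially the same route as the paper: isolate the zero Fourier mode (which is killed by $C^{(2)}_{\zeta}$ in the first case and controlled by \zcref{lem:zeromodeminus13} in the second), bound the nonzero modes uniformly by $C/|n|$ via \zcref{eq:candelabrabound,eq:moosebd}, and convert to a $\mathcal{C}^{-\tilde{\kappa}}$ bound. The only cosmetic difference is that the paper handles \zcref{eq:cherry-renorm-close} by writing $4\mathbb{E}[\rselkrenormrr[rsP]^{\zeta}_{t}]+C^{(2)}_{\zeta}$ as the sum $4\mathbb{E}[\rselkrenormrr[rsP]^{\zeta}_{t}]+\mathbb{E}[\rscherryrenormrenorm[rsP]^{\zeta}_{t}]$ minus the quantity from \zcref{eq:moose-renorm-close} and invokes \zcref{eq:candelabra-moose-bd}, whereas you apply \zcref{eq:moosebd} directly mode by mode; these are equivalent.
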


\begin{proof}
Recalling the definition \zcref{eq:C2zetadef} of $C^{(2)}_{\zeta}$,
we have
\[
\left(\mathbb{E}\left[\rscherryrenormrenorm[rsP]^{\zeta}_{t}\right]-C^{(2)}_{\zeta}\right)^{\wedge}(k)=\begin{cases}
0, & k=0;\\
\mathbb{E}\left[\widehat{\rscherryrenormrenorm[rsP]}^{\zeta}_{t}(k)\right], & k\ne0.
\end{cases}
\]
Then \zcref{eq:moose-renorm-close} follows from this and \zcref{eq:candelabrabound}.
To see the bound \zcref{eq:cherry-renorm-close}, we write 
\[
4\mathbb{E}\left[\rselkrenormrr[rsP]^{\zeta}_{t}\right]+C^{(2)}_{\zeta}=\left(4\mathbb{E}\left[\rselkrenormrr[rsP]^{\zeta}_{t}\right]+\mathbb{E}\left[\rscherryrenormrenorm[rsP]^{\zeta}_{t}\right]\right)-\left(\mathbb{E}\left[\rscherryrenormrenorm[rsP]^{\zeta}_{t}\right]-C^{(2)}_{\zeta}\right)
\]
and then use \zcref{eq:zeromodeminus13,eq:candelabra-moose-bd} to
control the first term and \zcref{eq:moose-renorm-close} to control
the second term.
\end{proof}

\section{Stochastic estimates\label{sec:BPHZ}}

\newcommand{\x}{\mathbf{x}}
\newcommand{\y}{\mathbf{y}}
\newcommand{\z}{\mathbf{z}}
\newcommand{\w}{\mathbf{w}}
\newcommand{\sr}{\sigma_{\mathrm{refl}}}
\renewcommand{\mS}{\mathbb{S}}

This section is concerned with upper bounds on the moments of terms appearing in
the model $(\hat\Pi^{\eps,\zeta},\hat\Gamma^{\eps,\zeta})$. We need these
estimates for two reasons. First, we need to prove
\zcref{thm:model-norm-bounded}, i.e.\ to show that the model norm of
$(\hat\Pi^{\eps,\zeta},\hat\Gamma^{\eps,\zeta})$ is bounded uniformly in
$\eps,\zeta$. Second, we need to prove \zcref{prop:var-boundary} below, which
shows that the variance of a number of the explicit terms in the regularity
structure expansion of $(\partial_x h^{\eps,\zeta})^2$, when
evaluated at the spatial boundary and integrated in time, goes to $0$. Each of these
results is obtained by bounding the moments of
various iterated stochastic integrals arising in the definitions of
$\hat{\boldsymbol{\varPi}}^{\eps,\zeta}\tau$.

Estimates on the model for the KPZ equation have previously been obtained in
\cite{hairer:2013:solving, hairer:quastel:2018:class,
gubinelli:perkowski:2017:kpz}. Our estimates follow a largely similar strategy
to those, the most important difference being that we must deal with the fact
that the noise considered here has a reflection symmetry arising from the boundary condition. See
\cite{gerencer:hairer:2019:singular,chouk:zuijlen:2021:anderson} for previous work
on singular SPDE with boundary conditions.
The particular challenge created by the boundary condition in our setting is in
controlling the model at the boundary. 
The main issue is that the reflection symmetry of the noise
leads to potential blow-ups at the boundary. A similar problem appears in
\cite{gerencer:hairer:2019:singular}, where it is overcome by introducing spaces of modeled
distributions that allow for controlled blow-up at the boundary. In particular, in
\cite{gerencer:hairer:2019:singular} there is no need to reprove the
stochastic estimates on the model (which is kept to be the periodic one) -
except for the first renormalisation constant. We do not follow the same
approach because we must solve two additional issues. 
First, we do not want to
allow such blow-up of the modeled distributions, since our application of
the reconstruction theorem (the estimate \zcref{eq:apply-reconstruction} in the
proof of \zcref{prop:apply-reconstruction-boundary}) takes place exactly at the
boundary. Furthermore, in order to control the norm of the model with Neumann
boundary conditions, we can in principle allow the
stochastic terms to explode at the boundary, as long as they remain genuine
distributions on the torus (i.e.\ as long as the blow-up is integrable, which turns out to always  be the case except for the 
first renormalisation constant, which is not part of the model). However, we will eventually need to control
the stochastic terms also at the boundary, and this turns out to be rather
subtle, since \zcref{tab:the-terms-1} shows that some of the stochastic
terms do actually contribute to the expectation. (See \zcref{rem:claw-exp-isnt-smooth}
below for a similar example of the additional complexity caused by the
boundary). One of the main contributions of this section is to prove that the two-point correlation functions of all relevant stochastic terms (except for
$\rsrenorm$) vanish at the boundary. Obtaining these sharp estimates requires us to revisit
existing results on convergent Feynman diagrams
\cite{hairer:quastel:2018:class,hairer:2018:analyst}; see
\zcref{prop:graph-bound}.

\subsection{Notation, reductions, and basic estimates}

In order to control the model norm of $(\hat{\Pi}^{\eps,\zeta},\hat{\Gamma}^{\eps,\zeta})$, we must obtain stochastic estimates for all the basis elements of the regularity structure $\check{\mathsf{T}}$ defined in \zcref{sec:trunc-model-space}. However, since the elements of $\check{\mathsf{T}}_{\mathcal{I}}, \check{\mathsf{T}}_{\mathcal{I}'}$ can be obtained from other elements through integration and differentiation, it suffices to control the model norm for elements in $$\check{\mathsf{T}}_{\mathrm{stoch}} = \check{\mathsf{T}}_{\bullet} \cup \check{\mathsf{T}}_{\star} .$$
Before we proceed, let us introduce some notation. Due to the large number of integration variables appearing in this
section, we depart from the previous notation of using subscripts
for time indices. Instead, we write for example $\mathbf{x}=(t,x)$
and then write $f(\mathbf{x})$ or $f(t,x)$ where previously we had written $f_{t}(x)$.

We note that the elements of $\check{\mathsf{T}}_{\mathrm{stoch}}$ are all periodic, in the sense that for every $\x \in \RR^2$, the function $ (s, y) \mapsto \hat{\Pi}^{\eps, \zeta}_\x \tau (s, y)$ is $2L$-periodic in the $y$ variable.\footnote{This is not the case for all elements of $\check{\mathsf{T}}$, for example not for the polynomial term $\rsx\in\check{\mathsf{T}}_{\mathrm{poly}}$.}
In particular, all the functions and distributions that we consider in this section are naturally defined on the strip
\begin{equation*}
    \mathbb{S}_{2L} = \RR \times \TT_{2 L} ,
\end{equation*}
where $\mathbb{T}_{2L}$ is the $2L$-torus $\TT_{2L } = \RR/(2L \ZZ)$, which we naturally identify with the interval $[-L, L)$ with periodic boundary.
We will denote by
\begin{equation}
    \llangle f,g\rrangle_{\mS_{2L}^d} \coloneqq \int_{\mS_{2L}^{d}} f (\mathbf{x}_{1},\ldots,\mathbf{x}_{d}) g(\mathbf{x}_{1},\ldots,\mathbf{x}_{d})\,\dif\mathbf{x}_{1}\cdots\dif\mathbf{x}_{d} \label{eq:llip}
\end{equation}
the space-time pairing between two functions $f,g\colon \mS_{2L}^{d}\to\mathbb{R}$ whenever the integral is well-defined. This is not to be confused
with the spatial pairing $\langle\cdot,\cdot\rangle$ on $[0,L]$ defined in \zcref{eq:fgIP}. In the upcoming calculations we will usually drop the subscript $\mS^d_{2L}$, since the number of variables $d$ will be clear from context and the strip size $L$ will be fixed throughout the section. %

For clarity, let us now recall the model norms that we are going to estimate in this section.  We start by defining a space of periodic test functions. We say that a function $g \colon \mS_{2L} \to \RR$ is a test function if $g$ is the periodic extension of a function with compact support in $ [-1, 1] \times (-L, L) $. We then define
\begin{equation*}
\begin{aligned}
  \mathcal{C}_{\mathrm{c}}^{1, 2} = \{ g \colon \mS_{2L} \to \RR \text{ such that } g \text{ is a test function with } \partial_{t} g, \partial_{x}^{2} g
\text{ continuous } \} ,
\end{aligned}
\end{equation*}
and we endow this space with the usual uniform topology
\begin{equation*}
\begin{aligned}
  \| g \|_{\mathcal{C}^{1, 2}_{\mathrm {c}}} = \sup_{(t, x) \in \mS_{2L} } \left(
| g (t, x) | + | \partial_{t} g (t, x) | + |
\partial_{x}^{2} g (t, x) | \right) .
\end{aligned}
\end{equation*}
For any $g \in \mathcal{C}^{1,2}_{\mathrm{c}}$, we define the rescaled test function $g^\lambda_\x$, for fixed $ \lambda \in (0, 1] $ and $ \x \in \mS_{2L}$, by
\begin{equation*}
\begin{aligned}
g_\x^\lambda (\y) = g^{\lambda}_{(t, x)} (s, y) = \lambda^{- 3} g ( 
\lambda^{-2}(t-s),  \lambda^{-1} (x-y))  \qquad \text{for all } \y = (s, y) \in (t +[-1, 1]) \times (x + (-L, L))  ,
\end{aligned}
\end{equation*}
by which we mean the (spatially) $2L$-periodic extension of a function with compact support
in $(t +[-1, 1]) \times (x + (-L, L))$, using that $\lambda \leq 1$. 

Then, for $T>0$, consider $\|(\hat{\Pi}^{\eps,\zeta},\hat{\Gamma}^{\eps,\zeta})\|_{T}$
the norm of the model restricted to $[-T,T]\times\mathbb{R}$, as in
\cite[Defn.~2.17 and Rmk.~2.20]{hairer:2014:theory}. We first note that the model  norm is controlled by the regularity of the basis elements of $\check{\mathsf{T}}_{\mathrm{stoch}}$:
\begin{lem}\label{lem:red-to-stoch}
  For any $T > 0$, we have
  \begin{equation*}
    \|(\hat{\Pi}^{\eps,\zeta},\hat{\Gamma}^{\eps,\zeta})\|_{T} \lesssim_T \adjustlimits\sup_{\| g\|_{\mathcal{C}^{1,2}_{\mathrm{c}}} \leq 1}  \sup_{\tau \in \check{\mathsf{T}}_{\mathrm{stoch}}} \sup_{\x \in \mS_{2L}} \sup_{\lambda \in (0, 1)} \left( | \hat{\Pi}_{\x}^{\ve, \zeta} \tau
(g^{\lambda}_{\x}) | \lambda^{- | \tau |}   \right) .
\end{equation*}
\end{lem}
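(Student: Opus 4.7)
The plan is to start by recalling that the model norm $\|(\hat\Pi^{\eps,\zeta},\hat\Gamma^{\eps,\zeta})\|_T$, in the sense of \cite[Defn.~2.17 and Rmk.~2.20]{hairer:2014:theory}, is the sum of two pieces: a ``$\Pi$-part''
\[
\mathcal{P}_T := \adjustlimits\sup_{\x \in [-T,T]\times \mathbb{S}_{2L}} \sup_{\tau \in \check{\mathsf T}} \sup_{\lambda \in (0,1]} \sup_{g} \frac{|\hat\Pi_\x^{\eps,\zeta}\tau(g^\lambda_\x)|}{\lambda^{|\tau|}},
\]
where $g$ runs over smooth test functions with suitable bounded derivatives, and a ``$\Gamma$-part''
\[
\mathcal{G}_T := \sup_{\x,\y\in[-T,T]\times\mathbb{S}_{2L}}\sup_{\tau\in\check{\mathsf T}}\sup_{\ell<|\tau|}\frac{\|\hat\Gamma^{\eps,\zeta}_{\x,\y}\tau\|_{\ell}}{\|\x-\y\|_{\mathfrak{s}}^{|\tau|-\ell}}.
\]
The goal is then to show that each of these two quantities is controlled (up to a constant depending on $T$) by the right-hand side of the stated inequality, which involves only $\tau\in\check{\mathsf T}_{\mathrm{stoch}}$ and test functions in $\mathcal{C}^{1,2}_{\mathrm{c}}$.

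The first step is to reduce $\mathcal{P}_T$ from $\check{\mathsf T}$ to $\check{\mathsf T}_{\mathrm{stoch}}$. For $\tau\in\check{\mathsf T}_{\mathrm{poly}}$, by \zcref{tab:model-defns}, $\hat\Pi_\x^{\eps,\zeta}\tau$ is an explicit polynomial of $\y - \x$, and the corresponding bound is deterministic and classical. For $\tau=\mathcal{I}\rho$ or $\tau=\mathcal{I}'\rho$ with $\rho\in\check{\mathsf T}_{\mathrm{stoch}}$, admissibility of the model (see \zcref{eq:Pihatintegration}) together with the decomposition $\hat\Pi^{\eps,\zeta}_\x(\mathcal{I}\rho)$ as a space-time convolution of $\hat\Pi^{\eps,\zeta}\rho$ against $K$, minus a polynomial correction whose coefficients are themselves testings of $\hat\Pi^{\eps,\zeta}\rho$ against kernels of the form $\partial^{k}K(\x-\cdot)$, allows to transfer the bound on $\hat\Pi^{\eps,\zeta}\rho$ to $\hat\Pi^{\eps,\zeta}(\mathcal{I}\rho)$ and $\hat\Pi^{\eps,\zeta}(\mathcal{I}'\rho)$ via the multilevel Schauder estimate of \cite[Lem.~5.19 and Prop.~6.16]{hairer:2014:theory}. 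The key input is just the compact support and regularity of $K$ recalled in \zcref{subsec:The-kernels}.

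The second step is to reduce $\mathcal{G}_T$ to $\mathcal{P}_T$. Inspection of \zcref{tab:model-defns} shows that for every $\tau\in\check{\mathsf T}$, the matrix elements of $\hat\Gamma^{\eps,\zeta}_{\x,\y}\tau-\tau$ are differences (possibly shifted by Taylor-type polynomial subtractions) of the functions $\boxop{\rho}^{\eps,\zeta}_\x$ for $\rho\in\check{\mathsf T}_{\mathcal{I}}\cup\check{\mathsf T}_{\mathcal{I}'}$ evaluated at $\x$ and $\y$. These in turn are convolutions of $\hat\Pi^{\eps,\zeta}\rho$ against smooth compactly supported kernels (namely derivatives of $K$), which implies that pointwise values and increments of these functions are of the form $\hat\Pi_\z^{\eps,\zeta}\rho'(g^\lambda_\z)$ for suitable $\rho'\in\check{\mathsf T}_{\mathrm{stoch}}$, basepoint $\z$, scale $\lambda\sim\|\x-\y\|_{\mathfrak s}$, and test function $g\in\mathcal{C}^{1,2}_{\mathrm c}$. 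Carrying out this expansion carefully, using the explicit form of $\hat\Pi^{\eps,\zeta}$ on $\check{\mathsf T}_{\mathrm{stoch}}$ together with admissibility, yields the bound $\mathcal{G}_T\lesssim_T \mathcal{P}_T$, following the argument of \cite[Thm.~5.14]{hairer:2014:theory}.

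The final step is to verify that restricting test functions to $\mathcal{C}^{1,2}_{\mathrm{c}}$ rather than $\mathcal{C}^\infty$ with all derivatives bounded is harmless. This is a standard approximation: since the lowest homogeneity appearing in $\check{\mathsf T}_{\mathrm{stoch}}$ is $|\rsnoise|=-\tfrac32-\kappa$, the reconstruction-type inequality for $\hat\Pi_\x^{\eps,\zeta}\tau$ only requires testing against functions of parabolic regularity strictly greater than $\tfrac32$. Functions in $\mathcal{C}^{1,2}_{\mathrm{c}}$ have parabolic regularity $2$, which comfortably exceeds this threshold. A mollification argument (approximate a smooth test function by a convex combination of rescaled $\mathcal{C}^{1,2}_{\mathrm c}$ functions, as in \cite[Prop.~3.32]{hairer:2014:theory}) then transfers bounds from the restricted class to the full class with no loss. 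The main difficulty I anticipate is bookkeeping in the second step, where the polynomial Taylor corrections appearing in $\hat\Gamma^{\eps,\zeta}_{\x,\y}\tau$ must be carefully matched with the higher-order vanishing of $K$ guaranteed by \zcref{eq:Kpolyvanish}; this is where the precise choice of truncation of the heat kernel plays a role.
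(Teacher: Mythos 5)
Your proposal is correct and follows essentially the same route as the paper, which disposes of this lemma in two sentences: the $\Pi$-bounds on $\check{\mathsf{T}}_{\mathcal{I}}\cup\check{\mathsf{T}}_{\mathcal{I}'}$ follow from those on $\check{\mathsf{T}}_{\mathrm{stoch}}$ by integration/differentiation and the two-degree parabolic regularization of the kernel (your Schauder step), and the $\Gamma$-bounds are read off from the regularity of the basis elements via \zcref{tab:model-defns} (your second step). Your additional remarks on the polynomial sector and on the restriction to $\mathcal{C}^{1,2}_{\mathrm{c}}$ test functions are correct standard points that the paper leaves implicit.
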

\begin{proof}
  This follows from the fact that $\check{\mathsf{T}}_{\mathcal{I}},
  \check{\mathsf{T}}_{\mathcal{I}'}$ are obtained from
  $\check{\mathsf{T}}_{\mathrm{stoch}}$ by integration and differentiation, and
  from the fact that the heat kernel regularizes by two degrees of regularity in
  parabolic scaling. Similarly, the analytic estimates on $\Gamma^{\ve, \zeta}$
  follow from the regularity of all the basis elements of $\check{\mathsf{T}}$,
  in view of \zcref{tab:model-defns}.
\end{proof}
Now, by following the same proof as that of \cites[Theorem 10.7]{hairer:2014:theory},
which is essentially an application of Gaussian hypercontractivity,
we can make the following reduction in order to prove \zcref{thm:model-norm-bounded}. (The major difference in this
setting is that we do not have spatial homogeneity, but this does
not play an important role in the proof.) 
\begin{prop}
\label{prop:hypercontractivity}For any $ \kappa \in (0, 1)$ there exists a $p_0(\kappa) \in
[2,\infty)$ such that for any $p \geq p_0(\kappa) $, we have
\begin{equation}
  \begin{aligned}
    \mathbb{E} &\left[ \adjustlimits\sup_{\| g\|_{\mathcal{C}^{1,2}_{\mathrm{c}}} \leq 1}  \sup_{\tau \in \check{\mathsf{T}}_{\mathrm{stoch}}}  \sup_{\x \in \mS_{2L}} \sup_{\lambda \in (0, 1)} | \hat{\Pi}_{\x}^{\ve, \zeta} \tau
(g^{\lambda}_{\x}) |^p \lambda^{- | \tau | p}  \right] \\
  & \lesssim_{p, L, T, \kappa} \adjustlimits\sup_{\|g \|_{\mathcal C^{1,2}_{\mathrm c}}\leq 1}\sup_{\tau\in\check{\mathsf{T}}_{\mathrm{stoch}}}\sup_{x\in \TT_{2L}}\sup_{\lambda\in(0,1)}\left(\lambda^{-2|\tau| - \kappa}\mathbb{E}|\llangle\hat{\Pi}^{\eps,\zeta}_{(0,x)}(\tau),g^{\lambda}_{(0,x)}\rrangle_{\mS_{2L}}|^{2}\right)^{p/2}.\label{eq:hypercontractivity}
  \end{aligned}
\end{equation}
\end{prop}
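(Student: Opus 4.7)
The plan is to follow the strategy of \cite[Thm.~10.7]{hairer:2014:theory}, which is essentially a Gaussian-chaos version of Kolmogorov's continuity criterion, adapted to the fact that in our setting only time (not full space-time) translation invariance is available.

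First, I would observe that for each $\tau \in \check{\mathsf{T}}_{\mathrm{stoch}}$ the random variable $\hat{\Pi}^{\eps,\zeta}_\x \tau(g^\lambda_\x)$ is an iterated Wiener--Itô integral against $\dif W^\zeta$ of order at most $n(\tau)$, where $n(\tau)$ counts the number of noise factors in $\tau$ (the renormalization constants $C^{(1)}_\zeta, C^{(2)}_\zeta$ only subtract off zero-chaos components). Since $\check{\mathsf{T}}_{\mathrm{stoch}}$ is finite, these variables all lie in a fixed inhomogeneous Wiener chaos of order at most $N_* \coloneqq \max_\tau n(\tau)$, uniformly in $\eps,\zeta,\x,\lambda,g$. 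Gaussian hypercontractivity then gives, for every $q \geq 2$,
\begin{equation*}
\mathbb{E}\big|\hat{\Pi}^{\eps,\zeta}_\x \tau(g^\lambda_\x)\big|^q \lesssim_{N_*,q} \big(\mathbb{E}\big|\hat{\Pi}^{\eps,\zeta}_\x \tau(g^\lambda_\x)\big|^2\big)^{q/2}.
\end{equation*}
Moreover, because both $(\dif W^\zeta)$ and $\varphi^\eps_{\uu,\vv}$ are stationary in time, the law of $\hat{\Pi}^{\eps,\zeta}_{(t,x)} \tau(g^\lambda_{(t,x)})$ does not depend on $t$, so this second moment may be evaluated at basepoints of the form $(0,x)$, which accounts for the form of the right-hand side of \zcref{eq:hypercontractivity}.

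Second, I would exchange the expectation with the three suprema via the standard wavelet/chaining argument of \cite[\S10]{hairer:2014:theory} (see also \cite[\S13]{friz:hairer:2020:course}). Using a Besov-type embedding of $\mathcal{C}^{1,2}_{\mathrm{c}}$ into a parabolic Hölder space of sufficiently high regularity, the continuous supremum over test functions $g$ reduces to a countable supremum over reference wavelets at dyadic scales, and a dyadic chaining over scales $\lambda = 2^{-n}$ and over basepoints in a dyadic grid of $[-T,T] \times \mathbb{T}_{2L}$ then controls the remaining suprema. Each such step loses at worst a polynomial-in-$n$ factor, which can be absorbed into two $\lambda^{-\kappa/4}$ losses by choosing $p$ larger than some threshold $p_0(\kappa)$; combining with the hypercontractive estimate of the first step yields \zcref{eq:hypercontractivity}.

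The only substantive departure from Hairer's argument is that the lack of spatial translation invariance---caused both by the reflection symmetry of the noise about $0$ and $L$ and by the spatial inhomogeneity of $\varphi^\eps_{\uu,\vv}$---prevents reducing the second moment to a single basepoint, so the supremum over $x \in \mathbb{T}_{2L}$ must remain on the right-hand side of \zcref{eq:hypercontractivity}. This is harmless because the wavelet and chaining estimates in the second step are pointwise in the basepoint; it is the structural reason for the precise asymmetric form of the reduction claimed in the proposition.
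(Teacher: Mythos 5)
Your proposal is correct and follows essentially the same route as the paper, which proves this proposition simply by invoking the argument of \cite[Thm.~10.7]{hairer:2014:theory} (Gaussian hypercontractivity within a fixed finite-order Wiener chaos, followed by the wavelet/Kolmogorov chaining over dyadic scales and basepoints, with $p\geq p_0(\kappa)$ needed to make the dyadic sums converge) together with the same observation you make: time stationarity lets one fix the basepoint at $t=0$, while the lack of spatial homogeneity is harmless and merely forces the supremum over $x\in\TT_{2L}$ to remain on the right-hand side.
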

Note that the homogeneity $|\tau|$ of basis elements $\tau \in
  \check{\mathsf{T}}$ contains an arbitrarily small, but strictly positive
  parameter $\kappa$, which for convenience is chosen to be the same parameter that appears in this
  proposition. 
The right side of \zcref{eq:hypercontractivity} only depends on second
moments, and so the rest of this section will be on estimating
the second moments of $\hat{\Pi}^{\eps,\zeta}_{(0,x)}(\tau)$ for
the trees $\tau\in\check{\mathsf{T}}_{\mathrm{stoch}}$. In particular, we will
obtain the following result.
\begin{prop}\label{prop:stochastic-estimates}
  For each $\tau \in \check{\mathsf{T}}_{\mathrm{stoch}}$, we have
  \begin{equation*}
    \adjustlimits \sup_{\ve, \zeta \in (0, 1)} \sup_{\|g \|_{\mathcal C^{1,2}_{\mathrm c}}\leq 1}\sup_{x\in \TT_{2L}}\sup_{\lambda\in(0,1)} \lambda^{-2|\tau| - \kappa}\mathbb{E}|\llangle\hat{\Pi}^{\eps,\zeta}_{(0,x)}(\tau),g^{\lambda}_{(0,x)}\rrangle_{\mS_{2L}}|^{2} < \infty .
  \end{equation*}
\end{prop}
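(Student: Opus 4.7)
The plan is to decompose each $\hat{\Pi}^{\eps,\zeta}_{(0,x)}(\tau)$ into the sum of its projections onto the homogeneous Wiener chaoses of the white noise $\dif W^{\zeta}$, and then to control the second moment by explicit Feynman diagram estimates. For $\tau \in \check{\mathsf{T}}_{\bullet}$, the claim is immediate: $\rsnoise$ is the white noise tested against a test function, and $\rspotential$ represents the deterministic function $\varphi^{\eps}_{\uu,\vv}$, which remains bounded in $\mathcal{C}^{-1-\kappa}$ uniformly in $\eps$. For $\tau \in \check{\mathsf{T}}_{\star}$, each tree can be expanded into a sum of iterated stochastic integrals, the number of noise factors being controlled by the number of $\rsnoise$ leaves of $\tau$ (ranging from $0$ for $\rscherrybb$ up to $4$ for $\rscandelabrarrrr$ and $\rsmooserrrr$). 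After renormalization (which has already been incorporated into the map $\hat{M}$ and hence into $\hat{\Pi}^{\eps,\zeta}$), the zeroth-chaos component of each of these trees is either $0$ or an explicit deterministic boundary contribution, both of which are then easy to handle.

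For the higher chaos components, I would apply Isserlis' theorem to expand the second moment of $\llangle \hat{\Pi}^{\eps,\zeta}_{(0,x)}(\tau), g^{\lambda}_{(0,x)}\rrangle$ as a sum over pairings of the noise factors coming from the two copies of $\tau$. Each pairing produces a covariance of the form $\Sh^{\zeta}_{2L}(x-y)+\Sh^{\zeta}_{2L}(x+y)$, by \zcref{eq:dWzetacov}. Expanding the ``shah'' sums into sums over reflections and $2L$-translations, the variance becomes a finite sum of Feynman-diagram integrals, each of which is a space-time convolution of the truncated heat kernel $K$ (and its derivative $K'$), the covariance $\Sh^{\zeta}_{2L}$, and the potential $\varphi^{\eps}_{\uu,\vv}$, tested against the rescaled test function $g^{\lambda}_{(0,x)}$. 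The desired bound then reduces to showing that each such Feynman diagram scales like $\lambda^{2|\tau|+\kappa}$ as $\lambda \downarrow 0$, uniformly in $\eps,\zeta$.

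The bound on each Feynman diagram will be obtained by applying \zcref{prop:graph-bound}, a general estimate in the spirit of \cite{hairer:quastel:2018:class,hairer:2018:analyst}. The key input is the scaling degree of each kernel edge: $K$ has parabolic homogeneity $-1$, $K'$ has homogeneity $-2$, the covariance $\Sh^{\zeta}_{2L}$ of the spatial noise has degree $-1$, and the mollified potential $\varphi^{\eps}_{\uu,\vv}$ behaves like a distribution of degree $-1-\kappa$ as $\eps \downarrow 0$. For each $\tau \in \check{\mathsf{T}}_{\star}$ one must then verify the usual power-counting criterion: the total homogeneity after integration of every subgraph matches (and beats by $\kappa$) the scaling budget $2|\tau|$. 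This is a finite, case-by-case check over the trees in $\check{\mathsf{T}}_{\star}$.

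The main obstacle will be to make these bounds uniform in $\eps$ for trees involving $\rspotential$, in particular $\rselkrbr$, $\rselkrrb$, $\rscherryrb$, $\rscherrybb$, and the second-chaos component of $\rsclawrr$. Because the noise covariance contains the reflected piece $\Sh^{\zeta}_{2L}(x+y)$, it couples the noise near the boundary with the near-delta singularity of $\varphi^{\eps}_{\uu,\vv}$ at $\{0,L\}$; naïve pointwise bounds on the associated kernel convolutions therefore blow up as $\eps \to 0$. It is only after integration against $g^{\lambda}_{(0,x)}$, together with careful use of the evenness/oddness symmetries (which are encoded in the parities $\sgn(\tau)$ of \zcref{eq:sgnbasecase,eq:sgnrecursion} and in the reflection identities of \zcref{lem:reconstruct-symmetry}), that the needed $\lambda^{2|\tau|+\kappa}$ scaling becomes visible. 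I will formalize this by treating $\varphi^{\eps}_{\uu,\vv}$ as a distribution of suitable negative regularity and absorbing its singular support into the combinatorial structure of the Feynman diagram, so that the hypotheses of \zcref{prop:graph-bound} apply uniformly in $\eps$. The same diagrammatic analysis, carried out along the spatial boundary $\{0,L\}$ rather than at a generic basepoint, will then yield the two-point function vanishing needed later for \zcref{prop:var-boundary}.
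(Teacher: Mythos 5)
Your overall strategy---chaos decomposition, Isserlis expansion into contractions, and bounding the resulting Feynman diagrams via the power-counting estimate of \zcref{prop:graph-bound}---is indeed the skeleton of the paper's argument, but as written the proposal has two genuine gaps and one misdiagnosis of where the difficulty lies.

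First, you never mention the recentering (positive renormalization) built into the model. For the positive-homogeneity trees $\rsclawrr$, $\rselkrbr$ and $\rsmooserrrr$, the object $\hat{\Pi}^{\eps,\zeta}_{(0,x)}(\tau)$ is not the canonical lift but involves subtracting the value of a subtree at the basepoint (see \zcref{tab:model-defns}); without exploiting this subtraction---which, via \zcref{e:Kx-bd}, trades a power of the increment $|\mathbf{x}-\mathbf{y}|_{\mathfrak{s}}$ for additional kernel blow-up---the diagrams cannot produce the required $\lambda^{2|\tau|+\kappa}$ scaling, since the unrecentered objects have the wrong small-scale behavior. Second, even after the negative renormalization encoded in $\hat{M}$, not every diagram satisfies the strict-positivity hypothesis \zcref{eq:degcondition} of \zcref{prop:graph-bound}: the contraction $\rsclawrrcA$ has degree exactly $0$ and its uniform boundedness rests on an antisymmetry/explicit Fourier argument (\zcref{lem:claw-symm-bd}); the subdiagrams of type $\rsbphz$ inside $\rselkrrr$, $\rscandelabrarrrr$ and $\rsmooserrrr$ are divergent by naive power counting and require the dedicated Taylor-subtraction estimates \zcref{lem:bphzbd,lem:bphz-increment}; and the renormalized means of $\rscandelabrarrrr$ and $\rsmooserrrr$ are bounded only because of an exact cancellation of logarithms between $\mathbb{E}\,\rscherryrenormrenorm[rsK]^{\zeta}$, $\mathbb{E}\,\rselkrenormrr[rsK]^{\zeta}$ and $C^{(2)}_{\zeta}$ (\zcref{prop:sumofcancellinglogs}). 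None of this is delivered by a generic ``case-by-case power-counting check.'' Finally, you locate the main obstacle in the trees containing $\rspotential$; in fact these are the easy ones, since $\rslollipopb[rsK]^{\eps}$ is bounded uniformly in $\eps$ by the explicit formula \zcref{eq:lollipopb-explicit}, so $\rscherrybb$, $\rscherryrb$, $\rselkrrb$ and $\rselkrbr$ reduce immediately to estimates on their noise parts. The hard work lies entirely in the pure-noise third- and fourth-chaos trees and in the recentered and renormalization-sensitive subdiagrams just described.
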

\begin{proof}
  This is a simple computation for $\tau\in\check{\mathsf{T}}_{\bullet}$.
  For each of the (nine; see \zcref{tab:RS-table}) $\tau\in \check{\mathsf{T}}_{\star}$, the result is a consequence of the estimates in
  \zcref{prop:cherryrr-bound,prop:claw-bound,prop:rbrelk,prop:rrbelk,prop:elkrrr,prop:candelabra,prop:moose}
  below, as shown in the following table:
\[
\begin{tabular}{c|cccccccccc}
  \toprule
  $\tau$ &$\rscherryrr$&$\rscherryrb$&$\rscherrybb$&$\rsclawrr$&$\rselkrbr$&$\rselkrrb$&$\rselkrrr$&$\rscandelabrarrrr$&$\rsmooserrrr$\\
  \midrule
  Estimate & \zcref{eq:cherry-model-bound} &\zcref{eq:redbluecherrybd}&\zcref{eq:lollipopb-bound} & \zcref[range]{eq:claw-exp-bd,eq:claw-cov-bd} & \zcref[range]{e:rbr-aim,eq:elkrbrcovbd} & \zcref{e:rrb-aim} & \zcref[range]{eq:elkrrr-exp,eq:elkrrr-cov} & \zcref[range]{eq:candelabra-cov-bd,eq:candelabra-mean-bd} & \zcref{e:moose-aim-2}  \\
  \bottomrule
\end{tabular}.\qedhere
\]
\end{proof}
Before beginning the stochastic estimates in earnest, %
we 
introduce some notation and preliminary results.
For $\mathbf{x}_{i}=(t_{i},x_{i}) \in \mS_{2L}$, $i=1,2$, we define 
\begin{equation}
\mathcal{E}^{\zeta}_{\pm}(\mathbf{x}_{1},\mathbf{x}_{2})\coloneqq\delta(t_{1}-t_{2})\Sh^{\zeta}_{2L}(x_{1}\pm x_{2})\label{eq:Ezetapmdef}
\end{equation}
and then
\begin{equation}
\mathcal{E}^{\zeta}(\mathbf{x}_{1},\mathbf{x}_{2})\coloneqq\mathcal{E}^{\zeta}_{+}(\mathbf{x}_{1},\mathbf{x}_{2})+\mathcal{E}^{\zeta}_{-}(\mathbf{x}_{1},\mathbf{x}_{2}),\label{eq:Ezetadef}
\end{equation}
which is the covariance function for the reflected noise, the quantity appearing %
on the right side of \zcref{eq:dWzetacov}. %
We will also extend the definition \zcref{eq:sym_group_generators}
by 
\[
\sr(t,x)\coloneqq(t,\sr(x))\qquad\text{for }(t,x)\in\mS_{2L}.
\]
We define the parabolically-scaled periodic norm by
\[
    |\x|_{\mathfrak{s}} =|(t,x)|_{\mathfrak{s}}\coloneqq\sqrt{t}+ \min_{z \in 2L \ZZ}|x -z| ,
\]
which induces a metric on $\mS_{2L}$ and a pseudo-metric on $\RR\times \RR$. With this metric we define balls
\begin{equation}\label{eq:Urdef}
    \mathcal{U}_{c}(\x) \coloneqq \{ \y \text{ such that } |\x - \y|_{\mf{s}} \leq c\} \subseteq \mS_{2L}.
\end{equation}
We also define the pseudo-metric
\begin{equation}
d_{\mathfrak{s},\mathscr{S}}(\mathbf{x}_{1},\mathbf{x}_{2})\coloneqq \min\left\{ |\mathbf{x}_{1}-\sr(\mathbf{x}_{2})|_{\mathfrak{s}}, |\mathbf{x}_{1}-\mathbf{x}_{2}|_{\mathfrak{s}} \right\},\label{eq:dsS}
\end{equation}
which will show up later in the calculations due to the even extension of the noise. 
We note that $d_{\mathfrak{s},\mathscr{S}}$ is a true metric when
restricted to $\mathbb{R}\times[0,L)$. Finally, we introduce the periodized kernel
\begin{equation}
  J(\x)= J(t,x) \coloneqq \sum_{z \in 2L\ZZ} K(t, x+z),\label{eq:Jdef}
\end{equation}
so that $J$ is defined on $\mS_{2L}$.
By the properties of the kernel $K$ imposed in \zcref{subsec:The-kernels},
as well as standard properties of the heat kernel, we see that 
\begin{equation}
|J(\mathbf{x})|\lesssim|\mathbf{x}|^{-1}_{\mathfrak{s}} , \qquad|J'(\mathbf{x})|\lesssim|\mathbf{x}|^{-2}_{\mathfrak{s}} , \qquad\text{and}\qquad|J''(\mathbf{x})|\lesssim|\mathbf{x}|^{-3}_{\mathfrak{s}}\qquad\text{for all }\x\in \mathbb{S}_{2L}.\label{eq:Kbds}
\end{equation}
When we deal with the recentering of trees that takes place in the model definition (see \zcref{tab:model-defns}), we will often need to deal with a ``recentered'' version of $J$. We define, for $\x,\w\in\mathbb{S}_{2L}$,
\begin{equation}\label{eq:recenter-J-def}
  J_{\x}(\w) \coloneqq J(\w)-J(\w+\x).
\end{equation}
The following estimate will be used frequently throughout the analysis.
\begin{lem}\label{lem:Kx-bd}
For any $\delta \in [0,1]$ and $ \x, \w \in \mS_{2L}$, we have
\begin{equation}\label{e:Kx-bd}
  |J'_\x(\w)|\lesssim |\x|^\delta_{\mf{s}} (|\w|_{\mf{s}}^{-2 - \delta}+|\w +\x|_{\mf{s}}^{-2 - \delta})  .
\end{equation}
\end{lem}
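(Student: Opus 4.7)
The plan is to establish the two endpoints $\delta = 0$ and $\delta = 1$ and then interpolate between them using the elementary inequality $\min(A,B) \leq A^{1-\delta}B^\delta$.

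\medskip

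\emph{Endpoint $\delta = 0$.} This is immediate from the definition \zcref{eq:recenter-J-def} and the triangle inequality, together with the derivative bound $|J'(\mathbf z)| \lesssim |\mathbf z|_{\mathfrak s}^{-2}$ from \zcref{eq:Kbds}: we obtain $|J'_{\x}(\w)| \leq |J'(\w)| + |J'(\w + \x)| \lesssim |\w|_{\mathfrak s}^{-2} + |\w+\x|_{\mathfrak s}^{-2}$.

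\medskip

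\emph{Endpoint $\delta = 1$.} Write $\x = (\tau,\xi)$ and $\w = (s,y)$, and consider two cases. If $|\x|_{\mathfrak s} \geq \tfrac12 \min(|\w|_{\mathfrak s},|\w+\x|_{\mathfrak s})$, then the $\delta=0$ bound already gives the $\delta=1$ bound up to a constant, since the trivial inequality $a^{-2} \leq |\x|_{\mathfrak s} a^{-3}$ holds whenever $|\x|_{\mathfrak s} \gtrsim a$. Otherwise, $|\x|_{\mathfrak s} \leq \tfrac12 \min(|\w|_{\mathfrak s},|\w+\x|_{\mathfrak s})$ and I would decompose
\begin{equation*}
J'(\w) - J'(\w+\x) = \bigl[J'(s,y) - J'(s+\tau,y)\bigr] + \bigl[J'(s+\tau,y) - J'(s+\tau,y+\xi)\bigr]
\end{equation*}
and estimate each bracket via the fundamental theorem of calculus. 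For the spatial increment one has $|J'(s+\tau,y) - J'(s+\tau,y+\xi)| \leq |\xi| \sup_{r \in [0,|\xi|]} |J''(s+\tau,y+r)|$, and a short argument shows that $|(s+\tau,y+r)|_{\mathfrak s} \gtrsim |\w|_{\mathfrak s}$ along this path (since $|\xi| \leq |\x|_{\mathfrak s} \leq \tfrac12 |\w|_{\mathfrak s}$), so that by \zcref{eq:Kbds} this bracket is bounded by $|\xi|\,|\w|_{\mathfrak s}^{-3} \leq |\x|_{\mathfrak s}\,|\w|_{\mathfrak s}^{-3}$. For the temporal increment, using the standard parabolic bound $|\partial_t J'(\mathbf z)| \lesssim |\mathbf z|_{\mathfrak s}^{-4}$ (which follows from the identity $\partial_t K = \tfrac12 \partial_x^2 K$ on the support of the heat-kernel part of $K$ together with \zcref{eq:Kbds} applied to third-order derivatives), it suffices to check that $|(s+u,y)|_{\mathfrak s} \gtrsim |\w|_{\mathfrak s}$ for $u \in [0,\tau]$: if $|s| \geq |\tau|$ then $s$ and $s+u$ have comparable absolute values, while if $|s| < |\tau|$ then $|y| \geq |\w|_{\mathfrak s} - \sqrt{|s|} \geq \tfrac12|\w|_{\mathfrak s}$ by our smallness assumption on $|\x|_{\mathfrak s}$, so that $|(s+u,y)|_{\mathfrak s} \geq |y| \gtrsim |\w|_{\mathfrak s}$. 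Consequently the temporal bracket is bounded by $|\tau|\,|\w|_{\mathfrak s}^{-4} \leq |\x|_{\mathfrak s}^{2}\,|\w|_{\mathfrak s}^{-4} \leq |\x|_{\mathfrak s}\,|\w|_{\mathfrak s}^{-3}$. The same argument with the roles of $\w$ and $\w+\x$ exchanged (or by symmetry under $\x \mapsto -\x$) yields the bound with $|\w+\x|_{\mathfrak s}$ in place of $|\w|_{\mathfrak s}$, proving the $\delta=1$ case.

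\medskip

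\emph{Interpolation.} Writing $A_0(\w,\x) = |\w|_{\mathfrak s}^{-2} + |\w+\x|_{\mathfrak s}^{-2}$ and $A_1(\w,\x) = |\w|_{\mathfrak s}^{-3} + |\w+\x|_{\mathfrak s}^{-3}$, the two endpoint bounds combine to give $|J'_{\x}(\w)| \lesssim \min(A_0, |\x|_{\mathfrak s} A_1) \leq A_0^{1-\delta}(|\x|_{\mathfrak s} A_1)^{\delta} = |\x|_{\mathfrak s}^{\delta} A_0^{1-\delta} A_1^{\delta}$. Expanding both factors using $(a+b)^p \lesssim a^p + b^p$ and then controlling each cross term by $|\mathbf z_1|_{\mathfrak s}^{-2(1-\delta)}|\mathbf z_2|_{\mathfrak s}^{-3\delta} \leq |\mathbf z_1|_{\mathfrak s}^{-2-\delta} + |\mathbf z_2|_{\mathfrak s}^{-2-\delta}$ (via the trivial case distinction $|\mathbf z_1|_{\mathfrak s} \lessgtr |\mathbf z_2|_{\mathfrak s}$) yields the desired bound \zcref{e:Kx-bd}.

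\medskip

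The main obstacle is the $\delta=1$ case, specifically the temporal mean-value estimate: the parabolic metric weights time by a factor of $1/2$, so one has to check carefully that the path $(s+u,y)$ stays at distance $\gtrsim |\w|_{\mathfrak s}$ from the origin even when the time coordinate can change sign during the integration. The case analysis on whether $|s|$ is larger or smaller than $|\tau|$ handles this.
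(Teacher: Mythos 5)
Your proof is correct and follows essentially the same route as the paper's: when $|\x|_{\mf s}\lesssim|\w|_{\mf s}$ you split $J'(\w)-J'(\w+\x)$ into spatial and temporal increments and apply the mean value theorem with the bounds $|J''(\z)|\lesssim|\z|_{\mf s}^{-3}$ and $|\partial_t J'(\z)|\lesssim|\z|_{\mf s}^{-4}$, and otherwise you use the trivial triangle-inequality bound; the only difference is that you interpolate between the $\delta=0$ and $\delta=1$ endpoints at the end, whereas the paper absorbs the factor $|\x|_{\mf s}^{\delta}$ inline via $|x|\le|\x|_{\mf s}^{\delta}|\w|_{\mf s}^{1-\delta}$. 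One small repair: in the temporal increment your claim that $|s|\ge|\tau|$ forces $|s+u|$ and $|s|$ to be comparable can fail when $s$ and $\tau$ have opposite signs and $|s|\approx|\tau|$, but the needed lower bound $|(s+u,y)|_{\mf s}\gtrsim|\w|_{\mf s}$ follows directly from subadditivity of $\sqrt{|\cdot|}$, namely $\sqrt{|s+u|}\ge\sqrt{|s|}-\sqrt{|u|}\ge\sqrt{|s|}-|\x|_{\mf s}$, so no case analysis is required.
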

\begin{proof}
  Let $\x=(t,x)$.
We consider two cases. First, suppose that $|\x|_{\mf{s}} \leq
\oh|\w|_{\mf{s}}$. In this case, for some $\xi \in [0, 1]$ we have
\begin{equation*}
  |J'(\w) - J'(\w + (0, x))|  = |J''(\w + \xi (0, x)) x| \lesssim |\w|_{\mf{s}}^{-3} |x|\lesssim |\w|_{\mf s}^{-2-\delta}|\x|_{\mf s}^\delta
  \end{equation*}
 since $|\w + \xi (0, x)|_{\mf{s}}, |\w + \x|_{\mf{s}} 
\gtrsim |\w|_{\mf{s}}$ Similarly, using in addition the estimate $|\partial_s J' (s, y)|
\lesssim |(s, y)|_{\mf{s}}^{-4}$, we have
  \begin{equation*}
    |J'(\w+ \x) - J'(\w + (0, x))|  \lesssim |\w|_{\mf s}^{-4}|t|\lesssim |\w|_{\mf s}^{-4}|\x|_{\mf s}|t|^{1/2} \lesssim |\w|_{\mf{s}}^{-2-\delta} |\x|_{\mf s}^{\delta}
\end{equation*}
Then in this case \zcref{e:Kx-bd} follows
from the triangle inequality.
On the other hand, if $|\w|_{\mf{s}} <  2 |\x|_{\mf{s}}$, then we use the triangle inequality to bound
\begin{align*}
  |J'(\w) - J'(\w+\x)| & \lesssim |\w|_\mf{s}^{-2-\delta} |\w|_\mf{s}^{\delta} + |\w+\x|_\mf{s}^{-2-\delta}|\w+\x|_\mf{s}^{\delta} \\
  & \lesssim |\x|^\delta_\mf{s} (|\w|_\mf{s}^{-2-\delta}  + |\w+\x|_\mf{s}^{-2-\delta}) .\qedhere
  \end{align*}
\end{proof}

\subsection{Feynman diagrams\label{subsec:feynman-diagrams}}

We now extend the graphical notation introduced in \zcref{subsec:Tree-diagrams}
to help formulate our estimates. For each $\tau\in\check{\mathsf{T}}_{\star}$,
we associate $\tau$ with a directed graph $\mathsf{G}(\tau)=(\mathsf{V}(\tau),\mathsf{E}(\tau))$
with vertex set $\mathsf{V}(\tau)$ and edge set $\mathsf{E}(\tau)$.
The edges $e=(e_{\downarrow},e_{\uparrow})\in\mathsf{E}(\tau)$ are
directed, with $e_{\downarrow}$ and $e_{\uparrow}$ representing
the vertices closer to and farther away from the root (lower and higher
in our diagrams), respectively. The leaf nodes of the tree can be
written as $\mathsf{V}_{\rsnoise}(\tau)\sqcup\mathsf{V}_{\rspotential}(\tau)$,
where $\mathsf{V}_{\rsnoise}(\tau)$ and $\mathsf{V}_{\rspotential}(\tau)$
contain the nodes representing $\rsnoise$ and $\rspotential$ terms,
respectively. %

For trees $\tau_{1},\tau_{2}\in\check{\mathsf{T}}_{\mathcal{I}'}\cup\check{\mathsf{T}}_{\star}$,
we define the set $\mathscr{C}(\tau_{1},\tau_{2})$ of \emph{contractions}
to be the set of all perfect matchings of $\mathsf{V}_{\rsnoise}(\tau_{1})\sqcup\mathsf{V}_{\rsnoise}(\tau_{2})$.
Given a contraction $\nu\in\mathscr{C}(\tau_{1},\tau_{2})$, we define
a new ``contracted'' graph $\tau_{1}\hash_{\nu}\tau_{2}\coloneqq(\mathsf{V}(\tau_{1})\sqcup\mathsf{V}(\tau_{2}),\mathsf{E}(\tau_{1})\sqcup\mathsf{E}(\tau_{2})\sqcup\nu)$
by taking the union of the graphs $\mathsf{G}(\tau_{1})$ and $\mathsf{G}(\tau_{2})$
and then adding edges corresponding to the matching $\nu$. With some abuse of notations, we will also use $\nu$ to denote the collection of all new edges corresponding to the matching. We consider
the edges added from $\nu$ to be \emph{undirected}. Symbolically,
we draw edges corresponding to terms in $\nu$ as dashed red lines.
\nomenclature{$\rslollirc$}{Contraction}
Here are some examples of symbolic representations of contracted trees:
\[
\rslollirc,\qquad\rselkrbrcSN,\qquad \rsclawrrcNORECENTER,\qquad \rscherryrrcc,\qquad\rselkrrbcc,\qquad \rselkrrrccccc, \qquad \rsmooserrrrcPD.
\]
As the third example shows, the graph $\tau_{1}\hash_{\nu}\tau_{2}$
need not be connected. 

We write
\[
\mathsf{V}_{\square}(\tau_{1}\hash_{\nu}\tau_{2})\coloneqq\mathsf{V}_{\square}(\tau_{1})\sqcup\mathsf{V}_{\square}(\tau_{2})\qquad\text{for }\square\in\{\rsnoise,\rspotential\},
\]
\begin{equation}
\mathsf{V}_{\varrho}(\tau_{1}\hash_{\nu}\tau_{2})\coloneqq\{\varrho(\tau_{1}),\varrho(\tau_{2})\},%
\end{equation}
where $\varrho(\tau_{i})$ is the root of $\tau_i$, and
\[
\mathsf{V}_{\mathrm{int}}(\tau_{1}\hash_{\nu}\tau_{2})\coloneqq\mathsf{V}(\tau_{1}\hash_{\nu}\tau_{2})\setminus\mathsf{V}_{\varrho}(\tau_{1}\hash_{\nu}\tau_{2}),
\]
as well as
\[
\mathsf{E}_{\mathcal{I}'}(\tau_{1}\hash_{\nu}\tau_{2})\coloneqq\mathsf{E}(\tau_{1})\sqcup\mathsf{E}(\tau_{2}).
\]

We now define the (deterministic) distribution represented by the contraction $\boxop{\tau_{1}\hash_{\nu}\tau_{2}}^{\eps,\zeta}$.
This represents the term in the Isserlis theorem expansion of $\mathbb{E}\left[\boxop{\tau_{1}}^{\eps,\zeta}\boxop{\tau_{2}}^{\eps,\zeta}\right]$
corresponding to the matching $\nu$. Specifically, we use the Isserlis theorem to write
\begin{equation}
\mathbb{E}\left[\boxop{\tau_{1}}^{\eps,\zeta}(\mathbf{x}_{\rho(\tau_{1})})\boxop{\tau_{2}}^{\eps,\zeta}(\mathbf{x}_{\rho(\tau_{2})})\right]=\sum_{\nu\in\mathscr{C}(\tau_{1},\tau_{2})}\boxop{\tau_{1}\hash_{\nu}\tau_{2}}^{\eps,\zeta}(\mathbf{x}_{\varrho(\tau_{1})},\mathbf{x}_{\varrho(\tau_{2})}), \label{eq:Isserlis-contractions}
\end{equation}
where we define
\begin{align}
& \boxop{\tau_{1}\hash_{\nu}\tau_{2}}^{\eps,\zeta}(\mathbf{x}_{\varrho_{1}},\mathbf{x}_{\varrho_{2}})\notag\\
 & \qquad \coloneqq\int_{(\mathbb{R}^{2})^{\mathsf{V}_{\mathrm{int}}}}\left(\prod_{e\in\mathsf{E}_{\mathcal{I}'}(\tau_{1}\hash_{\nu}\tau_{2})}K'(\mathbf{x}_{e_{\downarrow}}-\mathbf{x}_{e_{\uparrow}})\right)\left(\prod_{e\in\nu}\mathcal{E}^{\zeta}(\mathbf{x}_{e_{1}},\mathbf{x}_{e_{2}})\right)\left(\prod_{v\in V_{\rspotential}}\varphi^{\eps}_{\uu,\vv}(x_{v})\right)\prod_{v\in\mathsf{V}_{\mathrm{int}}}\,\dif\mathbf{x}_{v} \notag\\
 & \qquad = \int_{\mS_{2L}^{\mathsf{V}_{\mathrm{int}}}}\left(\prod_{e\in\mathsf{E}_{\mathcal{I}'}(\tau_{1}\hash_{\nu}\tau_{2})}J'(\mathbf{x}_{e_{\downarrow}}-\mathbf{x}_{e_{\uparrow}})\right)\left(\prod_{e\in\nu}\mathcal{E}^{\zeta}(\mathbf{x}_{e_{1}},\mathbf{x}_{e_{2}})\right)\left(\prod_{v\in V_{\rspotential}}\varphi^{\eps}_{\uu,\vv}(x_{v})\right)\prod_{v\in\mathsf{V}_{\mathrm{int}}}\dif\mathbf{x}_{v} .
\label{eq:define-contraction}
\end{align}
Here, to lighten the notation we have used the shorthand $\mathsf{V}_{\mathrm{int}}$ for $\mathsf{V}_{\mathrm{int}}(\tau_{1}\hash_{\nu}\tau_{2})$ and similarly for all other vertex sets, and we have denoted $\mathbf{x}_{v}=(t_{v},x_{v})$ for $v\in\mathsf{V}_{\mathrm{int}}(\tau_{1}\hash_{\nu}\tau_{2})$,
$\varrho_{i}=\varrho(\tau_{i})$ for $i=1,2$. (This notation will
be particularly useful when $\tau_{1}$ and $\tau_{2}$ are the same tree $\tau$,
in which case the $\varrho_{i}$s are different elements of the disjoint
union.) Moreover, we have written $e=(e_{1},e_{2})$ for $e\in\nu$, where the order is arbitrary
since $\mathcal{E}^{\zeta}$ is symmetric (but each pair of vertices
only appears in the product \emph{once}). Finally, the last identity in \zcref{eq:define-contraction} follows from the fact that $\mE$ and $\varphi$ are $2L$-periodic and the fact that for any $2L$-periodic function $f$, one has
\begin{equation}\label{e:periodic-ind}
    \int_{\RR^2} K'(\x - \y) f(\y) \, \dif \y = \int_{\mS_{2L}} J'(\x - \y) f(\y) \, \dif \y .
\end{equation}
Indeed, note that one can start by applying this formula to the contracted leaf variables, where one has
\begin{equation}\label{e:step}
  \int_{\RR^2} K'(\x_{\iota(e_1)} - \x_{e_1}) \mE^\zeta(\x_{e_1}, \x_{e_2}) \ud \x_{e_1} = \int_{\mS_{2L}} J'(\x_{\iota(e_1)} - \x_{e_1}) \mE^\zeta(\x_{e_1}, \x_{e_2}) \ud \x_{e_1} ,
\end{equation}
where $\iota(e_1)$ is the parent of $e_1$. Then after applying this formula to all the variables in $V_{\rspotential} \sqcup V_{\rsnoise}$ one can proceed in the same manner, inductively from the leaves towards the roots. In fact, the result of \zcref{e:step} is again a periodic function, and every inner vertex has exactly degree three and is connected to two children and only one parent in its tree, meaning that in the induction one obtains integrals of the form
\begin{equation*}
  \int_{\RR^2} K'(\x_{\iota(v)} - \x_v) f_1(\x_v) f_2(\x_v) \ud \x_v ,
\end{equation*}
where $f_1, f_2$ are periodic and $\iota(v)$ is the parent of $v$ in its tree, so that \zcref{e:periodic-ind} can be applied again.

In the sequel, we will need to bound quite a number of integrals of the form \zcref{eq:define-contraction}. 
A first useful estimate will be the following bound on the convolution of kernels appearing in \zcref{eq:Kbds}.
\begin{lem}
\label{lem:one-int-bound}Let $\alpha,\beta\in(0,3)$ and let $\Theta\subset\mS_{2L}^{2}$
be a compact set. Then, locally uniformly over $\x_1,\x_2\in \mS_{2L}$,
we have
\begin{equation}
    \int_{\Theta}|\x_1-\z|^{-\alpha_1}_{\mathfrak{s}}|\x_2-\z|^{-\alpha_2}_{\mathfrak{s}}\,\dif\mathbf{z}
  \lesssim\begin{cases}
|\x_1-\x_2|^{-\alpha_1-\alpha_2+3}_{\mathfrak{s}}, & \alpha_1+\alpha_2>3;\\
\log(2+|\x_1-\x_2|^{-1}_{\mathfrak{s}}), & \alpha_1+\alpha_2=3;\\
1, & \alpha_1+\alpha_2<3.
\end{cases}\label{eq:boundsingle}
\end{equation}
  Similarly for the pseudo-metric $d_{\mf{s},\mathscr{S}}$ defined in \zcref{eq:dsS},
  we have
  \begin{equation}
    \int_{\Theta} d_{\mf{s}, \mathscr{S}} (\x_1, \z)^{-\alpha_1} d_{\mf{s}, \mathscr{S}}( \x_2, \z)^{-\alpha_2} \,\dif\mathbf{z}
    \lesssim\begin{cases}
  d_{\mf{s}, \mathscr{S}}(\x_1, \x_2)^{-\alpha_1-\alpha_2+3}, & \alpha_1+\alpha_2>3;\\
  \log(2+d_{\mf{s}, \mathscr{S}}(\x_1, \x_2)^{-1}), & \alpha_1+\alpha_2=3;\\
  1, & \alpha_1+\alpha_2<3.
  \end{cases}\label{eq:boundsingle-dsS}
  \end{equation}
\end{lem}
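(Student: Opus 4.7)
The plan is to prove Part 1 by a standard splitting argument adapted to the parabolic scaling, and then to derive Part 2 from Part 1 via the reflection symmetry. The key underlying fact is that the parabolic scaling on $\mS_{2L}$ has effective dimension $3$, so that for a compact set $\Theta$ and $r\in(0,1]$, the parabolic ball has volume $\mathcal{L}(\{\z\in\Theta:|\z-\x|_{\mathfrak{s}}\le r\})\lesssim r^{3}$, and consequently, for $\alpha\in(0,3)$ and locally uniformly in $\x$,
\[
\int_{|\z-\x|_{\mathfrak{s}}\le r}|\z-\x|^{-\alpha}_{\mathfrak{s}}\,\dif\z\lesssim r^{3-\alpha}, \qquad \int_{\Theta}|\z-\x|^{-\alpha}_{\mathfrak{s}}\,\dif\z\lesssim 1,
\]
and similarly $\int_{|\z-\x|_{\mathfrak{s}}\le r}|\z-\x|^{-3}_{\mathfrak{s}}\,\dif\z\lesssim \log(2+r^{-1})$. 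These are by direct integration in parabolic polar coordinates (or equivalently, by dyadic decomposition of the ball into annuli of inner radius $2^{-k}$).

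For Part 1, I would set $\rho\coloneqq|\x_{1}-\x_{2}|_{\mathfrak{s}}$ and split $\Theta$ into three pieces according to the position of $\z$ relative to $\x_1$ and $\x_2$:
\[
\Theta_{1}\coloneqq\{\z\in\Theta:|\x_{1}-\z|_{\mathfrak{s}}\le \tfrac{\rho}{2}\},\quad\Theta_{2}\coloneqq\{\z\in\Theta:|\x_{2}-\z|_{\mathfrak{s}}\le \tfrac{\rho}{2}\},\quad\Theta_{3}\coloneqq\Theta\setminus(\Theta_{1}\cup\Theta_{2}).
\]
On $\Theta_{1}$, the triangle inequality gives $|\x_{2}-\z|_{\mathfrak{s}}\ge \rho/2$, so the factor involving $\x_2$ is pulled out and the remaining integral $\int_{|\x_{1}-\z|_{\mathfrak{s}}\le \rho/2}|\x_{1}-\z|^{-\alpha_{1}}_{\mathfrak{s}}\,\dif\z$ is handled by the volume bound above, producing at most $\rho^{3-\alpha_{1}-\alpha_{2}}$ (or a logarithmic factor if $\alpha_1=3$, or a constant if $\alpha_1<3$). $\Theta_{2}$ is symmetric. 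On $\Theta_{3}$, both factors are at least $\rho/2$, so we may simply bound the integral by $\rho^{-\alpha_{1}-\alpha_{2}}\mathcal{L}(\Theta_{3})$ when $\rho\ge 1$, or split into dyadic annuli $\{2^{k}\rho\le|\x_{1}-\z|_{\mathfrak{s}}<2^{k+1}\rho\}$ and sum the geometric series, which converges to $\rho^{3-\alpha_1-\alpha_2}$ if $\alpha_1+\alpha_2>3$, gives a logarithm if equal, and is bounded if less. Putting the three pieces together yields the three cases of \zcref{eq:boundsingle}; the main care needed is in tracking whether geometric sums over dyadic scales converge at the top or the bottom end, which is exactly the dichotomy expressed by the three cases.

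For Part 2, I would use the elementary bound
\[
d_{\mathfrak{s},\mathscr{S}}(\x_{i},\z)^{-\alpha_{i}}\le|\x_{i}-\z|^{-\alpha_{i}}_{\mathfrak{s}}+|\x_{i}-\sigma_{\mathrm{refl}}(\z)|^{-\alpha_{i}}_{\mathfrak{s}}
\]
(which follows from the definition \zcref{eq:dsS} of $d_{\mathfrak{s},\mathscr{S}}$ as a minimum), expanding the product of the two such factors into four summands. In the summand with no reflections, Part 1 applies directly to give a bound involving $|\x_{1}-\x_{2}|_{\mathfrak{s}}$; for the mixed summands (say $|\x_{1}-\z|^{-\alpha_{1}}_{\mathfrak{s}}|\x_{2}-\sigma_{\mathrm{refl}}(\z)|^{-\alpha_{2}}_{\mathfrak{s}}$), I change variables $\w=\sigma_{\mathrm{refl}}(\z)$ using that $\sigma_{\mathrm{refl}}$ is an isometry of $\mS_{2L}$ with the parabolic metric, and apply Part 1 to obtain a bound involving $|\sigma_{\mathrm{refl}}(\x_{1})-\x_{2}|_{\mathfrak{s}}=|\x_{1}-\sigma_{\mathrm{refl}}(\x_{2})|_{\mathfrak{s}}$; the fully-reflected summand similarly reduces to $|\x_{1}-\x_{2}|_{\mathfrak{s}}$. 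In each case, $d_{\mathfrak{s},\mathscr{S}}(\x_{1},\x_{2})$ is by definition the minimum of $|\x_{1}-\x_{2}|_{\mathfrak{s}}$ and $|\x_{1}-\sigma_{\mathrm{refl}}(\x_{2})|_{\mathfrak{s}}$, so each of the four bounds is controlled by $d_{\mathfrak{s},\mathscr{S}}(\x_{1},\x_{2})^{3-\alpha_{1}-\alpha_{2}}$ since the exponent $3-\alpha_{1}-\alpha_{2}$ is negative in the first case (and the logarithm is monotone in the second). No genuine new difficulty arises; the only subtlety is to check that the image $\sigma_{\mathrm{refl}}(\Theta)$ is again compact, which is immediate, and that local uniformity is preserved under the change of variables.
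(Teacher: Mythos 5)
Your proposal is correct and follows essentially the same route as the paper: the paper omits the proof of \zcref{eq:boundsingle} as a standard (dyadic/parabolic-ball) estimate of exactly the kind you give, and derives \zcref{eq:boundsingle-dsS} from it by the same four-term reflection expansion and isometry argument you describe, concluding via the negativity of the exponent (or monotonicity of the logarithm) and the fact that $d_{\mathfrak{s},\mathscr{S}}(\x_1,\x_2)$ is the minimum of the relevant distances. The only cosmetic point is that your parenthetical about the case $\alpha_1=3$ on $\Theta_1$ is vacuous since the hypothesis restricts the exponents to $(0,3)$.
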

\begin{proof}
  We leave the bound \zcref{eq:boundsingle} without proof, as if follows for
  example along similar but much simpler arguments to the proof of \zcref{prop:hepp-prop} below. As for the second bound \zcref{eq:boundsingle-dsS}, we may assume by enlarging the domain that $\Theta = [-c,c]\times \TT_{2L}$ for some $c\in (0,\infty)$, and then we note that
  \begin{equation*}
    \begin{aligned}
      \int_{\Theta} d_{\mf{s}, \mathscr{S}} (\x_1, \z)^{-\alpha_1} d_{\mf{s}, \mathscr{S}}( \x_2, \z)^{-\alpha_2} \,\dif\mathbf{z} \lesssim &\int_{\Theta}|\x_1-\z|^{-\alpha_1}_{\mathfrak{s}}|\x_2-\z|^{-\alpha_2}_{\mathfrak{s}}\,\dif\mathbf{z} + \int_{\Theta}|\sr \x_1-\z|^{-\alpha_1}_{\mathfrak{s}}|\x_2-\z|^{-\alpha_2}_{\mathfrak{s}}\,\dif\mathbf{z} \\
      + \int_{\Theta} & |\x_1-\z|^{-\alpha_1}_{\mathfrak{s}}|\sr \x_2-\z|^{-\alpha_2}_{\mathfrak{s}}\,\dif\mathbf{z} + \int_{\Theta}|\sr \x_1-\z|^{-\alpha_1}_{\mathfrak{s}}|\sr \x_2-\z|^{-\alpha_2}_{\mathfrak{s}}\,\dif\mathbf{z} .
    \end{aligned}
  \end{equation*}
  In the case $\alpha_1 + \alpha_2 >3$ we use
  \zcref{eq:boundsingle} to obtain
  \begin{equation*}
    \begin{aligned}
      \int_{\Theta} d_{\mf{s}, \mathscr{S}} (\x_1, \z)^{-\alpha_1} d_{\mf{s}, \mathscr{S}}( \x_2, \z)^{-\alpha_2} \,\dif\mathbf{z}  \lesssim & |\x_1 - \x_2|_{\mf{s}}^{- \alpha_1 -\alpha_2 + 3} + | \sr \x_1 - \x_2|_{\mf{s}}^{- \alpha_1 -\alpha_2 + 3}
       + |\x_1 - \sr \x_2|_{\mf{s}}^{- \alpha_1 -\alpha_2 + 3} \\
      & + |\sr \x_1 - \sr \x_2|_{\mf{s}}^{- \alpha_1 -\alpha_2 + 3} \lesssim d_{\mf{s}, \mathscr{S}}(\x_1, \x_2)^{- \alpha_1 -\alpha_2 + 3} .
    \end{aligned}
  \end{equation*}
  The other cases follow similarly.
\end{proof}

However, \zcref{lem:one-int-bound} is not sufficient for the more complicated trees, and we will require some more sophisticated     estimates that are presented in \zcref{subsec:Convergent-Feynman-diagrams} below. First, we have to derive some simple bounds on some small trees, which in particular will serve as building blocks for the proofs in \zcref{subsec:Convergent-Feynman-diagrams}.

\subsection{Simple terms} %

In this section we derive bounds on $\rslollipopr[rsK]^{\zeta}$,
$\rslollipopb[rsK]^{\eps}$, $\rscherrybb[rsK]^\eps$, $\rscherryrb[rsK]^{\eps,\zeta}$, and $\rscherryrr[rsK]^\zeta$. %
Very similar bounds on $\rslollipopr[rsP]^{\zeta}$
and $\rslollipopb[rsP]^{\eps}$ have been proved in \zcref{lem:EW-cov,prop:bluecherrycontrib}.
In particular, it follows immediately from \zcref{eq:lollipopb-explicit}
and the fact that $\rslollipopb[rsK]^{\eps}-\rslollipopb[rsP]^{\eps}$
is smooth, uniformly in $\eps$, that 
\begin{equation}
  \left|\rslollipopb[rsK]^{\eps}(\mathbf{x})\right|,\left|\rscherrybb[rsK]^{\eps}(\mathbf{x})\right|\lesssim1,\qquad\text{uniformly in $\x$ and $\eps$.}\label{eq:lollipopb-bound}
\end{equation}
We now prove an estimate on the covariance of $\rslollipopr[rsK]^\zeta$ that is analogous to  \zcref{lem:EW-cov}, along with a few consequences.
We define a new symbol $\rstriangle$,  %
with the (deterministic) realization 
\begin{equation}\label{eq:black-triangle-def}
  \rstriangle[rsK]^\zeta (\y ) \coloneqq \rscherryrrc[rsK]^{\zeta}(\mathbf{y}) - C^{(1)}_\zeta (\y) \overset{\zcref{eq:exp-renorm}}= \mathbb{E}\rsrenorm[rsK]^\zeta(\y).
\end{equation}\nomenclature{$\rstriangle$}{Expectation of $\rsrenorm$}
Furthermore, let us define the distance from the boundary
\begin{equation}\label{e:dLdef}
  d_L(y) = \min_{k \in L\ZZ} |y - k|\qquad\text{for $y\in\TT_{2L}$} .
\end{equation}
  We note that this does \emph{not} coincide with the distance from zero in the torus, because we are choosing $k \in L \ZZ$ rather than $k \in 2L \ZZ$. Instead, this is the distance from the boundaries where the reflection occurs.

\begin{prop}
  \label{prop:lollibd}The difference
  \begin{equation}\label{eq:lollirc-diff-bd}
    \rslollirc[rsK]^\zeta(\x_1,\x_2)-\rslollirc[rsP]^\zeta(\x_1,\x_2)\text{ is bounded and Hölder equicontinuous,}
  \end{equation}
  uniformly in $\x_1,\x_2\in\mS_{2L}$ and $\zeta\in(0,1]$. In particular
  we have, uniformly over $\x_1, \x_2 \in \mS_{2L}$ and $\zeta\in (0,1]$, that
\begin{equation}\label{eq:redcherryx1x2bd}
\left|\rslollirc[rsK]^{\zeta}(\mathbf{x}_{1},\mathbf{x}_{2})\right|\lesssim d_{\mathfrak{s},\mathscr{S}}(\mathbf{x}_{1},\mathbf{x}_{2})^{-1}
\end{equation}
and
\begin{equation}\label{eq:redbluecherrybd}
\left|\rscherryrbc[rsK]^{\eps,\zeta}(\mathbf{x}_{1},\mathbf{x}_{2})\right|\lesssim d_{\mathfrak{s},\mathscr{S}}(\mathbf{x}_{1},\mathbf{x}_{2})^{-1}.
\end{equation}
Similarly, we have
\begin{equation}
\left\lvert\rstriangle[rsK]^\zeta(\y)\right\rvert=\left\lvert\rscherryrrc[rsK]^{\zeta}(\mathbf{y})-C^{(1)}_{\zeta}(\mathbf{y})\right\rvert\lesssim1,\label{eq:renorm-constants-same}
\end{equation}
uniformly over $\zeta\in(0,1]$ and $\mathbf{y}\in\mS_{2L}$.
Finally, for any $s \neq t \in \RR$ and $c\in(0,\infty)$, we have
\begin{equation}\label{e:newrbc}
  \adjustlimits\lim_{\ve \downarrow 0} \sup_{\zeta \in (0, \ve)} \sup_{d_L(x),d_L(y) \leq c \ve} \; \left\vert \rscherryrbc[rsK]^{\eps,\zeta}( (t,x), (s, y)) \right\vert = 0 ,
\end{equation}
where $d_L$ is defined in \zcref{e:dLdef}.
\end{prop}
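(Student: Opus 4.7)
The five claims can be established in a cascade, with \zcref{eq:lollirc-diff-bd} as the fundamental input. From it the pointwise bounds \zcref{eq:redcherryx1x2bd}, \zcref{eq:redbluecherrybd}, and \zcref{eq:renorm-constants-same} follow by routine arguments, and the boundary-vanishing statement \zcref{e:newrbc} is the main obstacle; it requires a Taylor expansion exploiting a reflection symmetry.

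For \zcref{eq:lollirc-diff-bd}, I would use the decomposition $p = K + \tilde K$ from \zcref{eq:pissumofKs} to write $\rslollipopr[rsP]^\zeta = \rslollipopr[rsK]^\zeta + \tilde K' \circledast \dif W^\zeta$. Expanding $\rslollirc[rsP]^\zeta - \rslollirc[rsK]^\zeta$ yields three cross terms, each containing at least one factor of $\tilde K'$. Since $\tilde K$ is smooth with rapid decay and the covariance $\mathcal{E}^\zeta$ pairs boundedly against smooth test functions, each of these three terms is a smooth function of $(\x_1,\x_2)$ whose Hölder seminorms of any positive order are bounded uniformly in $\zeta \in (0,1]$.

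Given \zcref{eq:lollirc-diff-bd}, the bound \zcref{eq:redcherryx1x2bd} reduces to the same bound for $\rslollirc[rsP]^\zeta$, which follows from the explicit formula in \zcref{lem:EW-cov}, the Gaussian estimate $p_t(x) \lesssim |(t,x)|_{\mathfrak{s}}^{-1}$ (after the periodic summation defining $\Sh^\zeta_{2L}$), and the appearance of both arguments $x_1 \pm x_2$, which produces the pseudo-metric $d_{\mathfrak{s},\mathscr{S}}$. Next, \zcref{eq:redbluecherrybd} follows from the factorization
\[
  \rscherryrbc[rsK]^{\eps,\zeta}(\x_1,\x_2) = \rslollirc[rsK]^\zeta(\x_1,\x_2)\,\rslollipopb[rsK]^\eps(\x_1)\,\rslollipopb[rsK]^\eps(\x_2),
\]
valid because $\rslollipopb$ is deterministic (so the only nontrivial Wick contractions involve the two red stems), together with the uniform bound $|\rslollipopb[rsK]^\eps| \lesssim 1$ from \zcref{eq:lollipopb-bound}. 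Finally, \zcref{eq:renorm-constants-same} follows by evaluating \zcref{eq:lollirc-diff-bd} at $\x_1 = \x_2 = \y$, recalling that $C^{(1)}_\zeta(y) = \rscherryrrc[rsP]^\zeta(\y)$.

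The hard part is \zcref{e:newrbc}. Using the same factorization together with $|\rslollipopb| \lesssim 1$, it suffices to prove that $|\rslollirc[rsK]^\zeta((t,x),(s,y))| \lesssim \eps^2$ uniformly in $\zeta \in (0,\eps)$ and in $x,y$ with $d_L(x),d_L(y) \leq c\eps$, for each fixed $s \neq t$. The crucial observation is a reflection symmetry: since $\dif W^\zeta$ was extended (see \zcref{eq:extendthenoise}) by even reflection across $0$ followed by $2L$-periodization, the resulting field is even about every point of $L\ZZ$; since $K$ is even in its spatial variable by \zcref{eq:KKhateven}, the convolution $\mathcal{I}\rsnoise = K \circledast \dif W^\zeta$ inherits this evenness, so its spatial derivative $\rslollipopr[rsK]^\zeta(t,\cdot)$ is continuous and odd about each point of $L\ZZ$, and in particular vanishes there. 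Consequently $\rslollirc[rsK]^\zeta((t,x),(s,y)) = 0$ whenever $x \in L\ZZ$ or $y \in L\ZZ$. For fixed $s \neq t$, the formula in \zcref{lem:EW-cov} combined with \zcref{eq:lollirc-diff-bd} shows that $(x,y) \mapsto \rslollirc[rsK]^\zeta((t,x),(s,y))$ is smooth on $\TT_{2L}^2$ with $C^2$-norm bounded uniformly in $\zeta \in (0,1]$, since $p_{|t-s|} \ast \Sh^\zeta_{2L} = p_{|t-s|} \ast R^\zeta \ast \Sh_{2L}$ converges in $C^\infty$ to $p_{|t-s|} \ast \Sh_{2L}$ as $\zeta \to 0$. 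Because this smooth function vanishes on $\{x \in L\ZZ\} \cup \{y \in L\ZZ\}$, smooth division on the torus yields a factorization $\rslollirc[rsK]^\zeta((t,x),(s,y)) = \sin(\pi x/L)\sin(\pi y/L)\,G^\zeta(x,y)$ with $\|G^\zeta\|_\infty$ bounded uniformly in $\zeta$; since $|\sin(\pi z/L)| \lesssim d_L(z)$, this gives the desired bound $|\rslollirc[rsK]^\zeta| \lesssim d_L(x)\,d_L(y) \lesssim \eps^2$.
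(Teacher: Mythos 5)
Your treatment of the first four claims coincides with the paper's: the same decomposition $p=K+\tilde K$, the same observation that every term in the expansion of $\rslollirc[rsK]^{\zeta}-\rslollirc[rsP]^{\zeta}$ carries at least one smooth, rapidly decaying factor $\tilde K'$, reduction of \zcref{eq:redcherryx1x2bd} to \zcref{lem:EW-cov}, the factorization through the deterministic and uniformly bounded $\rslollipopb[rsK]^{\eps}$ for \zcref{eq:redbluecherrybd}, and evaluation on the diagonal for \zcref{eq:renorm-constants-same}. Where you genuinely diverge is \zcref{e:newrbc}. The paper's argument is soft: each term of the covariance decomposition vanishes whenever one spatial argument lies in $L\mathbb{Z}$ (the same reflection symmetry you invoke), and this combined with the uniform H\"older equicontinuity from \zcref{eq:lollirc-diff-bd} (plus the H\"older continuity of $p_{|t-s|}$ for fixed $s\neq t$) already forces the supremum over $d_L(x),d_L(y)\le c\eps$ to zero. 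You instead extract a quantitative rate $\eps^{2}$ via a smooth-division factorization $\sin(\pi x/L)\sin(\pi y/L)G^{\zeta}(x,y)$. This is a valid and in fact stronger conclusion, but it costs you a uniform-in-$\zeta$ $C^{2}$ bound on $(x,y)\mapsto\rslollirc[rsK]^{\zeta}((t,x),(s,y))$ for fixed $s\neq t$, which does not follow from \zcref{eq:lollirc-diff-bd} as stated (that only gives H\"older equicontinuity). It is true, but you should justify it: for the cross terms $\int_{-\infty}^{t_1\wedge t_2}p'_{t_1-r}*\tilde K'_{t_2-r}*\Sh^{\zeta}(\cdot)\,\dif r$ the spatial derivatives must be shifted onto the smooth factor $\tilde K'$ to avoid the singularity of $p''$ near $r=t_1$, and for the $\rslollirc[rsP]^{\zeta}$ part one uses that $p_{|t-s|}*\Sh^{\zeta}_{2L}$ is uniformly smooth once $|t-s|>0$ is fixed. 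With that point made explicit, your proof is complete; the paper's route avoids the issue entirely by settling for the qualitative limit, which is all that \zcref{e:newrbc} demands.
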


\begin{proof}
We have
\[
\rslollipopr[rsK]^{\zeta}(\mathbf{x})=((p'-\tilde{K}')\circledast\dif W^{\zeta})(\mathbf{x})=\rslollipopr[rsP]^{\zeta}(\mathbf{x})-(\tilde{K}'\circledast\dif W^{\zeta})(\mathbf{x}).
\]
This means that
\begin{equation}\label{eq:lollipop-cov-decomp}
  \begin{aligned}
\rslollirc[rsK]^{\zeta}(\mathbf{x}_{1},\mathbf{x}_{2}) & =\mathbb{E}\left[\rslollipopr[rsP]^{\zeta}(\mathbf{x}_{1})\rslollipopr[rsP]^{\zeta}(\mathbf{x}_{2})\right]-\mathbb{E}\left[\rslollipopr[rsP]^{\zeta}(\mathbf{x}_{1})(\tilde{K}'\circledast\dif W^{\zeta})(\mathbf{x}_{2})\right]\\
 & \qquad-\mathbb{E}\left[(\tilde{K}'\circledast\dif W^{\zeta})(\mathbf{x}_{1})\rslollipopr[rsP]^{\zeta}(\mathbf{x}_{2})\right]+\mathbb{E}\left[(\tilde{K}'\circledast\dif W^{\zeta})(\mathbf{x}_{1})(\tilde{K}'\circledast\dif W^{\zeta})(\mathbf{x}_{2})\right].
\end{aligned}
\end{equation}
We recall from \zcref{lem:EW-cov} that \[\mathbb{E}\left[\rslollipopr[rsP]^{\zeta}(\mathbf{x}_{1})\rslollipopr[rsP]^{\zeta}(\mathbf{x}_{2})\right] = p_{|t_1-t_2|}*\Sh^\zeta_{2L}(x_1-x_2)-p_{|t_1-t_2|}*\Sh^\zeta_{2L}(x_1+x_2).\] On the other hand, the last term of \zcref{eq:lollipop-cov-decomp} is bounded and Hölder
continuous, uniformly in $\mathbf{x}_{1},\mathbf{x}_{2},\zeta$, by
the smoothness and rapid decay of $\tilde{K}'$. For the cross-terms, we can write
(for the first one, the other one being symmetrical)
\begin{align*}
\mathbb{E} & \left[\rslollipopr[rsP]^{\zeta}(\mathbf{x}_{1})(\tilde{K}'\circledast\dif W^\zeta)(\mathbf{x}_{2})\right]\\
 & =\int^{t_{1}\wedge t_{2}}_{-\infty}\iint p_{t_1-s}'(x_1-y_{1})\tilde{K}'_{t_2-s}(x_{2}-y_{2})(\Sh^{\zeta}(y_{1}-y_{2})+\Sh^{\zeta}(y_{1}+y_{2}))\,\dif y_{1}\,\dif y_{2}\,\dif s\\
 & =\int^{t_{1}\wedge t_{2}}_{-\infty}\left(p_{t_1-s}'*\tilde{K}_{t_2-s}'*\Sh^{\zeta}(x_{1}-x_{2})-p_{t_1-s}'*\tilde{K}_{t_2-s}'*\Sh^{\zeta}(x_{1}+x_{2})\right)\,\dif s,
\end{align*}
and this term is also Hölder continuous, uniformly in $\mathbf{x}_{1},\mathbf{x}_{2},\zeta$,
by the smoothness and rapid decay of $\tilde{K}'$. These estimates together
imply \zcref{eq:lollirc-diff-bd,eq:redcherryx1x2bd,eq:renorm-constants-same}. The estimate 
\zcref{eq:redbluecherrybd} then follows from \zcref{eq:lollipopb-bound}. The
estimate \zcref{e:newrbc} then follows by using the Hölder continuity estimates
(and the Hölder continuity of $p_t(x)$ in $x$ for fixed $t>0$) along with the
fact that each of the terms on the right side of \zcref{eq:lollipop-cov-decomp}
is zero when either $\x_1 = (t_1, x_1) \in \{(t_1, 0), (t_1, L)\}$ or $\x_2 =
(t_2, x_2) \in \{(t_2, 0), (t_2, L)\}$.
\end{proof}

Now we can address the model applied to $\rscherryrr$.
\begin{prop}
\label{prop:cherryrr-bound}We have 
\begin{equation}
\left|\mathbb{E}\left[\hat{\Pi}^{\eps,\zeta}_{\mathbf{x}}(\rscherryrr)(\mathbf{y}_{1})\cdot\hat{\Pi}^{\eps,\zeta}_{\mathbf{x}}(\rscherryrr)(\mathbf{y}_{2})\right]\right|\lesssim d_{\mathfrak{s},\mathscr{S}}(\mathbf{y}_{1},\mathbf{y}_{2})^{-2}.\label{eq:cherry-model-bound}
\end{equation}
\end{prop}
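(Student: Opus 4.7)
The plan is to exploit the fact that, by inspection of Table~\ref{tab:model-defns}, $\hat{\Pi}^{\eps,\zeta}_{\mathbf{x}}(\rscherryrr)(\mathbf{y})=\rsrenorm[rsK]^{\eps,\zeta}(\mathbf{y})$ does not actually depend on the basepoint $\mathbf{x}$, so the problem reduces to bounding the two-point correlation function of $\rsrenorm[rsK]^{\eps,\zeta}$ on $\mathbb{S}_{2L}$.

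The key observation is that $\rsrenorm[rsK]^{\eps,\zeta}$ is the second Wiener chaos projection of $\rscherryrr[rsK]^{\eps,\zeta}=(\rslollipopr[rsK]^{\zeta})^{2}$: indeed, \zcref{eq:Qrenorm} together with the Wick product structure gives $\rsrenorm[rsK]^{\zeta}(\mathbf{y})=(\rslollipopr[rsK]^{\zeta}(\mathbf{y}))^{2}-\mathbb{E}[(\rslollipopr[rsK]^{\zeta}(\mathbf{y}))^{2}]$. Applying the Isserlis theorem as in \zcref{eq:Isserlis-contractions} (noting that the single contraction pairing one leaf of the first copy with itself is exactly the term cancelled by the renormalization, and similarly for the second copy), only the two ``cross'' contractions survive, and both produce the same value, giving the identity
\begin{equation*}
\mathbb{E}\bigl[\rsrenorm[rsK]^{\zeta}(\mathbf{y}_{1})\cdot\rsrenorm[rsK]^{\zeta}(\mathbf{y}_{2})\bigr]=2\bigl(\rslollirc[rsK]^{\zeta}(\mathbf{y}_{1},\mathbf{y}_{2})\bigr)^{2}.
\end{equation*}
It will then suffice to invoke the pointwise bound \zcref{eq:redcherryx1x2bd} from \zcref{prop:lollibd} and square it to obtain \zcref{eq:cherry-model-bound}.

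There is essentially no obstacle here: the non-trivial content of the statement has already been packaged into \zcref{prop:lollibd}, which itself was obtained by comparing the kernels $K$ and $p$ and invoking \zcref{lem:EW-cov}. The only mildly delicate point is to justify the combinatorial identification of the surviving contractions with $2(\rslollirc[rsK]^{\zeta})^{2}$; this can be done either directly via the Wick product formula for Gaussian random variables in the second chaos, or by expanding $(\rslollipopr[rsK]^{\zeta})^{2}$ in Hermite polynomials and using orthogonality of chaos projections.
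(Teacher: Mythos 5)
Your reduction to the two-point function of $\rsrenorm[rsK]^{\zeta}$ and the use of Isserlis plus \zcref{eq:redcherryx1x2bd} is exactly the paper's route, but there is a genuine error in the middle step: the identity
\[
\rsrenorm[rsK]^{\zeta}(\mathbf{y})=\bigl(\rslollipopr[rsK]^{\zeta}(\mathbf{y})\bigr)^{2}-\mathbb{E}\bigl[\bigl(\rslollipopr[rsK]^{\zeta}(\mathbf{y})\bigr)^{2}\bigr]
\]
is false. By \zcref{eq:Qrenorm}, the subtracted constant is $C^{(1)}_{\zeta}(x)$, which is defined in \zcref{eq:C1zetadef} as $\mathbb{E}[\rscherryrr[rsP]^{\zeta}_{t}(x)]$, i.e.\ via the canonical lift built from the \emph{true heat kernel} $p$, not from the truncated kernel $K$. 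Consequently $\rsrenorm[rsP]^{\zeta}$ is mean zero (that is \zcref{eq:subtract-exp}), but $\rsrenorm[rsK]^{\zeta}$ is not: its mean is $\mathbb{E}[\rscherryrr[rsK]^{\zeta}_{t}(x)]-\mathbb{E}[\rscherryrr[rsP]^{\zeta}_{t}(x)]$, which is in general nonzero (see \zcref{eq:exp-renorm} and the definition \zcref{eq:black-triangle-def} of $\rstriangle[rsK]^{\zeta}$). So your claim that the self-contractions are ``exactly the term cancelled by the renormalization'' does not hold for the $K$-lift, and the Isserlis expansion actually reads
\begin{equation*}
\mathbb{E}\bigl[\rsrenorm[rsK]^{\zeta}(\mathbf{y}_{1})\,\rsrenorm[rsK]^{\zeta}(\mathbf{y}_{2})\bigr]
=\rstriangle[rsK]^{\zeta}(\mathbf{y}_{1})\,\rstriangle[rsK]^{\zeta}(\mathbf{y}_{2})
+2\bigl(\rslollirc[rsK]^{\zeta}(\mathbf{y}_{1},\mathbf{y}_{2})\bigr)^{2}.
\end{equation*}
The extra product-of-means term is not negligible as an identity, but it is harmless for the estimate: by \zcref{eq:renorm-constants-same} (the other half of \zcref{prop:lollibd}, which you did not invoke) it is bounded uniformly in $\mathbf{y}_{1},\mathbf{y}_{2},\zeta$, and since $\Theta$ is compact one has $d_{\mathfrak{s},\mathscr{S}}(\mathbf{y}_{1},\mathbf{y}_{2})\lesssim1$, so a uniformly bounded term is dominated by $d_{\mathfrak{s},\mathscr{S}}(\mathbf{y}_{1},\mathbf{y}_{2})^{-2}$. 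Once you add this term and cite \zcref{eq:renorm-constants-same}, your argument coincides with the paper's proof.
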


\begin{proof}
  We can compute, referring to \zcref{tab:model-defns} and recalling the definition \zcref{eq:black-triangle-def}, that 
\begin{equation}
  \mathbb{E}\left[\hat{\Pi}^{\eps,\zeta}_{\mathbf{x}}(\rscherryrr)(\mathbf{y}_{1})\cdot\hat{\Pi}^{\eps,\zeta}_{\mathbf{x}}(\rscherryrr)(\mathbf{y}_{2})\right]=\left(\left(\rstriangle[rsK]^\zeta\right)^{\otimes 2}+\textcolor{blue}{2}\rscherryrrcc[rsK]^{\zeta}\right)(\mathbf{y}_{1},\mathbf{y}_{2}).\label{eq:split-cherry-contractions}
\end{equation}
By \zcref{eq:redcherryx1x2bd}, we have 
\begin{equation}
\left|\rscherryrrcc[rsK]^{\zeta}(\mathbf{y}_{1},\mathbf{y}_{2})\right|\lesssim d_{\mathfrak{s},\mathscr{S}}(\mathbf{y}_{1},\mathbf{y}_{2})^{-2}.\label{eq:bound-cherry-contraction}
\end{equation}
Using \zcref{eq:renorm-constants-same} we complete the proof.
\end{proof}

\subsection{\label{subsec:Convergent-Feynman-diagrams}Convergent Feynman diagrams}

To estimate all of the integrals of interest, we will
require a generalization of \zcref{lem:one-int-bound} to arbitrary \emph{convergent} Feynman diagrams which appear in \zcref{eq:Isserlis-contractions}. Divergent diagrams, which must be treated with renormalization, also arise in the computations, and these are handled on a case-by-case basis as they appear. Our estimate, \zcref{prop:graph-bound}, is an analogue of known results, but it provides a quantitative bound in terms of the distance between the two root vertices, in contrast to existing estimates in the literature that usually integrate over all vertices. %
See for example \cite{weinberg:1960:high}, or \cite[Theorem A.3]{hairer:quastel:2018:class} and \cite[Proposition 2.3]{hairer:2018:analyst} for more modern accounts. In our setting concerning correlation functions, the resulting estimates contain logarithmic factors that do not appear when integrating over all variables.

Our estimates on a given two-point function $\boxop{\tau\hash_{\nu}\tau}^{\ve, \zeta}$ will be rephrased in terms of combinatorial properties of the contracted graph $\tau\hash_{\nu}\tau$,
and in particular in terms of a notion of \emph{degree}. It turns out to be simpler to define the degree on a somewhat simplified \emph{multi}graph $\mathsf{G}_{\mathrm{c}}$ derived from $\mathsf{G} = \tau\hash_\nu\tau$. Each edge $e$ of $\mathsf{G}$ will be associated with a weight $\mathsf{w}(e)$, representing the degree of blow-up of the kernel associated to that edge at the origin. (Up to this point, the only kernel we have considered is $J'$, which has weight $2$ according to \zcref{eq:Kbds}, but we will later replace this kernel %
by slightly modified kernels that could exhibit  possibly stronger singularities.)
We obtain $\mathsf{G}_{\mathrm{c}}$ from $\mathsf{G}$ by removing each copy of $\rslollipopb$ (as they are uniformly bounded according to \zcref{eq:lollipopb-bound}) and replacing each copy of $\rslollirc$ by an edge of weight $1$ (in accordance with \zcref{eq:redcherryx1x2bd}), which we will draw in purple in the diagrams. For example,
if $\mathsf{G} = \rselkrrbcc$, then $\mathsf{G}_{\mathrm{c}}$ can be represented as
$
  \begin{tikzpicture}[rstree,elk,hascontractions]
    \tikzset{
      gcnode/.style={circle, fill=black, draw=black, inner sep=0.08em},
    }
    \node[root] (root1) at (0,0) {}
    child [dummy] { }
      child [I'] {
        node[gcnode] (x2) {}
      }
    ;
    \node[root] (root2) at (0.3,0) {}
    child [I'] {
      node[gcnode] (x4) {}
    }
    child [dummy] { }
  ;
  \draw[purple,thick] (x2) to[out=90,in=90, looseness=.6] (x4);
  \draw[purple,thick] (x2) to[out=90,in=90, looseness=1.5] (x4);
  \end{tikzpicture}%
  $, where the two $\bullet$ nodes correspond the roots of the two $\rscherryrr$. We emphasize that since $\mathsf{G}_{\mathrm{c}}$ is a multigraph, pairs of vertices may be connected by multiple edges. For $u,v\in\mathsf{V}(\mathsf{G}_{\mathrm{c}})$, we define $Q(u,v)$ to be the sum of the weights of all edges connecting $u$ and $v$, which describes the singularity of the kernel connecting the two variables corresponding to $u,v$. Then, for any $\overline{\mathsf V}\subseteq\mathsf{V}(\mathsf{G}_{\mathrm{c}})$, we define the degree
  \begin{equation}
    \deg(\overline{\mathsf{V}}) = 3(|\overline{\mathsf{V}}|-1) - \sum_{\{u,v\}\in \binom{\overline{\mathsf{V}}}{2}} Q(u,v).\label{eq:degree-Vbar}
  \end{equation}

The condition that guarantees that a Feynman diagram is convergent
is that there is no sub-diagram with negative degree. In this 
case, we can prove the following quantitative bound on the integral
associated to a given graph.
\begin{prop}
\label{prop:graph-bound}Let $\tau\in\check{\mathsf{T}}_{\star}$
and let $\nu\in\mathscr{C}(\tau,\tau)$ be a contraction. Let $(\mathsf{V},\mathsf{E})=\mathsf{G}=\tau\hash_{\nu}\tau$ and define the multigraph $\mathsf{G}_{\mathrm{c}} = (\mathsf{V}_{\mathrm{c}},\mathsf{E}_{\mathrm{c}})$. %
Assume that 
\begin{equation}
  \deg(\overline{\mathsf{V}})>0\qquad\text{ for all   }\overline{\mathsf{V}}\subseteq\mathsf{V}_{\mathrm{c}}\text{ such that }\textcolor{blue}{|\overline{\mathsf{V}}|}\ge2,\label{eq:degcondition}
\end{equation}
and define
\begin{equation}
  \gamma\coloneqq\max\left\{ 3-\deg(\overline{\mathsf{V}})\st\overline{\mathsf{V}}\subseteq\mathsf{V}_{\mathrm{c}}\text{ and }\mathsf{V}_{\varrho}(\mathsf{G})\subseteq\overline{\mathsf{V}}\right\} .\label{eq:gammadef}
\end{equation}
Then we have
\begin{equation}
  \left|\boxop{\tau\hash_{\nu}\tau}^{\eps,\zeta}(\mathbf{x}_{1},\mathbf{x}_{2})\right|\lesssim d_{\mathfrak{s},\mathscr{S}}(\mathbf{x}_{1},\mathbf{x}_{2})^{-\gamma}\left(\log\left(2+d_{\mathfrak{s},\mathscr{S}}(\mathbf{x}_{1},\mathbf{x}_{2})^{-1}\right)\right)^{k_{0}-2} ,\label{eq:graph-bound-goal}
\end{equation}
where $k_{0} \coloneqq \lvert\mathsf{V}_{\mathrm{int}} (\mathsf{G})
\setminus (\mathsf{V}_{\rspotential}(\mathsf{G}) \cup
\mathsf{V}_{\rsnoise}(\mathsf{G}))\rvert$.
\end{prop}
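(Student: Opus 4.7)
The plan is to prove \zcref{prop:graph-bound} via a multi-scale Hepp sector decomposition, in the spirit of Weinberg's classical convergence theorem and its incarnation in the regularity structures literature (\cites[Appendix~A]{hairer:quastel:2018:class}, \cites[Prop.~2.3]{hairer:2018:analyst}), enhanced with careful tracking of the pointwise dependence on $d_{\mathfrak{s},\mathscr{S}}(\x_1,\x_2)$ and of the logarithmic factors, which are not present in the ``fully integrated'' versions of that theorem that appear in those references.

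First, I would reduce the integral from $\mS_{2L}^{\mathsf{V}_{\mathrm{int}}}$ to $\RR^{2|\mathsf{V}_{\mathrm{int}}|}$. Using the expansion $J'(\x) = \sum_{z \in 2L\ZZ} K'(\x + (0, z))$ together with the splitting $\mathcal{E}^\zeta = \mathcal{E}^\zeta_+ + \mathcal{E}^\zeta_-$, the integral $\boxop{\tau \hash_\nu \tau}^{\eps, \zeta}(\x_1, \x_2)$ can be rewritten as a finite sum (with the sum over translations controlled by the Gaussian decay of $K'$) of integrals over $\RR^{2|\mathsf{V}_{\mathrm{int}}|}$, each involving only copies of $\x_2$ obtained by reflections and $2L$-periodic translations. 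Since $d_{\mathfrak{s},\mathscr{S}}(\x_1,\x_2)$ is exactly the infimum of $|\x_1 - \x_2'|_{\mathfrak{s}}$ over all such images $\x_2'$ of $\x_2$, it suffices to prove the analogous pointwise bound on $\RR^{2|\mathsf{V}_{\mathrm{int}}|}$ in terms of $|\x_1 - \x_2'|_{\mathfrak{s}}$. The boundary-potential leaves are integrated against $|\varphi^\eps_{\uu,\vv}|$, which is a bounded measure of total mass $\mathcal{O}(1)$, and thus can be treated as additional (well-behaved) external insertions.

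Next, I would partition each Euclidean domain into Hepp sectors indexed by rooted binary trees $\mathscr{T}$ whose leaves are $\mathsf{V}_{\mathrm{c}}$. Each internal node $n \in \mathscr{T}$ corresponds to a vertex subset $\overline{\mathsf V}_n$ with an associated ``cluster scale'' $r_n$. Within each sector, a change to polar-type coordinates adapted to $\mathscr{T}$ yields, up to bounded angular integrations, an integrand of the form $\prod_n r_n^{\deg(\overline{\mathsf V}_n)-1}\,\dif r_n$, after matching the kernel weights $Q(u,v)$ (with $\{u,v\}$ ranging over pairs whose most recent common ancestor is exactly $n$) against a parabolic Jacobian of effective dimension $3$ per interior coordinate. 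Hypothesis \zcref{eq:degcondition} guarantees convergence of each $r_n$-integral at $r_n = 0$, while the scale of the top cluster is pinned at order $d_{\mathfrak{s},\mathscr{S}}(\x_1,\x_2)$ by the reduction above. Integrating the scales from the leaves toward the root, every node with $\deg > 0$ contributes a strict power of its parent scale, while each ``borderline'' node where $\deg(\overline{\mathsf V}_n)$ is arbitrarily close to $0$ contributes at most one logarithm.

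Summing over sectors and nested subsets, the dominant term comes from those $\overline{\mathsf V} \supseteq \mathsf{V}_\varrho(\mathsf G)$ which realize the maximum in \zcref{eq:gammadef}, producing the advertised $d_{\mathfrak{s},\mathscr{S}}(\x_1,\x_2)^{-\gamma}$ factor. The most delicate part of the argument will be to obtain the sharp exponent $k_0 - 2$ of the logarithm: I would argue inductively on the Hepp hierarchy that each accumulated logarithm corresponds to a nested borderline cluster which must strictly contain $\mathsf{V}_\varrho(\mathsf G)$ \emph{and} contain at least one interior non-leaf vertex, since a subset consisting only of $\mathsf{V}_\varrho(\mathsf G)$ together with leaves contributes its full power of $d_{\mathfrak{s},\mathscr{S}}$ through the external distance and cannot be borderline. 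The number of admissible nesting levels is therefore bounded by $k_0 - 2$, giving the desired logarithmic exponent. This combinatorial counting, together with the careful accounting of reflections and periodic translations in the initial reduction step, is where I expect the main technical work to lie.
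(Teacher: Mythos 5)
Your overall architecture --- a Hepp-sector/Weinberg-type multiscale decomposition with power counting governed by subgraph degrees, the root cluster pinned at scale $d_{\mathfrak{s},\mathscr{S}}(\x_1,\x_2)$, and logarithms produced by borderline nested clusters --- is exactly the strategy of the paper (which isolates it as a standalone estimate, \zcref{prop:hepp-prop}, proved by summing admissible scalings on binary trees from the leaves to the root). However, there is one genuine gap: you never bridge the integral you actually have, which is over \emph{all} of $\mathsf{V}_{\mathrm{int}}(\mathsf{G})$ and contains the contraction kernels $\mathcal{E}^{\zeta}$, to the power counting you invoke, which is formulated on the reduced multigraph $\mathsf{G}_{\mathrm{c}}$ with weights $Q(u,v)$. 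This reduction is not bookkeeping. The kernel $\mathcal{E}^{\zeta}(\x_{e_1},\x_{e_2})=\delta(t_{e_1}-t_{e_2})\Sh^{\zeta}_{2L}(x_{e_1}\pm x_{e_2})$ is a parabolic codimension-$3$ object mollified only in space, so the two-vertex subgraph consisting of a contracted pair of noise leaves has degree exactly $3-3=0$, violating \zcref{eq:degcondition} if the noise vertices are kept as Hepp leaves; and if instead you try to integrate them out crudely, the absolute-value bound on $K'\circledast\mathcal{E}^{\zeta}\circledast K'$ diverges at equal times as $\zeta\downarrow0$ (e.g.\ $\int\lVert p'_{t_1-s}\rVert_{1}\,\lVert\,\lvert p'_{t_2-s}\rvert * R^{\zeta}\rVert_{\infty}\,\dif s$ is not uniformly controlled for $t_1=t_2$, $x_1\ne x_2$). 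The uniform-in-$\zeta$ bound $\bigl|\rslollirc[rsK]^{\zeta}(\x,\y)\bigr|\lesssim d_{\mathfrak{s},\mathscr{S}}(\x,\y)^{-1}$, which is what justifies assigning weight $1$ to each contraction in $\mathsf{G}_{\mathrm{c}}$, requires the exact Gaussian covariance computation of \zcref{lem:EW-cov} (the telescoping identity $p'\circledast\delta\circledast p'=\partial_s p_{t_1+t_2-2s}$) together with \zcref{prop:lollibd}; similarly the potential leaves must be pre-integrated into $\rslollipopb[rsK]^{\eps}$, which is bounded only because of the signed, bounded-mass structure of $\varphi^{\eps}_{\uu,\vv}$. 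You need to perform this leaf integration explicitly \emph{before} the Hepp decomposition.

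Two smaller points. First, your reduction to $\mathbb{R}^{2}$ by the method of images is a legitimate alternative to the paper's choice of working directly with the pseudo-metric $d_{\mathfrak{s},\mathscr{S}}$ on the strip; but note that each edge independently selects its own nearest image of its endpoints, so the unfolding produces a sum over reflection/translation assignments per edge (not per vertex), and the same vertex variable then appears with different reflections in different factors --- this is manageable (it is how \zcref{eq:boundsingle-dsS} is proved for a single convolution) but must be set up carefully. Second, your combinatorial claim pinning the logarithm exponent at $k_{0}-2$ is only a heuristic as stated; in the paper the logarithms are counted along the chain of Hepp-tree nodes from $\varrho_{1}\curlywedge\varrho_{2}$ to the root at which the running degree fails to increase, and only a crude bound on their number is needed, since no application of the proposition is sensitive to the exact power of the logarithm.
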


\begin{proof}
Recalling \zcref{eq:define-contraction}, since the kernel $J$
is compactly supported we can find a compact set $\Theta\subseteq\mS_{2L}$,
depending on $|\mathsf{V}(\mathsf{G})|$, such that
\begin{equation}
\boxop{\tau\hash_{\nu}\tau}^{\eps,\zeta}(\mathbf{x}_{\varrho_{1}},\mathbf{x}_{\varrho_{2}})=\int_{(\x_{\varrho_1}+\Theta)^{\mathsf{V}_{\mathrm{int}}(\mathsf{G})}}\left(\prod_{e\in\mathsf{E}_{\mathcal{I}'}(\mathsf{G})}J'(\mathbf{x}_{e_{\uparrow}}-\mathbf{x}_{e_{\downarrow}})\right)\left(\prod_{e\in\nu}\mathcal{E}^{\zeta}(\mathbf{x}_{e_{1}},\mathbf{x}_{e_{2}})\right)\left(\prod_{v\in V_{\rspotential}(\mathsf{G})}\varphi^{\eps}_{\uu,\vv}(x_{v})\right)\prod_{v\in\mathsf{V}_{\mathrm{int}}(\mathsf{G})}\dif\mathbf{x}_{v}.\label{eq:contraction-compact}
\end{equation}
Note that the set $\Theta$ and the Lebesgue measure $|\x_{\varrho_1} +
\Theta|$ of its shift are independent of $\x_{\varrho_1}, \x_{\varrho_2}$. In addition,
it does not matter if we center the integration domain around $\x_{\varrho_1}$ or $\x_{\varrho_2}$. We will use these observations
throughout the proof.
In the following, we integrate out the edges directly connecting to a $\rsnoise$ or $\rspotential$ node to reduce the problem to an integral related to the graph $\mathsf{G}_{\mathrm{c}}$. %
First, we note that each $v\in\mathsf{V}_{\rspotential}(\mathsf{G})$ is
connected to the rest of the graph only through a single edge $(v,\iota(v))$,
where $\iota(v)$ is the parent vertex of $v$ in (the appropriate
copy of) $\tau$. Also, each edge $e=\{e_1,e_2\}$ of $\nu$ connects two distinct vertices $e_1$ and $e_2$ which represent $\rsnoise$ nodes, and each of these nodes has one other incident edge $(e_i,\iota(e_i))$ in $\mathsf{G}$. The set of vertices that do not correspond to $\rsnoise$ or $\rspotential$ nodes is exactly $\mathsf{V}(\mathsf{G}_{\mathrm c})$. Therefore, we can write
\begin{align*}
 & \boxop{\tau\hash_{\nu}\tau}^{\eps,\zeta}(\mathbf{x}_{\varrho_{1}},\mathbf{x}_{\varrho_{2}})\\
 & \quad
 =\int_{(\x_{\varrho_1}+\Theta)^{\mathsf{V}_{\mathrm{int}}(\mathsf{G}_{\mathrm c})}}\left(\prod_{e\in\mathsf{E}_{2}(\mathsf{G}_{\mathrm c})}J'(\mathbf{x}_{e_{\uparrow}}-\mathbf{x}_{e_{\downarrow}})\right)\left(\prod_{e\in\mathsf{E}_1(\mathsf{G}_{\mathrm c})}\rslollirc[rsK]^\zeta(\x_{e_1},\x_{e_2})\right)\left(\prod_{v\in V_{\rspotential}}\rslollipopb[rsK]^{\eps}(\mathbf{x}_{\iota(v)})\right)\prod_{v\in\mathsf{V}_{\mathrm{int}}(\mathsf{G}_{\mathrm c})}\dif\mathbf{x}_{v},
\end{align*}
where we have defined $\mathsf{E}_1(\mathsf{G}_{\mathrm c})\coloneqq \{(\iota(e_1),\iota(e_2)\st \{e_1,e_2\}\in\nu\}$ and $\mathsf{E}_2(\mathsf{G}_{\mathrm c})\coloneqq\mathsf{E}(\mathsf{G}_{\mathrm c})\setminus \mathsf{E}_1(\mathsf{G}_{\mathrm c})$ and $\mathsf{V}_{\mathrm{int}}(\mathsf{G}_{\mathrm c})\coloneqq \mathsf{V}(\mathsf{G}_{\mathrm{c}})\setminus\{\varrho_1,\varrho_2\}$.
Now using the bounds \zcref{eq:lollipopb-bound,eq:redcherryx1x2bd} in this expression, and then recalling the definition of $Q$ above, we obtain
\begin{align}
\left|\boxop{\tau\hash_{\nu}\tau}^{\eps,\zeta}(\mathbf{x}_{\varrho_{1}},\mathbf{x}_{\varrho_{2}})\right| & \lesssim\int_{(\x_{\varrho_1}+\Theta)^{\mathsf{V}_{\mathrm{int}}(\mathsf{G}_{\mathrm{c}})}}\left(\prod^{2}_{i=1}\prod_{e\in\mathsf{E}_{i}(\mathsf{G}_{\mathrm{c}})}d_{\mathfrak{s},\mathscr{S}}(\mathbf{x}_{e_{1}},\mathbf{x}_{e_{2}})^{-i}\right)\prod_{v\in\mathsf{V}_{\mathrm{int}}(\mathsf{G}_{\mathrm{c}})}\dif\mathbf{x}_{v}\nonumber \\
 & =\int_{(\x_{\varrho_1}+\Theta)^{\mathsf{V}_{\mathrm{int}}(\mathsf{G}_{\mathrm{c}})}}\left(\prod_{\{u,v\}\in\binom{\mathsf{V}(\mathsf{G}_{\mathrm{c}})}{2}}d_{\mathfrak{s},\mathscr{S}}(\mathbf{x}_{u},\mathbf{x}_{v})^{-Q(u,v)}\right)\prod_{v\in\mathsf{V}_{\mathrm{int}}(\mathsf{G}_{\mathrm{c}})}\dif\mathbf{x}_{v}.\nonumber%
\end{align}
The proof is then completed by \zcref{prop:hepp-prop} below.
\end{proof}

We close this section with another result that is useful in checking the conditions of \zcref{prop:graph-bound}. The issue we address is that sometimes it is hard to check the condition $\deg ( \overline{\mathsf{G}}) > 0$ for all subgraphs $\overline{\mathsf{G}}$ because there are a large number of possible subgraphs. The following lemma reduces the number of subgraphs that need to be checked to a manageable number.

\begin{lem}
  Let $\mathsf{G}=(\mathsf{V},\mathsf{E})=\tau\hash_\nu\tau$ for some $\tau\in\check{\mathsf{T}}_{\mathrm{stoch}}$, let $\mathsf{G}_{\mathrm{c}} = (\mathsf{V}_{\mathrm{c}},\mathsf{E}_{\mathrm{c}})$ be the simplified multigraph defined above and suppose that the following three conditions hold:
  \begin{enumerate}
    \item We have $Q(u,v)\le 2$ for all $u,v\in\mathsf{V}_{\mathrm{c}}$.
    \item We have $Q(u,v)+Q(v,w)+Q(u,w)\le 5$ for all distinct $u,v,w\in\mathsf{V}_{\mathrm{c}}$.
    \item If distinct elements $u,v,w,z\in\mathsf{V}_{\mathrm{c}}$ are such that either $Q(u,v)+Q(v,w)+Q(w,z) = 6$ or $Q(u,v)+Q(u,w)+Q(u,z)=6$, then $Q(u,v)+Q(u,w)+Q(u,z)+Q(v,w)+Q(v,z)+Q(w,z)\le 8$.
    \end{enumerate}
  Then $\deg(\overline{\mathsf{V}})>0$ for all $\overline{\mathsf{V}}\subseteq\mathsf{V}_{\mathrm{c}}$ with $|\overline{\mathsf{V}}|\ge 2$.
  \label{lem:check-degcond-simpler}
\end{lem}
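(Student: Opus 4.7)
The plan is to prove $\deg(\overline{\mathsf V}) > 0$ by case analysis on $n = |\overline{\mathsf V}|$, reducing the question to showing that the pair-sum $S(\overline{\mathsf V}) \coloneqq \sum_{\{u,v\} \subseteq \overline{\mathsf V}} Q(u,v)$ is strictly less than $3(n-1)$. The cases $n=2$ and $n=3$ are immediate: condition (1) gives $S \le 2 < 3$, and condition (2) gives $S \le 5 < 6$.

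The main case is $n=4$, where the goal is $S \le 8$. I would introduce the auxiliary graph $H$ on $\overline{\mathsf V}$ whose edges are exactly the pairs $\{u,v\}$ with $Q(u,v) = 2$, and split the argument on $m \coloneqq |E(H)|$. If $m \le 2$, then condition (1) immediately yields $S \le 2m + (6 - m) = m + 6 \le 8$. If $m \ge 3$, I would pick any three edges of $H$ and invoke the classification of $3$-edge subgraphs of $K_4$: by analyzing the possible degree sequences on four vertices with three edges (namely $(3,1,1,1)$, $(2,2,2,0)$, and $(2,2,1,1)$), these three edges necessarily form a star $K_{1,3}$, a triangle, or a path $P_4$. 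A triangle's $Q$-sum would equal $6$, contradicting condition (2). A star or path has weight-$2$ sum exactly $6$, triggering the premise of condition (3), whose conclusion is precisely $S \le 8$. This case is the main technical obstacle and is the only place where condition (3) is essential.

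For $n \ge 5$, I would use a clean structural argument based on the fact that $\mathsf G = \tau \hash_\nu \tau$ for some $\tau \in \check{\mathsf T}_{\mathrm{stoch}}$. The vertex set $\mathsf V_{\mathrm c}$ admits a canonical bipartition $\mathsf V_{\mathrm c} = \mathsf V_1 \sqcup \mathsf V_2$ (one part per copy of $\tau$); weight-$2$ edges coming from the kernel $J'$ lie within each $\mathsf V_i$ and form a sub-forest of $\tau$'s tree there, while weight-$1$ edges from the $\rslollirc$-groupings cross between the two parts and contribute a total weight equal to the number $k$ of $\rsnoise$ leaves in $\tau$, which satisfies $k \le 4$ for every $\tau \in \check{\mathsf T}_{\mathrm{stoch}}$. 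Setting $n_i \coloneqq |\overline{\mathsf V} \cap \mathsf V_i|$, the sub-forest property bounds the number of weight-$2$ edges on side $i$ by $n_i - 1$, so
\[
S \le 2(n_1 - 1) + 2(n_2 - 1) + k \le 2(n-2) + 4 = 2n,
\]
which is strictly less than $3(n-1) = 3n - 3$ for every $n \ge 4$.

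In short, conditions (1) and (2) dispatch the two smallest cases, the combinatorial classification of 3-edge subgraphs of $K_4$ together with condition (3) handles the pivotal case $n = 4$, and the bipartite tree-plus-contractions structure of $\tau \hash_\nu \tau$ closes out all remaining subset sizes. I expect the only real difficulty to lie in the $n = 4$ analysis, where one has to carefully match the shape of condition (3)'s hypothesis (stars and paths, specifically) against the three possible configurations of three weight-$2$ edges on four vertices.
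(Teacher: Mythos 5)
Your proof is correct. For $|\overline{\mathsf{V}}|\in\{2,3\}$ and for $|\overline{\mathsf{V}}|=4$ you follow essentially the paper's route: the paper's own $n=4$ step is exactly your dichotomy on the number $m$ of weight-$2$ pairs, and your classification of three weight-$2$ edges on four vertices (triangle excluded by condition (2), star or path triggering condition (3)) just spells out what the paper leaves implicit. Where you genuinely diverge is $n\ge 5$: the paper simply reads off from the table of trees that the total weight is at most $12$ (four weight-$1$ contraction edges plus four weight-$2$ $\mathcal{I}'$ edges), notes strictness on proper subsets, and compares with $3(n-1)\ge 12$; you instead use the bipartition into the two copies of $\tau$, the forest structure of the $\mathcal{I}'$ edges within each copy, and the bound $k\le 4$ on the number of contraction edges to get $S\le 2(n_1-1)+2(n_2-1)+k\le 2n<3n-3$. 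Your bound is finer and, amusingly, already covers $n=4$ on its own (each copy of $\tau$ has at most three inner vertices, so $n_1,n_2\ge 1$ there), which makes your condition-(3) analysis logically redundant within your own write-up — though it is of course the intended content of the lemma as a checkable criterion. One small inaccuracy: it is not true that the weight-$1$ edges "cross between the two parts." A contraction $\nu\in\mathscr{C}(\tau,\tau)$ is an arbitrary perfect matching of \emph{all} noise leaves and may pair two leaves inside the same copy of $\tau$ (this is precisely what happens for the second- and zeroth-chaos contractions in the analysis of $\rscandelabrarrrr$ and $\rsmooserrrr$). This does not damage your inequality, since you only use that the total weight contributed by contraction edges is the number of matched pairs, namely $k\le 4$, regardless of where those edges land — but the claim as stated should be corrected.
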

\begin{proof}
  If $|\mathsf{V}|$ is $2$ or $3$, then the conclusion follows immediately from the definition \zcref{eq:degree-Vbar} and the first or second condition, respectively. If $|\mathsf{V}|=4$, then by the third assumption we can assume that, out of the six elements $\{u,v\}\in\binom{\overline{\mathsf V}}{2}$, there are at most two such that $Q(u,v)\ge 2$. The remaining four elements $\{u,v\}$ must have $Q(u,v)\le 1$, which means that $\sum_{\{u,v\}\in\binom{\overline {\mathsf{V}}}{2}} Q(u,v)\le 8$ and the conclusion again follows from the definition \zcref{eq:degree-Vbar}.

  Now a quick look at \zcref{tab:RS-table} shows that $\sum_{\{u,v\}\in\binom{\mathsf{V}_{\mathrm{c}}}{2}} Q(u,v) \le 12$ (as there are at most four $\rslollirc$s of weight $1$ each and four remaining $\mathcal{I}'$ edges of weight $2$ each), and the inequality is strict if the sum is restricted to $\binom{\overline{\mathsf{V}}}{2}$ for $\overline{\mathsf{V}}$ a proper subset of $\mathsf{V}_{\mathrm{c}}$. Using \zcref{eq:degree-Vbar} again, this establishes the conclusion for $|\mathsf{V}|\ge 5$.
\end{proof}

\begin{rem}\label{rem:how-to-check}
  \zcref{lem:check-degcond-simpler} means that, to check the condition \zcref{eq:degcondition}, it is sufficient to consider the subgraph with edge set $\{\{u,v\}\in \binom{\mathsf{V}_{\mathrm{c}}}{2} \st Q(u,v)\ge 2\}$, check that this graph has no edges with $Q\ge 3$ and no triangles, and finally check that the four vertices of each path of length $3$ and each neighborhood of a degree-$3$ vertex in this graph satisfy the degree condition \zcref{eq:degcondition}.
\end{rem}

\subsection{Second chaos: terms involving recentering}

In this section we handle the trees $\rsclawrr$, $\rselkrbr$, and
$\rselkrrb$. The first two of these terms involve recentering when the model is applied to them, and the last term benefits from the use of recentering in its analysis. Thus, we begin by introducing some notations for studying quantities involving recentering.

\subsubsection{$\protect\rsclawrr$}

Referring to \zcref{tab:model-defns}, we see that
\[
\hat{\Pi}^{\eps,\zeta}_{\mathbf{x}}(\rsclawrr)(\mathbf{y})=\rsclawrr[rsK]^{\zeta}(\mathbf{y})-\rsiplollipopr[rsK]^{\zeta}(\mathbf{x})\rslollipopr[rsK]^{\zeta}(\mathbf{y})=\hat{\Pi}^{\eps,\zeta}_{\mathbf{x}}(\rsiplollipopr)(\mathbf{y})\hat{\Pi}^{\eps,\zeta}_{\mathbf{x}}(\rslollipopr)(\mathbf{y}).
\]
At this stage it is convenient to introduce a new notation for recentered trees.\label{recentered-notation}\nomenclature{$\rslollipopnc$}{Recentered integration}
When we draw a cross on an edge of the tree in a realization, it means
that we subtract the value of the subtree stemming from that edge
at a point $\mathbf{x}$, which we add to the superscript. For example,
we have
\begin{equation}\label{e:recenter-notation}
\rsiplollipoprG[rsK]^{\zeta,\mathbf{x}}(\mathbf{y})\coloneqq\rsiplollipopr[rsK]^{\zeta}(\mathbf{y})-\rsiplollipopr[rsK]^{\zeta}(\mathbf{x})=\hat{\Pi}^{\eps,\zeta}_{\mathbf{x}}(\rsiplollipopr)(\mathbf{y})
\end{equation}
and
\begin{equation}\label{eq:recenter-notation-claw}
  \rsclawrG[rsK]^{\zeta,\mathbf{x}}(\mathbf{y})=\rsiplollipoprG[rsK]^{\zeta,\x}(\y)\rslollipopr[rsK]^{\zeta}(\y)\overset{\zcref{e:recenter-notation}}=\left(\rsiplollipopr[rsK]^{\zeta}(\mathbf{y})-\rsiplollipopr[rsK]^{\zeta}(\mathbf{x})\right)\rslollipopr[rsK]^{\zeta}(\mathbf{y})=\hat{\Pi}^{\eps,\zeta}_{\mathbf{x}}(\rsclawrr)(\mathbf{y}).
\end{equation}
We will use crossed edges in trees with contractions (introduced in \zcref{subsec:feynman-diagrams})
with an analogous meaning.

The recentered tree $\rsiplollipoprG[rsK]^{\zeta,\mathbf{x}}$
lives in the first homogeneous Wiener--Itô chaos, just like $\rsiplollipopr[rsK]^{\zeta}$.
Indeed, the only difference is that the edge with a cross corresponds
to the kernel 
\[
J'_{\x - \y} (\y - \z)=
J'(\mathbf{y}-\mathbf{z})-J'(\mathbf{x}-\mathbf{z})
\]defined in \zcref{eq:recenter-J-def}. %
Therefore, we can still
compute the first and second moments in terms of (linear combinations of) Feynman
diagrams. 
In particular, we have 
\begin{equation}
  \EE\left[\hat{\Pi}^{\eps,\zeta}_{\mathbf{x}}(\rsclawrr)(\mathbf{y})\right]  \ovset{\zcref{eq:recenter-notation-claw}}=\mathbb{E}\left[\rsclawrG[rsK]^{\zeta,\x}(\y)\right] = \rsclawrrcA[rsK]^{\zeta}(\mathbf{y})-\rsclawrrcAS[rsK]^{\zeta}(\mathbf{x},\mathbf{y})\label{eq:claw-E-expand}
\end{equation}
and similarly
\begin{equation}
  \Cov\left(\hat{\Pi}^{\eps,\zeta}_{\mathbf{x}}(\rsclawrr)(\mathbf{y}_{1}),\hat{\Pi}^{\eps,\zeta}_{\mathbf{x}}(\rsclawrr)(\mathbf{y}_{2})\right) %
  =\rsclawrrcc[rsK]^{\zeta,\mathbf{x}}(\mathbf{y}_{1},\mathbf{y}_{2})+\rsclawrrccc[rsK]^{\zeta,\mathbf{x}}(\mathbf{y}_{1},\mathbf{y}_{2}).\label{eq:claw-cov-expand}
\end{equation}
The term $\rsclawrrcA[rsK]^{\zeta}(\mathbf{y})$ is most challenging to estimate, since 
$\deg(\rsclawrrcA)=3\cdot2-2\cdot3=0$
is not positive, so we cannot use \zcref{prop:graph-bound}. In fact, power counting would suggest a logarithmic blow-up. Such blow-up, however, in fact does not occur because of the antisymmetry of the integrand. This analysis is left to \zcref{lem:claw-symm-bd} below.
Overall, we obtain the following estimate.
\begin{prop}\label{prop:claw-bound}
We have 
\begin{equation}
\left|\mathbb{E}\left[\hat{\Pi}^{\eps,\zeta}_{\mathbf{x}}(\rsclawrr)(\mathbf{y})\right]\right|\lesssim\log(2+d_{\mathfrak{s},\mathscr{S}}(\mathbf{x},\mathbf{y})^{-1})\label{eq:claw-exp-bd}
\end{equation}
and, for any $\tilde{\kappa}>0$,
\begin{equation}
  \begin{aligned}\left\lvert\Cov\left(\hat{\Pi}^{\eps,\zeta}_{\mathbf{x}}(\rsclawrr)(\mathbf{y}_{1}),\hat{\Pi}^{\eps,\zeta}_{\mathbf{x}}(\rsclawrr)(\mathbf{y}_{2})\right)\right\rvert & \lesssim_{{\tilde{\kappa}}}(\log(2+d_{\mf{s}, \mathscr{S}}(\mathbf{y}_{1}, \mathbf{y}_{2})^{-1}+d_{\mf{s}, \mathscr{S}}(\mathbf{x},\mathbf{y}_{1})^{-1}+d_{\mf{s}, \mathscr{S}}(\mathbf{x}, \mathbf{y}_{2})^{-1}))^{2}\\
 & \qquad+d_{\mathfrak{s},\mathscr{S}}(\mathbf{y}_{1},\mathbf{y}_{2})^{-1}|\mathbf{x}-\mathbf{y}_{1}|^{\oh-{\tilde{\kappa}}}_{\mathfrak{s}}|\mathbf{x}-\mathbf{y}_{2}|^{\oh-{\tilde{\kappa}}}_{\mathfrak{s}}.
\end{aligned}
\label{eq:claw-cov-bd}
\end{equation}
\end{prop}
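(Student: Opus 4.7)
I would start from the factorization $\hat{\Pi}^{\eps,\zeta}_{\mathbf{x}}(\rsclawrr)(\mathbf{y}) = \rsiplollipoprG[rsK]^{\zeta,\mathbf{x}}(\mathbf{y})\,\rslollipopr[rsK]^{\zeta}(\mathbf{y})$ supplied by \zcref{eq:recenter-notation-claw}, which writes the recentered model as a product of two first-chaos variables. Both the expectation and the covariance of $\hat\Pi^{\eps,\zeta}_{\mathbf{x}}(\rsclawrr)$ can then be expanded by Isserlis into the Feynman diagrams appearing in \zcref{eq:claw-E-expand} and \zcref{eq:claw-cov-expand}, and analysed independently; the two bounds are of very different nature, reflecting the fact that the mean sits exactly at the borderline of convergence whereas the covariance is genuinely convergent once a $\delta$-gain has been extracted from the recentering.

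For the expectation bound \zcref{eq:claw-exp-bd}, the simplified multigraph $\mathsf{G}_{\mathrm{c}}$ of the Feynman diagram $\rsclawrrcA$ has two vertices joined by a weight-$2$ edge and a weight-$1$ edge, so $\deg(\mathsf{G}_{\mathrm{c}}) = 0$, and \zcref{prop:graph-bound} is inapplicable. The plan is to rewrite
\[
\mathbb{E}\bigl[\hat{\Pi}^{\eps,\zeta}_{\mathbf{x}}(\rsclawrr)(\mathbf{y})\bigr] = \int_{\mS_{2L}} J'_{\mathbf{x}-\mathbf{y}}(\mathbf{y}-\mathbf{w})\,\rslollirc[rsK]^{\zeta}(\mathbf{w},\mathbf{y})\,\dif\mathbf{w}
\]
and exploit the fact that $J'(\mathbf{y}-\cdot)$ is odd in its spatial argument about $\mathbf{y}$, while, to leading order, $\rslollirc[rsK]^{\zeta}(\mathbf{w},\mathbf{y})$ decomposes (by \zcref{lem:EW-cov} together with the Hölder corrections of \zcref{prop:lollibd}) into a diagonal piece even in $x_w-x_y$ plus a reflected piece even in $x_w+x_y$. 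Pairing these parities produces the cancellation that collapses the naively logarithmic divergence to a controlled logarithm. I would package this cancellation as the auxiliary lemma \zcref{lem:claw-symm-bd} announced in the preceding paragraph, whose proof would split the integration domain into Hepp-type sectors $|\mathbf{w}-\mathbf{y}|_{\mathfrak{s}}\lessgtr|\mathbf{x}-\mathbf{y}|_{\mathfrak{s}}$ and carefully balance the diagonal and reflected contributions on each sector, yielding the factor $\log(2+d_{\mathfrak{s},\mathscr{S}}(\mathbf{x},\mathbf{y})^{-1})$.

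For the covariance bound \zcref{eq:claw-cov-bd}, Isserlis produces the two cross-pairing diagrams $\rsclawrrcc$ (parallel matching) and $\rsclawrrccc$ (swapped matching) of \zcref{eq:claw-cov-expand}. I would apply \zcref{e:Kx-bd} with $\delta = \oh-\tilde\kappa$ to each recentered $\mathcal{I}'$-edge to extract the prefactor $|\mathbf{x}-\mathbf{y}_1|_{\mathfrak{s}}^{\oh-\tilde\kappa}|\mathbf{x}-\mathbf{y}_2|_{\mathfrak{s}}^{\oh-\tilde\kappa}$ at the cost of raising those edge weights to $2+\delta$; the resulting multigraph on $\{\mathbf{y}_1,\mathbf{y}_2,\mathbf{w}_1,\mathbf{w}_2\}$ is easily seen to satisfy the hypothesis of \zcref{prop:graph-bound} via the checklist of \zcref{lem:check-degcond-simpler}. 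The exponent $\gamma$ is realised by the two-root subgraph of the parallel pairing and equals $1$, producing the polynomial term on the right-hand side of \zcref{eq:claw-cov-bd}. The $(\log)^2$ term is a fallback estimate that dominates when $\mathbf{y}_1$ and $\mathbf{y}_2$ are close (and the polynomial bound $d_{\mathfrak{s},\mathscr{S}}(\mathbf{y}_1,\mathbf{y}_2)^{-1}$ would explode): it follows from $|\Cov|\le|\mathbb{E}[X_1X_2]|+|\mathbb{E}[X_1]||\mathbb{E}[X_2]|$, combined with \zcref{eq:claw-exp-bd} applied at both $\mathbf{y}_i$ for the second summand and with a cancellation-based estimate on the two cross-pairing diagrams (foregoing the recentering gain and mimicking the mean analysis on each one) for the first summand, bundling the three small distances into a single logarithm at the end.

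The main obstacle is the expectation bound: once the reflection-antisymmetry cancellation of \zcref{lem:claw-symm-bd} has been established, the covariance estimate reduces to essentially routine degree bookkeeping with \zcref{prop:graph-bound}, whereas the mean sits exactly where naive power counting fails and a genuine symmetry-based cancellation is required.
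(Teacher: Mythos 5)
Your proposal follows the same architecture as the paper's proof: the same factorization of $\hat{\Pi}^{\eps,\zeta}_{\mathbf{x}}(\rsclawrr)$, the same Isserlis decomposition into \zcref{eq:claw-E-expand,eq:claw-cov-expand}, the same identification of $\rsclawrrcA$ as the borderline-degree diagram requiring a separate argument, and the same use of \zcref{e:Kx-bd} with $\delta=\oh-\tilde\kappa$ to extract the recentering gain on the parallel pairing. The one genuinely different choice is your route to \zcref{lem:claw-symm-bd}: you propose a physical-space parity cancellation (odd $J'$ against the even diagonal part of the lollipop covariance) balanced over Hepp-type sectors, whereas the paper computes the Fourier coefficients of $\rsclawrrcA[rsP]^{\zeta}$ exactly, recognizes a smoothed sawtooth function (hence a \emph{uniform} bound $\lesssim 1$, not merely a log), and then compares the $K$- and $p$-kernel versions. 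Be aware that the delicate point your sketch glosses over is the reflected part $-p_{|t-s|}*\Sh^{\zeta}(x_w+x_y)$ of the covariance, which is singular near $\sigma_{\mathrm{refl}}\mathbf{y}$ and is \emph{not} killed by the parity of $J'$ about $\mathbf{y}$; a naive sector estimate there produces a factor $\log(2+1/d_L(y))$, which is not controlled by $\log(2+d_{\mathfrak{s},\mathscr{S}}(\mathbf{x},\mathbf{y})^{-1})$ when $\mathbf{y}$ is near the boundary but $\mathbf{x}$ is far away. The Fourier computation shows this apparent boundary logarithm in fact does not occur (cf.\ \zcref{rem:claw-exp-isnt-smooth}), so your physical-space version must exhibit a second cancellation near the reflection point, not just near the diagonal; this is doable but is precisely the part you have left unexecuted.

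Two smaller points. First, the lemma you would "package" is not quite the paper's \zcref{lem:claw-symm-bd}: the paper's lemma concerns the single-point, non-recentered diagram and asserts uniform boundedness, with the $\log(2+d_{\mathfrak{s},\mathscr{S}}(\mathbf{x},\mathbf{y})^{-1})$ in \zcref{eq:claw-exp-bd} coming entirely from the recentering correction $\rsclawrrcAS$, bounded by a routine application of \zcref{lem:one-int-bound,prop:lollibd}. Second, your framing of the $(\log)^2$ term as a "fallback" obtained from $|\Cov|\le|\mathbb{E}[X_1X_2]|+|\mathbb{E}X_1||\mathbb{E}X_2|$ misreads the structure of \zcref{eq:claw-cov-bd}: the two summands on the right-hand side simply bound the two Isserlis pairings separately — the polynomial term bounds $\rsclawrrcc$ and the $(\log)^2$ term bounds $\rsclawrrccc$ — and the latter needs no cancellation at all, only the triangle inequality applied to differences of the two-point elbow diagram together with \zcref{eq:elbowlollipop}. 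Your fallback inequality would not work on its own, since $\mathbb{E}[X_1X_2]$ contains the genuinely singular $d_{\mathfrak{s},\mathscr{S}}(\mathbf{y}_1,\mathbf{y}_2)^{-1}$ contribution and is not bounded by $(\log)^2$.
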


\begin{proof}
Using \zcref{eq:Kbds, lem:one-int-bound,prop:lollibd},
we can compute that
\begin{equation}\label{eq:elbowlollipop}
\left|\rsclawrrcAS[rsK]^{\zeta}(\mathbf{x},\mathbf{y})\right|\lesssim\log(2+d_{\mathfrak{s},\mathscr{S}}(\mathbf{x},\mathbf{y})^{-1}).
\end{equation}
Using this and \zcref{lem:claw-symm-bd} in \zcref{eq:claw-E-expand},
we get \zcref{eq:claw-exp-bd}. %

Now we turn our attention to \zcref{eq:claw-cov-bd}.
For any fixed $\tilde{\kappa}>0$, we use \zcref{e:Kx-bd} for $\delta = 1/2 - \tilde{\kappa}$, together with
\zcref{eq:redcherryx1x2bd,lem:one-int-bound},
to estimate that
\begin{align}
  \left\lvert\rsclawrrcc[rsK]^{\zeta,\x}(\y_1,\y_2)\right\rvert &\lesssim_{\tilde\kappa} \left\lvert \rslollirc[rsK]^\zeta(\y_1,\y_2)\right\rvert \cdot \left\lvert \int_{\mathbb{S}^2_{2L}} J'_{\x-\y_1}(\y_1-\z_1) \rslollirc[rsK]^\zeta(\z_1,\z_2)J'_{\x-\y_2}(\y_2-\z_2)\,\dif\z_1\,\dif\z_2\right\rvert\notag\\
 & \lesssim_{{\tilde{\kappa}}} d_{\mathfrak{s},\mathscr{S}}(\mathbf{y}_{1},\mathbf{y}_{2})^{-1}|\mathbf{x}-\mathbf{y}_{1}|^{\oh-{\tilde{\kappa}}}_{\mathfrak{s}}|\mathbf{x}-\mathbf{y}_{2}|^{\oh-{\tilde{\kappa}}}_{\mathfrak{s}}.\label{eq:bound-claw-second-term}
\end{align}
For the third term, we do not need to take advantage of the recentering, and we simply use the triangle inequality and \zcref{eq:elbowlollipop}
to estimate 
\begin{align}
  &\left|\rsclawrrccc[rsK]^{\zeta,\mathbf{x}}(\mathbf{y}_{1},\mathbf{y}_{2})\right|
  = \left\lvert\left(\rsclawrrcAS[rsK]^\zeta(\y_1,\y_2) - \rsclawrrcAS[rsK]^\zeta(\x,\y_2)\right) \left(\rsclawrrcAS[rsK]^\zeta(\y_2,\y_1) - \rsclawrrcAS[rsK]^\zeta(\x,\y_1)\right)\right\rvert \notag\\
&\qquad\lesssim(\log(2+d_{\mf{s}, \mathscr{S}}(\mathbf{y}_{1}, \mathbf{y}_{2})^{-1}))^2+(\log(2+d_{\mf{s}, \mathscr{S}} (\mathbf{x}, \mathbf{y}_{1})^{-1}))^2+(\log(2+d_{\mf{s}, \mathscr{S}}(\mathbf{x}, \mathbf{y}_{2})^{-1} ))^2.\label{eq:bound-claw-third-term}
\end{align}
Using \zcref{eq:bound-claw-second-term,eq:bound-claw-third-term}
in \zcref{eq:claw-cov-expand}, we get \zcref{eq:claw-cov-bd},
and the proof is complete.
\end{proof}

\subsubsection{$\protect\rselkrbr$}

Now we turn our attention to bounding the mean and covariance functions of $\hat \Pi_{\x}^{\eps,\zeta}(\rselkrbr[rsN])$. Similarly
to the previous term, this tree requires a recentering. With the same
notation as introduced in \zcref{recentered-notation}, and referring
to \zcref{tab:model-defns}, we see that
\[
\hat{\Pi}^{\eps,\zeta}_{\mathbf{x}}(\rselkrbr)(\mathbf{y})=\rselkrbr[rsK]^{\eps,\zeta}(\mathbf{y})-\rsipcherryrb[rsK]^{\eps,\zeta}(\mathbf{x})\rslollipopr[rsK]^{\zeta}(\mathbf{y})=\rsipcherryrbG[rsK]^{\eps,\zeta,\mathbf{x}}(\mathbf{y})\rslollipopr[rsK]^{ \zeta} (\y) \eqqcolon \rselkrbrG[rsK]^{\eps,\zeta,\x}(\y),
\]
where
\begin{equation*}
  \begin{aligned}
    \rsipcherryrbG[rsK]^{\ve, \zeta, \x}(\y) & = \rsipcherryrb[rsK]^{\ve, \zeta}(\y) - \rsipcherryrb[rsK]^{\ve, \zeta}(\x) = \hat{\Pi}_{\x}^{\ve, \zeta} (\rsipcherryrb[rsN]) (\y) .
  \end{aligned}
\end{equation*}
For the present tree we find the following estimate. Recall the distance $d_L$
defined in \zcref{e:dLdef}.
\begin{prop}\label{prop:rbrelk}
    We have for any $\tilde{\kappa} > 0$ that
    \begin{equation}\label{e:rbr-aim}
        \left\vert \EE \left[ \hat{\Pi}_{\x}^{\ve, \zeta} (\rselkrbr) (\y) \right] \right\vert \lesssim  \log(2 +d_{\mf{s},\mathscr{S}}(\x, \y)^{-1}) + \log(2 +1/d_L(y))^2 %
      \end{equation}
      and\begin{equation}
        \begin{aligned}
        \Cov \left( \hat{\Pi}_{\x}^{\ve, \zeta} (\rselkrbr) (\y_1) ,
        \hat{\Pi}_{\x}^{\ve, \zeta}  (\rselkrbr)  (\y_2) \right)
        \lesssim_{\tilde{\kappa}} & (\log( 2 +d_{\mf{s}, \mathscr{S}}(\y_1, 
        \y_2)^{-1} + d_{\mf{s}, \mathscr{S}} (\x,  \y_1)^{-1} + d_{\mf{s},
        \mathscr{S}}(\x, \y_2)^{-1}))^2 \\
        & + d_{\mf{s}, \mathscr{S}}(\y_1, \y_2)^{-1} |\x - \y_1|_{\mf{s}}^{1/2 - {\tilde{\kappa}}} |\x - \y_2|_{\mf{s}}^{1/2- {\tilde{\kappa}}}  . 
      \end{aligned}\label{eq:elkrbrcovbd}
      \end{equation}
    Moreover, for any $s \neq t \in \RR$ and $c\in(0,\infty)$, we have
\begin{equation}\label{e:newrbc2}
  \adjustlimits\lim_{\ve \downarrow 0} \sup_{\zeta \in (0, \ve)} \sup_{d_L(x),d_L(y) \leq c \ve} \; \left\vert  \Cov \left( \rselkrbr[rsK]^{\ve, \zeta} (t, x), \rselkrbr[rsK]^{\ve, \zeta} (s, y)  \right) \right\vert = 0.
\end{equation}
\end{prop}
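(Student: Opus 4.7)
For the mean \zcref{e:rbr-aim}, the unique Isserlis contraction of $\hat\Pi^{\eps,\zeta}_\x(\rselkrbr)(\y)=\rsipcherryrbG[rsK]^{\eps,\zeta,\x}(\y)\,\rslollipopr[rsK]^\zeta(\y)$ pairs the noise leaf of the external $\rslollipopr[rsK]^\zeta(\y)$ with the unique noise leaf inside $\rsipcherryrb$, yielding
\begin{equation*}
\mathbb{E}\left[\hat\Pi^{\eps,\zeta}_\x(\rselkrbr)(\y)\right]=\int_{\mathbb{S}_{2L}}\left(J'(\y-\z)-J'(\x-\z)\right)\rslollirc[rsK]^\zeta(\y,\z)\,\rslollipopb[rsK]^\eps(\z)\,\dif\z\eqqcolon T_1-T_2.
\end{equation*}
In $T_2$ the singularities of $J'(\x-\cdot)$ and of $\rslollirc[rsK]^\zeta(\y,\cdot)$ are at distinct points — at $\x$ and, after splitting the covariance into same-side and reflected pieces, at $\y$ and $\sr\y$ — so the borderline case $\alpha_{1}+\alpha_{2}=3$ of \zcref{lem:one-int-bound}/\zcref{eq:boundsingle-dsS} directly returns $|T_2|\lesssim\log(2+d_{\mathfrak{s},\mathscr{S}}(\x,\y)^{-1})$, which is the first summand of the bound. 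The term $T_1$ is the delicate one because both singularities collapse to $\z=\y$; the associated contracted graph $\mathsf{G}_{\mathrm{c}}$ has only two vertices and total edge weight $3$, hence $\deg=0$, so \zcref{prop:graph-bound} does not apply.

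To handle $T_1$ I split $\rslollirc[rsK]^\zeta(\y,\z)=G^\zeta(\y-\z)-G^\zeta(\y-\sr\z)$ into its same-side and reflected pieces. The reflected piece places one singularity at $\sr\y$ while $J'(\y-\cdot)$ is singular at $\y$, at separation $d_{L}(y)$; \zcref{lem:one-int-bound} again returns $\log(2+1/d_L(y))$. For the same-side piece I exploit parity: $J'$ is spatially odd and $G^\zeta$ is spatially even, so Taylor expanding $\rslollipopb[rsK]^\eps(\z)$ around $\z=\y$ annihilates the zeroth-order contribution, and the first-order residual produces an integrand of order $|\y-\z|_{\mathfrak{s}}^{-2}$ times the variation of $\rslollipopb[rsK]^\eps$. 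Since $\partial_{x}\rslollipopb[rsK]^\eps$ fails to be bounded uniformly in $\eps$ only on an $\eps$-neighbourhood of the reflecting walls, this borderline-integrable residual contributes a further $\log(2+1/d_L(y))$; the two boundary logarithms compound into the $\log(2+1/d_L(y))^{2}$ factor. This parity cancellation is the analogue for $\rselkrbr$ of the $\rsclawrr$ argument encapsulated in \zcref{lem:claw-symm-bd}.

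For the covariance \zcref{eq:elkrbrcovbd} I will expand $\hat\Pi^{\eps,\zeta}_\x(\rselkrbr)(\y_{1})\hat\Pi^{\eps,\zeta}_\x(\rselkrbr)(\y_{2})$ via Isserlis and discard the self-contractions already captured by $\mathbb{E}\times\mathbb{E}$. The two remaining cross-contractions produce contracted multigraphs on four vertices (roots $\y_{1},\y_{2}$ and inner vertices $A_{1},A_{2}$) carrying two weight-$2$ $\mathcal{I}'$-edges and two weight-$1$ purple contraction edges, with total weight $6$. The hypothesis \zcref{eq:degcondition} is quickly verified by \zcref{rem:how-to-check} (no triangles or degree-$3$ vertices appear), and \zcref{prop:graph-bound} yields a bound of the form $d_{\mathfrak{s},\mathscr{S}}(\y_{1},\y_{2})^{-1}$ times powers of log. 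The additional Hölder factor $|\x-\y_{i}|_{\mathfrak{s}}^{1/2-\tilde\kappa}$ in the first summand of \zcref{eq:elkrbrcovbd} is obtained by applying \zcref{lem:Kx-bd} with $\delta=1/2-\tilde\kappa$ to the recentered kernel difference $J'(\y_{i}-\z)-J'(\x-\z)$ before feeding the resulting estimate into \zcref{prop:graph-bound}: this trades one unit of kernel decay for a Hölder increment of size $|\x-\y_{i}|^{1/2-\tilde\kappa}$. The squared-log summand then corresponds to the contractions for which the recentering yields no useful gain and two successive integrations fall into the borderline logarithmic case of \zcref{lem:one-int-bound}.

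For the vanishing limit \zcref{e:newrbc2} I will expand $\Cov(\rselkrbr[rsK]^{\eps,\zeta}(t,x),\rselkrbr[rsK]^{\eps,\zeta}(s,y))$ by Isserlis. Each surviving cross-contraction carries either a factor $\rscherryrb[rsK]^{\eps,\zeta}((t,x),\cdot)$ or $\rscherryrb[rsK]^{\eps,\zeta}((s,y),\cdot)$, which vanishes uniformly in $\zeta\in(0,\eps)$ as $\eps\to 0$ under $d_{L}(x)\leq c\eps$ or $d_{L}(y)\leq c\eps$ by \zcref{e:newrbc}; or a factor $\rslollirc[rsK]^\zeta$ pinned to a spatial coordinate within $c\eps$ of the boundary, which also vanishes by the odd-reflection structure of $\rslollipopr[rsK]^\zeta$ as in the proof of \zcref{prop:lollibd}. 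Combined with the uniform-in-$(\eps,\zeta)$ bounds supplied by the first two parts, dominated convergence delivers the limit. The main obstacle throughout is the treatment of $T_1$: the Feynman integral sits at the exact boundary of convergence, so the argument cannot rely on power counting or \zcref{prop:graph-bound} at all, and it is only the delicate interplay between the parity of the kernels $J'$ and $G^\zeta$ and the $\eps$-scale boundary behaviour of $\rslollipopb[rsK]^\eps$ that extracts a uniform-in-$\zeta$ logarithmic bound; the fact that this cancellation is less sharp near $L\mathbb{Z}$ is exactly what produces the squared rather than single logarithm in \zcref{e:rbr-aim}.
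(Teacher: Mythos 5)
Your treatment of the mean \zcref{e:rbr-aim} takes a genuinely different route from the paper's. The paper writes the non-recentered tree as an integral of the BPHZ-type kernel $\rsbphz[rsK]^{\zeta,J'}(\mathbf{y},\cdot)$ against the boundary potential $\varphi^{\eps}_{\uu,\vv}$ and invokes \zcref{lem:bphzbd} (whose proof subtracts $J_{1}(\mathbf{x},\mathbf{y})\rsclawrrcA[rsK]^{\zeta}(\mathbf{x})$ and rests on the Fourier computation of \zcref{lem:claw-symm-bd}); the factor $\log(2+1/d_L(y))^{2}$ then comes from integrating $d^{-2}\log$ against $\varphi^{\eps}_{\uu,\vv}$, which is supported in an $\eps$-neighbourhood of $L\mathbb{Z}$. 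You instead recenter $\rslollipopb[rsK]^{\eps}$ at $\mathbf{y}$ and kill the zeroth-order same-side term by spatial parity; this is viable and somewhat more elementary for this particular tree, but it reproves in-line what the paper packages into the reusable \zcref{lem:bphzbd}. Two small points: you should verify that the same-side part of $\rslollirc[rsK]^{\zeta}$ — including the cross terms with $\tilde K'$ in \zcref{eq:lollipop-cov-decomp} — really is even in the spatial separation (it is, by \zcref{eq:KKhateven}); and your reflected zeroth-order term and your first-order residual are \emph{added}, so they produce a single $\log(2+1/d_L(y))$ rather than "compounding" into a square — which of course still suffices. Your covariance argument is essentially the paper's (\zcref{lem:Kx-bd} with $\delta=\oh-\tilde{\kappa}$ applied to the recentered kernel for one contraction, iterated borderline logarithms for the other), though \zcref{prop:graph-bound} as stated does not apply to recentered kernels, so the two inner integrations should be carried out directly via \zcref{lem:one-int-bound} as the paper does.

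The genuine gap is in \zcref{e:newrbc2}. Your claim that every surviving cross-contraction carries either a factor $\rscherryrb[rsK]^{\eps,\zeta}((t,x),\cdot)$ or a factor $\rslollirc[rsK]^{\zeta}$ pinned to the boundary is not correct. The crossed contraction pairs the noise inside $\rsipcherryrb[rsK]^{\eps,\zeta}(\mathbf{y}_1)$ with the external $\rslollipopr[rsK]^{\zeta}(\mathbf{y}_2)$ and vice versa, producing a product of two factors of the form $\int J'(\mathbf{y}_1-\mathbf{z})\,\rslollipopb[rsK]^{\eps}(\mathbf{z})\,\rslollirc[rsK]^{\zeta}(\mathbf{z},\mathbf{y}_2)\,\dif\mathbf{z}$: the $\rslollirc$ here connects an \emph{integrated inner vertex} to the other root, so neither factor vanishes by a pointwise boundary evaluation, and your appeal to dominated convergence hides the real difficulty, namely uniformity in $\zeta\in(0,\eps)$ and in the inner variable $\mathbf{z}$, which may approach the singularity at $\mathbf{y}_2$. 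The paper avoids all of this by observing that $\rsipcherryrb[rsK]^{\eps,\zeta}(t,0)=0$ exactly (it is odd, $2L$-periodic, and continuous), so the covariance equals $\Cov\bigl(\rselkrbrXB[rsK]^{\eps,\zeta,(t,0)}(t,x),\,\rselkrbr[rsK]^{\eps,\zeta}(s,y)\bigr)$ for free; \zcref{lem:Kx-bd} then extracts a quantitative factor $|x|^{\delta}$ from the recentered kernel $J'_{(0,-x)}$, and the remaining integrals are bounded uniformly by \zcref{lem:one-int-bound,prop:lollibd}. This free recentering at the boundary point is the idea your argument is missing.
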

\begin{proof}
  By definition, we have
  \begin{equation*}
    \EE \left[ \hat{\Pi}_{\x}^{\ve, \zeta} (\rselkrbr) (\y) \right] =\rselkrbrcS[rsK]^{\ve, \zeta, \x} (\y)= \rselkrbrcSG[rsK]^{\ve, \zeta} (\y) -  \rselkrbrcSN[rsK]^{\ve, \zeta} (\x, \y) ,
  \end{equation*}
  where on the right hand-side we have the non-recentered trees. Now, the second
  term can be treated via \zcref{prop:lollibd, lem:one-int-bound, eq:lollipopb-bound} and we  obtain
  \begin{equation}\label{e:claw-opened-b}
    \left\vert \rselkrbrcSN[rsK]^{\ve, \zeta} (\x, \y) \right\vert \lesssim  \log(2 +d_{\mf{s},\mathscr{S}}(\x, \y)^{-1}) ,
  \end{equation}
  which is an estimate of the desired order. This leaves us with the tree $\rselkrbrcSG $. By %
  the compact support property of $J'$, we have
  \begin{equation*}
    \rselkrbrcSG[rsK]^{\ve, \zeta} (\y)  = \int_{\y + \Theta} \rsbphz[rsK]^{\zeta, J'}(\y , \z) \varphi^{\ve}_{\uu,\vv} (z) \ud \z , 
  \end{equation*}
  for some compact set $\Theta \subseteq \mS_{2L}$ that is independent of $\y$. Here $\rsbphz[rsK]^{\zeta,
  J'}$ is defined in \zcref{eq:bphzdef} below and we write the integration
  variable as $\z=(r,z)$. Now by \zcref{lem:bphzbd} applied with $J_1 = J'$ and
  $\alpha =2$ (note that the assumption \zcref{e:assu-J1} on the kernel $J'$ is
  satisfied by \zcref{e:Kx-bd}), we
  have that 
  \begin{equation*}
    \Big\vert \rsbphz[rsK]^{\zeta, J'}(\y, \z)  \Big\vert \lesssim d_{\mf{s}, \mathscr{S}}(\y , \z)^{-2}  \log(2 +d_{\mf{s}, \mathscr{S}}(\y , \z)^{-1}) .
  \end{equation*}
  Then if $d_{L}(y)
  \geq  2\ve$, we find for some $c > 0$ satisfying $\Theta
  \subseteq [-c, c] \times \TT_{2L}$ that
  \begin{equation*}
    \begin{aligned}
    \left\vert \int_{\y + \Theta} |\y - \z|_{\mf{s}}^{-2}  \log(2 +d_{\mf{s}, \mathscr{S}} (\y , \z)^{-1})
    \varphi^{\ve}_{\uu , \vv} (z) \ud \z \right\vert & \lesssim \log(2 +1/d_L(y) )\int_{[-c,
    c] } \frac{1}{|r |+ d_L (y)^{2}} \ud r\\& \lesssim \log{ (2+ 1/d_L(y))}^2 ,
    \end{aligned}
    \end{equation*}
    where we have used that $\varphi^\ve_{\uu,
    \vv}$ is supported in regions of radius $\eps$ about $L\ZZ$. 
    On the other hand, for
    $|d_L(y)|< 2 \ve$ we have the
    following upper bound:
    \begin{equation*}
      \begin{aligned}
        \left\vert \int_{\y+ \Theta} |\y - \z|_{\mf{s}}^{-2}  \log(2 +d_{\mf{s}, \mathscr{S}} (\y , \z)^{-1})
    \varphi^{\ve}_{\uu , \vv} (z) \ud \z \right\vert & \leq 2 \int_{\y+ \Theta} |\y - \z|_{\mf{s}}^{-2}  \log(2 + |\y - \z|_{\mf{s}}^{-1})
    \varphi^{\ve}_{\uu , \vv} (z) \ud \z \\
    & \lesssim \ve^{-1} \int_{[-c,
      c] \times [-  \ve, \ve] } \frac{\log(2 +(\sqrt{r} + |z|)^{-1})}{|r |+ | z |^{2}} \ud r \ud z \\
      & \lesssim \ve^{-1} \int_0^{ \ve} \log{ (2+ 1/z) }^2  \ud z \\
      & \lesssim \log(2+ 1/\ve)^2 \lesssim \log(2+ 1/ d_L(y))^2,
      \end{aligned}
    \end{equation*}
  where in the first line we used the reflection symmetry of $\varphi^\ve$
  about $0$ and $L$.
  Therefore, we obtain the estimate
  \begin{equation}\label{eq:thirdtermbound}
      \Big\vert \rselkrbrcS[rsK]^{\ve, \zeta, \x} (\y) \Big\vert  \lesssim  \log(2 +d_{\mf{s},\mathscr{S}}(\x, \y)^{-1}) + \log(2 +1/d_L(y))^2.
  \end{equation}
  This completes the proof of \zcref{e:rbr-aim}.

  For the covariance, we have %
  \begin{equation} \label{e:to-bd-selkrbr}
      \Cov \left( \hat{\Pi}_{\x}^{\ve, \zeta} (\rselkrbr[rsN]) ( \y_1) ,\hat{\Pi}_{\x}^{\ve, \zeta} (\rselkrbr[rsN]) (\y_2) \right)  =  \rselkrbrcc[rsK]^{\ve, \zeta, \x} \left( \y_1, \y_2 \right) + \rselkrbrccc[rsK]^{\ve, \zeta, \x} \left( \y_1, \y_2 \right).
  \end{equation}
  The two terms on the right side are analogous to the terms bounded in \zcref{eq:bound-claw-second-term,eq:bound-claw-third-term}, the only difference being the attachment of additional $\rslollipopb$s. But the contribution of the $\rslollipopb$s is bounded by \zcref{eq:lollipopb-bound}, and thus, in the same way as  the bounds \zcref[range]{eq:bound-claw-second-term,eq:bound-claw-third-term}, we obtain
  \begin{equation}\label{eq:firstterm-elkrbr}
    \begin{aligned}
      \left\vert \rselkrbrcc[rsK]^{\ve, \zeta, \x} \left( \y_1, \y_2 \right) \right\vert  \lesssim_{\tilde\kappa} d_{\mf{s},\mathscr{S}}(\y_1, \y_2)^{-1} |\x - \y_1|_{\mf{s}}^{1/2-{\tilde{\kappa}}} |\x - \y_2|_{\mf{s}}^{1/2- {\tilde{\kappa}}},
  \end{aligned}
  \end{equation}
  and 
  \begin{equation}\label{eq:second-term-elkrbr}
    \left\vert \rselkrbrccc[rsK]^{\ve, \zeta, \x} \left( \y_1, \y_2 \right) \right\vert \lesssim (\log( 2 +d_{\mf{s}, \mathscr{S}}(\y_1 , \y_2)^{-1} + d_{\mf{s}, \mathscr{S}}(\x ,\y_1)^{-1} + d_{\mf{s}, \mathscr{S}}(\x, \y_2)^{-1}))^2 .
  \end{equation}
  Using \zcref{eq:firstterm-elkrbr,eq:second-term-elkrbr} in \zcref{e:to-bd-selkrbr}, we obtain \zcref{eq:elkrbrcovbd}.

  As for \zcref{e:newrbc2}, let us consider only the case $|x| \leq c
  \ve$ (the case $|x-L| \leq c \ve $ is treated analogously). Note also
  that it suffices to fix one of the two variables (in this case $x$) close to
  $\{0, L\}$, and leave $y$ free. Since $
  \rsipcherryrb[rsK]^{\ve, \zeta}(t, 0) = 0 $ we find that
  \begin{align}
     \Cov \left( \rselkrbr[rsK]^{\ve, \zeta} (t, x), \rselkrbr[rsK]^{\ve, \zeta} (s, y)  \right)    &=   \Cov \left( \rselkrbr[rsK]^{\ve, \zeta} (t, x) - \rsipcherryrb[rsK]^{\ve, \zeta}(t, 0) \rslollipopr[rsK]^\zeta(t, x), \rselkrbr[rsK]^{\ve, \zeta} (s, y)  \right)   \notag \\
     &=   \Cov \left( \rselkrbrXB[rsK]^{\ve, \zeta, (t, 0)} (t, x) , \rselkrbr[rsK]^{\ve, \zeta} (s, y)  \right)   . \label{eq:introduce-dash}
  \end{align}
  Therefore, we obtain terms similar to
  \zcref{eq:firstterm-elkrbr,eq:second-term-elkrbr}, but with the recentering
  only in one of the trees. We estimate via
  \zcref{prop:lollibd,lem:one-int-bound,e:Kx-bd} for any $\delta \in (0, 1)$ and
  with $\x = (t,x)$ that 
  \begin{equation*}
    \begin{aligned}
      \Big\vert & \rselkrbrccone[rsK]^{\ve, \zeta, (t, 0)} \left( (t,x), (s,y) \right) \Big\vert \\
      & \lesssim d_{\mf{s}, \mathscr{S}}( (t, x), (s, y))^{-1} |x|^\delta \int_{(\x+\Theta)^2} ( |(t, x) - \z_1 |_{\mf{s}}^{-2-\delta}+ |(t, 0) - \z_1 |_{\mf{s}}^{-2-\delta} ) d_{\mf{s}, \mathscr{S}}( \z_1, \z_2)^{-1} |(s, y) - \z_2|_{\mf{s}}^{-2} \ud \z_1 \ud \z_2 \\
      & \lesssim d_{\mf{s}, \mathscr{S}}( (t, x), (s, y))^{-1} |x|^\delta \lesssim_{s,t} |x|^\delta .
    \end{aligned}
  \end{equation*}
  For the second term, we similarly estimate
  \begin{equation*}
    \left\vert \rselkrbrcccone[rsK]^{\ve, \zeta, (t,0)} \left( (t,x), (s, y)\right) \right\vert \lesssim_{s, t} |x|^\delta \int_{\x+\Theta} (|(t, x) - \z|_{\mf{s}}^{-2-\delta}+|(t, 0) - \z|_{\mf{s}}^{-2-\delta}) d_{\mf{s}, \mathscr{S}}(\z, (s, y))^{-1} \ud \z \lesssim_{s,t} |x|^\delta .
  \end{equation*}
  Overall, we have obtained that
  \begin{equation*}
   \Big| \Cov \left( \rselkrbrXB[rsK]^{\ve, \zeta, (t, 0)} (t, x) , \rselkrbr[rsK]^{\ve, \zeta} (s, y)  \right) \Big| \lesssim_{s,t, \delta} |x|^\delta .
  \end{equation*}
This concludes the proof of \zcref{e:newrbc2}.
\end{proof}

\subsubsection{$\protect\rselkrrb$}
For the tree $\rselkrrb$, there is no recentering involved. However, the tree is renormalized, meaning that
\begin{equation*}
  \hat{\Pi}_\x^{\ve, \zeta} (\rselkrrb[rsN]) (\y) = \rscherryrenormb[rsK]^{\ve, \zeta} (\y)  .
\end{equation*}
In particular, the expected value of the model on this tree is given by
\begin{equation*}
  \EE \left[ \hat{\Pi}_\x^{\ve, \zeta} \rselkrrb[rsN] (\y) \right] = \EE  \rscherryrenormb[rsK]^{\ve, \zeta} (\y) = \rscherryrenormbtr[rsK]^{\ve, \zeta} (\y) ,
\end{equation*}
where the filled black triangle is as in \zcref{eq:black-triangle-def}.

For the present tree we find the following estimate.
\begin{prop}\label{prop:rrbelk}
    We have
    \begin{equation} \label{e:rrb-aim}
      \begin{aligned}
        \left\vert \EE \left[ \hat{\Pi}_{\x}^{\ve, \zeta} (\rselkrrb) (\y)  \right] \right\vert \lesssim 1 \qquad\text{and} \qquad
        \left\lvert\Cov \left( \hat{\Pi}_{\x}^{\ve, \zeta} (\rselkrrb) (\y_1) , \hat{\Pi}_{\x}^{\ve, \zeta}  (\rselkrrb)  (\y_2) \right)\right\rvert \lesssim \log ( 2+ d_{\mf{s},\mathscr{S}} (\y_1, \y_2)^{-1} )  . 
      \end{aligned}
    \end{equation}
    Moreover, for any $s \neq t \in \RR$, we have
\begin{equation}\label{e:newrbc3}
  \adjustlimits \lim_{\ve \downarrow 0} \sup_{\zeta \in (0, \ve)} \sup_{d_L(x),d_L(y) \leq c \ve} \; \left\vert  \Cov \left( \rselkrrb[rsK]^{\ve, \zeta} (t, x), \rselkrrb[rsK]^{\ve, \zeta} (s, y)  \right) \right\vert = 0 .
\end{equation}
\end{prop}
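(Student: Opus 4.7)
My plan exploits two structural features of $\hat\Pi^{\eps,\zeta}_\x(\rselkrrb)(\y) = \rscherryrenormb[rsK]^{\eps,\zeta}(\y) = \rsiprenorm[rsK]^\zeta(\y) \cdot \rslollipopb[rsK]^\eps(\y)$ (cf.\ \zcref{tab:model-defns,tab:Mhatdef}): the absence of recentering, which reduces the problem to a product of a deterministic factor $\rslollipopb^\eps$ against a random factor $\rsiprenorm^\zeta$ in the inhomogeneous second Wiener chaos; and a reflection symmetry that will force $\rsiprenorm^\zeta$ to vanish at the boundary $L\mathbb{Z}$. For the expectation bound in \zcref{e:rrb-aim}, the deterministic $\rslollipopb^\eps$ factors out and it suffices to bound $\mathbb{E}[\rsiprenorm[rsK]^\zeta(\y)] = (J' \circledast \rstriangle[rsK]^\zeta)(\y)$; the uniform boundedness of $\rstriangle[rsK]^\zeta$ from \zcref{eq:renorm-constants-same}, combined with the integrability of $|J'| \lesssim |\cdot|_{\mathfrak{s}}^{-2}$ from \zcref{eq:Kbds} on its compact support (parabolic effective dimension $3$), controls the convolution, and combining with $|\rslollipopb^\eps| \lesssim 1$ from \zcref{eq:lollipopb-bound} delivers the first estimate.

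For the covariance bound in \zcref{e:rrb-aim}, the deterministic factors $\rslollipopb^\eps(\y_i)$ again come out, reducing matters to $\Cov(\rsiprenorm^\zeta(\y_1), \rsiprenorm^\zeta(\y_2))$. Applying Isserlis' theorem together with $\mathbb{E}[\rsrenorm^\zeta(\z_1)\rsrenorm^\zeta(\z_2)] = 2\rslollirc^\zeta(\z_1,\z_2)^2$ yields
\begin{equation*}
\Cov(\rsiprenorm^\zeta(\y_1),\rsiprenorm^\zeta(\y_2)) = 2 \int_{\mathbb{S}_{2L}^2} J'(\y_1-\z_1)\,J'(\y_2-\z_2)\,\rslollirc^\zeta(\z_1,\z_2)^2\,\dif\z_1\dif\z_2.
\end{equation*}
Using the bounds on $J'$ and $|\rslollirc^\zeta| \lesssim d_{\mathfrak{s},\mathscr{S}}^{-1}$ from \zcref{eq:redcherryx1x2bd}, two applications of \zcref{lem:one-int-bound}---first integrating $\z_1$ to produce $d_{\mathfrak{s},\mathscr{S}}(\y_1,\z_2)^{-1}$, then integrating $\z_2$ to produce the logarithm---furnish the desired bound. (One could alternatively verify the hypotheses of \zcref{prop:graph-bound} directly on the contracted diagram.)

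The heart of the proof is the boundary vanishing \zcref{e:newrbc3}, for which I plan a symmetry-based argument. Since the extended noise is spatially even on $\TT_{2L}$, one checks that $\rslollipopr^\zeta$ is spatially odd, so $\rscherryrr^\zeta = (\rslollipopr^\zeta)^2$ is spatially even; since $C^{(1)}_\zeta$ from \zcref{eq:Echerryrr} is also spatially even, so is $\rsrenorm^\zeta$. Convolution with the spatially odd kernel $J'$ then produces a spatially odd function $\rsiprenorm^\zeta$, yielding the pointwise almost sure vanishing $\rsiprenorm[rsK]^\zeta(t,x_0)=0$ for every $x_0 \in L\mathbb{Z}$, and in particular $\Cov(\rsiprenorm^\zeta(t,0),\rsiprenorm^\zeta(s,y)) = 0$. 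To upgrade this to a quantitative estimate, I will write $\rsiprenorm^\zeta(t,x) = \rsiprenorm^\zeta(t,x)-\rsiprenorm^\zeta(t,0)$ inside the covariance and expand via Isserlis; the integrand then contains the difference $J'((t,x)-\z_1) - J'((t,0)-\z_1)$. Applying \zcref{lem:Kx-bd} with some $\delta \in (0,1)$ extracts a factor $|x|^\delta$ together with tails $|\z_1-(t,x)|_{\mathfrak{s}}^{-2-\delta}+|\z_1-(t,0)|_{\mathfrak{s}}^{-2-\delta}$; two applications of \zcref{lem:one-int-bound} then bound the integral by $\sum_\pm d_{\mathfrak{s},\mathscr{S}}((t,\pm x),(s,y))^{-\delta}$, which is controlled by $|s-t|^{-\delta/2}$ for fixed $s\neq t$. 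The identical argument covers $x$ near $L$ via reflection about $L$. Multiplying by the bounded factors $|\rslollipopb^\eps|$ and using $|x|,|y|\le c\eps$ yields $|\Cov(\rselkrrb^{\eps,\zeta}(t,x),\rselkrrb^{\eps,\zeta}(s,y))| \lesssim_{s,t} \eps^\delta \to 0$ uniformly in $\zeta \in (0,\eps)$. The main obstacle will be precisely this uniformity in $\zeta$: a naive Cauchy--Schwarz bound through $\Var(\rsiprenorm^\zeta)$ would fail, since that variance diverges logarithmically as $\zeta \downarrow 0$ (reflecting the negative homogeneity $-2\kappa$ of $\rsiprenorm$); the symmetry-based decomposition sidesteps this by keeping only the well-behaved kernel differences, whose improved regularity from \zcref{lem:Kx-bd} makes the double integral convergent uniformly in $\zeta$.
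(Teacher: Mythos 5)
Your proposal is correct and follows essentially the same route as the paper: the mean and covariance bounds via \zcref{eq:renorm-constants-same}, \zcref{eq:lollipopb-bound}, \zcref{prop:lollibd}, and \zcref{lem:one-int-bound}, and the boundary vanishing by exploiting the oddness of $x\mapsto\rsiprenorm[rsK]^{\zeta}(t,x)$ to rewrite the covariance with a recentered kernel and then extracting the factor $|x|^{\delta}$ from \zcref{e:Kx-bd}. Your closing remark about why a naive Cauchy--Schwarz through $\Var(\rsiprenorm^{\zeta})$ would fail is a correct and worthwhile observation, though not needed for the argument.
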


\begin{proof}
  For the mean, we use \zcref{eq:renorm-constants-same,eq:lollipopb-bound} to obtain
  \begin{equation*}
    \left\vert \EE \left[ \hat{\Pi}_{\x}^{\ve, \zeta} (\rselkrrb) (\y)  \right] \right\vert = \big\vert \rselkrrbc[rsK]^{\ve, \zeta} (\y) \big\vert \lesssim 1.
  \end{equation*}
  For the covariance we find that
  \begin{equation*}
    \begin{aligned}
      \Cov \left( \hat{\Pi}_{\x}^{\ve, \zeta} (\rselkrrb) (\y_1), \hat{\Pi}_{\x}^{\ve, \zeta} (\rselkrrb) ( \y_2 )  \right) & =  2 \, \rselkrrbcc[rsK]^{\ve, \zeta} \left( \y_1, \y_2 \right).
    \end{aligned}
  \end{equation*}
  By \zcref{lem:one-int-bound,prop:lollibd} and \zcref{eq:lollipopb-bound}, we have
  \begin{equation*}
    \begin{aligned}
      \left\vert \rselkrrbcc[rsK]^{\ve, \zeta} (\y_1, \y_2) \right\vert & \lesssim \log ( 2+ d_{\mf{s},\mathscr{S}} (\y_1, \y_2)^{-1} )   ,
    \end{aligned}
    \end{equation*}
  which completes the proof of \zcref{e:rrb-aim}.

  As for \zcref{e:newrbc3}, we have by \zcref{eq:lollipopb-bound} that
  \begin{equation*}
    \left\vert \Cov \left( \rselkrrb[rsK]^{\ve, \zeta} (t, x), \rselkrrb[rsK]^{\ve, \zeta} (s, y)  \right) \right\vert \lesssim \left\vert \rselkrrbccpl[rsK]^{\ve, \zeta} ((t,x), (s,y)) \right\vert .
  \end{equation*}
  Now by \zcref{tab:RS-table}, we see that for fixed $\ve, \zeta \in (0, 1)$,
  the random function $x \mapsto \rsipcherryrr[rsK]^{\ve, \zeta} (t, x)$ is odd and
  continuous. Therefore, we have
  \begin{equation*}
    \rselkrrbccpl[rsK]^{\ve, \zeta} ((t,0 ), (s,y)) = 0 ,
  \end{equation*}
  and we can rewrite
  \begin{equation*}
    \begin{aligned}
     \left\vert \rselkrrbccpl[rsK]^{\ve, \zeta} ((t,x), (s,y)) \right\vert & = \left\vert \rselkrrbccpl[rsK]^{\ve, \zeta} ((t,x), (s,y)) - \rselkrrbccpl[rsK]^{\ve, \zeta} ((t,0 ), (s,y)) \right\vert \\
     & = \left\vert \rselkrrbccplX[rsK]^{\ve, \zeta, (t, 0)} ((t,x), (s,y)) \right\vert.
    \end{aligned}
  \end{equation*}
  Therefore, 
  combining \zcref{e:Kx-bd} with \zcref{lem:one-int-bound,prop:lollibd}, we obtain for any
  $\delta \in (0, 1)$ and a compact set $\Theta \subseteq \mS_{2L}$ independent
  of $\x= (t, x), \y=(s, y)$ that 
  \begin{equation*}
    \begin{aligned}
      \left\vert \rselkrrbccplX[rsK]^{\ve, \zeta, (t,0)} (\x, \y) \right\vert &  \lesssim |x|^\delta \int_{( \x+\Theta)^2} (|(t,0) - \z_1|_{\mf{s}}^{-2-\delta}+ |\x - \z_1|_{\mf{s}}^{-2-\delta}) d_{\mf{s}, \mathscr{S}}(\z_1, \z_2)^{-2} |\y - \z_2|_{\mf{s}}^{-2}\ud \z_1 \ud \z_2 \\
      & \lesssim_{s, t} |x|^\delta ,
    \end{aligned}
  \end{equation*}
  from which \zcref{e:newrbc3} follows.%
\end{proof}

\subsection{Third and fourth chaoses}
Finally, we are left with estimating the covariance functions of the remaining three trees, $\rselkrrr, \rscandelabrarrrr$, and $\rsmooserrrr$.  The main technical difficulty arises from those pairings that generate subtrees of the form $\rsbphz$. Consequently, we must handle singular integrals involving kernels with logarithmic divergence. In this section we will invoke several technical estimates on such singular integrals whose proofs are deferred to \zcref{s.renormalizationblabla}.

\subsubsection{$\protect\rselkrrr$}

The tree $\rselkrrr$ does not require any recentering, just the renormalization of the cherry $\rscherryrr$. Our estimates are obtained similarly to the ones obtained in the previous section.

\begin{prop}\label{prop:elkrrr}
We have 
\begin{equation}
\mathbb{E}\left[\hat{\Pi}^{\eps,\zeta}_{\mathbf{x}}(\rselkrrr)(\mathbf{y})\right]=0\label{eq:elkrrr-exp}
\end{equation}
and, for any ${\tilde{\kappa}}>0$,
\begin{equation}
\Cov\left(\hat{\Pi}^{\ve, \zeta}_{\mathbf{x}}(\rselkrrr)(\mathbf{y}_{1}),\hat{\Pi}^{\ve, \zeta}_{\mathbf{x}}(\rselkrrr)(\mathbf{y}_{2})\right)\lesssim d_{\mathfrak{s},\mathscr{S}}(\mathbf{y}_{1},\mathbf{y}_{2})^{-1-{\tilde{\kappa}}} .\label{eq:elkrrr-cov}
\end{equation}
Moreover, for any $s \neq t \in \RR$ and $c\in(0,\infty)$, we have
\begin{equation}\label{e:newrbc4}
  \adjustlimits\lim_{\ve \downarrow 0} \sup_{\zeta \in (0, \ve)} \sup_{d_L(x),d_L(y) \leq c \ve} \; \left\vert  \Cov \left( \rscherryrenormr[rsK]^{ \zeta} (t, x), \rscherryrenormr[rsK]^{\zeta} (s, y)  \right) \right\vert = 0 .
\end{equation}
\end{prop}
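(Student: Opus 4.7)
The plan is to address the three statements in turn, using respectively the Wiener chaos structure (for the mean), the Isserlis expansion combined with \zcref{prop:graph-bound} and the BPHZ framework of \zcref{sec:BPHZ} (for the covariance bound), and a recentering argument in the spirit of \zcref{prop:rbrelk} and \zcref{prop:rrbelk} (for the boundary vanishing). The main obstacle will be the covariance estimate, because some of the Feynman diagrams produced by the Isserlis expansion have simplified graphs that fail the degree condition \zcref{eq:degcondition}; handling them requires the BPHZ cancellations coming from the renormalization of $\rscherryrr$ rather than a direct application of \zcref{prop:graph-bound}.

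The vanishing of the mean \zcref{eq:elkrrr-exp} is immediate. The random variable $\hat{\Pi}^{\eps,\zeta}_{\mathbf{x}}(\rselkrrr)=\rscherryrenormr[rsK]^{\zeta}=\rslollipopr[rsK]^{\zeta}\cdot\rsiprenorm[rsK]^{\zeta}$ is the product of a first-chaos and a second-chaos element (the zeroth-chaos contribution of $\rscherryrr$ being removed by the renormalization constant $C^{(1)}_\zeta$), so by the Wiener product formula it lies in the direct sum of the first and third homogeneous Wiener chaoses, both of zero mean.

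For the covariance estimate \zcref{eq:elkrrr-cov}, I would write $\rscherryrenormr[rsK]^\zeta=\rselkrrr[rsK]^\zeta-B\cdot\rslollipopr[rsK]^\zeta$ with $B(\y)=(K'\circledast C^{(1)}_\zeta)(\y)$ deterministic, expand the covariance bilinearly, and compute each resulting expectation via Isserlis. The Feynman diagrams corresponding to the self-contractions $(b_1,b_2)$ or $(b_1',b_2')$ inside a single cherry are cancelled (up to a convergent remainder) by the corresponding contributions generated by $B\cdot\rslollipopr[rsK]^\zeta$; this is the BPHZ-type cancellation that is implemented rigorously in \zcref{sec:BPHZ}. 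What remains is a sum over ten ``allowed'' matchings of the six noise leaves, namely six ``Type D'' diagrams (all three pairings cross-tree) and four ``Type B'' diagrams (one cross-tree pair and two non-forbidden within-tree pairs). For the six Type D diagrams, \zcref{prop:graph-bound} applies directly: the simplified graph $\mathsf{G}_{\mathrm c}$ has four vertices, two tree-edges of weight $2$, and three contraction-edges of weight $1$; the degree conditions are readily verified via \zcref{lem:check-degcond-simpler} and \zcref{rem:how-to-check}, and the worst subgraph containing both roots has $\gamma=1$, yielding a contribution of order $d_{\mathfrak{s},\mathscr{S}}(\y_1,\y_2)^{-1}$. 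The Type B diagrams are where the BPHZ technology is really needed, since their simplified graphs have a pair of vertices with combined weight $Q=3$; the subdivergences coming from the factor $\rslollirc[rsK]^\zeta(\y,\z)$ at coincident points are controlled via the near-cancellation $\int K'(\y-\z)\,\rslollirc[rsK]^\zeta(\y,\z)\,\dif\z\approx0$ at stationarity, which improves the effective kernel in the same way as the BPHZ estimate \zcref{lem:bphzbd} does for the tree $\rsbphz$. Summing everything and absorbing the residual logarithmic factors into $d_{\mathfrak{s},\mathscr{S}}^{-\tilde\kappa}$ gives \zcref{eq:elkrrr-cov}.

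For the boundary vanishing \zcref{e:newrbc4}, the key observation is that $\rscherryrenormr[rsK]^\zeta(t,x_0)=0$ for every $t\in\RR$ and every $x_0\in L\ZZ$. Indeed, in the factorization $\rscherryrenormr=\rslollipopr\cdot\rsiprenorm$ one has $\sgn(\rslollipopr)=-1$ and $\sgn(\rsiprenorm)=-\sgn(\rsrenorm)=-1$ by \zcref{eq:sgnbasecase,eq:sgnrecursion}, so both factors are continuous and odd under $\sigma_{\mathrm{refl}}$, and combined with $2L$-periodicity they vanish at every point of $L\ZZ$. Assuming without loss of generality that $|x|\le c\eps$ (the case $|x-L|\le c\eps$ is analogous), and using that $\rslollipopr[rsK]^\zeta(t,0)=0$, I would write
\[
\rscherryrenormr[rsK]^\zeta(t,x)=\bigl(\rslollipopr[rsK]^\zeta(t,x)-\rslollipopr[rsK]^\zeta(t,0)\bigr)\cdot\rsiprenorm[rsK]^\zeta(t,x),
\]
so that in every Feynman diagram of the Isserlis expansion of $\mathbb{E}[\rscherryrenormr[rsK]^\zeta(t,x)\rscherryrenormr[rsK]^\zeta(s,y)]$, the kernel $K'((t,x)-\z)$ on the edge from the left-tree root to its outer noise leaf is replaced by the recentered kernel $J'((t,x)-\z)-J'((t,0)-\z)$. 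Applying \zcref{e:Kx-bd} with any fixed $\delta\in(0,1)$ extracts a prefactor $|x|^\delta$, and the remaining multi-integrals are bounded uniformly in $\zeta\in(0,\eps)$ by the same techniques as above, with the modified edge now carrying weight $2+\delta$ in place of $2$. Since $s\ne t$ keeps all residual singularities bounded, I would conclude that $|\Cov(\rscherryrenormr[rsK]^\zeta(t,x),\rscherryrenormr[rsK]^\zeta(s,y))|\lesssim_{s,t,\delta}|x|^\delta\le(c\eps)^\delta$, which tends to zero as $\eps\downarrow0$, uniformly in $\zeta\in(0,\eps)$.
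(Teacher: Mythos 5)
Your outline of the mean (odd Wiener chaos) and your enumeration of the ten surviving matchings (six all-cross-tree, four with one cross-tree pair and two claw-type within-tree pairs) agree with the paper, and the six ``Type D'' diagrams are indeed dispatched by \zcref{prop:graph-bound} exactly as you say. The genuine gap is in your treatment of the four ``Type B'' diagrams. Each of these consists of \emph{two} copies of the logarithmically divergent subtree $\rsbphz$ (one in each tree, with $Q=3$ between the root and the internal vertex, hence degree $0$) joined by a single contraction edge $\rslollirc$. Your proposed mechanism --- ``the near-cancellation $\int K'(\y-\z)\,\rslollirc[rsK]^{\zeta}(\y,\z)\,\dif\z\approx 0$'' --- is not accurate: that integral is $\rsclawrrcA[rsK]^{\zeta}(\y)$, which is \emph{not} zero in this reflected setting (unlike the periodic case, where it vanishes by antisymmetry); it is merely uniformly bounded, and proving that boundedness is itself the nontrivial Fourier computation of \zcref{lem:claw-symm-bd}. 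More importantly, even granting the single-subdivergence estimate \zcref{lem:bphzbd}, you cannot simply apply it once in each tree: the contraction joining the two renormalized subtrees is a \emph{spatial} mollification of a delta, and applying \zcref{lem:bphzbd} twice with $J_1=J'$ produces a non-integrable composition. The paper resolves this by applying \zcref{lem:bphzbd} with $J_1$ taken to be the kernel $M_i$ represented by the entire other half of the diagram, which in turn requires verifying the H\"older hypothesis \zcref{e:assu-J1-2} for the BPHZ-renormalized kernel --- this is exactly the content of the additional continuity estimate \zcref{lem:bphz-increment} --- together with a decomposition of $\rslollirc$ into a translation-invariant part, a reflected part, and a smooth remainder to handle the failure of \zcref{e:assu-J1} for the reflected piece. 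None of this chaining appears in your argument, and without it the Type B bound does not follow.

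A related issue affects your boundary estimate \zcref{e:newrbc4}. You recenter the stochastic factor $\rslollipopr$ at $(t,0)$, so that after collapsing the noise vertices the crossed edge becomes an increment of the covariance kernel $\rslollirc$ in its first argument; \zcref{e:Kx-bd} controls increments of the deterministic kernel $J'$, not of $\rslollirc$, so extracting the $|x|^{\delta}$ factor this way requires an additional H\"older estimate (uniform in $\zeta$) that the paper does not provide. The paper instead uses $\rsiprenorm[rsK]^{\zeta}(t,0)=0$ and recenters the \emph{deterministic} $\mathcal{I}'$ edge of $\rsiprenorm$, to which \zcref{e:Kx-bd} applies directly. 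Even then, the Type B diagrams at the boundary are not handled by ``the same techniques as above'': one must further split off the term $\rsclawrrX[rsK]^{\zeta,(t,0)}\cdot(\cdots)$ as in \zcref{e:another-elk} and invoke the recentered BPHZ estimate \zcref{lem:zigzag} for the remaining piece. Your proposal is structurally on the right track, but these two steps --- the composition of two BPHZ subtractions across a single contraction, and the interaction of the boundary recentering with that composition --- are the actual content of the proof and are missing.
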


\begin{proof}
The expectation \zcref{eq:elkrrr-exp} is immediate by symmetry.
We thus begin by proving \zcref{eq:elkrrr-cov}. Referring to \zcref{tab:model-defns},
we see that there is no recentering, so we can expand
\begin{equation}
\begin{aligned}
\Cov\left(\hat{\Pi}^{\ve, \zeta}_{\mathbf{x}}(\rselkrrr)(\mathbf{y}_{1}),\hat{\Pi}^{\ve, \zeta}_{\mathbf{x}}(\rselkrrr)(\mathbf{y}_{2})\right) & \lesssim\left|\rselkrrrc[rsK]^{\zeta}(\mathbf{y}_{1},\mathbf{y}_{2})\right|+\left|\rselkrrrccc[rsK]^{\zeta}(\mathbf{y}_{1},\mathbf{y}_{2})\right| + \left|\rselkrrrcccccc[rsK]^{\zeta}(\mathbf{y}_{1},\mathbf{y}_{2})\right| \\
 & \qquad + \left|\rselkrrrcc[rsK]^{\zeta}(\mathbf{y}_{1},\mathbf{y}_{2})\right|+\left|\rselkrrrcccc[rsK]^{\zeta}(\mathbf{y}_{1},\mathbf{y}_{2})\right|.
\end{aligned}\label{eq:elkrrr-cov-exp}
\end{equation}
For the first term on the right side, we use \zcref{eq:renorm-constants-same,eq:redcherryx1x2bd} to obtain
\begin{equation*}
  \left\vert \rselkrrrc[rsK]^{\zeta} \right\vert \left( \y_1, \y_2 \right)  \lesssim d_{\mathfrak{s},\mathscr{S}} (\y_1, \y_2)^{-1} .
\end{equation*}
For the last two terms of \zcref{eq:elkrrr-cov-exp} we use \zcref{prop:graph-bound} to obtain
\begin{equation*}
  \left\vert \rselkrrrcc[rsK]^{\zeta} \right\vert \left( \y_1, \y_2 \right) + \left\vert \rselkrrrcccc[rsK]^{\zeta} \right\vert \left( \y_1, \y_2 \right)  \lesssim d_{\mathfrak{s},\mathscr{S}} (\y_1, \y_2)^{-1}.
\end{equation*}
Thus, to complete the proof of \zcref{eq:elkrrr-cov}, it remains to bound the second and third term on the right side of \zcref{eq:elkrrr-cov-exp}.

The second term, involving  $\rselkrrrccc$, requires some care. We would like to
bound it via \zcref{lem:bphzbd}. This is somewhat complicated by the fact that
there are \emph{two} copies of the tree $\rsbphz$ joined by a single
contraction. The contraction represents a \emph{spatial} mollification of a
delta function, arising from our spatial mollification of the noise rather than
a spatiotemporal mollification, and this distinction leads to certain
integrability issues if we try to apply \zcref{lem:bphzbd} twice with $J_1=J'$.
To avoid this problem, we instead apply \zcref{lem:bphzbd} with $J_1$ taken to
be $\rselkrrrcccARM[rsK]^\zeta$,
which is to say the kernel represented by the entire remainder of the tree. Of course, then to check
the hypothesis of \zcref{lem:bphzbd} with this choice of $J_1$ we need to apply
\zcref{lem:bphzbd} once again. In fact, this will additionally require the
further refined continuity result given by \zcref{lem:bphz-increment}.

We start by using \zcref{eq:lollirc-diff-bd,eq:EW-cov} to write %
\begin{equation*}
  \rslollirc[rsK]^\zeta( \x, \y) = q_{|t-s|} * \Sh^\zeta_{2L} (x -y )- q_{|t-s|} * \Sh^\zeta_{2L} (x + y ) + R^\zeta(\x, \y) ,
\end{equation*}
where $q$ is the periodic heat kernel and $R^\zeta$ is a remainder that is bounded and H\"older continuous (uniformly in $\zeta$), and we use the notation
$\x= (t,x)$ and $\y=(s, y)$. We then define
\begin{equation} \label{e:Jidef}
  J_1 (\x, \y) = q_{|t-s|} * \Sh^\zeta_{2L} (x -y ) , \qquad J_2(\x, \y) = - q_{|t-s|} * \Sh^\zeta_{2L} (x + y ) , \qquad\text{and}\qquad J_3 (\x, \y) = R^\zeta(\x, \y) .
\end{equation}
Each $J_i$ is symmetric in the sense that $J_i(\x, \y) = J_i(\y, \x)$. In addition, $J_1$
satisfies \zcref{e:assu-J1} with $\alpha = 1$ (and therefore also for any
$\alpha > 1$) and any $\delta \in (0,
1)$. Moreover, $J_3$ satisfies \zcref{e:assu-J1} for any $\alpha > 0$ and $\delta
\in (0, 1)$. Finally, $J_2$ satisfies
\begin{equation}\label{e:symm12}
  J_2(\x, \y) = - J_1(\x, \sr \y) .
\end{equation}
It follows that we can apply \zcref{lem:bphzbd,lem:bphz-increment} to both $J_1$
and $J_3$. We do not apply them directly to $J_2$ (because it does not satisfy
\zcref{e:assu-J1}), but instead for $J_2$ we will reduce ourselves to $J_1$ via
\zcref{e:symm12}. 

Now for $i =1,2,3$ we define
\begin{equation*}
  M_i (\x, \y) = \rsbphz[rsK]^{\zeta, J_i} (\y, \x) ,
\end{equation*}
using the notation defined in \zcref{eq:bphzdef} below.
Note that the arguments on the right side are exchanged ($(\y, \x)$ rather than
$(\x, \y)$), which allows us to rewrite 
\begin{equation*}
  \rselkrrrccc[rsK]^{\zeta} (\y_1, \y_2 )= \sum_{i=1}^3 \rsbphz[rsK]^{\zeta, M_i} (\y_1, \y_2) .
\end{equation*}
We observe that by \zcref{e:symm12} we have
\begin{equation} \label{e:symm123}
  \rsbphz[rsK]^{\zeta, M_2} (\y_1, \y_2) = \rsbphz[rsK]^{\zeta, M_1} (\sr \y_1, \y_2) .
\end{equation}
Next, we apply \zcref{lem:bphzbd} first to $M_i$, for $i =1,3$ and then to $
\rsbphz[rsK]^{\zeta, M_i}$ for $i=1,3$. In the latter case we are able to reapply \zcref{lem:bphzbd}
as a consequence of \zcref{lem:bphz-increment}, which verifies the hypothesis \zcref{e:assu-J1-2}. In both cases we apply the lemmas with arbitrary $\alpha\in (1,2)$ and $\delta\in(0,2-\alpha)$. We obtain, for any $\tilde\kappa>0$,
\begin{equation*}
  \Big\vert \rsbphz[rsK]^{\zeta, M_i} (\y_1, \y_2 ) \Big\vert \lesssim_{\tilde{\kappa}} |\y_1 -  \y_2|_{\mf{s}}^{-1 - \tilde{\kappa}},
\end{equation*}
for $i=1,3$. Finally, by \zcref{e:symm123} we
conclude
\begin{equation*}
  \Big\vert \rselkrrrccc[rsK]^{\zeta} (\y_1, \y_2 ) \Big\vert \lesssim_{\tilde{\kappa}} d_{\mf{s}, \mathscr{S}}(\y_1 , \y_2)^{-1 - \tilde{\kappa}} ,
\end{equation*}
for any $\tilde{\kappa} > 0$, which is of the desired order for \zcref{eq:elkrrr-cov}.

Finally, for the third term in \zcref{eq:elkrrr-cov-exp}, we use \zcref{eq:renorm-constants-same} and
\zcref{lem:bphzbd} with $J_1, J_2, J_3$ as in the previous case to estimate
\begin{equation*}
  \left|\rselkrrrcccccc[rsK]^{\zeta}(\mathbf{y}_{1},\mathbf{y}_{2})\right| \lesssim_{\tilde{\kappa}} d_{\mathfrak{s},\mathscr{S}}(\y_1, \y_2)^{-1 - {\tilde{\kappa}}} ,
\end{equation*}
which is of the desired order for \zcref{eq:elkrrr-cov}. This completes the proof of \zcref{eq:elkrrr-cov}.

It remains to prove \zcref{e:newrbc4}. As in previous cases, we treat only the boundary $x=0$,
since the boundary $x =L$ is treated similarly. We first use %
the fact that $\rsiprenorm[rsK]^\zeta
(t, 0) = 0$ to find, for fixed
$\ve, \zeta \in (0,1)$ and $t \in \RR$, that
\begin{align} 
     \Cov \left( \rscherryrenormr[rsK]^{ \zeta} (t, x), \rscherryrenormr[rsK]^{\zeta} (s, y)  \right)  &=   \Cov \left( \rscherryrenormr[rsK]^{ \zeta} (t, x) - \rsiprenorm[rsK]^\zeta
    (t, 0) \rslollipopr[rsK]^\zeta (t, x)  , \rscherryrenormr[rsK]^{\zeta} (s, y)  \right) \notag \\
    & =  \Cov \left(  \rscherryrenormrXA[rsK]^{ \zeta, (t, 0)} (t, x) , \rscherryrenormr[rsK]^{\zeta} (s, y)  \right) .\label{e:cov-interm-elkrrr}
  \end{align}
Here we have again used the recentering notation introduced in 
\zcref{e:recenter-notation} above. 
When computing the covariances in \zcref{e:cov-interm-elkrrr}, we find the analogues of the same
diagrams that appear in \zcref{eq:elkrrr-cov-exp}, each with the left edge incoming into the
left root replaced  by a crossed edge. 

For the analogue of the first term in \zcref{eq:elkrrr-cov-exp}, we combine \zcref{eq:renorm-constants-same} with
\zcref{e:Kx-bd,prop:lollibd} to obtain for any $\delta \in (0, 1)$ and $s\neq t$ that
\begin{equation*}
  \left\vert \rselkrrrcXB[rsK]^{\zeta, (t,0)} \right\vert \left((t, x), (s,y) \right)  \lesssim_{s, t, \delta} |x|^\delta .
\end{equation*} 
Similarly, following the steps above, together with \zcref{e:Kx-bd,lem:bphzbd,prop:lollibd}, we bound
\begin{equation*}
  \left( \left|\rselkrrrccccccXA[rsK]^{\zeta, (t,0)} \right| + \left\vert \rselkrrrccXA[rsK]^{\zeta, (t,0)} \right\vert + \left\vert \rselkrrrccccXA[rsK]^{\zeta, (t,0)} \right\vert \right) \left((t, x), (s,y) \right) \lesssim_{s, t,\delta} |x|^\delta .
\end{equation*}

Therefore, we are left with estimating 
\begin{equation}
  \left\vert \rselkrrrcccXA[rsK]^{\zeta, (t,0)} \right\vert \left((t, x), (s,y) \right).\label{eq:lasttwoterms}
\end{equation}
Again, this term is delicate because it contains $\rsclawrrcA$ as a subtree. %
We estimate
\begin{equation}\label{e:another-elk}
  \left\vert \rselkrrrcccXA[rsK]^{\zeta, (t,0)} \right\vert \left((t, x), (s,y) \right) \leq \Big\vert \rselkrrrcccXC[rsK]^{\zeta, (t,0),\x} ((t, x), (s,y)) \Big\vert + \Big\vert \rsclawrrX[rsK]^{\zeta, (t,0)} (t, x)\cdot \rselkrrrXA[rsK]^{\zeta} ((t, x),(s,y)) \Big\vert .
\end{equation}
Here we define, for any kernel $J_1$, %
\begin{equation}\label{e:defzigzag}
  \rsbphzXX[rsK]^{\zeta, \w, \y_1, J_1} ( \y_1, \y_2) \coloneqq \int_{\mS_{2L}}   ( J_1(\z, \y_2) - J_1(\y_1, \y_2)) (J'(\y_1 - \z)-J'(\w-\z))\rslollirc[rsK]^\zeta (\z, \y_1) \ud \z  
\end{equation}
in accordance with \zcref{e:new-zigzagdef} below,
and then
\begin{equation*}
  \begin{aligned}
    \rselkrrrcccXC[rsK]^{\zeta, (t,0),\x} (\x, \y) & =  \sum_{i=1}^3 \rsbphzXX[rsK]^{\zeta, (t,0), \x, M_i} ( \x, \y) \\
    & = \sum_{i=1}^3  \int_{\mS_{2L}} J'_{(0, -x)} (\x- \z)  ( M_i (\z,  \y) - M_i(\x ,  \y)) \rslollirc[rsK]^\zeta (\z, \x) \ud \z 
  \end{aligned}
\end{equation*}
with $\x = (t, x), \y=(s, y)$, and $(t,0)=(0,-x)+\x$.
Let us start by estimating the second term in \zcref{e:another-elk}. For the first factor, via \zcref{lem:claw-symm-bd} and the same arguments that lead to
\zcref{e:claw-opened-b}, we have
\begin{equation}
  \Big\vert \rsclawrrX[rsK]^{\zeta, (t,0)} (t, x) \Big\vert \lesssim \log(2 +1/|x|) .\label{eq:clawrrX}
\end{equation}
This logarithmic upper bound is not optimal, and actually we expect this term to be bounded uniformly in $x$. However, this does not matter, because for the other factor we are about to obtain an algebraic bound (of arbitrarily small power) anyway. %
Indeed, for the second factor in the second term, since
$\rslollipopr[rsK]^\zeta(t, 0) = 0$, we have for any $\delta \in (0, 1)$ that
\begin{equation}
  \Big\vert \rselkrrrXA[rsK]^{\zeta} ((t, x),(s,y)) \Big\vert = \Big\vert \rselkrrrXA[rsK]^{\zeta} ((t, x),(s,y)) -\rselkrrrXA[rsK]^{\zeta} ((t, 0),(s,y)) \Big\vert %
  \lesssim_{s, t, \delta} |x|^\delta ,\label{eq:penelktimate}
\end{equation}
where the last estimate is a consequence of \zcref{lem:bphz-increment} applied
to the kernels $J_i$ defined in \zcref{e:Jidef}. %

Finally, we turn to the first term on the right side of \zcref{e:another-elk}. 
Here we apply \zcref{lem:zigzag} with the kernel $ J_1=M_i$ and any $\alpha \in
(0, 1), \delta \in (0, 2- \alpha)$. In this way, we obtain
\begin{equation}\label{e:zzbd}
  \Big\vert \rselkrrrcccXC[rsK]^{\zeta, (t,0), \x} (\x, \y) \Big\vert \lesssim_{s,t} |x|^\delta ,
\end{equation}
as required. 
This completes the proof of \zcref{e:newrbc4} and therefore of the
entire proposition.
\end{proof}

\subsubsection{$\protect \rscandelabrarrrr$}

For the tree $\rscandelabrarrrr$ there is no recentering, but there is renormalization; see \zcref{tab:model-defns}. We have the following result.
\begin{prop} \label{prop:candelabra}
We have
\begin{equation}
  \left\vert \Cov \left( \hat{\Pi}_{\x}^{\ve, \zeta} (\rscandelabrarrrr) (\y_1) , \hat{\Pi}_{\x}^{\ve, \zeta}  (\rscandelabrarrrr)  (\y_2) \right) \right\vert \lesssim \log (2 +d_{\mathfrak{s},\mathscr{S}}(\y_1, \y_2)^{-1})^4.\label{eq:candelabra-cov-bd}
  \end{equation}
  Also, for any ${\tilde{\kappa}}>0$, uniformly over $\lambda \in (0, 1)$ and  $g \in \mathcal{C}_{\mathrm{c}}^{1,2}$ with $\|g \|_{\mathcal C^{1,2}_{\mathrm c}}\leq 1$, we have
  \begin{equation}\label{eq:candelabra-mean-bd}
    \left\vert \EE \left \llangle\hat{\Pi}_{\x}^{\ve, \zeta} (\rscandelabrarrrr) ,g^\lambda_{\x}\right\rrangle \right\vert \lesssim_{{\tilde{\kappa}}}  \lambda^{-{\tilde{\kappa}}} .
  \end{equation}
  Moreover, for any $s \neq t \in \RR$ and $c\in(0,\infty)$, we have
\begin{equation}\label{e:newrbc5}
  \adjustlimits\lim_{\ve \downarrow 0} \sup_{\zeta \in (0, \ve)} \sup_{d_L(x),d_L(y) \leq c \ve} \; \left\vert  \Cov \left( \rscherryrenormrenorm[rsK]^{ \zeta} (t, x), \rscherryrenormrenorm[rsK]^{\zeta} (s, y)  \right) \right\vert = 0 .
\end{equation}
\end{prop}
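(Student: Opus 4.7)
The approach mirrors the strategy already used for $\rselkrrr$ and $\rsmooserrrr$ but is adapted to the fact that $\rscherryrenormrenorm[rsK]^{\zeta} = (\rsipcherryrr[rsK]^{\zeta})^{2}$ lives in the direct sum of the $0$th, $2$nd, and $4$th homogeneous Wiener chaoses, with $C^{(2)}_{\zeta}$ serving as an (approximate) cancellation of the zeroth-chaos part. For the covariance bound \zcref{eq:candelabra-cov-bd}, I first note that $C^{(2)}_{\zeta}$ is deterministic and hence drops out, leaving $\Cov(\rscherryrenormrenorm[rsK]^{\zeta}(\y_{1}),\rscherryrenormrenorm[rsK]^{\zeta}(\y_{2}))$. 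Expanding this via Isserlis, the covariance is a finite sum of contraction diagrams $\rscandelabrarrrr\hash_{\nu}\rscandelabrarrrr$ indexed by pairings $\nu$ of the eight $\rsnoise$ leaves. Because $\rsipcherryrr=K'\circledast\rsrenorm$ is a Wick (second-chaos) object, pairings must avoid self-pairings within a single $\rscherryrr$ subtree; also, those pairings that stay entirely within one of the two $\rscandelabrarrrr$s cancel against $\mathbb{E}[\cdot]\mathbb{E}[\cdot]$, so only ``connected'' pairings remain. For each such pairing I will reduce to the multigraph $\mathsf{G}_{\mathrm{c}}$ on the six vertices $\{\varrho_{1},\varrho_{2},p_{1},p_{2},q_{1},q_{2}\}$ (the roots and the four $\rscherryrr$ roots), verify the positive-degree condition \zcref{eq:degcondition} via the shortcut of \zcref{lem:check-degcond-simpler}, and apply \zcref{prop:graph-bound}. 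In every connected pairing the maximal subgraph degree saturates at $\gamma=0$ and the internal vertex count is $k_{0}=4$, yielding a bound of $\log^{k_{0}-2}(2+d_{\mathfrak{s},\mathscr{S}}(\y_{1},\y_{2})^{-1})$; summing over the finitely many diagrams gives the stated (slightly loose) $\log^{4}$ bound.

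For the mean bound \zcref{eq:candelabra-mean-bd}, I write $\mathbb{E}[\hat{\Pi}_{\x}^{\eps,\zeta}(\rscandelabrarrrr)(\y)]=\mathbb{E}[\rscherryrenormrenorm[rsK]^{\zeta}(\y)]-C^{(2)}_{\zeta}$. Writing $K=p-\tilde K$ in every $\mathcal{I}'$ appearing in the diagrammatic representation and expanding, I will show that $\mathbb{E}[\rscherryrenormrenorm[rsK]^{\zeta}]-\mathbb{E}[\rscherryrenormrenorm[rsP]^{\zeta}]$ is uniformly bounded in $\zeta$, since every term that is not the pure $p$-contribution involves at least one convolution against the smooth, rapidly decaying $\tilde K$ (this is essentially the reasoning used in \zcref{prop:lollibd}). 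Combining with \zcref{eq:moose-renorm-close} of \zcref{prop:sumofcancellinglogs} then shows that $\y\mapsto \mathbb{E}[\hat{\Pi}_{\x}^{\eps,\zeta}(\rscandelabrarrrr)(\y)]$ is uniformly bounded in $\mathcal{C}^{-\tilde{\kappa}}(\mathbb{S}_{2L})$, and pairing with the test function $g^{\lambda}_{\x}$ (whose $\mathcal{C}^{\tilde{\kappa}}$ norm blows up at rate $\lambda^{-\tilde{\kappa}}$) yields the desired estimate.

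For the boundary vanishing \zcref{e:newrbc5}, I will use that $\rsrenorm$ has parity $+1$, so that the spatial derivative in $\rsipcherryrr=\partial\mathcal{I}\,\rsrenorm$ produces parity $-1$; together with $2L$-periodicity (\zcref{lem:reconstruct-symmetry}) this forces $\rsipcherryrr[rsK]^{\zeta}(t,x_{0})=0$ for every $x_{0}\in L\mathbb{Z}$, and hence $\rscherryrenormrenorm[rsK]^{\zeta}(t,x_{0})=(\rsipcherryrr[rsK]^{\zeta}(t,x_{0}))^{2}=0$ as well. Exactly as in the arguments culminating in \zcref{e:newrbc2} and \zcref{e:newrbc4}, I will then rewrite the covariance at $x$ close to the boundary as a covariance involving the recentered increment $\rscherryrenormrenorm[rsK]^{\zeta}(t,x)-\rscherryrenormrenorm[rsK]^{\zeta}(t,0)$, and quantify this increment in the chaos expansion using \zcref{e:Kx-bd} (for the kernel gain of a power $|x|^{\delta}$) together with \zcref{lem:bphz-increment} to handle the logarithmically singular kernels arising from the $\rscherryrr$ contractions. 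For fixed $s\neq t$ the resulting bound is of order $|x|^{\delta}$ uniformly in $\zeta$, which vanishes as $\eps\downarrow0$ when $d_{L}(x),d_{L}(y)\le c\eps$.

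The main obstacle is step one: even with the Wick restriction removing self-pairings within each cherry, the Isserlis expansion generates a considerable number of distinct diagrammatic topologies on the six-vertex graph $\mathsf{G}_{\mathrm{c}}$ (cycles through all four middle vertices, disjoint double edges, $K_{2,2}$, and ``mixed'' configurations with a single within-tree purple edge), and each must be checked against the degree hypothesis of \zcref{prop:graph-bound}. The reduction from subgraphs of $\mathsf{V}_{\mathrm{c}}$ to the short list of triples and four-vertex neighborhoods described in \zcref{rem:how-to-check} is essential here to keep the case analysis manageable, as is the appearance of exactly two $\mathcal{I}'$-edges (of weight two) at every non-root vertex, which guarantees uniform balance in the weight count.
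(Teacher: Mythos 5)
Your route is the same as the paper's for all three claims: Isserlis expansion plus \zcref{prop:graph-bound} (with the subgraph check streamlined by \zcref{lem:check-degcond-simpler}) for the covariance, the $K=p-\tilde K$ multinomial expansion combined with \zcref{eq:moose-renorm-close} for the mean, and parity/periodicity plus recentering with \zcref{e:Kx-bd} for the boundary vanishing. There is, however, one concrete error in your first step. You assert that, because $\rsipcherryrr[rsK]^{\zeta}=K'\circledast\rsrenorm[rsK]^{\zeta}$ is ``a Wick (second-chaos) object,'' pairings must avoid self-pairings within a single $\rscherryrr$ subtree. That is false for the $K$-kernel objects: the renormalization constant $C^{(1)}_{\zeta}$ is defined via the $p$-kernel lift (see \zcref{eq:C1zetadef}), so $\rsrenorm[rsP]^{\zeta}$ is mean-zero but $\rsrenorm[rsK]^{\zeta}$ is not — its mean is the bounded but nonvanishing function $\rstriangle[rsK]^{\zeta}$ of \zcref{eq:exp-renorm,eq:renorm-constants-same}. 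Consequently $\rsipcherryrr[rsK]^{\zeta}$ has a deterministic component, and the covariance expansion of $\rscherryrenormrenorm[rsK]^{\zeta}$ contains, in addition to your fourth-chaos diagrams, second-chaos diagrams in which one or two cherries self-contract and contribute factors of $\rstriangle[rsK]^{\zeta}$ (these are $\rscandelabrarrrrcA$ and $\rscandelabrarrrrcC$ in the paper's notation). These terms survive the subtraction of $\mathbb{E}[\cdot]\mathbb{E}[\cdot]$ whenever at least one cross-pairing remains, so they cannot be discarded.

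The omission is fixable with tools you already invoke: each such diagram is a product of bounded $\rstriangle[rsK]^{\zeta}$ factors with a one- or two-fold cross-contraction of $\rslollirc[rsK]^{\zeta}$ kernels, and \zcref{lem:one-int-bound} together with \zcref{eq:redcherryx1x2bd,eq:renorm-constants-same} bounds them by $\log(2+d_{\mathfrak{s},\mathscr{S}}(\y_1,\y_2)^{-1})^{2}$, which is dominated by the stated $\log^{4}$ bound. The same deterministic component also resurfaces in the mean computation (the paper isolates it as $\rscandelabrarrrrcE[rsK]^{\zeta}$, which is absent from the $p$-kernel version since $\mathbb{E}\,\rsrenorm[rsP]^{\zeta}=0$), and in the boundary estimate, where after recentering one must check that the degree of every subdiagram, originally an integer $\ge 1$, survives the loss of $\delta<1$ from \zcref{e:Kx-bd}. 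With these additions your argument matches the paper's proof.
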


\begin{proof}
  Using the symmetries of the diagram, we see that the covariance is bounded by the following contractions:
\begin{equation*}
  \begin{aligned}
    \left\vert \Cov \left( \hat{\Pi}_{\x}^{\ve, \zeta} (\rscandelabrarrrr) (\y_1) , \hat{\Pi}_{\x}^{\ve, \zeta}  (\rscandelabrarrrr)  (\y_2) \right) \right\vert & \lesssim \left( \left\vert \rscandelabrarrrrcA[rsK]^{\zeta} \right\vert + \left\vert \rscandelabrarrrrcB[rsK]^{\zeta} \right\vert +  \left\vert \rscandelabrarrrrcC[rsK]^{\zeta} \right\vert  +  \left\vert \rscandelabrarrrrcD[rsK]^{\zeta} \right\vert \right) \left(\y_1, \y_2 \right) .
  \end{aligned}
\end{equation*}
Here we have used the black triangle notation introduced in \zcref{eq:black-triangle-def}.
We use \zcref{lem:one-int-bound,eq:redcherryx1x2bd,eq:renorm-constants-same} to estimate
\begin{equation*}
  \left( \left\vert \rscandelabrarrrrcA[rsK]^{\zeta} \right\vert
  +  \left\vert \rscandelabrarrrrcC[rsK]^{\zeta}\right\vert \right) \left(\y_1, \y_2\right) \lesssim \log(2 +d_{\mathfrak{s},\mathscr{S}}(\y_1, \y_2)^{-1})^2.
\end{equation*}
The proof of \zcref{eq:candelabra-cov-bd} will thus be complete once we show that %
\begin{equation}
  \begin{aligned}
    \left( \left\vert  \rscandelabrarrrrcB[rsK]^{\zeta} \right\vert
    + \left\vert \rscandelabrarrrrcD[rsK]^{\zeta} \right\vert \right) \left(\y_1, \y_2\right) \lesssim \log(2 +d_{\mathfrak{s},\mathscr{S}}(\y_1, \y_2)^{-1})^{4}.
  \end{aligned}\label{eq:two-contractions}
\end{equation}
We obtain this last estimate by applying \zcref{prop:graph-bound}. The power of $4$ is
because each of the contracted diagrams has $4$ inner vertices (disregarding both roots and noise vertices). In order to apply \zcref{prop:graph-bound} we must make sure that the $\gamma$ from \zcref{eq:gammadef} is zero, and furthermore, that for any subdiagram $\overline{\mathsf{G}}$, we have $\deg (\overline{\mathsf{G}}) >0$. The latter condition is a consequence of \zcref{lem:check-degcond-simpler,rem:how-to-check}. Since there is no standard reference for computations of two-point correlations, we check the condition on $\gamma$ precisely. (Incidentally, the same approach proves also that there is no sub-diagram with negative degree.)

The key is to graphically represent the different subdiagrams. As a first step, rather than the graph $\mathsf{G}= \tau\hash_{\nu}\tau$, we can consider the reduced diagram $\mathsf{G}_c$ constructed in \zcref{subsec:Convergent-Feynman-diagrams}, in which we have replaced every pair of edges connecting to the same contracted vertex with a single edge of weight $-1$. (We draw this new edge in purple below.) %
Then, instead of considering subdiagrams, we consider only subsets of vertices of $\mathsf{G}_{\mathrm{c}}$, to which we associate the full (induced) subdiagram that they span. This is equivalent by \zcref{eq:gammadegtilde}. 
To start, here is a representation of $\mathsf{G}_{\mathrm{c}}$ in the case of $\rscandelabrarrrrcB$:
\begin{equation*}
  \begin{tikzpicture}[scale=0.5]
    \tikzset{
      bullet/.style={circle, fill=black, draw=black, inner sep=1.6pt},
      hollow/.style={circle, draw=black, fill=white, inner sep=1.6pt}
    }
    \node[bullet] (L) at (-0.7,1) {};
    \node[bullet] (R) at ( 0.7,1) {};
    \node[bullet] (C) at (0,0) {};
    \node[bullet] (L1) at (2.3,1) {};
    \node[bullet] (R1) at ( 3.7,1) {};
    \node[bullet] (D) at (3,0) {};
    \draw (C) -- (L);
    \draw (C) -- (R);
    \draw (D) -- (L1);
    \draw (D) -- (R1);
    \draw[purple, thick] (R) to[out=90,in=90, looseness=1.0] (L1);
    \draw[purple, thick] (L) to[out=90,in=90, looseness=1.0] (L1);
    \draw[purple, thick] (R) to[out=90,in=90, looseness=1.0] (R1);
    \draw[purple, thick] (L) to[out=90,in=90, looseness=1.0] (R1);
  \end{tikzpicture}.
\end{equation*}
Now, from \zcref{eq:gammadef} we must only consider subdiagrams that contain both roots. We represent with white bullets the vertices that are not in the subdiagram, and with black bullets the vertices that are in the subdiagram. Thus, for example, one of the subgraphs of $\mathsf{G}_{\mathrm{c}}$ containing three vertices is represented as follows: %
Here are two examples of the correspondence between subgraphs of $\mathsf{G}_{\mathrm{c}}$ and subdiagrams:
\begin{equation*}
  \begin{tikzpicture}[scale=0.5,baseline=0.5]
    \tikzset{
      bullet/.style={circle, fill=black, draw=black, inner sep=1.6pt},
      hollow/.style={circle, draw=black, fill=white, inner sep=1.6pt}
    }
    \node[bullet] (L) at (-0.7,1) {};
    \node[hollow] (R) at ( 0.7,1) {};
    \node[bullet] (C) at (0,0) {};
    \node[hollow] (L1) at (2.3,1) {};
    \node[hollow] (R1) at ( 3.7,1) {};
    \node[bullet] (D) at (3,0) {};
  \end{tikzpicture}
  \quad\rightsquigarrow\quad
  \begin{tikzpicture}[scale=0.5,baseline=0.5]
    \tikzset{
      bullet/.style={circle, fill=black, draw=black, inner sep=1.6pt},
      hollow/.style={circle, draw=black, fill=white, inner sep=1.6pt}
    }
    \node[bullet] (L) at (-0.7,1) {};
    \node[hollow] (R) at ( 0.7,1) {};
    \node[bullet] (C) at (0,0) {};
    \node[hollow] (L1) at (2.3,1) {};
    \node[hollow] (R1) at ( 3.7,1) {};
    \node[bullet] (D) at (3,0) {};
    \draw (C) -- (L);
  \end{tikzpicture}
  \qquad\qquad\text{and}\qquad\qquad
  \begin{tikzpicture}[scale=0.5,baseline=0.5]
    \tikzset{
      bullet/.style={circle, fill=black, draw=black, inner sep=1.6pt},
      hollow/.style={circle, draw=black, fill=white, inner sep=1.6pt}
    }
    \node[bullet] (L) at (-0.7,1) {};
    \node[bullet] (R) at ( 0.7,1) {};
    \node[bullet] (C) at (0,0) {};
    \node[bullet] (L1) at (2.3,1) {};
    \node[hollow] (R1) at ( 3.7,1) {};
    \node[bullet] (D) at (3,0) {};
  \end{tikzpicture}
\quad\rightsquigarrow\quad
  \begin{tikzpicture}[scale=0.5,baseline=0.5]
    \tikzset{
      bullet/.style={circle, fill=black, draw=black, inner sep=1.6pt},
      hollow/.style={circle, draw=black, fill=white, inner sep=1.6pt}
    }
    \node[bullet] (L) at (-0.7,1) {};
    \node[bullet] (R) at ( 0.7,1) {};
    \node[bullet] (C) at (0,0) {};
    \node[bullet] (L1) at (2.3,1) {};
    \node[hollow] (R1) at ( 3.7,1) {};
    \node[bullet] (D) at (3,0) {};
    \draw (C) -- (L);
    \draw (C) -- (R);
    \draw (D) -- (L1);
    \draw[purple, thick] (R) to[out=90,in=90, looseness=1.0] (L1);
    \draw[purple, thick] (L) to[out=90,in=90, looseness=1.0] (L1);
  \end{tikzpicture}.
\end{equation*}
We observe that the map from subsets of vertices to subdiagrams is one-to-one. Moreover, since we always include the two roots, all possibilities are reduced to the choice of which subset $\overline{\mathsf{V}}$ of the top four vertices are included in the subdiagram. To each such $\mathsf{V}$  we associate the number $\gamma(\overline{\mathsf{V}}) \coloneqq \deg(\overline{\mathsf{V}}\cup\{\varrho_1,\varrho_2\}) -3$ as defined in \zcref{eq:degree-Vbar}. %
We can list all possible subdiagrams (up to symmetries) %
and their associated $\gamma$s as follows:
\newcommand{\diagramfour}[4]{\begin{tikzpicture}[scale=1]
    \tikzset{bullet/.style={circle, fill=black, draw=black, inner sep=1.6pt},
             hollow/.style={circle, draw=black, fill=white, inner sep=1.6pt}}
    \node[#1] (A) at (0,0) {};
    \node[#2] (B) at (0.3,0) {};
    \node[#3] (C) at (0.6,0) {};
    \node[#4] (D) at (0.9,0) {};
\end{tikzpicture}}
\begin{equation}
\begin{tabular}{c|cccccc}
  \toprule
  $\overline{\mathsf{V}}$ &
\diagramfour{hollow}{hollow}{hollow}{hollow}&
\diagramfour{bullet}{hollow}{hollow}{hollow}&
\diagramfour{bullet}{bullet}{hollow}{hollow}&
\diagramfour{bullet}{hollow}{bullet}{hollow}&
\diagramfour{bullet}{bullet}{bullet}{hollow}&
\diagramfour{bullet}{bullet}{bullet}{bullet}\\
  \midrule
  $\gamma(\overline{\mathsf{V}})$ &
  $0$&$1$ & $2$ & $1$ & $1$ & $0$ \\
  \bottomrule
\end{tabular}.
\end{equation}
We see that $\min_{\overline{\mathsf{V}}} \gamma(\overline{\mathsf{V}}) = 0$, %
which implies the desired result.

  The second contraction on the left side of \zcref{eq:two-contractions} can be treated in exactly the same manner. This time we start from the diagram
  \begin{equation*}
    \begin{tikzpicture}[scale=0.5]
      \tikzset{
        bullet/.style={circle, fill=black, draw=black, inner sep=1.6pt},
        hollow/.style={circle, draw=black, fill=white, inner sep=1.6pt}
      }
      \node[bullet] (L) at (-0.7,1) {};
      \node[bullet] (R) at ( 0.7,1) {};
      \node[bullet] (C) at (0,0) {};
      \node[bullet] (L1) at (2.3,1) {};
      \node[bullet] (R1) at ( 3.7,1) {};
      \node[bullet] (D) at (3,0) {};
      \draw (C) -- (L);
      \draw (C) -- (R);
      \draw (D) -- (L1);
      \draw (D) -- (R1);
      \draw[purple, thick] (R) to[out=90,in=90, looseness=1.0] (L1);
      \draw[purple, thick] (L) to[out=90,in=90, looseness=1.0] (R);
      \draw[purple, thick] (R1) to[out=90,in=90, looseness=1.0] (L1);
      \draw[purple, thick] (L) to[out=90,in=90, looseness=1.0] (R1);
    \end{tikzpicture}.
  \end{equation*}
  We again list all possible subdiagrams (up to symmetries) and associated $\gamma$s:
  \begin{equation}
    \begin{tabular}{c|ccccccc}
  \toprule
  $\overline{\mathsf{V}}$ &
\diagramfour{hollow}{hollow}{hollow}{hollow}&
\diagramfour{bullet}{hollow}{hollow}{hollow}&
\diagramfour{bullet}{bullet}{hollow}{hollow}&
\diagramfour{bullet}{hollow}{bullet}{hollow}&
\diagramfour{bullet}{hollow}{hollow}{bullet}&
\diagramfour{bullet}{bullet}{bullet}{hollow}&
\diagramfour{bullet}{bullet}{bullet}{bullet}
   \\
  \midrule
  $\gamma(\overline{\mathsf{V}})$ &
  $0$&$1$ & $1$ & $2$ & $1$ & $1$ & $0$ \\
  \bottomrule
\end{tabular}.
\end{equation}
 We find again that $\min_{\overline{\mathsf{V}}} \gamma(\overline{\mathsf{V}}) = 0$, which allows us to apply \zcref{prop:graph-bound} to obtain the desired estimate. This concludes the analysis of the covariance (i.e.\ the proof of \zcref{eq:candelabra-cov-bd}).

  Now we turn our attention to the analysis of the mean, i.e.\ the proof of \zcref{eq:candelabra-mean-bd}.
We find, recalling \zcref{tab:model-defns} and the black triangle notation from \zcref{eq:black-triangle-def}, that
\begin{equation}
  \EE \left[ \hat{\Pi}_{\x}^{\zeta} (\rscandelabrarrrr) (\y)\right] = \EE\rscherryrenormrenorm[rsK]^\zeta(\y)-C^{(2)}_\zeta = \rscandelabrarrrrcE[rsK]^{\zeta} (\y) + \rscandelabrarrrrcF[rsK]^{\zeta}(\y) - C_\zeta^{(2)}.\label{eq:exp-candelabra-decompose}
\end{equation}
By \zcref{eq:renorm-constants-same} we have
\begin{equation*}%
  \left\lvert \rscandelabrarrrrcE[rsK]^{\zeta}(\y) \right\rvert %
  \lesssim 1 .
\end{equation*}
Using this and
\zcref{prop:sumofcancellinglogs} in \zcref{eq:exp-candelabra-decompose}, %
we can
estimate for any fixed $\tilde{\kappa}>0$, uniformly over $\lambda \in (0,1)$ and bounded $g \in \mathcal{C}_{\mathrm{c}}^{1,2}$, that
\begin{equation*}
  \left\lvert \left\llangle \EE \left[ \hat{\Pi}_{\x}^{\zeta} (\rscandelabrarrrr) (\y)\right] %
    ,g^\lambda_\x\right\rrangle %
    \right\vert \lesssim_{{\tilde{\kappa}}} \lambda^{-{\tilde{\kappa}}} + \sup_{\y 
  \in \mS_{2L}} \left\vert \rscandelabrarrrrcF[rsK]^{\zeta} (\y) - \rscandelabrarrrrcF[rsP]^{\zeta} (\y) \right\rvert .
\end{equation*}
To estimate the right side of this inequality, we can represent the term inside the supremum as the integral of two different kernel sets (one with $q'$ and one with $J'$) over the same diagram $\mathsf{G}=\rscandelabrarrrrcF[rsN]$, in the following sense:
\begin{equation*}
  \begin{aligned}
    \rscandelabrarrrrcF[rsK]^{\zeta} (\y) & - \rscandelabrarrrrcF[rsP]^{\zeta} (\y) \\
                                                     & = \int_{\mS_{2L}^{\mathsf{V}_{\mathrm{int}}}} \left[ \left( \prod_{e \in \mathsf{E}_{\mI'}} J^{\prime}(\x_{e_{\downarrow}} - \x_{e_{\uparrow}}) \right) - \left( \prod_{e \in \mathsf{E}_{\mI'}} q^{\prime}(\x_{e_{\downarrow}} - \x_{e_{\uparrow}}) \right) \right]
    \left( \prod_{e \in \mathsf{E}_\nu} \mE^{\zeta}(\x_{e_1}, \x_{e_2})  \right) \prod_{v\in\mathsf{V}_{\mathrm{int}}}\dif\mathbf{x}_{v} \\
    & = \sum_{\emptyset \neq A \subseteq \mathsf{E}_{\mI'}} (-1)^{|A|} \int_{\mS_{2L}^{\mathsf{V}_{\mathrm{int}}}} \left( \prod_{e \in A } \tilde{J}'(\x_{e_{\downarrow}} - \x_{e_{\uparrow}}) \right) \left(\prod_{e \in \mathsf{E}_{\mI'}\setminus A } q' (\x_{e_{\downarrow}} - \x_{e_{\uparrow}})\right)
    \left( \prod_{e \in \mathsf{E}_\nu} \mE^{\zeta}(\x_{e_1}, \x_{e_2})  \right) \prod_{v\in\mathsf{V}_{\mathrm{int}}}\dif\mathbf{x}_{v},
  \end{aligned}
\end{equation*}
where we used the same notation as in \zcref{eq:define-contraction}, together with the multinomial theorem and the identity $q = J + \tilde{J}$. For $\emptyset \ne A \subseteq \mathsf{E}_{\mI'}$, let us pick arbitrarily some $e_A\in A$. Now since in every term of the last sum at least one of the kernels is smooth, we can estimate the integral as follows:
Now for $\varnothing\ne A\subseteq\mathsf{E}_{\mathcal{I}'}$, since at least one of the kernels is smooth, we can estimate
\begin{equation*}
    \bigg\vert \int_{\mS_{2L}^{\mathsf{V}_{\mathrm{int}}}} \bigg(  \prod_{e \in A } \tilde{J}'  (\x_{e_{\downarrow}} - \x_{e_{\uparrow}}) \bigg) \left(\prod_{e \in \mathsf{E}_{\mI'}\setminus A } q' (\x_{e_{\downarrow}} - \x_{e_{\uparrow}})\right)
    \left( \prod_{e \in \mathsf{E}_\nu} \mE^{\zeta}(\x_{e_1}, \x_{e_2})  \right) \prod_{v\in\mathsf{V}_{\mathrm{int}}}\dif\mathbf{x}_{v} \bigg\vert \\
    \lesssim 1 
\end{equation*}
by \zcref{lem:one-int-bound,eq:redcherryx1x2bd}.
The only difference between this situation and the setting of \zcref{lem:one-int-bound} is that here we are not
integrating over a compact set, but this can be handled easily by the decay
$|\tilde{J}'(\y)| \lesssim e^{-c|\y|^2_{\mf{s}}}$ of the kernel. This concludes
the proof of \zcref{eq:candelabra-mean-bd}.

As for \zcref{e:newrbc5}, we proceed similarly to previous cases. Since
$\rsiprenorm[rsK]^\zeta (t, 0)= 0$ we find that 
\begin{equation*}
  \left\vert  \Cov \left( \rscherryrenormrenorm[rsK]^{ \zeta} (t, x), \rscherryrenormrenorm[rsK]^{\zeta} (s, y)  \right) \right\vert \leq \left\vert  \Cov \left( \rscherryrenormrenormXA[rsK]^{ \zeta, (0, x)} (t, 0), \rscherryrenormrenorm[rsK]^{\zeta} (s, y)  \right) \right\vert + \left\vert  \Cov \left( \rscherryrenormrenormXB[rsK]^{ \zeta, (0, -x)} (t, x), \rscherryrenormrenorm[rsK]^{\zeta} (s, y)  \right) \right\vert ,
\end{equation*}
where we have followed the notation introduced in the proofs of
\zcref{prop:rbrelk,prop:rrbelk, prop:elkrrr}. In particular, a combination of
the arguments that we have already used in this proof, together with the
estimate \zcref{e:Kx-bd}, leads to the following estimate for any $\delta \in (0,1)$:
\begin{equation*}
  \left\vert  \Cov \left( \rscherryrenormrenormXA[rsK]^{ \zeta} (t, 0; x), \rscherryrenormrenorm[rsK]^{\zeta} (s, y)  \right) \right\vert + \left\vert  \Cov \left( \rscherryrenormrenormXB[rsK]^{ \zeta} (t, x; -x), \rscherryrenormrenorm[rsK]^{\zeta} (s, y)  \right) \right\vert \lesssim_{s, t, \delta} |x|^\delta.
\end{equation*}
To prove this directly, $\delta$ should be chosen sufficiently small that all the degrees of the
subdiagrams in the trees remain strictly positive. (The kernel $J'_{(0,x)}$ counts
as having explosion of rate $-2-\delta$ via \zcref{e:Kx-bd}, compared to the
rate $-2$ for $J'$.) Note that the
degrees of the original subdiagrams take integer values and we have proven that
they are all strictly positive. Therefore, a subdiagram
$\overline{\mathsf{G}}$ passes from $\deg (\overline{\mathsf{G}}) = \alpha$ to
$\deg(\overline{\mathsf{G}}) \geq \alpha - \delta$ when one replaces an
edge in the diagram with a crossed edge. %
Since in the original diagram $\alpha
\geq 1$, we will have $\alpha - \delta > 0$ as long as $\delta < 1$, which implies that we can choose any
$\delta < 1$
without affecting the convergence of the Feynman diagram. 
This implies \zcref{e:newrbc5} and completes the proof of the proposition.
\end{proof}

\subsubsection{$\protect \rsmooserrrr$}

The last stochastic estimate concerns the tree $\rsmooserrrr$. This tree is the most complicated one, requiring both recentering and renormalization. However, we will be able to use the estimates that we have already obtained for $\rselkrrr$ to simplify the analysis somewhat. We recall from \zcref{tab:model-defns} that
\begin{equation}\label{e:recentre-moose}
  \hat{\Pi}^{\ve, \zeta}_\x  \left( \rsmooserrrr \right) (\y) = \left(\rsipcherryrenormr[rsK]^{\zeta} (\y)-\rsipcherryrenormr[rsK]^{\zeta} (\x) \right)\rslollipopr[rsK]^{\zeta} (\y) +\frac{1}{4}C^{(2)}_{\zeta} ,
\end{equation}
where $C^{(2)}_{\zeta}$ is defined in \zcref{eq:C2zetadef}.
\begin{prop}\label{prop:moose}
    We have uniformly over $\y_1, \y_2 \in \mS_{2L}$ that
    \begin{equation}\label{e:moose-aim-1}
        \begin{aligned}
          \left\vert \Cov \left( \rselkrenormrr[rsK]^{ \zeta} (\y_1) ,\rselkrenormrr[rsK]^{ \zeta} (\y_2)\right) \right\vert \lesssim d_{\mathfrak{s},\mathscr{S}}(\y_1, \y_2)^{-1} .
        \end{aligned}
      \end{equation}
      In addition, for any ${\tilde{\kappa}} > 0$ and uniformly over $\x \in
      \mS_{2L}, \lambda \in (0,1)$, and $g \in \mathcal{C}^{1,2}_{\mathrm{c}}$ with $\|g \|_{\mathcal C^{1,2}_{\mathrm c}}\leq 1$, we have
      \begin{equation} \label{e:moose-aim-2}
        \EE \left[  \left\llangle\hat{\Pi}^{\ve, \zeta}_{\x} (\rsmooserrrr), g^\lambda_\x\right\rrangle^2  \right] \lesssim_{{\tilde{\kappa}}} \lambda^{-{\tilde{\kappa}}}.
      \end{equation}
      Moreover, for any $s \neq t \in \RR$ and $c\in(0,\infty)$, we have
      \begin{equation}\label{e:newrbc6}
        \adjustlimits\lim_{\ve \downarrow 0} \sup_{\zeta \in (0, \ve)} \sup_{d_L(x),d_L(y) \leq c \ve} \; \left\vert  \Cov \left( \rselkrenormrr[rsK]^{ \zeta} (t, x), \rselkrenormrr[rsK]^{\zeta} (s, y)  \right) \right\vert = 0 .
      \end{equation}
\end{prop}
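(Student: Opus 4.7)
The proof follows the general pattern of \zcref{prop:elkrrr}, with additional complications arising from the recentering built into the model on $\rsmooserrrr$ through \eqref{e:recentre-moose}. We address the three claims in turn.

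For the covariance bound \eqref{e:moose-aim-1}, the renormalization constant $C^{(2)}_\zeta$ drops out of any covariance, so we may work directly with $\rselkrenormrr[rsK]^\zeta$. Expanding the covariance via Isserlis' theorem---with internal tadpoles within each copy of $\rsrenorm$ already subtracted by the renormalization $\rscherryrr\to\rsrenorm$---reduces the computation to a finite sum of Feynman diagrams indexed by pairings across the two copies of the tree. Each such diagram is either directly bounded by \zcref{prop:graph-bound} (whose hypotheses are verified via \zcref{lem:check-degcond-simpler,rem:how-to-check}) or contains an internal $\rsbphz$-like subgraph arising from the renormalized $\rscherryrr$ inside $\rsipcherryrenormr$. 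The latter are handled, as was done for $\rselkrrrccc$ in the proof of \zcref{prop:elkrrr}, by decomposing $\rslollirc[rsK]^\zeta$ into the three components introduced in \eqref{e:Jidef} and iterating \zcref{lem:bphzbd,lem:bphz-increment}. Combining all contributions yields the announced bound $d_{\mf s,\mathscr S}(\y_1,\y_2)^{-1}$.

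For the weighted second moment bound \eqref{e:moose-aim-2}, I would start from the decomposition \eqref{e:recentre-moose} and split $\EE[\langle f,g^\lambda_\x\rangle^2]=\bigl(\EE\langle f,g^\lambda_\x\rangle\bigr)^2+\Var\langle f,g^\lambda_\x\rangle$, where $f=\hat\Pi^{\ve,\zeta}_\x(\rsmooserrrr)$. The mean simplifies to $\EE[\rselkrenormrr[rsK]^\zeta(\y)]+\tfrac14 C^{(2)}_\zeta$ plus a bounded Hölder-continuous remainder (coming from the single cross-contraction between $\rsipcherryrenormr[rsK]^\zeta(\x)$ and $\rslollipopr[rsK]^\zeta(\y)$), and by \zcref{prop:sumofcancellinglogs} the principal part is uniformly bounded in $\mathcal C^{-\tilde\kappa}$, so the mean contribution to the second moment is at most $\lambda^{-\tilde\kappa}$. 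The variance part is more subtle: a direct application of \eqref{e:moose-aim-1} would only yield $\lambda^{-1}$. Instead, after expanding the covariance of $f(\y_1)$ and $f(\y_2)$ via Wick, each contraction acquires either a recentered kernel of the form $J'_{\y-\x}$ (see \eqref{eq:recenter-J-def}), to which \zcref{lem:Kx-bd} applies with a gain $|\y-\x|_{\mf s}^{\oh-\tilde\kappa}$, or an isolated factor $\rsipcherryrenormr[rsK]^\zeta(\x)$ whose contribution is controlled using its finite moments together with the second moment of $\langle\rslollipopr[rsK]^\zeta,g^\lambda_\x\rangle$. Both gains translate, after rescaling to the support of $g^\lambda_\x$, into the required bound $\lambda^{-\tilde\kappa}$.

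For the boundary vanishing \eqref{e:newrbc6}, we proceed as in the proofs of \zcref{prop:rbrelk,prop:rrbelk,prop:elkrrr,prop:candelabra}. The critical observation is that $\rsipcherryrenormr[rsK]^\zeta(t,0)=0$, since $x\mapsto\rsipcherryrenormr[rsK]^\zeta(t,x)$ is odd and continuous (as the spatial derivative of an even, $2L$-periodic function). We may therefore rewrite
\begin{equation*}
  \Cov\bigl(\rselkrenormrr[rsK]^\zeta(t,x),\rselkrenormrr[rsK]^\zeta(s,y)\bigr)=\Cov\Bigl(\rselkrenormrr[rsK]^\zeta(t,x)-\rsipcherryrenormr[rsK]^\zeta(t,0)\rslollipopr[rsK]^\zeta(t,x),\rselkrenormrr[rsK]^\zeta(s,y)\Bigr),
\end{equation*}
and expand via Wick's theorem so that each resulting Feynman diagram carries a recentered kernel; \zcref{lem:Kx-bd} then produces a gain $|x|^\delta$ for any $\delta\in(0,1)$, and taking $\ve\downarrow 0$ under $|x|\leq c\ve$ yields the claimed limit. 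The hardest part of the whole proof is the careful enumeration of the Wick contractions in (1); in particular, the handful of diagrams carrying $\rsbphz$-type logarithmic divergences, which require the decomposition of $\rslollirc[rsK]^\zeta$ from \eqref{e:Jidef} together with iterated applications of \zcref{lem:bphzbd,lem:bphz-increment} to check that no subgraph violates the degree condition after all reductions.
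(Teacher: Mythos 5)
Your treatment of \eqref{e:moose-aim-1} and \eqref{e:newrbc6} follows essentially the same route as the paper: contraction-by-contraction bounds via \zcref{prop:graph-bound}, the decomposition \eqref{e:Jidef} with \zcref{lem:bphzbd,lem:bphz-increment} for the $\rsbphz$-type subdiagrams, and for the boundary limit the observation that $\rsipcherryrenormr[rsK]^{\zeta}(t,0)=0$ followed by the $|x|^{\delta}$ gain from \zcref{lem:Kx-bd}. (One warning on \eqref{e:newrbc6}: the contraction in which $\rsclawrrcA$ appears as a subdiagram cannot be handled by \zcref{lem:Kx-bd} alone; it requires the two-sided recentering of \zcref{lem:zigzag} together with the claw bound, as in the corresponding step of the proof of \zcref{prop:elkrrr}.)

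The genuine gap is in the variance part of \eqref{e:moose-aim-2}, in the branch where you leave $\rsipcherryrenormr[rsK]^{\zeta}(\x)$ as an ``isolated factor'' and control it ``using its finite moments together with the second moment of $\langle\rslollipopr[rsK]^{\zeta},g^{\lambda}_{\x}\rangle$.'' Any such decoupling (Cauchy--Schwarz plus hypercontractivity) gives at best
\begin{equation*}
\mathbb{E}\Bigl[\rsipcherryrenormr[rsK]^{\zeta}(\x)^{2}\,\bigl\langle\rslollipopr[rsK]^{\zeta},g^{\lambda}_{\x}\bigr\rangle^{2}\Bigr]\lesssim\lambda^{-1-2\kappa},
\end{equation*}
which is far from $\lambda^{-\tilde{\kappa}}$; the decoupled bound discards exactly the correlation structure that makes the true contribution small. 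The paper instead expands the recentered product into the three diagrams of \eqref{e:recenter-decomp}, treats $g^{\lambda}_{\x}$ as an additional kernel edge of order $\tilde{\kappa}/2-3$ attached to the fixed base point $\x$, and applies the one-fixed-vertex version of the graph bound (\cite[Prop.~2.4]{hairer:2018:analyst}); it is the convergence of the \emph{joint} Feynman diagram, with $\x$ as a vertex, that produces $\lambda^{-\tilde{\kappa}}$. Relatedly, for the contractions in which the two bottom noises $\rslollipopr[rsK]^{\zeta}(\y_{1}),\rslollipopr[rsK]^{\zeta}(\y_{2})$ are paired with each other, one must \emph{not} expand $(f(\y_{1})-f(\x))(f(\y_{2})-f(\x))$ into its four terms: the piece $f(\x)^{2}\,\mathbb{E}[h(\y_{1})h(\y_{2})]$ alone integrates to $\lambda^{-1}$. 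For those contractions the difference has to be kept intact so that the factorization $\boxop{\rsipelkrrr\hash_{\nu_{1}}\rsipelkrrr}^{\zeta,\x}\cdot\rslollirc[rsK]^{\zeta}$ together with \zcref{lem:Kx-bd} yields the gain $|\x-\y_{1}|_{\mf{s}}^{1/2-\tilde{\kappa}}|\x-\y_{2}|_{\mf{s}}^{1/2-\tilde{\kappa}}$ against the $d_{\mf{s},\mathscr{S}}(\y_{1},\y_{2})^{-1}$ singularity. Your dichotomy as stated does not make clear that the recentered-kernel branch is mandatory there and that the isolated-factor branch needs the fixed-vertex graph bound rather than a moment estimate.
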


\begin{proof}
  The proof is more complicated than that for $\rscandelabrarrrr$ because of the need for recentering.
  However the overall strategy is the same as in the previous estimates.
  
  In the case of this tree, we will not draw all the contractions because the tree has few symmetries and we would end up with too many terms. Instead, for a tree $\tau$ in a fixed $n$-th homogeneous chaos, we write $\mathscr{C}^{(n)}(\tau)$ for all the possible pairings of two identical copies of $\tau$ (a pairing $\nu \in \mathscr{C}^{(n)}(\tau)$ does not contain any internal contractions). We find from \zcref{e:recentre-moose}, writing $\mathscr{C}^{(n)}$ instead of $\mathscr{C}^{(n)}(\tau)$ since the reference tree is clear from context, that
\begin{equation*}
  \begin{aligned}
    \left\vert \EE \left[ \hat{\Pi}_{\x}^{\ve, \zeta} (\rsmooserrrr) (\y_1) \hat{\Pi}_{\x}^{\ve, \zeta} (\rsmooserrrr) (\y_2) \right] \right\vert  \lesssim  \bigg[ & \sum_{\nu \in \mathscr{C}^{(4)}} \boxop{\rsmooserrrrD  \hash_\nu \rsmooserrrrR }^{\zeta, \x}  + \sum_{\nu \in \mathscr{C}^{(2)}} \boxop{ \rsmooserrrrcAD \hash_\nu \rsmooserrrrcAR }^{
  \zeta, \x} + \sum_{\nu \in \mathscr{C}^{(2)}} \boxop{\rsmooserrrrcDD \hash_\nu \rsmooserrrrcDR }^{
    \zeta, \x} \\
  & + \sum_{\nu \in \mathscr{C}^{(2)}} \boxop{ \rsmooserrrrcBD  \hash_\nu \rsmooserrrrcBR }^{
  \zeta, \x} + \sum_{\nu \in \mathscr{C}^{(2)}} \boxop{ \rsmooserrrrcCD  \hash_\nu \rsmooserrrrcCR }^{\zeta, \x} \\
  & + \Big\vert \rsmooserrrrccAA[rsK]^{\zeta, \x} + 2 \, \rsmooserrrrccB[rsK]^{\zeta, \x} - \frac{1}{4} C^{(2)}_{\zeta} \Big\vert^{\otimes 2} \bigg]\left( \y_1 , \y_2 \right).
  \end{aligned}
\end{equation*}
Moreover, we have
\begin{align}
  \left\vert \Cov \left( \hat{\Pi}_{\x}^{\ve, \zeta} (\rsmooserrrr) (\y_1) ,  \hat{\Pi}_{\x}^{\ve, \zeta} (\rsmooserrrr) (\y_2) \right) \right\vert  \lesssim  \bigg[ & \sum_{\nu \in \mathscr{C}^{(4)}} \boxop{ \rsmooserrrrD \hash_\nu \rsmooserrrrR }^{\zeta, \x} + \sum_{\nu \in \mathscr{C}^{(2)}} \boxop{ \rsmooserrrrcAD \hash_\nu \rsmooserrrrcAR }^{
  \zeta, \x} + \sum_{\nu \in \mathscr{C}^{(2)}} \boxop{ \rsmooserrrrcDD \hash_\nu \rsmooserrrrcDR }^{
    \zeta, \x}\notag \\
  & + \sum_{\nu \in \mathscr{C}^{(2)}} \boxop{\rsmooserrrrcBD \hash_\nu \rsmooserrrrcBR }^{
  \zeta, \x} + \sum_{\nu \in \mathscr{C}^{(2)}} \boxop{ \rsmooserrrrcCD \hash_\nu \rsmooserrrrcCR }^{\zeta, \x} \bigg]\left( \y_1 , \y_2 \right).\label{e:moose-cov-to-estimate}
  \end{align}
In both cases we used the crossed edge notation introduced in
\zcref{e:recenter-notation}\ to indicate recentering. 

Moreover, we note that if $\x$ is of the form $\x=(t,0)$, then the recentering is not needed, since $\rsipcherryrenormr[rsK]^\zeta(\x) =0$ and hence \zcref{e:recentre-moose} reduces to %
\begin{equation}\label{e:reconst-moose-boundary}
  \hat{\Pi}_{(t,0)}^{\ve, \zeta} (\rsmooserrrr) (\y) %
  = \rselkrenormrr[rsK]^{\zeta} (\y) + \tfrac14C^{(2)}_\zeta.
\end{equation}
This observation will be useful in some cases (in particular, when contractions ``factorize,'' which is considered in \zcref{step:factorize} below) to obtain the bounds \zcref{e:moose-aim-1,e:moose-aim-2} simultaneously.

The estimates follow mostly along the lines of the arguments that we have already used. We proceed in several steps.

\begin{thmstepnv}
\item\emph{Contractions that factorize.}\label{step:factorize} We start by
defining $\mathscr{C}_{\mathrm{l}}\subseteq \mathscr{C} (\rsmooserrrrD ,
\rsmooserrrrD)$
to be
the set of contractions in which the bottom noise nodes of each tree are matched
with each other. %
  Such contractions appear both in the fourth and in the second chaos, and we single them out in order to bound them separately. Two examples of  contractions in $\mathscr{C}_{\mathrm{l}}$ are
\begin{equation*}
  \rsmooserrrrcPA \qquad\text{and}  \qquad \rsmooserrrrcPE .
\end{equation*}
We note that by definition %
if $\nu\in\mathscr{C}_{\mathrm{l}}$, then there exists a $\nu_1 \in \mathscr{C}(\rsipelkrrr , \rsipelkrrr)$ such that
\begin{equation}
  \boxop{\rsmooserrrrD  \hash_\nu \rsmooserrrrR}^{\zeta, \x} (\y_1, \y_2) = \boxop{ \rsipelkrrrR \hash_{\nu_1} \rsipelkrrrR }^{\zeta, \x} (\y_1, \y_2)  \cdot  \rslollirc[rsK]^{\zeta} (\y_1, \y_2).\label{eq:Clfactors}
\end{equation}
We can treat the first factor on the right side of \zcref{eq:Clfactors} %
via \zcref{prop:elkrrr}  (or more directly via the bounds on the contractions on the right side of \zcref{eq:elkrrr-cov-exp} in its proof).
Indeed, that result along with \zcref{e:Kx-bd} implies that, for any fixed $\tilde\kappa>0$, 
\begin{align*}
    \Big\vert & \boxop{ \rsipelkrrrR \hash_{\nu_1} \rsipelkrrrR }^{\zeta, \x} (\y_1, \y_2) \Big\vert\notag\\&\le \int_{\mS_{2L}^2} |J'(\y_1 - \z_1) - J'(\x - \z_1)| | J'(\y_2 - \z_2) - J'(\x - \z_2) \left\lvert\boxop{ \rselkrrr \hash_{\nu_1} \rselkrrr }^{\zeta} (\z_1, \z_2)\right\rvert %
    \ud \z_{1} \ud \z_2 \\
              & \lesssim  |\x - \y_1|^{\delta} |\x - \y_2|^{\delta} \int_{\mS_{2L}^2} (|\y_1 - \z_1|^{-2 -\delta} + |\x- \z_1|^{-2 -\delta}) (|\y_2 - \z_2|^{-2 -\delta} + |\x- \z_2|^{-2 -\delta})\notag\\&\hspace{4in}\cdot d_{\mathfrak{s},\mathscr{S}}(\z_1, \z_2)^{-1-\tilde\kappa} \ud \z_{1} \ud \z_2 \\
  & \lesssim |\x - \y_1|^{\delta} |\x - \y_2|^{\delta}.
  \end{align*}
Here the last estimate is a consequence of \zcref{lem:one-int-bound}, as long as we choose $\delta$ and $\tilde\kappa$ small enough that $2\delta+\tilde\kappa<1$. Using this estimate together with together with \zcref{prop:lollibd} in \zcref{eq:Clfactors}, we obtain for any $\nu \in \mathscr{C}_{\mathrm{l}}$ that for any ${\tilde{\kappa}} > 0$,
\begin{equation*}
  \left\lvert\boxop{\rsmooserrrrD  \hash_\nu \rsmooserrrrR}^{\zeta, \x} (\y_1, \y_2)\right\rvert \lesssim_{\tilde{\kappa}} |\x - \y_1|^{1/2 - {\tilde{\kappa}}} |\x - \y_2|^{1/2 - {\tilde{\kappa}}} d_{\mathfrak{s},\mathscr{S}}(\y_1, \y_2)^{-1} .
\end{equation*}
This is an estimate of the right order for the covariance bound \zcref{e:moose-aim-1} (setting $\x=(t,0)$ and recalling \zcref{e:reconst-moose-boundary}), and by scaling it also implies that
\begin{equation*}
  \left\vert \left\llangle\boxop{\rsmooserrrrD  \hash_\nu \rsmooserrrrR}^{\zeta, \x}, ( g^\lambda_\x)^{\otimes 2}\right\rrangle \right\vert  \lesssim \lambda^{- 2 {\tilde{\kappa}}} ,
\end{equation*}
which is of the desired order for \zcref{e:moose-aim-2}.

\item \label{step:cov-bd} \emph{The covariance bound.} For the remaining contractions, the recentering causes some additional complications, and so the proofs of \zcref{e:moose-aim-1} and \zcref{e:moose-aim-2} require somewhat different arguments. Thus in this step we complete the proof of \zcref{e:moose-aim-1}, leaving the proof of \zcref{e:moose-aim-2} to \zcref{step:recentering,step:renormalization} below. In \zcref{step:factorize} we already obtained the requisite bound for contractions in $\mathscr{C}_{\mathrm{l}}$, so we now turn our attention to the remaining contractions, working chaos by chaos. %

We start with contractions in $\mathscr{C}^{(4)} \setminus
\mathscr{C}_{\mathrm{l}}$. In this case we apply \zcref{prop:graph-bound} with
$\gamma =0$. To apply the proposition we must control the minimal degree over
all subdiagrams. Here we follow the same approach taken in the proof of
\zcref{prop:candelabra}. Namely, any contraction $\nu \in \mathscr{C}^{(4)}
\setminus \mathscr{C}_{\mathrm{l}}$ leads to a reduced diagram $\mathsf{G}_\nu$
over the inner vertices only, and where purple lines represent edges associated
to kernels with blow-up controlled by $d_{\mathfrak{s},\mathscr{S}}( \cdot
)^{-1}$.
For example,%
\begin{equation}
  \text{the contraction}\qquad
  \rsmooserrrrcPGI
  \qquad
  \text{leads to the reduced diagram}\qquad
  \begin{tikzpicture}[scale=0.5,baseline=(current bounding box.center)]
    \tikzset{
      bullet/.style={circle, fill=black, draw=black, inner sep=1.6pt},
      hollow/.style={circle, draw=black, fill=white, inner sep=1.6pt}
    }
    \node[bullet] (L1) at (-0.7,0) {};
    \node[bullet] (L2) at (-0.7,1) {};
    \node[bullet] (L3) at (-0.7,2) {};
    \node[bullet] (R1) at (0.7,0) {};
    \node[bullet] (R2) at (0.7,1) {};
    \node[bullet] (R3) at (0.7,2) {};
    \draw[thick] (L1) to (L2);
    \draw[thick] (L2) to (L3);
    \draw[thick] (R1) to (R2);
    \draw[thick] (R2) to (R3);
    \draw[purple, thick] (L1) to[out=0,in=180, looseness=1.0] (R2);
    \draw[purple, thick] (L2) to[out=0,in=180, looseness=1.0] (R1);
    \draw[purple, thick] (L3) to[out=110,in=70, looseness=2.0] (R3);
    \draw[purple, thick] (L3) to[out=70,in=110, looseness=1.0] (R3);
  \end{tikzpicture}.\label{eq:example-contraction}
\end{equation}
Now for any contraction $\nu\in\mathscr{C}^{(4)}$, we let $\mathsf{G}_{\mathrm{c},\nu}$ denote the simplified multigraph corresponding to $\rsmooserrrr\hash_\nu\rsmooserrrr$ as constructed in \zcref{subsec:Convergent-Feynman-diagrams}. For any subset $\overline{\mathsf{V}}\subseteq\mathsf{V}(\mathsf{G}_{\mathrm{c},\nu})$ (which in fact does not depend on the contraction $\nu$), we define $\deg_{\mathsf{G}_{\mathrm{c},\nu}}(\overline{\mathsf{V}})$ to be the quantity defined in \zcref{eq:degree-Vbar} with respect to the graph $\mathsf{G}_{\mathrm{c},\nu}$. Then we define $\gamma(\nu,\overline{\mathsf{V}}) \coloneqq
\deg_{\mathsf{G}_{\mathrm{c},\nu}} (\overline{\mathsf{V}}) -3$. %
For example, in the case of $\nu$ depicted in \zcref{eq:example-contraction}, we have the following association of degree to subsets of vertices:
\newcommand{\diagramsix}[6]{%
  \begin{tikzpicture}[scale=0.5, baseline ={(0,-0.85)}]
    \tikzset{
      bullet/.style={circle, fill=black, draw=black, inner sep=1.6pt},
      hollow/.style={circle, draw=black, fill=white, inner sep=1.6pt}
    }
    \node[#1] (L1) at (-0.3,-2.0) {};
    \node[#2] (L2) at (-0.3,-1.5) {};
    \node[#3] (L3) at (-0.3,-1.0) {};
    \node[#4] (R1) at (0.3,-2.0) {};
    \node[#5] (R2) at (0.3,-1.5) {};
    \node[#6] (R3) at (0.3,-1.0) {};
\end{tikzpicture}}
\begin{equation*}
    \diagramsix{bullet}{hollow}{bullet}{bullet}{hollow}{bullet}
  \mapsto  \gamma(\nu,\overline{\mathsf{V}}) = 4 , \qquad 
    \diagramsix{bullet}{bullet}{bullet}{bullet}{hollow}{bullet}
  \mapsto  \gamma (\nu,\overline{\mathsf{V}}) = 3 .
\end{equation*}
However, we can go one step further, and for a given choice of inner vertices it is easy to actually compute the minimum of $\gamma (\nu,\overline{\mathsf{V}})$ over all $\nu \in \mathscr{C}^{(4)} \setminus \mathscr{C}_{\mathrm{l}}$. This is because the only choice involved lies in where the purples edges are attached, under the constraint of not joining the two bottom vertices, and it is easy to see for any given $\overline{\mathsf{V}}$ what the maximum number of purple edges in the induced subgraph can be. %
Indeed, if for any subset $\overline{\mathsf{V}}$ of inner vertices, we write \[\gamma(\overline{\mathsf{V}}) \coloneqq \min_{\nu \in \mathscr{C}^{(4)} \setminus \mathscr{C}_{\mathrm{l}}} \gamma(\nu,\overline{\mathsf{V}}).\]
then we can summarize the values of $\gamma(\overline{\mathsf{V}})$ as follows:
\newcommand{\diagramsixtight}[6]{\begin{tikzpicture}[scale=0.5, baseline={(0,-0.85)}]
    \tikzset{bullet/.style={circle, fill=black, draw=black, inner sep=1.6pt},
             hollow/.style={circle, draw=black, fill=white, inner sep=1.6pt}}
    \node[#1] (L1) at (-0.3,-2.0) {};
    \node[#2] (L2) at (-0.3,-1.6) {};
    \node[#3] (L3) at (-0.3,-1.2) {};
    \node[#4] (R1) at (0.3,-2.0) {};
    \node[#5] (R2) at (0.3,-1.6) {};
    \node[#6] (R3) at (0.3,-1.2) {};
\end{tikzpicture}}
\begin{equation}
  \begin{tabular}{c|cccccccccc}
  \toprule
  $\overline{\mathsf{V}}$ &
  $\diagramsixtight{bullet}{hollow}{hollow}{bullet}{hollow}{hollow}$ &
  $\diagramsixtight{bullet}{bullet}{hollow}{bullet}{hollow}{hollow}$ &
  $\diagramsixtight{bullet}{hollow}{bullet}{bullet}{hollow}{hollow}$ &
  $\diagramsixtight{bullet}{bullet}{hollow}{bullet}{bullet}{hollow}$ &
  $\diagramsixtight{bullet}{hollow}{bullet}{bullet}{bullet}{hollow}$ &
  $\diagramsixtight{bullet}{hollow}{bullet}{bullet}{hollow}{bullet}$ &
  $\diagramsixtight{bullet}{bullet}{bullet}{bullet}{hollow}{hollow}$ &
  $\diagramsixtight{bullet}{bullet}{bullet}{bullet}{bullet}{hollow}$ &
  $\diagramsixtight{bullet}{bullet}{bullet}{bullet}{hollow}{bullet}$ &
  $\diagramsixtight{bullet}{bullet}{bullet}{bullet}{bullet}{bullet}$ \\[3pt]
  \midrule
  $\gamma(\overline{\mathsf{V}})$ & $0$ & $0$ & $2$ & $0$ & $2$ & $3$ & $1$ & $1$ & $2$ & $0$ \\
  \bottomrule
\end{tabular}.
\end{equation}
Since $\min_{\overline{\mathsf{V}}} \gamma(\overline{\mathsf{V}}) =0$, we can apply \zcref{prop:graph-bound} with $\gamma =0$ (the positive degree condition verified via \zcref{lem:check-degcond-simpler,rem:how-to-check}), and we obtain for any $\nu \in \mathscr{C}^{(4)} \setminus \mathscr{C}_{\mathrm{l}}$ the estimate
\begin{equation*}
  \max_{\nu \in \mathscr{C}^{(4)}\setminus \mathscr{C}_{\mathrm{l}}}\big\vert \boxop{ \rsmooserrrr \hash_\nu \rsmooserrrrRA }^{\zeta}  \big\vert (\y_1, \y_2) \lesssim \log( 2+ d_{\mathfrak{s},\mathscr{S}}(\y_1, \y_2)^{-1} )^4 , 
\end{equation*}
which is a correct bound on the covariance. The same approach delivers the bound
\begin{equation*}
  \begin{aligned}
    \left\lvert \rsmooserrrrcPC[rsK]^{\zeta} + \rsmooserrrrcPD[rsK]^{\zeta} \right\rvert (\y_1, \y_2) \lesssim \log( 2+ d_{\mathfrak{s},\mathscr{S}}(\y_1, \y_2)^{-1} )^4 .
  \end{aligned}
\end{equation*}
where we now define $\gamma(\overline{\mathsf{V}})$ as the analogous minimum over these two contractions and summarize its values as follows:
\begin{equation}
\begin{tabular}{c|cccccccccc}
  \toprule
  $\overline{\mathsf{V}}$ &
  $\diagramsixtight{bullet}{hollow}{hollow}{bullet}{hollow}{hollow}$ &
  $\diagramsixtight{bullet}{bullet}{hollow}{bullet}{hollow}{hollow}$ &
  $\diagramsixtight{bullet}{hollow}{bullet}{bullet}{hollow}{hollow}$ &
  $\diagramsixtight{bullet}{bullet}{hollow}{bullet}{bullet}{hollow}$ &
  $\diagramsixtight{bullet}{hollow}{bullet}{bullet}{bullet}{hollow}$ &
  $\diagramsixtight{bullet}{hollow}{bullet}{bullet}{hollow}{bullet}$ &
  $\diagramsixtight{bullet}{bullet}{bullet}{bullet}{hollow}{hollow}$ &
  $\diagramsixtight{bullet}{bullet}{bullet}{bullet}{bullet}{hollow}$ &
  $\diagramsixtight{bullet}{bullet}{bullet}{bullet}{hollow}{bullet}$ &
  $\diagramsixtight{bullet}{bullet}{bullet}{bullet}{bullet}{bullet}$ \\[3pt]
  \midrule
  $\gamma(\overline{\mathsf{V}})$ & $0$ & $1$ & $2$ & $1$ & $2$ & $3$ & $1$ & $1$ & $2$ & $0$ \\

  \bottomrule
\end{tabular}.
\end{equation}

Let us proceed with contractions that contain the element $\rsbphz$. Via \zcref{lem:bphzbd,lem:one-int-bound,prop:lollibd}, for any ${\tilde{\kappa}} > 0$ (i.e.\ we absorb the logarithms into an arbitrarily small negative power), we estimate for some compact domain $\Theta \subseteq \mS_{2L}$ that
\begin{equation*}
  \left\vert \rsmooserrrrcPB[rsK]^{\zeta} \right\vert (\y_1, \y_2) \lesssim_{\tilde{\kappa}}  \int_{\Theta^2} |\y_1 - \z|_{\mf{s}}^{-2 -{\tilde{\kappa}} /2} |\y_2   - \z'|_{\mf{s}}^{-2-{\tilde{\kappa}} /2} d_{\mathfrak{s},\mathscr{S}}(\z, \z')^{-2} \ud \z \ud \z' \lesssim d_{\mathfrak{s},\mathscr{S}}(\y_1, \y_2)^{- {\tilde{\kappa}}} .
\end{equation*}
Similarly, we find a compact domain $\Theta \subseteq \mS_{2L}$ such that for any ${\tilde{\kappa}} > 0$, we have
\begin{equation*}
  \left\vert \rsmooserrrrcPF[rsK]^{\zeta}(\y_1, \y_2) \right\vert \lesssim_{\tilde{\kappa}} \int_{\Theta^2} |\y_1 - \z|_{\mf{s}}^{-2-{\tilde{\kappa}}/2} |\y_2 - \z'|_{\mf{s}}^{-2-{\tilde{\kappa}}/2} d_{\mathfrak{s},\mathscr{S}}(\y_1, \z')^{-1} d_{\mathfrak{s},\mathscr{S}}(\y_2, \z)^{-1} \ud \z \ud \z' \lesssim d_{\mathfrak{s},\mathscr{S}}(\y_1, \y_2)^{-{\tilde{\kappa}}} ,
\end{equation*}
where we have again used \zcref{lem:bphzbd,lem:one-int-bound}. Note that the only
other homogeneous (meaning that it does not pair vertices in the same tree that are not already paired in $\rsmooserrrrcCDI$) contraction 
of $\rsmooserrrrcCDI$ lies in $\mathscr{C}_{\mathrm{l}}$, so it has already been
dealt with.

Finally, the last remaining contraction can be bounded as %
\begin{equation*}
  \Big\vert \rsmooserrrrcDDIP[rsK]^\zeta \Big\vert (\y_1 ,\y_2) \lesssim \log( 2 +d_{\mf{s}, \mathscr{S}}(\y_1, \y_2)^{-1})^2
\end{equation*}
by \zcref{eq:renorm-constants-same,lem:one-int-bound,prop:lollibd}. This is a bound of the desired order and completes the proof of the bound on the covariance in \zcref{e:moose-aim-1}. %

\item\textit{Recentering.}\label{step:recentering} In this case we deal with the
recentering, in order to obtain \zcref{e:moose-aim-2}. Note that we must
only consider contractions that do not belong to
$\mathscr{C}_{\mathrm{l}}$ (as the latter contractions have already been handled in \zcref{step:factorize}). In other words, we must control all the contractions
that have been controlled in \zcref{step:cov-bd}, only this time including the
recentering. Our approach follows the one in \cites{hairer:quastel:2018:class},
namely we view the test function $g_\x^{\lambda}$ as an additional kernel that
is being integrated over, which therefore corresponds to a new edge in the
Feynman diagram associated to the contraction. For example, consider the contraction
  $\rsmooserrrrcPG$.
When this diagram is tested against $ (g_{\x}^\lambda)^{\otimes 2}$, we can represent $g^\lambda(\x -
\y)$ by a dashed line, so the overall integral
\[
\left\llangle \rsmooserrrrcPG[rsK]^{\zeta,\x},(g^\lambda_\x)^{\otimes 2}\right\rrangle = \iint \rsmooserrrrcPG[rsK]^{\zeta, \x}(\y_1,\y_2)g^\lambda(\x -\y_1)g^\lambda(\x-\y_2)\,\dif\y_1 \,\dif \y_2
\]
can be decomposed
into a linear combination of the three Feynman diagrams
\begin{equation} \label{e:recenter-decomp}
  \rsmooserecenterA, \qquad \qquad  \rsmooserecenterB, \qquad \qquad\text{and}\qquad\qquad  \rsmooserecenterC,
\end{equation}
each with the root evaluated at $\x$.
The three integrals appear in analogy to the decomposition of the product $(f(\y_1)
- f(\x)) h(\y_1) g^\lambda(\x - \y_1) (f(\y_2)
- f(\x)) h(\y_2) g^\lambda(\x - \y_2)$ into the following three terms (up to $\y_1
\mapsto \y_2$ symmetry):
\begin{equation*}
  \begin{aligned}
  &f(\y_1)f(\y_2) h(\y_1) h(\y_2) g^\lambda(\x - \y_1)g^\lambda(\x - \y_2) , \\
    &f(\x)f(\x) h(\y_1)h(\y_2) g^\lambda(\x - \y_1) g^\lambda(\x - \y_2), \\
  &f(\y_1)f(\x) h(\y_1)h(\y_2) g^\lambda(\x - \y_1)g^\lambda(\x - \y_2) .
  \end{aligned}
\end{equation*}
where $f$ corresponds to the $\rsipelkrrr$ diagram and $h$
corresponds to $\rslollipopr$. %
Now we estimate the test function $g^\lambda_\x$ by
\begin{equation*}
  |g_{\x}^{\lambda}(\y)| \lesssim \lambda^{-{\tilde{\kappa}}/2} |\y - \x|_{\mf{s}}^{{\tilde{\kappa}}/2 - 3} ,
\end{equation*}
for any ${\tilde{\kappa}}>0$. With the above estimate, the Feynman diagrams in
\zcref{e:recenter-decomp} become convergent (in the sense that they do not
contain any diverging subdiagram) by a similar argument to that of
\zcref{lem:check-degcond-simpler,rem:how-to-check}). 
Thus we can apply \cites[Proposition
2.4]{hairer:2018:analyst}, which is just \zcref{prop:graph-bound} with only one
vertex variable fixed rather than two, to obtain the bound
\begin{equation*}
\left\llangle \rsmooserrrrcPG[rsK]^{\zeta,\x},(g^\lambda_\x)^{\otimes 2}\right\rrangle 
  \lesssim \lambda^{ - {\tilde{\kappa}}} .
\end{equation*}
The same approach works for all the other remaining contractions. Indeed this approach
coincides with the one taken in \cites[Section 6.2.5]{hairer:quastel:2018:class}. To complete the proof of \zcref{e:moose-aim-2}, we must now   estimate the mean terms, which is done in \zcref{step:renormalization} below.

\item\emph{Renormalization.}\label{step:renormalization} Finally, we need to estimate the mean
terms, and in particular also the compensation due to the renormalization
constant. We start with $\rsmooserrrrccAA$, for which we have
\begin{equation*}
  \Big\vert \rsmooserrrrccAA[rsK]^{\ve, \zeta, \x} \Big\vert (\y) \lesssim 1 ,
\end{equation*}
by \zcref{eq:renorm-constants-same,lem:bphzbd}. Next we consider
$\rsmooserrrrccBI$, which requires compensation through the renormalization
constant $C^{(2)}_\zeta$. In this case, it follows from \zcref{prop:sumofcancellinglogs}
that for any ${\tilde{\kappa}} >0$, uniformly over $\x \in \mS_{2L}$, $\lambda \in (0, 1)$
and bounded $g \in \mathcal{C}^{1,2}_{\mathrm{c}}$, that
\begin{equation*}
  \left\lvert \left\llangle\rsmooserrrrccBI[rsK]^{\zeta} ,g_{\x}^\lambda\right\rrangle  + \frac{1}{8} C^{(2)}_{\zeta} \right\rvert  \lesssim \lambda^{-{\tilde{\kappa}}}+  \sup_{\y \in \mS_{2L}} \left\lvert \rsmooserrrrccBI[rsK]^{\zeta} - \rsmooserrrrccBI[rsP]^{\zeta} \right\rvert (\y) .
\end{equation*}
For the last term on the right side,  we use the same approach as in the proof of \zcref{prop:candelabra}. Using the multinomial theorem, we rewrite the difference, for any $\x \in \mS_{2L}$, as
\begin{equation*}
  \begin{aligned}
    \rsmooserrrrccBI[rsK]^{\zeta} (\x) & - \rsmooserrrrccBI[rsP]^{\zeta} (\x) \\
    & = \sum_{\emptyset \neq \mathsf{A} \subseteq \mathsf{E}_{\mI'}} (-1)^{|\mathsf{A}|} \int_{\mS_{2L}^{\mathsf{V}_{\mathrm{int}}}} \left( \prod_{e \in \mathsf{A} } \tilde{J}'(\x_{e_{\downarrow}} - \x_{e_{\uparrow}}) \right) \left(\prod_{e \in \mathsf{E}_{\mI'} \setminus \mathsf{A} } q' (\x_{e_{\downarrow}} - \x_{e_{\uparrow}})\right)
    \left( \prod_{e \in \nu} \mE^{\zeta}(\x_{e_1}, \x_{e_2})  \right) \prod_{v \in \mathsf{V}_{\mathrm{int}}} \ud x_{v} ,
  \end{aligned}
\end{equation*}
where $\mathsf{E}_{\mI'}$ is the set of edges that are not part of the
contraction in $\rsmooserrrrccBI$. Unlike in the proof of \zcref{prop:candelabra}, it is not the case that as soon
as $\mathsf{A}$ is nonempty, then there are no more divergences.
Indeed, if we denote
with a dashed internal line the kernel $\tilde{J}'$ (recall that $\tilde{J}=q-J$), then we must take
care of the following diagrams, which are still divergent because of the
innermost contraction: 
\begin{equation}\label{e:mooseC2A}
  \rsmooserrrrccBKA[rsP]^{\zeta} \;, \qquad \rsmooserrrrccBKB[rsP]^{\zeta} \;, \qquad \rsmooserrrrccBKC[rsP]^{\zeta} \;,
\end{equation}
as well as the following ones, in which we have replaced two kernels:
\begin{equation}\label{e:mooseC2B}
  \rsmooserrrrccBKD[rsP]^{\zeta} \;, \qquad \rsmooserrrrccBKE[rsP]^{\zeta} \;, \qquad \rsmooserrrrccBKF[rsP]^{\zeta} \;,
\end{equation}
and finally the last one, in which we have replaced three kernels:
\begin{equation}\label{e:mooseC2C}
  \rsmooserrrrccBKG[rsP]^{\zeta} \;.
\end{equation}
All the terms in \zcref{e:mooseC2A} can be treated in the same way, so we only
consider the first one. Here we apply \zcref{lem:bphzbd} with $J_1 = q$ to estimate for any
${\tilde{\kappa}} >0$ and some $c>0$ that
\begin{equation*}
  \begin{aligned}
    \bigg\vert \rsmooserrrrccBKA[rsP]^{\zeta} (\y) \bigg\vert & \lesssim_{{\tilde{\kappa}}} \int_{\mS_{2L}^2} |\tilde{J}' (\y - \z)| (|\z - \z'|^{-2- {\tilde{\kappa}}}_{\mf{s}} +1) |q'(\z' - \y)| \ud \z \ud \z'  \\
    & \lesssim \int_{\mS_{2L}^2} e^{-c |\y - \z|_{\mf{s}}^{2}} (|\z - \z'|^{-2- {\tilde{\kappa}}}_{\mf{s}} +1) (e^{-|\z'- \y|_{\mf{s}}^2} + |\z' - \y|_{\mf{s}}^{-2} 1_{\mathcal{U}_c(\y)}(\z')) \ud \z \ud \z' \\
    & \lesssim \int_{\mS_{2L}} e^{-c |\y - \z|_{\mf{s}}^{2}} \ud \z \lesssim 1 ,
  \end{aligned}
\end{equation*}
where we have used \zcref{lem:one-int-bound}, together with the estimate
$|\tilde{J}'(\x)| \lesssim e^{-c|\x|_{\mf{s}}^2}$ due to the spectral gap of $q$
on the torus.

The terms in \zcref{e:mooseC2B,e:mooseC2C} are even simpler. For example, the first term in
\zcref{e:mooseC2B} can be bounded by applying \zcref{lem:bphzbd} with $J_1 = \tilde{J}'$:
\begin{equation*}
  \rsmooserrrrccBKD[rsP]^{\zeta} (\y) \lesssim_{{\tilde{\kappa}}}\int_{\mS_{2L}^2} |\tilde{J}' (\y - \z)| (|\z - \z'|^{-2- {\tilde{\kappa}}}_{\mf{s}} +1) |q'(\z' - \y)| \ud \z \ud \z' \lesssim 1 ,
\end{equation*}
All of the other terms in \zcref{e:mooseC2B} can be bounded with the same
argument. 
Finally, the single term in \zcref{e:mooseC2C} is estimated via \zcref{lem:bphzbd}
with $J_1 = \tilde{J}$ as follows:
\begin{equation*}
  \begin{aligned}
  \bigg\lvert \rsmooserrrrccBKG[rsP]^{\zeta} (\y) \bigg\rvert & \lesssim_{{\tilde{\kappa}}} \int_{\mS_{2L}^2} |\tilde{J}' (\y - \z)| |\tilde{J}'(\z' - \y)| (|\z - \z'|^{-2- {\tilde{\kappa}}}_{\mf{s}} +1)  \ud \z \ud \z' \\
  & \lesssim \int_{\mS_{2L}^2} e^{-c |\y - \z|_{\mf{s}}^2} e^{-c |\z' - \y|_{\mf{s}}^2} (|\z - \z'|^{-2- {\tilde{\kappa}}}_{\mf{s}} +1)  \ud \z \ud \z' \lesssim 1.
  \end{aligned}
\end{equation*}  
This is once more an estimate of the desired order, and it completes the proof
of \zcref{e:moose-aim-2}.

\item\emph{Proof of \zcref{e:newrbc6}.}
  We note that $\rselkrenormrr[rsK]^{
\zeta}(t, 0) = 0$ because $\rsipcherryrenormr[rsK] (t, 0) = 0 $.
Therefore, we obtain
\begin{equation*}
  \begin{aligned}
    \left\vert  \Cov \left( \rselkrenormrr[rsK]^{ \zeta} (t, x), \rselkrenormrr[rsK]^{\zeta} (s, y)  \right) \right\vert & =  \left\vert  \Cov \left( \rselkrenormrr[rsK]^{ \zeta} (t, x) - \rsipcherryrenormr[rsK] (t, 0) \rslollipopr[rsK]^\zeta (t, x), \rselkrenormrr[rsK]^{\zeta} (s, y)  \right) \right\vert \\
    & = \left\vert  \Cov \left( \rselkrenormrrXB[rsK]^{ \zeta, (t, 0)} (t, x), \rselkrenormrr[rsK]^{\zeta} (s, y)  \right) \right\vert ,
  \end{aligned}
\end{equation*}
where with the crossed line we follow the same notation introduced in
\zcref{e:recenter-notation}. Now we
must control the same contractions that appear in
\zcref{e:moose-cov-to-estimate}. However, most of those contractions correspond
to Feynman diagrams that are convergent via \zcref{prop:graph-bound}, so that in
that case our result follows via \zcref{e:Kx-bd} together with the same
explanation as in the proof of \zcref{prop:candelabra}. The same applies to
contractions of the form $\rsmooserrrrcPF$ in which the central two noise
vertices are paired, by additionally using \zcref{lem:bphzbd}. Therefore, the
only issue arises when estimating the term
\begin{equation*}
  \Big\vert \rsmooserrrrcAXB[rsK]^{\zeta, (t, 0)} \Big\vert ( (t, x), (s, y)) .
\end{equation*}
To bound this term, we follow the same proof as in \zcref{prop:elkrrr}. Using notation analogous to that appearing in \zcref{e:another-elk}, we decompose
\begin{equation}
  \Big\vert \rsmooserrrrcAXB[rsK]^{\zeta, (t, 0)} \Big\vert ( (t, x), (s, y)) \leq \Big\vert \rsmooserrrrcAXC[rsK]^{\zeta, (t, 0),(t,x)} \Big\vert ( (t, x), (s, y)) +  \Big\vert \rsclawrrX[rsK]^{\zeta, (t, 0)} \Big\vert (t,x) \Big\vert \rsmooserrrrcAXD[rsK]^{\zeta} \Big\vert ( (t, x), (s, y)),\label{eq:second-term-decompose}
\end{equation}
We bound the first term
similarly to \zcref{e:zzbd}: for any $\delta \in (0,1)$, we estimate
\begin{equation} \label{e:moose-aim-new}
  \Big\vert \rsmooserrrrcAXC[rsK]^{\zeta, (t,0),(t,x)} \Big\vert ( (t, x), (s, y)) \lesssim_{s, t, \delta} |x|^\delta .
\end{equation}
More precisely, from \zcref{lem:zigzag} and using the notation of \zcref{e:defzigzag}, we obtain
\begin{equation*}
  \begin{aligned}
    \big\vert \rsbphzXX[rsK]^{\zeta, (t, 0), (t,x), J'} ( (t, x), \y_2) \big\vert \lesssim |x|^\delta  & (d_{\mf{s}, \mathscr{S}} ((t, x) , \y_2 )^{-2 - \delta }+ d_{\mf{s}, \mathscr{S}} ((t, 0) , \y_2 )^{-2 - \delta }) ,
  \end{aligned}
\end{equation*}
and combining this estimate with
\zcref{lem:bphzbd,lem:one-int-bound,prop:lollibd} leads to \zcref{e:moose-aim-new}. For the
second term in \zcref{eq:second-term-decompose}, we follow the proofs of \zcref{eq:clawrrX,eq:penelktimate} in \zcref{prop:elkrrr} to
obtain
\begin{equation*}
  \Big\vert \rsclawrrX[rsK]^{\zeta, (t, 0)} \Big\vert (t, x) \Big\vert \rsmooserrrrcAXD[rsK]^{\zeta} \Big\vert ( (t, x), (s, y)) \lesssim_{s, t, \delta} |x|^\delta \log(2 + 1/|x|) ,
\end{equation*}
This completes the proof of \zcref{e:newrbc6} and
thus of the entire proposition. \qedhere
\end{thmstepnv}
\end{proof}

\subsection{Renormalization estimates}
\label{s.renormalizationblabla}

In this section we obtain a number of estimates that concern the tree $\rsclawrr$, whose contraction $\rsclawrrcA$ is logarithmically divergent by power counting, but   in our setting it turns out to be uniformly bounded.

\begin{lem}
    \label{lem:claw-symm-bd}We have
    \[
    \left|\rsclawrrcA[rsK]^{\zeta}(\mathbf{x})\right|\lesssim1
    \]
    uniformly over all $\mathbf{x}\in\mS_{2L}$ and $\zeta\in(0,1)$.
    \end{lem}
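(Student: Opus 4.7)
The plan is to reduce the claim to an explicit heat-kernel computation that exploits the oddness of $J'$ in the spatial variable. Starting from the representation $\rsclawrrcA[rsK]^\zeta(\y)=\int J'(\y-\z)\,\rslollirc[rsK]^\zeta(\z,\y)\,\dif\z$, I would first invoke \zcref{prop:lollibd} to write $\rslollirc[rsK]^\zeta = \rslollirc[rsP]^\zeta + R^\zeta$ with $\|R^\zeta\|_\infty$ uniformly bounded. The $R^\zeta$-contribution to the integral is then bounded by a constant times $\int|J'(\y-\z)|\,\dif\z$, which is finite because $|J'|\lesssim|\cdot|_{\mathfrak{s}}^{-2}$ has integrable singularity against the parabolic volume on the compact support of $J$. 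It thus suffices to bound $\int J'(\y-\z)\,\rslollirc[rsP]^\zeta(\z,\y)\,\dif\z$ uniformly in $\y$ and $\zeta$.

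Using the explicit formula \zcref{eq:EW-cov}, this integral splits as $I_+(\y)-I_-(\y)$, where $I_+$ and $I_-$ correspond respectively to the ``diagonal'' contribution from $p_{|s-t_y|}*\Sh^\zeta_{2L}(z-y)$ and the ``reflection'' contribution from $p_{|s-t_y|}*\Sh^\zeta_{2L}(z+y)$. The change of variable $\w=\z-\y$ in $I_+$ yields $\int J'(-\w)\,Q^\zeta_{w_t}(w_x)\,\dif\w$ with $Q^\zeta_\tau \coloneqq p_{|\tau|}*\Sh^\zeta_{2L}$ (even in space, as a convolution of even functions); since $J'(-\w)$ is odd in $w_x$ by \zcref{eq:KKhateven}, the $w_x$-integral vanishes for every fixed $w_t$, so $I_+\equiv 0$.

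The main obstacle is bounding $I_-$, which after a further change of variables (substituting $\tau=t_y-s$ and $w=y-z$ and then exploiting the spatial oddness of $J'$) becomes $\int J'(\tau,w)\,Q^\zeta_\tau(w+2y)\,\dif w\,\dif\tau$, integrated over the compact support of $J$. By power counting this integral has parabolic degree zero and is only logarithmically convergent, so the bound genuinely requires the heat-kernel structure. I would decompose $K=p-\tilde K$ (with $\tilde K$ smooth, whose contribution is trivially bounded); for the remaining $p'$-piece, the semigroup identity $p'_{|\tau|}*p_{|\tau|} = \partial_u p_{2|\tau|}$ reduces the inner spatial integral to $-\partial_u p_{2|\tau|}(2y)$, and the substitution $s=y^2/\tau$ then evaluates the subsequent $\tau$-integral via $\Gamma(\tfrac12)$ to the constant $\tfrac12$, uniformly in $y>0$. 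Replacing $p_{|\tau|}$ by the mollified and periodized $Q^\zeta_\tau$ contributes only bounded corrections: the mollifier acts by $|\tilde R(\zeta\xi)|\le 1$ in Fourier, and the $2L$-periodization adds Gaussian-decaying image sums summable uniformly in $\y,\zeta$. Finally, the whole expression is continuous in $\y$ on the torus (and vanishes when the spatial coordinate of $\y$ lies in $L\ZZ$ by odd/even symmetry), which together with the explicit bound for the main term delivers the required uniform estimate.
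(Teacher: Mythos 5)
Your proof is correct, and for the core estimate it takes a genuinely different route from the paper's. The paper first bounds the $p$-kernel version $\rsclawrrcA[rsP]^{\zeta}$ by computing its spatial Fourier coefficients explicitly: they come out as $-\frac{L\ii\widehat{\Sh}^{\zeta}_{2L}(n)}{\pi n}$ at frequency $2n\neq 0$ and zero otherwise, i.e.\ a $\zeta$-smoothed sawtooth, hence uniformly bounded; it then controls the difference $\rsclawrrcA[rsP]^{\zeta}-\rsclawrrcA[rsK]^{\zeta}$ essentially as you do, via the smoothness and decay of $\tilde K'$ and of $\rslollirc[rsP]^{\zeta}-\rslollirc[rsK]^{\zeta}$. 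You instead work in physical space: after reducing to the $\rslollirc[rsP]^{\zeta}$ covariance, you kill the diagonal part of \zcref{eq:EW-cov} by the exact odd/even cancellation ($J'$ odd in space, $p_{|\tau|}*\Sh^{\zeta}_{2L}$ even), which is legitimate because the integral is absolutely convergent for each fixed $\zeta>0$ (the smoothed covariance is bounded by $C\zeta^{-1}$ while $|J'|\lesssim|\cdot|_{\mathfrak{s}}^{-2}$ is integrable), and you evaluate the reflection part via the semigroup identity $p'_{|\tau|}*p_{|\tau|}=p'_{2|\tau|}$ together with the uniform bound $\int_{0}^{\infty}|p'_{2\tau}(u)|\,\dif\tau=\tfrac12$ for $u\neq0$. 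The two computations are the same object viewed in Fourier versus physical space: your $\int_{0}^{\infty}p'_{2\tau}(2y)\,\dif\tau$ is precisely the sign/sawtooth profile that the paper reads off from the $\propto 1/n$ coefficients. Your version makes the cancellation mechanism (why the naive logarithmic divergence does not occur) completely explicit and identifies which half of the covariance is responsible, at the price of a longer chain of reductions; the paper's Fourier computation is shorter and directly exhibits the discontinuous limiting profile, which is the observation behind \zcref{rem:claw-exp-isnt-smooth}. The only places where you should be a bit more careful in a written version are (i) the treatment of the non-compactly-supported, slowly time-decaying kernel $\tilde K'$ (the paper handles this through the periodized $\tilde J'$ and the spectral gap, giving Gaussian decay), and (ii) when $y$ is near $L$ rather than $0$, the singular image in the periodization comes from $q=-2L$ rather than $q=0$, but the same semigroup computation applies.
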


    \begin{rem}\label{rem:claw-exp-isnt-smooth}
     In contrast to the periodic situation considered in \cite{hairer:2013:solving,hairer:quastel:2018:class}, in which case this expectation is just zero by antisymmetry, in \zcref{lem:claw-symm-bd} the expectation is not even equicontinuous as $\zeta\to 0$.
    \end{rem}
    
    \begin{proof}
      We start by %
      explicitly computing $\mathbb{E}\widehat{\rsclawrr[rsP]}^{\zeta}_{t}(j)$
      using arguments similar to those in
      \zcref{sec:Explicit-calculations}. Following
      the same notation as in that section,
      we have
      \[
      \widehat{\rsiplollipopr[rsP]}^{\zeta}_{t}(k)=\frac{\pi\ii k}{L}\int^{t}_{-\infty}\e^{-\frac{\pi^{2}k^{2}}{2L^{2}}(t-s)}\widehat{\rslollipopr[rsP]}^{\zeta}_{s}(k)\,\dif s.
      \]
      Therefore, we have
      \begin{align*}
      \mathbb{E}\widehat{\rsclawrr[rsP]}^{\zeta}_{t}(j) & =\frac{\pi\ii}{L}\sum_{k\in\mathbb{Z}}k\int^{t}_{-\infty}\e^{-\frac{\pi^{2}k^{2}}{2L^{2}}(t-s)}\mathbb{E}\left[\widehat{\rslollipopr[rsP]}^{\zeta}_{t}(j-k)\widehat{\rslollipopr[rsP]}^{\zeta}_{s}(k)\right]\,\dif s\\
      \ovset{\zcref{eq:EW-cov-fourier}} & =\frac{\pi\ii}{L}\sum_{k\in\mathbb{Z}\setminus\{0\}}k\int^{t}_{-\infty}\e^{-\frac{\pi^{2}k^{2}}{2L^{2}}(t-s)}\left(\delta_{j}-\delta_{j-2k}\right)\widehat{\Sh}^{\zeta}_{2L}(k)\e^{-\frac{\pi^{2}k^{2}}{2L^{2}}(t-s)}\,\dif s\\
       & =\frac{\pi\ii}{L}\sum_{k\in\mathbb{Z}\setminus\{0\}}\left(\delta_{j}-\delta_{j-2k}\right)\widehat{\Sh}^{\zeta}_{2L}(k)k\int^{t}_{-\infty}\e^{-\frac{\pi^{2}k^{2}}{L^{2}}(t-s)}\,\dif s\\
       & =\frac{L\ii}{\pi}\sum_{k\in\mathbb{Z}\setminus\{0\}}\frac{\delta_{j}-\delta_{j-2k}}{k}\cdot\widehat{\Sh}^{\zeta}_{2L}(k).
      \end{align*}
      This can be rewritten as
      \[
      \mathbb{E}\widehat{\rsclawrr[rsP]}^{\zeta}_{t}(j) = \begin{cases}
        0,&0=j\text{ or }j\in 2\mathbb{Z}+1;\\
        -\frac{L\ii\widehat{\Sh}^{\zeta}_{2L}(n)}{\pi n},&0\neq j=2n\in 2\mathbb{Z} .      \end{cases}
      \]
      This is the Fourier transform of a $\zeta$-smoothed version of a
      ``sawtooth'' function. Therefore we have
      \begin{equation*}
        \Big|\rsclawrrcA[rsP]^{\zeta}( \x)\Big|\lesssim 1 \qquad\text{for all } \x \in \mS_{2L} .
      \end{equation*} 
      Thus we are left with estimating the difference 
      \begin{equation*}
        \Big|\rsclawrrcA[rsP]^{\zeta}( \x) - \rsclawrrcA[rsK]^{\zeta}( \x) \Big| \leq \int_{\mS_{2L}} |\tilde{J}' (\x - \z)| | \rslollirc[rsP]^\zeta (\z, \x)| \ud \z   + \int_{\mS_{2L}} |J' (\x - \z)| | \rslollirc[rsP]^\zeta (\z, \x) - \rslollirc[rsK]^\zeta (\z, \x) | \ud \z ,
      \end{equation*}
      where we defined $\tilde{J}' = q' - J'$, with $q$ the
      periodic heat kernel. In particular, $\tilde{J}'$ is smooth, with decay
      $|\tilde{J}' (\z)| \lesssim e^{-c |\z|_{\mf{s}}^2}$ for some $c>0$ because
      of the spectral gap of $q$ on the torus, and similarly also $(\z, \x)
      \mapsto \rslollirc[rsP]^\zeta (\z, \x) - \rslollirc[rsK]^\zeta (\z, \x)$
      is smooth by the proof of \zcref{prop:lollibd}. Hence, by \zcref{eq:redcherryx1x2bd} of \zcref{prop:lollibd}, for some compact set
      $\Theta \subseteq \mS_{2L}$ we have
      \begin{equation*}
        \Big|\rsclawrrcA[rsP]^{\zeta}( \x) - \rsclawrrcA[rsK]^{\zeta}( \x) \Big| \lesssim \int_{\mS_{2L}} e^{-c |\x - \z|_{\mf{s}}^2}d_{\mf{s}, \mathscr{S}} (\x, \z)^{-1} \ud \z   + \int_{\Theta} |\x - \z|_{\mf{s}}^{-2} \ud \z \lesssim 1.
      \end{equation*}
      This completes the proof.
    \end{proof}

In the next result, we treat a specific tree, which requires a renormalization
of BPHZ type and which was necessary for the study of several
  more complicated trees in previous sections. We define
\begin{equation}
\rsbphz[rsK]^{\zeta,J_{1}}(\x,\y)\coloneqq\int_{\mS_{2L}}J_{1}(\z, \y)J'(\x-\z) \rslollirc[rsK]^\zeta (\x, \z) \,\dif\mathbf{z},\label{eq:bphzdef}
\end{equation}
and similarly
\begin{equation*}
  \rsbphz[rsP]^{\zeta,J_{1}}(\x,\y)\coloneqq\int_{\mS_{2L}}J_{1}(\z, \y)q'(\x-\z) \rslollirc[rsP]^\zeta (\x, \z) \,\dif\mathbf{z} ,
\end{equation*}
where in the latter case the internal edges are replaced by the periodic heat kernel $q$. The curlicue edge in the diagram denotes the kernel $J_1$, which is added to the superscript.
We note that in the diagrams above there are two leaf nodes, one at the top left and
the other at the root of the tree, which are associated to the variables
$\mathbf{y}$ and $\mathbf{x}$, respectively. In \zcref{eq:bphzdef}, there are
two kernels $J_{1}$ and $J'$ involved on the right side, with the top left edge represented by $J_{1}$. We are mostly
interested in the case $ J_{1}  \in \{J, q, \rslollirc[rsK]^\zeta,
\rslollirc[rsP]^\zeta\}$, where we recall the periodic heat
kernel $q = J + \tilde{J}$. However, it will be useful to consider $J_{1}$ as a
general kernel, namely we assume $ (\z, \w) \mapsto J_{1} (\z, \w) $ is a smooth
map on $\mS_{2L}^2 \setminus D$, where $D = \{ (\z, \w) \in \mS_{2L}^2 \st \z
= \w\}$ is the diagonal, satisfying for some $\alpha \in (0, 3)$, $\delta \in (0, 1]$, and (small) $a>0$ that
\begin{subequations} \label{e:assu-J1}%
  \begin{align}
  |J_{1}(\z, \w)| & \lesssim |\z - \w|^{-\alpha}_{\mathfrak{s}} +1 && \text{for all }  \z, \w \in \mS_{2L} ,\text{ and}\label{e:assu-J1-1}\\
  |J_{1}(\z', \w) - J_{1}(\z, \w)|_{\mathfrak{s}} &  \lesssim |\z - \z'|^{\delta}_{\mathfrak{s}}|\z - \w|^{-\alpha - \delta}_{\mathfrak{s}} && \text{for all }  \z, \z', \w\in \mS_{2L} \text{ s.t. } |\z' - \z|_{\mf{s}} \leq a |\z - \w|_{\mf{s}} .\label{e:assu-J1-2}
  \end{align}%
\end{subequations}
We have the following estimates on the functions defined above.
\begin{lem}
\label{lem:bphzbd}Suppose that $ (\z, \w) \mapsto J_{1} (\z, \w) $ is a smooth
map on $\mS_{2L}^2 \setminus D$,  %
and that $J_1$
satisfies \zcref{e:assu-J1} for some $\alpha \in (0, 3), \delta \in (0, 1]$, and
$a >0$. Then, uniformly over $\mathbf{x},\mathbf{y}\in\mS_{2L}$ and $\zeta \in (0,1)$,
we have
\begin{equation}
\Big|\rsbphz[rsK]^{\zeta,J_{1}}(\mathbf{x},\mathbf{y})\Big| + \Big|\rsbphz[rsP]^{\zeta,J_{1}}(\mathbf{x},\mathbf{y})\Big|\lesssim 1+ \begin{cases}
  |\x - \y|_{\mf{s}}^{- \alpha} \log(2 + 1/|\x - \y|_{\mf{s}} ) \qquad & \text{if } \alpha \in (0,2) ; \\
  d_{\mf{s}, \mathscr{S}}(\x , \y)^{- \alpha} \log(2 + 1/d_{\mf{s}, \mathscr{S}}(\x , \y) )  \qquad & \text{if } \alpha \in [2, 3) .
\end{cases} 
\label{eq:bphzbd}
\end{equation}
\end{lem}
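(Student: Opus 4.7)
The plan is to isolate the divergent ``tadpole'' piece at $\z = \x$ by performing a first-order Taylor subtraction in $J_1$, and then to bound the subtracted constant part using the antisymmetry of $J'(\x-\z)$ in the spatial variable of $\z$. Concretely, I would write
\begin{equation*}
\rsbphz[rsK]^{\zeta, J_1}(\x, \y) = J_1(\x, \y) L^\zeta(\x) + R^{\zeta, J_1}(\x, \y),
\end{equation*}
where $L^\zeta(\x) \coloneqq \int_{\mS_{2L}} J'(\x - \z) \rslollirc[rsK]^\zeta(\x, \z) \dif \z$ and $R^{\zeta, J_1}(\x, \y) \coloneqq \int_{\mS_{2L}} [J_1(\z, \y) - J_1(\x, \y)] J'(\x - \z) \rslollirc[rsK]^\zeta(\x, \z) \dif \z$. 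Neither term is defined absolutely at $\zeta = 0$, but both admit bounds uniform in $\zeta$.

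The first step is to prove that $L^\zeta(\x)$ is bounded uniformly in $\zeta$ and $\x$. I would use the explicit decomposition of $\rslollirc[rsK]^\zeta$ as in the proof of \zcref{prop:lollibd}: $\rslollirc[rsK]^\zeta(\x, \z) = q_{|t-s|} * \Sh^\zeta_{2L}(x - y) - q_{|t-s|} * \Sh^\zeta_{2L}(x + y) + R^\zeta(\x, \z)$ with $R^\zeta$ bounded uniformly in $\zeta$. The contribution from the first (diagonal) piece vanishes identically, because after the change of variables $u = x - y$ the integrand is the product of the odd function $J'(t-s, u)$ and the even function $q_{|t-s|} * \Sh^\zeta_{2L}(u)$, and the $y$-integration is over the symmetric torus $\TT_{2L}$. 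The contribution from the reflection piece is integrated against a kernel $J'(\x - \z)$ that is smooth at $\z = \sr \x$ whenever $\x$ is bounded away from $\{0, L\}$, with the singularity of order one being integrable in parabolic dimension three; when $\x$ lies near the boundary the two singularities coalesce and require a more careful computation, but the oddness of $J'$ combined with the explicit form of $q_{|t-s|} * \Sh^\zeta$ still yields a bound of order one.

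The second step is to estimate $R^{\zeta, J_1}(\x, \y)$ by splitting the integration domain into the near region $\mathcal{U}_{|\x-\y|_\mf{s}/2}(\x)$ and its complement. In the near region, \zcref{e:assu-J1-2} gives $|J_1(\z, \y) - J_1(\x, \y)| \lesssim |\z - \x|_\mf{s}^\delta |\x - \y|_\mf{s}^{-\alpha - \delta}$, so the integrand is bounded by $|\x - \y|_\mf{s}^{-\alpha - \delta} |\z - \x|_\mf{s}^{\delta - 2} d_{\mf{s}, \mathscr{S}}(\x, \z)^{-1}$, whose integral is of order $|\x - \y|_\mf{s}^{-\alpha}$. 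In the far region, I would bound $|J_1(\z, \y) - J_1(\x, \y)| \lesssim |\z - \y|_\mf{s}^{-\alpha} + |\x - \y|_\mf{s}^{-\alpha} + 1$ using \zcref{e:assu-J1-1} and apply \zcref{lem:one-int-bound} (and its $d_{\mf{s}, \mathscr{S}}$ counterpart). The case split in \zcref{eq:bphzbd} at $\alpha = 2$ arises precisely at this point: for $\alpha < 2$ the integral near the diagonal dominates and the reflection singularity of $\rslollirc^\zeta$ at $\z = \sr\x$ is subdominant, so one recovers $|\x - \y|_\mf{s}^{-\alpha}$; for $\alpha \in [2, 3)$ the interaction of $J_1$ with the reflection term of $\rslollirc^\zeta$ becomes of the same order and forces the bound to be phrased in terms of $d_{\mf{s}, \mathscr{S}}(\x, \y)$, with the borderline logarithmic factor arising at $\alpha + 1 = 3$.

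The analogous estimate for $\rsbphz[rsP]^{\zeta, J_1}$ follows along identical lines: the key antisymmetry still applies, since $q' = J' + \tilde{J}'$ is odd in the spatial variable, and the smooth correction $\tilde{J}' = q' - J'$ has Gaussian decay from the spectral gap of $q$ on the torus, making every integral involving it absolutely convergent by a trivial computation. I expect the main obstacle to be the uniform-in-$\zeta$ bound on $L^\zeta(\x)$ when $\x$ is close to the spatial boundary $\{0, L\}$, since in that regime the diagonal and reflection singularities of $\rslollirc^\zeta$ merge and the antisymmetry cancellation must be combined with a careful cancellation of the two pieces of the covariance that appear in \zcref{eq:EW-cov}.
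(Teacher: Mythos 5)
Your decomposition coincides with the paper's: you subtract $J_{1}(\x,\y)$ times the tadpole integral $L^{\zeta}(\x)=\rsclawrrcA[rsK]^{\zeta}(\x)$ and estimate the remainder by a near/far split, using \zcref{e:assu-J1-2} near the diagonal and \zcref{e:assu-J1-1} away from it. The remainder estimate is sound in outline, but the far region requires the further case analysis on the relative sizes of $d_{L}(x)$ and $|\x-\y|_{\mf{s}}$ that occupies most of the paper's argument: bounding $|J_{1}(\z,\y)-J_{1}(\x,\y)|$ by absolute values and integrating $|\x-\z|_{\mf{s}}^{-2}|\sr\x-\z|_{\mf{s}}^{-1}$ over the far region naively produces a factor $\log(2+1/d_{L}(x))$, which is not controlled by the right side of \zcref{eq:bphzbd} when $d_{L}(x)\ll|\x-\y|_{\mf{s}}$; in that regime one must instead use that $|\sr\x-\z|_{\mf{s}}\gtrsim|\x-\z|_{\mf{s}}$ on the far region, and separately that the coalescence of the two singularities when $d_{L}(x)\gtrsim|\x-\y|_{\mf{s}}$ is harmless. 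This is recoverable, but it is where the real work lies.

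The genuine gap is in your first step, the uniform bound on $L^{\zeta}(\x)$. Away from the boundary your parity argument is fine, but near $x\in L\ZZ$ the assertion that the oddness of $J'$ ``still yields a bound of order one'' is precisely the hard part, and the natural quantitative version of your argument fails: writing the reflected contribution as $\int J'(t-s,2x-v)\,[q_{|t-s|}*\Sh^{\zeta}_{2L}](v)\,\dif v\,\dif s$, the value at $x=0$ vanishes by parity, but the increment in $x$ is controlled only by $|x|^{\delta}$ times an integral of order $|v|_{\mf{s}}^{-3-\delta}$, which is not absolutely convergent, so no soft continuity argument closes the estimate. Indeed, the $\zeta\to0$ limit of $L^{\zeta}$ is a bounded but \emph{discontinuous} sawtooth profile (see \zcref{rem:claw-exp-isnt-smooth}), so any proof must produce boundedness without equicontinuity. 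The paper's \zcref{lem:claw-symm-bd} does this by computing the Fourier coefficients explicitly, $\mathbb{E}\widehat{\rsclawrr[rsP]}^{\zeta}_{t}(2n)\propto\widehat{\Sh}^{\zeta}_{2L}(n)/n$, and then comparing the $p$- and $K$-kernel versions via the smoothness and decay of $\tilde{K}$. Since that lemma precedes the present one, you should cite it rather than re-derive its content; otherwise you must supply an equally quantitative replacement for the parity argument at the boundary.
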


\begin{proof}
We start by analysing $\rsbphz[rsK]^{\zeta,J_{1}}$.
For notational convenience, since $J_{1}$ is fixed, we abbreviate
$\rsbphz[rsK]^{\zeta}=\rsbphz[rsK]^{\zeta,J_{1}}$ in the proof. Then define
\begin{equation}\label{e:Fdef}
F(\mathbf{x},\mathbf{y})\coloneqq\rsbphz[rsK]^{\zeta}(\mathbf{x},\mathbf{y})-J_{1}(\mathbf{x},\mathbf{y})\rsclawrrcA[rsK]^{\zeta}(\mathbf{x}),
\end{equation}
so by \zcref{lem:claw-symm-bd} and \zcref{e:assu-J1-1} we have
\[
\left|\rsbphz[rsK]^{\zeta}(\mathbf{x},\mathbf{y})\right|\lesssim|F(\mathbf{x},\mathbf{y})|+|\mathbf{x}-\mathbf{y}|^{-\alpha}_{\mf{s}}.
\]
Hence to prove \zcref{eq:bphzbd} it suffices to prove the same bound
on $F$. To this end, we bound $|F|\leq F^{+}+F^{-}$, with $F^{+}, F^{-}$
defined through the following calculation:
  \begin{align}%
    |F (\mathbf{x},\mathbf{y}) | & = \left\vert \int_{\mS_{2L}}(J_{1}(\mathbf{z}, \mathbf{y})-J_{1}(\mathbf{x}, \mathbf{y}))J'(\mathbf{x}-\mathbf{z}) \rslollirc[rsK]^\zeta (\x, \z) \ud \z \right\vert \notag\\
    & \lesssim \int_{\x + \Theta}|J_{1}(\mathbf{z}, \mathbf{y})-J_{1}(\mathbf{x}, \mathbf{y})|  | \x - \z|_{\mf{s}}^{-3} \ud \z + \int_{\x+\Theta}|J_{1}(\mathbf{z}, \mathbf{y})-J_{1}(\mathbf{x}, \mathbf{y})| | \x - \z|_{\mf{s}}^{-2}| \sr \x - \z|_{\mf{s}}^{-1} \ud \z \notag \\
    & = F^- (\x, \y) + F^+ (\x, \y),\label{eq:Fpmdef}
  \end{align}
  where we used \zcref{prop:lollibd} and the usual estimate \zcref{eq:Kbds} on $J'$. Here, $\Theta$ is a compact set independent of $\x$ and $\y$, which we can find thanks to the compact support property of $J$.

We start by estimating $F^-$, and the idea is to use the regularity of $J_1$ in the first variable. We split the integration domain into two. With $a>0$ as in \zcref{e:assu-J1-2}, we define 
  \begin{equation*}
    A (\x, \y) = \{ \z \in \mS_{2L} \st |\x - \z|_{\mf{s}} \leq  a |\x - \y|_{\mf{s}} \}\cap  (\x+\Theta)\qquad\text{and}\qquad B (\x, \y) = (\x+\Theta) \setminus A(\x, \y) .
  \end{equation*}
  For $\z\in A(\x,\y)$, we have $\left| J_{1}(\z,  \y) - J_{1}(\x,  \y) \right|\lesssim 
      |\x - \z|_{\mf{s}}^\delta |\x - \y|_{\mf{s}}^{-\alpha-\delta} 
      $, which leads to
  \begin{equation*}
    \begin{aligned}
    \int_{A(\x, \y)}\left| J_{1}(\z , \y) - J_{1}(\x , \y) \right| |\x - \z|_{\mf{s}}^{-3} \ud\z & \lesssim |\x - \y|_{\mf{s}}^{-\alpha - \delta}\int_{A(\x, \y)} |\x - \z|_{\mf{s}}^{-3 + \delta} \ud\z  \lesssim |\x - \y|_{\mf{s}}^{-\alpha} .
    \end{aligned}
  \end{equation*}
  On the other hand, we have
  \begin{equation*}
    \left| J_{1}(\z,  \y) - J_{1}(\x,  \y) \right|\lesssim 1+
      |\z - \y|_{\mf{s}}^{-\alpha} + |\x - \y|_{\mf{s}}^{-\alpha}
  \end{equation*}
  for $\z\in B(\x,\y)$, and so
  \begin{equation*}
    \begin{aligned}
    \int_{B(\x, \y)}\left| J_{1}(\z , \y) - J_{1}(\x , \y) \right| |\x - \z|_{\mf{s}}^{-3} \ud\z & \lesssim \int_{B(\x, \y)} (1+|\z - \y|_{\mf{s}}^{-\alpha} + |\x - \y|_{\mf{s}}^{-\alpha}) |\x - \z|_{\mf{s}}^{-3} \ud\z \\
    & \lesssim (1+|\x - \y|_{\mf{s}}^{-\alpha}) (1 + \log(2 +|\x - \y|_{\mf{s}}^{-1})),
    \end{aligned}
  \end{equation*}
  where the last estimate comes from a direct integration. 
  Together, these bounds yield an estimate of the desired order for $F^{-}$. 

  Let us move on to estimating $F^+$. Here we must take some additional
  precautions, since a na\"ive estimate on the second term in the definition of $F^+$ (following the
  same steps as above) would lead to
  \begin{equation}\label{e:naive}
    \begin{aligned}
      \int_{\x+\Theta} & |J_{1}(\mathbf{x}, \mathbf{y})| | \x - \z|_{\mf{s}}^{-2}| \sr \x - \z|_{\mf{s}}^{-1} \ud \z  & \lesssim (1+|\x - \y|_{\mf{s}}^{-
      \alpha } )\int_{\x+\Theta} |\x - \z|_{\mf{s}}^{-2} |\sr \x - \z|_{\mf{s}}^{-1}  \ud \z \\
      & \lesssim (1+|\x - \y|_{\mf{s}}^{-\alpha})  \log( 2 +1/|x|) . 
    \end{aligned}
  \end{equation}
  Here we recall that $\x=(t,x)$. 
  To obtain \zcref{eq:bphzbd}, we cannot tolerate any blow-up as $x\to 0$, so we must improve this estimate. Of course, this is the same issue faced in the proof of
  \zcref{lem:claw-symm-bd}, only now in a slightly different setting because of the reflection. For simplicity and since the problem is time-homogeneous, we assume that $t=0$, i.e.\ $\x=(0,x)$, and that $|x|\le L/2$ so $d_L(\x)=|x|$. We consider two different cases depending on the relative magnitudes of $|x|=|\x|_{\mathfrak{s}}$ and $|\x-\y|_{\mathfrak{s}}$. %

  \emph{The case $|\x - \y|_{\mf{s}} < \oh |x|$.} In this case we use \zcref{e:naive} to obtain
  \begin{equation*}
    \begin{aligned}
      \int_{\x+\Theta} |J_{1}(\mathbf{x}, \mathbf{y})| | \x - \z|_{\mf{s}}^{-2}| \sr \x - \z|_{\mf{s}}^{-1} \ud \z\lesssim (1+|\x - \y|_{\mf{s}}^{-\alpha})  \log( 2 +|\x - \y|^{-1}_{\mf{s}}) ,
    \end{aligned}
  \end{equation*}
so that we must only estimate  
\begin{align} 
  \int_{\x+\Theta} & |J_{1}(\mathbf{z}, \mathbf{y})| | \x - \z|_{\mf{s}}^{-2}| \sr \x - \z|_{\mf{s}}^{-1} \ud \z \lesssim \int_{\x+\Theta} (1+|\y - \z|_{\mf{s}}^{-\alpha} ) |\x - \z|_{\mf{s}}^{-2}   |\sr \x - \z|_{\mf{s}}^{-1}   \ud\z . \label{e:to-bd-bphz}
\end{align}
The integral involving the constant $1$ is easy to deal with, so we focus on the other one. To estimate this integral we slice $\Theta$ into three regions $A, B, C$. Define 
\begin{equation*}
    \begin{aligned}
    A=A(\x, \y) & \coloneqq \{ \z \in \x+\Theta \st |\x - \z|_{\mf{s}}  < |x| \} , \\
    B=B(\x, \y) & \coloneqq \{ \z \in \x+\Theta \st |x| \leq |\x - \z|_{\mf{s}}  < 3|x| \} , \\
    C=C(\x, \y) & \coloneqq \{ \z \in \x+\Theta \st 3|x| \leq |\x - \z|_{\mf{s}}  \} .
    \end{aligned}
\end{equation*}
By definition we have $A \cup B \cup C =\x+  \Theta$. %
Furthermore, we have $|\sr \x - \z|_{\mf{s}}\ge |\sr\x-\x|_{\mf s}-|\x-\z|_{\mf s} =2|x|-|\x-\z|_{\mf s} \geq |x|$ for all $\z \in A$. Therefore, on $A$ we can estimate the integral  by 
\begin{equation*}
  \begin{aligned}
    \int_A |\y - \z|_{\mf{s}}^{-\alpha}  |\x - \z|_{\mf{s}}^{-2}   |\sr \x - \z|_{\mf{s}}^{-1}  \, \dif \z & \lesssim |x|^{-1} \int_A |\y - \z|_{\mf{s}}^{-\alpha}  |\x - \z|_{\mf{s}}^{-2}  \dif \z \\
    & \lesssim |x|^{-1} \begin{cases}
    |x|^{1 - \alpha} \qquad & \text{ if } \alpha \in (0, 1); \\
    \log(2 + 1/|\x-\y|_{\mf{s}}) \qquad & \text{ if } \alpha = 1; \\
    |\x-\y|_{\mf{s}}^{-\alpha + 1} \qquad & \text{ if } \alpha \in (1, 3)
    \end{cases} \\
    &  \lesssim %
      |\x-\y|_{\mf{s}}^{-\alpha}\log(2 + 1/|\x-\y|_{\mf{s}}), %
  \end{aligned}
\end{equation*}
as desired. On the set $B$ we have that $|\y - \z|_{\mf{s}} \geq |\x - \z|_{\mf{s}}
- |\y - \x|_{\mf{s}} \ge \oh |x|$, and so
\begin{equation*}
    \int_B |\y - \z|_{\mf{s}}^{-\alpha}  |\x - \z|_{\mf{s}}^{-2}   |\sr \x - \z|_{\mf{s}}^{-1}  \, \dif \z \lesssim |x|^{-2-\alpha}\int_B |\sr \x - \z|_{\mf{s}}^{-1}  \, \dif \z \lesssim %
|x|^{-2 - \alpha} |x|^2
    \lesssim |\x - \y|_{\mf{s}}^{-\alpha}.
\end{equation*}
as desired. Finally, on the set $C$, we have
\begin{equation*}
    \int_C |\y - \z|_{\mf{s}}^{-\alpha}  |\x - \z|_{\mf{s}}^{-2}   |\sr \x - \z|_{\mf{s}}^{-1} \,  \dif \z \lesssim  \int_C |\x - \z|_{\mf{s}}^{-3 -\alpha}  \, \dif \z \lesssim |x|^{-\alpha } \lesssim |\x - \y|_{\mf{s}}^{-\alpha } ,
\end{equation*}
which concludes the proof in the region  $|\x - \y|_{\mf{s}} < \oh |x|$.

  \emph{The case $|\x - \y|_{\mf{s}} \geq \oh|x|$.}
  In this case we again split the integral over $\z$ into three parts. Recalling that the parameter $a$ was fixed so that \zcref{e:assu-J1-2} is satisfied,  we define %
  \begin{subequations}
  \begin{align}
    A&\coloneqq\{\z\in \x+ \Theta\st |\z|_{\mf s} \le 4 |\x-\y|_{\mf s}\text{ and } |\z-\x|_{\mf s}\le a |\x-\y|_{\mf s}\};\label{eq:A3}\\
    B&\coloneqq\{\z\in \x+\Theta\st |\z|_{\mf s} \le 4 |\x-\y|_{\mf s}\text{ and } |\z-\x|_{\mf s}> a |\x-\y|_{\mf s}\};\label{eq:B3}\\
    C&\coloneqq\{\z\in \x+\Theta\st |\z|_{\mf s} > 4 |\x-\y|_{\mf s}%
    \}.\label{eq:C3}
  \end{align}%
    \label{eq:ABC3}
\end{subequations}
For the  integral over $A$ we estimate using \zcref{e:assu-J1-2} that
\begin{align*}
  \int_A &|J_1(\z,\y)-J_1(\x,\y)||\x-\z|_{\mf s}^{-2}|\sigma_{\mathrm{refl}}\x-\z|^{-1}_{\mf s}\,\dif \z\lesssim |\x-\y|^{-\alpha-\delta}_{\mf{s}}\int_{|\z|_{\mf s}\le 4|\x-\y|_{\mf s}} |\x-\z|^{-2+\delta}_{\mathfrak s} |\sigma_{\mathrm{refl}}\x-\z|^{-1}_{\mathfrak s}\,\dif \z
  \\&\lesssim |\x-\y|^{-\alpha-\delta}_{\mf{s}}\int_{|\z|_{\mf s}\le 4|\x-\y|_{\mf s}} \left(|\x-\z|^{-3+\delta}_{\mathfrak s} +|\sigma_{\mathrm{refl}}\x-\z|^{-3+\delta}_{\mathfrak s}\right)\,\dif \z\lesssim |\x-\y|^{-\alpha}_{\mf s},
\end{align*}
where in the last inequality we used the assumption that $|\x-\y|_{\mf s}\ge \oh|x|= \oh d_L(\x)$.

For the integral over $B$, we use \zcref{e:assu-J1,eq:B3} to write
\begin{align}
  \int_B &|J_1(\z,\y)-J_1(\x,\y)||\x-\z|_{\mf s}^{-2}|\sigma_{\mathrm{refl}}\x-\z|^{-1}_{\mf s}\,\dif \z\notag
  \\&\le |\x-\y|_{\mf s}^{-2}\int_B (1+|\z-\y|^{-\alpha}_{\mf s}+|\x-\y|_{\mf s}^{-\alpha})|\sigma_{\mathrm{refl}}\x-\z|_{\mf s}^{-1}\,\dif\z. \label{eq:Bbound}
\end{align}
For the last term, we integrate over $\{\z\st |\z|_{\mf s}\le 4|\x-\y|_{\mf s}\}$ %
to see that
$\int_B |\sigma_{\mathrm{refl}}\x-\z|_{\mf s}^{-1}\,\dif\z \lesssim |\x-\y|_{\mf s}^{2}$, which is a sufficient bound in that case.
For the second term in \zcref{eq:Bbound}, we note that if $\alpha<2$, then by Young's inequality we can write
$|\z-\y|^{-\alpha}_{\mf s} |\sigma_{\mathrm{refl}}\x-\z|_{\mf s}^{-1}\lesssim |\z-\y|^{-\alpha-1}_{\mf s} + |\sigma_{\mathrm{refl}}\x-\z|^{-\alpha-1}_{\mf s}$, so we can proceed to obtain $\int_B |\z-\y|^{-\alpha}_{\mf s}|\sigma_{\mathrm{refl}}\x-\z|^{-1}_{\mf s}\,\dif\z\lesssim |\x-\y|^{2-\alpha}$. On the other hand, if $\alpha\ge 2$, then by \zcref{lem:one-int-bound} we have
\[\int_{\x+\Theta} |\z-\y|^{-\alpha}_{\mf s}|\sigma_{\mathrm{refl}}\x-\z|^{-1}_{\mf s}\,\dif\z
         \lesssim\begin{cases}
           \log(2+1/d_{\mf s,\mathscr{S}}(\x,\y)),&\alpha=2;\\
           d_{\mf s,\mathscr{S}}(\x,\y)^{-\alpha+2},&\alpha\in (2,3).
           \end{cases}
\]
Altogether, we obtain
\begin{equation}
  \int_B |J_1(\z,\y)-J_1(\x,\y)||\x-\z|_{\mf s}^{-2}|\sigma_{\mathrm{refl}}\x-\z|^{-1}_{\mf s}\,\dif \z
         \lesssim\begin{cases}
           |\x-\y|_{\mf s}^{-\alpha},&\alpha \in (0,2);\\
           |\x-\y|_{\mf s}^{-\alpha}\log(2+1/d_{\mf s,\mathscr{S}}(\x,\y)),&\alpha=2;\\
           d_{\mf s,\mathscr{S}}(\x,\y)^{-\alpha},&\alpha\in (2,3).
         \end{cases}\label{eq:Bbound-final}
         \end{equation}

         Finally, for the integral over $C$,
we use the fact that, for $\z\in C$, we have $|\z|_{\mf s}>4|\x-\y|_{\mf s}\ge 2 |\x|_{\mf s}$ and thus
$|\z-\y|_{\mf s}\ge |\z|_{\mf s}-|\x-\y|_{\mf s} -|\x|_{\mf s} \ge \nicefrac16|\z|_{\mf s}$ 
as well as $|\z-\x|_{\mf s}\ge |\z|_{\mf s}-|\x|_{\mf s} \ge \nicefrac12 |\z|_{\mf s}$ to estimate from \zcref{e:assu-J1-1} that 
\begin{align*}
  \int_C &|J_1(\z,\y)-J_1(\x,\y)||\x-\z|_{\mf s}^{-2}|\sigma_{\mathrm{refl}}\x-\z|^{-1}_{\mf s}\,\dif \z\notag\\
         &\lesssim \int_C (1+|\z-\y|^{-\alpha}_{\mf s}+|\x-\y|_{\mf s}^{-\alpha})|\x-\z|^{-2}|\sigma_{\mathrm{refl}}\x-\z|_{\mf s}^{-1}\,\dif\z. %
       \\&\lesssim \int_{\z\in\x+\Theta:|\z|_{\mf s}\ge 4|\x-\y|_{\mf s}} (1+|\z|_{\mf s}^{-3-\alpha}+|\x-\y|^{-\alpha}_{\mf s}|\z|_{\mf s}^{-3})\,\dif\z\lesssim 1+|\x-\y|^{-\alpha}_{\mf s}\log(2+1/|\x-\y|_{\mf s})
\end{align*}
  This completes the proof of the result for the case of $\rsbphz[rsK]^{\zeta,J_1}$.

  \emph{The case $\rsbphz[rsP]^{\zeta, J_1}$.} The only difference when
  considering $\rsbphz[rsP]$ rather than $\rsbphz[rsK]$ is that the kernel $q$ does
  not have compact support. However, the difference $\tilde{J} = q - J$ is
  smooth and, due to the spectral gap of $q'$, decays exponentially fast, i.e.\ $| \tilde{J}' (\x)| \lesssim e^{-c
  |\x|_{\mf{s}}^2} $ for some $c >0$.
  Thus, if we write the difference as
  \begin{equation}
    \begin{aligned}
    \rsbphz[rsP]^{\zeta,J_{1}} - \rsbphz[rsK]^{\zeta,J_{1}} &=  \int_{\mS_{2L}} J_{1}(\mathbf{z}, \mathbf{y}) \tilde{J}'(\mathbf{x}-\mathbf{z}) \rslollirc[rsP]^\zeta (\x, \z)\,\dif\mathbf{z} \\&\qquad+ \int_{\mS_{2L}} J_{1}(\mathbf{z}, \mathbf{y}) J'(\mathbf{x}-\mathbf{z}) \left(\rslollirc[rsP]^\zeta (\x, \z) - \rslollirc[rsK]^\zeta (\x, \z)  \right)\,\dif\mathbf{z}  ,\end{aligned}\label{eq:bphzdiffbd}
  \end{equation}
  then the integrals on the right side are convergent, similarly to the situation %
  in the proofs of \zcref{prop:candelabra, prop:moose}. %
  Indeed, for the first integral we estimate
    \begin{align}
    \bigg\vert \int_{\mS_{2L}}  J_{1}(\mathbf{z}, \mathbf{y}) \tilde{J}'(\mathbf{x}-\mathbf{z}) \rslollirc[rsP]^\zeta (\x, \z)\,\dif\mathbf{z}  \bigg\vert & \lesssim \int_{\mS_{2L}} e^{- c |\x - \z|_{\mf{s}}^2} (|\z - \y|_{\mf{s}}^{-\alpha} +1) d_{\mf{s}, \mathscr{S}} (\x , \z)_{\mf{s}}^{-1} \,\dif\mathbf{z}\notag \\\label{e:replace-1}
    & \lesssim \begin{cases}
      1 \qquad & \text{if } \alpha \in (0, 2) ; \\
      \log(2 + d_{\mf{s}, \mathscr{S}} (\x, \y)) \qquad & \text{if } \alpha =2 ; \\
      1 + d_{\mf{s}, \mathscr{S}} (\x, \y)^{- \alpha + 2} \qquad & \text{if } \alpha \in (2,3) ,
    \end{cases}
    \end{align} 
  where we have employed \zcref{lem:one-int-bound,prop:lollibd}.

  On the other hand, the second integral in \zcref{eq:bphzdiffbd} reduces again to an integral over a compact domain
  $\x +\Theta$ by the compact support property of $J'$. We find, since the kernel $\rslollirc[rsP]^\zeta (\x, \z) - \rslollirc[rsK]^\zeta (\x, \z) $
  is uniformly bounded by \zcref{eq:lollirc-diff-bd}, that
  \begin{align}%
      \left\vert \int_{\mS_{2L}} J_{1}(\mathbf{z}, \mathbf{y}) J'(\mathbf{x}-\mathbf{z}) \left(\rslollirc[rsP]^\zeta (\x, \z) - \rslollirc[rsK]^\zeta (\x, \z)  \right)\,\dif\mathbf{z} \right\vert & \lesssim  \int_{\x +\Theta} (1+|\z - \y|_{\mf{s}}^{-\alpha}) |\x - \z|_{\mf{s}}^{-2} \,\dif\mathbf{z}\notag \\
      & \lesssim 1+\begin{cases}
        1 \qquad & \text{if } \alpha \in (0, 1) ; \\
        \log(2 + |\x - \y|_{\mf{s}}) \qquad & \text{if } \alpha = 1 ; \\
        |\x - \y|_{\mf{s}}^{- \alpha + 1} \qquad & \text{if } \alpha \in (1,3) .
      \end{cases} \label{e:replace-2}
  \end{align}%
  The estimates \zcref{e:replace-1,e:replace-2} are strictly better than the ones in \zcref{eq:bphzbd}, and
  therefore imply \zcref{eq:bphzbd} also for $ \rsbphz[rsP]^{\zeta,J_{1}}$.
  This completes the proof of the result.
\end{proof}

The next result controls the regularity of $\rsbphz[rsK]^{\zeta,J_{1}}$ in its
second variable.

\begin{lem} \label{lem:bphz-increment}
  In the same setting as \zcref{lem:bphzbd}, fix any $\alpha \in (1, 2), \delta \in
  (0, 2 - \alpha)$, and assume that the kernel $J_1$ and its reflection
  $J_1^{r} (\x, \y) = J_{1}(\y, \x)$ both satisfy \zcref{e:assu-J1} with these
  $\alpha, \delta$. Then we have
  \begin{equation}
    \Big|\rsbphz[rsK]^{\zeta,J_{1}}(\mathbf{x},\mathbf{y})- \rsbphz[rsK]^{\zeta,J_{1}}(\mathbf{x},\mathbf{y'})\Big| \lesssim |\x - \y|^{-\alpha - \delta}_{\mf{s}} |\y - \y'|_{\mf{s}}^\delta \log(2 + 1/|\x - \y|_{\mf{s}} +  1/|\y - \y'|_{\mf{s}}) .\label{eq:bphzdiffbd-2}
    \end{equation}
\end{lem}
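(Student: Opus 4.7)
The plan is to adapt the BPHZ-type renormalization used in the proof of \zcref{lem:bphzbd} to the increment
\[
\Delta(\x,\y,\y') \coloneqq \rsbphz[rsK]^{\zeta,J_1}(\x,\y) - \rsbphz[rsK]^{\zeta,J_1}(\x,\y'),
\]
splitting into two regimes according to the ratio $|\y-\y'|_{\mathfrak{s}}/|\x-\y|_{\mathfrak{s}}$. A useful first observation is that, by the transposed hypothesis \zcref{e:assu-J1-2} applied to $J_1^r$, the map $\w \mapsto J_1(\z,\w)$ is H\"older-$\delta$ at any basepoint $\w_0$ with constant of order $|\z-\w_0|^{-\alpha-\delta}_{\mathfrak{s}}$ whenever $|\w-\w_0|_{\mathfrak{s}} \le a|\z-\w_0|_{\mathfrak{s}}$.

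In the diagonal regime $|\y-\y'|_{\mathfrak{s}} \ge (a/4)|\x-\y|_{\mathfrak{s}}$, the triangle inequality together with \zcref{lem:bphzbd} applied separately to the two terms in $\Delta$ gives a bound of order $|\x-\y|^{-\alpha}_{\mathfrak{s}}\log(\cdots)$, and since $|\x-\y|^{-\alpha}_{\mathfrak{s}} \lesssim |\y-\y'|^{\delta}_{\mathfrak{s}}|\x-\y|^{-\alpha-\delta}_{\mathfrak{s}}$ in this regime, this already yields the claimed bound.

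In the off-diagonal regime $|\y-\y'|_{\mathfrak{s}} < (a/4)|\x-\y|_{\mathfrak{s}}$ (in particular $|\x-\y|_{\mathfrak{s}} \sim |\x-\y'|_{\mathfrak{s}}$), I would introduce the BPHZ counterterm
\[
\tilde F(\x,\y,\y') \coloneqq \Delta(\x,\y,\y') - [J_1(\x,\y) - J_1(\x,\y')]\,\rsclawrrcA[rsK]^\zeta(\x),
\]
so that the subtracted piece is of the correct order by \zcref{lem:claw-symm-bd} combined with the H\"older estimate on $\w\mapsto J_1(\x,\w)$ recorded above. The remaining quantity $\tilde F$ has integrand $[h(\z)-h(\x)]J'(\x-\z)\rslollirc[rsK]^\zeta(\x,\z)$ with $h(\z) \coloneqq J_1(\z,\y) - J_1(\z,\y')$, and I would combine two pointwise estimates on this increment: regularity of $J_1$ in its first argument, applied separately to $J_1(\cdot,\y)$ and $J_1(\cdot,\y')$, yields
\[
|h(\z)-h(\x)| \lesssim |\z-\x|^{\delta}_{\mathfrak{s}}|\x-\y|^{-\alpha-\delta}_{\mathfrak{s}} \quad \text{for } |\z-\x|_{\mathfrak{s}} \le a|\x-\y|_{\mathfrak{s}},
\]
while regularity of $J_1$ in its second argument yields
\[
|h(\z)-h(\x)| \lesssim |\y-\y'|^{\delta}_{\mathfrak{s}}(|\z-\y|^{-\alpha-\delta}_{\mathfrak{s}} + |\x-\y|^{-\alpha-\delta}_{\mathfrak{s}}).
\]
Taking the sharper of these bounds at each $\z$, partitioning the integral into the same three regions used in the proof of \zcref{lem:bphzbd} (small $|\z-\x|_{\mathfrak{s}}$, small $|\sr\x-\z|_{\mathfrak{s}}$, and the far region), and applying \zcref{lem:one-int-bound} should deliver the claimed estimate, with the logarithm arising from borderline integrals.

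The main obstacle will be the near-reflection region where $|\sr\x-\z|_{\mathfrak{s}}$ is small, which already required delicate case analysis in the proof of \zcref{lem:bphzbd}. Here the interpolation between the two pointwise bounds on $|h(\z)-h(\x)|$ must be performed carefully, using the threshold determined by whether $|\z-\x|_{\mathfrak{s}}$ is smaller or larger than $|\y-\y'|_{\mathfrak{s}}$, in order to extract a factor $|\y-\y'|^{\delta}_{\mathfrak{s}}$ without losing the integrability gained from the factor $|\z-\x|^{\delta}_{\mathfrak{s}}$. The assumption $\delta < 2-\alpha$ enters exactly to ensure that the remaining $\z$-integral remains convergent after the loss of one power in the singularity near $\sr\x$.
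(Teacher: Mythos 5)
Your proposal follows essentially the same route as the paper's proof: the same initial reduction disposing of the regime where $|\y-\y'|_{\mf{s}}\gtrsim|\x-\y|_{\mf{s}}$ via \zcref{lem:bphzbd}, the same counterterm $[J_1(\x,\y)-J_1(\x,\y')]\,\rsclawrrcA[rsK]^{\zeta}(\x)$ controlled by \zcref{lem:claw-symm-bd} and the transposed H\"older hypothesis, and the same pair of pointwise bounds on the increment of $\z\mapsto J_1(\z,\y)-J_1(\z,\y')$ interpolated at the threshold $|\z-\x|_{\mf{s}}\sim|\y-\y'|_{\mf{s}}$ before the region-by-region integration. The only point to patch is that your second pointwise bound requires $|\y-\y'|_{\mf{s}}\lesssim|\z-\y|_{\mf{s}}$ and so is not valid for $\z$ within distance of order $|\y-\y'|_{\mf{s}}$ of $\y$ or $\y'$, where one substitutes the crude size bound \zcref{e:assu-J1-1} and uses $3-\alpha>\delta$ to absorb the resulting $|\y-\y'|_{\mf{s}}^{3-\alpha}$ contribution, exactly as the paper does.
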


\begin{proof}
  The proof follows along the same lines of the proof of \zcref{lem:bphzbd}. In
  particular, for any fixed $d <\infty$, if $|\x - \y|_{\mf{s}} \leq d |\y
  - \y'|_{\mf{s}}$, then the upper bound follows directly from
  \zcref{lem:bphzbd} since
  \begin{equation*}
    |\x - \y|_{\mf{s}}^{-\alpha} = |\x - \y|_{\mf{s}}^{-\alpha- \delta }|\x - \y|_{\mf{s}}^\delta \lesssim_{d} |\x - \y|_{\mf{s}}^{-\alpha- \delta } |\y - \y'|_{\mf{s}}^\delta .
  \end{equation*}
  Therefore, let us consider only the case  $|\x - \y|_{\mf{s}} > d |\y
  - \y'|_{\mf{s}}$. The value of $d > 0$ will be allowed to vary throughout the
  proof, and is eventually required to be sufficiently large for all the
  estimates to hold. We assume that $d \geq 2/a$, implying that
  \begin{equation}
  |\y-\y'|_{\mf s} \le \nicefrac{a}2|\x-\y|_{\mf s},\label{eq:key-a-relation}
  \end{equation}
  and hence since $J_1^r$
  satisfies \zcref{e:assu-J1} that \begin{equation}\label{eq:J1diffbd}
    |J_{1}(\mathbf{x},\mathbf{y}) - J_{1}(\mathbf{x},\mathbf{y}')| \lesssim |\x - \y|_{\mf{s}}^{-\alpha - \delta} |\y - \y'|_{\mf{s}}^\delta.
  \end{equation}
  Together with \zcref{lem:claw-symm-bd}, this implies that if we define
\begin{equation*}
G(\mathbf{x},\mathbf{y})\coloneqq\rsbphz[rsK]^{\zeta, J_1}(\mathbf{x},\mathbf{y})-\rsbphz[rsK]^{\zeta, J_1}(\mathbf{x},\mathbf{y}')-J_{1}(\mathbf{x},\mathbf{y})\rsclawrrcA[rsK]^{\zeta}(\mathbf{x}) + J_{1}(\mathbf{x},\mathbf{y}')\rsclawrrcA[rsK]^{\zeta}(\mathbf{x}) ,
\end{equation*}
then we have
\[
\left|\rsbphz[rsK]^{\zeta, J_1}(\mathbf{x},\mathbf{y})-\rsbphz[rsK]^{\zeta, J_1}(\mathbf{x},\mathbf{y}')\right|\lesssim|G(\mathbf{x},\mathbf{y})|+|\mathbf{x}-\mathbf{y}|^{-\alpha - \delta }_{\mf{s}} |\y - \y'|_{\mf{s}}^\delta.
\]
Hence to prove \zcref{eq:bphzdiffbd-2} it suffices to prove the same bound
on $G$. We fix $\y,\y'$   in the proof, and to simplify the notation we define the recentered kernel
\begin{equation*}
  M_{1} (\w, \w') = J_{1}(\w, \w') - J_{1}(\w, \w'+ \y'- \y).
\end{equation*}
Note that  $M_1(\w,\y) = J_1(\w,\y)-J_1(\w,\y')$ for any $\w\in\mS_{2L}$. %
Let us proceed as in the proof of \zcref{lem:bphzbd}, following the same notational conventions. %
We can estimate using \zcref{eq:Kbds,eq:redcherryx1x2bd}  that
\begin{equation*}
  \begin{aligned}
    |G (\mathbf{x},\mathbf{y}) | & = \left\vert \int_{\mS_{2L}}(M_{1}(\mathbf{z}, \mathbf{y})-M_{1}(\mathbf{x}, \mathbf{y}))J'(\mathbf{x}-\mathbf{z}) \rslollirc[rsK]^\zeta (\x, \z) \ud \z \right\vert \\
    & \lesssim \int_{\x + \Theta}|M_{1}(\mathbf{z}, \mathbf{y})-M_{1}(\mathbf{x}, \mathbf{y})|  | \x - \z|_{\mf{s}}^{-3} \ud \z + \int_{\x+ \Theta}|M_{1}(\mathbf{z}, \mathbf{y})-M_{1}(\mathbf{x}, \mathbf{y})| | \x - \z|_{\mf{s}}^{-2}| \sr \x - \z|_{\mf{s}}^{-1} \ud \z  \\
    & \eqqcolon G^- (\x, \y) + G^+ (\x, \y).
  \end{aligned}
\end{equation*}

We start by estimating $G^-$. We define 
  \begin{equation*}
    A (\x, \y, \y') \coloneqq \{ \z \in \mS_{2L} \st |\x - \z|_{\mf{s}} \leq |\y - \y'|_{\mf{s}} \} \qquad\text{and}\qquad B (\x, \y, \y') \coloneqq (\x+\Theta) \setminus A(\x, \y, \y') .
  \end{equation*}
For $\z \in A (\x, \y, \y')$ we have $|\x-\z|_{\mf s} \le |\y-\y'|_{\mf s}\le \nicefrac a 2|\x-\y|_{\mf s}$ by \zcref{eq:key-a-relation}. Using this and then also assuming that $d\ge 2$ to ensure that $|\x-\y'|_{\mf s} \ge |\x-\y|_{\mf s}-|\y-\y'|_{\mf s} \ge \oh |\x-\y|_{\mf s}$, we get $|\x-\z|_{\mf s} \le a|\x-\y'|_{\mf s}$ as well. Therefore, we can use \zcref{e:assu-J1-2} to estimate that
\begin{equation*}
  \left| M_{1}(\z , \y) - M_{1}(\x , \y) \right|  \leq |J_1 (\x, \y) - J_1(\z, \y)| + |J_1(\x, \y') - J_1 (\z, \y')| \lesssim |\x - \y|^{-\alpha -\delta}_{\mf{s}} |\x - \z|_{\mf{s}}^\delta .
\end{equation*}
Thus for the integral over $A(\x, \y, \y')$ we find that
  \begin{equation*}
    \begin{aligned}
    \int_{A(\x, \y, \y')}\left| M_{1}(\z , \y) - M_{1}(\x , \y) \right| |\x - \z|_{\mf{s}}^{-3} \ud\z & \lesssim |\x - \y|_{\mf{s}}^{-\alpha - \delta}\int_{A(\x, \y, \y')} |\x - \z|_{\mf{s}}^{-3 + \delta} \ud\z  \lesssim |\x - \y|_{\mf{s}}^{-\alpha - \delta} |\y - \y'|_{\mf{s}}^{\delta} .
    \end{aligned}
  \end{equation*}
Next, for the integral on $B(\x,\y,\y')$, by \zcref{e:assu-J1-2} applied to $J_1^r(\z, \y) = J_1(\y, \z)$, we have
\begin{equation*}
  \begin{aligned}
    |M_1(\x, \y)| & = |J_1(\x, \y) - J_1(\x, \y')| \lesssim |\x - \y|^{-\alpha- \delta} |\y - \y'|^\delta,
  \end{aligned}
\end{equation*}
so we can estimate
\begin{equation*}
  \begin{aligned}
    \int_{B(\x, \y, \y')}\left| M_{1}(\x , \y) \right| |\x - \z|_{\mf{s}}^{-3} \ud\z & \lesssim |\x - \y|^{-\alpha- \delta}_{\mf{s}} |\y - \y'|^\delta_{\mf{s}} \int_{B(\x, \y, \y')}|\x - \z|_{\mf{s}}^{-3} \ud \z \\
    & \lesssim |\x - \y|^{-\alpha- \delta}_{\mf{s}} |\y - \y'|^\delta_{\mf{s}} \log(2 + 1/|\y - \y'|_{\mf{s}}) .
  \end{aligned}
\end{equation*}
To estimate the term involving $|M_1(\z,\y)|$ on $B(\x,\y,\y')$, we further divide $B(\x,\y,\y')$ into two sets. Define $c\coloneqq2\vee\nicefrac1a$. If $\z\in B(\x,\y,\y')\cap \mathcal{U}_{c|\y-\y'|_{\mf s}}(\y)$, then we have $|\x-\z|_{\mf s}\ge |\x-\y|_{\mf s}-|\y-\z|_{\mf s} \ge \oh|\x-\y|_{\mf s}$. We can then use \zcref{e:assu-J1-1}  along with this inequality to estimate
\begin{align*}
\int_{B(\x,\y,\y')\cap\mathcal{U}_{c|\y-\y'|_{\mf s}}(\y)} |M_1(\z,\y)||\x-\z|^{-3}_{\mf s} \,\dif \z %
&\lesssim |\x-\y|^{-3}_{\mf s}\int_{B(\x,\y,\y')\cap\mathcal{U}_{c|\y-\y'|_{\mf s}}(\y)}(|\z-\y|_{\mf s}^{-\alpha} + |\z-\y'|^{-\alpha}_{\mf s})\,\dif \z\\
&\lesssim |\x-\y|^{-3}_{\mf s}|\y-\y'|^{3-\alpha}_{\mf s}\lesssim |\x-\y|^{-\alpha-\delta}_{\mf s}|\y-\y'|^{\delta}_{\mf s},
\end{align*}
where in the last estimate we used that $|x-y|_{\mf s}\gtrsim |\y-\y'|_{\mf s}$.
On the other hand, we can estimate, using the fact that for $z\not \in \mathcal{U}_{c|\y-\y'|_{\mf s}}$ we have $|\y-\y'|\le \nicefrac 1 c |\z-\z|_{\mf s} \le a|\z-\y|_{\mf s}$, that
\begin{equation*}
  \begin{aligned}
    \int_{B(\x, \y, \y')}\left| M_{1}(\z , \y) \right| |\x - \z|_{\mf{s}}^{-3} \ud\z &\lesssim %
   |\y - \y'|^\delta_{\mf{s}} \int_{B(\x, \y, \y')  \setminus \mathcal{U}_{c|\y - \y'|_{\mf{s}}} (\y)} |\z - \y|_{\mf{s}}^{-\alpha -\delta}  |\x - \z|_{\mf{s}}^{-3} \ud \z\\&\lesssim |\x-\y|_{\mf s}^{-\alpha-\delta}|\y-\y'|_{\mf s}^{\delta}\log(2+1/|\y-\y'|_{\mf s}).
  \end{aligned}
\end{equation*}
The last estimate is obtained by further breaking up the integration domain into three sets: the first two when $\z$ is within a ball of radius proportional to $|\x-\y|_{\mf s}$ of $\x$ and of $\y$, respectively, and then the last being the remaining set.

  Let us move on to estimating $G^+$. We fix constants $c_1,c_2,c_3,c_4$ and define the sets
  \begin{equation*}
    A \coloneqq\mathcal{U}_{c_1 |\x - \y|_{\mf{s}}} (\x) , \qquad B \coloneqq \mathcal{U}_{c_2 |\x - \y|_{\mf{s}}} (\y) , \qquad C \coloneqq \mathcal{U}_{c_3 |\y - \y'|_{\mf{s}}} (\y) , \qquad D \coloneqq \mathcal{U}_{c_4 |\y - \y'|_{\mf{s}}} (\x) .
  \end{equation*}
  We assume that $c_1,c_2\le \nicefrac 13$, so $A\cap B=\varnothing$. We also assume that $c_4 \le dc_1$ and $c_3\le dc_2$, so that $C\subseteq B$ and $D\subseteq A$. Finally, we assume that $c_4 \le da$ and that $c_3 \ge 1/a$. 
Furthermore, set $E = (\x + \Theta) \setminus (A
  \cup B)$.
We estimate the integral on each set. 

  For $\z\in D$, using \zcref{e:assu-J1-2} and the assumption that  $c_4<da$, we can estimate
  \begin{equation*}
    |M_1(\z, \y) - M_1(\x, \y)| = |J_1(\z,\y)-J_1(\x,\y)| + |J_1(\x,\y')-J_1(\x,\y')| \lesssim |\x - \z|^{\delta}_{\mf{s}} | \y - \x |^{-\alpha - \delta}_{\mf{s}} 
  \end{equation*}
  to obtain
  \begin{equation*}
    \begin{aligned}
      \int_D |M_1(\z, \y) - M_1(\x, \y)| | \x - \z|_{\mf{s}}^{-2}| \sr \x - \z|_{\mf{s}}^{-1} \ud \z & \lesssim  | \y - \x |^{-\alpha - \delta}_{\mf{s}} \int_D |\x - \z|^{\delta}_{\mf{s}} | \x - \z|_{\mf{s}}^{-2}| \sr \x - \z|_{\mf{s}}^{-1}  \ud \z \\
      & \lesssim | \y - \x |^{-\alpha - \delta}_{\mf{s}}  |\y - \y'|_{\mf{s}}^{\delta},
    \end{aligned}
  \end{equation*}
  where for the last inequality we simply used Young's inequality to write $|\x-\z|^{-2+\delta}_{\mf s}|\sr\x-\z|^{-1}_{\mf s} \lesssim |\x-\z|^{-3+\delta}_{\mf s}|+|\sr\x-\z|^{-3+\delta}$ and then noted that integral of the second term is smaller than the integral of the first since $D$ is centered around $\x$.
  
  In $A\setminus D$, we use that $|\z - \y|_{\mf{s}} \geq |\y-\x|_{\mf{s}}-|\x-\z|_{\mf{s}}\geq (1-c_1) |\x - \y|_{\mf{s}}$
  to estimate
  \begin{equation*}
  \begin{aligned}
    |M_1(\z, \y) - M_1(\x, \y)|&\le |J_1(\z,\y)-J_1(\z,\y')| + |J_1(\x,\y)-J_1(\x,\y')| \\&\lesssim |\y - \y'|^{\delta}_{\mf{s}} | \y - \x |^{-\alpha - \delta}_{\mf{s}} + |\y - \y'|^{\delta}_{\mf{s}} | \y - \z |^{-\alpha - \delta}_{\mf{s}} \lesssim |\y - \y'|^{\delta}_{\mf{s}} | \y - \x |^{-\alpha - \delta}_{\mf{s}},
    \end{aligned}
  \end{equation*}
  and so we obtain
  \begin{equation*}
    \begin{aligned}
      \int_{A\setminus D} |M_1(\z, \y) - M_1(\x, \y)| | \x - \z|_{\mf{s}}^{-2}| \sr \x - \z|_{\mf{s}}^{-1} \ud \z & \lesssim |\y - \y'|^{\delta}_{\mf{s}} | \y - \x |^{-\alpha - \delta}_{\mf{s}} \int_{A\setminus D} | \x - \z|_{\mf{s}}^{-2}| \sr \x - \z|_{\mf{s}}^{-1} \ud \z \\
      & \lesssim |\y - \y'|^{\delta}_{\mf{s}} | \y - \x |^{-\alpha - \delta}_{\mf{s}} \log(2 + |\y - \y'|_{\mf{s}}^{-1}).
    \end{aligned}
  \end{equation*}
  To obtain the last estimate we have distinguished the cases when $|x|\lesssim |\y-\y'|_{\mf s}$, in which case $\sr \x$ is uniformly in the interior of $D$ and so we can obtain the bound from the estimate $|\x-\z|^{-2}_{\mf s}|\sr \x-\z|^{-1}_{\mf s} \lesssim |\x-\z|^{-3}_{\mf s}+|\sr \x-\z|^{-3}_{\mf s}$, and the complementary case, for which we use \zcref{lem:one-int-bound}.

  Next, on the set $C$ we use the estimate 
  \begin{equation*}
    |M_1(\z, \y) - M_1(\x, \y)| \le |J_1(\z,\y)|+|J_1(\z,\y')+|J_1(\x,\y)-J_1(\x,\y')| \lesssim |\z - \y|_{\mf{s}}^{-\alpha} + |\z - \y'|_{\mf{s}}^{-\alpha} + |\x - \y|^{-\alpha- \delta} |\y - \y'|^\delta .
  \end{equation*} 
  We then use that $|\z - \x|_{\mf{s}} \gtrsim |\y - \x|_{\mf{s}}$ for $\z \in
  C$ to  obtain
  \begin{equation*}
    \begin{aligned}
      \int_{C} |M_1(\z, \y) - M_1(\x, \y)| | \x - \z|_{\mf{s}}^{-2}| \sr \x - \z|_{\mf{s}}^{-1} \ud \z & \lesssim \int_{C} (|\z - \y|_{\mf{s}}^{-\alpha} + |\z - \y'|_{\mf{s}}^{-\alpha} ) | \x - \z|_{\mf{s}}^{-2}| \sr \x - \z|_{\mf{s}}^{-1} \ud \z \\
      & \quad + |\x - \y|^{-\alpha- \delta} |\y - \y'|^\delta \int_{C}| \x - \z|_{\mf{s}}^{-2}| \sr \x - \z|_{\mf{s}}^{-1} \ud \z \\
      & \lesssim | \x - \y|_{\mf{s}}^{-2} \int_{C} (|\z - \y|_{\mf{s}}^{-\alpha} + |\z - \y'|_{\mf{s}}^{-\alpha} ) | \sr \x - \z|_{\mf{s}}^{-1} \ud \z \\
      & \quad + |\x - \y|^{-\alpha- \delta - 2} |\y - \y'|^{\delta+2} \\
      & \lesssim | \x - \y|_{\mf{s}}^{-2} |\y - \y'|_{\mf{s}}^{2 -\alpha } + |\x - \y|^{-\alpha- \delta } |\y - \y'|^{\delta}  \\
      & \lesssim | \x - \y|_{\mf{s}}^{-\alpha - \delta} |\y - \y'|_{\mf{s}}^{\delta},
    \end{aligned}
  \end{equation*}
  where we have used that $\alpha < 2$ and $\alpha+\delta<2$.   
  For $ \z \in B \setminus C$ we use \zcref{e:assu-J1} and the assumption that $c_3 \ge 1/a$ to estimate
  \begin{equation*}
    |M_1(\z, \y) - M_1(\x, \y)| \lesssim |\y - \y'|_{\mf{s}}^{\delta} (|\z - \y|_{\mf{s}}^{-\alpha - \delta}+ |\x - \y|_{\mf{s}}^{-\alpha - \delta} )\lesssim |\y - \y'|_{\mf{s}}^{\delta} |\z - \y|_{\mf{s}}^{-\alpha - \delta} .
  \end{equation*}
 Then we bound
  \begin{equation*}
    \begin{aligned}
      \int_{B \setminus C} |M_1(\z, \y) - M_1(\x, \y)| | \x - \z|_{\mf{s}}^{-2}| \sr \x - \z|_{\mf{s}}^{-1} \ud \z & \lesssim |\x - \y|_{\mf{s}}^{-2} |\y - \y'|_{\mf{s}}^\delta \int_{B \setminus C} |\z - \y|_{\mf{s}}^{-\alpha - \delta} | \sr \x - \z|_{\mf{s}}^{-1} \ud \z \\
      & \lesssim|\x - \y|_{\mf{s}}^{-2} |\y - \y'|_{\mf{s}}^\delta |\x - \y|_{\mf{s}}^{2 - \alpha - \delta } \\
      & \lesssim |\y - \y'|_{\mf{s}}^\delta |\x - \y|_{\mf{s}}^{- \alpha - \delta }  ,
    \end{aligned}
  \end{equation*}
  where we have used that $\delta + \alpha < 2$.

  Finally, on the set $E$, we use the estimate  
  \begin{equation*}
    |M_1(\z, \y) - M_1(\x, \y)| \le |M_1(\z,\y)|+M_1(\x,\y)| \lesssim |\y - \y'|_{\mf{s}}^{\delta} (|\z - \y|_{\mf{s}}^{-\alpha - \delta} +|\x-\y|_{\mf s}^{-\alpha-\delta})\lesssim |\y - \y'|_{\mf{s}}^{\delta} |\x-\y|_{\mf s}^{-\alpha-\delta}
  \end{equation*}
  to obtain
  \begin{equation*}
    \begin{aligned}
      \int_{E} |M_1(\z, \y) - M_1(\x, \y)| | \x - \z|_{\mf{s}}^{-2}| \sr \x - \z|_{\mf{s}}^{-1} \ud \z & \lesssim |\y - \y'|_{\mf{s}}^\delta |\x-\y|^{-\alpha-\delta}_{\mf s}\int_{E} |\z - \x|_{\mf{s}}^{- 2}|\sr \x-\z|^{-1} \ud \z %
    \end{aligned}
  \end{equation*}
  We now consider two cases: if $|x|\le \nicefrac {c_1}2|\x-\y|_{\mf s}$, then we use Young's inequality to bound the last integral by $\log (2+|\x-\y|_{\mf s}^{-1})$, while if $|x|\ge \nicefrac{c_1}2|\x-\y|_{\mf s}$, then we use \zcref{lem:one-int-bound} to bound the last integral by $\log (2+|\x-\sr\x|_{\mf s}^{-1})\le \log (2+|\x-\y|_{\mf s}^{-1})$. Therefore, in either case we bound the integral over $E$ by $|\y-\y'|^\delta_{\mf s}|\x-\y|^{-\alpha-\delta}_{\mf s}\log(2+|\x-\y|^{-1}_{\mf s})$.
  This concludes the proof of the desired result.
\end{proof}
Finally, we consider the renormalization of $\rsbphz[rsK]^{\zeta,J_{1}}$ with recenterings. %
We define
\begin{equation}\label{e:new-zigzagdef}
  \rsbphzXX[rsK]^{\zeta, \w,\mathbf{u}, J_1} ( \x, \y) \coloneqq \int_{\mS_{2L}} (J_1 (\z , \y) - J_1(\mathbf{u} , \y)) J'_{\w-\x} (\x - \z)   \rslollirc[rsK]^\zeta (\x, \z) \ud \z ,
\end{equation}
where we recall that $J'_{\w-\x} (\x - \z)=J'(\x-\z)-J'(\w-\z)$.
Then we obtain the following estimate, the proof of which follows along the same
lines as the proofs of \zcref{lem:bphzbd,lem:bphz-increment}.
\begin{lem}\label{lem:zigzag}
  Consider the same setting of \zcref{lem:bphzbd}, and in particular let $J_1$
  satisfy \zcref{e:assu-J1} for some $\alpha \in (1,3)$, $\delta \in (0,1]$, and
  $a> 0$. We have for any $0 < \delta_1 < \delta$ and locally uniformly over $\x, \y \in
  \mS_{2L}$ and $\w \in \{ (t,0), (t, L)\}$, where $\x = (t, x)$, that
  \begin{equation*}
    \Big\vert \rsbphzXX[rsK]^{\zeta, \w, \x,J_1} ( \x, \y) \Big\vert \lesssim |\w - \x|_{\mf{s}}^{\delta_1} ( d_{\mf{s}, \mathscr{S}}(\x , \y)^{-\alpha - \delta_1} + d_{\mf{s}, \mathscr{S}}(\w , \y)^{-\alpha - \delta_1}).
  \end{equation*}
\end{lem}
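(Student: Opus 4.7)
The proof will combine the techniques developed in \zcref{lem:bphzbd,lem:bphz-increment} with the refined kernel estimate of \zcref{lem:Kx-bd}. The key observation is that the recentering of $J'$ around the boundary point $\w$ provides an extra regularity factor of $|\w-\x|_{\mf{s}}^{\delta_1}$ at the price of worsening the singularity of the kernel by $\delta_1$ degrees. Since the hypothesis $\delta_1 < \delta$ leaves a positive gap between the available regularity of $J_1$ and the additional singularity introduced, all the integrals below Lemma one-int-bound will remain in the subcritical regime and no logarithmic corrections will arise.

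First I would invoke \zcref{lem:Kx-bd} to write
\begin{equation*}
  |J'_{\w-\x}(\x-\z)| \lesssim |\w-\x|_{\mf{s}}^{\delta_1}\left(|\x-\z|_{\mf{s}}^{-2-\delta_1} + |\w-\z|_{\mf{s}}^{-2-\delta_1}\right),
\end{equation*}
which splits the defining integral of $\rsbphzXX[rsK]^{\zeta,\w,\x,J_1}(\x,\y)$ into a piece $I_\x$ centered at $\x$ and a piece $I_\w$ centered at $\w$. For $I_\x$, I would follow the analysis of $F^-$ in the proof of \zcref{lem:bphzbd}: the domain splits into $\{|\x-\z|_{\mf s} \le a|\x-\y|_{\mf s}\}$, where the H\"older assumption \zcref{e:assu-J1-2} gives $|J_1(\z,\y)-J_1(\x,\y)| \lesssim |\x-\z|_{\mf s}^{\delta}|\x-\y|_{\mf s}^{-\alpha-\delta}$, and its complement, where \zcref{e:assu-J1-1} provides $|J_1(\z,\y)|+|J_1(\x,\y)| \lesssim 1+|\z-\y|_{\mf s}^{-\alpha}+|\x-\y|_{\mf s}^{-\alpha}$. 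Combining these with the covariance estimate $|\rslollirc[rsK]^\zeta(\x,\z)| \lesssim d_{\mf{s},\mathscr{S}}(\x,\z)^{-1}$ from \zcref{eq:redcherryx1x2bd} and integrating by means of \zcref{lem:one-int-bound} should yield $|I_\x| \lesssim |\w-\x|_{\mf s}^{\delta_1} d_{\mf{s},\mathscr{S}}(\x,\y)^{-\alpha-\delta_1}$ (since $\alpha+\delta_1 < \alpha+\delta < 3$ and the exponent avoids the borderline cases).

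For $I_\w$, I would add and subtract $J_1(\w,\y)$ to decompose
\begin{equation*}
  J_1(\z,\y) - J_1(\x,\y) = \bigl(J_1(\z,\y)-J_1(\w,\y)\bigr) - \bigl(J_1(\x,\y)-J_1(\w,\y)\bigr).
\end{equation*}
The first bracket depends on $\z$, and its contribution can be handled by the same near-/far-diagonal splitting as above, this time with $\x$ replaced by $\w$ in the regularity estimate; this gives a bound of order $|\w-\x|_{\mf s}^{\delta_1} d_{\mf{s},\mathscr{S}}(\w,\y)^{-\alpha-\delta_1}$. For the second bracket (independent of $\z$), the remaining integral $\int (J'(\x-\z)-J'(\w-\z))\rslollirc[rsK]^\zeta(\x,\z)\,\dif\z$ is a recentered version of $\rsclawrrcA[rsK]^\zeta$, and after applying \zcref{lem:Kx-bd} once more and invoking \zcref{lem:claw-symm-bd}, this contributes a prefactor $|\w-\x|_{\mf s}^{\delta_1}$ multiplied by $|J_1(\x,\y)-J_1(\w,\y)| \lesssim |\w-\x|_{\mf s}^{\delta-\delta_1} d_{\mf{s},\mathscr{S}}(\x,\y)^{-\alpha-(\delta-\delta_1)}$ (after applying \zcref{e:assu-J1-2} to the reflected kernel $J_1^r$, which is the role of the hypothesis on $J_1^r$).

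The main obstacle will be the $|\sr\x-\z|^{-1}$ contribution hidden inside $d_{\mf{s},\mathscr{S}}(\x,\z)^{-1}$, particularly in $I_\w$ where the kernel is already centered at the boundary point $\w$ satisfying $\sr\w=\w$. Here I expect to mimic the delicate three-region decomposition ($A$, $B$, $C$) from the case $|\x-\y|_{\mf s} \ge \oh|x|$ in the proof of \zcref{lem:bphzbd}, using that when $\x$ is close to the boundary, $\w$ is also close to $\sr\x$, so the reflected copies can be absorbed into terms centered at $\w$. A supplementary dichotomy on whether $|\w-\x|_{\mf s}$ dominates or is dominated by $d_{\mf{s},\mathscr{S}}(\x,\y)$ will streamline the argument: in the regime $|\w-\x|_{\mf s} \gtrsim d_{\mf{s},\mathscr{S}}(\x,\y)$, one may simply estimate each of $J'(\x-\z)$ and $J'(\w-\z)$ separately using \zcref{lem:bphzbd} and absorb $|\w-\x|_{\mf s}^{\delta_1}$ as an overall prefactor, while in the regime $|\w-\x|_{\mf s} \ll d_{\mf{s},\mathscr{S}}(\x,\y)$ the detailed splitting above is required.
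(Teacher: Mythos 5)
Your overall architecture is close to the paper's: reduce to the regime $|\w-\x|_{\mathfrak{s}}\le\frac{1}{d_0}|\x-\y|_{\mathfrak{s}}$ (the complementary regime being dispatched via \zcref{lem:bphzbd}), apply \zcref{lem:Kx-bd} to extract the prefactor $|\w-\x|_{\mathfrak{s}}^{\delta_1}$ at the cost of raising the kernel exponent to $2+\delta_1$, and then run a near-/far-diagonal splitting using \zcref{e:assu-J1-2} and \zcref{e:assu-J1-1}, with special care for the reflected singularity. One caveat on your framing: the claim that ``all the integrals remain in the subcritical regime'' is not accurate. The crux is a \emph{supercritical} local interaction: the kernel piece centered at $\w$ meets the covariance piece $|\sr\x-\z|_{\mathfrak{s}}^{-1}$ at distance $|\w-\sr\x|_{\mathfrak{s}}=|x|$, and $\int|\w-\z|_{\mathfrak{s}}^{-2-\delta_1}|\sr\x-\z|_{\mathfrak{s}}^{-1}\,\dif\z\simeq|x|^{-\delta_1}$ by \zcref{lem:one-int-bound}; the proof only closes because this loss is compensated by the pointwise Hölder gain $|x|^{\delta}$ with $\delta>\delta_1$.

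The genuine gap is in your second bracket $J_1(\x,\y)-J_1(\w,\y)$. Factoring it out leaves $\bigl(J_1(\x,\y)-J_1(\w,\y)\bigr)\cdot\rsclawrrX[rsK]^{\zeta,\w}(\x)$, and the recentered claw does \emph{not} yield a prefactor $|\w-\x|_{\mathfrak{s}}^{\delta_1}$: applying \zcref{lem:Kx-bd} to it produces the non-integrable exponent $3+\delta_1$ at $\z=\x$, and the only available bound (combining \zcref{lem:claw-symm-bd} with \zcref{eq:elbowlollipop}) is $\lesssim 1+\log(2+1/|x|)$ — the paper explicitly conjectures boundedness but declines to prove it (see the remark after \zcref{eq:clawrrX}). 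Pairing this logarithm with $|J_1(\x,\y)-J_1(\w,\y)|\lesssim|x|^{\delta}|\x-\y|_{\mathfrak{s}}^{-\alpha-\delta}$ gives the target $|x|^{\delta_1}|\x-\y|_{\mathfrak{s}}^{-\alpha-\delta_1}$ times $\log(2+1/|x|)\,(|x|/|\x-\y|_{\mathfrak{s}})^{\delta-\delta_1}$, and this last factor is unbounded along $|x|\simeq|\x-\y|_{\mathfrak{s}}/d_0\to0$, a regime your easy case $|\w-\x|_{\mathfrak{s}}\gtrsim d_{\mathfrak{s},\mathscr{S}}(\x,\y)$ does not absorb. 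The paper avoids this by never introducing $J_1(\w,\y)$: it keeps the increment $J_1(\z,\y)-J_1(\x,\y)$ under the integral, so the Hölder gain is the local one $|\x-\z|_{\mathfrak{s}}^{\delta}$ (equivalently $|x|^{\delta}$ on the region $|\z-\x|_{\mathfrak{s}}\le 3|x|$) and is traded pointwise against the clustered singularities at $\x$, $\w$, $\sr\x$ before integrating. A minor additional point: $J_1(\x,\y)-J_1(\w,\y)$ is an increment in the \emph{first} argument, so it is controlled by \zcref{e:assu-J1-2} for $J_1$ itself; no hypothesis on $J_1^r$ is needed, and indeed \zcref{lem:zigzag} does not assume one.
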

The restriction of choosing the recentering points $\w \in \{(t,0), (t, L)\}$ and $\mathbf{u} = \x$, which are tied to  the
base point $\x = (t,x)$ in
\zcref{e:new-zigzagdef}, is purely out of convenience, since it allows us to
shorten the proof and is the only case in which we need this estimate.

\begin{proof}
  We follow similar steps to the proofs of
  \zcref{lem:bphzbd,lem:bphz-increment}. Let us fix $\w = (t, 0)$,
  where $\x = (t,x)$. (The case $\w = (t, L)$ follows from identical arguments.) As in the proof of \zcref{lem:bphz-increment}, it suffices to consider
  the case $|\w - \x|_{\mf{s}} = |x| \leq \nicefrac1d |\x - \y|_{\mf{s}}$ for a constant
  $d > 0$ which will be assumed large enough  throughout the proof.
  Otherwise, the estimate is a consequence of
  \zcref{lem:bphzbd}, since in that case we estimate
  $d_{\mf{s}, \mathscr{S}}(\x, \y) \lesssim |x|$ and similarly $d_{\mf{s},
  \mathscr{S}}(\w, \y) \leqslant d_{\mf{s}, \mathscr{S}}(\x, \y) + d_{\mf{s}, \mathscr{S}}(\x, \w) \lesssim |x|$. 
  We start by estimating
\begin{equation*}
  \begin{aligned}
    \Big\vert \int_{\mS_{2L}}&J'_{\w - \x}(\mathbf{x}-\mathbf{z})(J_{1}  (\mathbf{z}, \mathbf{y})-J_{1} (\mathbf{x}, \mathbf{y})) \rslollirc[rsK]^\zeta (\x, \z) \ud \z \Big\vert \\&
    \lesssim   |\w -\x|^{\delta_1}_{\mf{s}} \int_{\x + \Theta}|J_{1}(\mathbf{z}, \mathbf{y})-J_{1}(\mathbf{x}, \mathbf{y})|  | \x - \z|_{\mf{s}}^{-1} (|\w - \z|^{-2- \delta_1}_{\mf{s}} + |\x - \z|^{-2 - \delta_1}_{\mf{s}}) \ud \z \\
    & \qquad +|\w - \x|^{\delta_1}_{\mf{s}} \int_{\x+\Theta}|J_{1}(\mathbf{z}, \mathbf{y})-J_{1}(\mathbf{x}, \mathbf{y})| (|\w - \z|^{-2- \delta_1}_{\mf{s}} + |\x - \z|^{-2 -\delta_1}_{\mf{s}})| \sr \x - \z|_{\mf{s}}^{-1} \ud \z  \\
    &=  |\w - \x|_{\mf{s}}^{\delta_1} (F^- (\x, \y, \w) + F^+ (\x, \y, \w) ),
  \end{aligned}
\end{equation*}
where we used \zcref{prop:lollibd} and the estimate \zcref{e:Kx-bd} on $J'_{\w- \x}$, together with
the compact support property of $J$, in order to restrict the integration to a
compact $\x+\Theta \subseteq \mS_{2L}$, where $\Theta$ is independent of $\w,\x,
\y$.

We start by estimating $F^-$. We split the integration domain into two. With
$a>0$ as in \zcref{e:assu-J1}, we define 
  \begin{equation*}
    A (\x, \y) = \{ \z \in \mS_{2L} \st |\x - \z|_{\mf{s}} \leq  a |\x - \y|_{\mf{s}} \} \qquad\text{and}\qquad B (\x, \y) = (\x+\Theta) \setminus A(\x, \y) .
  \end{equation*}
  We estimate the difference of kernels appearing in $F^-$ as follows:
  \begin{equation*}
    \left| J_{1}(\z,  \y) - J_{1}(\x,  \y) \right|\lesssim \begin{cases}
      |\x - \z|_{\mf{s}}^\delta |\x - \y|_{\mf{s}}^{-\alpha-\delta} & \text{ if } \z \in A(\x, \y) ; \\
      |\z - \y|_{\mf{s}}^{-\alpha} + |\x - \y|_{\mf{s}}^{-\alpha}& \text{ if } \z \in B(\x, \y) .
    \end{cases}
  \end{equation*}
  Then on $A(\x, \y)$ we find
  \begin{equation*}
    \begin{aligned}
    \int_{A(\x, \y)}\left| J_{1}(\z , \y) - J_{1}(\x , \y) \right| & (|\w - \z|^{-2- \delta_1}_{\mf{s}} + |\x - \z|^{-2 -\delta_1}_{\mf{s}}) |\x - \z|_{\mf{s}}^{-1} \ud\z \\
    & \lesssim |\x - \y|_{\mf{s}}^{-\alpha - \delta}\int_{A(\x, \y)} (|\w - \z|^{-2- \delta_1}_{\mf{s}} + |\x - \z|^{-2 -\delta_1}_{\mf{s}})|\x - \z|_{\mf{s}}^{-1 + \delta} \ud\z  \lesssim |\x - \y|_{\mf{s}}^{-\alpha - \delta_1} ,
    \end{aligned}
  \end{equation*}
  where we have used the assumption $\delta_1 < \delta$.
  On the other hand, on $B(\x, \y)$ we estimate
  \begin{equation*}
    \begin{aligned}
    \int_{B(\x, \y)} & \left| J_{1}(\z , \y) - J_{1}(\x , \y) \right| (|\w - \z|^{-2- \delta_1}_{\mf{s}} + |\x - \z|^{-2 -\delta_1}_{\mf{s}}) |\x - \z|_{\mf{s}}^{-1} \ud\z \\
    &  \lesssim \int_{B(\x, \y)} (|\w - \z|^{-3- \delta_1}_{\mf{s}} + |\x - \z|^{-3 -\delta_1}_{\mf{s}}) (|\z - \y|_{\mf{s}}^{-\alpha} + |\x - \y|_{\mf{s}}^{-\alpha}) \ud\z \\
    & \lesssim  |\w - \y|_{\mf{s}}^{-\alpha - \delta_1} + |\x - \y|_{\mf{s}}^{-\alpha - \delta_1} \lesssim  |\x - \y|_{\mf{s}}^{-\alpha - \delta_1} ,
    \end{aligned}
  \end{equation*}
  where we have used the fact that in the domain of integration  $|\w-\z|_{\mf{s}}\lesssim |\x-\z|_{\mf{s}}$, $\delta_1 < \delta < 1$ and that $  |\w - \x|_{\mf{s}}
  \leq (1/2) |\x - \y|_{\mf{s}}$ provided we fix $d \geq 2$, which means that $|\w-\y|_{\mf s}\geq |\x-\y|_{\mf s}-|\w-\x|_{\mf s}\gtrsim |\x-\y|_{\mf s}$.

  This is overall an estimate of the desired order for $F^{-}$, so we move to
  $F^{+}$.  
  In this case we again split the integral over $\z$ into three parts. Define
  \begin{equation*}
    \begin{aligned}
      A&\coloneqq\{\z\in \x+ \Theta\st |\z - \x|_{\mf s} \le 3 |x| \text{ and } |\z-\x|_{\mf s}\le a |\x-\y|_{\mf s}\};\\
      B&\coloneqq\{\z\in \x+\Theta\st |\z - \x|_{\mf s} \le 3 |x| \text{ and } |\z-\x|_{\mf s}> a |\x-\y|_{\mf s}\};\\
      C&\coloneqq\{\z\in \x+\Theta\st |\z - \x|_{\mf s} > 3 |x| \}.
    \end{aligned}
  \end{equation*}
where the radius $3 |x|$ is chosen so that $\x, \sr \x$ and $\w$ all lie in the
interior of $A \cup B$. Moreover, by assuming that $3 /d \leq a $ we can ensure $B = \varnothing$,
so we must only estimate the integral on $A \cup C$. On $A$ we employ \zcref{e:assu-J1-2} and estimate
\begin{align*}
  \int_A &|J_1(\z,\y)-J_1(\x,\y)| (|\w - \z|^{-2- \delta_1}_{\mf{s}} + |\x - \z|^{-2 -\delta_1}_{\mf{s}}) |\sigma_{\mathrm{refl}}\x-\z|^{-1}_{\mf s}\,\dif \z \\
  & \lesssim  |\x-\y|^{-\alpha-\delta}_{\mf{s}} \int_{A} |\x-\z|^{-2+\delta -\delta_1}_{\mathfrak s} |\sigma_{\mathrm{refl}}\x-\z|^{-1}_{\mathfrak s}\,\dif \z + |\x-\y|^{-\alpha - \delta }_{\mf{s}} |x|^\delta \int_{A} |\w -\z|_{\mf{s}}^{-2-\delta_1} |\sigma_{\mathrm{refl}}\x-\z|^{-1}_{\mathfrak s}\,\dif \z  
  \\
  &\lesssim |\x-\y|^{-\alpha-\delta}_{\mf{s}}\int_{A} \left(|\x-\z|^{-3+\delta - \delta_1}_{\mathfrak s} +|\sigma_{\mathrm{refl}}\x-\z|^{-3+\delta - \delta_1 }_{\mathfrak s}\right)\,\dif \z  + |\x-\y|^{\textcolor{blue}{-\alpha-\delta}}_{\mf s} |x|^{ \delta - \delta_1}\\
  & \lesssim |\x-\y|^{-\alpha - \delta_1}_{\mf s}  + |\x-\y|^{-\alpha- \delta}_{\mf s} |x|^{ \delta - \delta_1} \lesssim |\x-\y|^{-\alpha - \delta_1}_{\mf s}.
\end{align*}
Here we used that $|\x-\y|_{\mf s} \gtrsim |x|$. Also, to
estimate the integral with $\w$, we used that $|\w - \sr
\x|_{\mf{s}} = |x|$ by assumption, along with \zcref{lem:one-int-bound}.

For the integral over $C$, we further split up the integral in domains similar
to those used in the proof of \zcref{lem:bphzbd}. We define
\begin{equation*}
  \begin{aligned}
    C_1&\coloneqq\{\z\in \x+ \Theta\st 3|x| < |\z - \x|_{\mf s} \le 3 |\x-\y|_{\mf s}\text{ and } |\z-\x|_{\mf s}\le a |\x-\y|_{\mf s}\};\\
    C_2&\coloneqq\{\z\in \x+\Theta\st 3|x| < |\z - \x|_{\mf s} \le 3 |\x-\y|_{\mf s}\text{ and } |\z-\x|_{\mf s}> a |\x-\y|_{\mf s}\};\\
    C_3&\coloneqq\{\z\in \x+\Theta\st |\z - \x|_{\mf s} > 3 |\x-\y|_{\mf s} \}.
  \end{aligned}
\end{equation*}
On $C_1$ we use \zcref{e:assu-J1-2} to estimate
\begin{align*}
  \int_{C_1} &|J_1(\z,\y)-J_1(\x,\y)|(|\w - \z|^{-2- \delta_1}_{\mf{s}} + |\x - \z|^{-2 -\delta_1}_{\mf{s}})|\sigma_{\mathrm{refl}}\x-\z|^{-1}_{\mf s}\,\dif \z\notag
  \\&\le |\x-\y|_{\mf s}^{-\alpha - \delta} \int_{C_1} |\z - \x|_{\mf{s}}^\delta (|\w - \z|^{-2- \delta_1}_{\mf{s}} + |\x - \z|^{-2 -\delta_1}_{\mf{s}})|\sigma_{\mathrm{refl}}\x-\z|_{\mf s}^{-1}\,\dif\z \\
  & \lesssim |\x-\y|_{\mf s}^{-\alpha - \delta} \int_{C_1}  (|\w - \z|^{-2- \delta_1+\delta}_{\mf{s}} + |\x - \z|^{-2 -\delta_1 +\delta}_{\mf{s}})|\sigma_{\mathrm{refl}}\x-\z|_{\mf s}^{-1}\,\dif\z \lesssim |\x- \y|_{\mf{s}}^{- \alpha - \delta_1}. 
\end{align*}
where we have used that from the definition of $C_1$ we have $|\w - \z|_{\mf{s}}
\simeq |\x - \z|_{\mf{s}}\simeq |\sr \x-\z|_{\mf s} \simeq |\z|_{\mf{s}}$, together with the assumption
$\delta_1 < \delta$.

On $C_2$ we estimate with \zcref{e:assu-J1-1} that
\begin{align*}
  \int_{C_2} &|J_1(\z,\y)-J_1(\x,\y)|(|\w - \z|^{-2- \delta_1}_{\mf{s}} + |\x - \z|^{-2 -\delta_1}_{\mf{s}})|\sigma_{\mathrm{refl}}\x-\z|^{-1}_{\mf s}\,\dif \z\notag
  \\
  &\lesssim \int_{C_2} (|\z - \y|_{\mf{s}}^{-\alpha} +|\x - \y|_{\mf{s}}^{-\alpha}) (|\w - \z|^{-2- \delta_1}_{\mf{s}} + |\x - \z|^{-2 -\delta_1}_{\mf{s}})|\sigma_{\mathrm{refl}}\x-\z|_{\mf s}^{-1}\,\dif\z .
\end{align*}
Now for the term involving $|\z - \y|_{\mf{s}}^{-\alpha}$ we use that $|\w - \z|_{\mf{s}}
\simeq |\x - \z|_{\mf{s}}\simeq|\z-\sr\x|_{\mf s} \simeq |\z|_{\mf{s}}$ to find
\begin{equation*}
  \begin{aligned}
  \int_{C_2} |\z - \y|_{\mf{s}}^{-\alpha} (| \w - \z|^{-2- \delta_1}_{\mf{s}} + | \x - \z|^{-2 -\delta_1}_{\mf{s}})|\z - \sigma_{\mathrm{refl}}\x|_{\mf s}^{-1}\,\dif\z
  &\lesssim \int_{C_2} |\z - \y|_{\mf{s}}^{-\alpha} | \z - \x|^{-3- \delta_1}_{\mf{s}}\,\dif\z
\\&\lesssim|\x-\y|^{-3-\delta_1}_{\mf s}\int_{C_2} |\z-\y|^{-\alpha}_{\mf s}\,\dif \z\lesssim |\x - \y|_{\mf{s}}^{- \alpha - \delta_1}.
\end{aligned}
\end{equation*}
Similarly, for the term involving $|\x - \y|_{\mf{s}}^{-\alpha}$, we get
\begin{equation*}
  |\x - \y|_{\mf{s}}^{-\alpha} \int_{C_2}  (|\w - \z|^{-2- \delta_1}_{\mf{s}} + |\x - \z|^{-2 -\delta_1}_{\mf{s}})|\sigma_{\mathrm{refl}}\x-\z|_{\mf s}^{-1}\,\dif\z \lesssim |\x - \y|_{\mf{s}}^{- \alpha - \delta_1} .
\end{equation*}
Finally, on $C_3$ we estimate
\begin{align*}
  \int_{C_3} &|J_1(\z,\y)-J_1(\x,\y)|(|\w - \z|^{-2- \delta_1}_{\mf{s}} + |\x - \z|^{-2 -\delta_1}_{\mf{s}})|\sigma_{\mathrm{refl}}\x-\z|^{-1}_{\mf s}\,\dif \z\notag
  \\
  &\lesssim \int_{C_3} (|\z - \y|_{\mf{s}}^{-\alpha} +|\x - \y|_{\mf{s}}^{-\alpha}) (|\w - \z|^{-2- \delta_1}_{\mf{s}} + |\x - \z|^{-2 -\delta_1}_{\mf{s}})|\sigma_{\mathrm{refl}}\x-\z|_{\mf s}^{-1}\,\dif\z \\
  & \lesssim \int_{C_3} \left(|\z - \y|_{\mf{s}}^{-\alpha - 3 - \delta_1} + |\x - \y|_{\mf{s}}^{-\alpha} |\z - \y|_{\mf{s}}^{- 3 - \delta_1}\right) \,\dif\z \lesssim |\x - \y|_{\mf{s}}^{-\alpha - \delta_1}.
\end{align*}
This completes the proof of the result.
\end{proof}

\subsection{Variance at the boundary}

We recall from \zcref{subsec:Reduction-to-the} the definition %
\begin{equation*}
  \mathcal{X}^{\eps}_{\uu,\vv;s,t}(g)=\int_{\mathbb{R}}\Psi^{\eps}_{s, t ; r} \langle\varphi^{\eps}_{\uu,\vv},g_{r}\rangle \ud r,
\end{equation*}
In this section we prove the following proposition.

\begin{prop}\label{prop:var-boundary}
  For $\tau\in\left\{ \rscherryrb,\rscherryrenormr,\rselkrbr,\rscherrybb,\rscherryrenormb,\rscherryrenormrenorm,\rselkrenormrr\right\} $ we have
  \[
  \adjustlimits\lim_{\eps\downarrow0}\sup_{\zeta\in(0,\eps)}\Var\left(\mathcal{X}^{\eps}_{\uu,\vv;0,T}\left(\boxop{\tau}^{\eps,\zeta}\right)\right)=0.
  \]
  \end{prop}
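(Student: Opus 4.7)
The plan is to apply the dominated convergence theorem to the integral representation of the variance, relying on the boundary vanishing statements e:newrbc, e:newrbc2, e:newrbc3, e:newrbc4, e:newrbc5, e:newrbc6 that were proved en route to the main covariance bounds of Section sec:BPHZ. First, the tree $\rscherrybb$ is deterministic, since $\boxop{\rscherrybb}^{\eps,\zeta}=(\rslollipopb[rsK]^{\eps})^{2}$ contains no noise leaves, so $\Var(\mathcal{X}^{\eps}_{\uu,\vv;0,T}(\boxop{\rscherrybb}^{\eps,\zeta}))=0$ and that case is trivial. For the remaining six trees I would write
\[
\Var\bigl(\mathcal{X}^{\eps}_{\uu,\vv;0,T}(\boxop{\tau}^{\eps,\zeta})\bigr)=\iint_{[0,T]^{2}}\Psi^{\eps}_{0,T;r}\Psi^{\eps}_{0,T;s}\,\mathcal{C}^{\eps,\zeta}_{\tau}(r,s)\,\dif r\,\dif s,
\]
where $\mathcal{C}^{\eps,\zeta}_{\tau}(r,s)\coloneqq\iint_{[0,L]^{2}}\varphi^{\eps}_{\uu,\vv}(x)\varphi^{\eps}_{\uu,\vv}(y)\Cov(\boxop{\tau}^{\eps,\zeta}(r,x),\boxop{\tau}^{\eps,\zeta}(s,y))\,\dif x\,\dif y$, and show two things: pointwise vanishing of $\mathcal{C}^{\eps,\zeta}_{\tau}(r,s)$ as $\eps\downarrow0$ for each $r\neq s$, and an integrable uniform upper bound in $\eps,\zeta$.

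For pointwise vanishing, since $\supp\varphi^{\eps}_{\uu,\vv}\cap[0,L]\subseteq\{x:d_{L}(x)\leq\tfrac{3}{4}\eps\}$ and $\|\varphi^{\eps}_{\uu,\vv}\|_{L^{1}([0,L])}\lesssim 1$, we have
\[
\lvert\mathcal{C}^{\eps,\zeta}_{\tau}(r,s)\rvert\lesssim\sup_{d_{L}(x),d_{L}(y)\leq\tfrac{3}{4}\eps}\bigl\lvert\Cov(\boxop{\tau}^{\eps,\zeta}(r,x),\boxop{\tau}^{\eps,\zeta}(s,y))\bigr\rvert.
\]
For each $\tau$ in the list the corresponding vanishing statement applies: e:newrbc (via the product structure $\rscherryrb[rsK]=\rslollipopr[rsK]\rslollipopb[rsK]$ and eq:lollipopb-bound, together with the fact that $\rslollipopr[rsK]$ vanishes at the boundary) for $\rscherryrb$; e:newrbc4 for $\rscherryrenormr$; e:newrbc2 for $\rselkrbr$; e:newrbc3 for $\rscherryrenormb$ (noting that $\rselkrrb[rsK]^{\eps,\zeta}$ and $\rscherryrenormb[rsK]^{\eps,\zeta}$ differ by a deterministic function coming from subtracting $C^{(1)}_{\zeta}$ inside the inner cherry, which drops out of the covariance); e:newrbc5 for $\rscherryrenormrenorm$; and e:newrbc6 for $\rselkrenormrr$. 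In every case the supremum tends to $0$ as $\eps\downarrow0$ uniformly over $\zeta\in(0,\eps)$ and over fixed $r\neq s$.

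For the uniform dominating bound I will use the crude bound $d_{\mathfrak{s},\mathscr{S}}((r,x),(s,y))\geq\sqrt{|r-s|}$ together with covariance estimates of the form
\[
\bigl|\Cov(\boxop{\tau}^{\eps,\zeta}(r,x),\boxop{\tau}^{\eps,\zeta}(s,y))\bigr|\lesssim d_{\mathfrak{s},\mathscr{S}}((r,x),(s,y))^{-\gamma_{\tau}}\log\bigl(2+d_{\mathfrak{s},\mathscr{S}}^{-1}\bigr)^{k_{\tau}}
\]
with some $\gamma_{\tau}<2$ and $k_{\tau}\in\mathbb{Z}_{\ge0}$. These follow from the covariance bounds on the (recentered) model proved in Propositions prop:cherryrr-bound, prop:rbrelk, prop:rrbelk, prop:elkrrr, prop:candelabra, prop:moose: passing from the model to the canonical lift is performed using the identities in \zcref{tab:model-defns}, i.e.\ writing $\boxop{\tau}^{\eps,\zeta}(\y)=\hat{\Pi}^{\eps,\zeta}_{\mathbf{x}}(\tau)(\y)+R_{\tau}(\mathbf{x},\y)$ for an explicit remainder $R_{\tau}$ built from lower homogeneity canonical lifts, bounded using \zcref{prop:lollibd} and \zcref{lem:one-int-bound}; choosing $\x$ far from $\y_{1},\y_{2}$ (say $\x=(T+1,0)$) makes the additional contributions bounded. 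Integrating the resulting bound against $\varphi^{\eps}_{\uu,\vv}\otimes\varphi^{\eps}_{\uu,\vv}$ over $[0,L]^{2}$ yields $\lvert\mathcal{C}^{\eps,\zeta}_{\tau}(r,s)\rvert\lesssim |r-s|^{-\gamma_{\tau}/2}\log(2+|r-s|^{-1/2})^{k_{\tau}}$, which is integrable on $[0,T]^{2}$ uniformly in $\eps,\zeta$.

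The main technical obstacle is the passage in the last paragraph: the vanishing statements e:newrbc2, e:newrbc4, e:newrbc5, e:newrbc6 were proved directly for the canonical lifts, but the clean covariance estimates in \zcref{sec:BPHZ} are formulated for the recentered model. Verifying that the remainder $R_{\tau}(\mathbf{x},\y)$ coming from this passage is controlled by the same class of bounds requires a short additional Wick/Feynman-diagram argument, which is routine using the graph-counting machinery already set up in \zcref{prop:graph-bound,lem:check-degcond-simpler} but needs to be recorded carefully. Once this uniform control is in place, the dominated convergence theorem together with the pointwise vanishing proved in the previous paragraph immediately yields $\lim_{\eps\downarrow0}\sup_{\zeta\in(0,\eps)}\Var(\mathcal{X}^{\eps}_{\uu,\vv;0,T}(\boxop{\tau}^{\eps,\zeta}))=0$, as desired.
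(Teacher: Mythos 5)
Your proposal is correct and follows essentially the same route as the paper: the variance is written as a double time integral of the spatially averaged covariance, the pointwise vanishing for $r\neq s$ is supplied by the boundary-vanishing statements \zcref{e:newrbc,e:newrbc2,e:newrbc3,e:newrbc4,e:newrbc5,e:newrbc6} (with $\rscherrybb$ trivial because it is deterministic), and an integrable dominating function of the form $|r-s|^{-3/4}$ comes from the covariance bounds of \zcref{sec:BPHZ}, after which dominated convergence finishes the argument. The model-versus-canonical-lift remainder you flag is a genuine fine point, but your proposed handling of it is adequate and consistent with how the paper uses those estimates.
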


The strategy of the proof is the same for all the trees. We will use the
dominated convergence theorem, together with the fact that the integrands are
uniformly bounded in $\eps$ and $\zeta$, and that for fixed $\ve, \zeta$ they
vanish at the boundary. In particular, we note that
\begin{equation}\label{e:var-identity}
    \Var\left(\mathcal{X}^{\eps}_{\uu,\vv;0,T}\left(\boxop{\tau}^{\eps,\zeta}\right)\right)  =  \int_{\mathbb{R}^2}
    \Psi^{\eps}_{s, t; r_1} \Psi^{\eps}_{s, t; r_2} \left\langle (\varphi^{\eps}_{\uu,\vv})^{\otimes 2},\Cov \left( \boxop{\tau}^{\eps,\zeta}_{r_1}, \boxop{\tau}^{\eps,\zeta}_{r_2} \right)  \right\rangle  \ud r_{12}  ,
\end{equation}
where we have defined
\begin{equation*}
  \Cov \left( \boxop{\tau}^{\eps,\zeta}_{r_1}, \boxop{\tau}^{\eps,\zeta}_{r_2} \right) (x, y) = \Cov \left( \boxop{\tau}^{\eps,\zeta} (r_1, x), \boxop{\tau}^{\eps,\zeta} (r_2, y) \right) \qquad \text{for all } x, y \in \TT_{2L}
\end{equation*}
and used the notation $\langle f,g\rangle = \iint_{[0,L]^2} f(x,y)g(x,y)\,\dif x\,\dif y$ (which extends the inner product $\langle\cdot,\cdot\rangle$ defined above to functions of two variables).
Now, for $\x=(t,x), \y=(s,y)$ such that $t \neq s$, we have the following bound.
\begin{lem}\label{lem:boundary-cov-vanish} 
  For $\tau\in\left\{
  \rscherryrb,\rscherryrenormr,\rselkrbr,\rscherrybb,\rscherryrenormb,\rscherryrenormrenorm,\rselkrenormrr\right\}
  $ we have for any $t \neq s$ and any $c > 0$ that
  \begin{equation*}
    \adjustlimits\lim_{\ve \downarrow 0} \sup_{\zeta \in (0, \ve)} \sup_{d_L(x),d_L(y) \leq c \ve} \; \left\vert \Cov \left( \boxop{\tau}^{\eps,\zeta}_{t}, \boxop{\tau}^{\eps,\zeta}_{s} \right) (x, y) \right\vert = 0 .
  \end{equation*}
\end{lem}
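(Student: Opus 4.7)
The proof will be a direct case check over the seven trees in the list. For six of them the conclusion is either trivial or already contained, essentially verbatim, in one of the boundary-vanishing identities \zcref{e:newrbc,e:newrbc2,e:newrbc3,e:newrbc4,e:newrbc5,e:newrbc6} established respectively inside \zcref{prop:lollibd,prop:rbrelk,prop:rrbelk,prop:elkrrr,prop:candelabra,prop:moose}; only the tree $\rscherryrenormb$ requires a short bookkeeping step.

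First I would dispatch $\tau = \rscherrybb$: by the multiplicative property \zcref{eq:Qprodderivs} of the canonical lift, $\boxop{\rscherrybb}^{\eps,\zeta} = (\rslollipopb[rsK]^{\eps})^{2}$ is a deterministic function of space-time, so its covariance vanishes identically. For the four trees $\tau \in \{\rscherryrenormr,\rselkrbr,\rscherryrenormrenorm,\rselkrenormrr\}$ the canonical lift $\boxop{\tau}^{\eps,\zeta}$ coincides with the object whose two-point covariance is controlled in \zcref{e:newrbc4,e:newrbc2,e:newrbc5,e:newrbc6}, so the claim is immediate. For $\tau = \rscherryrb$, noting that $\rscherryrb = \rslollipopr \star \rslollipopb$ with $\rslollipopb[rsK]^{\eps}$ deterministic and $\rslollipopr[rsK]^{\zeta}$ centered, the covariance factors as
\begin{equation*}
\Cov\!\left(\boxop{\rscherryrb}^{\eps,\zeta}_{t}(x),\boxop{\rscherryrb}^{\eps,\zeta}_{s}(y)\right) = \rslollipopb[rsK]^{\eps}(x)\,\rslollipopb[rsK]^{\eps}(y)\,\rslollirc[rsK]^{\zeta}((t,x),(s,y)) = \rscherryrbc[rsK]^{\eps,\zeta}((t,x),(s,y)),
\end{equation*}
which is precisely the quantity whose vanishing is asserted in \zcref{e:newrbc}.

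The only case not amounting to a direct quotation is $\tau = \rscherryrenormb$. Here I would unwind the definitions \zcref{eq:Qinductive,eq:Qrenorm} to write $\rscherryrenormb[rsK]^{\eps,\zeta} = \rsiprenorm[rsK]^{\eps,\zeta}\cdot\rslollipopb[rsK]^{\eps}$ and $\rselkrrb[rsK]^{\eps,\zeta} = \rsipcherryrr[rsK]^{\eps,\zeta}\cdot\rslollipopb[rsK]^{\eps}$, and observe that $\rsiprenorm[rsK]^{\eps,\zeta} - \rsipcherryrr[rsK]^{\eps,\zeta} = -K'\circledast C^{(1)}_{\zeta}$ is deterministic. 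Because covariances are invariant under deterministic shifts and $\rslollipopb[rsK]^{\eps}$ is itself deterministic, this yields
\begin{equation*}
\Cov\!\left(\rscherryrenormb[rsK]^{\eps,\zeta}_{t}(x),\rscherryrenormb[rsK]^{\eps,\zeta}_{s}(y)\right) = \Cov\!\left(\rselkrrb[rsK]^{\eps,\zeta}_{t}(x),\rselkrrb[rsK]^{\eps,\zeta}_{s}(y)\right),
\end{equation*}
after which \zcref{e:newrbc3} completes the argument. No substantive obstacle arises: all the delicate combinatorics and kernel estimates have already been carried out in the propositions of \zcref{sec:BPHZ}, and the role of the present lemma is purely to collect and rename their conclusions in the form required for the dominated-convergence argument that proves \zcref{prop:var-boundary}.
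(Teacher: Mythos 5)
Your proposal is correct and follows essentially the same route as the paper: the paper's proof of this lemma likewise just invokes \zcref{e:newrbc,e:newrbc2,e:newrbc3,e:newrbc4,e:newrbc5,e:newrbc6} for the six stochastic trees and dismisses $\rscherrybb$ as deterministic. Your extra bookkeeping for $\rscherryrenormb$ (covariance unchanged under the deterministic renormalization shift, so \zcref{e:newrbc3} applies) is a point the paper leaves implicit, and it is handled correctly.
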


\begin{proof}
  This result is a consequence of the estimates
  \zcref{e:newrbc,e:newrbc2,e:newrbc3,e:newrbc4,e:newrbc5,e:newrbc6},
  respectively for $\tau \in
  \{\rscherryrb,\rselkrbr,\rscherryrenormb,\rscherryrenormr,\rscherryrenormrenorm,\rselkrenormrr
  \}$, while the case $\tau = \rscherrybb$ is trivial because the term is
  deterministic and the covariance is zero.
\end{proof}
With this bound in hand, we are ready to prove \zcref{prop:var-boundary}.

\begin{proof}[Proof of \zcref{prop:var-boundary}]
  We use the identity \zcref{e:var-identity}, and observe that by
  \zcref{lem:boundary-cov-vanish,eq:Psiproperties}, we have for any fixed $c>0$ and $r_1 \neq r_2$ that
  \begin{equation*}
    \begin{aligned}
    \lim_{\ve \to 0} \Big\vert \Psi^{\eps}_{s, t; r_1} \Psi^{\eps}_{s, t; r_2} & \left\langle (\varphi^{\eps}_{\uu,\vv})^{\otimes 2},\Cov \left( \boxop{\tau}^{\eps,\zeta}_{r_1}, \boxop{\tau}^{\eps,\zeta}_{r_2} \right)  \right\rrangle_{\TT_{2L}^2} \Big\vert \\
    & \lesssim\lim_{\ve \to 0 }\sup_{\zeta \in (0, \ve)} \sup_{ d_L(x),d_L(y)\leq c \ve}  \Big\vert \Cov \left( \boxop{\tau}^{\eps,\zeta}_{r_1}, \boxop{\tau}^{\eps,\zeta}_{r_2} \right) (x, y)\Big\vert \left\langle (\varphi^{\eps}_{\uu,\vv})^{\otimes 2}, 1 \right\rangle  = 0 .
    \end{aligned}
  \end{equation*}
  Therefore, the result follows by dominated convergence once we prove that
  the integrand 
  \begin{equation*}
    (r_1, r_2) \mapsto \Psi^{\eps}_{s, t; r_1} \Psi^{\eps}_{s, t; r_2} \left\langle (\varphi^{\eps}_{\uu,\vv})^{\otimes 2},\Cov \left( \boxop{\tau}^{\eps,\zeta}_{r_1}, \boxop{\tau}^{\eps,\zeta}_{r_2} \right)  \right\rangle
  \end{equation*}
  is bounded, uniformly in $\ve \in (0, 1)$ and $\zeta \in (0, \ve)$, by an integrable function in $r_1, r_2$. This is a consequence of
  \zcref{eq:redbluecherrybd,eq:elkrbrcovbd,e:rrb-aim,eq:elkrrr-cov,eq:candelabra-cov-bd,e:moose-aim-1},
  which imply that
for each $\tau$ as in the statement of this proposition (the only exception is
$\rscherrybb$, which is deterministic so that in that case the result is trivial
  because the covariance vanishes), we can estimate for any $r_1 \neq r_2$ and
  any $x, y \in \TT_{2L}$ that
\begin{equation*}
  \begin{aligned}
    \left\vert \Cov \left( \boxop{\tau}^{\eps,\zeta}, \boxop{\tau}^{\eps,\zeta} \right) ( (r_1, x), (r_2, y)) \right\vert & \lesssim d_{\mf{s},\mathscr{S}}(\x, \y)^{-3/2} \lesssim |r_1 - r_2|^{-3/4} .
  \end{aligned}
\end{equation*} 
Since 
$  \int_{[s, t]^2} |r_1 - r_2|^{-3/4} \ud r_1 \ud r_2 < \infty$,
we have obtained the required dominating function and completed the proof.
 \end{proof}

\appendix

\section{\label{sec:Basic-properties-of}Basic properties of the stochastic
heat equation}

In this section, we present several results on the stochastic heat
equation.

\subsection{Mild vs.\ energy solutions}

In this section, we will prove a result on the relationship between
almost stationary energy solutions to the open KPZ equation and mild
solutions of the stochastic heat equation. This result is almost a
combination of \cites[Prop. 3.13]{goncalves:perkowski:simon:2020:derivation}
and \cites[Prop. 4.2]{parekh:2019:KPZ}. There are two improvements
that we need. First, we need a result at the level of coupled equations
(as the results of \cite{goncalves:perkowski:simon:2020:derivation}
are only written at the level of laws of solutions). Second, the notions
of weak solution in \cite{goncalves:perkowski:simon:2020:derivation}
and \cite{parekh:2019:KPZ} are not exactly the same: \cite{goncalves:perkowski:simon:2020:derivation}
considers test functions with Neumann boundary conditions (and then
must add an additional boundary potential by hand to obtain Robin
conditions for the solution), while \cite{parekh:2019:KPZ} considers
test functions with Robin boundary conditions. 

We define $\mathcal{S}_{\mathrm{Neu}}$ and $\nabla_{\kappa}$ as
in \zcref[range]{eq:SNeu,eq:deltakappadef}. Let $\vartheta\in\{\pm1\}$
and let $(k_{t})_{t\ge0}$ be an almost stationary energy solution
to the open KPZ equation
\begin{subequations}
\begin{align}
\dif k_{t}(x) & =\frac{1}{2}\Delta k_{t}(x)\dif t+\frac{\vartheta}{2}(\partial_{x}k_{t}(x))^{2}\dif t+\dif W_{t}(x),\qquad t>0,x\in(0,L);\label{eq:openKPZ-formal}\\
\partial_{x}k_{t}(0) & =\partial_{x}\tilde{h}_{t}(L)=0,\qquad t>0\label{eq:openKPZ-formal-bc}
\end{align}
\end{subequations}
in the sense of \cites[Thm.~3.7]{goncalves:perkowski:simon:2020:derivation}.
In particular, for any $\varphi\in\mathcal{S}_{\mathrm{Neu}}$ and
$t\ge s\ge0$, the limit
\[
\mathcal{B}_{s,t}(\varphi)\coloneqq\lim_{\kappa\downarrow0}\int^{t}_{s}\int^{L}_{0}\varphi(x)\left\{ (\nabla_{\kappa}k_{r}(x))^{2}-\kappa^{-1}\right\} \,\dif x\,\dif r
\]
exists in $L^{2}(\mathbb{P})$, and if we define
\begin{equation}
\langle W_{t},\varphi\rangle\coloneqq\langle k_{t},\varphi\rangle-\langle k_{0},\varphi\rangle-\frac{1}{2}\int^{t}_{s}\langle k_{r},\Delta\varphi\rangle\,\dif r-\frac{\vartheta}{2}\mathcal{B}_{s,t}(\varphi),\label{eq:Wtdef}
\end{equation}
then $(\dif W_{t})$ is a space-time white noise on $[0,\infty)\times[0,L]$,
independent of $\tilde{h}_{0}$, by \cites[Thm. 3.7(3)]{goncalves:perkowski:simon:2020:derivation}
and Lévy's characterization of Brownian motion.
\begin{prop}
\label{prop:cole-hopf}If we define
\[
Z_{t}=\e^{\vartheta k_{t}(x)-t/24},
\]
then $(Z_{t})_{t\ge0}$ has a version that is the (unique by \cites[Prop.~4.2]{parekh:2019:KPZ})
mild solution to the open stochastic heat equation
\begin{align*}
\dif Z_{t}(x) & =\oh\Delta Z_{t}(x)\dif t+\vartheta Z_{t}(x)\dif W_{t}(x), &  & t>0,x\in(0,L);\\
\partial_{x}Z_{t}(0) & =-\nicefrac{1}{2}Z_{t}(0)\qquad\text{and}\qquad\partial_{x}Z_{t}(L)=\nicefrac{1}{2}Z_{t}(L), &  & t\in(0,T];\\
Z_{0}(x) & =\e^{h_{0}(x)}, &  & x\in(0,L)
\end{align*}
in the sense of \cites[Defn.~4.1]{parekh:2022:ergodicity}.
\end{prop}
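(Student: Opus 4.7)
My plan is to establish this result by a mollification plus Itô's formula argument that follows the strategy of Proposition 3.13 in \cite{goncalves:perkowski:simon:2020:derivation}, suitably adapted to handle both signs $\vartheta = \pm 1$ on a common probability space (rather than just in law). By the uniqueness of mild solutions of the open SHE established in \cite{parekh:2019:KPZ}, it suffices to show that, for every test function $\varphi \in \mathcal{C}^\infty([0,L])$, the process $Z_t = \exp(\vartheta k_t - t/24)$ satisfies the weak-mild formulation
\[
\langle Z_t, \varphi\rangle = \langle Z_0, P^{\mathrm{Rob}}_t \varphi\rangle + \vartheta \int_0^t \langle P^{\mathrm{Rob}}_{t-s}\varphi \cdot Z_s, \dif W_s\rangle,
\]
where $(P^{\mathrm{Rob}}_t)$ is the semigroup generated by $\tfrac{1}{2}\Delta$ with the Robin boundary conditions $\partial_x u(0) = -\tfrac{1}{2} u(0)$, $\partial_x u(L) = \tfrac{1}{2} u(L)$, and $\langle\cdot,\cdot\rangle$ is the $L^2([0,L])$ pairing.

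To produce this identity, I would first mollify spatially. Extend $k_t$ from $[0,L]$ to $[-L,L]$ by even reflection and then $2L$-periodically, producing a process that is in $L^2(\mathbb{T}_{2L})$. Convolving with $\rho^\zeta$ as in \zcref{sec:mollifying} gives a smooth approximation $k^\zeta_t$ for which $\partial_x k^\zeta_t$ is a classical function, so that $(\partial_x k^\zeta_t)^2$ makes sense pointwise. The mollified process satisfies, in the sense of Itô,
\[
\dif k^\zeta_t = \tfrac{1}{2}\Delta k^\zeta_t \dif t + \tfrac{\vartheta}{2} \rho^\zeta * \bigl[\tfrac{1}{2}\dif \mathcal{B}_t/\dif t\bigr] \dif t + \dif W^\zeta_t,
\]
where the middle term is the mollified nonlinearity, defined through the limit procedure on $\mathcal{B}_{s,t}$ from the energy-solution definition (this step is essentially \cites[Prop.~3.13]{goncalves:perkowski:simon:2020:derivation}). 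Then I would apply Itô's formula to $\exp(\vartheta k^\zeta_t - t/24)$ tested against the time-reversed Robin semigroup applied to $\varphi$. The Itô correction produces two terms: the spatial quadratic variation of $k^\zeta$, which exactly cancels the contribution of the $(\partial_x k^\zeta)^2$-type drift via the identity $\vartheta^2 = 1$ together with the Wick-renormalization encoded in $\mathcal{B}$; and a diverging boundary constant (proportional to $\Sh^\zeta_{2L}(0)$) whose correctly-normalized part converges to $-1/24$ after combining with the boundary contribution from integration by parts against $P^{\mathrm{Rob}}_{t-s}\varphi$.

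The main technical obstacle will be step (c): tracing carefully how the mismatch between the Neumann test-function space $\mathcal{S}_{\mathrm{Neu}}$ used in the energy solution formulation and the Robin boundary conditions on $P^{\mathrm{Rob}}_{t-s}\varphi$ produces exactly the correct boundary renormalization. When integrating by parts $\langle k^\zeta_s, \tfrac{1}{2}\Delta P^{\mathrm{Rob}}_{t-s}\varphi\rangle$, boundary terms of the form $\tfrac{1}{2}k^\zeta_s(0)(P^{\mathrm{Rob}}_{t-s}\varphi)(0)$ and $\tfrac{1}{2}k^\zeta_s(L)(P^{\mathrm{Rob}}_{t-s}\varphi)(L)$ appear; these must conspire with the quadratic-variation boundary divergences from $W^\zeta$ so that, after exponentiation, only the finite $-t/24$ and the correct Robin prefactor $\pm\tfrac{1}{2}$ survive. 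This parallels the boundary computation in \cite[§3.5]{goncalves:perkowski:simon:2020:derivation} but must be done directly at the level of the coupled SPDEs rather than only at the level of laws.

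Finally, I would pass to the limit $\zeta \downarrow 0$ using the $L^2$-convergence of $\mathcal{B}^{[\zeta]}_{s,t}$ to $\mathcal{B}_{s,t}$ and the convergence of mild solutions to the mollified SHE to the mild solution of the open SHE (proved in \zcref{lem:convofzeta,prop:epstozeroconv} of the present paper, applied with $\uu = \vv = 0$). The pathwise coupling between $k$ and $Z$ is maintained throughout because the noise $(\dif W_t)$ is defined via \zcref{eq:Wtdef} from $k$ itself, and both sides of the mild formula are expressed in terms of this same noise. Uniqueness of the mild solution then identifies the limit with $\e^{\vartheta k_t - t/24}$, completing the proof.
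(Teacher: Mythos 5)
Your proposal identifies the right target (the Robin weak/mild formulation from Parekh) but defers the one step that constitutes the paper's actual proof, and the route you sketch around it runs into a definitional problem. The paper does not redo the It\^o--Cole--Hopf computation at all: it simply quotes the display following (6.21) in Gon\c{c}alves--Perkowski--Simon, which already asserts that $Z_t=\e^{\vartheta k_t-t/24}$ satisfies, for every \emph{Neumann} test function $f\in\mathcal{S}_{\mathrm{Neu}}$,
\[
\langle f,Z_{t}\rangle-\langle f,Z_{0}\rangle=\frac{1}{2}\int^{t}_{0}\langle\Delta f,Z_{s}\rangle\,\dif s+\frac{1}{4}\int^{t}_{0}\left(Z_{s}(0)f(0)+Z_{s}(L)f(L)\right)\,\dif s+\int^{t}_{0}\langle Z_{s}f,\dif W_{s}\rangle .
\]
The entire content of the proof is then the conversion of this identity into the Robin-test-function formulation required by Parekh's Prop.~4.4: given $g$ with $g'(0)=-\tfrac12 g(0)$ and $g'(L)=\tfrac12 g(L)$, one builds a corrector $\eta^{\eps}$ with $\sup|\eta^{\eps}|\lesssim\eps$ but with $\Delta\eta^{\eps}=q^{\eps}$ an approximate combination of boundary deltas, so that $g+\eta^{\eps}\in\mathcal{S}_{\mathrm{Neu}}$; applying the displayed identity to $g+\eta^{\eps}$ and letting $\eps\downarrow0$, the term $\tfrac12\int_0^t\langle\Delta\eta^{\eps},Z_s\rangle\,\dif s$ converges (using the $L^2(\mathbb{P})$-continuity of $x\mapsto Z_s(x)$) to exactly minus the boundary term, while everything involving $\eta^{\eps}$ itself vanishes. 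This is precisely the step you label ``the main technical obstacle'' and leave as something that ``must be traced carefully''; without an explicit mechanism for it the proof is not complete.

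Moreover, the alternative you sketch --- testing $k^{\zeta}$ directly against $P^{\mathrm{Rob}}_{t-s}\varphi$ before exponentiating --- does not obviously make sense, because the energy-solution nonlinearity $\mathcal{B}_{s,t}(\cdot)$ is only defined on $\mathcal{S}_{\mathrm{Neu}}$, and neither $P^{\mathrm{Rob}}_{t-s}\varphi$ nor the translates of $\rho^{\zeta}$ needed to define $\rho^{\zeta}*[\dif\mathcal{B}_t/\dif t]$ lie in that space. The reason the paper performs the boundary-condition conversion \emph{after} exponentiation is exactly that at the level of $Z$ only the linear SHE structure remains, so arbitrary smooth test functions are admissible and the corrector argument goes through. (A smaller point: the KPZ nonlinearity is not cancelled by the It\^o correction, as you write, but by the chain rule $\Delta\e^{h}=\e^{h}(\Delta h+(\partial_x h)^2)$; the It\^o correction only supplies the renormalization constants $-t/24$ and the Robin coefficients $\pm\tfrac12$.) To repair your plan, replace steps (b)--(c) by the citation of the Gon\c{c}alves--Perkowski--Simon identity above and then supply the $\eta^{\eps}$ corrector argument explicitly.
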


\begin{proof}[Proof of \zcref{prop:cole-hopf}]
It is shown in \cites[display following (6.21)]{goncalves:perkowski:simon:2020:derivation}
that, for any $f\in\mathcal{S}_{\mathrm{Neu}}$ and any $t>0$, we
have
\begin{equation}
\langle f,Z_{t}\rangle-\langle f,Z_{0}\rangle=\frac{1}{2}\int^{t}_{0}\langle\Delta f,Z_{s}\rangle\,\dif s+\frac{1}{4}\int^{t}_{0}\left(Z_{s}(0)f(0)+Z_{s}(L)f(L)\right)\,\dif s+\int^{t}_{0}\langle Z_{s}f,\dif W_{s}\rangle.\label{eq:weak-Neumann}
\end{equation}
Moreover, by \cites[Prop.~3.10]{goncalves:perkowski:simon:2020:derivation},
for each $t>0$, the map that takes $x$ to the random variable $Z_{t}(x)$
is continuous in $L^{2}(\mathbb{P})$. By \cites[Prop.~4.4]{parekh:2019:KPZ},\footnote{The statement of \cites[Prop.~4.4]{parekh:2019:KPZ} assumes that
the solution takes values in $\mathcal{C}([0,L])$, which is not part
of the definition of almost stationary energy solution in \cite{goncalves:perkowski:simon:2020:derivation},
but the proof of \cites[Prop.~4.4]{parekh:2019:KPZ} does not require
continuity.} it suffices to show that, if we define
\[
\mathcal{S}_{\mathrm{Rob};\mu,\nu}\coloneqq\left\{ \psi\in\mathcal{C}^{\infty}([0,L])\st\psi'(0)=\mu\psi(0)\text{ and }\psi'(L)=\nu\psi(L)\right\} ,
\]
then we have for all $g\in\mathcal{S}_{\mathrm{Rob};-\oh,\oh}$ that
\begin{equation}
\langle g,Z_{t}\rangle-\langle g,Z_{0}\rangle=\frac{1}{2}\int^{t}_{0}\langle\Delta g,Z_{s}\rangle\,\dif s+\int^{t}_{0}\langle Z_{s}g,\dif W_{s}\rangle.\label{eq:weak-Neumann-1}
\end{equation}
In other words, we are trying to show that these two notions of weak
solution are equivalent. So let $g\in\mathcal{S}_{\mathrm{Rob};-\oh,\oh}$.
For $\eps<\frac{4}{3}L$, define $\psi^{\eps}$ as in \zcref{eq:psiepsdef}
and let
\begin{equation}
q^{\eps}(x)\coloneqq2g'(0)\psi^{\eps}(x)-2g'(L)\psi^{\eps}(x-L).\label{eq:qdef}
\end{equation}
and
\begin{equation}
\eta^{\eps}(x)\coloneqq\int^{x}_{0}\left(-g'(0)+\int^{y}_{0}q^{\eps}(z)\,\dif z\right)\,\dif y,\label{eq:etadef}
\end{equation}
so
\[
\partial_{x}\eta^{\eps}(0)=-g'(0)\qquad\text{and}\qquad\partial_{x}\eta^{\eps}(L)=-g'(0)+\int^{L}_{0}\left[2g'(0)\psi^{\eps}(x)-2g'(L)\psi^{\eps}(x-L)\right]\,\dif x\overset{\zcref{eq:psiepshalflineint}}{=}-g'(L),
\]
and thus $g+\eta^{\eps}\in\mathcal{S}_{\mathrm{Neu}}$. Therefore,
we can take $f=g+\eta^{\eps}$ in \zcref{eq:weak-Neumann} and obtain
\begin{align}
0 & =\langle g+\eta^{\eps},Z_{t}\rangle-\langle g+\eta^{\eps},Z_{0}\rangle-\frac{1}{2}\int^{t}_{0}\langle\Delta(g+\eta^{\eps}),Z_{s}\rangle\,\dif s\nonumber \\
 & \qquad-\frac{1}{4}\int^{t}_{0}\left(Z_{s}(0)\left(g(0)+\eta^{\eps}(0)\right)+Z_{s}(1)\left(g(L)+\eta^{\eps}(L)\right)\right)\,\dif s-\int^{t}_{0}\langle Z_{s}(g+\eta^{\eps}),\dif W_{s}\rangle\label{eq:takefgpluseta}
\end{align}
We have from the definition \zcref{eq:etadef} that
\[
\Delta\eta^{\eps}(x)=q^{\eps}(x),
\]
so in fact using \zcref[comp=true]{eq:qdef,eq:psiproperties,eq:psiepsdef,eq:psiepshalflineint}
along with the continuity of $x\mapsto Z_{s}(x)$ in $L^{2}(\mathbb{P})$
noted above, we get the limit in $L^{2}(\mathbb{P})$ (and hence in
probability)
\begin{equation}
\frac{1}{2}\int^{t}_{0}\langle\Delta\eta^{\eps},Z_{s}\rangle\,\dif s\xrightarrow[\eps\to0]{}\frac{1}{2}\int^{t}_{0}\left(g'(0)Z_{s}(0)-g'(L)Z_{s}(L)\right)\,\dif s=-\frac{1}{4}\int^{t}_{0}\left(g(0)Z_{s}(0)+g(L)Z_{s}(L)\right)\,\dif s.\label{eq:boundarycorrect}
\end{equation}
Moreover, we have
\[
\sup_{x\in[0,L]}|\partial_{x}\eta^{\eps}(x)|\le\max\left\{ |\mu\varphi(0)|,|\nu\varphi(0)|\right\} 
\]
while
\[
\partial_{x}\eta^{\eps}(x)=0\qquad\text{for all }x\in[2\eps,L-2\eps].
\]
Together, the last two displays along with the fact that $\eta^{\eps}(0)=0$
imply that
\begin{equation}
\sup_{x\in[0,L]}|\eta^{\eps}(x)|\le\eps\max\left\{ |g(0)|,|g(L)|\right\} .\label{eq:etasmall-1}
\end{equation}
This means that 
\begin{equation}
\langle\eta^{\eps},Z_{t}\rangle-\langle\eta^{\eps},Z_{0}\rangle-\frac{1}{4}\int^{t}_{0}\left(Z_{s}(0)\eta^{\eps}(0)+Z_{s}(1)\eta^{\eps}(L)\right)\,\dif s-\int^{t}_{0}\langle Z_{s}\eta^{\eps},\dif W_{s}\rangle\xrightarrow[\eps\to0]{}0\qquad\text{in probability.}\label{eq:bulketavanishes}
\end{equation}
Using \zcref{eq:boundarycorrect,eq:bulketavanishes} in \zcref{eq:takefgpluseta},
we get
\[
0=\langle g,Z_{t}\rangle-\langle g,Z_{0}\rangle-\frac{1}{2}\int^{t}_{0}\langle\Delta g,Z_{s}\rangle\,\dif s-\int^{t}_{0}\langle Z_{s}g,\dif W_{s}\rangle,
\]
which is \zcref{eq:weak-Neumann-1}.
\end{proof}

\subsection{Properties of solutions to stochastic heat equations}

While most of these results are fairly standard, the presence of boundary
potentials in our setting, together with the use of the two scales
$\eps,\zeta>0$ on which we approximate the boundary potential and
the noise respectively, makes it difficult to find precise references.
For this reason, we provide relatively complete proofs for the convenience
of the reader. These results will then be used to establish \zcref{prop:epstozeroconv},
\ref{lem:l.conZeps}, and \zcref{lem:convofzeta}.

Before turning to the proof, we recall an elementary fact. Let $K$
be a compact set in $\mathbb{R}^{d}$ and let $X_{1},X_{2},\ldots$
and $X$ be $\mathcal{C}(K)$-valued random variables. To establish
the convergence in probability of $X_{n}\to X$ in $\mathcal{C}(K)$,
it is sufficient to show that
\begin{enumerate}
\item for any $t\in K$, we have $X_{n}(t)\to X(t)$ in probability; and
\item There is an $\alpha>0$ such that for any $p\in[1,\infty)$, we have
$\sup_{n}\mathbb{E}|X_{n}(t)-X_{n}(s)|^{p}\leq C_{p}|t-s|^{\alpha p}$
for any $t,s\in K$, where $C_{p}<\infty$ is a constant depending
on $p$ but not on $t$ and $s$.
\end{enumerate}
Throughout the section, we fix $T>0$ and denote $U\coloneqq[0,T]\times[0,L]$.
Since the goal is to show the convergence in probability of $h^{\eps,\zeta}_{\uu,\vv},Z^{\eps,\zeta}_{\uu,\vv}$
in $\mathcal{C}(U)$ as $\eps,\zeta\to0$, to apply the above criterion,
it suffices to show the pointwise convergence and the moment estimates
on the modulus of continuity.

\subsubsection{Feynman--Kac formula and moment estimates}

An important tool in establishing various moment estimates on the
solution to stochastic heat equation is through the Feynman-Kac formula.
We write the solution to \zcref{Zepszeta} as 
\begin{align}
Z^{\eps,\zeta}_{\uu,\vv;t}(x) & =\mathrm{E}_{t,x}\left[\e^{A^{\zeta}(X_{0})}\exp\left\{ \int^{t}_{0}\dif W^{\zeta}(s,X_{s})-\frac{1}{2}t\Sh^{\zeta}_{2L}(0)-\frac{1}{4}\int^{t}_{0}\Sh^{\zeta/2}_{L}(X_{s})\,\dif s\right\} \right.\nonumber \\
 & \qquad\qquad\qquad\left.\times\exp\left\{ \int^{t}_{0}\left(\frac{1}{2}\Sh^{\zeta/2}_{L}(X_{s})+\varphi^{\eps}_{\uu,\vv}(X_{s})\right)\,\dif s\right\} \right]\nonumber \\
 & =\mathrm{E}_{t,x}\exp\left\{ A^{\zeta}(X_{0})+\int^{t}_{0}\dif W^{\zeta}(s,X_{s})-\frac{1}{2}t\Sh^{\zeta}_{2L}(0)+\int^{t}_{0}\left(\frac{1}{4}\Sh^{\zeta/2}_{L}(X_{s})+\varphi^{\eps}_{\uu,\vv}(X_{s})\right)\,\dif s\right\} ,\label{eq:FeynmanKac}
\end{align}
where $\mathrm{E}_{t,x}$ denotes expectation with respect to an auxiliary
backward-in-time Brownian motion $(X_{s})_{s\in[0,t]}$ with $X_{t}=x$.
This is essentially the usual Feynman--Kac formula for the stochastic
heat equation \cite{bertini:cancrini:1995:stochastic}, and can be
proved in the same way. Since our noise has a spatially inhomogeneous
covariance, the Itô correction term takes a slightly different form,
coming from the quadratic variation of the noise given in \zcref{eq:WzetaQV},
i.e. the term $-\frac{1}{2}t\Sh^{\zeta}_{2L}(0)-\frac{1}{4}\int^{t}_{0}\Sh^{\zeta/2}_{L}(X_{s})\,\dif s$
appearing in \zcref{eq:FeynmanKac}.

Using the above representation, we prove the following moment estimate.

\begin{prop}
\label{prop:moment-bd}For each $p\in(-\infty,\infty)$, we have 
\[
\sup_{\substack{t\in[0,T],x\in[0,L]\\
\eps,\zeta\in(0,1)
}
}\mathbb{E}\left[|Z^{\eps,\zeta}_{\uu,\vv;t}(x)|^{p}\right]<\infty\qquad\text{and}\qquad\sup_{\substack{\hat{t}\in[0,T],x\in[0,L]\\
\eps,\zeta\in(0,1)
}
}\mathbb{E}\left[|\hat{Z}^{\eps,\zeta}_{\uu,\vv;\hat{t}}(x)|^{p}\right]<\infty.
\]
\end{prop}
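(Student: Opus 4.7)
The proof is based on the Feynman--Kac representation \zcref{eq:FeynmanKac}, and the corresponding estimates for the backward process $\hat Z^{\eps,\zeta}_{\uu,\vv}$ follow from the distributional identity \zcref{eq:epsmatchinlaw} established in \zcref{prop:symmetry-timerev}. Positive and negative moments must be handled by different arguments.

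For positive integer moments $p$, I will use the replica trick. Writing $\mathbb{E}[(Z^{\eps,\zeta}_{\uu,\vv;t}(x))^p]$ as an expectation over $p$ independent auxiliary backward Brownian motions $X^{(1)},\dots,X^{(p)}$ from $(t,x)$, interchanging expectations, and integrating out the Gaussian noise $W^{\zeta}$ and the Gaussian initial data $A^{\zeta}$ produces a quadratic form in the replica paths. The key cancellation is that the $-\tfrac{p}{2}t\Sh^{\zeta}_{2L}(0)$ sum of Itô corrections (one copy from each of the $p$ replicas) exactly absorbs the $\tfrac{p}{2}t\Sh^{\zeta}_{2L}(0)$ diagonal self-interaction arising in $\tfrac{1}{2}\mathbb{E}\bigl[\bigl(\sum_{i}\int_{0}^{t}\dif W^{\zeta}(s,X^{(i)}_{s})\bigr)^{2}\bigr]$. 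Using \zcref{eq:Shrescale} to rewrite $\Sh^{\zeta}_{2L}(2y)=\tfrac{1}{2}\Sh^{\zeta/2}_{L}(y)$, the remaining diagonal contributions combine with the explicit $\tfrac{1}{4}\Sh^{\zeta/2}_{L}(X^{(i)}_{s})$ term in \zcref{eq:FeynmanKac} to yield boundary-local-time functionals $\tfrac{1}{2}\int_{0}^{t}\Sh^{\zeta/2}_{L}(X^{(i)}_{s})\dif s$, while the off-diagonal contractions give pairwise (reflected) intersection-local-time functionals $\int_{0}^{t}[\Sh^{\zeta}_{2L}(X^{(i)}_{s}-X^{(j)}_{s})+\Sh^{\zeta}_{2L}(X^{(i)}_{s}+X^{(j)}_{s})]\dif s$. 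Together with the deterministic integral $\int_{0}^{t}\varphi^{\eps}_{\uu,\vv}(X^{(i)}_{s})\dif s$ and the finite-dimensional Gaussian contribution from the $A^{\zeta}(X^{(i)}_{0})$ covariance matrix, all of these Brownian functionals admit exponential moments of every order, bounded uniformly in $\eps,\zeta\in(0,1)$, by classical estimates on boundary local times and (reflected) intersection local times of Brownian motion on a bounded interval. Real $p\geq 1$ then follows from integer $p$ by Jensen's inequality.

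For negative moments $p<0$, the same replica computation instead produces a non-cancelling $\tfrac{|p|(|p|+1)}{2}t\Sh^{\zeta}_{2L}(0)$ term that diverges as $\zeta\downarrow 0$, so one must establish a multiplicative lower bound on $Z^{\eps,\zeta}_{\uu,\vv;t}(x)$ directly. My strategy is a Mueller-type positivity argument: restricting the Feynman--Kac expectation in \zcref{eq:FeynmanKac} to the event that the auxiliary path $X$ stays in the interior $[\delta,L-\delta]$ and satisfies a small-ball bound, the boundary-supported integrands $\Sh^{\zeta/2}_{L}(X_{s})$ and $\varphi^{\eps}_{\uu,\vv}(X_{s})$ vanish, and one obtains $Z^{\eps,\zeta}_{\uu,\vv;t}(x)\geq c\exp(G^{\eps,\zeta})$, where $G^{\eps,\zeta}$ is a Gaussian functional of $(W^{\zeta},A^{\zeta})$ with deterministic mean dominated by $-\tfrac{t}{2}\Sh^{\zeta}_{2L}(0)$ and variance bounded uniformly. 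The hard part will be to show that this divergent deterministic shift does not spoil $\mathbb{E}[\exp(-pG^{\eps,\zeta})]$: this should be done by exploiting the very same cancellation between the variance of the heat-smoothed noise integral and the Itô correction that keeps $\mathbb{E}[Z^{\eps,\zeta}_{\uu,\vv;t}(x)]$ uniformly bounded in the positive-moment case, reorganized through the Cole--Hopf structure of the equation. Combined with the positive-moment bounds, this will complete the proof.
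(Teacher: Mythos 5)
Your positive-moment argument is essentially identical to the paper's: reduce to integer $p$ by H\"older/Jensen, apply the Feynman--Kac replica computation, observe that the diagonal self-interactions cancel the It\^o correction $-\tfrac{p}{2}t\Sh^{\zeta}_{2L}(0)$ and (via \zcref{eq:Shrescale}) combine with the explicit $\tfrac14\Sh^{\zeta/2}_{L}$ term into boundary local times, and bound all the remaining exponential functionals (pairwise reflected intersection local times, the $\varphi^{\eps}_{\uu,\vv}$ occupation integral, the $A^{\zeta}$ covariance) uniformly in $\eps,\zeta$ by H\"older. That part is correct and needs no change.

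The negative-moment part has a genuine gap, and the route you sketch will not close it. If you restrict the Feynman--Kac path expectation to paths staying in $[\delta,L-\delta]$ and apply Jensen over the restricted path measure, the resulting Gaussian lower bound $Z^{\eps,\zeta}_{\uu,\vv;t}(x)\ge c\,\e^{G^{\eps,\zeta}}$ has $G^{\eps,\zeta}$ with mean $-\tfrac{t}{2}\Sh^{\zeta}_{2L}(0)+O(1)$ and \emph{bounded} variance (the variance of the noise tested against the conditional occupation density is $O(1)$, not $t\Sh^{\zeta}_{2L}(0)$), so $\mathbb{E}[\e^{-qG^{\eps,\zeta}}]\gtrsim\exp\{\tfrac{q}{2}t\Sh^{\zeta}_{2L}(0)\}\to\infty$ as $\zeta\downarrow0$. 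The cancellation that saves the positive moments is between the It\^o correction and the \emph{quadratic variation along a single path}; once you linearize by Jensen over paths, that quadratic term is lost and nothing in your outline recovers it. Saying it ``should be done by exploiting the very same cancellation \dots reorganized through the Cole--Hopf structure'' is precisely the missing proof.

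The paper avoids this by a change-of-measure trick rather than a pathwise lower bound. Writing the Feynman--Kac exponent as $\mathcal{Y}+\mathcal{Z}$ with $\mathcal{Y}$ the noise integral plus It\^o correction and $\mathcal{Z}$ the (bounded-exponential-moment) potential part, one applies Jensen under the tilted path measure $\e^{\mathcal{Y}}\,\dif\mathrm{P}_{t,x}/\mathrm{E}_{t,x}\e^{\mathcal{Y}}$ to get
\[
\left(\mathrm{E}_{t,x}\e^{\mathcal{Y}+\mathcal{Z}}\right)^{-q}\le\left(\mathrm{E}_{t,x}\e^{\mathcal{Y}-q\mathcal{Z}}\right)\left(\mathrm{E}_{t,x}\e^{\mathcal{Y}}\right)^{-q-1}.
\]
The first factor is controlled by the positive-moment estimate; the second is a negative moment of the mollified stochastic heat equation \emph{without} potential, for which uniform-in-$\zeta$ bounds are known (the paper follows \cite{hu:le:2022:asymptotics}). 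You would need either to import such a result or to supply an argument of comparable strength (e.g.\ a concentration estimate for $\log\mathrm{E}_{t,x}\e^{\mathcal{Y}}$); as written, your proposal does not contain one.
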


\begin{proof}
We will prove the result for $Z^{\eps,\zeta}_{\uu,\vv;t}(x)$; the
result for $\hat{Z}^{\eps,\zeta}_{\uu,\vv;\hat{t}}(x)$ follows from
\zcref{prop:symmetry-timerev}. We consider the cases $p>0$ and $p<0$
separately.

For $p>0$, by Hölder's inequality it suffices to consider integer
$p$. In this case we introduce replicas of Brownian motions and take
expectations in the Feynman--Kac formula \zcref{eq:FeynmanKac} to
write 
\begin{align*}
\mathbb{E}\left[Z^{\eps,\zeta}_{\uu,\vv;t}(x)^{p}\right] & =\mathrm{E}^{\otimes p}_{t,x}\left(\exp\left\{ \sum^{p}_{j=1}A^{\zeta}(X^{j}_{0})+\sum^{p}_{j=1}\int^{t}_{0}\left(\frac{1}{2}\Sh^{\zeta/2}_{L}(X^{j}_{s})+\varphi^{\eps}_{\uu,\vv}(X^{j}_{s})\right)\,\dif s\right\} \right.\\
 & \qquad\qquad\left.\cdot\exp\left\{ \sum_{1\le j<k\le p}\int^{t}_{0}\left(\Sh^{\zeta}_{2L}(X^{j}_{s}-X^{k}_{s})+\Sh^{\zeta}_{2L}(X^{j}_{s}+X^{k}_{s})\right)\,\dif s\right\} \right).
\end{align*}
Here, $\mathrm{E}^{\otimes p}_{t,x}$ is the expectation under which
each $X^{1},\ldots,X^{p}$ are independent Brownian motions. It is
not difficult to check that each of the term in the exponentials has
exponential moments of all orders, which are bounded uniformly in
$\eps$ and $\zeta$, and hence the desired moment bound follows from
Hölder's inequality.

For $p<0$, by Jensen's inequality it suffices to consider $p=-q\le-1$.
Since 
\[
\sup_{\zeta\in(0,1)}\mathbb{E}\exp\left\{ -q\inf_{x\in[0,L]}A^{\zeta}(x)\right\} <\infty
\]
for all $q\in[1,\infty)$, it suffices to bound $\mathbb{E}\left(\mathrm{E}_{t,x}\e^{\mathcal{Y}+\mathcal{Z}}\right)^{-q}$,
where 
\[
\mathcal{Y}\coloneqq\int^{t}_{0}\dif W^{\zeta}(s,X_{s})-\frac{1}{2}t\Sh^{\zeta}_{2L}(0)-\frac{1}{4}\int^{t}_{0}\Sh^{\zeta/2}_{L}(X_{s})\,\dif s
\]
and 
\[
\mathcal{Z}\coloneqq\int^{t}_{0}\left(\frac{1}{2}\Sh^{\zeta/2}_{L}(X_{s})+\varphi^{\eps}_{\uu,\vv}(X_{s})\right)\,\dif s.
\]
We have 
\[
\left(\mathrm{E}_{t,x}\e^{\mathcal{Y}+\mathcal{Z}}\right)^{-q}=\left(\frac{\mathrm{E}_{t,x}\e^{\mathcal{Y}+\mathcal{Z}}}{\mathrm{E}_{t,x}\e^{\mathcal{Y}}}\right)^{-q}\left(\mathrm{E}_{t,x}\e^{\mathcal{Y}}\right)^{-q}\le\left(\mathrm{E}_{t,x}\e^{\mathcal{Y}-q\mathcal{Z}}\right)\left(\mathrm{E}_{t,x}\e^{\mathcal{Y}}\right)^{-q-1},
\]
by the Jensen's inequality with respect to the measure $\frac{\e^{\mathcal{Y}}}{\mathrm{E}_{t,x}\e^{\mathcal{Y}}}\dif\mathrm{P}_{t,x}$.
Then we take $\mathbb{E}$ of both sides and apply Hölder's inequality.
All moments of $\mathrm{E}_{t,x}\e^{\mathcal{Y}-q\mathcal{Z}}$ are
bounded by the positive moment estimate. It remains to bound $\left(\mathrm{E}_{t,x}\e^{\mathcal{Y}}\right)^{-q'}$
for all $q'\in[1,\infty)$, and this can be done following the proof
of \cites[Thm. 4.6]{hu:le:2022:asymptotics}. The proof is complete. 
\end{proof}

Recall the parabolic Hölder norm is defined in \zcref{eq:parabolic-holder-norm}.
Using the moment estimates obtained in \zcref{prop:moment-bd} and
the Schauder estimate of the heat semigroup, we derive the uniform
moment bound on the Hölder norm of $Z^{\eps,\zeta}_{\uu,\vv}$ and
$h^{\eps,\zeta}_{\uu,\vv}$. 
\begin{prop}
\label{p.holder} For any $p\in[1,\infty)$ and $\alpha<\tfrac{1}{2}$,
we have 
\[
\sup_{\eps,\zeta\in(0,1)}\|Z^{\eps,\zeta}_{\uu,\vv}\|^{p}_{\mathcal{C}^{\alpha}_{\mathfrak{s}}(U)}+\sup_{\eps,\zeta\in(0,1)}\|h^{\eps,\zeta}_{\uu,\vv}\|^{p}_{\mathcal{C}^{\alpha}_{\mathfrak{s}}(U)}<\infty.
\]
\end{prop}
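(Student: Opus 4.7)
The plan is to prove the Hölder regularity of $Z^{\eps,\zeta}_{\uu,\vv}$ first, using the mild formulation, and then deduce the regularity of $h^{\eps,\zeta}_{\uu,\vv}=\log Z^{\eps,\zeta}_{\uu,\vv}$ using the negative moment estimates from \zcref{prop:moment-bd}. Throughout, the key issue is that the moment estimates on increments must be uniform in $\eps,\zeta\in(0,1)$ despite the approximate boundary potential $\varphi^{\eps}_{\uu,\vv}$ becoming singular.

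First, I would use the mild formulation \zcref{eq:Zepszetamild} to decompose
\[
Z^{\eps,\zeta}_{\uu,\vv;t}(x)=I^{\mathrm{ic}}_{t}(x)+I^{\mathrm{drift}}_{t}(x)+I^{\mathrm{noise}}_{t}(x),
\]
corresponding to the initial condition, the deterministic (potential) drift, and the stochastic convolution. For each of these three pieces, I would establish an estimate of the form
\[
\sup_{\eps,\zeta\in(0,1)}\mathbb{E}\left|I^{\bullet}_{t}(x)-I^{\bullet}_{s}(y)\right|^{p}\lesssim_{p}\left(|t-s|^{1/2}+|x-y|\right)^{\alpha p}
\]
for every $\alpha<\tfrac{1}{2}$ and $p\in[1,\infty)$. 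For the initial-condition term, this is standard (the semigroup applied to a Brownian path gives a spatially Hölder function). For the stochastic convolution $I^{\mathrm{noise}}_{t}(x)=\int^{t}_{0}\int p_{t-s}(x-y)Z^{\eps,\zeta}_{\uu,\vv;s}(y)\,\dif W^{\zeta}_{s}(y)$, I would apply the Burkholder--Davis--Gundy inequality together with the uniform moment bound on $Z^{\eps,\zeta}_{\uu,\vv}$ from \zcref{prop:moment-bd}, and use the usual Schauder-type estimates for parabolic convolutions with space-time white noise (noting that the covariance of $\dif W^{\zeta}$ is bounded above by that of the unmollified noise plus an exponentially decaying extra term, uniformly in $\zeta$).

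The main obstacle is controlling $I^{\mathrm{drift}}_{t}(x)=\int^{t}_{0}\int p_{t-s}(x-y)Z^{\eps,\zeta}_{\uu,\vv;s}(y)\bigl(\varphi^{\eps}_{\uu,\vv}+\tfrac{1}{2}\Sh^{\zeta/2}_{L}\bigr)(y)\,\dif y\,\dif s$ uniformly in $\eps,\zeta$, since $\varphi^{\eps}_{\uu,\vv}+\tfrac{1}{2}\Sh^{\zeta/2}_{L}$ concentrates on a boundary layer of width $O(\eps)$ as $\eps\downarrow0$. The observation that resolves this difficulty is that, when paired against a smooth function, $\varphi^{\eps}_{\uu,\vv}+\tfrac{1}{2}\Sh^{\zeta/2}_{L}$ converges to the finite boundary measure $-\uu\delta_{0}-\vv\delta_{L}+\tfrac{1}{2}(\delta_{0}+\delta_{L})$. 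Using \zcref{prop:moment-bd} to bound $Z^{\eps,\zeta}_{\uu,\vv;s}(y)$ pointwise in $L^{p}$, a Minkowski-type integral inequality reduces the analysis to bounds of the form
\[
\int^{t}_{0}\left|p_{t-s}(x-y_{0})-p_{t'-s}(x'-y_{0})\right|\dif s\lesssim(|t-t'|^{1/2}+|x-x'|)^{\alpha}
\]
for $y_{0}\in\{0,L\}$ and $(t,x),(t',x')\in U$, which are classical Schauder estimates for the heat kernel; this uses crucially that the integrand against $(\dif y)$ is an honest (signed) measure with total mass bounded uniformly in $\eps,\zeta$.

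Assembling these three estimates and invoking the Kolmogorov continuity criterion on $U$ in the parabolic scaling yields the bound on $\|Z^{\eps,\zeta}_{\uu,\vv}\|_{\mathcal{C}^{\alpha}_{\mathfrak{s}}(U)}$. For $h^{\eps,\zeta}_{\uu,\vv}=\log Z^{\eps,\zeta}_{\uu,\vv}$, I would write
\[
|h^{\eps,\zeta}_{\uu,\vv;t}(x)-h^{\eps,\zeta}_{\uu,\vv;s}(y)|\le\frac{|Z^{\eps,\zeta}_{\uu,\vv;t}(x)-Z^{\eps,\zeta}_{\uu,\vv;s}(y)|}{\min\{Z^{\eps,\zeta}_{\uu,\vv;t}(x),Z^{\eps,\zeta}_{\uu,\vv;s}(y)\}}
\]
and apply Hölder's inequality in $L^{p}(\Omega)$ with the bound on increments of $Z^{\eps,\zeta}_{\uu,\vv}$ established above and the negative moment bound of \zcref{prop:moment-bd}, followed by another application of Kolmogorov continuity. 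A small auxiliary argument (taking a countable dense subset and using the $\mathcal{C}^{\alpha}_{\mathfrak{s}}$ seminorm representation) upgrades the $L^{p}$ control on increments to a bound on $\mathbb{E}\|h^{\eps,\zeta}_{\uu,\vv}\|^{p}_{\mathcal{C}^{\alpha}_{\mathfrak{s}}(U)}$.
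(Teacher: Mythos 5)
Your proposal is correct, and it reaches the same conclusion by a genuinely different technical route from the paper. The paper also starts from the mild formulation and the same three-term decomposition, but it controls the two convolution terms by measuring the \emph{source} terms $Z^{\eps,\zeta}_{\uu,\vv}\,(\varphi^{\eps}_{\uu,\vv}+\oh\Sh^{\zeta/2}_{L})$ and $Z^{\eps,\zeta}_{\uu,\vv}\,\dif W^{\zeta}$ in the negative parabolic Hölder space $\mathcal{C}^{\alpha-2}_{\mathfrak{s}}(U)$ and then invoking a Schauder estimate for the heat semigroup; in particular it observes that the boundary potential lives uniformly in $\mathcal{C}^{-1}$, so that term would even allow any $\alpha<1$. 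You instead prove pointwise $L^{p}(\Omega)$ increment estimates for each piece and conclude by Kolmogorov/Garsia--Rodemich--Rumsey; your Minkowski reduction of the drift term to the time-integrated heat kernel tested against a boundary measure of uniformly bounded mass is the correct way to see the uniformity in $\eps,\zeta$, and is the increment-level counterpart of the paper's $\mathcal{C}^{-1}$ observation. The second difference is in passing to $h^{\eps,\zeta}_{\uu,\vv}=\log Z^{\eps,\zeta}_{\uu,\vv}$: the paper bounds $\sup_{U}(Z^{\eps,\zeta}_{\uu,\vv})^{-1}$ in $L^{p}$ by a separate chaining argument combining the modulus of continuity of $Z$ with the pointwise negative moments of \zcref{prop:moment-bd}, whereas you estimate increments of $\log Z$ directly via $|\log a-\log b|\le|a-b|/\min\{a,b\}$, Hölder's inequality, and the pointwise negative moments, then run Kolmogorov once more. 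Your route avoids the sup-of-reciprocal estimate entirely and is arguably more elementary; the paper's route has the advantage that the Schauder/Besov formalism is reused elsewhere. One cosmetic caveat: your remark that the covariance of $\dif W^{\zeta}$ is ``bounded above by that of the unmollified noise plus an exponentially decaying term'' is not literally true pointwise (a mollified delta does not dominate a delta); what you actually need, and what is standard, is that the Itô-isometry double integral against $\Sh^{\zeta}_{2L}(y_{1}-y_{2})+\Sh^{\zeta}_{2L}(y_{1}+y_{2})$ admits bounds uniform in $\zeta$ because these kernels are positive mollifications of measures of fixed mass. Finally, since $\|\cdot\|_{\mathcal{C}^{\alpha}_{\mathfrak{s}}(U)}$ as defined in \zcref{eq:parabolic-holder-norm} includes the supremum norm, you should note explicitly that the seminorm bound plus the trivial anchor $h^{\eps,\zeta}_{\uu,\vv;0}(0)=A^{\zeta}(0)=0$ controls the full norm; this is immediate but worth a sentence.
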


\begin{proof}
We first explain how to obtain the estimate on $h^{\eps,\zeta}_{\uu,\vv}$
from the estimate on $Z^{\eps,\zeta}_{\uu,\vv}$. Since $h^{\eps,\zeta}_{\uu,\vv}=\log Z^{\eps,\zeta}_{\uu,\vv}$,
by the elementary facts of $|\log x|\leq C(x+x^{-1})$ and $|\log x-\log y|\leq(x^{-1}+y^{-1})|x-y|$,
we have 
\[
\|h^{\eps,\zeta}_{\uu,\vv}\|_{\mathcal{C}^{\alpha}_{\mathfrak{s}}(U)}\leq C\left(\sup_{(t,x)\in U}Z^{\eps,\zeta}_{\uu,\vv;t}(x)+\sup_{(t,x)\in U}(Z^{\eps,\zeta}_{\uu,\vv;t}(x))^{-1}\right)\left(1+\|Z^{\eps,\zeta}_{\uu,\vv}\|_{\mathcal{C}^{\alpha}_{\mathfrak{s}}(U)}\right).
\]
Thus, with the moment estimate on $\|Z^{\eps,\zeta}_{\uu,\vv}\|_{\mathcal{C}^{\alpha}_{\mathfrak{s}}(U)}$,
by applying Hölder inequality, we only need to derive the moment estimate
on $\sup_{(t,x)\in U}(Z^{\eps,\zeta}_{\uu,\vv;t}(x))^{-1}$ to obtain
the desired bound on $\|h^{\eps,\zeta}_{\uu,\vv}\|_{\mathcal{C}^{\alpha}_{\mathfrak{s}}(U)}$.
This can be done through a standard chaining argument combined with
the modulus of continuity endowed by the bound of $\|Z^{\eps,\zeta}_{\uu,\vv}\|_{\mathcal{C}^{\alpha}_{\mathfrak{s}}(U)}$
and the negative moment estimate obtained in \zcref{prop:moment-bd};
see for example \cite[Prop.~5.8]{khoshnevisan:kim:mueller:shiu:2020:dissipation}.
Now it is enough to establish the moment bound on $\|Z^{\eps,\zeta}_{\uu,\vv}\|_{\mathcal{C}^{\alpha}_{\mathfrak{s}}(U)}$.
The following argument is similar to \cite[Thm.~2.1]{parekh:2022:ergodicity},
so we do not provide all details.

Since $Z^{\eps,\zeta}_{\uu,\vv}$ solves the equation \eqref{Zepszeta},
we write it in the mild formulation (a periodized version of \eqref{eq:Zepszetamild})
\begin{equation}
\begin{aligned}Z^{\eps,\zeta}_{\uu,\vv;t}(x) & =\int^{L}_{-L}q_{t}(x-y)\e^{A^{\zeta}(y)}\,\dif y+\int^{t}_{0}\int^{L}_{-L}q_{t-s}(x-y)Z^{\eps,\zeta}_{\uu,\vv;s}(y)\left(\varphi^{\eps}_{\uu,\vv}+\oh\Sh^{\zeta/2}_{L}\right)(y)\,\dif y\,\dif s\\
 & \qquad+\int^{t}_{0}\int^{L}_{-L}q_{t-s}(x-y)Z^{\eps,\zeta}_{\uu,\vv;s}(y)\,\dif W^{\zeta}_{s}(y).
\end{aligned}
\label{e.mildperiodic}
\end{equation}
where $q$ is the periodized heat kernel $q_{t}(x)=\sum_{n\in\mathbb{Z}}p_{t}(x+2Ln)$
with $p$ defined in \zcref{eq:ptdef}. There are three terms on the
right side of \eqref{e.mildperiodic}, and we will bound the $\|\cdot\|_{\mathcal{C}^{\alpha}_{\mathfrak{s}}(U)}$
norm of each of them. For the last two terms, using a Schauder estimate
(\cite[Thm.~2.8]{chandra:weber:2017:stochastic} for the version we
need), it is enough to bound the $\|\cdot\|_{\mathcal{C}^{\alpha-2}_{\mathfrak{s}}(U)}$
norm of the source terms, namely $Z^{\eps,\zeta}_{\uu,\vv;s}(y)(\varphi^{\eps}_{\uu,\vv}+\oh\Sh^{\zeta/2}_{L})(y)$
and $Z^{\eps,\zeta}_{\uu,\vv;s}(y)\,\dif W^{\zeta}_{s}(y)$. They
can be handled in the same way as in the proof of \cites[Thm.~2.1]{parekh:2022:ergodicity},
using the moment bounds on $Z^{\eps,\zeta}_{\uu,\vv}$ in \zcref{prop:moment-bd}.
We have 
\[
\sup_{\eps,\zeta\in(0,1)}\mathbb{E}\|Z^{\eps,\zeta}_{\uu,\vv}(\varphi^{\eps}_{\uu,\vv}+\oh\Sh^{\zeta/2}_{L})\|^{p}_{\mathcal{C}^{\alpha-2}_{\mathfrak{s}}(U)}+\sup_{\eps,\zeta\in(0,1)}\mathbb{E}\|Z^{\eps,\zeta}_{\uu,\vv}\dif W^{\zeta}\|^{p}_{\mathcal{C}^{\alpha-2}_{\mathfrak{s}}(U)}\leq C_{p,\alpha},
\]
for any $p\in[1,\infty)$ and $\alpha<\tfrac{1}{2}$. As a matter
of fact, for the first term on the left side, we can take $\alpha<1$
as $\varphi^{\eps}_{\uu,\vv}+\oh\Sh^{\zeta/2}_{L}$ approximates Dirac
functions which lives in $\mathcal{C}^{-1}$. It remains to deal with
the first term on the right side of \zcref{e.mildperiodic}. Recall
that $A^{\zeta}$ is an integral of a mollified white noise, defined
in \zcref{eq:Azetadef}, then it is elementary to check that $\sup_{\zeta\in(0,1)}\mathbb{E}\|e^{A^{\zeta}}\|^{p}_{\mathcal{C}^{\alpha}[-L,L]}<\infty$,
for any $\alpha<\tfrac{1}{2}$. By the standard Schauder estimate,
we obtain the moment bound for $\|q_{\cdot}*e^{A^{\zeta}}\|_{\mathcal{C}^{\alpha}_{\mathfrak{s}}(U)}$
for any $\alpha<\tfrac{1}{2}$. This completes the proof. 
\end{proof}

\subsubsection{\label{subsec:Convergence-as-:}Convergence as $\zeta,\protect\eps\to0$}

In this section, we prove, for each fixed $(t,x)$, the convergence
of $Z^{\eps,\zeta}_{\uu,\vv}(t,x)$ as $\zeta,\eps\to0$. The proof
of the convergence as we remove the mollification (i.e. as $\zeta\to0$)
is rather standard. We sketch it here for the convenience of the reader.
\begin{lem}
\label{lem:convofzeta-HE}For each fixed $\eps>0$ and $(t,x)\in U$,
we have 
\begin{equation}
\lim_{\zeta\to0}Z^{\eps,\zeta}_{\uu,\vv;t}(x)=Z^{\eps}_{\uu,\vv;t}(x)\qquad\text{and}\qquad\lim_{\zeta\to0}\hat{Z}^{\eps,\zeta}_{\uu,\vv;\hat{t}}(x)=\hat{Z}^{\eps}_{\uu,\vv;\hat{t}}(x)\qquad\text{in probability.}\label{eq:convofzeta-HE}
\end{equation}
In addition, for each fixed $(t,x)\in U$, we have 
\begin{equation}
\lim_{\eps\to0}Z^{\eps}_{\uu,\vv;t}(x)=Z_{\uu,\vv;t}(x)\qquad\text{and}\qquad\lim_{\eps\to0}\hat{Z}^{\eps}_{\uu,\vv;\hat{t}}(x)=\hat{Z}_{\uu,\vv;\hat{t}}(x)\qquad\text{in probability.}\label{eq:convofeps-HE}
\end{equation}
\end{lem}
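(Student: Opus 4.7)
The plan is to prove both limits in \zcref{eq:convofzeta-HE,eq:convofeps-HE} by comparing the mild formulations of the respective stochastic heat equations and running a Picard/Gronwall-type estimate on the difference of solutions, using the uniform moment bounds of \zcref{prop:moment-bd,p.holder} to control the multiplicative factor. In each case, the difference of the two solutions satisfies a linear inhomogeneous multiplicative SHE whose forcing terms vanish as the relevant parameter tends to zero, and the stability of the SHE under such perturbations yields the desired convergence in probability.

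For the first convergence \zcref{eq:convofzeta-HE}, I would first rewrite $Z^{\eps}_{\uu,\vv}$ via an even-periodic extension to $\mathbb{R}/(2L\mathbb{Z})$, so that its Robin boundary conditions (\zcref{eq:Zepsuv-Robin}) with parameters $\pm\oh$ are absorbed into an additional boundary forcing $\tfrac{1}{2}(\delta_0 + \delta_L)$ in an equation with periodic boundary conditions. Both $Z^{\eps,\zeta}_{\uu,\vv}$ and this extended $Z^{\eps}_{\uu,\vv}$ then satisfy mild equations driven by the same periodic heat kernel, differing only in three places: the initial data $A^{\zeta}$ versus $h_0$, the mollified noise $\dif W^{\zeta}$ versus $\dif W$, and the boundary forcing $\tfrac{1}{2}\Sh^{\zeta/2}_L$ versus $\tfrac{1}{2}(\delta_0 + \delta_L)$. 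Each of these converges as $\zeta\downarrow 0$ in an appropriate negative Hölder space: $A^{\zeta}\to h_0$ in $\mathcal{C}^\alpha$ with $\alpha<\oh$, $\dif W^{\zeta}\to\dif W$ in any parabolic Hölder space of regularity strictly less than $-\thrh$, and $\Sh^{\zeta/2}_L$ converges to $\delta_0+\delta_L$ in any $\mathcal{C}^{-1-\kappa}$ norm. Combining the uniform bound $\sup_{\zeta}\mathbb{E}\|Z^{\eps,\zeta}_{\uu,\vv}\|^p_{\mathcal{C}^{\alpha}_{\mathfrak{s}}(U)}<\infty$ from \zcref{p.holder} with standard Schauder estimates and stochastic convolution inequalities, a Gronwall bound on the difference $Z^{\eps,\zeta}_{\uu,\vv}-Z^{\eps}_{\uu,\vv}$ yields convergence in $L^p$ for every $p$, hence in probability.

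For the second convergence \zcref{eq:convofeps-HE}, I would apply the same strategy. After even extension, both $Z^{\eps}_{\uu,\vv}$ and $Z_{\uu,\vv}$ are mild solutions of periodic SHEs on $\mathbb{R}/(2L\mathbb{Z})$, with $Z^{\eps}_{\uu,\vv}$ driven by the smooth bulk potential $\varphi^{\eps}_{\uu,\vv}+\tfrac{1}{2}(\delta_0+\delta_L)$ and $Z_{\uu,\vv}$ driven by the singular potential $(\oh-\uu)\delta_0+(\oh-\vv)\delta_L$ coming from the Robin boundary conditions \zcref{eq:Zbc-Robin}. By construction \zcref{eq:varphiuvdef}, $\varphi^{\eps}_{\uu,\vv}\to-\uu\delta_0-\vv\delta_L$ as $\eps\downarrow 0$ in any $\mathcal{C}^{-1-\kappa}$ norm on the torus, so the two equations converge in the same negative Hölder sense. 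Writing a mild equation for the difference and using the uniform bound $\sup_{\eps}\mathbb{E}\|Z^{\eps}_{\uu,\vv}\|^p_{\mathcal{C}^{\alpha}_{\mathfrak{s}}(U)}<\infty$ from \zcref{p.holder}, a Gronwall-type argument again gives convergence in probability. The backward convergences for $\hat{Z}^{\eps,\zeta}_{\uu,\vv}$ and $\hat{Z}^{\eps}_{\uu,\vv}$ are immediate from the symmetry-in-law of \zcref{prop:symmetry-timerev}.

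The main technical point is controlling the singular boundary forcing $\varphi^{\eps}_{\uu,\vv}$ in the second step, since $\|\varphi^{\eps}_{\uu,\vv}\|_{L^2([0,L])}$ diverges like $\eps^{-1/2}$. The right framework is to estimate this term not in $L^2$ but in $\mathcal{C}^{-1-\kappa}$, where $\|\varphi^{\eps}_{\uu,\vv}\|_{\mathcal{C}^{-1-\kappa}}$ stays bounded uniformly in $\eps$. Combined with the smoothing of the heat semigroup (gaining two degrees in parabolic scaling) and with the Hölder control of $Z^{\eps}_{\uu,\vv}$, this renders the deterministic and stochastic integrals involving $\varphi^{\eps}_{\uu,\vv}(Z^{\eps}_{\uu,\vv}-Z_{\uu,\vv})$ controllable, and the Gronwall argument closes. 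This is the only nontrivial step; all other estimates are routine perturbation bounds for linear SHEs.
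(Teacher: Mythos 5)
Your route is genuinely different from the paper's. The paper does not run a Gronwall/perturbation argument at all: it first shows $(Z^{\eps,\zeta}_{\uu,\vv;t}(x))_{\zeta}$ is Cauchy in $L^{2}(\mathbb{P})$ by computing the mixed second moment $\mathbb{E}[Z^{\eps,\zeta_{1}}_{\uu,\vv;t}(x)Z^{\eps,\zeta_{2}}_{\uu,\vv;t}(x)]$ explicitly via the Feynman--Kac formula (the covariance terms converge to Brownian local times), and then identifies the abstract limit as the mild solution of \zcref{eq:Zeps} by a Malliavin/Gaussian integration-by-parts argument applied to the stochastic convolution; the $\eps\to0$ case is handled the same way. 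Your approach has the advantage of avoiding the identification step entirely, since you compare directly with the known solution $Z^{\eps}_{\uu,\vv}$, and the even-extension bookkeeping of the Robin conditions as delta potentials is consistent with how the paper thinks about the boundary elsewhere.

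However, there is a gap in how you treat the noise term, and as written that step would fail. You propose to control the difference of the stochastic convolutions by combining ``$\dif W^{\zeta}\to\dif W$ in parabolic H\"older spaces of regularity below $-\thrh$'' with Schauder estimates. This is a pathwise framing, and it cannot work: the product $Z_{s}\,\dif W_{s}$ of a $\mathcal{C}^{\alpha}$ function with $\alpha<\oh$ and a distribution of regularity $-\thrh-$ has negative total regularity, so it is not a classically defined product and no deterministic Schauder estimate applies to it. The term
\begin{equation*}
\int^{t}_{0}\int p_{t-s}(x-y)\,Z^{\eps}_{\uu,\vv;s}(y)\,\bigl(\dif W^{\zeta}_{s}(y)-\dif W_{s}(y)\bigr)
\end{equation*}
must instead be estimated by the It\^o isometry: its second moment is an explicit double integral against the difference of the (reflected, periodized) covariance kernels $\Sh^{\zeta}_{2L}(\cdot)$ and the limiting delta, and one shows it vanishes using the continuity of $(y,y')\mapsto\mathbb{E}[Z^{\eps}_{\uu,\vv;s}(y)Z^{\eps}_{\uu,\vv;s}(y')]$ together with dominated convergence to handle the $(t-s)^{-1/2}$ singularity. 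The same remark applies to the Gronwall kernel coming from $\int p_{t-s}*\bigl((Z^{\eps,\zeta}-Z^{\eps})\dif W^{\zeta}\bigr)$. Once these terms are handled by second-moment computations rather than H\"older/Schauder bounds, your Gronwall scheme closes; but at that point you are performing essentially the covariance computations that the paper packages into its Feynman--Kac mixed-moment argument. A second, minor issue: for \zcref{eq:convofeps-HE} the limiting equation carries the singular potential $\uu\delta_{0}+\vv\delta_{L}$ multiplying $Z_{\uu,\vv;s}$, so the ``difference of potentials'' forcing involves $\langle\varphi^{\eps}_{\uu,\vv}+\uu\delta_{0}+\vv\delta_{L},\,p_{t-s}(x-\cdot)Z^{\eps}_{\uu,\vv;s}\rangle$; you should say explicitly that its smallness uses the spatial H\"older continuity of $Z^{\eps}_{\uu,\vv;s}$ near $\{0,L\}$ uniformly in $\eps$ (which \zcref{p.holder} does supply), not merely a $\mathcal{C}^{-1-\kappa}$ bound on $\varphi^{\eps}_{\uu,\vv}$.
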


\begin{proof}
Again, we only need to consider $Z^{\eps,\zeta}_{\uu,\vv;t}(x)$;
the corresponding result for $\hat{Z}^{\eps,\zeta}_{\uu,\vv;\hat{t}}(x)$
then follows from \zcref{prop:symmetry-timerev}. We follow the strategy
of \cites[Thm.~3.6]{hu:huang:nualart:tindel:2015:stochastic}. The
proof proceeds in two steps: first, we show that the sequence of random
variables converges, and then we identify the limit. Recall that $Z^{\eps,\zeta}_{\uu,\vv}$
satisfies the integral equation \zcref{eq:Zepszetamild}. 
\begin{thmstepnv}
\item \label{Step:CauchyinLp}First we show that, for each fixed $(t,x)$,
the sequence $(Z^{\eps,\zeta}_{\uu,\vv;t}(x))_{\zeta\in(0,1)}$ is
Cauchy in $L^{2}(\mathbb{P})$ as $\zeta\to0$. Using the Feynman--Kac
formula, the mixed second moment can be computed as 
\begin{equation}
\begin{aligned}\mathbb{E}[Z^{\eps,\zeta_{1}}_{\uu,\vv;t}(x)Z^{\eps,\zeta_{2}}_{\uu,\vv;t}(x)] & =\mathrm{E}^{\otimes2}_{t,x}\exp\left\{ A^{\zeta_{1}}(X^{1}_{0})+A^{\zeta_{2}}(X^{2}_{0})+\sum^{2}_{j=1}\int^{t}_{0}\left(\frac{1}{2}\Sh^{\zeta_{j}/2}_{L}(X^{j}_{s})+\varphi^{\eps}_{\uu,\vv}(X^{j}_{s})\right)\,\dif s\right\} \\
 & \qquad\qquad\times\exp\left\{ \int^{t}_{0}\left(\Sh^{\zeta_{1},\zeta_{2}}_{2L}(X^{2}_{s}-X^{1}_{s})+\Sh^{\zeta_{1},\zeta_{2}}_{2L}(X^{2}_{s}+X^{1}_{s})\right)\,\dif s\right\} .
\end{aligned}
\label{e.mix2nd}
\end{equation}
Here $\Sh^{\zeta_{1},\zeta_{2}}_{2L}(x-x')+\Sh^{\zeta_{1},\zeta_{2}}_{2L}(x+x')$
is the spatial covariance between $\dif W^{\zeta_{1}}_{t}(x)$ and
$\dif W^{\zeta_{2}}_{t'}(x')$, defined similarly to as in \zcref{eq:dWzetacov}.
As $\zeta\to0$, the integrals $\int^{t}_{0}\Sh^{\zeta/2}_{L}(X^{j}_{s})\,\dif s$,
$\int^{t}_{0}\Sh^{\zeta_{1},\zeta_{2}}_{2L}(X^{2}_{s}-X^{1}_{s})\,\dif s$,
and $\int^{t}_{0}\Sh^{\zeta_{1},\zeta_{2}}_{2L}(X^{2}_{s}+X^{1}_{s})\,\dif s$
each converge to the local time of a corresponding Brownian motion.
This implies that the two-point function $\mathbb{E}[Z^{\eps,\zeta_{1}}_{\uu,\vv;t}(x)Z^{\eps,\zeta_{2}}_{\uu,\vv;t}(x)]$
converges as $\zeta_{1},\zeta_{2}\to0$, which in particular means
that the sequence $(Z^{\eps,\zeta}_{\uu,\vv;t}(x))_{\zeta\in(0,1)}$
is Cauchy in $L^{2}(\mathbb{P})$. With some abuse of notation, we
let $Z^{\eps,0}_{\uu,\vv;t}(x)$ denote the limit.
\item Now we identify the limit. For a smooth random variable $F$ (in the
sense of Malliavin calculus; see \cite{nualart:2006:malliavin} for
background), we can write using Gaussian integration by parts that
\begin{align*}
\mathbb{E} & \left[F\int^{t}_{0}\int^{\infty}_{-\infty}p_{t-s}(x-z)Z^{\eps,\zeta}_{\uu,\vv;t}(z)\,\dif W^{\zeta}_{s}(z)\right]\\
 & =\sum_{\substack{q\in2L\mathbb{Z}\\
\iota\in\{\pm1\}
}
}\mathbb{E}\left[F\int^{t}_{0}\int^{L}_{0}\left(\int^{\infty}_{-\infty}p_{t-s}(x-y)Z^{\eps,\zeta}_{\uu,\vv;t}(y)\rho^{\zeta}(y-z+\iota q)\,\dif y\right)\,\dif W_{s}(z)\right]\\
 & =\sum_{\substack{q\in2L\mathbb{Z}\\
\iota\in\{\pm1\}
}
}\mathbb{E}\left[\int^{t}_{0}\int^{L}_{0}\Dif_{s,z}F\int^{\infty}_{-\infty}p_{t-s}(x-y)Z^{\eps,\zeta}_{\uu,\vv;t}(y)\rho^{\zeta}(y-z+\iota q)\,\dif y\,\dif z\,\dif s\right].
\end{align*}
Now we take $\zeta\to0$, noting that as we do so $Z^{\eps,\zeta}_{\uu,\vv;t}(y)\to Z^{\eps,0}_{\uu,\vv;t}(y)$
in $L^{2}(\Omega)$ and $\rho^{\zeta}$ converges to a delta distribution,
and then integrate by parts again on the limit to obtain
\begin{align*}
\mathbb{E} & \left[F\int^{t}_{0}\int^{L}_{0}p_{t-s}(x-z)Z^{\eps,\zeta}_{\uu,\vv;t}(z)\,\dif W^{\zeta}_{s}(z)\right]\\
\to & \sum_{\substack{q\in2L\mathbb{Z}\\
\iota\in\{\pm1\}
}
}\mathbb{E}\left[F\int^{t}_{0}\int^{L}_{0}p_{t-s}(x-z+\iota q)Z^{\eps,0}_{\uu,\vv;t}(z-\iota q)\,\dif W_{s}(z)\right]=\mathbb{E}\left[F\int^{t}_{0}\int^{\infty}_{-\infty}p_{t-s}(x-z)Z^{\eps,0}_{\uu,\vv;t}(z)\,\dif W_{s}(z)\right].
\end{align*}
This implies that as $\zeta\to0$, 
\[
\int^{t}_{0}\int^{L}_{0}p_{t-s}(x-z)Z^{\eps,\zeta}_{\uu,\vv;t}(z)\,\dif W^{\zeta}_{s}(z)\to\int^{t}_{0}\int^{\infty}_{-\infty}p_{t-s}(x-z)Z^{\eps,0}_{\uu,\vv;t}(z)\,\dif W_{s}(z)\qquad\text{weakly in }L^{2}(\Omega).
\]
The convergence of the other terms in \zcref{eq:Zepszetamild} as
$\zeta\to0$ is clear, and we end up with 
\[
\begin{aligned}Z^{\eps,0}_{\uu,\vv;t}(x) & =\int^{\infty}_{-\infty}p_{t}(x-y)\e^{A(y)}\,\dif y+\int^{t}_{0}\int^{\infty}_{-\infty}p_{t-s}(x-y)Z^{\eps,0}_{\uu,\vv;s}(y)\left(\varphi^{\eps}_{\uu,\vv}+\oh\Sh_{L}\right)(y)\,\dif y\\
 & \qquad+\int^{t}_{0}\int^{\infty}_{-\infty}p_{t-s}(x-y)Z^{\eps,0}_{\uu,\vv;t}(y)\,\dif W^ {}_{s}(y).
\end{aligned}
\]
But this is exactly the mild solution formula for \zcref{eq:Zeps}.
This completes the proof of \zcref{eq:convofzeta-HE}.
\item For \zcref{eq:convofeps-HE}, the proof is essentially the same, only
with simplifications. For example, in \zcref{Step:CauchyinLp} we
need the mixed second moment expression for $\mathbb{E}[Z^{\eps_{1}}_{\uu,\vv}(t,x)Z^{\eps_{2}}_{\uu,\vv}(t,x)]$,
which can be derived similarly to as in \eqref{e.mix2nd}. We do not
repeat the argument here. \qedhere
\end{thmstepnv}
\end{proof}

With \zcref{p.holder} and \zcref{lem:convofzeta-HE}, we complete
the proofs of \zcref{prop:epstozeroconv}, \zcref{lem:l.conZeps},
and \zcref{lem:convofzeta}.

\section{An estimate on multiple integrals}

Let $\mathsf{S}$ be a finite set with two distinguished elements
$\varrho_{1},\varrho_{2}\in\mathsf{S}$, let $\mathsf{S}_{0}\coloneqq\mathsf{S}\setminus\{\varrho_{1},\varrho_{2}\}$,
and let $Q\colon\binom{\mathsf{S}}{2}\to\{0,1,2\ldots\}$. For a compact set $\Theta\subseteq \mS_{2L}$, define
\begin{equation}\label{eq:Idef}
\mathscr{I}[\mathsf{S},Q,\Theta](\mathbf{x}_{\varrho_{1}},\mathbf{x}_{\varrho_{2}})\coloneqq\int_{(\x_{\varrho_1}+\Theta)^{\mathsf{S}_{0}}}\left(\prod_{\{u,v\}\in\binom{\mathsf{S}}{2}}d(\mathbf{x}_{u},\mathbf{x}_{v})^{-Q(u,v)}\right)\prod_{v\in\mathsf{S}_{0}}\dif\mathbf{x}_{v}.
\end{equation}
For $\varnothing\ne\overline{\mathsf{S}}\subseteq\mathsf{S}$, define
\begin{equation}
\deg_{Q}(\overline{\mathsf{S}})\coloneqq3(|\overline{\mathsf{S}}|-1)-\sum_{\{u,v\}\in\binom{\overline{\mathsf{S}}}{2}}Q(u,v).\label{eq:degQ}
\end{equation}
We note that if $\overline{\mathsf{S}}$ is a singleton, then the
sum on the right side of \zcref{eq:degQ} is zero, and so $\deg_{Q}(\overline{\mathsf{S}})=0$.
\begin{prop}
\label{prop:hepp-prop}Let $\Theta\subseteq \mathbb{R}\times\mathbb{T}_{2L}$
be a compact set. Suppose that
\begin{equation}\label{eq:posdeg-1}
\deg_{Q}(\overline{\mathsf{S}})>0\qquad\text{for all }\overline{\mathsf{S}}\subseteq\mathsf{S}\text{ with }|\overline{\mathsf{S}}|\ge2,
\end{equation}
and define
\begin{equation}
  \gamma\coloneqq\max\left\{ 3-\deg_{Q}(\overline{\mathsf{S}})\st\{\varrho_{1},\varrho_{2}\}\subseteq\overline{\mathsf{S}}\subseteq\mathsf{S}\right\} .\label{eq:gammadegtilde}
\end{equation}
Then we have
\[
\left|\mathscr{I}[\mathsf{S},Q,\Theta](\mathbf{x}_{\varrho_{1}},\mathbf{x}_{\varrho_{2}})\right|\lesssim d_{\mathfrak{s},\mathscr{S}}(\mathbf{x}_{\varrho_{1}},\mathbf{x}_{\varrho_{2}})^{-\gamma}\left(\log\left(2+d_{\mathfrak{s},\mathscr{S}}(\mathbf{x}_{\varrho_{1}},\mathbf{x}_{\varrho_{2}})^{-1}\right)\right)^{|\mathsf{S}|-2}.
\]
\end{prop}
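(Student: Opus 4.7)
This is a Hepp-type sector bound, of the same flavor as the estimates in \cite[Appendix~A]{hairer:quastel:2018:class} and \cite[Proposition~2.4]{hairer:2018:analyst}. Those standard references integrate over \emph{all} vertex variables, producing an unconditional constant, while here the two roots $\mathbf{x}_{\varrho_1},\mathbf{x}_{\varrho_2}$ are held fixed, which produces quantitative dependence on $R \coloneqq d_{\mathfrak{s},\mathscr{S}}(\mathbf{x}_{\varrho_1},\mathbf{x}_{\varrho_2})$ and the logarithmic corrections. The strategy will be a dyadic Hepp sector decomposition of the domain of integration.

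For each pair $\{u,v\} \in \binom{\mathsf{S}}{2}$, introduce a dyadic scale $n_{uv} \in \mathbb{Z}_{\ge 0}$ such that $d_{\mathfrak{s},\mathscr{S}}(\mathbf{x}_u,\mathbf{x}_v) \asymp 2^{-n_{uv}}$; using the triangle inequality (and paying only a combinatorial price bounded by $(|\mathsf{S}|!)^2$), each sector can be encoded by a nested chain of subsets $\mathsf{S} = \overline{\mathsf{S}}_0 \supsetneq \overline{\mathsf{S}}_1 \supsetneq \cdots \supsetneq \overline{\mathsf{S}}_{\ell}$ together with a strictly increasing sequence of scales $0 = k_0 < k_1 < \cdots < k_{\ell}$, with the interpretation that $\overline{\mathsf{S}}_i$ collects the vertices mutually within distance $\lesssim 2^{-k_i}$. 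On such a sector, the volume of integration (after fixing the roots) is bounded by $\prod_{i=1}^{\ell} 2^{-3 k_i (|\overline{\mathsf{S}}_i|-|\overline{\mathsf{S}}_{i+1}|)}$ (the exponent $3$ being the parabolic dimension of $\mS_{2L}$, with the convention $|\overline{\mathsf{S}}_{\ell+1}|=1$), while the integrand is bounded by $\prod_i 2^{k_i \sum_{\{u,v\}\in E_i} Q(u,v)}$, where $E_i$ collects the edges whose endpoints both lie in $\overline{\mathsf{S}}_i$ but not both in $\overline{\mathsf{S}}_{i+1}$. An Abel summation rearrangement combines these into a bound $\prod_{i=1}^{\ell} 2^{-(k_i-k_{i-1}) \deg_Q(\overline{\mathsf{S}}_i)}$, modulo an overall factor $R^{-\gamma}$ coming from the top cluster $\overline{\mathsf{S}}_0 = \mathsf{S}$ and the constraint $d_{\mathfrak{s},\mathscr{S}}(\mathbf{x}_{\varrho_1},\mathbf{x}_{\varrho_2}) \asymp R$.

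The summation over sectors is then straightforward. For clusters $\overline{\mathsf{S}}_i$ not containing both roots, the scale difference $k_i - k_{i-1}$ ranges freely over $\mathbb{Z}_{\ge 1}$, and the strict positivity $\deg_Q(\overline{\mathsf{S}}_i) > 0$ of \zcref{eq:posdeg-1} yields a convergent geometric sum. For clusters containing both roots, the constraint $k_i \lesssim \log_2(1/R)$ truncates the sum, yielding either an additional $R^{-(3-\deg_Q(\overline{\mathsf{S}}_i))}$ factor (which is subsumed into $R^{-\gamma}$ by the definition \zcref{eq:gammadegtilde}) or, precisely when $3 - \deg_Q(\overline{\mathsf{S}}_i) = \gamma$ is maximal, a factor of $\log(2+R^{-1})$. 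A nested chain of subsets containing both $\varrho_1$ and $\varrho_2$ has sizes strictly increasing from $2$ to $|\mathsf{S}|$, so there are at most $|\mathsf{S}|-1$ such members; since the top member $\mathsf{S}$ already contributes the factor $R^{-\gamma}$ outright, at most $|\mathsf{S}|-2$ further levels can contribute logarithms, yielding the claimed exponent.

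\textbf{Main obstacle.} The delicate point is the bookkeeping near the borderline: one must verify that only clusters $\overline{\mathsf{S}}$ achieving the maximum $3-\deg_Q(\overline{\mathsf{S}}) = \gamma$ produce genuine logarithms, that the number of Hepp sectors is combinatorially harmless, and that after summation the exponent of $\log(2+R^{-1})$ stays bounded by $|\mathsf{S}|-2$ rather than growing with the number of admissible chains. This is parallel to (though slightly more delicate than) the standard argument, and amounts to observing that for any fixed Hepp forest the logarithmic losses come only from those levels of the chain where the degree is borderline-critical, and that the length of a chain of root-containing subsets is controlled by the number $|\mathsf{S}|-2$ of non-root vertices.
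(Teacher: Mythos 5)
Your overall strategy --- dyadic Hepp-sector decomposition, a volume bound per sector, leaf-to-root resummation driven by the positivity hypothesis, with logarithms produced only at the clusters containing both roots, of which there are at most $|\mathsf{S}|-2$ beyond $\mathsf{S}$ itself --- is the same as the paper's. But there is a genuine gap in your combinatorial encoding of the sectors. You encode a sector by a single nested chain $\mathsf{S}=\overline{\mathsf{S}}_0\supsetneq\overline{\mathsf{S}}_1\supsetneq\cdots\supsetneq\overline{\mathsf{S}}_\ell$ of ``the vertices mutually within distance $\lesssim 2^{-k_i}$.'' This object is not well defined, and a chain cannot encode a general configuration: the hierarchical clustering of $|\mathsf{S}|$ points at dyadic scales is a \emph{tree}, not a chain. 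Concretely, take $\mathsf{S}=\{\varrho_1,\varrho_2,a,b\}$ with $d_{\mathfrak{s},\mathscr{S}}(\mathbf{x}_{\varrho_1},\mathbf{x}_{\varrho_2})\asymp R$, with $\mathbf{x}_a$ within $2^{-k}\ll R$ of $\mathbf{x}_{\varrho_1}$ and $\mathbf{x}_b$ within $2^{-k'}\ll R$ of $\mathbf{x}_{\varrho_2}$: at scales just below $R$ there are two disjoint nontrivial clusters, $\{\varrho_1,a\}$ and $\{\varrho_2,b\}$, and a chain must drop one of them. Whichever pair is dropped has its weight $d_{\mathfrak{s},\mathscr{S}}(\mathbf{x}_u,\mathbf{x}_v)^{-Q(u,v)}$ assigned the scale of the smallest chain member containing both endpoints (here $\approx R$) rather than the true, much smaller separation, so the integrand is underestimated on that sector and the bound is false. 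Both your volume bound $\prod_i 2^{-3k_i(|\overline{\mathsf{S}}_i|-|\overline{\mathsf{S}}_{i+1}|)}$ and the Abel-summed product $\prod_i 2^{-(k_i-k_{i-1})\deg_Q(\overline{\mathsf{S}}_i)}$ are therefore only valid for chain-like configurations.

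The missing structure is exactly what makes the hypothesis ``$\deg_Q(\overline{\mathsf{S}})>0$ for \emph{all} $\overline{\mathsf{S}}$'' necessary: sectors must be indexed by binary rooted trees $\mathfrak{t}$ with leaf set $\mathsf{S}$ together with monotone scale assignments $\mathbf{n}$ on the inner vertices, so that $d_{\mathfrak{s},\mathscr{S}}(\mathbf{x}_u,\mathbf{x}_v)\asymp 2^{-\mathbf{n}(u\curlywedge v)}$; the sector volume is then $\lesssim\prod_v 2^{-3\mathbf{n}(v)}$ over all inner vertices except $\varrho_1\curlywedge\varrho_2$, and the resummation runs leaf-to-root over the whole tree, using positivity of $\deg_Q$ at \emph{every} inner vertex (i.e.\ for every cluster, including those on branches hanging off the root path) to obtain convergent geometric sums there, while the truncated sums producing powers of $R$ and logarithms occur only along the path from $\varrho_1\curlywedge\varrho_2$ to the root. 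A second, smaller issue is the criterion you state for when a logarithm appears: it is not ``precisely when $3-\deg_Q(\overline{\mathsf{S}}_i)=\gamma$.'' In the root-path recursion a logarithm is produced at any level whose degree does not exceed the currently accumulated reference degree, and one must carry the pair (power of $R$, power of $\log$) jointly through the recursion, taking a maximum at the end, to conclude that the result is dominated by $d_{\mathfrak{s},\mathscr{S}}(\mathbf{x}_{\varrho_1},\mathbf{x}_{\varrho_2})^{-\gamma}\left(\log(2+d_{\mathfrak{s},\mathscr{S}}(\mathbf{x}_{\varrho_1},\mathbf{x}_{\varrho_2})^{-1})\right)^{|\mathsf{S}|-2}$. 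You flag this bookkeeping as the main obstacle but do not carry it out, so as written the proposal is a correct plan with two unproved steps, one of which (the tree versus chain encoding) requires a real repair rather than mere elaboration.
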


The proof of this proposition builds on typical Hepp-sector decompositions of the integral domain. Since the proposition does not seem to have appeared in the literature in this form, we include the details.

We begin by introducing the notion of a Hepp sector. 
Hepp
sectors provide a way to partition the integration domain in terms of the relative
distances between the integration variables. Our definition will be
adapted to the reflected periodic distance $d_{\mathfrak{s},\mathscr{S}}$
introduced in \zcref{eq:dsS}.
In particular, we define the set $\mathscr{T}(\mathsf{S})$ of \emph{Hepp sectors} to be
the set of binary, rooted trees with $|\mathsf{S}|$ leaves indexed
by $\mathsf{S}$. For a binary tree $\mathfrak{t}\in\mathscr{T}(\mathsf{S})$,
we write $\mathfrak{t}^{\circ}$ for the set of \emph{non}-leaf vertices
of $\mathfrak{t}$. We say that a map $\mathbf{n}\colon\mathfrak{t}^{\circ}\to\mathbb{N}$
is an \emph{admissible scaling} if
\begin{equation}
u\preccurlyeq_{\mathfrak{t}}v\implies\mathbf{n}(u)\le\mathbf{n}(v),\label{eq:admissible-scaling}
\end{equation}
and we write $\mathcal{A}(\mathfrak{t})$ for the set of all admissible
scalings of a given tree $\mathfrak{t}$. Here the partial order $\preccurlyeq_{\mathfrak{t}}$
on $\mathfrak{t}$ is induced by the tree structure, with the root
of the tree being the minimal element. We write $u\curlywedge_{\mathfrak{t}}v$
for the least common ancestor between $u$ and $v$, and write $u\prec_{\mathfrak{t}}v$
if $u\preccurlyeq_{\mathfrak{t}}v$ and $u\ne v$.\label{treedefs}
We will often drop the subscript $\mathfrak{t}$ if the tree $\mathfrak{t}$
is clear from context.

For any admissible scaling $\mathbf{n}$, we define
\newcommand{\sfS}{\mathsf{S}}
\begin{equation}
\mathcal{U}(\mathfrak{t},\mathbf{n},\mathbf{x}_{\varrho_{1}},\mathbf{x}_{\varrho_{2}})\coloneqq
\left\{
(\mathbf{x}_{v})_{v\in\mathsf{S}_{0}}\in(\x_{\varrho_1}+\Theta)^{|\mathsf{S}_{0}|}
\st
C^{-1}2^{-\mathbf{n}(v\curlywedge_{\mathfrak{t}}w)}\le d_{\mathfrak{s},\mathscr{S}}(\mathbf{x}_{v},\mathbf{x}_{w})
\le C2^{-\mathbf{n}(v\curlywedge_{\mathfrak{t}}w)}
\text{ for all }
v,w\in\sfS\right\},
  \label{eq:Udef}
\end{equation}
where $C$ is a constant chosen sufficiently large (and allowed to
depend on $|\mathsf{S}|$ and $\Theta$, but not on $\mathfrak{t}$, $\mathbf{n}$, $\mathbf{x}_{\varrho_{1}}$, or $\mathbf{x}_{\varrho_{2}}$)
that for $\mathbf{x}_{\varrho_{1}},\mathbf{x}_{\varrho_{2}}\in(\x_{\varrho_1}+\Theta)^{2}$,
we have
\begin{equation}
\bigcup\left\{ \mathcal{U}(\mathfrak{t},\mathbf{n},\mathbf{x}_{\varrho_{1}},\mathbf{x}_{\varrho_{2}})\st\mathfrak{t}\in\mathscr{T}(\mathsf{S}),\mathbf{n}\in\mathcal{A}(\mathfrak{t}),\mathbf{n}(\mathbf{x}_{\varrho_{1}}\curlywedge_{\mathfrak{t}}\mathbf{x}_{\varrho_{2}})=\lfloor\log_{2}d_{\mathfrak{s},\mathscr{S}}(\mathbf{x}_{\varrho_{1}},\mathbf{x}_{\varrho_{2}})\rfloor\right\} =(\x_{\varrho_1}+\Theta)^{n-2}.\label{eq:coveralloftheta}
\end{equation}
We make a couple of brief comments at this stage:
\begin{itemize}
\item The condition that $\mathbf{n}(\mathbf{x}_{\varrho_{1}}\curlywedge_{\mathfrak{t}}\mathbf{x}_{\varrho_{2}})=\lfloor\log_{2}d_{\mathfrak{s},\mathscr{S}}(\mathbf{x}_{\varrho_{1}},\mathbf{x}_{\varrho_{2}})\rfloor$
is in some sense unnecessary, since one can take $v=\varrho_{1}$
and $w=\varrho_{2}$ in the condition in \zcref{eq:Udef} to see that
this condition must hold up to an additive constant in order for $\mathcal{U}(\mathfrak{t},\mathbf{n},\mathbf{x}_{\varrho_{1}},\mathbf{x}_{\varrho_{2}})$
to be nonempty. We fix the precise value for concreteness.
\item The sets $\mathcal{U}(\mathfrak{t},\mathbf{n},\mathbf{x}_{\varrho_{1}},\mathbf{x}_{\varrho_{2}})$
depend on the choice of the distinguished elements $\varrho_{1},\varrho_{2}\in\mathsf{S}$,
but we suppress this in the notation to keep the notation light.
\end{itemize}
We will need the following lemma.
\begin{lem}
\label{lem:Usizebound}Let $\mathfrak{t}\in\mathscr{T}(\mathsf{S})$
be fixed. With $v_{\star}\coloneqq\varrho_{1}\curlywedge_{\mathfrak{t}}\varrho_{2}$,
we have, using $|\cdot|$ to denote the Lebesgue measure of a set,
\[
|\mathcal{U}(\mathfrak{t},\mathbf{n},\mathbf{x}_{\varrho_{1}},\mathbf{x}_{\varrho_{2}})|\lesssim\prod_{v\in\mathfrak{t}^{\circ}\setminus\{v_{\star}\}}2^{-3\mathbf{n}(v)}.
\]
\end{lem}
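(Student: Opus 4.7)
The natural approach is induction on $|\mathsf{S}|$, integrating out a carefully chosen leaf at each step. The base case $|\mathsf{S}|=2$ is immediate: here $\mathsf{S}_{0}=\varnothing$ and $\mathfrak{t}^{\circ}=\{v_{\star}\}$, so $\mathcal{U}$ is a single point (or empty), and the product on the right is the empty product, equal to $1$.

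For the inductive step with $|\mathsf{S}|>2$, the first task is to produce a leaf $v^{*}\in\mathsf{S}_{0}$ whose parent $w^{*}=\pi_{\mathfrak{t}}(v^{*})$ is distinct from $v_{\star}$. Such a leaf exists by a simple pigeonhole argument: $v_{\star}$ has two subtrees containing $\varrho_{1}$ and $\varrho_{2}$ respectively, and since $|\mathsf{S}|>2$, at least one of these subtrees must be rooted at an internal vertex; any leaf of $\mathsf{S}_{0}$ in that subtree will have its parent strictly below $v_{\star}$. Write $v'$ for the sibling of $v^{*}$ under $w^{*}$, and pick any leaf $\tilde{w}\in\mathsf{S}\setminus\{v^{*}\}$ descended from $v'$. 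Then $v^{*}\curlywedge_{\mathfrak{t}}\tilde{w}=w^{*}$, so the defining condition of $\mathcal{U}$ forces
\[
d_{\mathfrak{s},\mathscr{S}}(\mathbf{x}_{v^{*}},\mathbf{x}_{\tilde{w}})\le C\,2^{-\mathbf{n}(w^{*})}.
\]
Since a $d_{\mathfrak{s},\mathscr{S}}$-ball of radius $r$ is the union of at most two parabolic balls of radius $r$, each of Lebesgue measure $\lesssim r^{3}$ under parabolic scaling, the set of admissible $\mathbf{x}_{v^{*}}$ (given the other coordinates) has measure $\lesssim 2^{-3\mathbf{n}(w^{*})}$.

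I would then form the contracted binary tree $\mathfrak{t}'$ by deleting the leaf $v^{*}$ and contracting $w^{*}$, i.e., connecting $v'$ directly to the parent of $w^{*}$. The leaf set becomes $\mathsf{S}\setminus\{v^{*}\}$, the internal vertex set becomes $\mathfrak{t}^{\circ}\setminus\{w^{*}\}$, and because $w^{*}\ne v_{\star}$, all pairwise least common ancestors among the remaining leaves are preserved, so in particular $v_{\star}=\varrho_{1}\curlywedge_{\mathfrak{t}'}\varrho_{2}$ still. The restriction $\mathbf{n}'$ of $\mathbf{n}$ is admissible on $\mathfrak{t}'$, and forgetting the $\mathbf{x}_{v^{*}}$ coordinate maps $\mathcal{U}(\mathfrak{t},\mathbf{n},\mathbf{x}_{\varrho_{1}},\mathbf{x}_{\varrho_{2}})$ into $\mathcal{U}(\mathfrak{t}',\mathbf{n}',\mathbf{x}_{\varrho_{1}},\mathbf{x}_{\varrho_{2}})$. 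By Fubini and the inductive hypothesis,
\[
|\mathcal{U}(\mathfrak{t},\mathbf{n},\mathbf{x}_{\varrho_{1}},\mathbf{x}_{\varrho_{2}})|\lesssim 2^{-3\mathbf{n}(w^{*})}\,|\mathcal{U}(\mathfrak{t}',\mathbf{n}',\mathbf{x}_{\varrho_{1}},\mathbf{x}_{\varrho_{2}})|\lesssim 2^{-3\mathbf{n}(w^{*})}\prod_{v\in(\mathfrak{t}')^{\circ}\setminus\{v_{\star}\}}2^{-3\mathbf{n}(v)}=\prod_{v\in\mathfrak{t}^{\circ}\setminus\{v_{\star}\}}2^{-3\mathbf{n}(v)},
\]
which is exactly the claimed bound.

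The only subtle point is ensuring the existence of the leaf $v^{*}$ with $\pi_{\mathfrak{t}}(v^{*})\ne v_{\star}$; this encodes the distinguished role of $v_{\star}$ and is handled by the pigeonhole argument above. Everything else is a clean inductive reduction, the crucial geometric input being the $r^{3}$ bound on the volume of a $d_{\mathfrak{s},\mathscr{S}}$-ball of radius $r$ in $\mathbb{S}_{2L}$.
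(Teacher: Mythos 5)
Your proof is correct and is essentially the same argument as the paper's: in both cases one integrates out the $\mathsf{S}_{0}$-variables one at a time, each variable being confined to a $d_{\mathfrak{s},\mathscr{S}}$-ball of radius $\simeq 2^{-\mathbf{n}(w)}$ (hence of Lebesgue measure $\lesssim 2^{-3\mathbf{n}(w)}$ under the parabolic scaling) for a distinct internal vertex $w\in\mathfrak{t}^{\circ}\setminus\{v_{\star}\}$; the paper organizes this as an explicit enumeration (each $j\in\mathsf{S}_{0}$ is paired with a previously chosen index $k_{j}<j$ via a labeling of the tree), whereas you organize it as an induction with leaf deletion and edge contraction. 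One small repair: your pigeonhole justification for the existence of $v^{*}$ is not quite right --- if $v_{\star}$ is not the root of $\mathfrak{t}$, both of its child subtrees can be single leaves even though $|\mathsf{S}|>2$ (e.g.\ when the remaining elements of $\mathsf{S}_{0}$ all sit outside the subtree rooted at $v_{\star}$), so it is false that one of them must be rooted at an internal vertex. But the fact you need is automatic: no leaf of $\mathsf{S}_{0}$ can be a child of $v_{\star}$ at all, since a leaf child of $v_{\star}$ lying in $\mathsf{S}_{0}$ would force both $\varrho_{1}$ and $\varrho_{2}$ into the other child subtree, contradicting $v_{\star}=\varrho_{1}\curlywedge_{\mathfrak{t}}\varrho_{2}$; hence \emph{any} leaf of $\mathsf{S}_{0}$ serves as $v^{*}$, and the rest of your induction goes through unchanged.
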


\begin{proof}
For notational convenience, let us assume in this proof that $\mathsf{S}=\{1,\ldots,n\}$
and $\varrho_{i}=i$ for $i=1,2$, so $\mathsf{S}_{0}=\{3,\ldots,n\}$.
We begin by defining a labeling map $\mathrm{I}\colon\mathfrak{t}\to\mathsf{S}$
by first setting $\mathrm{I}(j)\coloneqq j$ for all $j\in\mathsf{S}$
and then, whenever $u\in\mathfrak{t}^{\circ}$ has children $v$ and
$w$, setting $\mathrm{I}(u)\coloneqq\mathrm{I}(v)\wedge\mathrm{I}(w)$.
Now, for each $j\in\mathsf{S}_{0}$, we let $u_{j}\in\mathfrak{t}^{\circ}$
be the parent of the minimal element $u$ of $\mathfrak{t}$ with
$\mathrm{I}(u)=j$. (This minimal element always does have a parent
because the root of $\mathfrak{t}$ has label $1$ and we have $j\ge3$.)
Let $v_{j}$ be the child of $u_{j}$ such that $\mathrm{I}(v_{j})\ne j$,
and let $k_{j}\coloneqq\mathrm{I}(v_{j})$, so the construction implies
that $k_{j}<j$ (as we know that $j\ne\mathsf{I}(u_{j})=j\wedge\mathsf{I}(v_{j})=j\wedge k_{j}$)
and that 
\begin{equation}
j\curlywedge_{\mathfrak{t}}k_{j}=u_{j}.\label{eq:jwedgekjisuj}
\end{equation}
It is also straightforward to check that
\begin{equation}
\{u_{j}\st j\in\mathsf{S}_{0}\}=\mathfrak{t}^{\circ}\setminus\{v_{\star}\}.\label{eq:vertices-exhaust}
\end{equation}
Now we think of choosing an element of the set $\mathcal{U}(\mathfrak{t},\mathbf{n},\mathbf{x}_{1},\mathbf{x}_{2})$
by choosing the values of the $\mathbf{x}_{j}$s inductively in order
of $j$. The designated elements $\mathbf{x}_{\varrho_{1}}=\mathbf{x}_{1}$
and $\mathbf{x}_{\varrho_{2}}=\mathbf{x}_{2}$ are already fixed.
For each $j\ge3$, in order to have $(\mathbf{x}_{j})_{j\in\mathsf{S}_{0}}\in\mathcal{U}(\mathfrak{t},\mathbf{n},\mathbf{x}_{\varrho_{1}},\mathbf{x}_{\varrho_{2}})$,
we must have 
\begin{equation}
d_{\mathfrak{s},\mathscr{S}}(\mathbf{x}_{j},\mathbf{x}_{k_{j}})\le C2^{-\mathbf{n}(j\curlywedge_{\mathfrak{t}}k_{j})}\overset{\zcref{eq:jwedgekjisuj}}{=}C2^{-\mathbf{n}(u_{j})},\label{eq:exhaustintermediate}
\end{equation}
and hence $\mathbf{x}_{j}$ must lie in a set of volume at most $\tilde{C}2^{-3\mathbf{n}(u_{j})}$
for a larger constant $\tilde{C}$ depending on $\Theta$. (The factor
of $3$ is due to the parabolic scaling.) Using this observation inductively,
we see that 
\[
|\mathcal{U}(\mathfrak{t},\mathbf{n},\mathbf{x}_{\varrho_{1}},\mathbf{x}_{\varrho_{2}})|\lesssim\prod^{n}_{j=3}2^{-3\mathbf{n}(u_{j})}\overset{\zcref{eq:exhaustintermediate}}{=}\prod_{v\in\mathfrak{t}^{\circ}\setminus\{v_{\star}\}}2^{-3\mathbf{n}(v)},
\]
as claimed. 
\end{proof}

Now we can prove \zcref{prop:hepp-prop}.
\begin{proof}[Proof of \zcref{prop:hepp-prop}]

  \begin{thmstepnv}
\item 
  We decompose the domain $(\x_{\varrho_1}+\Theta)^{\mathsf{S}_0}$
into Hepp sectors. 
We apply \zcref{eq:coveralloftheta} to \zcref{eq:Idef}
to obtain the bound, for all $\mathbf{x}_{\varrho_{1}},\mathbf{x}_{\varrho_{2}}\in[0,L]$,
\begin{equation}
\begin{aligned} & \left|\mathscr{I}[\mathsf{S},Q,\Theta](\mathbf{x}_{\varrho_{1}},\mathbf{x}_{\varrho_{2}})\right| %
 \lesssim\sum_{\mathfrak{t}\in\mathscr{T}_{\mathsf{S}}}\sum_{\substack{\mathbf{n}\in\mathcal{A}(\mathfrak{t})\\
\mathbf{n}(u_{\star})=n_{\star}
}
}\int_{\mathcal{U}(\mathfrak{t},\mathbf{n},\mathbf{x}_{\varrho_{1}},\mathbf{x}_{\varrho_{2}})}\left(\prod_{\{u,v\}\in\binom{\mathsf{S}}{2}}d_{\mathfrak{s},\mathscr{S}}(\mathbf{x}_{u},\mathbf{x}_{v})^{-Q(u,v)}\right)\prod_{v\in\mathsf{S}_0}\dif\mathbf{x}_{v},
\end{aligned}
\label{eq:hepp-separate}
\end{equation}
where we have defined
\begin{equation}
u_{\star}=\varrho_{1}\curlywedge\varrho_{2}\qquad\text{and}\qquad n_{*}\coloneqq-\lfloor\log_{2}d_{\mathfrak{s},\mathscr{S}}(\mathbf{x}_{\varrho_{1}},\mathbf{x}_{\varrho_{2}})\rfloor.\label{eq:nstarustar}
\end{equation}

\item Now we bound each term of the outer sum on the right side of   \zcref{eq:hepp-separate} 
individually. Thus, for the remainder
of the proof, we fix a $\mathfrak{t}\in\mathscr{T}_{\mathsf{S}}$,
and we will use the notation $\preccurlyeq$, $\curlywedge$, etc.\ 
with respect to $\mathfrak{t}$. For an admissible
scaling $\mathbf{n}\in\mathcal{A}(\mathfrak{t})$, we recall from the definition \zcref{eq:Udef} that whenever
$(\mathbf{x}_{v})_{v\in\mathsf{S}_0}\in\mathcal{U}(\mathfrak{t},\mathbf{n},\mathbf{x}_{\varrho_{1}},\mathbf{x}_{\varrho_{2}})$,
we have
\begin{equation}
d_{\mathfrak{s},\mathscr{S}}(\mathbf{x}_{u},\mathbf{x}_{v})\simeq2^{-\mathbf{n}(u\curlywedge v)}.\label{eq:dbd}
\end{equation}
Using \zcref{lem:Usizebound,eq:dbd} in \zcref{eq:hepp-separate},
we see that the summand on the right side of \zcref{eq:hepp-separate}
is bounded by
\begin{equation}
  \mathscr{I}_{\mathfrak{t}}\coloneqq\sum_{\substack{\mathbf{n}\in\mathcal{A}(\mathfrak{t})\\
\mathbf{n}(u_{\star})=n_{\star}
}
}\left(\prod_{\{u,v\}\in\binom{\mathsf{S}}{2}}2^{Q(u,v)\mathbf{n}(u\curlywedge_{\mathfrak{t}}v)}\right)\left(\prod_{v\in\mathfrak{t}^{\circ}\setminus\{u_{\star}\}}2^{-3\mathbf{n}(v)}\right).\label{eq:hepp-separate-1}
\end{equation}
We would like to estimate $\mathscr{I}_{\mathfrak{t}}$ by integrating ``from
the leaves to the root.'' For this purpose, we rewrite the right
side of \zcref{eq:hepp-separate-1} as a node-by-node sum. For $w\in\mathfrak{t}^{\circ}$,
we define
\begin{equation}
\eta(w)\coloneqq3\cdot\mathbf{1}_{w\ne u_{\star}}-\sum_{\substack{\{u,v\}\in\binom{\mathsf{S}}{2}\\
u\curlywedge v=w
}
}Q(u,v),\label{eq:etadef-1}
\end{equation}
and then we can rewrite \zcref{eq:hepp-separate-1} as 
\begin{equation}
\mathscr{I}_{\mathfrak{t}}=\sum_{\substack{\mathbf{n}\in\mathcal{A}(\mathfrak{t})\\
\mathbf{n}(u_{\star})=n_{\star}
}
}\prod_{w\in\mathfrak{t}^{\circ}}2^{-\eta(w)\mathbf{n}(w)}.\label{eq:hepp-separate-1-1}
\end{equation}
\item In order to bound \zcref{eq:hepp-separate-1-1}, we need to develop some identities involving the $\eta(w)$s. For $w\in\mathfrak{t}^{\circ}$, we define 
\[
\mathsf{S}_{w}=\{u\in\mathsf{S}\st w\prec u\}
\]
to be the set of leaf nodes that are descendants of $w$. Using the
definitions \zcref{eq:posdeg-1,eq:etadef-1} we see that for any $u\in\mathfrak{t}^{\circ}$
with children $w_{1},w_{2}$, we have
\begin{align}
  \eta(u)&+\deg_Q(\mathsf{S}_{w_{1}})+\deg_Q(\mathsf{S}_{w_{2}})\notag\\ & =3\cdot\mathbf{1}_{u\ne u_{\star}}-\sum_{\substack{\{z_{1},z_{2}\}\in\binom{\mathsf{S}}{2}\\
z_{1}\curlywedge z_{2}=u
}
}Q(z_{1},z_{2})+\sum^{2}_{i=1}\left\{ 3(|\mathsf{S}_{w_{i}}|-1)-\sum_{\{z_{1},z_{2}\}\in\binom{\mathsf{S}_{w_i}}{2}}Q(z_{1},z_{2})\right\} \nonumber \\
 & =3(|\mathsf{S}_{u}|-1-\mathbf{1}_{u=u_{\star}})-\sum_{\substack{\{z_{1},z_{2}\}\in\binom{\mathsf{S}_{u}}{2}\\
z_{1}\curlywedge z_{2}=u
}
}Q(z_{1},z_{2})=\deg_Q(\mathsf{S}_{u})-3\cdot\mathbf{1}_{u=u_{\star}}.\label{eq:degtilderecursion}
\end{align}
Now we claim that
\begin{equation}
\text{if }u\not\prec u_{\star}\text{, then }\sum_{\substack{v\in\mathfrak{t}^{\circ}\\
v\succcurlyeq u
}
}\eta(v)=\deg_Q(\mathsf{S}_{w})-3\cdot\mathbf{1}_{u=u_{\star}}.\label{eq:sumetasisdef}
\end{equation}
We prove \zcref{eq:sumetasisdef} by induction. If $u$ has just two
descendants, i.e. $\mathsf{S}_{u}=\{z_{1},z_{2}\}$, then we have
\[
\sum_{\substack{v\in\mathfrak{t}^{\circ}\\
v\succcurlyeq u
}
}\eta(v)=\eta(u)\overset{\zcref{eq:etadef-1}}{=}3\cdot\mathbf{1}_{u\ne u_{\star}}-Q(z_{1},z_{2})\overset{\zcref{eq:degQ}}{=}3\cdot\mathbf{1}_{u\ne u_{\star}}+\deg_Q(\overline{\mathsf{S}}_{u})-3=\deg_Q(\overline{\mathsf{S}}_{u})-3\cdot\mathbf{1}_{u=u_{\star}},
\]
as claimed. On the other hand, if $u$ has two children $w_{1}$ and
$w_{2}$ for which \zcref{eq:sumetasisdef} holds, then we can write,
using the induction hypothesis, that
\[
\sum_{\substack{v\in\mathfrak{t}^{\circ}\\
v\succcurlyeq u
}
}\eta(v)=\eta(u)+\deg_Q(\mathsf{S}_{w_{1}})+\deg_Q(\mathsf{S}_{w_{2}})\overset{\zcref{eq:degtilderecursion}}{=}\deg_Q(\mathsf{S}_{u})-3\cdot\mathbf{1}_{u=u_{\star}},
\]
as well.

\item Now we use the identity \zcref{eq:sumetasisdef} to sum from the leaves
  towards the root in \zcref{eq:hepp-separate-1-1}. For $u\in\mathfrak{t}^{\circ}$, we define $\mathfrak{t}_{\succcurlyeq u}$
to be the subtree of $\mathfrak{t}$ rooted at $u$. Then any $\mathbf{n}\in\mathcal{A}(\mathfrak{t},n)$
can be split into a map $\mathbf{n}|_{\mathfrak{t}_{\succcurlyeq u}}\in\mathcal{A}(\mathfrak{t}_{\succcurlyeq u})$
and $\mathbf{n}|_{\mathfrak{t}\setminus\mathfrak{t}_{\succcurlyeq u}}\in\mathcal{A}(\mathfrak{t}\setminus\mathfrak{t}_{\succcurlyeq u},n)$,
and in fact this splitting gives a bijection
\begin{equation}
\mathcal{A}(\mathfrak{t},n)\cong\left\{ (\mathbf{n}^{(1)},\mathbf{n}^{(2)})\in\mathcal{A}(\mathfrak{t}_{\succcurlyeq u})\times\mathcal{A}(\mathfrak{t}\setminus\mathfrak{t}_{\succcurlyeq u},n)\st\begin{gathered}\mathbf{n}^{(1)}(u)\ge\mathbf{n}^{(2)}(v)\text{ if \ensuremath{v} is the parent of \ensuremath{u}}\\
\text{or }\mathbf{n}^{(1)}(u)\ge n\text{ if }u\text{ is the root}
\end{gathered}
\right\} .\label{eq:bijection}
\end{equation}
Therefore, for fixed $n\in\mathbb{N}$, let us define the set 
\begin{equation}
\mathcal{A}(\mathfrak{t}_{\succcurlyeq u},n)=\{\mathbf{n}\in\mathcal{A}(\mathfrak{t}_{\succcurlyeq u})\st\mathbf{n}(u)\ge n\}.\label{eq:Atndef}
\end{equation}
We now claim that, whenever $u\not\preccurlyeq u_{\star}$, we have
\begin{equation}
\sum_{\mathbf{n}\in\mathcal{A}(\mathfrak{t}_{\succcurlyeq u},n)}\prod_{v\in\mathfrak{t}^{\circ}_{\succcurlyeq u}}2^{-\eta(v)\mathbf{n}(v)}\lesssim2^{-n\deg_Q(\mathsf{S}_{u})}.\label{eq:sumprodindeq}
\end{equation}
To prove \zcref{eq:sumprodindeq}, we again use induction. If $u$
has just two descendants (which are thus leaves of $\mathfrak{t}$),
then the left side of \zcref{eq:sumprodindeq} reduces to 
\[
\sum^{\infty}_{n_{u}=n}2^{-\eta(u)\mathbf{n}(u)}\overset{\zcref{eq:sumetasisdef}}{=}\sum^{\infty}_{n_{u}=n}2^{-\eta(u)\deg_Q(\mathsf{S}_{u})}\simeq2^{-n\deg_Q(\mathsf{S}_{u})}
\]
since $\deg_Q(\mathsf{S}_{u})>0$ by \zcref{eq:posdeg-1}. Otherwise,
if $u\not\preccurlyeq u_{\star}$ has children $z_{1},z_{2}$ and
\zcref{eq:sumprodindeq} holds for $z_{1}$ and $z_{2}$, then we
have
\begin{align*}
\sum_{\mathbf{n}\in\mathcal{A}(\mathfrak{t}_{\succcurlyeq u},n)}\prod_{v\in\mathfrak{t}^{\circ}_{\succcurlyeq u}}2^{-\eta(v)\mathbf{n}(v)} & =\sum^{\infty}_{n_{u}=n}2^{-\eta(u)n_{u}}\prod^{2}_{i=1}\left(\sum_{\mathbf{n}_{i}\in\mathcal{A}(\mathfrak{t}_{\succcurlyeq z_{i}},n_{u})}\prod_{v\in\mathfrak{t}^{\circ}_{\succcurlyeq z_{i}}}2^{-\eta(v)\mathbf{n}(v)}\right)\\
\ovset{\zcref{eq:sumprodindeq}} & \lesssim\sum^{\infty}_{n_{u}=n}2^{-\left(\eta(u)+\deg_Q(\mathsf{S}_{z_{1}})+\deg_Q(\mathsf{S}_{z_{2}})\right)n_{u}}\overset{\zcref{eq:degtilderecursion}}{=}\sum^{\infty}_{n_{u}=n}2^{-n_{u}\deg_Q(\mathsf{S}_{u})}\lesssim2^{-n\deg_Q(\mathsf{S}_{u})},
\end{align*}
again using the fact that $\deg_Q(\mathsf{S}_{u})>0$. %

Similarly to \zcref{eq:sumprodindeq}, we have, if $u_{\star}$ has
children $\{z_{1},z_{2}\}$, that
\begin{equation}
\begin{aligned}\sum_{\substack{\mathbf{n}\in\mathcal{A}(\mathfrak{t}_{\succcurlyeq u_{\star}})\\
\mathbf{n}(u_{\star})=n_{\star}
}
}\prod_{v\in\mathfrak{t}^{\circ}_{\succcurlyeq u_{\star}}}2^{-\eta(v)\mathbf{n}(v)} & =2^{-\eta(u_{\star})n_{\star}}\prod^{2}_{i=1}\sum_{\mathbf{n}\in\mathcal{A}(\mathfrak{t}_{\succcurlyeq z_{i}},n_{\star})}\prod_{u\in\mathfrak{t}^{\circ}_{\succcurlyeq z_{i}}}2^{-\eta(u)\mathbf{n}(u)}\\
 & \ovset{\zcref{eq:sumprodindeq}}\lesssim2^{-\left(\deg_Q(\mathsf{S}_{z_{1}})+\deg_Q(\mathsf{S}_{z_{2}})+\eta(u_{\star})\right)n_{\star}}\ovset{\zcref{eq:degtilderecursion}}\simeq2^{\left(\deg_Q(\mathsf{S}_{u_{\star}})-3\right)n_{\star}}.
\end{aligned}
\label{eq:sumproduct-ustar}
\end{equation}
In particular, if $u_{\star}$ is in fact the root of $\mathfrak{t}$,
then combining \zcref{eq:hepp-separate-1-1,eq:sumproduct-ustar} and
then recalling \zcref{eq:nstarustar,eq:gammadegtilde} gives us
\begin{equation}
\mathscr{I}_{\mathfrak{t}}\lesssim2^{\left(\deg_Q(\mathsf{S}_{u_{\star}})-3\right)n_{\star}}\simeq d_{\mathfrak{s},\mathscr{S}}(\mathbf{x}_{\varrho_{1}},\mathbf{x}_{\varrho_{2}})^{\gamma},\label{eq:ustarrootestimate}
\end{equation}
and the right hand side is bounded by the right side of \zcref{eq:graph-bound-goal}.

\item To conclude, we must adapt the estimate \zcref{eq:ustarrootestimate}
to the case when $u_{\star}$ is not the root of $\mathfrak{t}$.
We start by enumerating all of the inner vertices that connect $u_{\star}$
to the root as $\{u\st u\preccurlyeq u_{\star}\}=\{u_{i}\}^{k}_{i=0}$
with $u_{\star}=u_{0}\succ u_{1}\succ\cdots\succ u_{k}$, with $u_{k}$
the root of $\mathfrak{t}$. Each $u_{i}$, $i=1,\ldots,k$, has two
children, one being $u_{i-1}$ and the other which we denote by $v_{i}$.
Now we use \zcref{eq:sumproduct-ustar} as well as $k$ applications
of \zcref{eq:sumprodindeq}, with $u$ taken to be each $v_{i}$,
to obtain the upper bound
\begin{equation}
\mathscr{I}_{\mathfrak{t}}\lesssim2^{-\left(\deg_Q(\mathsf{S}_{u_{\star}})-3\right)n_{\star}}\sum_{\mathbf{n}\in\mathcal{A}^{k}_{1}}\prod^{k}_{i=1}2^{-\left(\deg_Q(\mathsf{S}_{v_{i}})+\eta(u_{i})\right)\mathbf{n}(u_{i})}.\label{eq:startoffline}
\end{equation}
Here we have defined, for $\ell\in\{1,\ldots,k\}$,
\begin{equation}
\mathcal{A}^{k}_{\ell}\coloneqq\left\{ \mathbf{n}\colon\{u_{i}\}^{k}_{i=\ell}\to\mathbb{N}\st n_{\star}\ge\mathbf{n}(u_{\ell})\ge\mathbf{n}(u_{\ell+1})\ge\cdots\ge\mathbf{n}(u_{k})\right\} .\label{eq:Akelldef}
\end{equation}
We note that, by \zcref{eq:degtilderecursion}, we have, for each
$i=1,\ldots,k$, that
\[
\deg_Q(\mathsf{S}_{u_{i}})=\eta(u_{i})+\deg_Q(\mathsf{S}_{u_{i-1}})+\deg_Q(\mathsf{S}_{v_{i}}),
\]
so if we abbreviate 
\begin{equation}
d_{i}\coloneqq\deg_Q(\mathsf{S}_{u_{i}})\label{eq:didef}
\end{equation}
 then we can rewrite \zcref{eq:startoffline} as 
\begin{equation}
\mathscr{I}_{\mathfrak{t}}\lesssim2^{-\left(d_{0}-3\right)n_{\star}}\sum_{\mathbf{n}\in\mathcal{A}^{k}_{1}}\prod^{k}_{i=1}2^{-(d_{i}-d_{i-1})\mathbf{n}(u_{i})}=S_{\mathfrak{t};1,0}.\label{eq:startofflinebd}
\end{equation}
where we define
\begin{align}
S_{\mathfrak{t};\ell,m} & \coloneqq(n_{\star}+1)^{m}2^{-(d_{m}-3)n_{\star}}\sum_{\mathbf{n}\in\mathcal{A}^{k}_{\ell}}2^{-(d_{\ell}-d_{m})\mathbf{n}(u_{\ell})}\prod^{k}_{i=\ell+1}2^{-(d_{i}-d_{i-1})\mathbf{n}(u_{i})}\label{eq:Stdef}\\
\ovset{\zcref{eq:Akelldef}} & =(n_{\star}+1)^{m}2^{-(d_{m}-3)n_{\star}}\sum_{\mathbf{n}\in\mathcal{A}^{k}_{\ell+1}}\left(\prod^{k}_{i=\ell+1}2^{-(d_{i}-d_{i-1})\mathbf{n}(u_{i})}\right)\sum^{n_{\star}}_{n_{\ell}=\mathbf{n}(u_{\ell+1})}2^{-(d_{\ell}-d_{m})\mathbf{n}(u_{\ell})}\label{eq:Stbd}
\end{align}
if $\ell\in\{1,\ldots,k\}$. If $\ell=k+1$ then we use the convention
\begin{equation}
S_{\mathfrak{t};k+1,m}=(n_{\star}+1)^{m}2^{-(d_{m}-3)n_{\star}},\label{eq:ellisk+1}
\end{equation}
which should be interpreted as the case of the empty product on the
right side of \zcref{eq:Stdef} and the set $\mathcal{A}^{k}_{k+1}$
comprising the (single) empty sequence. Now there are two cases for
the last sum in \zcref{eq:Stbd}, depending on the sign of $d_{\ell}-d_{m}$.
If $d_{\ell}-d_{m}>0$, then we have
\[
\sum^{n_{\star}}_{n_{\ell}=\mathbf{n}(u_{\ell+1})}2^{-(d_{\ell}-d_{m})\mathbf{n}(u_{\ell})}\lesssim2^{-(d_{\ell}-d_{m})\mathbf{n}(u_{\ell+1})},
\]
and so we obtain in this case that
\begin{align}
S_{\mathfrak{t};\ell,m} & \lesssim(n_{\star}+1)^{m}2^{-(d_{m}-3)n_{\star}}\sum_{\mathbf{n}\in\mathcal{A}^{k}_{\ell+1}}2^{-(d_{\ell}-d_{m})\mathbf{n}(u_{\ell+1})}\prod^{k}_{i=\ell+1}2^{-(d_{i}-d_{i-1})\mathbf{n}(u_{i})}\nonumber \\
 & =(n_{\star}+1)^{m}2^{-(d_{m}-3)n_{\star}}\sum_{\mathbf{n}\in\mathcal{A}^{k}_{\ell+1}}2^{-(d_{\ell+1}-d_{m})\mathbf{n}(u_{\ell+1})}\prod^{k}_{i=\ell+2}2^{-(d_{i}-d_{i-1})\mathbf{n}(u_{i})}\overset{\zcref{eq:Stdef}}{=}S_{\mathfrak{t};\ell+1,m}.\label{eq:poscase}
\end{align}
On the other hand, if $d_{\ell}-d_{m}\le0$, then we have 
\[
\sum^{n_{\star}}_{n_{\ell}=\mathbf{n}(u_{\ell+1})}2^{-(d_{\ell}-d_{m})\mathbf{n}(u_{\ell})}\lesssim(n_{\star}-n_{\ell}+1)2^{-(d_{\ell}-d_{m})n_{\star}}\le(n_{\star}+1)2^{-(d_{\ell}-d_{m})n_{\star}},
\]
and using this bound in \zcref{eq:Stbd}, we get
\[
S_{\mathfrak{t};\ell,m}\lesssim(n_{\star}+1)^{m+1}2^{-(d_{\ell}-3)n_{\star}}\sum_{\mathbf{n}\in\mathcal{A}^{k}_{\ell+1}}\prod^{k}_{i=\ell+1}2^{-(d_{i}-d_{i-1})\mathbf{n}(u_{i})}.
\]
Therefore, in this case we have, as long as $m\le\ell$, that $S_{\mathfrak{t};\ell,m}\lesssim S_{\mathfrak{t};\ell+1,\ell+1}$.
Combining this observation with \zcref{eq:poscase}, we get, as long
as $m\le\ell$, that
\[
S_{\mathfrak{t};\ell,m}\lesssim\max\{S_{\mathfrak{t};\ell+1,m},S_{\mathfrak{t};\ell+1,\ell+1}\}.
\]
Using this bound inductively, we obtain
\begin{equation}\label{eq:stfinalbound}
S_{\mathfrak{t};1,0}\lesssim\max^{k+1}_{m=1}S_{\mathfrak{t};k+1,m}\overset{\zcref{eq:ellisk+1}}{=}\max^{k}_{m=1}(n_{\star}+1)^{m}2^{-(d_{m}-3)n_{\star}}\lesssim d_{\mathfrak{s},\mathscr{S}}(\mathbf{x}_{\varrho_{1}},\mathbf{x}_{\varrho_{2}})^{\gamma}\left(\log\left(2+d_{\mathfrak{s},\mathscr{S}}(\mathbf{x}_{\varrho_{1}},\mathbf{x}_{\varrho_{2}})^{-1}\right)\right)^{ k} \;,
\end{equation}
where by construction $k\le k_0$ as defined in the statement of the proposition,
and in the last inequality of \zcref{eq:stfinalbound} we recalled the definitions \zcref{eq:nstarustar,eq:gammadegtilde,eq:didef}.
This matches the right side of \zcref{eq:graph-bound-goal}, and so
the proof is complete in light of \zcref{eq:hepp-separate,eq:hepp-separate-1,eq:startofflinebd}.\qedhere
\end{thmstepnv}
\end{proof}

\label{tree-notation-index}\printnomenclature[0.55in]

\printbibliography

\end{document}